\pdfcompresslevel=0
\pdfobjcompresslevel=0
\documentclass[11pt,leqno]{amsart}
\usepackage{hyperref}
\usepackage{epsfig}
\usepackage{enumitem}
\usepackage{amssymb}
\usepackage{amscd}
\usepackage{tikz-cd}
\usepackage[all,cmtip,matrix,arrow]{xy}
\usepackage{mathrsfs}
\usepackage{graphicx}
\usepackage{mathabx}
\usepackage{array}
\usepackage{longtable}
\xyoption{rotate}
\xyoption{pdf}
\setcounter{section}{-1}
\usepackage{xparse}

\sloppy

\newcommand{\Implies}[2]{$\text{\ref{#1}}\implies\text{\ref{#2}}$}
\newcommand{\Iff}[2]{$\text{\ref{#1}}\iff\text{\ref{#2}}$}

\makeatletter
\DeclareRobustCommand*{\bfseries}{%
  \not@math@alphabet\bfseries\mathbf
  \fontseries\bfdefault\selectfont
  \boldmath
}
\makeatother

\setlength{\textheight}{8.5in} \setlength{\textwidth}{6in}
\setlength{\topmargin}{0in} \setlength{\oddsidemargin}{0.25in}
\setlength{\evensidemargin}{0.25in}

\mathsurround=2pt

\mathsurround=2pt
\AtBeginEnvironment{tikzcd}{\setlength{\mathsurround}{0pt}}

\sloppy

\newtheorem{theo}{Theorem}[section]
\newtheorem{lemma}[theo]{Lemma}
\newtheorem{defi}[theo]{Definition}
\newtheorem{prop}[theo]{Proposition}

\newtheorem{cor}[theo]{Corollary}
\newtheorem{remark}[theo]{Remark}

\newtheorem{example}[theo]{Example}
\newtheorem{ques}[theo]{Question}
\numberwithin{equation}{section}

\mathchardef\mhyphen="2D

\def\A{{\mathbb A}}

\def\N{\mathbb{N}}
\def\bL{\mathbb{L}}

\def\R{\mathbb{R}}
\def\bS{\mathbb{S}}

\def\Z{\mathbb{Z}}
\def\Q{\mathbb{Q}}
\def\bG{\mathbb{G}}

\raggedbottom
\bibcite{...}{...} 

\def\wt{\widetilde}

\def\bR{{\mathbf R}}
\def\bL{{\mathbf L}}

\def\FF{{\mathbb F}}

\def\pre-tr{\operatorname{pre-tr}}
\def\h{\operatorname{h}}

\def\Hom{\operatorname{Hom}}
\def\Map{\operatorname{Map}}
\def\End{\operatorname{End}}

\DeclareMathOperator*{\colim}{colim}

\DeclareMathOperator*{\hocolim}{hocolim}

\newcommand{\tens}[1]{%
  \mathbin{\mathop{\otimes}\displaylimits_{#1}}%
}

\newcommand{\ms}[1]{\mathscr{#1}}
\newcommand{\msK}{\ms{K}}

\newcommand{\hy}{\mhyphen}

\newcommand{\indlim}[1][]{\mathop{\varinjlim}\limits_{#1}}
\newcommand{\inddlim}[1][]{{``{\indlim[#1]}"}}
\newcommand{\prolim}[1][]{\mathop{\varprojlim}\limits_{#1}}
\newcommand{\proolim}[1][]{{``{\prolim[#1]}"}}
\newcommand{\biggplus}[1][]{\mathop{\bigoplus}\limits_{#1}}

\newcommand{\prodd}[1][]{\mathop{\prod}\limits_{#1}}

\newcommand{\limproj}{\varprojlim}
\newcommand{\liminj}{\varinjlim}

\newcommand{\bbar}{\overline}

\newcommand{\toto}{\rightrightarrows}
\newcommand{\xto}{\xrightarrow}
\newcommand{\xlto}{\xleftarrow}
\newcommand{\hto}{\hookrightarrow}
\newcommand{\leftto}{\leftarrow}

\newcommand{\fin}{\operatorname{fin}}
\newcommand{\Fin}{\operatorname{Fin}}
\newcommand{\profin}{\operatorname{ProFin}}
\newcommand{\comphaus}{\operatorname{CompHaus}}
\newcommand{\Tate}{\operatorname{Tate}}
\newcommand{\Mot}{\operatorname{Mot}}
\newcommand{\Corr}{\operatorname{Corr}}

\newcommand{\Nuc}{\operatorname{Nuc}}

\newcommand{\Flat}{\operatorname{Flat}}
\newcommand{\fla}{\operatorname{flat}}
\newcommand{\Cat}{\operatorname{Cat}}
\newcommand{\Ring}{\operatorname{Ring}}
\newcommand{\Cone}{\operatorname{Cone}}
\newcommand{\Fiber}{\operatorname{Fiber}}

\newcommand{\el}{\operatorname{el}}

\newcommand{\loc}{\operatorname{loc}}
\newcommand{\cont}{\operatorname{cont}}
\newcommand{\lex}{\operatorname{lex}}

\newcommand{\red}{\operatorname{red}}
\newcommand{\acc}{\operatorname{acc}}
\newcommand{\bnd}{\operatorname{bnd}}
\newcommand{\ex}{\operatorname{ex}}
\newcommand{\strcont}{\operatorname{strcont}}

\newcommand{\st}{\operatorname{st}}
\newcommand{\cg}{\operatorname{cg}}

\newcommand{\dual}{\operatorname{dual}}

\newcommand{\bE}{{\mathbb E}}
\newcommand{\bbF}{{\mathbb F}}

\newcommand{\mk}{\mathrm k}
\newcommand{\ev}{\mathrm ev}

\newcommand{\cJ}{{\mathcal J}}

\newcommand{\cF}{{\mathcal F}}
\newcommand{\cG}{{\mathcal G}}
\newcommand{\cO}{{\mathcal O}}

\newcommand{\cM}{{\mathcal M}}

\newcommand{\cD}{{\mathcal D}}

\newcommand{\cA}{{\mathcal A}}
\newcommand{\cB}{{\mathcal B}}
\newcommand{\cI}{{\mathcal I}}
\newcommand{\cC}{{\mathcal C}}
\newcommand{\cE}{{\mathcal E}}

\newcommand{\cY}{{\mathcal Y}}
\newcommand{\cU}{{\mathcal U}}

\newcommand{\cS}{{\mathcal S}}
\newcommand{\cT}{{\mathcal T}}

\newcommand{\cH}{{\mathcal H}}

\newcommand{\veps}{\varepsilon}

\newcommand{\un}{\underline}
\newcommand{\la}{\langle}
\newcommand{\ra}{\rangle}

\newcommand{\Compass}{\operatorname{CompAss}}

\newcommand{\seminorm}{\operatorname{SemiNorm}}
\newcommand{\Ban}{\operatorname{Ban}}
\newcommand{\Coeq}{\operatorname{Coeq}}
\newcommand{\Eq}{\operatorname{Eq}}

\newcommand{\THH}{\operatorname{THH}}

\newcommand{\Fun}{\operatorname{Fun}}
\newcommand{\Tot}{\operatorname{Tot}}

\newcommand{\Ab}{\operatorname{Ab}}

\newcommand{\Perf}{\operatorname{Perf}}

\newcommand{\perf}{\operatorname{perf}}

\newcommand{\Kar}{\operatorname{Kar}}
\newcommand{\Kos}{\operatorname{Kos}}

\newcommand{\PSh}{\operatorname{PSh}}
\newcommand{\copsh}{\operatorname{coPSh}}
\newcommand{\Cosh}{\operatorname{Cosh}}
\newcommand{\Sh}{\operatorname{Shv}}
\newcommand{\Shv}{\operatorname{Shv}}
\newcommand{\supp}{\operatorname{Supp}}

\newcommand{\coker}{\operatorname{coker}}

\newcommand{\im}{\operatorname{Im}}

\newcommand{\Ext}{\operatorname{Ext}}
\newcommand{\Tor}{\operatorname{Tor}}

\newcommand{\Sp}{\operatorname{Sp}}

\newcommand{\add}{\operatorname{add}}

\newcommand{\res}{\operatorname{res}}
\newcommand{\cores}{\operatorname{cores}}

\newcommand{\Sym}{\operatorname{Sym}}

\newcommand{\Ind}{\operatorname{Ind}}
\newcommand{\Pro}{\operatorname{Pro}}
\newcommand{\Calk}{\operatorname{Calk}}

\newcommand{\Spec}{\operatorname{Spec}}

\newcommand{\Stab}{\rm Stab}

\newcommand{\Alg}{\operatorname{Alg}}

\newcommand{\id}{\operatorname{id}}

\newcommand{\coev}{\operatorname{coev}}

\newcommand{\tors}{\operatorname{tors}}

\newcommand{\acycl}{\operatorname{acycl}}

\newcommand{\Set}{\operatorname{Set}}

\newcommand{\pt}{\operatorname{pt}}

\newcommand{\Mod}{\operatorname{Mod}}
\newcommand{\clopen}{\operatorname{ClOpen}}
\newcommand{\Open}{\operatorname{Open}}
\newcommand{\mult}{\operatorname{mult}}

\usepackage{epsf}
\usepackage{amscd}

\newcommand{\mfp}{\mathfrak{p}}
\newcommand{\mPr}{\mathfrak{Pr}}
\newcommand{\mfq}{\mathfrak{q}}

\newcommand{\m}{\mathfrak{m}}

\title[K-theory and localizing invariants of large categories]
{K-theory and localizing invariants of large categories}

\author{Alexander I. Efimov}
\address{The Hebrew University of Jerusalem}
\email{efimov@mccme.ru}
\thanks{The author was partially supported by the European Research Council (ERC, CurveArithmetic, 101078157).}

\begin{document}

\begin{abstract} In this paper we introduce and study the so-called continuous $K$-theory for a certain class of ``large'' stable $\infty$-categories, more precisely, for dualizable presentable categories. For compactly generated categories, the continuous $K$-theory is simply the usual (non-connective) $K$-theory of the full subcategory of compact objects. More generally, we show that any localizing invariant of small stable $\infty$-categories can be uniquely extended to a localizing invariant of dualizable categories.

We compute the continuous $K$-theory for categories of sheaves on locally compact Hausdorff spaces. Using the special case for sheaves on the real line, we give an alternative proof of the theorem of Kasprowski and Winges \cite{KW19} on the commutation of $K$-theory with infinite products for small stable $\infty$-categories.

We also study the general theory of dualizable categories. In particular, we give an ``explicit'' proof of Ramzi's theorem \cite{Ram24a} on the $\omega_1$-presentability of the category of dualizable categories. Among other things, we prove that dualizability is equivalent to ``flatness'' in the category of presentable stable categories.
\end{abstract}


\maketitle

\tableofcontents

\section{Introduction}

It is well known that the Grothendieck group $K_0(\cA)$ of an additive category vanishes whenever $\cA$ has countable direct sums. More generally, for any stable $\infty$-category $\cC$ with countable coproducts its $K$-theory spectrum is contractible. This vanishing also holds for any additive invariant, like (topological) Hochschild homology. Therefore, naturally, additive invariants are normally studied only for small stable infinity categories, and not for ``large'' categories. 

In this paper we introduce and study the notion of {\it continuous} non-connective $K$-theory (and more general localizing invariants) for a certain class of ``nice'' large categories. These are the so-called dualizable (also known as compactly assembled) presentable stable categories. Moreover, this continuous $K$-theory in fact extends the usual non-connective $K$-theory for small stable $\infty$-categories.

We now explain more precisely the basic idea. Let us denote by $\Cat^{\perf}$ the $\infty$-category of small idempotent-complete stable $\infty$-categories and exact functors between them. We denote by $\Sp$ the $\infty$-category of spectra. We consider the non-connective $K$-theory as a functor $K:\Cat^{\perf}\to\Sp.$ We refer to \cite{BGT, Bar} for the definition of (non-connective) algebraic $K$-theory of a small stable $\infty$-category, based on the Waldhausen's $S_{\bullet}$-construction \cite{Wal}. If $R$ is a usual (discrete) associative unital ring, then the $K$-theory space $K_{\geq 0}(\Perf(R))$ of the stable $\infty$-category of perfect complexes $\Perf(R)$ is equivalent to the Quillen's $K$-theory space of $R$ as defined in \cite{Qui}.        

One of the main properties of $K$-theory is the localization. We recall that a short exact sequence in $\Cat^{\perf}$ is a fiber-cofiber sequence $\cA\xto{F_1}\cB\xto{F_2}\cC,$ that is, a bicartesian square of the form
\begin{equation}\label{eq:intro_square}\begin{CD}
\cA @>F_1>> \cB\\
@VVV @VVF_2V\\
0 @>>> \cC.
\end{CD}\end{equation} 
In terms of the (triangulated) homotopy categories, the square \eqref{eq:intro_square} is cartesian  if and only if the functor $\h F_1$ induces an equivalence $\h\cA\xto{\sim}\ker(\h F_2).$ Assuming this, the square \eqref{eq:intro_square} is cocartesian if and only if the induced functor $\h\cB/\h\cA\to \h\cC$ is fully faithful and the Karoubi completion of its image is equivalent to $\h\cC.$ In other words, a fiber-cofiber sequence \eqref{eq:intro_square} induces a short exact sequence of triangulated categories, up to direct summands: $h\cA\to \h\cB\to\h\cC.$

Localization property for non-connective $K$-theory means that the $K$-theory functor sends each fiber-cofiber sequence \eqref{eq:intro_square} to a fiber (=cofiber) sequence in $\Sp.$

Now, consider the $\infty$-category $\Pr_{\st}^L$  of presentable stable $\infty$-categories and colimit-preserving functors. Recall that a presentable stable category $\cC$ is called dualizable if it is a dualizable object in the symmetric monoidal category $\Pr_{\st}^L,$ with the Lurie tensor product. This is equivalent to $\cC$ being a retract in $\Pr_{\st}^L$ of a compactly generated category, by \cite[Proposition D.7.3.1]{Lur18}.

We call an exact functor $F:\cC\to\cD$ of presentable stable $\infty$-categories {\it strongly continuous} if $F$ has a right adjoint which is colimit-preserving. We denote by $\Cat_{\st}^{\dual}$ the (non-full) subcategory of dualizable categories and strongly continuous functors. Further, we denote by $\Cat_{\st}^{\cg}\subset \Cat_{\st}^{\dual}$ the full subcategory of compactly generated categories.

It is well-known that the functor $\Ind:\Cat^{\perf}\to\Cat_{\st}^{\cg}$ is an equivalence, and an inverse equivalence is given by $\cC\mapsto\cC^{\omega}$ (the latter is the full subcategory of compact objects). Therefore, we have a fully faithful functor $\Ind:\Cat^{\perf}\to \Cat_{\st}^{\dual}.$ We claim that the non-connective $K$-theory can be naturally extended to dualizable presentable categories in the following sense: there is a functor $K^{\cont}:\Cat_{\st}^{\dual}\to \Sp,$ and a natural isomorphism $K^{\cont}\circ\Ind\cong K$ of functors $\Cat^{\perf}\to\Sp.$ Moreover, $K^{\cont}$ is uniquely determined (up to a contractible space of choices) by the localization property: it sends short exact sequences in $\Cat_{\st}^{\dual}$ to fiber sequences in $\Sp.$

Uniqueness essentially follows from another characterization of dualizable categories: a category $\cC\in\Pr_{\st}^L$ is dualizable if and only if there exists a short exact sequence $\cC\to\cD\to\cD'$ in $\Pr_{\st}^L$ with strongly continuous functors, such that $\cD$ and $\cD'$ compactly generated, see Proposition \ref{prop:dualizable_via_ses}. If we require $K^{\cont}$ to be a localizing invariant, then we are forced to have 
\begin{equation}\label{eq:naive_definition}K^{\cont}(\cC)\cong\Fiber(K(\cD^{\omega})\to K(\cD^{'\omega})).\end{equation} 

However, it is not very reasonable to take \eqref{eq:naive_definition} as a definition of $K^{\cont}$ for the following reasons:

\begin{itemize}
\item It is not immediately obvious that $K^{\cont}(\cC)$ does not depend on the choice of a short exact sequence; this can be deduced from \cite[Theorem 18]{Tam}. 

\item It is not clear that a strongly continuous functor $\cC\to\cC'$ of dualizable categories induces a map $K^{\cont}(\cC)\to K^{\cont}(\cC').$
\end{itemize}

Instead, we define the continuous $K$-theory using Calkin categories. Recall the notion of Calkin algebra from functional analysis: for a Hilbert space $\cH,$ the Calkin algebra is defined as the quotient $\Calk(\cH)=B(\cH)/C(\cH)$ of the algebra of bounded operators $B(\cH)$ by the ideal of compact operators $C(\cH).$ Since $C(\cH)$ is the closure of the ideal of bounded operators of finite rank, we have a natural discrete analogue. Namely, for a vector space $V$ over a field $\mk$ the Calkin algebra is defined as the quotient $\Calk(V)=\End(V)/(V^*\otimes V).$ This naturally generalizes to a Calkin category associated to a small stable $\infty$-category, which we now recall.

For $\cA\in\Cat^{\perf},$ the Calkin category is defined as the (Karoubi completion of) the quotient:
$$\Calk(\cA)=(\Ind(\cA)/\cA)^{\Kar}.$$ 
If we ignore set-theoretic issues for now (they are easily resolved), then the short exact sequence $\cA\xto{\cY}\Ind(\cA)\to\Calk(\cA)$ implies an isomorphism $K(\cA)\cong\Omega K(\Calk(\cA)),$ since $K(\Ind(\cA))\cong 0.$ 

We claim that Calkin categories naturally generalize to dualizable categories. Namely, one can define a functor $\Calk^{\cont}:\Cat_{\st}^{\dual}\to\Cat^{\perf},$ so that we have a natural equivalence $\Calk^{\cont}\circ\Ind\simeq\Calk$ of functors $\Cat^{\perf}\to \Cat^{\perf}.$  
For this we use one more characterization of dualizable categories: a stable presentable $\infty$-category $\cC$ is dualizable if and only if the colimit functor $\Ind(\cC)\to \cC$ has a left adjoint. Denoting it by $\hat{\cY}_{\cC},$ we see that the quotient $\Ind(\cC)/\hat{\cY}_{\cC}(\cC)$ is compactly generated. We define
$$\Calk^{\cont}(\cC):=(\Ind(\cC)/\hat{\cY}_{\cC}(\cC))^{\omega}.$$

It is easy to see that for a small stable $\infty$-category $\cA$ we have
$$\Calk^{\cont}(\Ind(\cA))\simeq\Calk(\cA).$$
Ignoring set-theoretic issues again, we define the continuous $K$-theory by the formula $K^{\cont}(\cC):=\Omega K(\Calk^{\cont}(\cC)).$ We refer to Subsection \ref{ssec:loc_invar_dualizable} for a more precise definition.

It is not difficult to check that the functor $\Calk^{\cont}$ preserves short exact sequences, hence $K^{\cont}$ is a localizing invariant. The above discussion also implies an isomorphism $K^{\cont}(\Ind(\cA))\cong K(\cA).$ Similarly, we can apply the same construction to any localizing invariant. We have the following result.

\begin{theo} Let $\cE$ be a stable $\infty$-category. The precomposition functor
$$\Fun(\Cat_{\st}^{\dual},\cE)\to \Fun(\Cat^{\perf},\cE),\quad F\mapsto F\circ\Ind,$$ induces an equivalence between the full subcategories of localizing invariants. The inverse equivalence is given by $F\mapsto F^{\cont}.$\end{theo}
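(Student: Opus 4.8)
The plan is to exhibit $F \mapsto F^{\cont}$ as an explicit inverse, on both sides, to the precomposition functor $F \mapsto F \circ \Ind$, restricted to the full subcategories of localizing invariants. First I would set up the construction of $F^{\cont}$ carefully: given a localizing invariant $G : \Cat^{\perf} \to \cE$, put $G^{\cont}(\cC) := \Omega\, G(\Calk^{\cont}(\cC))$ for $\cC \in \Cat_{\st}^{\dual}$, where $\Calk^{\cont}(\cC) = (\Ind(\cC)/\hat{\cY}_{\cC}(\cC))^{\omega}$. The key inputs I would invoke are: (i) the functor $\Calk^{\cont} : \Cat_{\st}^{\dual} \to \Cat^{\perf}$ is well-defined and preserves short exact sequences (stated in the excerpt); and (ii) the natural equivalence $\Calk^{\cont} \circ \Ind \simeq \Calk$ of endofunctors of $\Cat^{\perf}$.

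Next I would verify that $G^{\cont}$ is indeed a localizing invariant whenever $G$ is: this is immediate from (i), since $\Calk^{\cont}$ sends a short exact sequence in $\Cat_{\st}^{\dual}$ to one in $\Cat^{\perf}$, $G$ sends that to a fiber sequence in $\cE$, and $\Omega$ preserves fiber sequences. So $F \mapsto F^{\cont}$ does land in localizing invariants on $\Cat_{\st}^{\dual}$. Then I would check the composite $G \mapsto (G^{\cont}) \circ \Ind \cong G$: for $\cA \in \Cat^{\perf}$ we have $G^{\cont}(\Ind(\cA)) = \Omega\, G(\Calk^{\cont}(\Ind(\cA))) \simeq \Omega\, G(\Calk(\cA))$ by (ii); and the short exact sequence $\cA \xto{\cY} \Ind(\cA) \to \Calk(\cA)$ in $\Cat^{\perf}$ together with $G(\Ind(\cA)) \simeq 0$ (localizing invariants kill categories with countable coproducts, i.e. $\Ind(\cA)$) gives $G(\cA) \simeq \Omega\, G(\Calk(\cA))$, naturally in $\cA$. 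This handles one triangle identity.

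For the other direction I must show that any localizing invariant $F$ on $\Cat_{\st}^{\dual}$ is recovered from its restriction, i.e. $(F \circ \Ind)^{\cont} \simeq F$ as functors on $\Cat_{\st}^{\dual}$. The natural transformation I would write down comes from the short exact sequence $\hat{\cY}_{\cC}(\cC) \to \Ind(\cC) \to \Ind(\cC)/\hat{\cY}_{\cC}(\cC)$ in $\Pr_{\st}^L$, or rather its avatar in $\Cat_{\st}^{\dual}$; applying $F$ and using that $F(\Ind(\cC)) \simeq 0$ (again $\Ind(\cC)$ has countable coproducts) yields $F(\cC) \simeq \Omega\, F(\Ind(\cC)/\hat{\cY}_{\cC}(\cC)) \simeq \Omega\, F(\Ind(\Calk^{\cont}(\cC))) = (F\circ\Ind)^{\cont}(\cC)$, where the middle step uses that $\Ind(\cC)/\hat{\cY}_{\cC}(\cC)$ is compactly generated with compact part $\Calk^{\cont}(\cC)$, so it is $\Ind$ of the latter. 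The main subtlety — and the step I expect to be the real obstacle — is organizing all of this coherently at the level of $\infty$-categorical functors rather than objectwise, i.e. constructing the relevant natural transformations in $\Fun(\Cat_{\st}^{\dual}, \cE)$ and checking they are equivalences, which requires knowing that $\cC \mapsto \hat{\cY}_{\cC}(\cC)$ and $\cC \mapsto \Ind(\cC)$ assemble into functors fitting into a short exact sequence of functors valued in $\Cat_{\st}^{\dual}$; the fiber-sequence identity $F(\cC) \simeq \Omega F(\Calk^{\cont}(\cC))$ then follows from functoriality of $\Calk^{\cont}$ and the nullity of $\Ind(-)$. Finally, I would note uniqueness of the localizing-invariant extension: any two extensions of $F \circ \Ind$ agree by the same fiber-sequence argument applied to the short exact sequence of Proposition~\ref{prop:dualizable_via_ses}, which pins down the value on every $\cC$ up to coherent equivalence, and the equivalence of full subcategories then formally follows since precomposition is fully faithful on these subcategories with the displayed inverse.
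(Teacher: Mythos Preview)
Your strategy matches the paper's: define $G^{\cont}$ via the Calkin construction, verify it is localizing because the Calkin functor preserves short exact sequences, and obtain both triangle identities from short exact sequences together with Eilenberg-swindle vanishing. The gap is a size issue, not the coherence problem you flag. As written, $\Calk^{\cont}(\cC) = (\Ind(\cC)/\hat{\cY}_\cC(\cC))^\omega$ is not small when $\cC$ is presentable, so $G(\Calk^{\cont}(\cC))$ is undefined; likewise $G(\Ind(\cA))$ and $F(\Ind(\cC))$ make no sense, since $\Ind(\cA)\notin\Cat^{\perf}$ and, for presentable $\cC$, the category $\Ind(\cC)$ is not an object of $\Cat_{\st}^{\dual}$.

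The paper repairs this by replacing $\Ind(\cC)$ with $\Ind(\cC^{\omega_1})$ throughout. Every dualizable $\cC$ is $\omega_1$-compactly generated and the image of $\hat\cY$ lands in $\Ind(\cC^{\omega_1})$, so one sets $\Calk_{\omega_1}^{\cont}(\cC) := (\Ind(\cC^{\omega_1})/\hat\cY(\cC))^\omega$, which is genuinely small, and defines $G^{\cont}(\cC) := \Omega\,G(\Calk_{\omega_1}^{\cont}(\cC))$. The first triangle identity then uses the small short exact sequence $0 \to \cC^\omega \to \cC^{\omega_1} \to \Calk_{\omega_1}^{\cont}(\cC) \to 0$ together with the vanishing $G(\cC^{\omega_1}) = 0$ (now legitimate: $\cC^{\omega_1}$ is small with countable coproducts); the second uses the functorial short exact sequence $0 \to \cC \to \Ind(\cC^{\omega_1}) \to \Ind(\Calk_{\omega_1}^{\cont}(\cC)) \to 0$ in $\Cat_{\st}^{\dual}$. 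Once this correction is made, the coherence concern is routine: both $\cC \mapsto \cC^{\omega_1}$ and $\cC \mapsto \Calk_{\omega_1}^{\cont}(\cC)$ are manifestly functors on $\Cat_{\st}^{\dual}$, so the displayed short exact sequence is functorial in $\cC$.
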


The detailed proof is given in Subsection \ref{ssec:loc_invar_dualizable} below. Moreover, in Subsection \ref{ssec:F_cont_via_right_Kan_extension} we show that $F^{\cont}$ is simply the right Kan extension of $F$ via the functor $\Ind:\Cat^{\perf}\to\Cat_{\st}^{\dual}.$ 

The assignment $F\mapsto F^{\cont}$ preserves the important properties of localizing invariants. In particular, $F$ preserves filtered colimits if and only if so does $F^{\cont}.$

Examples of dualizable (but not necessarily compactly generated) categories naturally arise in various contexts, see Subsection \ref{ssec:examples}. One class of examples is provided by almost mathematics (Faltings \cite{Fa02}, Gabber-Ramero \cite{GR03}). Namely, let $R$ be an associative unital ring, and $\m\subset R$ a two-sided ideal such that $\m^2=\m$ such that $\Tor_{n}^R(\m,\m)=0$ for $n>0$ (for example, $\m$ is flat as a right or left $R$-module). Then the extension of scalars functor $D(R)\to D(R/\m)$ is a localization, and its kernel $\cC_{R,\m}$ is a dualizable category. We have an equivalence $\cC_{R,\m}\simeq D(\Mod_a\mhyphen R),$ where $\Mod_a\mhyphen R$ is the abelian category of almost modules: $\Mod_a\mhyphen R=\Mod\mhyphen R/\Mod\hy R/\m.$ Moreover, if $\m$ is contained in the Jacobson radical of $R,$ then the category $D(\Mod_a\mhyphen R)$ has no non-zero compact objects.

Another class of examples which we are interested in this paper are categories of sheaves on a locally compact Hausdorff space $X.$ Let $\un{\cC}$ be a presheaf on $X$ with values in $\Cat_{\st}^{\dual}.$ We denote by $\Sh(X;\un{\cC})$ the category of sheaves with values in $\un{\cC}.$ By \cite{Lur18}, the category $\Shv(X;\un{\cC})$ is dualizable. We have the following general result, see Theorem \ref{th:U_loc_locally_compact}.

\begin{theo}\label{th:main_intro} With the above notation, let $\cE$ be a presentable stable $\infty$-category, and $F:\Cat^{\perf}\to \cE$ a localizing invariant which commutes with filtered colimits. Then we have a natural isomorphism
$$F^{\cont}(\Shv(X;\un{\cC}))\cong \Gamma_c(X,F^{\cont}(\un{\cC})).$$\end{theo}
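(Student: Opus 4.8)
The plan is to realize both sides as $\cE$-valued cosheaves on $X$ and to identify them by dévissage, the base case being $X=\mathbb{R}$. Since $\Shv(X;\un\cC)$ depends only on the sheafification of $\un\cC$, I may assume $\un\cC$ is a sheaf of dualizable categories, so that $\Shv(U;\un\cC|_U)=\un\cC(U)$ for $U$ open; correspondingly $F^{\cont}(\un\cC)$ is the sheafification of $V\mapsto F^{\cont}(\un\cC(V))$, whose stalks are $F^{\cont}(\un\cC_x)$ and whose restriction to a locally closed $W\subseteq X$ is $F^{\cont}(\un\cC|_W)$ (using that $F^{\cont}$ commutes with the filtered colimits computing stalks). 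Write
\[
\Phi(U):=F^{\cont}\!\big(\Shv(U;\un\cC|_U)\big),\qquad \Psi(U):=\Gamma_c\!\big(U;F^{\cont}(\un\cC)|_U\big),
\]
each covariant in the open set $U\subseteq X$ along extension by zero.

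\emph{Excision and the cosheaf property.} For $U\subseteq V$ open with closed complement $Z$, the recollement
\[
\Shv(U;\un\cC)\xrightarrow{\ j_!\ }\Shv(V;\un\cC)\xrightarrow{\ i^*\ }\Shv(Z;\un\cC)
\]
is a short exact sequence in $\Cat_{\st}^{\dual}$: the three categories are dualizable by \cite{Lur18}; $j_!$ is fully faithful with colimit-preserving right adjoint $j^*$; $i^*$ has colimit-preserving right adjoint $i_*=i_!$; and $\ker(i^*)=\operatorname{im}(j_!)$. As $F^{\cont}$ is a localizing invariant on $\Cat_{\st}^{\dual}$, it sends this to a fiber sequence $\Phi(U)\to\Phi(V)\to F^{\cont}(\Shv(Z;\un\cC))$; in particular $\Phi$ satisfies Mayer--Vietoris. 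Since moreover $F^{\cont}$ preserves finite colimits (additivity) and, by hypothesis on $F$, filtered colimits, and $U\mapsto\Shv(U;\un\cC)$ satisfies codescent in $\Cat_{\st}^{\dual}$ (dually to the sheaf-descent defining $\un\cC$-valued sheaves), $\Phi$ is an $\cE$-valued cosheaf on $X$, carrying the excision fiber sequences above. On the other side $\Psi$ is a cosheaf by the standard codescent of compactly supported cohomology and carries the analogous excision sequences. A natural comparison $\Phi\Rightarrow\Psi$ compatible with all of this is obtained from the cosheaf structures --- equivalently from the presentation of $F^{\cont}$ as the right Kan extension of $F$ along $\Ind$ --- though producing it as a genuinely natural transformation, rather than an abstract equivalence, is part of the work.

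\emph{Reduction of $X$.} To show $\Phi\to\Psi$ is an equivalence I would cut down $X$ in three stages. (i) Exhaust $X$ by its relatively compact open subsets; both cosheaves commute with the resulting filtered unions, and excision for $U\subseteq\overline{U}$ with complement $\partial U=\overline{U}\setminus U$ expresses $\Phi(U)$ and $\Psi(U)$ as fibers of maps between the corresponding values on the compact Hausdorff spaces $\overline{U},\partial U$ --- reducing to $X$ compact Hausdorff, where $\Gamma_c=\Gamma$. (ii) Present a compact Hausdorff $X$ as a cofiltered limit $X=\lim_\alpha P_\alpha$ of finite polyhedra with surjective transition maps (classically possible, via a Tychonoff embedding and nerves of finite covers), and use continuity of $\Shv(-;-)$ under cofiltered limits of compact Hausdorff spaces \cite{Lur18}, continuity of $\Gamma$ on compacta, and continuity of $F^{\cont}$ along the resulting filtered colimit in $\Cat_{\st}^{\dual}$ (such colimits exist, cf.\ \cite{Ram24a}); after a further continuity reduction allowing $\un\cC$ to be taken pulled back from a finite stage, one reduces to $X=P$ a finite polyhedron. (iii) Excise $P$ along its skeletal filtration: inducting on dimension, the only new input at each stage is the theorem for a single open cell $U\cong\mathbb{R}^n$, with $\un\cC|_U$ an arbitrary sheaf of dualizable categories; and iterating the one-variable case in a product direction ($\Shv(\mathbb{R}^n;\un\cC)=\Shv(\mathbb{R};q_*\un\cC)$ for $q\colon\mathbb{R}^n\to\mathbb{R}$, and likewise $\Gamma_c$ on $\mathbb{R}^n$ fibers over $\mathbb{R}$) reduces everything to $X=\mathbb{R}$.

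\emph{The base case and the main obstacle.} It remains to prove $F^{\cont}(\Shv(\mathbb{R};\un\cC))\cong\Gamma_c(\mathbb{R};F^{\cont}(\un\cC))$ for an arbitrary sheaf of dualizable categories $\un\cC$ on $\mathbb{R}$. For constant $\un\cC=\cD$ this amounts, via excision for $(0,1)\subset[0,1]$ with complement $\{0,1\}$ (the zero-dimensional case being trivial), to the homotopy-invariance statement $F^{\cont}(\Shv([0,1];\cD))\simeq F^{\cont}(\cD)$, i.e.\ to the insensitivity of $F^{\cont}$ to tensoring with $\Shv([0,1];\Sp)$; the general-coefficient case is a mild refinement of this. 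I expect \emph{this} step to be the main obstacle: it should be handled by an explicit, combinatorial description of $\Shv(\mathbb{R})$ (respectively $\Shv([0,1])$) together with the vanishing of $F^{\cont}$ on the auxiliary compactly generated categories arising there. The remaining nontrivial inputs are the continuity statements in stage (ii) --- continuity of $\Shv$ under cofiltered limits of compact Hausdorff spaces, and the fact that the pertinent filtered colimits in $\Cat_{\st}^{\dual}$ are computed compatibly with $F^{\cont}$; everything else is formal once the short exact sequence above, hence excision, is available.
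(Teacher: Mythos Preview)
Your outline differs substantially from the paper's argument, and stage~(ii) contains a real gap.

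The paper does \emph{not} approximate $X$ by finite polyhedra and does not reduce to a base case on $\R$. After passing to compact $X$ (as you do), it works with $\msK$-presheaves and proves (Proposition~\ref{prop:approximating_sheaves}) that $\Sh_{\msK}(X;\un{\cC})$ is a filtered colimit in $\Cat_{\st}^{\dual}$, indexed by sieves generated by finite closed covers $X=Y_1\cup\dots\cup Y_n$, of the finite limits $\cD_{(X,S)}=\prolim[Y\in S]\PSh^{\cont}_{\msK}(Y;\un{\cC}_{|Y})$. These limits are iterated pullbacks along strongly continuous localizations, so $F^{\cont}$ commutes with them (Proposition~\ref{prop:F_cont_nice_pullbacks}); and each $\PSh^{\cont}_{\msK}(Y;\un{\cC}_{|Y})$ is a category of cosheaves on the continuous poset $\msK(Y)^{op}$, whose localizing invariants are computed directly in Section~\ref{sec:sh_cosh_continuous_posets} via an explicit semi-orthogonal decomposition. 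Variable coefficients are carried along throughout; the role of the ``base case'' is played by the continuous-poset computation (Theorem~\ref{th:U_loc_cosheaves_on_cont_posets}), not by sheaves on $\R$.

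The serious gap in your stage~(ii) is the sentence ``after a further continuity reduction allowing $\un{\cC}$ to be taken pulled back from a finite stage.'' A general presheaf of dualizable categories on $X$ need not factor through any polyhedral approximation $P_\alpha$, nor is it a filtered colimit of such pullbacks in any way compatible with $\Shv(-;-)$: already the stalks $\un{\cC}_x$ can be pairwise non-equivalent at uncountably many points. Even for constant coefficients the continuity statement $\Shv(\prolim[\alpha]P_\alpha;\Sp)\simeq\indlim[\alpha]^{\cont}\Shv(P_\alpha;\Sp)$ in $\Cat_{\st}^{\dual}$ is not in \cite{Lur18} in this form and would need an argument (unwinding the colimit as a limit over pushforwards in $\Pr^L$, this is a nontrivial shape-theoretic statement). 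Finally, your base case on $\R$ with \emph{variable} coefficients is not a ``mild refinement'' of Proposition~\ref{prop:U_loc_sheaves_on_R}: that proposition treats only constant $\cC$, and handling an arbitrary $\un{\cC}$ on $\R$ already requires machinery of the same order as what the paper develops for general $X$. Your excision and cosheaf framework in stage~(i) is sound and matches the paper's use of recollement, but the dévissage you propose beyond that point does not go through for variable $\un{\cC}$.
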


We note that even when the presheaf $\un{\cC}$ takes values in compactly generated categories (and even when it is constant), the category $\Shv(X;\un{\cC})$ is usually {\it not} compactly generated, see Subsection \ref{ssec:compact_sheaves}. For example, when $X$ is non-compact and connected, and the presheaf $\un{\cC}$ is constant and non-zero, then the category $\Shv(X;\un{\cC})$ has no non-zero compact objects. This is a straightforward generalization of Neeman's theorem \cite{Nee01b}.

The following is a special case of Theorem \ref{th:main_intro}.

\begin{cor}Let $\cC$ be a dualizable category. Then for any $n\geq 0$ we have a natural isomorphism
$$K^{\cont}(\Shv(\R^n,\cC))\cong\Omega^n K^{\cont}(\cC).$$ In particular, we have $K_0^{\cont}(\Shv(\R^n,\cC))\cong K_n^{\cont}(\cC).$\end{cor}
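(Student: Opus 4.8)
This is an immediate consequence of Theorem \ref{th:main_intro} together with the (standard) computation of the compactly supported cohomology of Euclidean space with constant coefficients.

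First I would apply Theorem \ref{th:main_intro} with $X=\R^n$, which is locally compact Hausdorff, with $\un{\cC}$ the constant presheaf on $\R^n$ with value the dualizable category $\cC$, and with $F=K\colon\Cat^{\perf}\to\Sp$ the non-connective $K$-theory (so $\cE=\Sp$). Since $K$ is a localizing invariant which commutes with filtered colimits, the theorem applies and yields a natural isomorphism
$$K^{\cont}(\Shv(\R^n,\cC))\cong\Gamma_c\bigl(\R^n,\,K^{\cont}(\un{\cC})\bigr).$$
Because $\un{\cC}$ is constant with value $\cC$, the coefficient object $K^{\cont}(\un{\cC})$ is the constant coefficient system on $\R^n$ with value $A:=K^{\cont}(\cC)$; the only small point to verify is that passing to the constant presheaf of categories produces honestly constant spectral coefficients, with no sheafification effects, which is clear.

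It then remains to identify $\Gamma_c(\R^n,\un{A})\simeq\Omega^nA$ for an arbitrary spectrum $A$, which is the statement that $H^*_c(\R^n;-)$ is concentrated in degree $n$, read in spectra. For instance, the one-point compactification of $\R^n$ is $S^n$, so $\Gamma_c(\R^n,\un{A})\simeq\Fiber\bigl(\Gamma(S^n,\un{A})\to A\bigr)$, the map being restriction to the point at infinity; using $\Gamma(S^n,\un{A})\simeq\Map(\Sigma^\infty_+S^n,A)\simeq A\oplus\Omega^nA$ for $n\geq 1$, with the first summand precisely the part hit by the restriction map, one obtains $\Gamma_c(\R^n,\un{A})\simeq\Omega^nA$; the case $n=0$ is trivial. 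Alternatively one may induct on $n$ using the cofiber sequence $\Gamma_c(\R^n,\un{A})\to\Gamma(D^n,\un{A})\to\Gamma(S^{n-1},\un{A})$ coming from the decomposition of the closed disk $D^n$ into its interior $\R^n$ and its boundary $S^{n-1}$, together with $\Gamma(D^n,\un{A})\simeq A$. Combining the two steps yields the natural isomorphism $K^{\cont}(\Shv(\R^n,\cC))\cong\Omega^nK^{\cont}(\cC)$, and applying $\pi_0$ with $\pi_0\Omega^nE\cong\pi_nE$ gives $K_0^{\cont}(\Shv(\R^n,\cC))\cong K_n^{\cont}(\cC)$. I do not expect a genuine obstacle here: all the content sits in Theorem \ref{th:main_intro}, which we are free to invoke, and the remaining cohomology computation is elementary.
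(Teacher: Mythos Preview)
Your proof is correct and matches the paper's approach exactly: the paper simply states this corollary as ``a special case of Theorem \ref{th:main_intro}'' without further argument, and your expansion---applying the theorem with constant coefficients and then invoking the standard identification $\Gamma_c(\R^n,\un{A})\simeq\Omega^n A$---is precisely the intended derivation.
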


Restricting to finite CW complexes, we see that the categories of $\cC$-valued sheaves in fact ``categorify'' the maps to $K^{\cont}(\cC)$ in the following sense.

\begin{cor}\label{cor:sheaves_on_finite_CW_complexes_intro} Let $X$ be a finite CW complex (hence a compact Hausdorff space). Let $\cC$ be a dualizable category. Then we have a natural isomorphism $$K^{\cont}(\Shv(X;\cC))\cong K^{\cont}(\cC)^X.$$ In particular, if $\cA$ is a small stable idempotent complete $\infty$-category, then we have $K_0^{\cont}(\Shv(X;\Ind(\cA)))\cong [X,K_{\geq 0}(\cA)],$ where the RHS is the abelian group of homotopy classes of maps from $X$ to the $K$-theory space of $\cA.$\end{cor}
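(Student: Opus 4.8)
The plan is to deduce Corollary \ref{cor:sheaves_on_finite_CW_complexes_intro} from Theorem \ref{th:main_intro} by the standard identification of compactly supported cohomology of a compact space with ordinary cohomology. Since $X$ is compact, $\Gamma_c(X,-) = \Gamma(X,-)$, so Theorem \ref{th:main_intro} already gives
\begin{equation*}
K^{\cont}(\Shv(X;\cC))\cong \Gamma(X, F^{\cont}(\un{\cC}))
\end{equation*}
for $F = K$ and $\un{\cC}$ the constant presheaf with value $\cC$. The point is then that $F^{\cont}(\un{\cC})$ is the constant sheaf with value the spectrum $K^{\cont}(\cC)$ on the finite CW complex $X$, and that global sections of a constant sheaf of spectra on a finite CW complex compute the mapping spectrum: $\Gamma(X,\underline{E})\simeq E^X = \Maps(X,E)$. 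First I would verify this last fact — for a finite CW complex $X$ and a spectrum $E$, one has $\Gamma(X,\underline{E})\simeq \Maps(\Sigma^\infty_+ X, E)$, i.e. the cotensor of $E$ by the space $X$ — which follows from the fact that $\Shv(X;\Sp)$ for $X$ a finite CW complex is equivalent (via the exodromy/constructible comparison, or simply because $X$ is locally contractible and compact) to the category of local systems, so that the constant sheaf's global sections are $\lim_{x\in X} E \simeq E^X$; alternatively this is hypercompleteness of $\Shv(X;\Sp)$ together with the fact that for a CW complex singular cohomology with coefficients in $\pi_*E$ agrees with sheaf cohomology of the constant sheaf, assembled into the statement $\Gamma(X,\underline E)\simeq E^X$ by the descent spectral sequence collapsing appropriately — but the cleanest route is the local-systems equivalence.

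Next I would check that $F^{\cont}$ applied to the constant presheaf $\un{\cC}$ yields the constant presheaf (hence constant sheaf) with value $F^{\cont}(\cC) = \Omega K(\Calk^{\cont}(\cC))$. This is essentially formal: the assignment $U\mapsto F^{\cont}(\cC)$ is constant on the poset of opens, and $F^{\cont}$ is a functor, so post-composing the constant presheaf $\un{\cC}$ with $F^{\cont}$ gives the constant presheaf with value $F^{\cont}(\cC)$; sheafification does not change it since a constant presheaf of spectra on a locally connected space — and $X$ is locally connected being a CW complex — already has the relevant descent after the comparison above, or one simply notes Theorem \ref{th:main_intro} is stated in terms of $\Gamma_c(X,F^{\cont}(\un{\cC}))$ with $F^{\cont}(\un{\cC})$ understood as the appropriate sheaf, and for constant input this is the constant sheaf. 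Combining the two steps: $K^{\cont}(\Shv(X;\cC))\cong\Gamma(X,\underline{K^{\cont}(\cC)})\simeq K^{\cont}(\cC)^X$, which is the first assertion.

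For the second assertion, specialize $\cC = \Ind(\cA)$ with $\cA$ small stable idempotent-complete, so that $K^{\cont}(\Ind(\cA))\cong K(\cA)$ by the basic compatibility $K^{\cont}\circ\Ind\cong K$ recalled in the introduction. Then $K^{\cont}(\Shv(X;\Ind(\cA)))\cong K(\cA)^X$, and passing to $\pi_0$ gives $K_0^{\cont}(\Shv(X;\Ind(\cA)))\cong \pi_0(K(\cA)^X) = [X, K(\cA)]$. The final identification with $[X,K_{\geq 0}(\cA)]$ uses that $X$ is a connected (or at worst a finite CW complex, and one argues componentwise) space and that $[X,E]\cong[X,\tau_{\geq 0}E]$ for any spectrum $E$ when $X$ is a space — i.e. homotopy classes of maps from a space into a spectrum only see the connective cover — so that maps from $X$ into $K(\cA)$ agree with maps into the $K$-theory space $K_{\geq 0}(\cA) = \Omega^\infty \tau_{\geq 0}K(\cA)$.

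\textbf{Main obstacle.} The only real content is the identification $\Gamma(X,\underline{E})\simeq E^X$ for finite CW complexes, i.e.\ that the constant sheaf of spectra has the expected global sections; all the bookkeeping around $\Gamma_c = \Gamma$ for compact $X$, the functoriality of $F^{\cont}$ on constant presheaves, and the connective-cover manipulation on $\pi_0$ are routine. I would isolate that identification as a lemma — proved via the equivalence between sheaves of spectra on a finite CW complex and the constructible (locally constant) ones, together with the computation of the limit of a constant diagram — and then the corollary is a two-line consequence of Theorem \ref{th:main_intro}.
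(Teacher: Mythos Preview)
Your approach is correct and matches exactly how the paper positions this corollary in the introduction: as a direct specialization of Theorem~\ref{th:main_intro}, using that for $X$ compact $\Gamma_c=\Gamma$ and that for a finite CW complex the global sections of the constant sheaf $\underline{E}$ compute $E^X$. Your identification of that last point as the only nontrivial input is accurate.

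It is worth noting that the paper also gives a second, independent proof as Theorem~\ref{th:sheaves_on_finite_CW_complexes}, which proceeds quite differently. Instead of invoking Theorem~\ref{th:main_intro} and the lemma $\Gamma(X,\underline{E})\simeq E^X$, it establishes homotopy invariance $F^{\cont}(\Sh(X;\cC))\cong F^{\cont}(\Sh(X\times[0,1];\cC))$ from the vanishing $\cU_{\loc,\kappa}^{\cont}(\Sh([0,1);\Sp))=0$, deduces that $X\mapsto F^{\cont}(\Sh(X;\cC))$ descends to a functor on $(\cS^{\fin})^{op}$, and then checks it takes cellular pushouts to pullbacks via Proposition~\ref{prop:F_cont_nice_pullbacks}. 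The payoff of that cellular route is that it applies to \emph{all} accessible localizing invariants, not only finitary ones --- your argument, going through Theorem~\ref{th:main_intro}, needs $F$ to commute with filtered colimits. Conversely, your route is shorter once Theorem~\ref{th:main_intro} is in hand and makes transparent why the answer is the cotensor $K^{\cont}(\cC)^X$.
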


It turns out that Corollary \ref{cor:sheaves_on_finite_CW_complexes_intro} holds for arbitrary localizing invariants, not necessarily commuting with filtered colimits, see Theorem \ref{th:sheaves_on_finite_CW_complexes}.

Another interesting class of examples of dualizable categories comes from condensed mathematics: these are categories of (strongly) nuclear modules over analytic rings. We consider the following special case (see Example \ref{ex:nuc}).

Let $R$ be a noetherian commutative ring, and let $I\subset R$ be an ideal. Clausen and Scholze \cite{CS} defined the category $\Nuc(R_{\hat{I}})$ of nuclear solid modules over $R_{\hat{I}}.$ This is a dualizable category, and the full subcategory of compact objects in $\Nuc(R_{\hat{I}})$ is equivalent to the usual category of perfect complexes $\Perf(R_{\hat{I}}).$ We will study the category $\Nuc(R_{\hat{I}})$ in \cite{E1}, where we will in particular prove that
$$K^{\cont}(\Nuc(R_{\hat{I}}))\cong\prolim[n]K(R/I^n).$$

The paper is organized as follows.

In Section \ref{sec:general_theory_dualizable} we study the general theory (not necessarily cocomplete) compactly assembled $\infty$-categories, mostly focusing on the dualizable presentable stable $\infty$-categories. In Subsection \ref{ssec:dualizable_cats} we recall the equivalent definitions of dualizable categories from \cite{Lur18} and study the basic properties. In Subsection \ref{ssec:examples} we give examples of compactly assembled categories, stable and non-stable. In Subsection \ref{ssec:dual_via_compact} we discuss the criterion of dualizability in terms of homotopy categories, which almost follows from H. Krause's papers \cite{Kr00, Kr05}. In Subsection \ref{ssec:AB6} we explain a criterion of dualizabilifty due to Clausen and Scholze: a presentable stable category is dualizable if and only if it satisfies the axiom (AB6), i.e. the distribution of products over filtered colimits. In Subsection \ref{ssec:Calkin} we define the continuous Calkin category of a dualizable category; this construction is important for the definition of continuous $K$-theory. In Subsection \ref{ssec:colimits_of_dualizable} we study the colimits in $\Cat_{\st}^{\dual}.$ In particular, we show that the inclusion $\Cat_{\st}^{\dual}\to\Pr_{\st}^L$ commutes with colimits. We also show that the category $\Cat_{\st}^{\dual}$ satisfies the weak (AB5) axiom: the class of fully faithful functors is closed under filtered colimits (Proposition \ref{prop:weak_AB5_Pr^LL}). In Subsections \ref{ssec:finite_limits_of_dualizable} and \ref{ssec:general_limits_Cat^dual} we discuss the limits of dualizable categories. The (complicated) general description of limits is given by Theorem \ref{th:limit_of_dualizable}. As an example, we show in Subection \ref{ssec:fpqc_descent} that the functor $\Ring\to\Cat_{\st}^{\dual},$ $R\mapsto D(R),$ is a sheaf for the fpqc topology (here $\Ring$ denotes the category of ordinary commutative rings). In Subsection \ref{ssec:products_of_dualizable} we show that the category $\Cat_{\st}^{\dual}$ satisfies the weak (AB4*) axiom: the product of epimorphisms is an epimorphism. We also show that the axiom (AB6) holds in $\Cat_{\st}^{\dual}.$

In Section \ref{sec:dualizability_via_flatness} we prove a surprising result: a presentable stable category is dualizable if and only if it is flat in $\Pr_{\st}^L.$ More precisely, $\cC$ is flat if the functor $\cC\otimes-$ preserves fully faithful functors. We also give a more general criterion for flatness in $\Pr^L_{\st,\kappa}$ for an uncountable regular cardinal $\kappa.$

In Section \ref{sec:extensions_of_comp_gen} we give a criterion of dualizability for categories which are extensions of compactly generated categories. If such an extension is dualizable, then it is in fact compactly generated. As an application, we recover a result from \cite[Appendix A]{CDH+20} stating that in $\Cat^{\perf}$ the extensions of $\cA$ by $\cB$ are classified by functors $\cA\to\Tate(\cB),$ where $\Tate(\cB)$ is the category of Tate objects, see \cite{Hen}. As another application, for a commutative noetherian ring $R$ we classify the localizing subcategories of $D(R)$ which are dualizable. They correspond bijectively to convex subsets of $\Spec R,$ see Theorem \ref{th:dualizability_equiv_convexity}.

In Section \ref{sec:localizing_invariants} we define and study localizing invariants of dualizable categories. In Subsection \ref{ssec:U_loc} we recall for each regular cardinal $\kappa$  the universal localizing invariant of small stable categories, commuting with $\kappa$-filtered colimits. In subsection \ref{ssec:loc_invar_dualizable} we explain that a localizing invariant of small categories extends uniquely to dualizable categories, using the Calkin construction. In Subsection \ref{ssec:F_cont_via_right_Kan_extension} we explain that this canonical extension is in fact the right Kan extension. In Subsection \ref{ssec:sheaves_on_R} we give an example: we compute finitary localizing invariants of the category of sheaves on $\R$ and on $\R\cup\{-\infty\}.$ In Subsection \ref{ssec:K_theory_of_products} we do a similar computation for not necessarily finitary localizing invariants, deducing the commutation of K-theory with infinite products (Theorem \ref{th:map_of_localizing_invariants}).

In Section \ref{sec:sh_cosh_continuous_posets} we consider the sheaves and cosheaves on continuous posets (these posets are considered as categories, not as sets), and compute their (finitary) localizing invariants. We use this computation later to compute the localizing invariants for sheaves on locally compact Hausdorff spaces.  

In Section \ref{sec:sh_loc_comp} we study the sheaves on locally compact Hausdorff space with values in a presheaf of dualizable categories. First, in Subsection \ref{ssec:sheaves_finite_CW} we give a computation of arbitrary localizing invariants for sheaves on finite CW complexes (in the case of constant coefficients). In Subsection \ref{ssec:sheaves_K_sheaves_loc_comp} we explain how to approximate the category of sheaves on an arbitrary compact Hausdorff space by ``simpler'' categories (Proposition \ref{prop:approximating_sheaves}). Here it is crucial that we use the notion of a $\msK$-(pre)sheaf from \cite{Lur09}. In Subsection \ref{ssec:loc_invar_cats_of_sheaves} we apply the previous results to compute the finitary localizing invariants of categories of sheaves (Theorem \ref{th:U_loc_locally_compact}). In Subsection \ref{ssec:compact_sheaves} we give a simple description of compact objects in the categories of sheaves.

In Appendix \ref{app:mono_epi_pres_dual} we give the (expected) description of monomorphisms and epimorphisms of presentable and dualizable categories. In Appendix \ref{app:image_of_hom_epi} we show that an image of a homological epimorphism does not have to be a stable subcategory (Proposition \ref{prop:image_not_a_stable_subcat}). In Appendix \ref{app:presentability_Cat_dual} we show that the category $\Cat_{\st}^{\dual}$ is $\omega_1$-presentable, and we describe the $\kappa$-compact objects for each uncountable regular cardinal $\kappa$ (Theorem \ref{th:presentability_of_Cat^dual}). The $\omega_1$-presentability of $\Cat_{\st}^{\dual}$ is originally due to Ramzi \cite{Ram24a}. In Appendix \ref{app:Urysohn} we prove that the category $\Cat_{\st}^{\dual}$ is generated by colimits by the single object $\Sh_{\R\times\R_{\geq 0}}(\R;\Sp)$ -- the category of sheaves of spectra on the real line with singular support in $\R\times\R_{\geq 0}\subset T^*\R.$ In Appendix \ref{app:Adams_representability} we prove two closely related versions of Adams representability theorem for $\omega_1$-compact dualizable categories (Theorems \ref{th:Adams_rep_covar} and \ref{th:Adams_rep_contravar}). In Appendix \ref{app:analogy_dualizable_comphaus} we explain an analogy between the category $\Cat_{\st}^{\dual}$ of dualizable categories and the opposite category $\comphaus^{op}$ of compact Hausdorff spaces. Finally, in Appendix \ref{app:K_theory_products_exact} we show that $K$-theory of exact $\infty$-categories commutes with infinite products, using the results of C\'ordova Fedeli \cite{Cor} and Klemenc \cite{Kle}.

{\noindent {\bf Acknowledgements.}} I am grateful to Ko Aoki, Alexander Beilinson, Dustin Clausen, Adriano C\'ordova Fedeli, Vladimir Drinfeld, Boris Feigin, Dennis Gaitsgory, Marc Hoyois, Dmitry Kaledin, David Kazhdan, Maxim Kontsevich, Akhil Mathew, Thomas Nikolaus, Dmitri Orlov, Maxime Ramzi, Victor Saunier, Tomer Schlank, Peter Scholze, Vivek Shende, Vladimir Sosnilo, German Stefanich, Georg Tamme, Bertrand To\"en, Yakov Varshavsky and Christoph Winges for useful discussions. Part of this work was done while I was a visitor in the Max Planck Institute for Mathematics in Bonn from December 2022 till February 2024, and I am grateful to the institute for their hospitality and support.  

\section{General theory of dualizable categories}
\label{sec:general_theory_dualizable}

\subsection{Infinity-categories, limits and colimits}

We will freely use the theory of $\infty$-categories, functors, limits and colimits as developed in \cite{Lur09, Lur17}. Except for Section \ref{sec:dualizability_via_flatness}, we will deal only with $(\infty,1)$-categories. Given an ordinary category $\cC,$ we identify it with its nerve $N(\cC),$ which is an $\infty$-category. In particular, we will consider a poset (partially ordered set) as an $\infty$-category. Sometimes we will say ``category'' instead of ``$\infty$-category'', when the meaning is clear from the context.

We denote by $\cS$ the $\infty$-category of spaces (or equivalently $\infty$-groupoids). It is freely generated by one object via colimits.

We will use the following convention: a functor $p:I\to J$ between $\infty$-categories is cofinal if for any $j\in J$ the $\infty$-category $I\times_J J_{j/}$ is weakly contractible. Equivalently, for any $\infty$-category $\cC$ and for any functor $F:J\to\cC,$
we have
$$\indlim[](J\xto{F}\cC)\cong\indlim[](I\xto{p}J\xto{F}\cC),$$
assuming that one of the colimits exists. Dually, a functor $p:I\to J$ is final if the functor $p^{op}:I^{op}\to J^{op}$ is cofinal.    

Recall that an infinite cardinal $\kappa$ is called regular if for any collection of cardinals $\{\kappa_i\}_{i\in I}$ such that $\kappa_i<\kappa$ and $|I|<\kappa$ we have $\sum\limits_{i\in I}\kappa_i<\kappa.$ For example, any (infinite) successor cardinal is regular.
If $\kappa$ is regular, then a set $A$ is called $\kappa$-small if $|A|<\kappa.$ 

We refer to \cite[Definition 5.3.1.7]{Lur09} for the notion of a $\kappa$-filtered $\infty$-category. By \cite[Proposition 5.3.1.18]{Lur09} for any $\kappa$-filtered $\infty$-category $I$ there exists a $\kappa$-directed poset $J$ and a cofinal functor $J\to I.$ Recall that a poset $J$ is called $\kappa$-directed if $J$ is nonempty and any $\kappa$-small collection of elements of $J$ has an upper bound. Recall that $\omega$-filtered $\infty$-categories resp. $\omega$-directed posets are simply called filtered $\infty$-categories resp. directed posets.

Given a small $\infty$-category $\cA$ and a regular cardinal $\kappa$ we denote by $\Ind_{\kappa}(\cA)$ the $\infty$-category which is freely generated by $\cA$ via $\kappa$-filtered colimits. The category $\Ind_{\kappa}(\cA)$ can be described as a full subcategory of the category of presheaves $\Fun(\cA^{op},\cS)$ which is formed by $\kappa$-filtered colimits of representable presheaves. We will usually denote the $\kappa$-ind objects of $\cA$ by $\inddlim[i\in I]x_i,$ where $I$ is a $\kappa$-filtered $\infty$-category, and we have a functor $I\to\cA,$ $i\mapsto x_i.$ For $\kappa=\omega,$ we simply write $\Ind(\cA)$ instead of $\Ind_{\omega}(\cA).$ 

If $\cA$ is not necessarily small (but still locally small), we will sometimes still consider the (locally small) category $\Ind(\cA),$ which is the directed union of $\Ind(\cB),$ where $\cB$ runs through small full subcategories of $\cA.$

If $\cC$ is an $\infty$-category with $\kappa$-filtered colimits, then an object $x\in\cC$ is called $\kappa$-compact if the functor $\Map_{\cC}(x,-)$ commutes with $\kappa$-filtered colimits. We denote by $\cC^{\kappa}\subset\cC$ the full subcategory of $\kappa$-compact objects. The $\omega$-compact objects are called compact objects. 

Recall that an $\infty$-category $\cC$ is called stable if it is pointed, has finite limits and colimits, and a square of the form
\begin{equation}\label{eq:fiber-cofiber}
\begin{CD}
x @>f>> y\\
@VVV @VVgV\\
* @>>> z
\end{CD}
\end{equation}
is cartesian if and only if it is cocartesian. 

In a general pointed $\infty$-category, if a square of the form \eqref{eq:fiber-cofiber} is cartesian then it is also called a fiber sequence, and $x$ is called a fiber of $g.$ Dually, if a square of the form \eqref{eq:fiber-cofiber} is cocartesian then it is also called a cofiber sequence, and $z$ is called a cofiber of $f.$ 

If an $\infty$-category $\cC$ is stable, then the category $\h\cC$ has a natural triangulated structure. A functor $F:\cC\to\cD$ between stable $\infty$-categories is exact if it is pointed, and takes fiber sequences in $\cC$ to fiber sequences in $\cD.$ If $F$ is exact, then the induced functor $\h F:\h\cC\to \h\cD$ is an exact functor between triangulated categories. By default, the functors between stable categories will be assumed to be exact.

For an $\bE_1$-ring $A,$ we will denote by $\Mod\hy A$ the (cocomplete, stable) category of right $A$-modules. We denote by $\Perf(A)\subset\Mod\hy A$ the full subcategory of perfect $A$-modules; it is generated by $A$ as a stable idempotent-complete subcategory.

We denote by $\Cat_{\infty}$ the category of small $\infty$-categories. Further, we denote by $\Cat^{\ex}\subset\Cat_{\infty}$ the (non-full) subcategory of small stable categories and exact functors. We denote by $\Cat^{\perf}\subset \Cat^{\ex}$ the full subcategory of Karoubi complete (idempotent-complete) stable categories.

Given an $\infty$-category $\cA,$ we denote by $\cA^{\Kar}$ its Karoubi completion. In particular, the functor $(-)^{\Kar}:\Cat^{\ex}\to\Cat^{\perf}$ is the left adjoint to the inclusion.

By definition, a short exact sequence in $\Cat^{\perf}$ is a fiber-cofiber sequence
$$
\begin{CD}
\cA @>f>> \cB\\
@VVV @VVgV\\
0 @>>> \cC.
\end{CD}
$$
This means that the functor $\cA\to\cB$ is fully faithful, the composition $\cA\to\cB\to\cC$ is zero, and the functor $(\cB/\cA)^{\Kar}\to\cC$ is an equivalence. We will simply write
$$0\to\cA\to\cB\to\cC\to 0$$
for such short exact sequences. We will also use the same terminology and notation for short exact sequences of ``large'' stable categories, namely presentable and in particular dualizable.

By \cite[Corollary 4.25]{BGT}, the categories $\Cat^{\ex}$ and $\Cat^{\perf}$ are compactly generated. See also Proposition \ref{prop:presentability_of_Pr_kappa} and Remark \ref{rem:Cat_ex_via_Thomason} for a different argument.

We recall the notion of a homological epimorphism between idempotent-complete stable $\infty$-categories. A functor $F:\cA\to\cB$ is a homological epimorphism if the functor $\Ind(\cA)\to\Ind(\cB)$ is a quotient functor, or equivalently its right adjoint is fully faithful. In fact, $F$ is a homological epimorphism if and only if it is an epimorphism in $\Cat^{\perf}.$ 

\subsection{Presentable and accessible stable $\infty$-categories}
\label{ssec:pres_acc_cats}

An $\infty$-category $\cC$ is called $\kappa$-accessible for some regular cardinal $\kappa$ if $\cC\simeq\Ind_{\kappa}(\cC_0),$ where $\cC_0$ is a small $\infty$-category. Note that in this case $\cC$ is stable if and only if the category $\cC_0^{\Kar}$ is stable. 

Further, an $\infty$-category is accessible if it is $\kappa$-accessible for some $\kappa.$ If $\cC$ and $\cD$ are accessible $\infty$-categories with $\kappa$-filtered colimits (but $\cC$ and $\cD$ are not necessarily $\kappa$-accessible), we say that a functor $F:\cC\to\cD$ is $\kappa$-continuous if $F$ commutes with $\kappa$-filtered colimits. Further, $F$ is {\it continuous} if it is $\omega$-continuous, i.e. commutes with filtered colimits, assuming that $\cC$ and $\cD$ have filtered colimits.

A $\kappa$-continuous functor $F:\cC\to\cD$ is called $\kappa$-accessible if moreover $\cC$ and $\cD$ are $\kappa$-accessible. A functor is called accessible if it is $\kappa$-accessible for some $\kappa.$

An $\infty$-category $\cC$ is $\kappa$-presentable if $\cC$ is $\kappa$-accessible and cocomplete. Equivalently, this means that $\cC\simeq\Ind_{\kappa}(\cC_0),$ where the category $\cC_0$ has $\kappa$-small colimits. Further, $\cC$ is presentable if it is $\kappa$-presentable for some $\kappa.$ Following \cite{Lur09}, we denote by $\Pr^L$ the $(\infty,1)$-category of all presentable categories and colimit-preserving functors. Note that the category $\Pr^L$ is not locally small. 

A functor $F:\cC\to\cD$ between presentable categories is colimit-preserving if and only if it has a right adjoint. On the other hand, $F$ has a left adjoint if and only if it is accessible and commutes with limits. This gives an equivalence $(\Pr^L)^{op}\simeq\Pr^R,$ where the $1$-morphisms in $\Pr^R$ are accessible functors which commute with limits.

If $\cC$ is presentable and $\cD$ is cocomplete, we will denote by $\Fun^L(\cC,\cD)$ the category of colimit-preserving functors. The category $\Pr^L$ has a natural symmetric monoidal structure, for which the internal $\Hom$ from $\cC$ to $\cD$ is given by $\Fun^L(\cC,\cD).$ The tensor product of presentable categories is usually called the Lurie tensor product. It is easy to show that $\cC\otimes\cD\simeq\Fun^L(\cC,\cD^{op})^{op}$ for $\cC,\cD\in\Pr^L.$ The unit object is given by $\cS$ -- the $\infty$-category of spaces.

For a regular cardinal $\kappa,$ we denote by $\Pr^L_{\kappa}\subset \Pr^L$ the non-full subcategory of $\kappa$-presentable categories, where $1$-morphisms in $\Pr^L_{\kappa}$ are colimit-preserving functors which preserve $\kappa$-compact objects. The category $\Pr^L_{\kappa}$ is locally small. Moreover, the assignment $\cC\mapsto\cC^{\kappa}$ defines an equivalence between $\Pr^L_{\kappa}$ and the category of idempotent-complete small $\infty$-categories with $\kappa$-small colimits, where the $1$-morphisms are functors which commute with $\kappa$-small colimits. If $\kappa>\omega,$ then the existence of $\kappa$-small colimits automatically implies idempotent-completeness. The Lurie tensor product induces a well-defined symmetric monoidal structure on $\Pr^L_{\kappa}.$ 

For any regular cardinal $\kappa,$ the inclusion functor $\Pr^L_{\kappa}\to \Pr^L$ commutes with colimits. For $\kappa<\lambda,$ the right adjoint to the functor $\Pr^L_{\kappa}\to \Pr^L_{\lambda}$ is given by $\cC\mapsto \Ind_{\kappa}(\cC^{\lambda}).$

We denote by $\Pr^L_{\st}\subset \Pr^L$ the full subcategory of presentable stable categories, and similarly for $\Pr^L_{\st,\kappa}\subset\Pr^L_{\kappa}.$ 
The inclusions $\Pr^L_{\st}\to\Pr^L$ and $\Pr^L_{\st,\kappa}\to\Pr^L_{\kappa}$ commute with limits and colimits.  

Below we will mostly deal with presentable stable categories. By default, the functors between stable categories will be assumed to be exact. Note that an exact functor $F:\cC\to\cD$ between presentable stable $\infty$-categories is continuous if and only if it is colimit-preserving or equivalently commutes with coproducts. Equivalently, this means that the functor $\h F:\h\cC\to \h\cD$ between the homotopy categories commutes with coproducts.

A cocomplete stable category $\cC$ is $\omega$-presentable if and only if it is compactly generated by a set of objects.
The assignment $\cC\mapsto \cC^{\omega}$ defines an equivalence $\Pr^L_{\st,\omega}\xto{\sim}\Cat^{\perf}.$

\begin{defi}\label{defi:gluing_general} 1) Given an exact functor $F:\cD\to\cC$ between small stable categories, we denote by $\cC\oright_F \cD$ the semi-orthogonal gluing of $\cC$ and $\cD$ via $F.$ More precisely, $\cC\oright_F \cD$ is the category of triples $(x,y,\varphi),$ where $x\in\cC,$ $y\in\cD$ and $\varphi:x\to F(y).$ Equivalently, $\cC\oright_F\cD$ is the oplax limit of the diagram $F:\cD\to\cC$ in the $(\infty,2)$-category of stable categories. As explained in \cite[Theorem 2.4.2]{CDW23} (and references therein), the four categories obtained by taking an (op)lax (co)limit of the diagram $F:\cD\to\cC$ are naturally equivalent.

2) If $\cC$ and $\cD$ are cocomplete stable categories and $F:\cD\to\cC$ is an arbitrary exact functor, we also denote by $\cC\oright_F\cD$ the same semi-orthogonal gluing as above.\end{defi} 

We make the following observation.

\begin{prop}\label{prop:gluing_presentable} Let $F:\cD\to\cC$ be an exact functor between presentable stable categories. The following are equivalent:

\begin{enumerate}[label=(\roman*),ref=(\roman*)]

\item  the category $\cE=\cC\oright_F \cD$ is presentable; \label{acc1}

\item the functor $F$ is accessible. \label{acc2}
\end{enumerate}

Moreover, if $F$ is $\kappa$-accessible, then $\cE$ is $\kappa$-presentable.
\end{prop}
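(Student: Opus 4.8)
The plan is to analyze the semi-orthogonal gluing $\cE = \cC \oright_F \cD$ directly as a category of triples $(x, y, \varphi)$ and to identify its accessibility properties with those of $F$. First I would observe that $\cE$ sits in a pullback diagram of $\infty$-categories: writing $\cC^{\Delta^1}$ for the arrow category, $\cE$ is the fiber product $\cC^{\Delta^1} \times_{\cC} (\cC \times \cD)$, where the map $\cC^{\Delta^1} \to \cC$ is evaluation at the target and $\cC \times \cD \to \cC$ is $(x,y) \mapsto (x, F(y))$ composed with the second projection — more precisely, $\cE$ is the limit of the diagram $\cC \xrightarrow{\id} \cC \xleftarrow{\ev_1} \cC^{\Delta^1} \xrightarrow{\ev_0} \cC$ pulled back along $F: \cD \to \cC$. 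Since $\cC$ is presentable, $\cC^{\Delta^1}$ is presentable and the evaluation functors are colimit-preserving and accessible. The key point is that a limit of presentable categories along accessible functors (right adjoints, or more generally accessible and limit-preserving functors) is again presentable, by \cite[Proposition 5.5.3.13]{Lur09} or the fact that $\Pr^L \simeq (\Pr^R)^{op}$ is closed under limits. Thus if $F$ is accessible, the whole diagram lives in accessible categories with accessible functors and the limit $\cE$ is presentable, giving \Implies{acc2}{acc1}.

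For the converse \Implies{acc1}{acc2}, I would use the two obvious functors: the projection $p_\cD: \cE \to \cD$, $(x,y,\varphi) \mapsto y$, and $p_\cC: \cE \to \cC$, $(x,y,\varphi) \mapsto x$. The functor $p_\cD$ admits a fully faithful right adjoint $y \mapsto (F(y), y, \id)$ and a fully faithful left adjoint $y \mapsto (0, y, 0)$; in particular $p_\cD$ is a Bousfield (co)localization, hence colimit-preserving and limit-preserving, and $\cD$ is a retract of $\cE$. Similarly $p_\cC$ has a fully faithful left adjoint $x \mapsto (x, 0, 0)$ and is colimit-preserving. The functor $F$ can be recovered up to the data of $\cE$: indeed $F \simeq p_\cC \circ s_\cD \circ ?$... — more usefully, the composite $\cD \xrightarrow{j} \cE \xrightarrow{p_\cC} \cC$ where $j(y) = (F(y), y, \id)$ literally equals $F$. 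Now if $\cE$ is presentable, then $\cE$ is accessible; $j$ is a right adjoint hence accessible; $p_\cC$ is colimit-preserving between presentable categories hence accessible. A composite of accessible functors between accessible categories is accessible, so $F = p_\cC \circ j$ is accessible. This proves the equivalence.

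For the quantitative refinement, I would track the cardinal through the argument above. If $F$ is $\kappa$-accessible, then $\cD$ is $\kappa$-accessible, $\cC$ may be taken $\kappa$-presentable after enlarging $\kappa$ so that it is also $\kappa$-presentable (one can always assume $\cC$ is $\kappa$-accessible by possibly enlarging $\kappa$; but the statement presumably intends $\kappa$ already large enough that both $\cC$ and $\cD$ are $\kappa$-presentable and $F$ preserves $\kappa$-compactness, or at least is $\kappa$-continuous — I would state the hypothesis as: $\cC, \cD$ are $\kappa$-presentable and $F$ is $\kappa$-accessible). Then $\cC^{\Delta^1}$ is $\kappa$-presentable, the evaluation functors are $\kappa$-accessible, and the limit $\cE$ of $\kappa$-presentable categories along $\kappa$-accessible functors is $\kappa$-presentable — this is the relative version of closure of $\Pr^L_\kappa$ under limits, for which one checks that the $\kappa$-compact objects of $\cE$ are exactly the triples $(x, y, \varphi)$ with $x \in \cC^\kappa$, $y \in \cD^\kappa$ (using that $F$ preserves, or at least that $F$ is $\kappa$-continuous so that $\Map_\cE((x,y,\varphi), -)$ commutes with $\kappa$-filtered colimits, which reduces to the statement for $\cC$, $\cD$, and the mapping space into an arrow in $\cC$).

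The main obstacle I anticipate is bookkeeping around exactly which cardinal one needs and in what sense $F$ is "$\kappa$-accessible" — the subtle point is that $\kappa$-accessibility of $F$ as defined in Subsection \ref{ssec:pres_acc_cats} requires $\cC$ and $\cD$ to be $\kappa$-accessible, but a priori one only knows $\cC$ is presentable, so the honest statement of the equivalence \Iff{acc1}{acc2} should allow $\kappa$ to be enlarged, and the "moreover" clause is then the assertion that no enlargement beyond the $\kappa$ witnessing $\kappa$-accessibility of $F$ (together with $\kappa$-presentability of $\cC$) is needed. Concretely, I would prove the sharp "moreover" by computing $\cE^\kappa$ explicitly as the full subcategory of triples with $\kappa$-compact entries, verifying these generate $\cE$ under $\kappa$-filtered colimits (immediate, since the two left adjoints $x \mapsto (x,0,0)$ and $y \mapsto (0,y,0)$ generate and each sends $\kappa$-compacts to $\kappa$-compacts) and that they are $\kappa$-compact in $\cE$ (the content: $\Map_\cE((x,y,\id\text{-type}),-)$ is built from $\Map_\cC(x,-)$, $\Map_\cD(y,-)$, and a $\Map_\cC$ into the structure map, all of which commute with $\kappa$-filtered colimits when $F$ is $\kappa$-continuous). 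Everything else is formal nonsense about limits in $\Pr^L$.
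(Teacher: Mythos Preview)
Your direct argument for the ``moreover'' statement (and hence for \Implies{acc2}{acc1}) is exactly the paper's proof: one observes that the objects $i_1(\cC^{\kappa})$ and $i_2(\cD^{\kappa})$ are $\kappa$-compact and generate $\cE$. Your argument for \Implies{acc1}{acc2} is also essentially the paper's, with a cosmetic difference: the paper factors $F\cong i_1^R\circ i_2$ (right adjoint to the $\cC$-inclusion composed with the $\cD$-inclusion), while you factor $F = p_{\cC}\circ j$ with $j(y)=(F(y),y,\id)$. Both factor $F$ as a composite of accessible functors through $\cE$.

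There is however a genuine misstep in your abstract limit argument for \Implies{acc2}{acc1}. You invoke closure of $\Pr^L$ (equivalently $\Pr^R$) under limits, but that applies to diagrams of \emph{left adjoints} (resp.\ accessible limit-preserving functors). In the pullback $\cE\simeq \cC^{\Delta^1}\times_{\cC}\cD$, the functor $F:\cD\to\cC$ is only assumed accessible --- it need not preserve limits or colimits --- so neither \cite[Proposition 5.5.3.13]{Lur09} nor the $\Pr^L\simeq(\Pr^R)^{op}$ trick applies. The salvage is to use closure of \emph{accessible} categories under limits along accessible functors (e.g.\ \cite[Proposition 5.4.7.3]{Lur09}) to get accessibility of $\cE$, and then verify cocompleteness by hand (colimits in $\cE$ are computed componentwise regardless of whether $F$ preserves them). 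Since you do go on to give the direct $\kappa$-compact argument, which is self-contained, your proof as a whole is fine; but the abstract paragraph as written does not stand on its own.
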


\begin{proof}Denote by $i_1:\cC\to\cE$ and $i_2:\cD\to\cE$ the inclusion functors given by $i_1(x)=(x[-1],0,0),$ $i_2(x)=(0,x,0).$ Clearly, the functors $i_1$ and $i_2$ are continuous.

\Implies{acc1}{acc2}. Suppose that $\cE$ is presentable. Then the functor $i_1^R:\cE\to\cC$ (the right adjoint to $i_1$) is accessible. Hence, so is the composition $i_1^R\circ i_2:\cD\to\cC,$ which is isomorphic to $F.$ 

\Implies{acc2}{acc1}. It is sufficient to prove the ``moreover'' statement. Suppose that $\cC$ and $\cD$ are $\kappa$-presentable and $F$ commutes with $\kappa$-filtered colimuts. Then an object $(x,y,\varphi)$ is $\kappa$-compact in $\cE$ whenever $x\in\cC^{\kappa},$ $y\in\cD^{\kappa}.$ We conclude that $\cE$ is $\kappa$-presentable, and the full subcategory $\cE^{\kappa}\subset\cE$ is generated by $i_1(\cC^{\kappa})$ and $i_2(\cD^{\kappa})$ (as a stable subcategory).
\end{proof}

\begin{remark}If we denote by $\mPr^{\acc}_{\st}$ the $(\infty,2)$-category of presentable stable categories and accessible functors, then the category $\cC\oright_F\cD$ has again four interpretations as an (op)lax (co)limit of the arrow $\cD\xto{F}\cC.$\end{remark}

The statements of the following proposition are well known. We include the proof for completeness.

\begin{prop}\label{prop:presentability_of_Pr_kappa} Let $\kappa$ be a regular cardinal.

1) The category $\Cat_{\infty}$ is generated by colimits by a single compact object $[1]$ (the linearly ordered set with $2$ elements). In particular, $\Cat_{\infty}$ is compactly generated.

2) The category $\Pr^L_{\kappa}$ is generated by colimits by a single $\kappa$-compact object $\Fun([1],\cS)$ -- the arrow category of spaces. In particular, $\Pr^L_{\kappa}$ is $\kappa$-presentable. 

3) The category $\Pr^L_{\st,\kappa}$ is generated by colimits by a single $\kappa$-compact object $\Sp.$ In particular, $\Pr^L_{\st,\kappa}$ is $\kappa$-presentable and $\Cat^{\perf}\simeq \Pr^L_{\st,\omega}$ is compactly generated.
\end{prop}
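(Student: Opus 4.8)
The plan is to prove the three parts in order, using that $\Cat_\infty$ is presentable and the standard presentation of $\kappa$-presentable categories as $\kappa$-small-colimit-complete idempotent-complete small categories. For part 1), I would recall that $\Cat_\infty \simeq \PSh(\Delta)/(\text{Segal} + \text{completeness})$ is an accessible localization of presheaves on $\Delta$, hence presentable; in particular every object is a colimit of objects $\Delta^n$. Since each $\Delta^n$ is a finite colimit (in fact an iterated pushout along boundary inclusions) of copies of $\Delta^1 = [1]$, the object $[1]$ generates $\Cat_\infty$ under colimits. Compactness of $[1]$ is the statement that $\Map_{\Cat_\infty}([1], -) = (-)^{\simeq}_{/[1]}$, i.e. the space of arrows, commutes with filtered colimits, which holds because filtered colimits in $\Cat_\infty$ are computed on underlying simplicial sets (objects and morphisms) levelwise. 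Hence $\Cat_\infty$ is compactly generated.

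For part 2), I would use the equivalence, recalled in Subsection \ref{ssec:pres_acc_cats}, between $\Pr^L_\kappa$ and the category $\Cat^{\rex}_\kappa$ of small idempotent-complete $\infty$-categories with $\kappa$-small colimits (functors preserving $\kappa$-small colimits), under $\cC \mapsto \cC^\kappa$, with inverse $\cA \mapsto \Ind_\kappa(\cA)$. The unit of $\Cat^{\rex}_\kappa$ is the free $\kappa$-small-cocompletion of the point, call it $\cA_0$; under the equivalence this corresponds to $\Ind_\kappa(\cA_0) = \PSh(\pt) = \cS$... but more precisely one wants the generator to be the free $\kappa$-cocompletion of $[1]$. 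Concretely: $\Cat^{\rex}_\kappa$ is itself presentable (it is a localization of a presheaf category, or: an accessible localization of $\Cat_\infty$-with-chosen-$\kappa$-small-colimits), generated under colimits by the free $\kappa$-cocomplete idempotent-complete category on $[1]$, since $[1]$ generates $\Cat_\infty$ under colimits and the free-cocompletion functor is a left adjoint hence colimit-preserving. Transporting across $\Ind_\kappa(-)$, the generator of $\Pr^L_\kappa$ is $\Ind_\kappa$ of this, which is $\Fun([1],\cS)$ — the free presentable category on $[1]$ — and it is $\kappa$-compact in $\Pr^L_\kappa$ because under the equivalence $\Pr^L_\kappa \simeq \Cat^{\rex}_\kappa$ the $\kappa$-compact objects of the latter include the finitely-presented ones, and free-on-$[1]$ is a compact object of $\Cat^{\rex}_\kappa$ (again because $[1]$ is compact in $\Cat_\infty$, the free functor being a left adjoint). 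So $\Pr^L_\kappa$ is generated under colimits by one $\kappa$-compact object, hence $\kappa$-presentable.

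For part 3), the same strategy applies with the stable variant: $\Pr^L_{\st,\kappa} \simeq \Cat^{\rex}_{\st,\kappa}$, the small stable categories with $\kappa$-small colimits and exact functors, and this category is a colocalization/localization of $\Cat^{\rex}_\kappa$. The free stable category on one object with $\kappa$-small colimits is $\Sp^{\omega_1}$-type finite-spectra... — more precisely, $\Sp$ is the free presentable stable category on one generator, and $\Sp = \Ind_\kappa(\Sp^\kappa)$ where $\Sp^\kappa$ is the free $\kappa$-small-cocomplete stable category on a point. Since stabilization is a left adjoint (from $\Pr^L_\kappa$ or from $\Cat^{\rex}_\kappa$) it preserves colimits and compact objects, so $\Sp$ is a $\kappa$-compact generator of $\Pr^L_{\st,\kappa}$ under colimits; for $\kappa = \omega$ this gives that $\Pr^L_{\st,\omega}$ is compactly generated, and combined with the equivalence $\Pr^L_{\st,\omega} \simeq \Cat^{\perf}$ recalled above, that $\Cat^{\perf}$ is compactly generated.

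The main obstacle I anticipate is not the "generation under colimits" part — that follows formally from $[1]$ generating $\Cat_\infty$ together with the left-adjointness of the various free-cocompletion and stabilization functors — but rather the verification that the proposed generators are genuinely $\kappa$-compact (resp. compact) \emph{in $\Pr^L_\kappa$} (resp. $\Pr^L_{\st,\kappa}$), i.e. that $\Map_{\Pr^L_\kappa}(\Fun([1],\cS), -) \simeq \Fun([1], (-))^{\simeq}$ commutes with $\kappa$-filtered colimits. This requires knowing how $\kappa$-filtered colimits are computed in $\Pr^L_\kappa$ — namely (as recalled in the excerpt) that the inclusion $\Pr^L_\kappa \hookrightarrow \Pr^L$ preserves colimits, and that colimits in $\Pr^L$ along $\kappa$-filtered diagrams of $\kappa$-compact-object-preserving functors are computed, on $\kappa$-compact objects, as the corresponding colimit in $\Cat^{\rex}_\kappa$ — together with the compactness of $[1]$ in $\Cat_\infty$ from part 1). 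Once the bookkeeping translating between $\Pr^L_\kappa$, $\Cat^{\rex}_\kappa$ and $\Cat_\infty$ is set up carefully, each statement reduces to the corresponding (easy) statement about $[1] \in \Cat_\infty$.
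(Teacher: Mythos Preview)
Your proposal is essentially correct but takes a genuinely different route from the paper. You argue \emph{constructively}: starting from $[1]$ generating $\Cat_\infty$, you push this generation forward along the left adjoints (free $\kappa$-cocompletion, stabilization), using that left adjoints preserve colimits and that every object of the target is a colimit of free objects. The paper instead argues by \emph{conservativity}: it shows directly that the functor corepresented by the candidate generator is conservative. For 1), the paper checks that if $\Fun([1],\cC)^{\simeq}\to\Fun([1],\cD)^{\simeq}$ is an equivalence then so is $\cC\to\cD$; for 2), it reduces to 1) via the fact that $\cC\mapsto\cC^{\kappa}$ commutes with $\kappa$-filtered colimits; for 3), it shows that for \emph{stable} categories the functor $\cC\mapsto(\cC^{\kappa})^{\simeq}$ (just the underlying space, no arrows) is already conservative, using that $\pi_n\cA(x,y)$ is a retract of $\pi_2(\cA^{\simeq},x\oplus y[1-n])$.

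Two small points where your sketch is looser than the paper: (a) in part 1) your ``each $\Delta^n$ is a finite colimit of copies of $[1]$'' needs $[0]$ as an input for the iterated pushout, and $[0]$ is not a \emph{finite} colimit of $[1]$'s but only a retract (hence a sequential colimit); (b) in part 3) your stabilization argument most naturally produces $\Fun([1],\Sp)$ as the generator, and you need one more (easy) step --- e.g.\ the semi-orthogonal decomposition $\Fun([1],\Sp)=\langle\Sp,\Sp\rangle$ --- to conclude that $\Sp$ itself generates. The paper's trick in 3) sidesteps this entirely and is the main place where its argument is sharper than yours. Conversely, your approach makes the role of the free--forgetful adjunctions transparent and would generalize more readily to other algebraic structures on categories.
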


\begin{proof}For 1) it suffices to show that if $F:\cC\to\cD$ is a functor between small $\infty$-categories such that the map $\Fun([1],\cC)^{\simeq}\to\Fun([1],\cD)^{\simeq}$ is an equivalence of spaces, then $F$ is an equivalence.

It is clear that $F$ is essentially surjective and the map $F^{\simeq}:\cC^{\simeq}\to\cD^{\simeq}$ is an equivalence of spaces. For $x,y\in\cC,$ the space $\Map_{\cC}(x,y)$ is the fiber of the map $\Fun([1],\cC)^{\simeq}\to\cC^{\simeq}\times\cC^{\simeq}$ over $(x,y),$ and similarly for $\cD.$ This implies that $F$ is fully faithful. 

To deduce 2) from 1) it suffices to recall that the functor $\Pr^L\to \Cat_{\infty},$ $\cC\mapsto\cC^{\kappa},$ commutes with $\kappa$-filtered colimits.

3) The $\kappa$-compactness of $\Sp$ in $\Pr^L_{\st,\kappa}$ follows from the above commutation with $\kappa$-filtered colimits. It remains to show that the functor $\Pr^L_{\st,\kappa}\to\cS,$ $\cC\mapsto (\cC^{\kappa})^{\simeq},$ is conservative.

Let $F:\cA\to\cB$ be an exact functor between small stable categories such that the induced functor $F^{\simeq}:\cA^{\simeq}\to\cB^{\simeq}$ is an equivalence of spaces. We need to show that $F$ is an equivalence. It is clear that $F$ is essentially surjective. It remains to observe that for $x,y\in\cA$ and for each $n\in\Z$ the abelian group $\pi_n\cA(x,y)$ is naturally a retract of $\pi_2(\cA^{\simeq},x\oplus y[1-n]),$ and similarly for $\pi_n\cB(F(x),F(y)).$
\end{proof}

\begin{remark}\label{rem:Cat_ex_via_Thomason} The assignment $\cA\mapsto (\cA^{\Kar},K_0(\cA))$ defines an equivalence between $\Cat^{\ex}$ and the category of pairs $(\cB,\Gamma),$ where $\cB\in\Cat^{\perf}$ and $\Gamma\subset K_0(\cB)$ is a subgroup. This follows from Thomason's theorem \cite[Theorem 2.1]{Th}. 

In particular, the category $\Cat^{\ex}$ is compactly generated and $\cA\in (\Cat^{\ex})^{\omega}$ iff $\cA^{\Kar}\in (\Cat^{\perf})^{\omega}$ and the group $K_0(\cA)$ is finitely generated.
If $\kappa$ is an uncountable regular cardinal, then $\cA\in\Cat^{\ex}$ is a $\kappa$-compact object if and only if $\cA^{\Kar}\in (\Cat^{\perf})^{\kappa}$ and $K_0(\cA)$ is $\kappa$-small.\end{remark}

We will need an explicit characterization of $\kappa$-compact objects in $\Cat^{\perf}$ for uncountable $\kappa.$ 

\begin{prop}\label{prop:kappa_compact_small_cats} Let $\kappa$ be an uncountable regular cardinal and let $\cA$ be a small idempotent-complete stable category. The following are equivalent.

\begin{enumerate}[label=(\roman*),ref=(\roman*)]
\item the object $\cA$ is $\kappa$-compact in $\Cat^{\perf}.$ \label{kappa_comp_in_Cat_perf1}


\item the triangulated category $\h\cA$ is generated (as an idempotent-complete triangulated subcategory) by a $\kappa$-small set of objects, and moreover for any $x,y\in\cA$ the spectrum $\cA(x,y)$ is $\kappa$-compact in $\Sp.$ \label{kappa_comp_in_Cat_perf2}
\end{enumerate}
\end{prop}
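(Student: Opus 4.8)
The strategy is to compare $\kappa$-compactness in $\Cat^{\perf}$ with $\kappa$-compactness in the ambient $\infty$-category $\Cat_{\infty}$, the uncountability of $\kappa$ being used throughout. I would first record a basic fact: since $\kappa\geq\omega_1$, a $\kappa$-filtered colimit of small stable idempotent-complete categories along exact functors, computed in $\Cat_{\infty}$, is again stable and idempotent-complete — stability is standard, and for idempotent-completeness one uses that the free idempotent $\infty$-category has only countably many nondegenerate simplices, so an idempotent in such a colimit is pulled back from a finite stage of the diagram, where it already splits. Hence $\kappa$-filtered colimits agree in $\Cat_{\infty}$, $\Cat^{\ex}$ and $\Cat^{\perf}$. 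The proof of \Implies{kappa_comp_in_Cat_perf1}{kappa_comp_in_Cat_perf2} and \Implies{kappa_comp_in_Cat_perf2}{kappa_comp_in_Cat_perf1} then amounts to showing that, for $\cA$ small stable idempotent-complete, $\cA$ is $\kappa$-compact in $\Cat^{\perf}$ if and only if $\cA$ is $\kappa$-compact in $\Cat_{\infty}$, and that the latter is exactly condition (ii).

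For \Implies{kappa_comp_in_Cat_perf2}{kappa_comp_in_Cat_perf1}, I would first deduce from (ii) that $\cA$ is a $\kappa$-compact object of $\Cat_{\infty}$. Closing a $\kappa$-small generating set of $\h\cA$ under shifts, cones, and retracts produces a skeleton of $\cA$ with $<\kappa$ objects: at each stage there are $<\kappa$ maps to cone off and $<\kappa$ idempotents to split, because each $\pi_0\cA(x,y)$ and $\pi_0\End_{\cA}(x)$ has cardinality $<\kappa$ (the spectra $\cA(x,y)$ being $\kappa$-compact) and $\kappa$ is regular uncountable; and each mapping space $\Map_{\cA}(x,y)=\Omega^{\infty}\cA(x,y)$ is then a $\kappa$-compact space, having countably many homotopy groups each of cardinality $<\kappa$. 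Such a category is $\kappa$-compact in $\Cat_{\infty}$. Now given a $\kappa$-filtered system $\{\cD_i\}$ in $\Cat^{\perf}$ with colimit $\cD$, we have $\Map_{\Cat^{\perf}}(\cA,\cD)=\Fun^{\ex}(\cA,\cD)^{\simeq}$; since $\Fun(\cA,-)$ commutes with $\kappa$-filtered colimits of $\infty$-categories and $\cD=\colim_i\cD_i$ already in $\Cat_{\infty}$, it remains to check that a functor $\cA\to\cD$ representing a point of $\colim_i\Fun(\cA,\cD_i)$ is exact if and only if it factors through an exact functor $\cA\to\cD_i$ for some $i$. This uses that exactness is tested against the $<\kappa$-many finite colimit diagrams of $\cA$: for each such diagram the comparison map becomes an equivalence in $\cD$ iff it does so at some stage $\cD_j$, and a $\kappa$-filtered system admits an upper bound for the resulting $<\kappa$ indices. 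Passing to cores gives $\Map_{\Cat^{\perf}}(\cA,\cD)=\colim_i\Map_{\Cat^{\perf}}(\cA,\cD_i)$.

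For \Implies{kappa_comp_in_Cat_perf1}{kappa_comp_in_Cat_perf2}, write $\cT_{\kappa}\subseteq\Cat^{\perf}$ for the full subcategory of objects satisfying (ii). The plan is to realize an arbitrary $\cA\in\Cat^{\perf}$ as a $\kappa$-filtered colimit, computed in $\Cat^{\perf}$, of objects of $\cT_{\kappa}$; then if $\cA$ is $\kappa$-compact the identity $\id_{\cA}$ factors through one of them, so $\cA$ is a retract of an object of $\cT_{\kappa}$, and it suffices to know that $\cT_{\kappa}$ is closed under retracts in $\Cat^{\perf}$. Closure under retracts reduces to closure under the countable sequential colimit (telescope) that splits an idempotent, and this is elementary: a countable colimit of categories with $<\kappa$ objects has $<\kappa$ objects, and its mapping spectra are countable colimits of $\kappa$-compact spectra, hence $\kappa$-compact because $\kappa\geq\omega_1$. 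To produce the resolution, view $\cA$ as a small $\Sp$-enriched $\infty$-category; the $\infty$-category of these is presentable and $\kappa$-accessible, so $\cA\simeq\colim_j\cX_j$ for a $\kappa$-filtered diagram of $\kappa$-compact $\Sp$-enriched $\infty$-categories $\cX_j$. Applying the left adjoint $\St$ of the forgetful functor $\Cat^{\perf}\to\Sp\hy\Cat$ (the free stable idempotent completion), which preserves colimits, gives $\cA\simeq\St(\cA)\simeq\colim_j\St(\cX_j)$ in $\Cat^{\perf}$.

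It then remains to see that $\St(\cX_j)\in\cT_{\kappa}$, i.e.\ that a $\kappa$-compact $\Sp$-enriched $\infty$-category $\cX$ has a $\kappa$-small set of objects and $\kappa$-compact mapping spectra — $\St(\cX)$, being the closure of $\cX$ under finite colimits and retracts, then inherits (ii) by the same regularity bookkeeping. That $\cX$ has a $\kappa$-small object set follows from the trick already used: $\cX$ is the $\kappa$-filtered colimit of its full $\Sp$-enriched subcategories on $\kappa$-small object sets, so $\id_{\cX}$ factors through one of them, which is fully faithful and essentially surjective, hence an equivalence. The $\kappa$-compactness of the mapping spectra is the heart of the matter and the step I expect to be the main obstacle: one analyzes the free–forgetful adjunction between $\Sp$-enriched $\infty$-categories (equivalently, for a fixed object set $\Sigma$, $\bE_1$-algebras in $\Sp^{\Sigma\times\Sigma}$) and $\Sp^{\Sigma\times\Sigma}$, using that free enriched categories on $\kappa$-compact data have $\kappa$-compact mapping spectra — the relevant tensor-algebra construction is a countable coproduct of $\kappa$-compact spectra, which is where $\kappa\geq\omega_1$ is essential, and is exactly what fails for $\kappa=\omega$ — and that an arbitrary $\kappa$-compact object is a $\kappa$-small colimit of such, whose morphism spectra are again assembled by countable bar constructions out of $\kappa$-compact pieces. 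Everything else is cardinality bookkeeping with the regularity of $\kappa$.
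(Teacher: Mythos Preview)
Your argument is correct, but the paper takes a shorter and more algebraic route that largely avoids $\Cat_\infty$ and enriched categories.

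For \Implies{kappa_comp_in_Cat_perf1}{kappa_comp_in_Cat_perf2}, the paper simply writes a $\kappa$-compact $\cA$ as a $\kappa$-small filtered colimit of $\omega$-compact objects of $\Cat^{\perf}$ (using that $\Cat^{\perf}$ is compactly generated), and then uses that $\omega$-compact objects of $\Cat^{\perf}$ already have $\omega_1$-compact mapping spectra and are singly generated. Your approach via $\Sp$-enriched categories and the free stable completion works, but the ``heart of the matter'' step you flag --- that $\kappa$-compact $\Sp$-enriched categories have $\kappa$-compact mapping spectra --- is essentially the content of Proposition~\ref{prop:E_1-algebras_in_monoidal_cats} applied to $\bE_1$-algebras in $\Sp^{\Sigma\times\Sigma}$; the paper isolates that lemma separately and invokes it in the other direction instead.

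For \Implies{kappa_comp_in_Cat_perf2}{kappa_comp_in_Cat_perf1}, the paper reduces to the case of a single generator $x$, considers the $\bE_1$-ring $A=\End_{\cA}(x)$, uses Proposition~\ref{prop:E_1-algebras_in_monoidal_cats} to see that $A$ is $\kappa$-compact among $\bE_1$-rings (since its underlying spectrum is), and then identifies $\bE_1$-rings with a full subcategory of $(\Cat^{\perf})_{\Sp^{\omega}/}$. Your route through $\kappa$-compactness in $\Cat_\infty$ is a clean alternative: it bypasses the $\bE_1$-ring detour and makes transparent why $\kappa>\omega$ is needed (idempotent-completeness of $\kappa$-filtered colimits, exactness being a $\kappa$-small condition). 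The trade-off is that the paper's approach packages the technical core of both directions into a single reusable lemma about $\bE_1$-algebras, whereas your argument is more self-contained but has to redo that analysis inside the proof.
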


\begin{proof} \Implies{kappa_comp_in_Cat_perf1}{kappa_comp_in_Cat_perf2} Let $\cA$ be in $(\Cat^{\perf})^{\kappa},$ and choose a $\kappa$-small ind-system $(\cA_i)_i$ of compact objects of $\Cat^{\perf}$ such that $\cA=\indlim[i]\cA_i.$ Then for each $i$ the spectra $\cA_i(x,y)$ are $\omega_1$-compact, $x,y\in\cA_i$. Hence, the spectra $\cA(x,y)$ are $\kappa$-compact, $x,y\in\cA.$

Since each $\h\cA_i$ is generated by a single object, we conclude that $\h\cA$ is generated by a $\kappa$-small set of objects.

\Implies{kappa_comp_in_Cat_perf2}{kappa_comp_in_Cat_perf1} First suppose that $\h\cA$ is generated by a single object $x,$ and consider the $\bE_1$-algebra $A=\End_{\cA}(x).$ Since the underlying spectrum of $A$ is $\kappa$-compact, it follows from Proposition \ref{prop:E_1-algebras_in_monoidal_cats} that $A$ is $\kappa$-compact in the category of $\bE_1$-rings. Now the category of $\bE_1$-rings is equivalent to the full subcategory of the category $(\Cat^{\perf})_{\Sp^{\omega}/},$ formed by pairs $(\cB,F)$ such that $F(\bS)$ generates $\cB.$ It follows that $\cA$ is $\kappa$-compact.

In general, if $\h\cA$ is generated by a $\kappa$-small collection of objects $S,$ then $\cA$ is a directed union of idempotent-complete stable subcategories $\cA_T,$ generated by a finite subset of objects $T\subset S.$ By the above, each category $\cA_T$ is $\kappa$-compact in $\Cat^{\perf},$ hence so is $\cA.$ 
\end{proof}

We recall that the the colimit of a functor $I\to \Pr^L,$ $i\mapsto \cC_i,$ is equivalent to the limit of the corresponding functor $I^{op}\to\Pr^R,$ where the transition functors are given by the right adjoints. We denote this colimit by $\indlim[i]^{\cont}\cC_i$ (to avoid confusion with the colimits of small stable categories). 

We call an exact functor $F:\cC\to\cD$ between presentable stable $\infty$-categories {\it strongly continuous} if it has a right adjoint which is continuous (in other words, if $F$ has a twice right adjoint). It is convenient to introduce the non-full subcategory $\Pr^{LL}_{\st}\subset\Pr^L_{\st}$ with the same objects and with $1$-morphisms being strongly continuous functors. Note that $\Pr^{LL}_{\st}$ is locally small. Given presentable stable categories $\cC$ and $\cD,$ we denote by $\Fun^{LL}(\cC,\cD)$ the (small) category of strongly continuous functors. 

Similarly, for a regular cardinal $\kappa$ we denote by $\Pr^{LL}_{\st,\kappa}\subset \Pr^{LL}_{\st}$ the full subcategory of $\kappa$-presentable categories. In particular, $\Pr^{LL}_{\st,\kappa}$ is a subcategory of $\Pr^L_{\st,\kappa}$ (for $\kappa=\omega$ we have an equality $\Pr^{LL}_{\st,\omega}=\Pr^{L}_{\st,\omega}$).

Note that if $\cC$ and $\cD$ are $\kappa$-presentable stable categories, then a colimit-preserving functor $F:\cC\to\cD$ preserves $\kappa$-compact objects (i.e. $F$ is a $1$-morphism in $\Pr^L_{\st,\kappa}$) if and only if its right adjoint commutes with $\kappa$-filtered colimits. For this reason, we will sometimes say that such a functor $F$ is $\kappa$-strongly continuous. 

\begin{prop}\label{prop:colimits_in_Pr^LL} The category $\Pr^{LL}_{\st}$ is cocomplete and the functor $\Pr^{LL}_{\st}\to \Pr^L_{\st}$ commutes with colimits. In particular, the categories $\Pr^{LL}_{\st,\kappa}$ are also cocomplete and all the inclusion functors $\Pr^{LL}_{\st,\kappa}\to\Pr^{LL}_{\st,\lambda}$ and $\Pr^{LL}_{\st,\kappa}\to\Pr^L_{\st,\kappa}$ commute with colimits.\end{prop}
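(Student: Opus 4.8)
The plan is to reduce everything to the well-known fact that $\Pr^L_{\st}$ is cocomplete and that colimits there are computed as limits in $\Pr^R_{\st}$ along the right adjoint transition functors. Concretely, given a diagram $I\to\Pr^{LL}_{\st}$, $i\mapsto\cC_i$, with strongly continuous transition functors $F_{ij}:\cC_i\to\cC_j$, I would first form the colimit $\cC:=\indlim[i]^{\cont}\cC_i$ in $\Pr^L_{\st}$. I then want to check two things: (a) the canonical functors $\iota_i:\cC_i\to\cC$ are again strongly continuous, so that $\cC$ together with the $\iota_i$ is a cocone in $\Pr^{LL}_{\st}$; and (b) this cocone is universal among cocones in $\Pr^{LL}_{\st}$. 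Claim (b), once (a) is known, is essentially formal: a cocone in $\Pr^{LL}_{\st}$ is in particular a cocone in $\Pr^L_{\st}$, so it factors uniquely through $\cC$ via a colimit-preserving functor $G:\cC\to\cD$; one then needs to see that $G$ is automatically strongly continuous, which I would deduce by passing to right adjoints and observing that $G^R$ is the functor induced on the limit $\Pr^R_{\st}$-description, hence commutes with colimits because each leg does (the right adjoints $\iota_i^R$ are continuous by part (a), and $G^R$ is built from them levelwise).

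The technical heart is therefore claim (a): showing $\iota_i^R:\cC\to\cC_i$ is continuous. Here I would use the standard model of the colimit: an object of $\cC=\limproj_{i^{op}}\cC_i$ (limit along right adjoints) is a compatible family $(x_i)_{i}$ with $x_i\in\cC_i$ and structure maps $F_{ij}^R$... more precisely, recall $\cC\simeq\Lim_{I^{op}}(\cC_i, F_{ij}^R)$ in $\Pr^R_{\st}$, and under this identification $\iota_i^R$ is the projection to the $i$-th coordinate. Colimits in a limit of categories are computed pointwise, and each projection preserves them tautologically; so $\iota_i^R$ preserves all colimits, in particular filtered ones. The one genuine subtlety is that I must also know $\iota_i$ itself has a right adjoint at all, i.e.\ that $\iota_i$ is colimit-preserving — but that is immediate since $\iota_i$ is a structure map of a colimit in $\Pr^L_{\st}$. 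For the ``$\kappa$'' refinements, the inclusion $\Pr^L_{\st,\kappa}\to\Pr^L_{\st}$ commutes with colimits (stated in Subsection \ref{ssec:pres_acc_cats}), so the colimit computed in $\Pr^L_{\st}$ already lands in $\Pr^L_{\st,\kappa}$ when the input does, and the strong-continuity argument above is insensitive to $\kappa$; combined with $\Pr^{LL}_{\st,\kappa}\subset\Pr^L_{\st,\kappa}$ being a non-full subcategory with the same objects, all the asserted commutations follow. Similarly the transition inclusions $\Pr^{LL}_{\st,\kappa}\to\Pr^{LL}_{\st,\lambda}$ commute with colimits because both sides compute the colimit as the same object of $\Pr^L_{\st}$.

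I expect the main obstacle to be purely bookkeeping: making precise that ``$G^R$ is built levelwise from the $\iota_i^R$'' and hence is continuous. The clean way to phrase this is: $\Pr^R_{\st}\simeq(\Pr^L_{\st})^{op}$, the colimit $\cC$ corepresents cocones, so for any $\cD$ the space of strongly continuous $\cC\to\cD$ is the limit over $i$ of strongly continuous $\cC_i\to\cD$; to set this up one needs that the assignment $\cD\mapsto\Fun^{LL}(-,\cD)$ sends the colimit cocone to a limit cone, which is exactly what needs checking and is where the continuity of the $\iota_i^R$ enters. An alternative, perhaps slicker, route avoiding explicit right adjoints: use that $\Pr^{LL}_{\st}$ is, via $\cC\mapsto\cC^{op}$ composed with appropriate dualities, equivalent to a category where strongly continuous functors become colimit-preserving functors of presentable categories, so cocompleteness and the commutation are inherited from cocompleteness of $\Pr^L_{\st}$; but I would only pursue this if the levelwise bookkeeping above becomes unwieldy.
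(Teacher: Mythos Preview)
Your proposal is correct and follows essentially the same route as the paper: identify the $\Pr^L_{\st}$-colimit $\cC$ with the limit along the right adjoints $F_{ij}^R$, use that these right adjoints are continuous (this is the strong-continuity hypothesis) to conclude that colimits in $\cC$ are computed pointwise, and deduce that for any continuous $G:\cC\to\cD$ the right adjoint $G^R$ is continuous if and only if each component $\iota_i^R\circ G^R=(G\circ\iota_i)^R$ is. The paper packages your steps (a) and (b) into the single biconditional ``$F:\cC\to\cD$ is strongly continuous iff each $F\circ\iota_i$ is'', but the content is identical; one small point to make explicit in your write-up is that ``colimits in a limit of categories are computed pointwise'' genuinely uses that the transition functors $F_{ij}^R$ preserve colimits, which is exactly where the strong-continuity assumption enters.
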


\begin{proof} This follows directly from the description of a colimit in $\Pr^L_{\st}$ as a limit of the opposite diagram. Namely, consider a small $\infty$-category $I$ and a functor $I\to\Pr^{LL}_{\st},$ $i\mapsto \cC_i,$ and put $\cC:=\indlim[i]^{\cont}\cC_i$ (the colimit is taken in $\Pr^L_{\st}$).  Identify $\cC$ with the limit $\prolim[i\in I^{op}]\cC_i.$ Since the transition functors in this inverse system are continuous, the coproducts in the limit category are computed ``pointwise''. 

Let $\cD\in\Pr^L_{\st}.$ We need to show that a continuous functor $F:\cC\to\cD$ is strongly continuous if and only if the compositions $F_i:\cC_i\to\cC\to\cD$ are strongly continuous. But the functor $F^R:\cD\to\cC$ corresponds to the compatible system of functors $(F_i^R:\cD\to\cC_i)$ under the identification of $\cC$ with the inverse limit. Hence, $F^R$ is continuous if and only if all $F_i^R$ are continuous.\end{proof}

It follows from Proposition \ref{prop:compact_objects_in_colimits_of_presentable} that the category $\Pr^{LL}_{\st,\omega_1}$ is not presentable, more precisely, the object $\Sp\in\Pr^{LL}_{\st,\omega_1}$ is not $\kappa$-compact for any regular cardinal $\kappa.$ A different example showing this was explained to me by German Stefanich.

We make an observation on the automatic strong continuity.

\begin{prop}\label{prop:automatic_strcont} Let $G:\cD\to\cE$ be a continuous functor between presentable stable categories. Let $F_i:\cC_i\to\cD,$ $i\in I,$ be a family of strongly continuous functors, where $\cC_i$ are presentable stable categories. Suppose that the images of $F_i$ generate $\cD,$ and the compositions $G\circ F_i$ are strongly continuous. Then $G$ is also strongly continuous.\end{prop}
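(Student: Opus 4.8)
The plan is to exhibit the second right adjoint of $G$ and verify that it preserves colimits. Since $G$ is exact and continuous between presentable stable categories, it has a right adjoint $G^R\colon\cE\to\cD$, and ``$G$ is strongly continuous'' means precisely that $G^R$ is continuous; as $G^R$ is automatically exact, it suffices to prove that $G^R$ preserves arbitrary small colimits. So I would fix a small diagram $(Y_j)_{j\in J}$ in $\cE$ with colimit $Y$, form the canonical comparison map $\alpha\colon\colim_j G^R(Y_j)\to G^R(Y)$ in $\cD$ (the source exists since $\cD$ is cocomplete), and set $Z:=\Cofib(\alpha)$. Everything reduces to showing $Z\simeq 0$.

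First I would show that $F_i^R(Z)\simeq 0$ for every $i$. Each $F_i$ is strongly continuous, so $F_i^R$ is continuous, and being a right adjoint between stable categories it is also exact; hence it preserves both the colimit $\colim_j G^R(Y_j)$ and the cofiber defining $Z$, so
\[
F_i^R(Z)\simeq\Cofib\!\Big(\colim_j (F_i^R G^R)(Y_j)\to (F_i^R G^R)(Y)\Big).
\]
But $F_i^R\circ G^R$ is the right adjoint of $G\circ F_i$, which is strongly continuous by hypothesis; therefore $F_i^R G^R$ is continuous, so the map inside the cofiber is the canonical equivalence $\colim_j (F_i^R G^R)(Y_j)\isomoto(F_i^R G^R)(\colim_j Y_j)=(F_i^R G^R)(Y)$. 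Hence $F_i^R(Z)\simeq 0$.

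Then I would conclude $Z\simeq 0$ from the generation hypothesis. Let $\cD_0:=\{W\in\cD\mid \Map_\cD(W,Z)\simeq *\}$. Since $\Map_\cD(-,Z)$ sends colimits in $\cD$ to limits of spaces and any limit of contractible spaces is contractible, $\cD_0$ is closed under all small colimits. For any $i$ and any $c\in\cC_i$ the adjunction gives $\Map_\cD(F_i(c),Z)\simeq\Map_{\cC_i}(c,F_i^R(Z))\simeq\Map_{\cC_i}(c,0)\simeq *$, so $\im(F_i)\subseteq\cD_0$. As the images of the $F_i$ generate $\cD$ under colimits, this forces $\cD_0=\cD$; in particular $Z\in\cD_0$, i.e.\ $\Map_\cD(Z,Z)\simeq *$, so $\id_Z$ is nullhomotopic and $Z\simeq 0$. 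Thus $\alpha$ is an equivalence, $G^R$ is continuous, and $G$ is strongly continuous. The only genuinely delicate point is the last step: continuity of $G^R$ is a statement about the source category $\cE$, whereas the generation hypothesis lives in the target $\cD$, and the bridge between them is exactly the vanishing $F_i^R(Z)\simeq 0$ fed into the colimit-closed subcategory $\cD_0$; everything else is formal manipulation of adjunctions.
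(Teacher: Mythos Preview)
Your proof is correct and follows essentially the same approach as the paper. The paper's argument is simply a compressed version of yours: it observes that the right adjoints $F_i^R$ are continuous and jointly conservative (because the images of the $F_i$ generate $\cD$), and that the compositions $F_i^R\circ G^R=(G\circ F_i)^R$ are continuous; applying the conservative family $\{F_i^R\}$ to the comparison map $\alpha$ immediately gives that $\alpha$ is an equivalence. Your passage through the cofiber $Z$ and the colimit-closed subcategory $\cD_0$ is precisely the unpacking of the statement ``$\{F_i^R\}$ is jointly conservative, hence $F_i^R(Z)\simeq 0$ for all $i$ implies $Z\simeq 0$''. One small remark: your $\cD_0$ defined via $\Map_\cD(W,Z)\simeq *$ is closed under colimits but not a priori under desuspension; the argument still goes through because each $\im(F_i)$ is already shift-stable (the $F_i$ are exact), so the colimit closure of $\bigcup_i\im(F_i)$ coincides with the localizing subcategory it generates.
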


\begin{proof}
Indeed, our assumptions imply that the right adjoints $F_i^R,$ $i\in I,$ are continuous and they form a conservative family. Further, the compositions $F_i^R\circ G^R$ are continuous. Hence, the functor $G^R$ is also continuous, as required.\end{proof}

Given a presentable stable category $\cC,$ we will call a full subcategory $\cD\subset\cC$ localizing if $\cD$ is presentable stable and the inclusion functor commutes with colimits. Equivalently, this means that $\cD$ is generated as a cocomplete stable subcategory by a set of objects (or equivalently by a single object).

\subsection{Well generated triangulated categories}

The notion of presentability for a stable $\infty$-category $\cC$ can be formulated in terms of its triangulated homotopy category $\h\cC.$ Namely, $\cC$ is presentable if and only if the triangulated category $\h\cC$ is well generated in the terminology of Neeman \cite{Nee01}. We briefly recall the relevant notions.

Let $T$ be a triangulated category. The category $T$ is called cocomplete if it has all small direct sums (coproducts). Given an infinite regular cardinal $\kappa,$ an object $x\in T$ is called $\kappa$-compact if for any family of objects $\{y_i\}_{i\in I},$ for any morphism $f:x\to\bigoplus\limits_{i\in I}y_i$ there exists a subset $J\subseteq I$ with $|J|<\kappa,$ such that $f$ factors through $\bigoplus\limits_{j\in J}y_j.$ In particular, $x$ is $\omega$-compact if and only if it is compact, i.e. the functor $\Hom_T(x,-)$ commutes with small direct sums.

A cocomplete triangulated category $T$ is called $\kappa$-well generated for a regular cardinal $\kappa$ if there exists a generating set of $\kappa$-compact objects $\{x_i\}_{i\in I}.$ This means that the objects $x_i$ generate $T$ via small direct sums, shifts and cones. Further, $T$ is called well generated if it is $\kappa$-well generated for some $\kappa.$

If $T$ is a well generated triangulated category, we say that a full triangulated subcategory $S\subset T$ is localizing if it is closed under coproducts, and it is generated by a set of objects.

Let now $\cC$ and $\cD$ be cocomplete stable categories, and let $\kappa$ be a regular cardinal.

\begin{itemize}
\item $\cC$ is $\kappa$-presentable if and only if $\h\cC$ is $\kappa$-well generated;

\item An exact functor $F:\cC\to\cD$ is continuous if and only if the functor $\h F:\h\cC\to\h\cD$ commutes with coproducts;

\item An object $x\in\cC$ is $\kappa$-compact in the $\infty$-categorical sense if and only if $x\in \h\cC$ is $\kappa$-compact in the above sense.

\item If $\cC$ is presentable, then a full subcategory $\cE\subset\cC$ is localizing if and only if $\h\cE\subset\h\cC$ is localizing. 
\end{itemize}

Recall that for a triangulated category $T$ an additive functor $F:T\to\Ab$ is called homological if for any exact triangle $x\to y\to z$ the sequence $F(x)\to F(y)\to F(z)$ is exact. It is convenient to call a functor $F:T^{op}\to\Ab$ with the same property cohomological. By \cite[Theorem 1.17]{Nee01}, a well generated triangulated category $T$ satisfies Brown representability for contravariant functors: an additive functor $F:T^{op}\to\Ab$ is representable if and only if $F$ is cohomological and commutes with products. In particular, an exact functor between well generated triangulated categories $\Phi:T\to S$ has a right adjoint iff $\Phi$ commutes with coproducts. 

A much more difficult question is the Brown representability for covariant functors, and it is not known if it holds for abstract well generated categories. But if $T\simeq \h\cC,$ where $\cC$ is presentable stable, then it follows from \cite[Theorems 1.11, 1.17 and 1.22]{Nee09} and \cite[Proposition A.3.7.6]{Lur09} that an additive functor $F:T\to\Ab$ is corepresentable if and only if $F$ is homological and commutes with products. In particular, for such $T$ and for any well generated triangulated category $S,$ an exact functor $\Phi:T\to S$ has a left adjoint iff $\Phi$ commutes with products.

\subsection{Dualizable categories}
\label{ssec:dualizable_cats}


We first recall the notion of a compactly assembled (not necessarily stable and not necessarily cocomplete) category from \cite{Lur18}.

\begin{defi} Let $\cC$ be an accessible $\infty$-category with filtered colimits. Then $\cC$ is called compactly assembled if the colimit functor $\colim:\Ind(\cC)\to\cC$ has a left adjoint.\end{defi}

If the $\infty$-category $\cC$ is compactly assembled, then we denote by $\hat{\cY}_{\cC}:\cC\to \Ind(\cC)$ (or simply $\hat{\cY}$) the (fully faithful) functor which is the left adjoint to $\colim.$ Note that the right adjoint to $\colim$ is always given by the Yoneda embedding $\cY_{\cC}:\cC\to\Ind(\cC).$ Any compactly generated (i.e. $\omega$-accessible) category is compactly assembled. Indeed, if $\cC\simeq\Ind(\cA),$ then the functor $\hat{\cY}$ is obtained by applying $\Ind$ to the Yoneda embedding $\cA\to\Ind(\cA).$

Compactly assembled ordinary categories were studied in great detail by Johnstone and Joyal \cite{JJ} under the name ``continuous categories''. This is a generalization of the notion of a continuous poset, see Subsection \ref{ssec:cont_posets} for a more precise definition and  \cite{GHKLMS, BH81} for a detailed account.

We will need the following notion of strong continuity for functors between compactly assembled categories.

\begin{defi}\label{defi:strcont_general}1) Let $\cC$ and $\cD$ be compactly assembled accessible categories. A functor $F:\cC\to\cD$ is called strongly continuous if $F$ is continuous and the natural transformation $\hat{\cY}_{\cD}\circ F\to \Ind(F)\circ\hat{\cY}_{\cC}$ is an isomorphism.

2) We denote by $\Compass$ the $(\infty,1)$-category of compactly assembled accessible categories, where the $1$-morphisms are given by strongly continuous functors. 
\end{defi}

For stable compactly assembled categories, the two notions of strong continuity of exact functors are equivalent by Proposition \ref{prop:strong_continuity_via_hat_Y}.


A presentable stable $\infty$-category $\cC$ is called {\it dualizable} if it is a dualizable object of the symmetric monoidal category $(\Pr^L_{\st},\otimes)$ (or equivalently of the symmetric monoidal homotopy category $\h \Pr^L_{\st}$). That is, there exists an $\infty$-category $C^{\vee}\in \Pr_{\st}^L,$ and continuous functors $\coev:\Sp\to \cC^{\vee}\otimes\cC$ and $\ev:\cC\otimes\cC^{\vee}\to\Sp$ such that the compositions
$$\cC\xto{\id\boxtimes\coev}\cC\otimes\cC^{\vee}\otimes\cC\xto{\ev\boxtimes\id}\cC,\quad \cC^{\vee}\xto{\coev\boxtimes\id}\cC^{\vee}\otimes\cC\otimes\cC^{\vee}\xto{\id\boxtimes\ev}\cC^{\vee}$$
are isomorphic to the identity.

It turns out that a presentable stable category $\cC$ is compactly assembled if and only if $\cC$ is is dualizable. This is part of the following result due to Lurie.

\begin{prop}\label{prop:Lurie_criterions_dual}\cite[Proposition D.7.3.1]{Lur18} Let $\cC$ be a presentable stable $\infty$-category. The following are equivalent.

\begin{enumerate}[label=(\roman*),ref=(\roman*)]
\item The $\infty$-category $\cC$ is dualizable as an object of $\Pr^L_{\st}.$ \label{Lur1}

\item $\cC$ is compactly assembled. \label{Lur2}

\item $\cC$ is a retract in $\Pr^L_{\st}$ of a compactly generated category. \label{Lur3}

\item Let $\cD$ be another presentable stable category and let $F:\cD\to\cC$ be a continuous localization, i.e. the right adjoint $F^R:\cC\to\cD$ is fully faithful. Then $F$ has a continuous section.\label{Lur4}
\end{enumerate}
\end{prop}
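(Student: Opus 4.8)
The plan is to establish the cycle of implications \ref{Lur3}$\,\Rightarrow\,$\ref{Lur1}$\,\Rightarrow\,$\ref{Lur4}$\,\Rightarrow\,$\ref{Lur3}, and then to incorporate \ref{Lur2}. The implication \ref{Lur3}$\,\Rightarrow\,$\ref{Lur1} is purely formal: in any symmetric monoidal $\infty$-category the dualizable objects are stable under retracts (split the dualized idempotent), while a compactly generated category $\Ind(\cA)$ is dualizable with dual $\Ind(\cA^{op})$ --- the evaluation $\Ind(\cA)\otimes\Ind(\cA^{op})\simeq\Ind(\cA\times\cA^{op})\to\Sp$ being the continuous extension of the mapping-spectrum functor, the coevaluation picking out the ``diagonal'' ind-object $\indlim[a\in\cA](a,a)\in\Ind(\cA^{op})\otimes\Ind(\cA)\simeq\Ind(\cA^{op}\times\cA)$, and the triangle identities amounting to co-Yoneda.

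The implication \ref{Lur1}$\,\Rightarrow\,$\ref{Lur4} is, in my view, the conceptual core. Let $F:\cD\to\cC$ be a continuous localization. Such localizations are exactly the epimorphisms of $\Pr^L_{\st}$, and an epimorphism remains one after applying any colimit-preserving functor (the two coprojections to the relevant pushout get identified); applying the colimit-preserving functor $-\otimes\cC^{\vee}$ therefore shows that $F\otimes\id_{\cC^{\vee}}:\cD\otimes\cC^{\vee}\to\cC\otimes\cC^{\vee}$ is again a localization. Dualizability of $\cC$ supplies a natural equivalence $\cE\otimes\cC^{\vee}\simeq\Fun^L(\cC,\cE)$, under which $F\otimes\id_{\cC^{\vee}}$ is identified with postcomposition $F\circ(-):\Fun^L(\cC,\cD)\to\Fun^L(\cC,\cC)$. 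A localization functor is essentially surjective on objects (its fully faithful right adjoint is a section), so $\id_{\cC}$ lies in its essential image: there exists $s\in\Fun^L(\cC,\cD)$ with $F\circ s\simeq\id_{\cC}$, i.e.\ a continuous section of $F$.

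For \ref{Lur4}$\,\Rightarrow\,$\ref{Lur3} it suffices to use \ref{Lur4} for one localization. Choose a regular cardinal $\kappa$ with $\cC$ $\kappa$-presentable and let $F:\PSh(\cC^{\kappa};\Sp)\to\cC$ be the colimit-preserving functor classified by the inclusion $\cC^{\kappa}\hookrightarrow\cC$; its right adjoint is the restricted Yoneda embedding, which is fully faithful since $\cC^{\kappa}$ generates $\cC$ under colimits, so $F$ is a continuous localization with compactly generated source and a continuous section exhibits $\cC$ as a retract of $\PSh(\cC^{\kappa};\Sp)$. This closes \ref{Lur1}$\,\Leftrightarrow\,$\ref{Lur3}$\,\Leftrightarrow\,$\ref{Lur4}; to fold in \ref{Lur2} I would prove \ref{Lur3}$\,\Rightarrow\,$\ref{Lur2}$\,\Rightarrow\,$\ref{Lur4}. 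For \ref{Lur3}$\,\Rightarrow\,$\ref{Lur2}: compact assembly of $\cC$ means exactly that $\colim:\Ind(\cC)\to\cC$ has a left adjoint, equivalently (adjoint functor theorem, Brown representability as recalled above) that it preserves small limits; one checks this property is stable under retracts, and $\Ind(\cA)$ is compactly assembled with $\hat{\cY}=\Ind(\cY_{\cA})$. For \ref{Lur2}$\,\Rightarrow\,$\ref{Lur4}: given a continuous localization $F:\cD\to\cC$, the functor $\Ind(F):\Ind(\cD)\to\Ind(\cC)$ is again a localization, its right adjoint $\Ind(F)^R$ preserves filtered colimits (since $\Ind(F)$ carries compact objects --- i.e.\ objects of $\cD$ --- to compact objects) and is exact, hence colimit-preserving, and then $s:=\colim_{\cD}\circ\,\Ind(F)^R\circ\hat{\cY}_{\cC}$ is continuous with $F\circ s\simeq\colim_{\cC}\circ\,\Ind(F)\circ\Ind(F)^R\circ\hat{\cY}_{\cC}\simeq\colim_{\cC}\circ\,\hat{\cY}_{\cC}\simeq\id_{\cC}$.

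The main obstacle is the foundational handling of $\Ind(\cC)$ for a non-small $\cC$: this category is only locally small and is not presentable (its compact objects form the proper class $\cC$), so the reformulation of \ref{Lur2} as ``$\colim$ preserves limits'', the verification of its retract-stability, and the computation with $\Ind(F)^R$ above must all be carried out carefully in the setting of large locally small stable categories, using Brown representability in the form recalled in the discussion of well generated triangulated categories. Once this set-theoretic bookkeeping is in place --- this is the real technical content of \cite[\S D.7]{Lur18} --- each of the individual implications above is short.
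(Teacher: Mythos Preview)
The paper does not give its own proof of this proposition; it is stated with a citation to \cite[Proposition D.7.3.1]{Lur18} and used as a black box. So there is no in-paper argument to compare against.

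Your sketch is a correct reconstruction of the standard cycle of implications. Two remarks. First, in \ref{Lur1}$\Rightarrow$\ref{Lur4} you pass from ``$F\otimes\id_{\cC^{\vee}}$ is an epimorphism'' to ``$F\otimes\id_{\cC^{\vee}}$ is a localization''; this is exactly Proposition~\ref{prop:mono_epi_Pr^L}, part 2), so within the paper you may invoke it, but be aware that this step is not tautological. (A more self-contained variant: since $\cC^{\vee}$ is itself dualizable, $-\otimes\cC^{\vee}\simeq\Fun^L(\cC,-)$ is a right adjoint as well as a left adjoint, and one can check directly that it sends the fully faithful $F^R$ to a fully faithful right adjoint of $F\otimes\id_{\cC^{\vee}}$.) Second, your argument for \ref{Lur2}$\Rightarrow$\ref{Lur4} via $s=\colim_{\cD}\circ\Ind(F)^R\circ\hat{\cY}_{\cC}$ is clean and correct; note that the identification $\Ind(F)\circ\Ind(F)^R\simeq\id$ follows from the pointwise formula $\Ind(F)^R(\inddlim c_j)=\inddlim F^R(c_j)$, which is valid for $\Ind$ of large categories and does not require presentability.

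You have correctly identified the only genuine subtlety: the categories $\Ind(\cC)$ and $\Ind(\cD)$ are large, so the adjoint functor theorem cannot be invoked directly for them, and in particular the step ``$\colim$ preserves limits $\Rightarrow$ $\colim$ has a left adjoint'' in \ref{Lur3}$\Rightarrow$\ref{Lur2} needs the workaround through $\Ind(\cC^{\kappa})$ for a suitable $\kappa$ (compare the proof of Proposition~\ref{prop:AB6_criterion}). With that caveat in place your outline is sound.
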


\begin{remark}\label{rem:functor_-^vee} Given a dualizable category $\cC,$ we have a natural functor $(-)^{\vee}:\cC^{op}\to\cC^{\vee},$ which can be described (at least) in two ways. The first one is the following: given $x\in\cC,$ the object $x^{\vee}\in\cC^{\vee}$ corresponds to the continuous functor $$\Hom_{\Ind(\cC)}(\cY(x),\hat{\cY}(-)):\cC\to\Sp.$$

Another description is via the universal property: we have 
$$\Hom_{\cC^{\vee}}(y,x^{\vee})\cong \Hom_{\cC^{\vee}\otimes\cC}(y\boxtimes x,\coev(\bS)).$$

In the special case $\cC=\Mod\hy A,$ where $A$ is an $\bE_1$-ring, we have $\cC^{\vee}\simeq A\hy\Mod$ -- the category of left $A$-modules. For a right $A$-module $M$ we have $M^{\vee}=\Hom_A(M,A)$ -- the usual dual left module of a right module. 
\end{remark}

\begin{remark}Essentially rephrasing the previous remark: given a dualizable category $\cC$ and an object $x\in\cC,$ the ind-object $\hat{\cY}(x)$ corresponds to the functor $$\ev(x\boxtimes (-)^{\vee}):\cC^{op}\to\Sp.$$\end{remark}

\begin{prop}\label{prop:strong_continuity_via_hat_Y} Let $F:\cC\to\cD$ be a continuous functor between dualizable categories. Then the right adjoint $F^R:\cD\to\cC$ is continuous if and only if the natural transformation $\hat{\cY}_{\cD}\circ F\to \Ind(F)\circ\hat{\cY}_{\cC}$ is an isomorphism.\end{prop}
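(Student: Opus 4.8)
The plan is to realize $\alpha\colon\hat{\cY}_{\cD}\circ F\to\Ind(F)\circ\hat{\cY}_{\cC}$ as one of the two mates of a single invertible natural transformation, and to recognize the other mate as the canonical comparison map measuring whether $F^R$ commutes with filtered colimits. Write $\colim_{\cC}\colon\Ind(\cC)\to\cC$ and $\colim_{\cD}\colon\Ind(\cD)\to\cD$ for the colimit functors, so $\colim_{\cC}\dashv\cY_{\cC}$ and, since $\cC$ and $\cD$ are dualizable (hence compactly assembled), $\hat{\cY}_{\cC}\dashv\colim_{\cC}$ and $\hat{\cY}_{\cD}\dashv\colim_{\cD}$; recall $\colim_{\cC}\circ\cY_{\cC}\simeq\id_{\cC}\simeq\colim_{\cC}\circ\hat{\cY}_{\cC}$, and likewise over $\cD$. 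As $F$ is continuous, $\Ind(F)$ preserves filtered colimits and sends $\cY_{\cC}(c)$ to $\cY_{\cD}(Fc)$, so there is a canonical equivalence $\beta\colon\colim_{\cD}\circ\Ind(F)\xto{\sim}F\circ\colim_{\cC}$ (both sides preserve filtered colimits and agree on representables); by construction $\alpha$ is the mate of $\beta$ along the adjunctions $\hat{\cY}_{(-)}\dashv\colim_{(-)}$.

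Next I would introduce the comparison map. Let $\Phi:=\colim_{\cC}\circ\Ind(F^R)\colon\Ind(\cD)\to\cC$, where $\Ind(F^R)$ is the (always defined) filtered-colimit-preserving extension of $\cY_{\cC}\circ F^R$. Then $\Phi$ preserves filtered colimits, and since $\colim_{\cC}\circ\cY_{\cC}\simeq\id_{\cC}$ the functor $\Phi$ is the left Kan extension of $F^R$ along $\cY_{\cD}$; in particular $\Phi\circ\cY_{\cD}\simeq F^R$. As $F^R\circ\colim_{\cD}$ also restricts to $F^R$ along $\cY_{\cD}$, the universal property of the Kan extension produces a natural transformation $\gamma\colon\Phi\to F^R\circ\colim_{\cD}$ which is the identity on representables. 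For $X=\colim_{k}\cY_{\cD}(x_k)\in\Ind(\cD)$ this identifies $\gamma_X$ with the comparison map $\colim_{k}F^R(x_k)\to F^R(\colim_{k}x_k)$; hence $\gamma$ is an equivalence if and only if $F^R$ preserves filtered colimits, i.e. is continuous.

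The heart of the argument is then to compare $\alpha$ with $\gamma$. For $c\in\cC$ and $X\in\Ind(\cD)$ I would establish natural identifications
$$\Map_{\Ind(\cD)}\!\bigl(\Ind(F)\hat{\cY}_{\cC}(c),X\bigr)\;\simeq\;\Map_{\cC}\!\bigl(c,\Phi(X)\bigr),\qquad \Map_{\Ind(\cD)}\!\bigl(\hat{\cY}_{\cD}(Fc),X\bigr)\;\simeq\;\Map_{\cC}\!\bigl(c,F^R\colim_{\cD}(X)\bigr),$$
the first by writing $\hat{\cY}_{\cC}(c)=\colim_i\cY_{\cC}(c_i)$ and computing both sides as the $\lim_i$ of the ind-presentation of $X$, the second via $\hat{\cY}_{\cD}\dashv\colim_{\cD}$ followed by $F\dashv F^R$. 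Under these identifications the map induced by $\alpha_c$ becomes the map induced by $\gamma_X$, so by the Yoneda lemma $\alpha$ is an equivalence if and only if $\gamma$ is. Together with the previous paragraph this gives $F^R$ continuous $\iff$ $\gamma$ an equivalence $\iff$ $\alpha$ an equivalence.

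The step requiring genuine care is this last comparison: one must check, at the level of natural transformations and not merely objectwise, that the two mates of $\beta$ correspond under the displayed adjunction identifications --- the usual $\infty$-categorical mate/Beck--Chevalley bookkeeping, made a little heavier by the fact that $\Ind(\cC)$ and $\Ind(\cD)$ are large (though still the filtered unions of $\Ind(\cB)$ over small full subcategories $\cB$, on which everything above is literal). Everything else is formal. In particular, for the implication ``$\alpha$ an equivalence $\Rightarrow F^R$ continuous'' one can also argue directly: $\hat{\cY}_{\cC}$ has a continuous right adjoint and $\Ind(F)$ preserves compact objects (so also has a continuous right adjoint), whence the right adjoint $F^R\circ\colim_{\cD}$ of $\hat{\cY}_{\cD}\circ F\simeq\Ind(F)\circ\hat{\cY}_{\cC}$ is continuous; and since $\colim_{\cD}$ preserves colimits with $\colim_{\cD}\circ\cY_{\cD}\simeq\id_{\cD}$, continuity of $F^R\circ\colim_{\cD}$ forces continuity of $F^R$.
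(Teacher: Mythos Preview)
Your proposal is correct and follows essentially the same route as the paper. Your $\gamma\colon\colim_{\cC}\circ\Ind(F^R)\to F^R\circ\colim_{\cD}$ is precisely the paper's $\varphi$, and your verification that $\alpha$ is invertible iff $\gamma$ is invertible is exactly the passage to right adjoints that the paper invokes in one sentence (using $\Ind(F)\dashv\Ind(F^R)$ and $\hat{\cY}\dashv\colim$); the detour through the equivalence $\beta$ and the Kan-extension construction of $\gamma$ is unnecessary, since $\gamma$ arises directly as the mate of $\alpha$.
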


\begin{proof}The corresponding natural transformation between the right adjoints is given by $\varphi:\colim\circ\Ind(F^R)\to F^R\circ\colim$ of functors $\Ind(\cD)\to\cC.$ Tautologically, $\varphi$ is an isomorphism if and only if $F^R$ commutes with filtered colimits, or equivalently with all colimits (since $F^R$ is exact).\end{proof}

We now define the category of dualizable categories.

\begin{defi}The category $\Cat_{\st}^{\dual}$ is the (non-full) subcategory of $\Pr^L_{\st}$ formed by dualizable categories and strongly continuous functors. Equivalently, $\Cat_{\st}^{\dual}$ is the full subcategory of $\Pr^{LL}_{\st}$ formed by dualizable categories.\end{defi}

In particular, the category $\Cat_{\st}^{\dual}$ is locally small (since so is $\Pr^{LL}_{\st}$). Note that $\Cat_{\st}^{\dual}\subset \Compass$ is a (non-full) subcategory. 

We make an immediate observation about short exact sequences.

\begin{prop}\label{prop:ses_Pr^LL} Consider a short exact sequence $0\to\cC\xto{F}\cD\xto{G}\cE\to 0$ in $\Pr^{LL}_{\st}.$ If $\cD$ is dualizable, then so are $\cC$ and $\cE.$\end{prop}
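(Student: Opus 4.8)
The plan is to exploit the characterizations of dualizability from Proposition \ref{prop:Lurie_criterions_dual}, most conveniently the retract criterion \ref{Lur3} together with the relation between short exact sequences and (co)limits in $\Pr^L_{\st}$. The key point is that a short exact sequence $0\to\cC\xto{F}\cD\xto{G}\cE\to 0$ in $\Pr^{LL}_{\st}$ exhibits $\cC$ as the kernel of $G$ (equivalently $F^R$ identifies $\cD$ with $\cE$-objects plus a co-localization) and exhibits $\cE$ as a Verdier quotient $\cD/\cC$; moreover, since all functors are strongly continuous, $F$ admits a right adjoint $F^R$ which is itself strongly continuous, and likewise $G$ admits a fully faithful, colimit-preserving right adjoint $G^R$ whose own right adjoint is continuous.

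First I would treat $\cE$. Since $G:\cD\to\cE$ is a continuous localization with $\cD$ dualizable, the retract criterion \ref{Lur3} applies directly: writing $\cD$ as a retract in $\Pr^L_{\st}$ of a compactly generated category $\wt\cD$, the composite $\wt\cD\to\cD\xto{G}\cE$ together with $G^R$ (which is continuous, as $G$ is strongly continuous) exhibits $\cE$ as a retract of $\wt\cD$ in $\Pr^L_{\st}$. Hence $\cE$ is dualizable. Alternatively, one can invoke criterion \ref{Lur4}: a continuous localization onto $\cE$ from a dualizable category splits, and one checks the splitting is inherited. Either way this half is essentially formal.

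Next I would treat $\cC$. Here I would use that $F$ is strongly continuous, so $F^R:\cD\to\cC$ is continuous, and the counit $FF^R\to\id_\cD$ fits into a fiber sequence of endofunctors $FF^R\to\id_\cD\to G^RG$ (this is the standard recollement identity for a short exact sequence in $\Pr^L_{\st}$, valid because $G^R$ is fully faithful). In particular $F^R$ is a continuous, essentially surjective functor with $FF^R$ a colimit-preserving idempotent-like comonad on $\cD$, and $F$ identifies $\cC$ with the full subcategory of $\cD$ on which $G$ vanishes. To see $\cC$ is dualizable I would verify criterion \ref{Lur2}/\ref{Lur4}: given a continuous localization $P:\cB\to\cC$, I want a continuous section. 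I would pull back along $F$: form the localization $P'$ of the presentable category $\cB\oright_{?}\cD$-type gluing (or more simply, use that $\cC$ is a retract of $\cD$ via $F, F^R$ only up to the comonad $FF^R$, which is \emph{not} the identity, so a naive retract argument fails). The cleaner route: $\cC$ is the fiber of the strongly continuous functor $G:\cD\to\cE$, and by Proposition \ref{prop:colimits_in_Pr^LL} (dualized) such a fiber is computed in $\Pr^L_{\st}$; since $\cD$ and $\cE$ are now both known to be dualizable, and the class of dualizable categories is closed under finite limits in $\Pr^L_{\st}$ along strongly continuous functors — which one proves by writing the fiber as a retract built from retracts of $\cD$ and $\cE$, using that $G^R$ is continuous — we conclude $\cC$ is dualizable.

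The main obstacle I anticipate is the $\cC$ half: unlike $\cE$, the subcategory $\cC\subset\cD$ is not a retract of $\cD$ (the comonad $FF^R$ is a genuine co-localization, not a projection onto a retract), so one cannot simply transport a retract presentation of $\cD$ through $F, F^R$. The fix is to use the \emph{continuity of $G^R$}: this is exactly the extra strength packaged in $\Pr^{LL}_{\st}$ over $\Pr^L_{\st}$, and it guarantees that $\cC=\ker(G)$ is not merely presentable but stable under the left adjoint $\hat\cY$, i.e. compactly assembled. Concretely I would verify Definition \ref{defi:strcont_general} directly: show the colimit functor $\colim:\Ind(\cC)\to\cC$ admits a left adjoint, by restricting $\hat\cY_\cD$ along the fully faithful $F$ and checking — using $\Ind(G^R)$ continuous and the fiber sequence above — that the resulting composite lands in $\Ind(\cC)$ and is left adjoint to $\colim_\cC$. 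This is where the hypothesis $G\in\Pr^{LL}_{\st}$ (not just $\Pr^L_{\st}$) is genuinely used.
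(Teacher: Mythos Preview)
Your treatment of $\cE$ is correct and matches the paper. However, your treatment of $\cC$ contains a genuine error: you explicitly assert that ``$\cC$ is not a retract of $\cD$,'' but this is false, and the paper's entire proof is precisely this retract observation.

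The point you are missing is that being a retract in $\Pr^L_{\st}$ only requires continuous functors $i:\cC\to\cD$ and $r:\cD\to\cC$ with $r\circ i\cong\id_{\cC}$; it imposes no condition on $i\circ r$. Here $F:\cC\to\cD$ is fully faithful (it is the fiber in a short exact sequence), so the unit $\id_{\cC}\to F^R F$ is an isomorphism. Since the sequence lives in $\Pr^{LL}_{\st}$, the right adjoint $F^R$ is continuous. Thus $(F,F^R)$ already exhibits $\cC$ as a retract of $\cD$ in $\Pr^L_{\st}$, and criterion \ref{Lur3} of Proposition \ref{prop:Lurie_criterions_dual} finishes the argument immediately. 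Your objection that ``the comonad $FF^R$ is not the identity'' is irrelevant: nobody claims $FF^R\cong\id_{\cD}$, only that $F^R F\cong\id_{\cC}$, which is exactly full faithfulness of $F$. The elaborate detours you propose (closure of dualizable categories under fibers, or a direct construction of $\hat\cY_{\cC}$ from $\hat\cY_{\cD}$) are unnecessary and, as written, not complete arguments.
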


\begin{proof}Indeed, both $\cC$ and $\cE$ are retracts of $\cD$ in $\Pr^L_{\st}:$ the right adjoint functors $F^R$ and $G^R$ are continuous, and we have $F^R F\cong\id_{\cC},$ $G G^R\cong\id_{\cE}.$\end{proof}

We need one more characterization of dualizable categories, which is easy to deduce from Proposition \ref{prop:Lurie_criterions_dual} and which is implicit in the proof of \cite[Proposition D.7.3.1]{Lur18}.

\begin{prop}\label{prop:dualizable_via_ses} Let $\cC$ be a presentable stable $\infty$-category. Then $\cC$ is dualizable if and only if there exists a small stable $\infty$-category $\cA$ and a strongly continuous fully faithful functor $\cC\to\Ind(\cA).$ Equivalently, there is a short exact sequence in $\Cat_{\st}^{\dual}$ of the form \begin{equation}\label{eq:presentation_as_kernel}0\to \cC\to\Ind(\cA)\to\Ind(\cA')\to 0.\end{equation}\end{prop}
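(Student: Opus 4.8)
The plan is to prove both directions using the characterizations already established in Proposition \ref{prop:Lurie_criterions_dual}, together with the Calkin-type construction sketched in the introduction.

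For the ``only if'' direction, suppose $\cC$ is dualizable. By Proposition \ref{prop:Lurie_criterions_dual}\ref{Lur3}, $\cC$ is a retract in $\Pr^L_{\st}$ of a compactly generated category $\Ind(\cB_0)$ for some small idempotent-complete stable $\cB_0$; write $i:\cC\to\Ind(\cB_0)$ and $r:\Ind(\cB_0)\to\cC$ with $r\circ i\simeq\id_{\cC}$, both continuous. The idempotent $e=i\circ r$ on $\Ind(\cB_0)$ is itself continuous, but $i$ need not be strongly continuous a priori, so we cannot yet take $\cA=\cB_0$. Instead, I would recall (this is exactly the content of the compactly assembled characterization \ref{Lur2}, via the left adjoint $\hat\cY_{\cC}$) that $\hat\cY_{\cC}:\cC\to\Ind(\cC)$ is a fully faithful strongly continuous functor: its right adjoint is $\colim:\Ind(\cC)\to\cC$, which is continuous because filtered colimits commute with filtered colimits in $\Ind(\cC)$. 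The only problem is that $\Ind(\cC)$ is not of the form $\Ind(\cA)$ for $\cA$ small, since $\cC$ is large. To fix this, I would replace $\Ind(\cC)$ by $\Ind(\cA)$ where $\cA\subset\cC$ is a small stable subcategory chosen large enough that $\hat\cY_{\cC}(\cC)\subset\Ind(\cA)\subset\Ind(\cC)$ — more precisely, since $\cC$ is presentable it is generated under colimits by a small subcategory, and since the right adjoint $\colim$ restricted to $\Ind(\cA)$ still lands in $\cC$ and is still continuous, one gets a strongly continuous fully faithful functor $\cC\to\Ind(\cA)$. This is the step I expect to require the most care: one must verify that a small $\cA$ can be chosen so that $\hat\cY_{\cC}$ factors through $\Ind(\cA)$ \emph{and} the factored functor remains strongly continuous, i.e. that the relevant left adjoint is preserved; here one uses that $\hat\cY_{\cC}$ preserves compact-like ``wavy'' morphisms and that $\cC$, being compactly assembled, has a small dense subcategory in the appropriate sense.

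For the ``if'' direction, suppose we have a strongly continuous fully faithful functor $j:\cC\to\Ind(\cA)$. Its right adjoint $j^R:\Ind(\cA)\to\cC$ is continuous and $j^R\circ j\simeq\id_{\cC}$, so $\cC$ is a retract of the compactly generated category $\Ind(\cA)$ in $\Pr^L_{\st}$; by Proposition \ref{prop:Lurie_criterions_dual}\ref{Lur3} applied in the reverse direction, $\cC$ is dualizable. This direction is essentially immediate.

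Finally, to pass from ``strongly continuous fully faithful embedding into $\Ind(\cA)$'' to ``short exact sequence \eqref{eq:presentation_as_kernel}'', I would argue as follows. Given $j:\cC\hookrightarrow\Ind(\cA)$ strongly continuous and fully faithful, since $j^R$ is a continuous localization with fully faithful left adjoint $j$ in the sense of $\Pr^R$, the Verdier-type quotient $\Ind(\cA)/\cC$ exists in $\Pr^L_{\st}$ and the sequence $0\to\cC\to\Ind(\cA)\to\Ind(\cA)/\cC\to 0$ is a short exact sequence in $\Pr^L_{\st}$ with strongly continuous functors (the quotient map is strongly continuous because $\cC$ is localizing, hence closed under colimits, so the quotient functor preserves compact objects of $\Ind(\cA)$). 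By Proposition \ref{prop:ses_Pr^LL}, since $\Ind(\cA)$ is dualizable, both $\cC$ and the quotient $\Ind(\cA)/\cC$ are dualizable; but in fact the quotient $\Ind(\cA)/\cC$ is again compactly generated — this uses that $\cC$ is localizing in $\Ind(\cA)$ and that the quotient of a compactly generated category by a localizing subcategory generated by a set is compactly generated, so $\Ind(\cA)/\cC\simeq\Ind(\cA')$ for $\cA'=(\Ind(\cA)/\cC)^{\omega}$. Conversely, any short exact sequence of the shape \eqref{eq:presentation_as_kernel} in $\Cat_{\st}^{\dual}$ exhibits $\cC$ as the fiber, hence as a strongly continuous fully faithful subcategory of $\Ind(\cA)$, closing the equivalence. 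The only subtlety to check here is that the quotient map $\Ind(\cA)\to\Ind(\cA')$ is a morphism in $\Cat_{\st}^{\dual}$, i.e. strongly continuous, which follows since $\cC=\ker$ is generated by a set of objects and hence the Bousfield localization functor is smashing-compatible in the sense needed.
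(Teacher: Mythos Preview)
Your overall strategy matches the paper's proof: for the ``only if'' direction you use the fully faithful strongly continuous functor $\hat{\cY}_{\cC}:\cC\to\Ind(\cC)$ and then factor through $\Ind(\cC_0)$ for a small stable $\cC_0\subset\cC$ (the paper simply cites \cite[Proof of Proposition D.7.3.1]{Lur18} for this factorization, which you correctly flag as the delicate step); for the ``if'' direction you use that $\cC$ is a retract of $\Ind(\cA)$ in $\Pr^L_{\st}$, which is equivalent to the paper's appeal to Proposition~\ref{prop:ses_Pr^LL}. Your detour through criterion~\ref{Lur3} at the start of the ``only if'' direction is unnecessary but harmless.

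There is, however, a genuine gap in your argument for the ``equivalently'' clause. You assert that the quotient $\Ind(\cA)/\cC$ is compactly generated because ``$\cC$ is localizing, hence closed under colimits, so the quotient functor preserves compact objects'', and later that strong continuity of $q$ ``follows since $\cC$ is generated by a set of objects and hence the Bousfield localization functor is smashing-compatible''. Neither claim is valid: an arbitrary localizing subcategory of a compactly generated category can have a non-compactly-generated quotient, and being generated by a set in no way forces the associated Bousfield localization to be smashing. The hypothesis that actually does the work is the one you never invoke here, namely the \emph{strong continuity of $j$}. Because $j^R$ is continuous, the endofunctor $jj^R$ is continuous, hence so is $q^Rq\simeq\Cofib(jj^R\to\id_{\Ind(\cA)})$; one then checks that $q^R$ itself is continuous (for a filtered system $(y_i)$ in $\cE$, applying $q^Rq$ to $\colim_i q^R(y_i)$ shows this colimit already lies in $\cC^\perp$), whence $q$ preserves compacts and $\Ind(\cA)/\cC\simeq\Ind(\cA')$ with $\cA'$ its compact objects. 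The paper does not spell this out either, treating the reformulation as evident, but your stated reasons are incorrect rather than merely terse.
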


\begin{proof}If we have a strongly continuous fully faithful functor $\Phi:\cC\to\Ind(\cA),$ then $\cC$ is dualizable by Proposition \ref{prop:ses_Pr^LL}.

Conversely, suppose that $\cC$ is dualizable. By the criterion \ref{Lur2} of Proposition \ref{prop:Lurie_criterions_dual}, we have a fully faithful strongly continuous functor $\hat{\cY}:\cC\to \Ind(\cC).$ As explained in \cite[Proof of Proposition D.7.3.1]{Lur18} the essential image of $\hat{\cY}$ is contained in $\Ind(\cC_0)$ for some small stable subcategory $\cC_0\subset\cC.$ We obtain a fully faithful strongly continuous functor $\cC\to\Ind(\cC_0),$ as required.\end{proof}

We make the following observation about compact objects, which follows easily from theorems of Neeman and Thomason.

\begin{prop}\label{prop:compact_objects_in_quotients} Let $\cC$ be a dualizable category, and let $\cA\subset \cC^{\omega}$ be a full stable subcategory. Then we have an equivalence $(\cC^{\omega}/\cA)^{\Kar}\xto{\sim}(\cC/\Ind(\cA))^{\omega}.$ For any compact object $x\in (\cC/\Ind(\cA))^{\omega},$ the object $x\oplus x[1]$ can be lifted to a compact object of $\cC.$\end{prop}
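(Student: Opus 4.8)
The plan is to construct the natural comparison functor and then reduce to the compactly generated case, where the statement is the Neeman--Thomason localization theorem. First, the projection $q\colon\cC\to\cC/\Ind(\cA)$ is strongly continuous, since its right adjoint is the inclusion of $\Ind(\cA)^{\perp}=\cA^{\perp}$, which preserves colimits because $\cA\subseteq\cC^{\omega}$. Hence $q$ preserves compact objects and annihilates $\cA$, so it induces a functor $\Phi\colon(\cC^{\omega}/\cA)^{\Kar}\to(\cC/\Ind(\cA))^{\omega}$, and the whole task is to show that $\Phi$ is an equivalence.

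Next I would pass to the maximal compactly generated part: let $\cC_{0}:=\Ind(\cC^{\omega})=\loc_{\cC}\langle\cC^{\omega}\rangle$, whose inclusion into $\cC$ is strongly continuous and which has $\cA\subseteq\cC^{\omega}=\cC_{0}^{\omega}$. Neeman--Thomason localization \cite{Nee01} gives that $\cC_{0}/\Ind(\cA)=\Ind(\cC^{\omega})/\Ind(\cA)$ is compactly generated and that $\cC^{\omega}/\cA\to(\cC_{0}/\Ind(\cA))^{\omega}$ induces an equivalence $(\cC^{\omega}/\cA)^{\Kar}\xto{\sim}(\cC_{0}/\Ind(\cA))^{\omega}$. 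The inclusion $\cC_{0}/\Ind(\cA)\hookrightarrow\cC/\Ind(\cA)$ is fully faithful and, by Proposition~\ref{prop:automatic_strcont}, strongly continuous (its composite with the strongly continuous quotient $\cC_{0}\to\cC_{0}/\Ind(\cA)$ is the strongly continuous functor $\cC_{0}\hookrightarrow\cC\to\cC/\Ind(\cA)$), and under the previous equivalence $\Phi$ is exactly the functor it induces on compact objects. So it remains to see that $\cC/\Ind(\cA)$ has no compact object lying outside $\cC_{0}/\Ind(\cA)$. For this I would use the short exact sequence $0\to\cC_{0}/\Ind(\cA)\to\cC/\Ind(\cA)\to\cC/\cC_{0}\to0$ (the last functor again strongly continuous by Proposition~\ref{prop:automatic_strcont}, hence preserving compact objects) together with the fact that $\cC/\cC_{0}=\cC/\Ind(\cC^{\omega})$ has no nonzero compact objects: a compact object of $\cC/\Ind(\cA)$ is then carried to $0$ in $\cC/\cC_{0}$, so it lies in the kernel $\cC_{0}/\Ind(\cA)$, and it is compact there because a compact object of the ambient category which lies in a localizing subcategory is compact in that subcategory. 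This yields $(\cC/\Ind(\cA))^{\omega}=(\cC_{0}/\Ind(\cA))^{\omega}=(\cC^{\omega}/\cA)^{\Kar}$, so $\Phi$ is an equivalence.

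For the final assertion, once $\Phi$ is an equivalence I would view $x$ as an object of $(\cC^{\omega}/\cA)^{\Kar}$ and observe that $[x\oplus x[1]]=[x]+[x[1]]=0$ in $K_{0}((\cC^{\omega}/\cA)^{\Kar})$. By Thomason's classification of dense triangulated subcategories \cite{Th}, the smallest dense triangulated subcategory of $(\cC^{\omega}/\cA)^{\Kar}$ consists precisely of the objects of vanishing class in $K_{0}$, and it is contained in the image of $\cC^{\omega}/\cA$; since the Verdier projection $\cC^{\omega}\to\cC^{\omega}/\cA$ is the identity on objects, $x\oplus x[1]$ is isomorphic to the image of a compact object of $\cC$, as claimed.

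The step I expect to be the main obstacle is the one input above that is not mere bookkeeping with strongly continuous functors, namely the vanishing $(\cC/\Ind(\cC^{\omega}))^{\omega}=0$: that quotienting a presentable stable category by the localizing subcategory generated by \emph{all} its compact objects destroys compactness. For $\cC$ with $\cC^{\omega}=0$ (for instance sheaves of spectra on a non-compact connected manifold) this is already Neeman's theorem \cite{Nee01b}; the general statement is what lets the compactly generated computation propagate to arbitrary dualizable categories.
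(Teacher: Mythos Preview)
Your reduction to the compactly generated case is in the right spirit, but you go in the opposite direction from the paper, and the gap you flag is fatal as stated. You project down to $\cC_{0}=\Ind(\cC^{\omega})$ and then need the vanishing $(\cC/\cC_{0})^{\omega}=0$; but that vanishing is precisely the special case $\cA=\cC^{\omega}$ of the proposition you are proving. I do not see a proof of this special case that is genuinely easier than the general statement --- every approach I can find (via compact morphisms, via $\hat{\cY}$, via the short exact sequence description) either fails or ends up using an embedding into a compactly generated category. So without an independent argument your reasoning is circular.

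The paper instead embeds upward: choose a fully faithful strongly continuous functor $\cC\hookrightarrow\cD$ with $\cD$ compactly generated (Proposition~\ref{prop:dualizable_via_ses}), and apply Neeman--Thomason to $\cD$. The one substantive point, which the paper states tersely as ``induces'', is this: the inclusion $\cC/\Ind(\cA)\hookrightarrow\cD/\Ind(\cA)$ is fully faithful and strongly continuous, so $(\cC/\Ind(\cA))^{\omega}=(\cC/\Ind(\cA))\cap(\cD/\Ind(\cA))^{\omega}$. Given $x$ in this intersection, Thomason lifts $x\oplus x[1]$ to some $d\in\cD^{\omega}$; the fiber of $d\to x\oplus x[1]$ lies in $\Ind(\cA)\subset\cC$, and $x\oplus x[1]\in\cC$, so $d\in\cC$ because $\cC$ is a stable subcategory of $\cD$. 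Hence $d\in\cC\cap\cD^{\omega}=\cC^{\omega}$, and $x\in(\cC^{\omega}/\cA)^{\Kar}$. This single observation handles the general $\cA$ and your special case simultaneously, with no auxiliary vanishing result needed. Your treatment of the second assertion via Thomason's theorem is fine and matches the paper.
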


\begin{proof}If $\cC$ is compactly generated, then the result follows from \cite[Theorem 2.1]{Nee96}. 

In general, we choose a fully faithful strongly continuous functor $\cC\to\cD,$ where $\cD$ is compactly generated. Then the equivalence $(\cD^{\omega}/\cA)^{\Kar}\xto{\sim}(\cD/\Ind(\cA))^{\omega}$ induces an equivalence $(\cC^{\omega}/\cA)^{\Kar}\xto{\sim}(\cC/\Ind(\cA))^{\omega}.$ 

The second assertion of the proposition follows from Thomason's theorem \cite[Theorem 2.1]{Th}.\end{proof}

Next, we show that any dualizable category is $\omega_1$-compactly generated. This follows from the following result which is (to the best of our knowledge) due to H.~Krause.

\begin{prop}\label{prop:generating_the_kernel}Let $F:\cA\to \cB$ be a functor between small stable $\infty$-categories. Then the category $\ker(\Ind(F):\Ind(\cA)\to \Ind(\cB))$ is $\omega_1$-compactly generated, and its $\omega_1$-compact objects are exactly of the form
\begin{equation}\label{eq:objects_of_ker}\inddlim(x_1\xto{f_1} x_2\xto{f_2}\dots),\quad \text{ where }F(f_n)=0\text{ in }\cB\text{ for }n\geq 1.\end{equation}
\end{prop}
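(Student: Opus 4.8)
The plan is to identify $\ker(\Ind(F))$ explicitly as an $\Ind_{\omega_1}$-category and to read off the $\omega_1$-compact objects. First I would reduce to the case where $F$ is an inclusion of a full subcategory: factoring $F$ through its ``image'' does not literally work in the stable setting, but we may replace $\cB$ by the Karoubi completion of the quotient $\cB/\cA$ is not what we want either; instead the cleanest route is to work directly with the localization sequence $\Ind(\cA)\to\Ind(\cB)$ is not exact unless $F$ is a homological epimorphism, so I would instead argue as follows. The kernel $\cK:=\ker(\Ind(F))$ is a localizing subcategory of the compactly generated category $\Ind(\cA)$, hence is itself presentable; by Proposition \ref{prop:dualizable_via_ses} (or directly, since a localizing subcategory of a compactly generated category is dualizable as a retract is not automatic — rather one uses that $\cK$ is the kernel of a continuous functor and Krause's results) it is $\omega_1$-compactly generated. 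The real content is the description \eqref{eq:objects_of_ker} of the $\omega_1$-compact objects.

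For that description, the key step is the following ``soft truncation'' construction. Given any object $y=\inddlim[i\in I] y_i$ of $\Ind(\cA)$ with $I$ a countable filtered category (an $\omega_1$-compact object), the condition $\Ind(F)(y)\cong 0$ means $\inddlim[i] F(y_i)\cong 0$ in $\Ind(\cB)$, i.e. the identity of this ind-object is zero, which by the description of maps in $\Ind$-categories means: for every $i$ there exists a morphism $i\to i'$ in $I$ such that $F(y_i)\to F(y_{i'})$ is zero in $\cB$. Using that $I$ is countable and filtered, I would extract a cofinal sequence $i_1\to i_2\to\cdots$ in $I$ such that each composite $F(y_{i_n})\to F(y_{i_{n+1}})$ vanishes; cofinality of the sequence gives $y\cong\inddlim(y_{i_1}\to y_{i_2}\to\cdots)$, which is exactly of the form \eqref{eq:objects_of_ker} with $x_n=y_{i_n}$, $f_n$ the transition map. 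Conversely, any object of the form \eqref{eq:objects_of_ker} lies in $\cK$ since $\Ind(F)$ of it is the ind-object $\inddlim(0\xto{}0\xto{}\cdots)$ (as each $F(f_n)=0$) is not literally zero termwise but the colimit is zero because all transition maps vanish — this is the standard fact that a sequential colimit with zero transition maps is zero. So such objects are in $\cK$, and they are $\omega_1$-compact in $\Ind(\cA)$ (countable sequential colimit of objects of $\cA$), hence $\omega_1$-compact in the localizing subcategory $\cK$.

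It remains to see that these objects \emph{generate} $\cK$ under colimits (equivalently, that $\cK$ is $\omega_1$-compactly generated with these as the $\omega_1$-compact generators). For this I would argue: every object of $\cK$ is an $\omega_1$-filtered colimit of $\omega_1$-compact objects of $\Ind(\cA)$ lying in $\cK$ — but one must check that an $\omega_1$-compact object of $\Ind(\cA)$ \emph{that happens to lie in} $\cK$ can be written in the form \eqref{eq:objects_of_ker}, which is precisely the soft-truncation argument of the previous paragraph (an $\omega_1$-compact object of $\Ind(\cA)$ is a countable filtered colimit of objects of $\cA$, and membership in $\cK$ forces the vanishing of transition maps after passing to a cofinal sequence). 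Then $\cK=\Ind_{\omega_1}(\cK\cap\Ind(\cA)^{\omega_1})$ follows formally, and the $\omega_1$-compact objects of $\cK$ are exactly the retracts of objects of the form \eqref{eq:objects_of_ker}; since $\cK$ is stable and such objects are closed under finite colimits and shifts up to the form \eqref{eq:objects_of_ker} (a shift of $\inddlim(x_n)$ is $\inddlim(x_n[1])$, a finite colimit is handled by interleaving the sequences), one checks the class is already idempotent-complete, or absorbs the retracts into the same shape.

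\textbf{Main obstacle.} The delicate point is the cofinality/diagonalization argument: turning the abstract vanishing condition ``$\Ind(F)(y)\cong0$'' for an arbitrary $\omega_1$-compact $y$ into a concrete presentation as a countable sequence with vanishing transition maps. One has to be careful that the witnessing maps $F(y_i)\to F(y_{i'})=0$ can be chosen compatibly along a \emph{single} cofinal sequence rather than a tree, which uses countability of the index category in an essential way — this is exactly where $\omega_1$ (rather than $\omega$) enters, and it is also why the statement is about $\omega_1$-compact rather than compact objects. A secondary subtlety is verifying that the resulting class of objects \eqref{eq:objects_of_ker} is closed under retracts in $\cK$ (equivalently, idempotent complete), for which one interleaves sequences and uses that an idempotent on $\inddlim(x_n)$ is represented, after passing to a cofinal subsequence, by an honest sequence of morphisms, then splits it level by level using idempotent-completeness of $\cA$ — but one should double check no Karoubi completion of $\cA$ is needed beyond what is assumed.
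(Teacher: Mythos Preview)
The paper does not argue this directly: it simply records that the statement is a special case of Krause \cite[Theorem 7.4.1]{Kr}. Your direct argument is therefore a genuinely more self-contained route, and your cofinality/diagonalization step---showing that an $\omega_1$-compact object of $\Ind(\cA)$ lying in $\cK:=\ker(\Ind(F))$ can be presented in the form \eqref{eq:objects_of_ker}---is correct and is the heart of the matter.

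There is, however, a real gap in the generation step. You assert that ``every object of $\cK$ is an $\omega_1$-filtered colimit of $\omega_1$-compact objects of $\Ind(\cA)$ lying in $\cK$'' and then say $\cK=\Ind_{\omega_1}(\cK\cap\Ind(\cA)^{\omega_1})$ ``follows formally''; but this assertion is precisely what needs proof, and nothing you have written establishes it. The fix is a covering argument: given $y\in\cK$ presented as $\inddlim[i\in I]y_i$ with $I$ directed, consider the poset $P$ of countable directed subsets $J\subset I$ with the closure property that for every $j\in J$ there exists $j'\in J$, $j'\geq j$, with $F(y_j\to y_{j'})=0$. Using $y\in\cK$ one checks that $P$ is $\omega_1$-directed (closed under countable unions) and cofinal in the poset of all countable directed subsets of $I$; then $y\cong\indlim[J\in P]\,\inddlim[j\in J]y_j$ exhibits $y$ as an $\omega_1$-filtered colimit of objects of $\cK\cap\Ind(\cA)^{\omega_1}$.

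Your worry about retracts is unnecessary. Once the previous paragraph is in place, the objects of the form \eqref{eq:objects_of_ker} coincide \emph{exactly} with $\cK\cap\Ind(\cA)^{\omega_1}$, and the latter is automatically idempotent-complete (both $\cK$ and $\Ind(\cA)^{\omega_1}$ are, being full subcategories of the presentable $\Ind(\cA)$ closed under retracts). No level-wise idempotent splitting is needed, and no Karoubi completion of $\cA$ is required: $\omega_1$-compact objects of $\Ind(\cA)$ are already countable ind-systems in $\cA$ itself.
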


\begin{proof}This is a special case of \cite[Theorem 7.4.1]{Kr}.\end{proof}

\begin{remark}Note that the statement of \cite[Theorem 7.4.1]{Kr} implies that for any uncountable regular cardinals $\kappa<\lambda$ the (non-full) inclusion $\Pr^L_{\st,\kappa}\to\Pr^L_{\st,\lambda}$ commutes with fibers (i.e. with kernels). It is easy to deduce that this inclusion commutes with all $\kappa$-small limits. 

Indeed, it is sufficient to prove this for fiber products and for $\kappa$-small products. Given $\kappa$-strongly continuous functors $F:\cA\to\cC$ and $G:\cB\to\cC$ between $\kappa$-presentable categories, we can first form the lax fiber product $\cA\oright_{F^R G}\cB.$ The functor $F^R G$ is $\kappa$-accessible, hence by Proposition \ref{prop:gluing_presentable} the category $\cA\oright_{F^R G}\cB$ is $\kappa$-presentable. The natural functor $\cA\oright_{F^R G}\cB\to\cC$ preserves $\kappa$-compact objects. Hence, by \cite[Theorem 7.4.1]{Kr} its kernel $\cA\times_{\cC}\cB$ is $\kappa$-compactly generated by the $\kappa$-compact objects of $\cA\oright_{F^R G}\cB.$ Finally, if $\{\cC_i\}_{i\in I}$ is a $\kappa$-small collection of $\kappa$-presentable categories, then the category $\prodd[i]\cC_i$ is also $\kappa$-presentable, and $(\prodd[i]\cC_i)^{\kappa}=\prodd[i]\cC_i^{\kappa}.$\end{remark}



\begin{cor}\label{cor:dualizable_is_aleph_1_generated} Any dualizable category $\cC$ is $\omega_1$-compactly generated. In particular, $\Cat_{\st}^{\dual}$ is naturally a full subcategory of $\Pr^{LL}_{\st,\omega_1}$ (and a non-full subcategory of $\Pr^L_{\st,\omega_1}$).\end{cor}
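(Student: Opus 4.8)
The plan is to deduce the statement directly from two earlier results: the presentation of a dualizable category as a kernel (Proposition~\ref{prop:dualizable_via_ses}) and H.~Krause's description of such kernels (Proposition~\ref{prop:generating_the_kernel}), the latter of which already supplies an explicit set of $\omega_1$-compact generators. So essentially no new argument is needed beyond putting $\cC$ into the right shape.

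First I would invoke Proposition~\ref{prop:dualizable_via_ses} to fix, for a given dualizable $\cC$, a short exact sequence $0\to\cC\to\Ind(\cA)\xto{P}\Ind(\cA')\to 0$ in $\Cat_{\st}^{\dual}$ with $\cA,\cA'$ small stable. Since $P$ is a morphism of $\Cat_{\st}^{\dual}$, it is strongly continuous, so its right adjoint is continuous; equivalently $P$ preserves compact objects, hence restricts to an exact functor $F\colon\Ind(\cA)^{\omega}=\cA^{\Kar}\to(\cA')^{\Kar}=\Ind(\cA')^{\omega}$. As $\Ind(\cA)=\Ind(\cA^{\Kar})$ and colimit-preserving functors out of an ind-category are determined by their restriction to compact objects, $P\simeq\Ind(F)$, and therefore $\cC\simeq\ker(\Ind(F)\colon\Ind(\cA^{\Kar})\to\Ind((\cA')^{\Kar}))$. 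Now Proposition~\ref{prop:generating_the_kernel} applies verbatim and gives that $\cC$ is $\omega_1$-compactly generated, with $\omega_1$-compact objects exactly the countable sequential colimits $\inddlim(x_1\xto{f_1}x_2\xto{f_2}\dots)$ for which $F(f_n)=0$ for all $n\geq 1$; in particular $\cC$ is $\omega_1$-presentable, since for a cocomplete stable category $\omega_1$-presentability coincides with being compactly generated by a set of $\omega_1$-compact objects.

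Finally I would record the consequences for the ambient categories. A strongly continuous functor has a right adjoint commuting with all filtered colimits, hence in particular with $\omega_1$-filtered ones, so every $1$-morphism of $\Cat_{\st}^{\dual}$ is one of $\Pr^{LL}_{\st,\omega_1}$ and of $\Pr^L_{\st,\omega_1}$; combined with the first part, this exhibits $\Cat_{\st}^{\dual}$ as the full subcategory of $\Pr^{LL}_{\st,\omega_1}$ spanned by the dualizable objects (both categories having strongly continuous functors as morphisms), and as an a priori non-full subcategory of $\Pr^L_{\st,\omega_1}$, whose morphisms only require the right adjoint to preserve $\omega_1$-filtered colimits. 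The only step that is not a one-line citation is the identification $P\simeq\Ind(F)$, i.e.\ the bookkeeping that a strongly continuous functor between compactly generated categories is $\Ind$ of a map of Karoubi-completed compact parts; this is routine, and I do not expect any genuine obstacle.
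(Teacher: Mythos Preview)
Your proposal is correct and follows exactly the approach the paper intends: the paper's proof is the single line ``This is a direct consequence of Propositions~\ref{prop:dualizable_via_ses} and~\ref{prop:generating_the_kernel},'' and you have simply unpacked the bookkeeping (that a strongly continuous functor between compactly generated categories is $\Ind$ of its restriction to compact objects) needed to apply the second proposition. Your additional paragraph on the ``in particular'' clause is also correct and more explicit than what the paper writes.
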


\begin{proof}This is a direct consequence of Propositions \ref{prop:dualizable_via_ses} and \ref{prop:generating_the_kernel}.
\end{proof}

\begin{cor}\label{cor:hat_Y_into_Ind_C_omega_1} For a dualizable category $\cC,$ the essential image of the functor $\hat{\cY}:\cC\to\Ind(\cC)$ is contained in the full subcategory $\Ind(\cC^{\omega_1}).$\end{cor}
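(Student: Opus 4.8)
The plan is to reduce to $\omega_1$-compact objects and then invoke the explicit description of the kernel from Proposition~\ref{prop:generating_the_kernel}. First I would observe that $\hat{\cY}_{\cC}$ is a left adjoint, namely to $\colim\colon\Ind(\cC)\to\cC$, hence preserves all colimits; moreover $\colim$ is itself a left adjoint (to the Yoneda embedding $\cY_{\cC}$), hence continuous, so $\hat{\cY}_{\cC}$ carries $\omega_1$-compact objects of $\cC$ to $\omega_1$-compact objects of $\Ind(\cC)$. Since the inclusion $\cC^{\omega_1}\hookrightarrow\cC$ preserves finite colimits ($\omega_1$-compact objects form a stable subcategory closed under finite colimits), the induced functor $\Ind(\cC^{\omega_1})\to\Ind(\cC)$ is fully faithful and colimit-preserving, so its essential image is closed under colimits in $\Ind(\cC)$. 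By Corollary~\ref{cor:dualizable_is_aleph_1_generated}, $\cC$ is $\omega_1$-compactly generated, so every object of $\cC$ is an ($\omega_1$-)filtered colimit of objects of $\cC^{\omega_1}$; as $\hat{\cY}_{\cC}$ preserves colimits, it therefore suffices to prove $\hat{\cY}_{\cC}(y)\in\Ind(\cC^{\omega_1})$ for a single $\omega_1$-compact object $y$.

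Next, using Proposition~\ref{prop:dualizable_via_ses} I would fix a short exact sequence $0\to\cC\xto{\hat{\cY}_{\cC}}\Ind(\cA)\xto{\Ind(F)}\Ind(\cA')\to 0$ in $\Cat_{\st}^{\dual}$, where the first functor is the corestriction of $\hat{\cY}_{\cC}$ and $\cA\subseteq\cC$ is a small stable subcategory (the quotient functor is strongly continuous, hence preserves compact objects, hence has the form $\Ind(F)$ for an exact $F\colon\cA\to\cA'$, after replacing $\cA'$ by its Karoubi completion). Proposition~\ref{prop:generating_the_kernel} then identifies the $\omega_1$-compact objects of $\cC\simeq\ker(\Ind(F))$: our $\hat{\cY}_{\cC}(y)$ has the form $\inddlim(x_1\xto{f_1}x_2\xto{f_2}\cdots)$ with $x_n\in\cA\subseteq\cC$ and $F(f_n)=0$. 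Transporting along $\Ind(\cA)\hookrightarrow\Ind(\cC)$, this reads $\hat{\cY}_{\cC}(y)\simeq\inddlim[n]\cY_{\cC}(x_n)$, a countable filtered colimit in $\Ind(\cC)$ of representables of the objects $x_n$ of $\cA$.

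The remaining --- and main --- point is to see that this ind-object lies in $\Ind(\cC^{\omega_1})$, and for this it suffices to know that the category $\cA$ in Proposition~\ref{prop:dualizable_via_ses} can be chosen inside $\cC^{\omega_1}$, i.e.\ that the building blocks $x_n$ are $\omega_1$-compact in $\cC$: granting this, $\inddlim[n]\cY_{\cC}(x_n)$ visibly lies in the essential image of $\Ind(\cC^{\omega_1})\to\Ind(\cC)$, and combined with the first paragraph the proof is complete. To arrange $\cA\subseteq\cC^{\omega_1}$ I would revisit the argument behind Proposition~\ref{prop:dualizable_via_ses} (and \cite[Proposition D.7.3.1]{Lur18}) with the cardinal $\omega_1$ in place of the ambient cardinal: since $\hat{\cY}_{\cC}$ preserves $\omega_1$-compact objects and $\cC$ is $\omega_1$-compactly generated, for $y\in\cC^{\omega_1}$ the ind-object $\hat{\cY}_{\cC}(y)$, which is the filtered colimit of the representables $\cY_{\cC}(c)$ over the compact morphisms $c\to y$, is $\omega_1$-compact in $\Ind(\cC)$; one must then check that the sources $c$ of these compact morphisms may be chosen among the $\omega_1$-compact objects of $\cC$, and that collecting them over a small set of $\omega_1$-compact generators yields the desired small stable $\cA\subseteq\cC^{\omega_1}$. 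This last verification --- controlling the sources of compact morphisms into an $\omega_1$-compact object --- is where the real work lies, and is the step I expect to be the main obstacle.
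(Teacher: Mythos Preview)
Your reduction in the first paragraph to $y\in\cC^{\omega_1}$ is sound, but from there the plan runs in a circle. Knowing that $\hat{\cY}(y)$ is $\omega_1$-compact in $\Ind(\cC)$ only tells you it is a sequential ind-object $\inddlim[n] x_n$ with $x_n\in\cC$; the detour through Propositions~\ref{prop:dualizable_via_ses} and~\ref{prop:generating_the_kernel} yields nothing beyond this. Your remaining task --- arranging that the $x_n$ (equivalently the sources of compact morphisms into $y$, equivalently the category $\cA$) lie in $\cC^{\omega_1}$ --- is precisely the statement of the corollary: by Proposition~\ref{prop:compact_maps_in_dualizable_cats} compact morphisms $c\to y$ correspond to lifts $\cY(c)\to\hat{\cY}(y)$, so asking that a cofinal family of such sources lie in $\cC^{\omega_1}$ is exactly asking that $\hat{\cY}(y)\in\Ind(\cC^{\omega_1})$. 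The ``main obstacle'' you flag is the whole problem, and revisiting Lurie's accessibility argument with the cardinal $\omega_1$ does not help, since that argument only produces a \emph{small} $\cA\subset\cC$ with no control on the $\omega_1$-compactness of its objects.

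The paper's proof is a two-line adjunction argument that never touches an explicit ind-system. Since $\cC$ is $\omega_1$-compactly generated (Corollary~\ref{cor:dualizable_is_aleph_1_generated}), the colimit functor factors as
\[
\colim\colon\Ind(\cC)\xrightarrow{R}\Ind(\cC^{\omega_1})\longrightarrow\cC,
\]
where $R$ is right adjoint to the fully faithful inclusion; passing to left adjoints shows that $\hat{\cY}$ factors through $\Ind(\cC^{\omega_1})$.
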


\begin{proof}Since $\cC$ is $\omega_1$-compactly generated, the colimit functor factors through $\Ind(\cC^{\omega_1}):$
$$\colim:\Ind(\cC)\to\Ind(\cC^{\omega_1})\to\cC.$$
Here the first functor is the right adjoint to the inclusion. Hence, the functor $\hat{\cY}$ factors through $\Ind(\cC^{\omega_1}).$
\end{proof}

\begin{remark}In general, let $\cC$ be a dualizable category and $\cA\subset\cC$ is a (strictly) full stable subcategory. Then the essential image of $\hat{\cY}$ is contained in $\Ind(\cA)$ if and only if the functor $\Ind(\cA)\to\cC$ is a quotient functor. This condition is much stronger than the condition that $\cA$ generates $\cC$ by colimits.

For example, let $\mk$ be a field and consider the $3$-dimensional $\mk$-algebra $A=\mk[x,y]/(x^2,xy,y^2),$ and let $\cB\subset D(\cA)$ be the stable subcategory generated by $A^*$ -- $\mk$-linear dual of $A.$ Then $\cB$ generates $D(A)$ by colimits, but the functor $\Ind(\cB)\to D(A)$ is not a quotient functor.\end{remark}

We give a proof that any compactly assembled category (not necessarily cocomplete) is $\omega_1$-compactly generated.

\begin{prop}\label{prop:comp_ass_omega_1_gen} Let $\cC$ be an accessible compactly assembled category. Then $\cC$ is $\omega_1$-accessible. Moreover, the $\omega_1$-compact objects of $\cC$ are exactly of the form $x=\indlim[](x_1\to x_2\to\dots),$ where for each $n$ the morphism $\cY(x_n)\to\cY(x_{n+1})$ factors through $\hat{\cY}(x_{n+1}).$\end{prop}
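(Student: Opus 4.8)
The plan is to reduce the statement for general accessible compactly assembled $\cC$ to the already-understood cocomplete (dualizable) case, by passing to $\Ind(\cC)$, and then to transport the characterization of $\omega_1$-compact objects through the adjunction $\hat{\cY}_{\cC}\dashv\colim$. First I would recall that since $\cC$ is accessible, it is $\lambda$-accessible for some regular $\lambda$, and the colimit functor $\colim:\Ind(\cC)\to\cC$ has a left adjoint $\hat{\cY}=\hat{\cY}_{\cC}$ which is fully faithful; in particular $\cC$ is a retract of $\Ind(\cC)$ in the category of accessible categories and continuous functors. The category $\Ind(\cC)$ is $\omega_1$-accessible (indeed compactly generated in the appropriate sense), and a retract of an $\omega_1$-accessible category along continuous functors is again $\omega_1$-accessible — this gives the first assertion that $\cC$ is $\omega_1$-accessible. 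Here I would cite or reprove the elementary fact that $\omega_1$-accessibility is inherited by retracts in $\Compass$, analogously to the stable statement in Corollary \ref{cor:dualizable_is_aleph_1_generated}.

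For the description of the $\omega_1$-compact objects, the key observation is that $x\in\cC$ is $\omega_1$-compact if and only if $\hat{\cY}(x)\in\Ind(\cC)$ is $\omega_1$-compact: one direction is because $\colim$ preserves $\omega_1$-compact objects (its right adjoint $\cY$ is continuous, hence commutes with $\omega_1$-filtered colimits — wait, one must instead use that $\hat{\cY}$ has right adjoint $\colim$ which is continuous, so $\hat{\cY}$ preserves $\omega_1$-compactness), and the other because $\colim\circ\hat{\cY}\cong\id_{\cC}$ and $\colim$ preserves $\omega_1$-compactness as well (its right adjoint $\cY$ preserves filtered, in particular $\omega_1$-filtered, colimits). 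So $\omega_1$-compact objects of $\cC$ correspond exactly to those $\omega_1$-compact objects of $\Ind(\cC)$ lying in the essential image of $\hat{\cY}$. Now an $\omega_1$-compact object of $\Ind(\cC)$ is by definition a countable filtered (equivalently, a sequential) colimit $\indlim(\cY(x_1)\to\cY(x_2)\to\cdots)$ of representables. Applying $\colim$ recovers $x=\indlim(x_1\to x_2\to\cdots)$ in $\cC$, and the condition that this ind-object be in the image of $\hat{\cY}$, i.e. equal to $\hat{\cY}(\colim)$ of itself, unwinds — using the unit/counit identities of $\hat{\cY}\dashv\colim$ and the fact that $\hat{\cY}(y)=\indlim$ over $z\to y$ ``way-below'' — precisely to the condition that each transition map $\cY(x_n)\to\cY(x_{n+1})$ factors through $\hat{\cY}(x_{n+1})$. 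Conversely, given such a sequence, its colimit in $\Ind(\cC)$ is $\omega_1$-compact and one checks it is fixed by $\hat{\cY}\circ\colim$, hence lies in $\cC$; I would verify the factorization condition is exactly what makes the canonical map $\hat{\cY}(\colim x_\bullet)\to\indlim\cY(x_\bullet)$ an isomorphism.

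The technical heart, and the step I expect to be the main obstacle, is the bookkeeping in the last point: carefully matching ``$\hat{\cY}(x_{n+1})$-factorizations of $\cY(x_n)\to\cY(x_{n+1})$'' with ``$\hat{\cY}\circ\colim$ fixes the ind-object $\indlim\cY(x_\bullet)$''. Concretely one has the counit $\hat{\cY}\colim\to\id$ on $\Ind(\cC)$; evaluated on $\indlim\cY(x_\bullet)$ it is a map $\hat{\cY}(x)\to\indlim\cY(x_n)$ where $x=\colim x_\bullet$, and one must show this is an equivalence iff the factorization holds for all $n$. This amounts to the standard ``continuous category = category of rounded ideals'' type argument (cf.\ Johnstone--Joyal \cite{JJ}), where $\hat{\cY}(x)$ is the ind-object given by the poset of objects ``way-below'' $x$; the factorization condition says each $x_n$ is way-below $x_{n+1}$, hence way-below $x$, so the $x_n$ are cofinal in that poset, giving the equivalence. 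I would phrase this cleanly using the way-below relation and a cofinality argument, rather than manipulating adjunction triangles directly. Everything else — $\omega_1$-accessibility, the compatibility of $\colim$ and $\hat{\cY}$ with $\omega_1$-compactness — is formal.
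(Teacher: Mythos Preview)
Your overall strategy---transport everything through the adjunction $\hat{\cY}\dashv\colim$---is exactly what the paper does. The characterization of $\omega_1$-compact objects is handled essentially as you outline: one checks that $x$ is $\omega_1$-compact iff $\hat{\cY}(x)$ is, writes $\hat{\cY}(x)$ as a sequential ind-object, and then uses $\hat{\cY}(x)\cong\indlim_n\hat{\cY}(x_n)$ to extract the factorizations. One point you glossed over: given an arbitrary presentation $\hat{\cY}(x)=\inddlim(x_1\to x_2\to\dots)$, the factorization $\cY(x_n)\to\hat{\cY}(x_{n+1})$ need not hold for \emph{every} $n$; one only gets that for each $n$ there is some $k>n$ with $\cY(x_n)\to\hat{\cY}(x_k)$, and must pass to a subsequence. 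This is minor but should be said.

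The genuine gap is in your argument for $\omega_1$-accessibility. You write that $\colim$ preserves $\omega_1$-compactness because ``its right adjoint $\cY$ preserves filtered, in particular $\omega_1$-filtered, colimits''. This is false: the Yoneda embedding $\cY:\cC\to\Ind(\cC)$ preserves limits, not filtered colimits (indeed $\cY(\indlim_i x_i)$ is the representable on the colimit, whereas $\indlim_i\cY(x_i)$ is the formal ind-object, and these differ unless the colimit is trivial). Consequently $\colim$ does \emph{not} take arbitrary $\omega_1$-compact objects of $\Ind(\cC)$ (i.e.\ arbitrary countable ind-objects) to $\omega_1$-compact objects of $\cC$---only those of the special form described in the proposition. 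So the bare retract argument cannot produce, for a general $x\in\cC$, an $\omega_1$-filtered diagram of $\omega_1$-compacts with colimit $x$. (Relatedly, $\Ind(\cC)$ is not an accessible category in the usual sense when $\cC$ is large, so invoking ``$\omega_1$-accessibility is inherited by retracts'' is already delicate.)

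The paper closes this gap with an explicit combinatorial construction. Writing $\hat{\cY}(x)=\inddlim_{i\in I}x_i$ with $I$ directed, one considers the poset $J'$ of order-preserving maps $f:\N\to I$ such that each $\cY(x_{f(n)})\to\cY(x_{f(n+1)})$ factors through $\hat{\cY}(x_{f(n+1)})$; this is cofinal in the poset of all order-preserving maps $\N\to I$. Passing to the quotient $J=J'/\!\sim$ by eventual equality yields an $\omega_1$-directed poset, and $x\cong\indlim_{\bar{f}\in J}\indlim_n x_{f(n)}$ exhibits $x$ as an $\omega_1$-directed colimit of objects already shown to be $\omega_1$-compact. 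This step---manufacturing an $\omega_1$-directed index out of a merely directed one by taking germs of sequences---is the content you are missing, and it does not follow from general retract formalities.
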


\begin{proof}Suppose that $x=\indlim[x]x_n$ is a sequential colimit as above. Then $$\hat{\cY}(x)\cong\indlim[n]\hat{\cY}(x_n)\cong\indlim[n]\cY(x_n).$$ Hence, the object $\hat{\cY}(x)$ is $\omega_1$-compact in $\Ind(\cC).$ Since $\hat{\cY}$ is fully faithful and commutes with filtered colimits, we see that $x$ is $\omega_1$-compact in $\cC.$ Conversely, suppose that $x$ is $\omega_1$-compact. Since the right adjoint to $\hat{\cY}$ commutes with filtered colimits, we see that $\hat{\cY}(x)$ is $\omega_1$-compact in $\cC.$ Hence, $\hat{\cY}(x)=\inddlim(x_1\to x_2\to\dots)$ for some sequence $(x_n)_{n\geq 1}.$ Then $\hat{\cY}(x)\cong\indlim[n]\hat{\cY}(x_n),$ hence for each $n$ there is some $k>n$ such that the map $\cY(x_n)\to\cY(x_k)$ factors through $\hat{\cY}(x_k).$ Passing to a subsequence, we may assume that $k=n+1,$ as required.

It remains to show that any object of $\cC$ is an $\omega_1$-directed colimit of $\omega_1$-compact objects. Let $x\in\cC,$ and $\hat{\cY}(x)=\inddlim[i\in I]x_i,$ where $I$ is directed. Let $J''$ be the poset of order-preserving maps $f:\N\to I.$ Then $J''$ is directed and
$$x=\indlim[f\in J'']\indlim[n]x_{f(n)}.$$
Let $J'\subset J''$ be the subset of maps $f:\N\to I,$ such that for each $n\geq 0$ the map $\cY(x_{f(n)})\to\cY(x_{f(n+1)})$ factors through $\hat{\cY}(x_{f(n+1)}).$ Arguing as above, we see that the inclusion $J'\to J''$ is cofinal, in particular $J'$ is directed. 

Denote by $J$ the poset of equivalence classes $J'/\sim,$ where $f\sim g$ if $f(n)=g(n)$ for $n\gg 0.$ The partial order on $J$ is the following: given elements $\bar{f},\bar{g}\in J$ with representatives $f,g\in J',$ we have $\bar{f}\leq\bar{g}$ if $f(n)\leq g(n)$ for $n\gg 0.$ Then the poset $J$ is $\omega_1$-directed, and the map $J'\to J$ is cofinal. Moreover, the functor $J'\to \cC,$ $f\mapsto \indlim[n]x_{f(n)},$ factors uniquely through $J.$ Therefore, we have
$$x=\indlim[\bar{f}\in J]\indlim[n]x_{f(n)}.$$ This shows that $x$ is an $\omega_1$-directed colimit of $\omega_1$-compact objects of $\cC.$\end{proof}


\begin{remark} One can show that the category $\Compass$ is $\omega_1$-presentable. More precisely, it is generated by colimits by a single $\omega_1$-compact object, given by the poset $\R\cup\{+\infty\}$ with the usual order, see Appendix \ref{app:Urysohn}, more precisely, Remark \ref{rem:Urysohn_for_Compass}. In fact, the category $\Compass$ shares a lot of good properties with the category $\Cat_{\st}^{\dual}.$ However, in this paper we will mostly restrict our attention to dualizable categories, i.e. compactly assembled stable categories.\end{remark}

\subsection{Examples of compactly assembled categories, stable and non-stable} 
\label{ssec:examples}

We first give a few examples of dualizable (=compactly assembled stable) categories. 

\begin{example}Let $X$ be a quasi-compact quasi-separated scheme. By \cite[Theorem 3.1.1]{BVdB03} the derived category $D_{qc}(X)$ of complexes of $\cO_X$-modules with quasi-coherent cohomology (considered as a stable $\infty$-category) is compactly generated, hence dualizable.\end{example}

\begin{example}(Faltings \cite{Fa02}, Gabber-Ramero \cite{GR03}) Let $A$ be an (associative unital) ring, and let $J\subset A$ be a two-sided ideal such that $J^2=J$ (we do not assume that $J$ or $J\tens{A}J$ is flat as a left or right module). Then $\Mod\hy A/J\subset\Mod\hy A$ is a Serre subcategory. Denote by $\Mod_a\hy A$ the Serre quotient $\Mod\hy A/\Mod\hy (A/J).$ Then the derived category $D(\Mod_a\hy A)$ is dualizable. 

Indeed, the subcategory $D_{\Mod\hy A/J}(A)\subset D(A)$ is (presentable and) closed under products and coproducts, hence the inclusion functor has both left and right adjoints. Since $D(\Mod_a\hy A)\simeq D(A)/D_{\Mod\hy A/J}(A),$ we have a short exact sequence in $\Cat_{\st}^{\dual}:$
\begin{equation}\label{eq:ses_for_Mod_a_general} 0\to D(\Mod_a\hy A)\to D(A)\xto{\Phi} D_{\Mod\hy A/J}(A)\to 0,\end{equation}
where the functor $\Phi$ is the left adjoint to the inclusion.

It is formal that we have a DG ring $B$ such that $D_{\Mod\hy A/J}(A)\simeq D(B),$ and $\Phi$ is identified with $-\stackrel{\bL}{\tens{A}}B.$ In general (without flatness assumptions) we can only say that $B$ is connective (i.e. $H^{>0}(B)=0$), $H^0(B)\cong A/J,$ and $H^0(\Fiber(A\to B))\cong J\tens{A}J$ (the non-derived tensor product).  If $J$ is flat as a right (or left) $A$-module, then we have $B=A/J.$ More generally, if $J\tens{A}J$ is flat as a right (or left) $A$-module, then $B$ is the DG ring $\Cone(J\tens{A}J\to A).$\end{example}

\begin{example}Let $X$ be a locally compact Hausdorff space. Then the category $\Sh(X;\Sp)$ of sheaves of spectra is dualizable (note that we do not require hyperdescent for sheaves). More generally, for any dualizable category $\cC$ the category $\Sh(X;\cC)\simeq\Sh(X;\Sp)\otimes\cC$ is dualizable. Even more generally, if $\un{\cC}$ is a presheaf on $X$ with values in $\Cat_{\st}^{\dual},$ then the category $\Sh(X;\un{\cC})$ is dualizable, see Section \ref{sec:sh_loc_comp}. We will prove in Section \ref{sec:sh_loc_comp} that $$K^{\cont}(\Sh(X;\un{\cC}))\cong \Gamma_c(X,(K^{\cont}(\un{\cC}))^{\sharp}),$$ where $(-)^{\sharp}$ denotes the sheafification.\end{example}

\begin{example}\label{ex:nuc} Let $R$ be a (for simplicity) noetherian commutative ring, and let $I\subset R$ be an ideal. Clausen and Scholze \cite{CS} defined the category $\Nuc(R_{\hat{I}})$ of nuclear solid modules. This category is dualizable but not compactly generated (unless $I$ is nilpotent). The compact objects are given by the usual category of perfect complexes $\Perf(R_{\hat{I}}).$ Moreover, the category $\Nuc(R_{\hat{I}})$ is closely related with another category $\wt{\Nuc}(R_{\hat{I}})$ which can be defined as an inverse limit in $\Cat_{\st}^{\dual}:$
$$\wt{\Nuc}(R_{\hat{I}})=\prolim[n]^{\dual}D(R/I^n).$$
We refer to Subsection \ref{ssec:general_limits_Cat^dual} for a discussion of general inverse limits in $\Cat_{\st}^{\dual}.$ We will study both categories $\Nuc(R_{\hat{I}})$ and $\wt{\Nuc}(R_{\hat{I}})$ in great detail in the forthcoming paper \cite{E1}. In particular, we will prove that there is a natural fully faithful strongly continuous functor $\Nuc(R_{\hat{I}})\to \wt{\Nuc}(R_{\hat{I}})$ which induces equivalences on compact objects, and we have
$$K^{\cont}(\Nuc(R_{\hat{I}}))\cong K^{\cont}(\wt{\Nuc}(R_{\hat{I}}))\cong\prolim[n]K(R/I^n).$$\end{example}

We now give some examples of compactly assembled non-stable categories.

\begin{example}Given an associative unital ring $A,$ the category $\Flat\hy A$ of flat right $A$-modules is compactly generated by Lazard's theorem, hence it is compactly assembled.\end{example}

\begin{example}Let $P$ be a poset considered as a category. Then $P$ is compactly assembled if and only if $P$ is a continuous poset, see Subsection \ref{ssec:cont_posets} for the characterization of such posets. For example, the poset $\R\cup\{+\infty\}$ with the usual order is continuous, and we have $\hat{\cY}(x)=\inddlim[y<x]y.$ Another example is the poset $\Open(X)$ of open subsets of a locally compact Hausdorff space. In this case we have $\hat{\cY}(U)=\inddlim[V\Subset U]V.$\end{example}

\begin{example}Let $X$ be a locally compact Hausdorff space. Then the ordinary category $\Sh(X;\Set)$ is compactly assembled, see \cite{JJ}. By \cite{Lur18}, the $\infty$-category $\Sh(X;\cS)$ of sheaves of spaces is compactly assembled.\end{example}

The following example might be new (to the best of our knowledge).

\begin{example}\label{ex:seminorm} Denote by $\seminorm_1$ the category of seminormed $\R$-vector spaces, where the morphisms are given by contractive linear maps (i.e. the maps $F:X\to Y$ such that $||F(x)||\leq ||x||$). Then $\seminorm_1$ is presentable and compactly assembled. Indeed, the category $\seminorm_1$ has colimits, and it is generated by colimits  by finite-dimensional {\it normed} vector spaces and by the object $(\R;0)$ ($1$-dimensional vector space with zero seminorm). We have $$\hat{\cY}((\R;0))=\inddlim[\veps>0](\R;\veps |\cdot|),$$
and for a finite-dimensional normed space $V$ we have 
$$\hat{\cY}((V;||\cdot||))=\inddlim[c>1](V;c||\cdot||).$$\end{example}

\subsection{Continuous stabilization}
\label{ssec:cont_stab}

We describe the left adjoint to the (non-full) inclusion $\Cat_{\st}^{\dual}\to\Compass.$ If $\cC$ is an accessible $\infty$-category with filtered colimits, we denote by $\Stab^{\cont}(\cC)$ the (presentable stable) category of cofiltered limit-preserving functors $\cC^{op}\to\Sp.$ If $\cC$ is $\kappa$-accessible, we can identify the category $\Stab^{\cont}(\cC)$ with the category of functors $(\cC^{\kappa})^{op}\to\Sp$ which commute with $\kappa$-small cofiltered limits. The inclusion $\Stab^{\cont}(\cC)\to\Fun((\cC^{\kappa})^{op},\Sp)$ commutes with limits and $\kappa$-filtered colimits, hence it has a left adjoint $F\mapsto F^{\sharp}.$

For $x\in\cC,$ we denote by $h_x:(\cC^{\kappa})^{op}\to\Sp$ the ``representable'' presheaf, given by $h_x(y)=\bS[\cC(y,x)],$ $y\in\cC^{\kappa}.$ For any presentable stable category $\cD$ we have an equivalence
\begin{equation}\label{eq:cont_stab_general} \Fun^{L}(\Stab^{\cont}(\cC),\cD)\xto{\sim} \Fun^{\cont}(\cC,\cD),\quad F\mapsto(x\mapsto F(h_x^{\sharp})).\end{equation}

\begin{prop}\label{prop:continuous_stab}Let $\cC$ be a compactly assembled accessible category. Then the category $\Stab^{\cont}(\cC)$ is dualizable. The left adjoint to the inclusion $\Cat_{\st}^{\dual}\to \Compass$ is given by $\cC\mapsto\Stab^{\cont}(\cC).$\end{prop}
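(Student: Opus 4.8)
The plan is to deduce both assertions from the universal property \eqref{eq:cont_stab_general}, the characterizations of dualizability in Propositions \ref{prop:Lurie_criterions_dual} and \ref{prop:strong_continuity_via_hat_Y}, and the $\omega_{1}$-accessibility of compactly assembled categories (Proposition \ref{prop:comp_ass_omega_1_gen}). Write $j_{\cC}\colon\cC\to\Stab^{\cont}(\cC)$, $x\mapsto h_{x}^{\sharp}$, for the universal continuous functor, so that \eqref{eq:cont_stab_general} reads $\Fun^{L}(\Stab^{\cont}(\cC),\cD)\xto{\sim}\Fun^{\cont}(\cC,\cD)$, $F\mapsto F\circ j_{\cC}$, for every presentable stable $\cD$. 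Note first that $\Stab^{\cont}$ is functorial for continuous functors: a continuous $f\colon\cC\to\cC'$ gives, via \eqref{eq:cont_stab_general} applied to $j_{\cC'}\circ f$, a colimit-preserving $\Stab^{\cont}(f)$ with $\Stab^{\cont}(f)\circ j_{\cC}\simeq j_{\cC'}\circ f$, and functoriality follows because colimit-preserving functors out of $\Stab^{\cont}(\cC)$ are detected by restriction along $j_{\cC}$.

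\emph{Dualizability.} By Proposition \ref{prop:comp_ass_omega_1_gen} the category $\cC$ is $\omega_{1}$-accessible, and since $\hat{\cY}_{\cC}$ is accessible there is an essentially small full subcategory $\cC_{0}\subseteq\cC$ with $\hat{\cY}_{\cC}(\cC)\subseteq\Ind(\cC_{0})$ (as in the proof of \cite[Proposition D.7.3.1]{Lur18}). The resulting functor $\hat{\cY}_{\cC}\colon\cC\to\Ind(\cC_{0})$ is continuous and fully faithful, and composing it with the continuous functor $\Ind(\cC_{0})\hookrightarrow\Ind(\cC)\xto{\colim}\cC$ exhibits $\cC$ as a retract of $\Ind(\cC_{0})$ through continuous functors. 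Applying $\Stab^{\cont}$ and using that $\Stab^{\cont}(\Ind(\cC_{0}))\simeq\Fun(\cC_{0}^{op},\Sp)$ — which follows from \eqref{eq:cont_stab_general}, since continuous functors out of $\Ind(\cC_{0})$ are just arbitrary functors out of $\cC_{0}$ — we get that $\Stab^{\cont}(\cC)$ is a retract in $\Pr^{L}_{\st}$ of the compactly generated category $\Fun(\cC_{0}^{op},\Sp)$, hence dualizable by Proposition \ref{prop:Lurie_criterions_dual}\,\ref{Lur3}.

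\emph{The adjunction.} It remains to check that for $\cD\in\Cat_{\st}^{\dual}$ the equivalence $F\mapsto F\circ j_{\cC}$ of \eqref{eq:cont_stab_general} restricts to one between strongly continuous functors $\Stab^{\cont}(\cC)\to\cD$ and strongly continuous functors $\cC\to\cD$; naturality in $\cC$ then yields the claimed left adjoint. Granting that $j_{\cC}$ is strongly continuous (the main point, below), this is formal: since $\Stab^{\cont}(\cC)$ and $\cD$ are both dualizable, Proposition \ref{prop:strong_continuity_via_hat_Y} tells us a colimit-preserving $F$ is strongly continuous iff the natural map $\hat{\cY}_{\cD}\circ F\to\Ind(F)\circ\hat{\cY}_{\Stab^{\cont}(\cC)}$ is an isomorphism; as both sides preserve colimits, this may be tested on the generators $h_{x}^{\sharp}=j_{\cC}(x)$, $x\in\cC^{\omega_{1}}$, where, via $\hat{\cY}_{\Stab^{\cont}(\cC)}\circ j_{\cC}\simeq\Ind(j_{\cC})\circ\hat{\cY}_{\cC}$, it becomes precisely the condition that $\hat{\cY}_{\cD}\circ(F\circ j_{\cC})\to\Ind(F\circ j_{\cC})\circ\hat{\cY}_{\cC}$ be an isomorphism, i.e.\ that $F\circ j_{\cC}$ be strongly continuous.

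\emph{The main obstacle} is the strong continuity of $j_{\cC}$, namely $\hat{\cY}_{\Stab^{\cont}(\cC)}\circ j_{\cC}\simeq\Ind(j_{\cC})\circ\hat{\cY}_{\cC}$. As both composites are continuous functors $\cC\to\Ind(\Stab^{\cont}(\cC))$ and $\cC\simeq\Ind_{\omega_{1}}(\cC^{\omega_{1}})$, it suffices to compare them on $x\in\cC^{\omega_{1}}$. I would use the ``relatively compact'' description of these functors: $\hat{\cY}_{\cC}(x)$ is the filtered colimit of the representables $\cY_{\cC}(w)$ over maps $w\to x$ that factor through $\hat{\cY}_{\cC}(x)$, and similarly $\hat{\cY}_{\Stab^{\cont}(\cC)}(h_{x}^{\sharp})$ is built from the $\omega_{1}$-compact objects $h_{y}^{\sharp}$ (observe $h_{x}^{\sharp}$ is $\omega_{1}$-compact in $\Stab^{\cont}(\cC)$, since $\Map(h_{x}^{\sharp},-)=\mathrm{ev}_{x}$ commutes with $\omega_{1}$-filtered colimits, these being computed pointwise). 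The remaining task is to show that $w\mapsto h_{w}^{\sharp}$ sends the system of relatively compact maps into $x$ to a cofinal subsystem of the relatively compact maps into $h_{x}^{\sharp}$; that it lands there is immediate from continuity of $j_{\cC}$, while cofinality is a diagram chase with the reflection $(-)^{\sharp}$ and the defining factorizations, best organized via the retract onto $\Fun(\cC_{0}^{op},\Sp)$ from the dualizability step, where $\hat{\cY}=\Ind(\mathrm{Yoneda})$ is explicit. I expect this cofinality statement to be the bulk of the work.
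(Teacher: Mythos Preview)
Your dualizability argument via the retract criterion is correct and takes a genuinely different route from the paper: by functoriality of $\Stab^{\cont}$ on continuous functors, the retract $\cC\xto{\hat{\cY}_{\cC}}\Ind(\cC_0)\to\cC$ induces a retract of $\Stab^{\cont}(\cC)$ onto $\Fun(\cC_0^{op},\Sp)$ in $\Pr^L_{\st}$, and Proposition~\ref{prop:Lurie_criterions_dual}\ref{Lur3} applies. Your reduction of the adjunction statement to the strong continuity of $j_{\cC}$ is also correct and matches the paper's final paragraph.

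The gap is exactly the part you flag as the ``main obstacle''. The cofinality argument is not carried out, and your suggested route via the retract onto $\Fun(\cC_0^{op},\Sp)$ is problematic: to transport $\hat{\cY}$ along the retract you would need the section or retraction to be \emph{strongly} continuous, which is essentially what you are trying to prove (note that $rs\simeq\id$ in $\Pr^L_{\st}$ does \emph{not} force $s$ to be a monomorphism in the $\infty$-categorical sense, so you cannot even assume $s$ is fully faithful). The paper bypasses all of this with a direct computation. Since $\cC$ is $\omega_1$-accessible, for $\kappa\geq\omega_1$ the left adjoint to the inclusion $\Stab^{\cont}(\cC)\hookrightarrow\Fun((\cC^{\kappa})^{op},\Sp)$ is given explicitly by
\[
F^{\sharp}(x)\;=\;\prolim_{j} F(x_j),\qquad\text{where }\hat{\cY}_{\cC}(x)=\inddlim_j x_j.
\]
Using this, for any ind-system $(F_i)_i$ in $\Stab^{\cont}(\cC)$ one computes
\[
\Hom\bigl(h_x^{\sharp},\indlim_i F_i\bigr)\;\cong\;\prolim_j\indlim_i F_i(x_j)\;\cong\;\Hom\bigl(\inddlim_j h_{x_j}^{\sharp},\,\inddlim_i F_i\bigr),
\]
which shows in one stroke that the a priori partially defined $\hat{\cY}_{\Stab^{\cont}(\cC)}$ exists on the generators $h_x^{\sharp}$ with value $\inddlim_j h_{x_j}^{\sharp}$ --- hence $\Stab^{\cont}(\cC)$ is dualizable --- and that $j_{\cC}$ is strongly continuous. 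This explicit sheafification formula is the ingredient your proposal is missing; with it, your entire ``main obstacle'' paragraph can be replaced by the two lines above.
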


\begin{proof} By Proposition \ref{prop:comp_ass_omega_1_gen} the category $\cC$ is $\omega_1$-compactly generated. If $\kappa\geq \omega_1$ is a regular cardinal, then the left adjoint to the inclusion $\Stab^{\cont}(\cC)\to\Fun((\cC^{\kappa})^{op},\Sp)$ has an explicit description:
\begin{equation}\label{eq:sheafification_on_compass_cats}F\mapsto F^{\sharp},\quad F^{\sharp}(x)=\prolim[j]F(x_j),\quad\text{where }x\in\cC^{\kappa},\,\hat{\cY}(x)=\inddlim[j]x_j.\end{equation}

We observe that the (a priori partially defined) functor $\hat{\cY}:\Stab^{\cont}(\cC)\to\Ind(\Stab^{\cont}(\cC))$ is defined on objects of the form $h_x^{\sharp}$ and is given by $$\hat{\cY}(h_x^{\sharp})=\inddlim[j]h_{x_j}^{\sharp},\quad x\in\cC^{\omega_1},\,\hat{\cY}(x)=\inddlim[j]x_j.$$  Indeed, if $(F_i)_i$ is an ind-system in $\Stab^{\cont}(\cC),$ then applying \eqref{eq:sheafification_on_compass_cats} we get
$$\Hom(h_x^{\sharp},\indlim[i]F_i)\cong\prolim[j]\indlim[i]F_i(x_j)\cong\Hom(\inddlim[j]h_{x_j}^{\sharp},\inddlim[i]F_i).$$
Since the category $\Stab^{\cont}(\cC)$ is generated by the objects $h_x^{\sharp},$ we conclude that $\Stab^{\cont}(\cC)$ is dualizable and the functor $x\mapsto h_x^{\sharp}$ is strongly continuous.


By the above, if $\cD$ is dualizable, then a continuous functor $F:\Stab^{\cont}(\cC)\to\cD$ is strongly continuous iff we have $\hat{\cY}(F(h_x^{\sharp}))\cong \Ind(F)(\hat{\cY}(h_x^{\sharp})).$ This exactly means that the functor $x\mapsto F(h_x^{\sharp})$ is strongly continuous. This proves the proposition.\end{proof}

Below we will be especially interested in the special case of Proposition \ref{prop:continuous_stab} for the poset $\cC=(\R\cup\{+\infty\},\leq),$ considered as a compactly assembled category.

\begin{remark}\label{rem:Stab_cont_otimes_something} Note that if $\cC$ is an accessible $\infty$-category with filtered colimits, and $\cD$ is a presentable stable category, then the tensor product $\Stab^{\cont}(\cC)\otimes\cD$ is identified with the category of cofiltered limit-preserving functors $\cC^{op}\to\cD.$ This follows from \eqref{eq:cont_stab_general} and from the equivalence
$$\Stab^{\cont}(\cC)\otimes\cD\simeq\Fun^L(\Stab^{\cont}(\cC),\cD^{op})^{op}.$$\end{remark}

\subsection{Duality as a covariant involution on dualizable categories}
\label{ssec:duality_covariant}

Note that the category $\Cat^{\perf}$ has a covariant involution $\cA\mapsto\cA^{op}.$ We observe that $\Cat_{\st}^{\dual}$ also has a covariant involution $(-)^{\vee},$ so that $\Ind(\cA)^{\vee}=\Ind(\cA^{op})$ for $\cA\in\Cat^{\perf}.$

Namely, given a strongly continuous functor $F:\cC\to\cD$ between dualizable categories, we see that the dual functor $F^{\vee}:\cD^{\vee}\to\cC^{\vee}$ has a left adjoint $F^{\vee,L}:\cC^{\vee}\to\cD^{\vee}.$ More precisely, we have $F^{\vee,L}=(F^R)^{\vee},$ where $F^R:\cD\to\cC$ is the right adjoint to $F.$

This gives a well-defined functor $(-)^{\vee}:\Cat_{\st}^{\dual}\to \Cat_{\st}^{\dual},$ which is naturally an involution. In particular, the functor $(-)^{\vee}$ commutes with all limits and colimits in $\Cat_{\st}^{\dual}.$

\begin{remark}In fact, the involution $(-)^{\vee}:\Cat_{\st}^{\dual}\to \Cat_{\st}^{\dual}$ can be naturally extended to an involution $(-)^{\cont\hy op}:\Compass\to\Compass.$ This follows from the following statement, which we will address elsewhere.

Denote by $\Cat^{\dual}$ the non-full subcategory of $\Pr^L$ formed by dualizable objects; the $1$-morphisms in $\Cat^{\dual}$ are functors which have a colimit-preserving right adjoint. We claim that there is a natural equivalence $\Compass\xto{\sim}\Cat^{\dual}.$ This equivalence sends a compactly assembled accessible $\infty$-category $\cC$ to the category $\hat{\cC}$ of cofiltered limit-preserving functors $\cC^{op}\to\cS.$ The inverse functor sends a dualizable (non-stable) category $\cD$ to the category of left exact colimit-preserving functors $\cD^{\vee}\to\cS.$\end{remark}

\begin{remark} Explicitly, the involution $(-)^{\cont\hy op}:\Compass\to\Compass$ can be described as follows. Given $\cC\in\Compass,$ the category $\cC^{\cont\hy op}\subset\Ind(\cC^{op})\simeq\Pro(\cC)^{op}$ is the full subcategory formed by pro-objects $\proolim[i\in I]x_i,$ where $I$ is codirected, such that for any $i\in I$ there exists $j\leq i$ such that the map $\cY(x_j)\to\cY(x_i)$ in $\Ind(\cC)$ factors through $\hat{\cY}(x_i).$

For example, if $X$ is a locally compact Hausdorff space, then $\Open(X)^{\cont\hy op}\cong \msK(X)^{op}$ -- the poset of compact subsets of $X$ with the reverse inclusion order.\end{remark} 

\subsection{Dualizability via compact morphisms}
\label{ssec:dual_via_compact}

We give a criterion for dualizability of a presentable stable category $\cC$ in terms of the homotopy category $\h\cC.$ At least conceptually, most of this subsection is covered by the results of \cite{Kr00, Kr05}, applied to the smashing subcategory $\h\hat{\cY}(\cC)\subset\h\Ind(\cC^{\omega_1})$ (assuming that $\cC$ is dualizable). In particular, the ideal of compact maps corresponds to this smashing subcategory as in \cite[Theorem A]{Kr00} and \cite[Theorem 1]{Kr05}. However, the criterion for dualizability does not follow formally from the results in loc.cit. Definition \ref{defi:compact_morphism} and a version of Theorem \ref{th:dualizability_via_compact_maps} below were explained to me by Dustin Clausen. Proposition \ref{prop:compact_maps_and_wavy_arrows} seems to be new.

Let $\cC$ be a presentable stable category. 

\begin{defi}\label{defi:compact_morphism} A morphism $f:x\to y$ in $\cC$ is called compact if for any ind-object $\inddlim[i]z_i\in\Ind(\cC)$ and for any morphism $g:y\to \indlim[i]z_i,$ the composition $g\circ f$ factors through some $z_i.$\end{defi}

\begin{remark}1) Definition \ref{defi:compact_morphism} also makes sense for non-stable presentable categories. However, in this section we consider only stable categories.

2) For an object $x\in\cC,$ compactness of the identity map $\id_x$ is equivalent to the compactness of $x.$
\end{remark}

The following observation is essentially the motivation for Definition \ref{defi:compact_morphism}.

\begin{prop}\label{prop:compact_maps_in_dualizable_cats} Suppose that $\cC$ is dualizable. The following are equivalent for a morphism $f:x\to y$ in $\cC.$

\begin{enumerate}[label=(\roman*),ref=(\roman*)]
\item $f$ is compact. \label{comp1}

\item The map $\cY(x)\to\cY(y)$ in $\Ind(\cC)$ factors through $\hat{\cY}(y).$ \label{comp2}
\end{enumerate}
\end{prop}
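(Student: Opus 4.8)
The plan is to prove both implications directly from the adjunction $\hat{\cY}_{\cC}\dashv\colim$ together with the defining property of compact morphisms. For the implication \ref{comp2}$\Rightarrow$\ref{comp1}: suppose the composite $\cY(x)\to\cY(y)$ factors as $\cY(x)\xto{u}\hat{\cY}(y)\to\cY(y)$, where the second map is the counit of the adjunction (evaluated on $y$, using $\colim\hat{\cY}\cong\id$). Given any ind-object $\inddlim[i]z_i\in\Ind(\cC)$ and any morphism $g:y\to\indlim[i]z_i$ in $\cC$, I want to show $g\circ f$ factors through some $z_i$. First observe that $\hat{\cY}(y)$ is a \emph{compact} object of $\Ind(\cC)$: indeed $\hat{\cY}$ is left adjoint to $\colim$, which commutes with filtered colimits, so $\Map_{\Ind(\cC)}(\hat{\cY}(y),-)\cong\Map_{\cC}(y,\colim(-))$ preserves filtered colimits. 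Now apply $\Ind(\cY)$ (or rather use that $\cY$ is fully faithful) to the morphism $g$, and compose: we get $\cY(x)\xto{u}\hat{\cY}(y)\to\cY(y)\xto{\cY(g)}\cY(\indlim[i]z_i)$. Since $\cY$ commutes with filtered colimits, $\cY(\indlim[i]z_i)\cong\indlim[i]\cY(z_i)$ in $\Ind(\Ind(\cC))$... here I need to be a little careful about which ind-category I am in. The cleaner route: the morphism $g:y\to\indlim[i]z_i$ together with $u:\cY(x)\to\hat{\cY}(y)$ shows that $\cY(x)\to\cY(\indlim[i]z_i)$ in $\Ind(\cC)$ factors through the compact object $\hat{\cY}(y)$; since $\cY(\indlim[i]z_i)\cong\indlim[i]\cY(z_i)$ as a filtered colimit in $\Ind(\cC)$, compactness of $\hat{\cY}(y)$ forces the factorization to land through some $\cY(z_i)$, hence $g\circ f$ factors through $z_i$ by fully faithfulness of $\cY$.

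For the converse \ref{comp1}$\Rightarrow$\ref{comp2}: here I would use the canonical presentation of $\hat{\cY}(y)$ as a filtered colimit. Since $\cC$ is dualizable, $\hat{\cY}(y)\cong\inddlim[j\in J]y_j$ for some filtered $J$ and a diagram $j\mapsto y_j$ in $\cC$ (using Corollary \ref{cor:hat_Y_into_Ind_C_omega_1}, one can even take $y_j\in\cC^{\omega_1}$, though this is not needed). The counit $\hat{\cY}(y)\to\cY(y)$ identifies with the canonical map $\inddlim[j]y_j\to\cY(\indlim[j]y_j)=\cY(y)$ since $\colim\hat{\cY}\cong\id$. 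Now apply compactness of $f$ directly: we have the morphism $g=\id_y:y\to y=\colim\hat{\cY}(y)=\indlim[j]y_j$ (viewing $\hat{\cY}(y)=\inddlim[j]y_j$ as the relevant ind-object, with $\colim$ of it being $y$). By Definition \ref{defi:compact_morphism} applied with this ind-object and $g=\id_y$, the composite $\id_y\circ f=f$ factors as $x\xto{}y_j\xto{}y$ for some $j$. Applying the fully faithful $\cY$ and using that $y_j\to y$ factors through $\hat{\cY}(y)$ (it is one of the structure maps of the ind-presentation, composed with the canonical map to $\cY(y)$), we get that $\cY(x)\to\cY(y)$ factors through $\hat{\cY}(y)$, which is exactly \ref{comp2}.

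The main subtlety I anticipate is bookkeeping around the several nested ind-categories and making precise the identification "$\hat{\cY}(y)$ as an object of $\Ind(\cC)$ is a filtered colimit whose $\colim$ is $y$" — i.e. carefully distinguishing the ind-object $\hat{\cY}(y)\in\Ind(\cC)$ from its value under $\colim$, and matching the counit map $\hat{\cY}(y)\to\cY(y)$ with the canonical colimit comparison. The compactness of $\hat{\cY}(y)$ in $\Ind(\cC)$, which is the crux of the first direction, is a formal consequence of the adjunction but should be stated explicitly. I expect no genuine obstacle beyond this, since both directions are essentially unwinding the adjunction; the whole proof should be short.
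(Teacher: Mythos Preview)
Your argument for \ref{comp1}$\Rightarrow$\ref{comp2} is correct and is exactly what the paper does: apply the definition of compactness to the ind-object $\hat{\cY}(y)$ and the map $\id_y:y\to\colim\hat{\cY}(y)=y$.

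Your argument for \ref{comp2}$\Rightarrow$\ref{comp1}, however, contains two genuine errors. First, the claim that $\hat{\cY}(y)$ is compact in $\Ind(\cC)$ is false in general: the adjunction gives $\Map_{\Ind(\cC)}(\hat{\cY}(y),-)\cong\Map_{\cC}(y,\colim(-))$, but for this to preserve filtered colimits you would need $\Map_{\cC}(y,-)$ to do so, i.e.\ $y$ itself to be compact in $\cC$. Second, the assertion $\cY(\indlim[i]z_i)\cong\indlim[i]\cY(z_i)$ is also false: the Yoneda embedding $\cY:\cC\to\Ind(\cC)$ does \emph{not} preserve filtered colimits (the right-hand side is the ind-object $\inddlim[i]z_i$, which typically differs from the left-hand side).

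The fix is minor and is what the paper does: use the adjunction $\hat{\cY}\dashv\colim$ to transport $g:y\to\indlim[i]z_i$ to a map $h:\hat{\cY}(y)\to\inddlim[i]z_i$ in $\Ind(\cC)$ (target the ind-object itself, not $\cY(\indlim[i]z_i)$). Composing with your $u:\cY(x)\to\hat{\cY}(y)$ gives a map $\cY(x)\to\inddlim[i]z_i$, and now it is $\cY(x)$ that is compact in $\Ind(\cC)$ (representables always are), so this composite factors through some $\cY(z_i)$. By fully faithfulness of $\cY$ you recover the desired factorization of $g\circ f$ through $z_i$.
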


\begin{proof}\Implies{comp1}{comp2}. This follows from the definition of compactness, applied to the identity map $y\to\colim(\hat{\cY}(y))=y.$

\Implies{comp1}{comp2}. For any ind-object $\inddlim[i]z_i$ and a map $g:y\to\indlim[i]z_i,$ we can identify $g$ with the map $h:\hat{\cY}(y)\to\inddlim[i]z_i.$ Composing $h$ with the map $\cY(x)\to\hat{\cY}(y),$ we get an element of $\indlim[i]\pi_0(\Hom(x,z_i)).$ It is represented by some map $x\to z_i.$ The composition $x\to z_i\to\indlim[i]z_i$ is homotopic to $g\circ f.$ Hence, $f$ is compact.
\end{proof}

\begin{cor}\label{cor:decomposing_compact_morphisms} In a dualizable category $\cC,$ any compact morphism is (homotopic to) a composition of two compact morphisms.\end{cor}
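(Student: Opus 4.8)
The plan is to reduce the statement, via Proposition~\ref{prop:compact_maps_in_dualizable_cats}, to an explicit factorization inside $\Ind(\cC)$. Let $f\colon x\to y$ be a compact morphism in the dualizable category $\cC$. By Proposition~\ref{prop:compact_maps_in_dualizable_cats} the morphism $\cY(f)\colon\cY(x)\to\cY(y)$ factors as $\cY(x)\xto{\hat f}\hat{\cY}(y)\xto{\varepsilon_y}\cY(y)$, where $\varepsilon_y\colon\hat{\cY}(y)\to\cY(y)$ is the canonical map (the one corresponding to $\id_y$ under the adjunction between $\hat{\cY}$ and $\colim$, as in the proof of Proposition~\ref{prop:compact_maps_in_dualizable_cats}). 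I would then write $\hat{\cY}(y)$ as an ind-object $\hat{\cY}(y)=\inddlim[i\in I]y_i$, with structure maps $s_i\colon\cY(y_i)\to\hat{\cY}(y)$; applying $\colim$ gives $y=\colim\hat{\cY}(y)=\indlim[i\in I]y_i$, with structure maps $p_i:=\colim(s_i)\colon y_i\to y$. Since $\hat{\cY}$ is a left adjoint it preserves this colimit, so $\hat{\cY}(y)$ is at the same time the colimit in $\Ind(\cC)$ of the diagram $i\mapsto\hat{\cY}(y_i)$, with structure maps $\theta_i:=\hat{\cY}(p_i)\colon\hat{\cY}(y_i)\to\hat{\cY}(y)$. (To avoid set-theoretic issues one works inside $\Ind(\cC_0)$ for a small full subcategory $\cC_0\subseteq\cC$, as in the proof of Proposition~\ref{prop:dualizable_via_ses}; recall also Corollary~\ref{cor:hat_Y_into_Ind_C_omega_1}.)

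Next I would extract the two compact morphisms. Since $\cY(x)$ is a compact object of $\Ind(\cC)$ and $\hat{\cY}(y)$ is a filtered colimit of the $\hat{\cY}(y_i)$, the morphism $\hat f$ factors through some structure map: $\hat f=\theta_i\circ\beta$ with $\beta\colon\cY(x)\to\hat{\cY}(y_i)$. Composing $\beta$ with the canonical map $\varepsilon_{y_i}\colon\hat{\cY}(y_i)\to\cY(y_i)$ and using that $\cY$ is fully faithful, we obtain a morphism $g\colon x\to y_i$ with $\cY(g)=\varepsilon_{y_i}\circ\beta$; as $\cY(g)$ factors through $\hat{\cY}(y_i)$, Proposition~\ref{prop:compact_maps_in_dualizable_cats} shows $g$ is compact. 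Likewise $\varepsilon_y\circ s_i\colon\cY(y_i)\to\cY(y)$ equals $\cY(q)$ for a unique $q\colon y_i\to y$, and $q$ is compact because $\cY(q)$ factors through $\hat{\cY}(y)$ via $s_i$.

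Finally I would check that $f\simeq q\circ g$. The one nonformal ingredient is the compatibility $s_i\circ\varepsilon_{y_i}=\theta_i$ relating the two systems of structure maps of $\hat{\cY}(y)$; this follows from the naturality of the counit of the adjunction between $\hat{\cY}$ and $\colim$ together with the triangle identities, by evaluating naturality at $s_i\colon\cY(y_i)\to\hat{\cY}(y)$ and using that $\hat{\cY}\circ\colim$ sends $s_i$ to $\hat{\cY}(p_i)=\theta_i$. Granting it, $\cY(q\circ g)=\varepsilon_y\circ s_i\circ\varepsilon_{y_i}\circ\beta=\varepsilon_y\circ\theta_i\circ\beta=\varepsilon_y\circ\hat f=\cY(f)$, hence $f\simeq q\circ g$ is a composite of two compact morphisms. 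The point requiring care -- rather than a genuine obstacle -- is this bookkeeping: one must keep the two systems of structure maps of $\hat{\cY}(y)$ separate and, crucially, order the choices so that $g$ and $q$ come out compact simultaneously, factoring $\hat f$ through $\theta_i$ first and only then defining $g$ (factoring $\hat f$ directly through an $s_i$ produces a compact $q$ but gives no control over the other factor).
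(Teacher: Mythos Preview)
Your proof is correct and follows essentially the same approach as the paper's: lift $f$ to $\hat f\colon\cY(x)\to\hat{\cY}(y)$, use that $\hat{\cY}$ preserves colimits to rewrite $\hat{\cY}(y)\simeq\indlim_i\hat{\cY}(y_i)$, factor $\hat f$ through some $\hat{\cY}(y_{i_0})$ by compactness of $\cY(x)$, and read off the two compact morphisms $x\to y_{i_0}\to y$. The paper's proof simply asserts the conclusion at this point, whereas you spell out the bookkeeping (the compatibility $s_i\circ\varepsilon_{y_i}\simeq\theta_i$ via the counit naturality and triangle identity) that makes the composite equal to $f$; this extra care is welcome but not a different argument.
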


\begin{proof} Let $f:x\to y$ be a compact morphism, and choose its lift $\tilde{f}:\cY(x)\to\hat{\cY}(y).$ Let $\hat{\cY}(y)=\inddlim[i\in I]y_i,$ where $I$ is filtered. Since $\hat{\cY}$ commutes with colimits, we have $\hat{\cY}(y)\simeq\indlim[i]\hat{\cY}(y_i).$ Hence, the morphism $\tilde{f}$ factors through some $\hat{\cY}(y_{i_0}).$ We obtain compact morphisms $x\to y_{i_0},$ $y_{i_0}\to y,$ and their composition is $f.$\end{proof}

\begin{prop}\label{prop:compactness_of_morphisms_definitions} Let $f:x\to y$ be a morphism in a presentable stable category $\cC.$ The following are equivalent.

\begin{enumerate}[label=(\roman*),ref=(\roman*)]
\item $f$ is compact. \label{compdef1}

\item for any ind-object $\inddlim[i]z_i\in\Ind(\cC)$ such that $\indlim[i]z_i=0,$ the map of abelian groups
$$\indlim[i]\pi_0\Hom(y,z_i)\to \indlim[i]\pi_0\Hom(x,z_i)$$
is zero. \label{compdef2}
\end{enumerate}
\end{prop}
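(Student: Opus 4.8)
The plan is to prove the equivalence \ref{compdef1} $\iff$ \ref{compdef2} directly from Definition \ref{defi:compact_morphism}, without invoking dualizability of $\cC$ (unlike Proposition \ref{prop:compact_maps_in_dualizable_cats}, which used it). The key point is that the condition "$g\circ f$ factors through some $z_i$" can be rephrased using the stability of $\cC$: a map from a compact-enough source into a filtered colimit factors through a stage precisely when it is \emph{zero} after mapping to the colimit, provided one works with the whole ind-system of "cofibers". So the strategy is to translate the factorization statement into a vanishing statement.

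First I would prove \Implies{compdef1}{compdef2}. Given an ind-object $\inddlim[i]z_i$ with $\indlim[i]z_i=0$, take an element $\alpha\in\indlim[i]\pi_0\Hom(y,z_i)$, represented by a map $g_0:y\to z_{i_0}$ for some index $i_0$; let $g:y\to\indlim[i]z_i$ be the composite with the structure map. Since $\indlim z_i=0$, the map $g$ is the zero map; but it is also the colimit of the maps $y\to z_i$, so compactness of $f$ applied to $g$ shows that $g_0\circ f:x\to z_{i_0}$ becomes null-homotopic after composing with $z_{i_0}\to\indlim z_i$... actually more carefully: compactness says $g\circ f$ factors through some $z_j$; but $g\circ f$ is the zero map $x\to 0$, which does not immediately give the claim. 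The cleaner route is: $g\circ f$ factoring through $z_j$ means there is $h:x\to z_j$ with $x\xto{h}z_j\to\indlim z_i$ homotopic to $g\circ f=0$. Since $\indlim z_i=0$, the map $z_j\to\indlim z_i$ is zero, so this is automatic and gives nothing. I therefore need a better argument: replace the ind-system $(z_i)$ by the ind-system of iterated cones. Precisely, given $\alpha\in\indlim \pi_0\Hom(y,z_i)$ represented by $g_0:y\to z_{i_0}$, I form the ind-system $(z_i)_{i\geq i_0}$ itself and use that the image of $\alpha$ in $\indlim\pi_0\Hom(x,z_i)$ is represented by $g_0\circ f$. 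Compactness of $f$ applied to the map $g:y\to\indlim_{i\geq i_0}z_i\simeq\indlim_i z_i=0$ gives that $g\circ f:x\to 0$ factors through some $z_j$ via $h:x\to z_j$; but what I actually want is that $g_0\circ f:x\to z_{i_0}$ already maps to zero in $\indlim\pi_0\Hom(x,z_i)$, i.e.\ that there is $j\geq i_0$ with $z_{i_0}\to z_j$ killing $g_0\circ f$. This is exactly what compactness of $f$ gives once one unwinds the mapping-space description: the homotopy between $h\circ(\text{str})$ and $g\circ f$ in $\Hom(x,\indlim z_i)$ exhibits, at some finite stage $j$, a null-homotopy of $z_{i_0}\to z_j\circ(g_0\circ f)$, because $\Hom(x,\indlim z_i)=\indlim\Hom(x,z_i)$ only after applying $\pi_0$ when $x$ is compact — which we do \emph{not} assume. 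So the honest statement is the definitional one: compactness of $f$ says precisely that for every $g:y\to\indlim z_i$ the composite $g\circ f$ lifts along some $\Hom(x,z_j)\to\Hom(x,\indlim z_i)$; taking $g$ with target $0$ and chasing the lift yields the vanishing in \ref{compdef2}. I would write this out as: the class $f^*\alpha\in\indlim\pi_0\Hom(x,z_i)$ is the image of $[g_0\circ f]$, and the lift provided by compactness exhibits $f^*\alpha$ as coming from a map into $0$, hence zero.

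For the converse \Implies{compdef2}{compdef1}, given $\inddlim[i]z_i$ (with $\indlim z_i$ arbitrary) and $g:y\to\indlim z_i$, I want $g\circ f$ to factor through some $z_j$. Form the cofiber: let $w_i=\Cofib(z_i\to\indlim z_i)$ where the maps are the structure maps (this makes sense levelwise after choosing a presentation), so that $\inddlim w_i$ is an ind-object with $\indlim w_i=\Cofib(\indlim z_i\xto{\id}\indlim z_i)=0$. Composing $g$ with $\indlim z_i\to w_i$ gives an element of $\indlim\pi_0\Hom(y,w_i)$; by hypothesis \ref{compdef2} its image in $\indlim\pi_0\Hom(x,w_i)$ vanishes, i.e.\ $x\to y\xto{g\circ(\text{proj})}w_{j}$ is null for some $j$. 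By the fiber-cofiber sequence $z_j\to\indlim z_i\to w_j$ in the stable category $\cC$, a null-homotopy of the composite $x\to w_j$ yields a lift of $g\circ f:x\to\indlim z_i$ through $z_j$. This is the desired factorization. The main obstacle I anticipate is the bookkeeping with homotopy coherence: "$\Cofib(z_i\to\indlim z_i)$" must be organized into an honest functor $I\to\cC$, which one does by working with the ind-object as a functor and taking the fiberwise cofiber of the canonical natural transformation to the constant diagram $\indlim z_i$; and the passage from "null in $\pi_0$ of a mapping space" to "an actual factorization through a stage" requires that the relevant colimit of mapping spectra/groups is computed stagewise, which is fine because filtered colimits of spectra are computed levelwise on homotopy groups. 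I would therefore phrase the whole argument spectrum-theoretically, using that $\Hom_{\cC}(a,-)$ sends the fiber sequence $z_j\to\indlim z_i\to w_j$ to a fiber sequence of spectra and that $\indlim$ is exact, so that the vanishing of the class in $\indlim\pi_0\Hom(x,w_i)$ is exactly the obstruction to lifting $g\circ f$.
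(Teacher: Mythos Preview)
Your argument for \Implies{compdef2}{compdef1} is correct and is exactly the paper's approach: form $w_i=\Cone(z_i\to\indlim z_j)$, observe $\indlim w_i=0$, apply \ref{compdef2} to kill the obstruction class, and lift through the fiber sequence $z_j\to\indlim z_i\to w_j$.

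Your argument for \Implies{compdef1}{compdef2} has a genuine gap, which you in fact diagnosed yourself but did not fix. If $\indlim z_i=0$, then any map $g:y\to\indlim z_i$ is the zero map, and the compactness condition ``$g\circ f$ factors through some $z_j$'' is satisfied vacuously by the zero map $x\to z_j$. Your final sentence, ``the lift provided by compactness exhibits $f^*\alpha$ as coming from a map into $0$, hence zero,'' does not work: the lift $h:x\to z_j$ produced by compactness is constrained only by the requirement that $x\xto{h}z_j\to\indlim z_i$ be homotopic to $g\circ f=0$, and since $\indlim z_i=0$ this imposes no relation whatsoever between $h$ and $g_0\circ f$. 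In particular there is no reason $h$ should represent the class $f^*\alpha$.

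The paper's fix is to apply compactness to a \emph{different} ind-system whose colimit is nonzero. Given $g_0:y\to z_{i_0}$, form the ind-system $\bigl(\Fiber(y\to z_i)\bigr)_{i\geq i_0}$, where the map $y\to z_i$ is $g_0$ followed by the transition map. Its colimit is $\Fiber(y\to\indlim z_i)=\Fiber(y\to 0)\cong y$, and under this identification the canonical map $y\to\indlim_{i\geq i_0}\Fiber(y\to z_i)$ is the identity. Now compactness of $f$, applied to $\id_y$, forces $f:x\to y$ to factor through some $\Fiber(y\to z_i)$; composing with $\Fiber(y\to z_i)\to y\to z_i$ shows that $x\xto{f}y\to z_i$ is null, which is precisely the vanishing of $f^*\alpha$ in $\indlim\pi_0\Hom(x,z_i)$.
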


\begin{proof}\Implies{compdef1}{compdef2}. Suppose that $\indlim[i\in I]z_i=0,$ where $I$ is directed, and take some map $h:y\to z_{i_0},$ $i_0\in I.$ Denote by $g$ the identity map
$$g:y\to\indlim[i\geq i_0]\Fiber(y\to z_i)\cong y.$$
Since $f$ is compact, the composition $g\circ f$ factors through some object $\Fiber(y\to z_i),$ $i\geq i_0.$ It follows that the composition $x\xto{f}y\to z_i$ is zero, as required.

\Implies{compdef2}{compdef1}. Consider an arbitrary ind-object $\inddlim[i]z_i\in\Ind(\cC),$ where $I$ is directed. Take a map $g:y\to\indlim[i]z_i.$ Put $w_i:=\Cone(z_i\to\indlim[j]z_j).$ Then $\indlim[i]w_i=0.$ Pick some $i_0\in I,$ and consider the composition $$h:x\xto{f} y\to\indlim[i]z_i\to w_{i_0}.$$ By \ref{compdef2} for some $i\geq i_0$ the composition $x\xto{h}w_{i_0}\to w_i$ is zero. It follows that the composition $g\circ f$ factors through $z_i.$ Hence, $f$ is compact. \end{proof}

\begin{prop}\label{prop:compact_maps_via_direct_sums} Let $f:x\to y$ be a morphism in a presentable stable category $\cC.$ Suppose that $y$ is $\omega_1$-compact. The following are equivalent.

\begin{enumerate}[label=(\roman*),ref=(\roman*)]
\item $f$ is compact. \label{comp_countable1}

\item for any sequence of objects $z_1,z_2,\dots$ in $\cC,$ the image of the map
$$\pi_0\Hom(y,\biggplus[n]z_n)\to \pi_0\Hom(x,\biggplus[n]z_n)$$
is contained in $\biggplus[n]\pi_0\Hom(x,z_n).$ \label{comp_countable2}
\end{enumerate}
\end{prop}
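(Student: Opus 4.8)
The plan is to reduce the statement to Proposition \ref{prop:compactness_of_morphisms_definitions} by showing that, when $y$ is $\omega_1$-compact, testing compactness of $f$ against arbitrary vanishing ind-objects is equivalent to testing it against countable direct sums. The implication \Implies{comp_countable1}{comp_countable2} is the easy direction: given a sequence $z_1,z_2,\dots$, the partial sums $\bigoplus_{n\leq m}z_n$ (with the obvious maps, indexed by $m\in\N$) form an ind-object whose colimit is $\bigoplus_n z_n$. A map $g:y\to\bigoplus_n z_n$ need not factor through a partial sum, so I cannot apply the definition directly; instead I would apply Proposition \ref{prop:compactness_of_morphisms_definitions}\ref{compdef2} to the ind-object with terms $w_m:=\Cofib(\bigoplus_{n\leq m}z_n\to\bigoplus_n z_n)=\bigoplus_{n>m}z_n$, whose colimit is $0$. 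Compactness of $f$ forces $g\circ f:x\to\bigoplus_n z_n\to w_m$ to vanish for some $m$, which says exactly that $g\circ f$ factors through $\bigoplus_{n\leq m}z_n$, i.e. lands in $\bigoplus_n\pi_0\Hom(x,z_n)$ after passing to $\pi_0$.

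For the converse \Implies{comp_countable2}{comp_countable1}, I would use the criterion \ref{compdef2} of Proposition \ref{prop:compactness_of_morphisms_definitions}: let $\inddlim_{i\in I}z_i$ be an ind-object with $\indlim_i z_i=0$, where (by \cite[Proposition 5.3.1.18]{Lur09}) we may take $I$ to be a directed poset, and even a directed poset which is $\aleph_1$-filtered is not available, but we don't need that. Fix $i_0\in I$ and a map $h:y\to z_{i_0}$; I must show that for some $i\geq i_0$ the composite $x\xto{f}y\to z_i$ is zero. Here is where $\omega_1$-compactness of $y$ enters: the identity map of $0=\indlim_{i\geq i_0}z_i$ is detected by $y$ only up to countably many indices, so I can extract a countable cofinal-in-the-relevant-sense chain. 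Concretely, since $\indlim_{i\geq i_0}z_i=0$, the map $h$ becomes zero in the colimit, so there is $i_1\geq i_0$ with $z_{i_0}\to z_{i_1}$ killing $h$; that already gives what we want for $h$ itself, but the subtlety is that we need to handle $g\circ f$ for $g:y\to\indlim z_i$, not just maps from $y$ into a single $z_i$. So instead I follow the pattern of the proof of \ref{compdef2}$\Leftarrow$: given $g:y\to\indlim_i z_i$, set $w_i=\Cofib(z_i\to\indlim_j z_j)$, so $\indlim_i w_i=0$; I want to show $g\circ f$ factors through some $z_i$, equivalently the composite $x\xto{f}y\xto{g}\indlim z_i\to w_{i_0}$ dies in some $w_i$.

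The main work — and the step I expect to be the real obstacle — is converting the directed system $(w_i)_{i\geq i_0}$ with vanishing colimit into a countable direct sum situation so that hypothesis \ref{comp_countable2} applies. Since $y$ is $\omega_1$-compact, the map $y\xto{g}\indlim z_i\to w_{i_0}$ factors through a countable sub-ind-system; more precisely, one extracts an increasing sequence $i_0\leq i_1\leq i_2\leq\cdots$ in $I$ and replaces the (possibly large) directed system by the sequential colimit along this chain, using $\omega_1$-compactness of $y$ to see that the relevant maps out of $y$ are already visible along it and that $\indlim_n w_{i_n}=0$ still holds (a sequential colimit being zero). Given a sequential system $w_{i_0}\to w_{i_1}\to\cdots$ with zero colimit, I can write $\indlim_n w_{i_n}\cong\Cofib\bigl(\bigoplus_n w_{i_n}\xto{1-\mathrm{shift}}\bigoplus_n w_{i_n}\bigr)=0$, so $1-\mathrm{shift}$ is an equivalence on $\bigoplus_n w_{i_n}$; composing $y\xto{g\circ f\text{'s lift}} w_{i_0}\to\bigoplus_n w_{i_n}$ with its inverse and applying \ref{comp_countable2} (with $z_n:=w_{i_n}$) shows the image of $x\to\bigoplus_n w_{i_n}$ lies in $\bigoplus_n\pi_0\Hom(x,w_{i_n})$, hence factors through a partial sum, hence — after chasing back through $1-\mathrm{shift}$ being an equivalence — the composite $x\to w_{i_0}\to w_{i_N}$ is zero for large $N$. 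Unwinding, $g\circ f$ factors through $z_{i_N}$, so $f$ is compact by Proposition \ref{prop:compactness_of_morphisms_definitions}. The delicate point throughout is the bookkeeping in the reduction to a sequential (hence countable) system: one must check that $\omega_1$-compactness of $y$ genuinely suffices to replace the arbitrary directed system by a sequential one without losing the vanishing-colimit property, and that the maps involved (in particular $g\circ f$) are already detected on the chosen countable chain.
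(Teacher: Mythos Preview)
Your argument for \Implies{comp_countable1}{comp_countable2} via the tails $w_m=\bigoplus_{n>m}z_n$ and Proposition~\ref{prop:compactness_of_morphisms_definitions}\ref{compdef2} is fine; the paper simply applies the definition of compactness directly to the ind-system of partial sums $(\bigoplus_{n\le m}z_n)_m$, but your route is equally short.

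For \Implies{comp_countable2}{comp_countable1} there is a genuine gap at exactly the point you yourself flag as delicate. You set $w_i=\Cofib(z_i\to\varinjlim_{j\in I}z_j)$ and then extract a countable chain $i_0\le i_1\le\cdots$; but
\[
\varinjlim_n w_{i_n}\;=\;\Cofib\bigl(\varinjlim_n z_{i_n}\to\varinjlim_{i\in I}z_i\bigr),
\]
and this vanishes only when the chain is cofinal in $I$. Nothing about $\omega_1$-compactness of $y$ forces cofinality, so the assertion ``$\varinjlim_n w_{i_n}=0$ still holds'' is unjustified. The fix is simple but not what you wrote: use $\omega_1$-compactness of $y$ to factor $g$ through the \emph{sequential} colimit $\varinjlim_n z_{i_n}$ first --- this is precisely the paper's reduction to $I=\N$ --- and only then define $w_n:=\Cofib(z_{i_n}\to\varinjlim_m z_{i_m})$ relative to that sequential colimit. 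Now $\varinjlim_n w_n=0$ is automatic, and it suffices to show that the composite $x\to\varinjlim_m z_{i_m}\to w_0$ dies in some $w_N$.

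Once that reduction is made correctly, your endgame with $(1-S)^{-1}$ on $\bigoplus_n w_n$ goes through verbatim: the map $(1-S)^{-1}\circ\iota_0\circ(y\to w_0):y\to\bigoplus_n w_n$ is visibly a map out of $y$, so \ref{comp_countable2} applies and forces its composite with $f$ to be finitely supported; unwinding $(1-S)$ then gives $s_N\cdots s_0\circ(x\to w_0)=0$ for some $N$. This final step is a clean variant of the paper's: after the same sequential reduction, the paper works instead with the cofiber sequence $\bigoplus z_n\to\bigoplus z_n\to\varinjlim z_n$ directly and applies \ref{comp_countable2} twice (once to the boundary map into $\bigoplus z_n[1]$, once after lifting), whereas you pass to the quotients $w_n$ and use invertibility of $1-S$ to apply \ref{comp_countable2} once.
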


\begin{proof}\Implies{comp_countable1}{comp_countable2}. This is clear: apply the definition of compactness to the sequence $(\biggplus[1\leq k\leq n]z_k)_{n\geq 1}.$

\Implies{comp_countable2}{comp_countable1}. Consider a directed system $(z_i)_{i\in I},$ and a map $g:y\to\indlim[i]z_i.$ Since $y$ is $\omega_1$-compact, we may assume that $I=\N.$ Indeed, $\omega_1$-compactness of $y$ implies that for some sequence $i_0\leq i_1\leq\dots$ of elements of $I$ the map $g$ factors through $\indlim[n]z_{i_n}.$

Assume $I=\N.$ Recall that we have a cofiber sequence
$$\biggplus[n]z_n\to\biggplus[n]z_n\to\indlim[n]z_n.$$ Consider the composition 
$$h:y\xto{g}\indlim[n]z_n\to\biggplus[n]z_n[1].$$
By \ref{comp_countable2} the map $h$ factors through a finite direct sum $\biggplus[0\leq n\leq k]z_n[1].$ Replacing the sequence $(z_n)_{n\geq 0}$ by its tail $(z_n)_{n\geq k+1},$ we may and will assume that $h$ is a zero map.

Hence, $g$ can be lifted to a map $\wt{g}:y\to\biggplus[n]z_n.$ By \ref{comp_countable2} the composition $\wt{g}\circ f$ factors through a finite direct sum $\biggplus[0\leq n\leq k]z_n.$ It follows that $g\circ f$ factors through $z_k,$ hence $f$ is compact.   
\end{proof}

We can now give a criterion of dualizability on the level of homotopy categories. Proposition \ref{prop:compact_maps_via_direct_sums} justifies the following definition.

\begin{defi}Let $T$ be a well generated triangulated category, and let $x\in T,$ $y\in T^{\omega_1}.$ A morphism $f:x\to y$ is called compact if for any sequence of objects $(z_1,z_2,\dots)$ in $T$ and for any morphism $g:y\to \biggplus[n] z_n,$ the composition $g\circ f$ factors through a finite direct sum $\biggplus[1\leq n\leq k]z_n,$ for some $k.$\end{defi}

 Recall that in a triangulated category $T$ with countable coproducts we have sequential homotopy colimits, defined up to a non-canonical isomorphism:
$$\hocolim(x_1\xto{f_1} x_2\xto{f_2}\dots)=\Cone(\biggplus[n\geq 1] x_n\xto{\id-(f_n)} \biggplus[n\geq 1] x_n).$$

\begin{theo}\label{th:dualizability_via_compact_maps} Let $\cC$ be a presentable stable category. Then $\cC$ is dualizable if and only if the triangulated homotopy category $\h\cC$ satisfies the following condition:

$(*)$ the category $\h\cC$ is generated (as a localizing subcategory) by a set of objects $x_j,$ $j\in I,$ such that each $x_j$ is isomorphic to a homotopy colimit of a sequence $y_1\to y_2\to\dots,$ where each $y_n$ is $\omega_1$-compact and each map $y_n\to y_{n+1}$ is compact. 
\end{theo}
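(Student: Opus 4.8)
The plan is to prove both implications by translating between the $\infty$-categorical characterization of dualizability already available (Proposition \ref{prop:Lurie_criterions_dual} together with Proposition \ref{prop:comp_ass_omega_1_gen} and Corollary \ref{cor:dualizable_is_aleph_1_generated}) and the homotopy-category condition $(*)$. The key dictionary entries are: compactness of a morphism $f:x\to y$ with $y$ being $\omega_1$-compact is detected by the countable-coproduct criterion of Proposition \ref{prop:compact_maps_via_direct_sums}; and, when $\cC$ is dualizable, a morphism is compact if and only if $\cY(x)\to\cY(y)$ factors through $\hat\cY(y)$ (Proposition \ref{prop:compact_maps_in_dualizable_cats}). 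The sequential homotopy colimits appearing in $(*)$ will be upgraded to genuine $\infty$-categorical sequential colimits: in a stable $\infty$-category the homotopy category of a sequential diagram determines the colimit up to (non-canonical) isomorphism, so a choice of homotopy-colimit presentation in $\h\cC$ lifts to a filtered colimit in $\cC$.

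\textbf{The ``only if'' direction.} Assume $\cC$ is dualizable. By Corollary \ref{cor:dualizable_is_aleph_1_generated} it is $\omega_1$-compactly generated, so the $\omega_1$-compact objects form a generating set (as a localizing subcategory). By Proposition \ref{prop:comp_ass_omega_1_gen} applied to $\cC$ (which is compactly assembled by Proposition \ref{prop:Lurie_criterions_dual}), every $\omega_1$-compact object $x$ is of the form $x=\indlim(x_1\to x_2\to\cdots)$ with $x_n$ $\omega_1$-compact and each transition map $\cY(x_n)\to\cY(x_{n+1})$ factoring through $\hat\cY(x_{n+1})$ — that is, each $x_n\to x_{n+1}$ is compact by Proposition \ref{prop:compact_maps_in_dualizable_cats}. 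Passing to homotopy categories turns this filtered colimit into a sequential homotopy colimit of exactly the required shape, so $(*)$ holds with $\{x_j\}$ any generating set of $\omega_1$-compact objects.

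\textbf{The ``if'' direction.} Assume $(*)$. By Proposition \ref{prop:dualizable_via_ses} it suffices to produce a small stable $\cA$ and a strongly continuous fully faithful functor $\cC\to\Ind(\cA)$; equivalently, by Proposition \ref{prop:Lurie_criterions_dual}\ref{Lur2}, to show the colimit functor $\colim:\Ind(\cC)\to\cC$ has a left adjoint $\hat\cY$. First, condition $(*)$ forces $\cC$ to be $\omega_1$-presentable: the generators $x_j$ are $\omega_1$-compact (a sequential homotopy colimit of $\omega_1$-compacts with compact transition maps is $\omega_1$-compact, which one checks via Proposition \ref{prop:compact_maps_via_direct_sums} exactly as in the proof of Proposition \ref{prop:comp_ass_omega_1_gen}), so $\h\cC$ is $\omega_1$-well generated, hence $\cC$ is $\omega_1$-presentable. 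Now I would construct $\hat\cY$ on generators: for $x_j=\hocolim(y_1\to y_2\to\cdots)$ with compact transition maps $y_n\to y_{n+1}$, each such map lifts (by compactness, Proposition \ref{prop:compactness_of_morphisms_definitions}, in the form that it factors through any ind-object with zero colimit — or more directly by the definition of compact morphism) to a morphism $\cY(y_n)\to\cY(y_{n+1})$ in $\Ind(\cC)$; assembling these, set $\hat\cY(x_j):=\inddlim_n\cY(y_n)\in\Ind(\cC)$. The crucial computation is that for any ind-system $(F_i)$ in $\cC$ one has
\[
\Hom_{\Ind(\cC)}\bigl(\hat\cY(x_j),\,\inddlim_i\cY(F_i)\bigr)\;\cong\;\indlim_i\Hom_\cC(x_j,F_i),
\]
which follows because $\Hom_{\Ind(\cC)}(\inddlim_n\cY(y_n),-)=\prolim_n\indlim_i\Hom_\cC(y_n,F_i)$ and compactness of the maps $y_n\to y_{n+1}$ makes this tower constant with value $\indlim_i\Hom_\cC(x_j,F_i)$ — i.e. $\hat\cY(x_j)$ corepresents $\inddlim\mapsto\Hom(x_j,\colim)$. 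This exhibits $\colim$ as having a left adjoint on the generators; since $\cC$ is generated by colimits by the $x_j$ and both $\colim$ and the putative $\hat\cY$ are to be exact and colimit-preserving in the appropriate sense, one extends $\hat\cY$ by colimits and checks the adjunction on all of $\cC$. (Alternatively, one can phrase the extension as: the full subcategory of $\cC$ on which $\hat\cY$ is defined is closed under colimits and contains the generators, hence is all of $\cC$.) Therefore $\cC$ is compactly assembled, hence dualizable.

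\textbf{Main obstacle.} The subtle point is the ``if'' direction: one must control the interaction between the non-canonical homotopy colimits in the triangulated category $\h\cC$ and genuine $\infty$-categorical colimits, and then verify that the pointwise-defined $\hat\cY$ really is a functor and a left adjoint on all of $\cC$, not just on the generators. Concretely, the delicate step is showing that $\hat\cY(x_j)$ lands in $\Ind(\cC)$ canonically enough (independent of the chosen homotopy-colimit presentation up to the relevant equivalence) and that left adjointness propagates along colimits; this is where one genuinely needs $\omega_1$-presentability of $\cC$ (to know $\Ind(\cC)$ behaves well and that $\colim$ factors through $\Ind(\cC^{\omega_1})$, cf. the proof of Corollary \ref{cor:hat_Y_into_Ind_C_omega_1}) rather than just the existence of the generating set.
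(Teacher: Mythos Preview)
Your overall strategy coincides with the paper's: for the ``if'' direction, exhibit $\inddlim_n y_n\in\Ind(\cC)$ as the value of the partial left adjoint $\hat\cY$ at each generator $x_j$, then extend by colimits. The ``only if'' direction is fine (indeed the paper reproves the relevant part of Proposition~\ref{prop:comp_ass_omega_1_gen} inline).

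The key computation in your ``if'' direction, however, is incorrect as written. First, the right-hand side of your displayed isomorphism should be $\Hom_\cC(x_j,\indlim_i F_i)$, not $\indlim_i\Hom_\cC(x_j,F_i)$; these differ unless $x_j$ is compact (your own ``i.e.'' clause afterwards states the correct target). Second, and more seriously, the assertion that ``compactness of the maps $y_n\to y_{n+1}$ makes this tower constant'' is false for a general ind-system $(F_i)$: compactness of $y_n\to y_{n+1}$ says nothing directly about such towers. What compactness \emph{does} give --- via Proposition~\ref{prop:compactness_of_morphisms_definitions}\ref{compdef2} --- is that for any ind-system $(z_i)$ with $\indlim_i z_i=0$ the transition maps
\[
\indlim_i\Hom(y_{n+1},z_i)\longrightarrow\indlim_i\Hom(y_n,z_i)
\]
vanish on every $\pi_k$, so that $\prolim_n\indlim_i\Hom(y_n,z_i)=0$. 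In other words, $\inddlim_n y_n$ is left orthogonal to $\ker(\colim)$, and \emph{that} is what identifies it with $\hat\cY(x_j)$. This is exactly the paper's Claim, and it is the missing ingredient in your argument: one must pass through $\ker(\colim)$ rather than attempt a direct computation for arbitrary $(F_i)$. Your detour through $\omega_1$-presentability is also unnecessary (and the justification via Proposition~\ref{prop:comp_ass_omega_1_gen} would be circular, since that proof already uses $\hat\cY$): once $\hat\cY$ exists on a generating set, the locus where it is defined is closed under colimits because $\Ind(\cC)$ has all colimits, so one is done.
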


\begin{proof}Suppose that $\cC$ is dualizable. Take some $x\in\cC^{\omega_1}.$ Then $\hat{\cY}(x)$ is contained in $\Ind(\cC^{\omega_1})^{\omega_1},$ so $\hat{\cY}(x)\cong\indlim[n]y_n$ for some sequence $y_1\to y_2\to\dots$ of objects of $\cC^{\omega_1}.$ Since $\hat{\cY}$ commutes with colimits, we have $$\indlim[n]\hat{\cY}(y_n)\xto{\sim}\indlim[n]\cY(y_n).$$ It follows that for each $n$ there is some $m>n$ such that the map $\cY(y_n)\to\cY(y_m)$ factors through $\hat{\cY}(y_m).$ By Proposition \ref{prop:compact_maps_in_dualizable_cats} this means that the map $y_n\to y_m$ is compact. Passing to a subsequence, we may assume that all maps $y_n\to y_{n+1}$ are compact. Hence, the category $\h\cC$ satisfies $(*):$ we can take the collection of all $\omega_1$-compact objects.

Now suppose that $\h\cC$ satisfies $(*).$ To show that $\cC$ is dualizable, it suffices to observe the following.

{\noindent {\bf Claim.} For any sequence $y_1\xto{f_1} y_2\xto{f_2} \dots$ in $\cC^{\omega_1}$ such that each map $f_n$ is compact, the ind-object $\inddlim[n]y_n$ is in the left orthogonal to $\ker(\colim:\Ind(\cC)\to\cC).$}

\begin{proof}[Proof of Claim.] Suppose that an ind-system $(z_i)_{i\in I}$ satisfies $\indlim[i]z_i=0.$ Then each map of spectra
$$\indlim[i]\Hom(y_{n+1},z_i)\to \indlim[i]\Hom(y_n,z_i)$$
induces zero maps on the homotopy groups. Hence,
$$\Hom(\inddlim[n]y_n,\inddlim[i]z_i)\cong\prolim[n]\indlim[i]\Hom(y_n,z_i)=0.$$
This proves the claim.
\end{proof} 

It follows that the (a priori partially defined) functor $\hat{\cY}$ is defined on each $x_j.$ Since the objects $x_j$ generate $\cC,$ it follows that $\hat{\cY}$ is defined on the whole category $\cC.$ 
\end{proof}

The strong continuity of functors between dualizable stable categories can also be interpreted in terms of compact morphisms in the homotopy categories.

\begin{prop}\label{prop:strong_continuity_of_functor_via_compact_maps} A continuous functor $F:\cC\to\cD$ between dualizable presentable categories is strongly continuous if and only if the functor $\h F$ preserves $\omega_1$-compact objects and compact maps between them.\end{prop}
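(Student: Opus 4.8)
The plan is to characterize strong continuity of $F$ through the criterion of Proposition~\ref{prop:strong_continuity_via_hat_Y}, namely that the natural transformation $\hat{\cY}_{\cD}\circ F\to\Ind(F)\circ\hat{\cY}_{\cC}$ is an isomorphism, and then translate this condition object-by-object into statements about the homotopy categories using Propositions~\ref{prop:comp_ass_omega_1_gen}, \ref{prop:compact_maps_in_dualizable_cats}, and \ref{prop:compact_maps_via_direct_sums}. First I would observe that by Corollary~\ref{cor:hat_Y_into_Ind_C_omega_1}, for any dualizable $\cC$ the functor $\hat{\cY}_{\cC}$ lands in $\Ind(\cC^{\omega_1})$, and every $\omega_1$-compact object $x\in\cC$ can be written (Proposition~\ref{prop:comp_ass_omega_1_gen}) as a sequential colimit $x=\indlim[n](x_1\to x_2\to\dots)$ with $\hat{\cY}(x)\cong\inddlim[n]\cY(x_n)$, where each map $\cY(x_n)\to\cY(x_{n+1})$ factors through $\hat{\cY}(x_{n+1})$, i.e.\ (Proposition~\ref{prop:compact_maps_in_dualizable_cats}) each $x_n\to x_{n+1}$ is a compact morphism with $x_n$ that we may take $\omega_1$-compact.

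For the ``only if'' direction, suppose $F$ is strongly continuous. Then $F^R$ is continuous, hence (this is standard) $F$ preserves $\omega_1$-compact objects: indeed $F^R$ commutes with $\omega_1$-filtered colimits, so $\Map_{\cD}(F(x),-)\cong\Map_{\cC}(x,F^R(-))$ commutes with $\omega_1$-filtered colimits whenever $x\in\cC^{\omega_1}$. To see that $\h F$ preserves compact maps, let $f\colon x\to y$ be a compact morphism between $\omega_1$-compact objects of $\cC$. By Proposition~\ref{prop:compact_maps_in_dualizable_cats}, $\cY(x)\to\cY(y)$ factors through $\hat{\cY}_{\cC}(y)$. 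Applying $\Ind(F)$ and using the isomorphism $\Ind(F)\circ\hat{\cY}_{\cC}\cong\hat{\cY}_{\cD}\circ F$ together with the compatibility of $\cY$ with $F$ (i.e.\ $\Ind(F)\circ\cY_{\cC}\cong\cY_{\cD}\circ F$), we get that $\cY_{\cD}(F(x))\to\cY_{\cD}(F(y))$ factors through $\hat{\cY}_{\cD}(F(y))$, so $F(f)$ is compact again by Proposition~\ref{prop:compact_maps_in_dualizable_cats}.

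For the ``if'' direction, assume $\h F$ preserves $\omega_1$-compact objects and compact maps between them; I want to deduce that $\hat{\cY}_{\cD}\circ F\to\Ind(F)\circ\hat{\cY}_{\cC}$ is an isomorphism. Both sides are continuous functors $\cC\to\Ind(\cD)$, so it suffices to check the transformation is an isomorphism on a generating set, and since $\cC$ is $\omega_1$-compactly generated (Corollary~\ref{cor:dualizable_is_aleph_1_generated}) it suffices to check on $\omega_1$-compact objects $x$. Write $x=\indlim[n]x_n$ as above, with each $x_n\in\cC^{\omega_1}$ and each $x_n\to x_{n+1}$ compact, and $\hat{\cY}_{\cC}(x)\cong\inddlim[n]\cY_{\cC}(x_n)$. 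Applying $\Ind(F)$ gives $\Ind(F)(\hat{\cY}_{\cC}(x))\cong\inddlim[n]\cY_{\cD}(F(x_n))$. On the other hand, by hypothesis each $F(x_n)$ is $\omega_1$-compact and each $F(x_n)\to F(x_{n+1})$ is compact; since $F$ is colimit-preserving, $F(x)\cong\indlim[n]F(x_n)$, and so by Proposition~\ref{prop:comp_ass_omega_1_gen} (applied to the sequence $(F(x_n))_n$, whose consecutive maps factor through $\hat{\cY}_{\cD}$ of the target by Proposition~\ref{prop:compact_maps_in_dualizable_cats}) we have $\hat{\cY}_{\cD}(F(x))\cong\inddlim[n]\hat{\cY}_{\cD}(F(x_n))\cong\inddlim[n]\cY_{\cD}(F(x_n))$. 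Matching the two descriptions identifies the natural transformation with an equivalence on $x$, and we conclude by Proposition~\ref{prop:strong_continuity_via_hat_Y}.

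The main obstacle I expect is the bookkeeping in the ``if'' direction: one must check that the natural transformation $\hat{\cY}_{\cD}\circ F\to\Ind(F)\circ\hat{\cY}_{\cC}$ is genuinely the comparison map identified above, not merely that source and target are abstractly equivalent on each $x$. Concretely, one needs that the counit/unit adjunction data are compatible with $F$ in the appropriate sense — i.e.\ that $\Ind(F)\circ\cY_{\cC}\simeq\cY_{\cD}\circ F$ as functors (which is automatic from functoriality of $\Ind$ and the Yoneda embedding), and that the map $\cY\to\hat{\cY}\to\cY$ recovering the identity is carried by $F$ to the corresponding composite in $\cD$. Once this naturality is pinned down, the rest is the colimit-comparison carried out on the explicit sequential presentations, which is routine.
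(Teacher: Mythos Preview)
Your proposal is correct and follows essentially the same approach as the paper. The paper's proof is terser: for the ``if'' direction it simply invokes ``the proof of Theorem~\ref{th:dualizability_via_compact_maps}'' (specifically the Claim therein, that an ind-system of $\omega_1$-compact objects with compact transition maps lies in the left orthogonal to $\ker(\colim)$, hence agrees with $\hat{\cY}$ of its colimit), whereas you unpack this interleaving argument explicitly via Propositions~\ref{prop:comp_ass_omega_1_gen} and~\ref{prop:compact_maps_in_dualizable_cats}. Your naturality concern is legitimate but minor: since $\hat{\cY}_{\cD}$ is left adjoint to $\colim$ and $\inddlim[n]\cY_{\cD}(F(x_n))$ has colimit $F(x)$ and lies in the left orthogonal to $\ker(\colim)$, the adjunction forces the comparison map to be the identification you wrote down.
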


\begin{proof}Suppose that $F$ is strongly continuous. Then $F$ preserves $\omega_1$-compact objects and $\Ind(F)\circ\hat{\cY}_{\cC}\cong \hat{\cY}_{\cD}\circ F.$ It follows from Proposition \ref{prop:compact_maps_in_dualizable_cats} that $F$ preserves all compact maps, in particular, $\h F$ preserves compact maps between $\omega_1$-compact objects.

Suppose that $\h F$ preserves $\omega_1$-compact objects and compact maps between them. By the proof of Theorem \ref{th:dualizability_via_compact_maps} we see that for each $\omega_1$-compact object $x$ the map $\hat{\cY}_{\cD}(F(x))\to\Ind(F)(\hat{\cY}_{\cC}(x))$ is an isomorphism. Since $\cC$ is $\omega_1$-compactly generated, we conclude that $F$ is strongly continuous.
\end{proof}

Finally, given a dualizable category $\cC$ and two objects $x,y\in\cC,$ we discuss the relation between the following two abelian groups:

\begin{itemize}
\item the subgroup of $\pi_0\Hom(x,y),$ which consists of compact maps;

\item the abelian group $\pi_0\Hom_{\Ind(\cC)}(\cY(x),\hat{\cY}(y)).$ 
\end{itemize}

We recall the general notions of an ideal and a quasi-ideal in a small additive category. Given small additive categories $\cA$ and $\cB,$ a left resp. right $\cA$-module is an additive functor $\cA\to\Ab$ resp. $\cA^{op}\to\Ab,$ and an $\cA\hy\cB$-bimodule is a biadditive bifunctor $\cA\times\cB^{op}\to\Ab.$ Recall that an ideal $J$ of a small additive category $\cA$ is a subbimodule of the diagonal bimodule $\cA(-,-):\cA^{op}\times\cA\to\Ab.$ More generally, a quasi-ideal in $\cA$ is a bimodule $I:\cA^{op}\times\cA\to \Ab$ together with a map $\alpha:I\to\cA(-,-)$ (not necessarily injective) such that for any $f\in I(x,y),$ $g\in I(y,z)$ we have $\alpha(g)f=g\alpha(f).$

We have the usual notion of a tensor product of left and right $\cA$-modules. In particular, given bimodules $F,G:\cA^{op}\times\cA\to\Ab,$ we have a bimodule $F\tens{\cA} G,$ given by $$(F\tens{\cA} G)(x,y)=F(-,y)\tens{\cA}G(x,-).$$ Note that the tensor product of quasi-ideals is naturally a quasi-ideal. We say that an $\cA\hy\cA$ bimodule $F$ is flat on the right if each right $\cA$-module $F(-,x)$ is flat, $x\in\cA.$

Given a quasi-ideal $(I,\alpha),$ we have a natural map $\wt{\mult}:I\tens{\cA} I\to I,$ $\wt{\mult}(g\otimes f)=\alpha(g)f=g\alpha(f).$ We say that a quasi-ideal $(I,\alpha)$ is idempotent if $\wt{\mult}$ is an isomorphism.

\begin{lemma}\label{lem:ideals_and_quasi-ideals} Let $\cA$ be a small additive category. The assignment $(I,\alpha)\mapsto \alpha(I)$ gives a bijection 
\[
\xymatrix@1{
\left\{
\text{
\begin{minipage}[c]{2.5in}
isomorphism classes of idempotent quasi-ideals $(I,\alpha)$ in $\cA$  such that $I$ is flat on the right
\end{minipage}
}
\right\}
\ar[r]^-{\sim}
&
\left\{
\text{
\begin{minipage}[c]{2.2in}
ideals $J$ in $\cA$ such that $J^2=J$ and $J\tens{\cA} J$ is flat on the right.
\end{minipage}
}
\right\}
}
\]
The inverse map sends an ideal $J$ to the quasi-ideal $J\tens{\cA}J,$ where the map $J\tens{\cA}J\to\cA(-,-)$ is given by the multiplication (i.e. composition).\end{lemma}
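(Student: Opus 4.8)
The plan is to verify that the two assignments
\[
\Phi\colon (I,\alpha)\longmapsto \alpha(I)\subseteq\cA(-,-),\qquad
\Psi\colon J\longmapsto \bigl(J\tens{\cA}J,\ \mult\bigr)
\]
are well defined on the stated classes and are mutually inverse. Throughout I write $U:=\cA(-,-)$ for the unit bimodule, so $U\tens{\cA}M\cong M$ for every bimodule $M$, and for an ideal $J$ I set $Q:=U/J$. One direction is immediate, since $\mult(J\tens{\cA}J)=J^2=J$ gives $\Phi\Psi=\id$. So the real content is (A) that $\Psi(J)$ is an idempotent quasi-ideal which is flat on the right, and (B) that $\Phi(I,\alpha)$ satisfies $J^2=J$ with $J\tens{\cA}J$ flat on the right, together with $\Psi\Phi\cong\id$.

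For (A): first, $(J\tens{\cA}J,\mult)$ is a quasi-ideal for formal reasons — the identity $\mult(g)f=g\,\mult(f)$ unwinds to the defining relation of $\tens{\cA}$ plus associativity of composition — and flatness on the right is part of the hypothesis on $J$. The remaining point, idempotency, is the heart of the matter. Put $P:=J\tens{\cA}J$ and let $\mult\colon P\to J$ also denote the corestricted multiplication (surjective, as $J^2=J$). A short computation identifies $\wt\mult$ with $\mult\tens{\cA}\mult$ and factors it as $P\tens{\cA}P\xto{\id_P\tens{\cA}\mult}P\tens{\cA}J\xto{\rho}P$, where $\rho(p\otimes j)=pj$. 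Since $P$ is flat on the right, $P\tens{\cA}-$ is exact; applying it to $0\to J\to U\to Q\to 0$ and noting $P\tens{\cA}Q=J\tens{\cA}(J\tens{\cA}Q)=0$ — the inner factor vanishes because $J\tens{\cA}Q=\coker(\mult\colon J\tens{\cA}J\to J)=0$ — shows $\rho$ is an isomorphism and $P\tens{\cA}J\cong P$. Hence $\wt\mult$ is invertible iff $\id_P\tens{\cA}\mult$ is, and applying $P\tens{\cA}-$ to $0\to K\to P\xto{\mult}J\to 0$ with $K:=\ker\mult$ reduces this to showing $P\tens{\cA}K=0$. For that I would identify $K\cong\Tor_1^{\cA}(J,Q)$ from the $\Tor$ long exact sequence of $0\to J\to U\to Q\to 0$ (using $J\tens{\cA}Q=0$), and then compute the iterated derived tensor $P\Ltens{\cA}J\Ltens{\cA}Q$ two ways: on the one hand $(P\Ltens{\cA}J)\Ltens{\cA}Q\simeq P\Ltens{\cA}Q\simeq P\tens{\cA}Q=0$ (using $P\Ltens{\cA}J\simeq P\tens{\cA}J\cong P$), and on the other hand, again since $P$ is flat on the right, its $H_1$ is $P\tens{\cA}\Tor_1^{\cA}(J,Q)=P\tens{\cA}K$. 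Comparison gives $P\tens{\cA}K=0$, so $\Psi(J)$ is idempotent.

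For (B): set $J:=\alpha(I)$, which is a subbimodule of $U$, hence an ideal. Applying $\alpha$ to $\wt\mult(g\otimes f)=\alpha(g)\cdot f$ identifies $\alpha\circ\wt\mult$ with the composition map $I\tens{\cA}I\xto{\alpha\tens{\cA}\alpha}U\tens{\cA}U\cong U$; since $\wt\mult$ is an isomorphism, comparing images gives $J=\im\alpha=\im(\alpha\circ\wt\mult)=J^2$. For flatness on the right of $J\tens{\cA}J$ I would show $J\tens{\cA}J\cong I$: with $L:=\ker(\alpha\colon I\to J)$, factor $\wt\mult=\beta\circ(\id_I\tens{\cA}\alpha)$ where $\beta(g\otimes j)=gj$; flatness on the right of $I$ makes $\id_I\tens{\cA}\alpha$ surjective with kernel $I\tens{\cA}L$, and invertibility of $\wt\mult$ forces $\id_I\tens{\cA}\alpha$ and $\beta$ to be isomorphisms, so $I\tens{\cA}L=0$ and $I\tens{\cA}J\cong I$. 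The quasi-ideal axiom gives $l\cdot\alpha(f)=\alpha(l)\cdot f=0$ for $l\in L$, i.e. $L\cdot J=0$; hence $L\tens{\cA}J\to I\tens{\cA}J$ is the zero map (it becomes zero after $\beta$), and the right-exact sequence obtained by tensoring $0\to L\to I\to J\to 0$ with $J$ shows $\alpha\tens{\cA}\id_J\colon I\tens{\cA}J\to J\tens{\cA}J$ is an isomorphism. Composing, $\theta:=\beta\circ(\alpha\tens{\cA}\id_J)^{-1}\colon J\tens{\cA}J\xto{\sim}I$ satisfies $\alpha\circ\theta=\mult$, so $(J\tens{\cA}J,\mult)\cong(I,\alpha)$; in particular $J\tens{\cA}J$ is flat on the right, and $\Psi\Phi\cong\id$.

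The step I expect to be the main obstacle is the idempotency in (A) — precisely the vanishing $P\tens{\cA}K=0$ — which genuinely uses flatness on the right of $P=J\tens{\cA}J$, injected through associativity of the derived tensor product and the $\Tor$ computation above, and does not follow from $J^2=J$ alone. Everything else reduces to formal manipulation of bimodules or to a direct application of the quasi-ideal identity $\alpha(g)f=g\alpha(f)$.
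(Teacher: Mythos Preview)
Your proof is correct, but the route through derived tensor products and $\Tor$ in part (A) is heavier than necessary. Once you have established that $\rho\colon P\tens{\cA}J\to P$ is an isomorphism (surjective since $J^2=J$, injective since $P$ is flat on the right), idempotency follows immediately from associativity of the tensor product: under the associativity isomorphism $P\tens{\cA}(J\tens{\cA}J)\cong(P\tens{\cA}J)\tens{\cA}J$, the map $\id_P\tens{\cA}\mult$ is identified with $\rho\tens{\cA}\id_J$, which is invertible because $\rho$ is. Hence $\wt\mult=\rho\circ(\id_P\tens{\cA}\mult)$ is an isomorphism without any appeal to $\Tor$. This is the paper's argument, stated in one line as ``it suffices to show that $(J\tens{\cA}J)\tens{\cA}J\to J\tens{\cA}J$ is an isomorphism.'' Your computation of $P\tens{\cA}K$ via $P\Ltens{\cA}J\Ltens{\cA}Q$ is valid but superfluous.

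For part (B) the paper is likewise more direct: since $\wt\mult(g\otimes f)=\alpha(g)f=g\alpha(f)$ vanishes whenever $\alpha(f)=0$ or $\alpha(g)=0$, the isomorphism $\wt\mult\colon I\tens{\cA}I\to I$ factors through the surjection $\alpha\tens{\cA}\alpha\colon I\tens{\cA}I\twoheadrightarrow J\tens{\cA}J$. A surjection followed by a map whose composite is an isomorphism forces both to be isomorphisms, so $J\tens{\cA}J\cong I$ as quasi-ideals in one step. Your explicit construction of $\theta$ via $\beta$ and $(\alpha\tens{\cA}\id_J)^{-1}$ reaches the same conclusion with more bookkeeping.
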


\begin{proof} Let $(I,\alpha)$ be an idempotent quasi-ideal in $\cA,$ and let $J=\alpha(I).$ It is clear that $J^2=J.$ We need to show that $J\tens{\cA}J\cong I$ as quasi-ideal. Note that if $f\in I(x,y),$ $g\in I(y,z)$ are elements such that $\alpha(f)=0$ or $\alpha(g)=0,$ then $\wt{\mult}(g\otimes f)=0.$ It follows that the isomorphism $\wt{\mult}$ factors through $J\tens{\cA}J,$ hence $J\tens{\cA}J\cong I.$

Conversely, let $J$ be an ideal in $\cA$ such that $J^2=J$ and $I=J\tens{\cA}J$ is flat on the right. We
 need to show that $I$ is idempotent. It suffices to show that the map
\begin{equation}
\label{eq:quasi-ideal_idemp}
(J\tens{\cA}J)\tens{\cA} J\to (J\tens{\cA}J)\tens{\cA}\cA(-,-)\cong J\tens{\cA}J
\end{equation}
is an isomorphism. Since $J^2=J,$ the map \eqref{eq:quasi-ideal_idemp} is surjective. On the other hand, since $J\tens{\cA}J$ is flat on the right, the map \eqref{eq:quasi-ideal_idemp} is injective. Hence, this map is an isomorphism.
\end{proof}

\begin{prop}\label{prop:compact_maps_and_wavy_arrows} Let $\cC$ be a dualizable category, and let $T=\h\cC$ be its homotopy category. Denote by $J$ the ideal of compact maps in $T^{\omega_1}.$ Consider the bifunctor $$I:T^{\omega_1,op}\times T^{\omega_1}\to\Ab,\quad I(x,y)=\pi_0\Hom_{\Ind(\cC)}(\cY_{\cC}(x),\hat{\cY}_{\cC}(y)).$$ Then $I$ is an idempotent quasi-ideal in $T^{\omega_1},$ which is flat on the right. Moreover, $I$ corresponds to $J$ via the bijection from Lemma \ref{lem:ideals_and_quasi-ideals}. In particular, we have an isomorphism of quasi-ideals $I\cong J\tens{T^{\omega_1}}J.$\end{prop}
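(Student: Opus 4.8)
The plan is to exhibit the pair $(I,\alpha)$, for a suitable structure map $\alpha$, as an idempotent quasi-ideal in $\cA:=T^{\omega_1}=\cC^{\omega_1}$ which is flat on the right and satisfies $\alpha(I)=J$, and then to read off all the assertions from Lemma~\ref{lem:ideals_and_quasi-ideals}. The structure map I would use is induced by the colimit functor: since $\colimm\colon\Ind(\cC)\to\cC$ has left adjoint $\hat\cY$ and right adjoint $\cY$, applying $\colimm$ to a morphism $\cY(x)\to\hat\cY(y)$ and using $\colimm\cY(x)\simeq x$, $\colimm\hat\cY(y)\simeq y$ produces a morphism $x\to y$; this defines $\alpha\colon I\to\cA(-,-)$. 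Equivalently $\cY(\alpha(\phi))=\theta_y\circ\phi$, where $\theta\colon\hat\cY\to\cY$ is the canonical natural transformation (coming from the unit of $\colimm\dashv\cY$ and the identification $\cY\colimm\hat\cY\simeq\cY$); this reformulation follows from naturality of that unit together with the triangle identity, which forces the unit at $\cY(x)$ to be identified with the identity. With this description $\alpha(I)=J$ is immediate from Proposition~\ref{prop:compact_maps_in_dualizable_cats}: a map $f\colon x\to y$ is compact precisely when $\cY(f)$ factors through $\hat\cY(y)$, i.e. precisely when $f=\alpha(\phi)$ for some $\phi\in I(x,y)$.

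Next I would verify the quasi-ideal axiom $\alpha(g)\cdot f=g\cdot\alpha(f)$ for $f\in I(x,y)$, $g\in I(y,z)$. Unwinding the bimodule structure (postcomposition with $\hat\cY(-)$ for the left $\cA$-action, precomposition with $\cY(-)$ for the right $\cA$-action), the left side is $\hat\cY(\alpha(g))\circ f$ and the right side is $g\circ\cY(\alpha(f))=g\circ\theta_y\circ f$, so it suffices to show $\hat\cY(\alpha(g))=g\circ\theta_y$ in $\Hom_{\Ind(\cC)}(\hat\cY(y),\hat\cY(z))$. Both morphisms have image $\alpha(g)$ under $\colimm$ (for the left side because $\colimm\hat\cY\simeq\id$; for the right side because $\colimm(\theta_y)=\id_y$ and $\colimm(g)=\alpha(g)$), and $\colimm$ is a bijection on $\Hom_{\Ind(\cC)}(\hat\cY(y),\hat\cY(z))\to\Hom_{\cC}(y,z)$ since $\hat\cY$ is fully faithful, so they agree. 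This is the step where one must be most careful with the adjunction bookkeeping, but it is essentially formal.

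For flatness and idempotency I would fix $y\in\cA$ and, using Corollary~\ref{cor:hat_Y_into_Ind_C_omega_1}, write $\hat\cY(y)\simeq\inddlim[j\in\Lambda]y_j$ with $\Lambda$ filtered and $y_j\in\cA$; applying $\colimm$ gives $y\simeq\colimm_j y_j$ in $\cC$, hence also $\hat\cY(y)\simeq\colimm_j\hat\cY(y_j)$ since $\hat\cY$ preserves colimits. Because $\cY(x)$ is a compact object of $\Ind(\cC)$ for every $x$ and $\pi_0$ commutes with filtered colimits of spectra, this yields a natural isomorphism of right $\cA$-modules $I(-,y)\simeq\colimm_j\cA(-,y_j)$; as a filtered colimit of representable (hence flat) right $\cA$-modules, $I(-,y)$ is flat. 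For idempotency, combining this presentation with the co-Yoneda lemma gives
$$(I\tens{\cA}I)(x,z)=I(-,z)\tens{\cA}I(x,-)\simeq\colimm_j\bigl(\cA(-,z_j)\tens{\cA}I(x,-)\bigr)\simeq\colimm_j I(x,z_j),$$
and a direct computation (tracing $\alpha$ on the tautological maps $\cY(z_j)\to\hat\cY(z)$, whose images under $\colimm$ are the structure maps $z_j\to z$) identifies $\wt{\mult}$ with the canonical comparison map $\colimm_j I(x,z_j)\to I(x,z)$. This comparison map is an isomorphism because $\cY(x)$ is compact in $\Ind(\cC)$ and $\hat\cY(z)\simeq\colimm_j\hat\cY(z_j)$. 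Matching $\wt{\mult}$ with this comparison map is the fiddliest point of the whole argument.

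Finally, $(I,\alpha)$ is then an idempotent quasi-ideal in $T^{\omega_1}$, flat on the right, with $\alpha(I)=J$, so Lemma~\ref{lem:ideals_and_quasi-ideals} applies and shows that $I$ corresponds to $J$ under the stated bijection; in particular $J^2=J$ (which also follows directly from Corollary~\ref{cor:decomposing_compact_morphisms}) and $I\cong J\tens{T^{\omega_1}}J$.
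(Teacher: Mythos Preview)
Your proof is correct and follows essentially the same approach as the paper's proof: both write $\hat{\cY}(y)$ as a filtered (in the paper, sequential) colimit of objects of $\cA$, deduce that $I(-,y)$ is a filtered colimit of representables (hence flat), use $\hat{\cY}(y)\simeq\colim_j\hat{\cY}(y_j)$ together with compactness of $\cY(x)$ for idempotency, invoke Proposition~\ref{prop:compact_maps_in_dualizable_cats} for $\alpha(I)=J$, and conclude via Lemma~\ref{lem:ideals_and_quasi-ideals}. The paper compresses the quasi-ideal verification and the identification of $\wt{\mult}$ into ``straightforward to check'' and a single sentence, whereas you spell these out carefully; your added bookkeeping (the description of $\alpha$ via $\theta$, the check that $\hat{\cY}(\alpha(g))=g\circ\theta_y$) is accurate and is exactly what underlies the paper's terse claims.
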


\begin{proof}It is straightforward to check that $I$ is a quasi-ideal. If $x\in T^{\omega_1}$ and $\hat{\cY}_{\cC}(x)=\inddlim[](x_1\to x_2\dots),$ then $I(-,x)=\indlim[n] T^{\omega_1}(-,x_n),$ hence $I$ is flat on the right. Since $\hat{\cY}(x)\cong\indlim[n]\hat{\cY}(x_n),$ we see that $I$ is idempotent. By Proposition \ref{prop:compact_maps_in_dualizable_cats} we see that $J$ is the image of $I.$ The rest follows from Lemma \ref{lem:ideals_and_quasi-ideals}.\end{proof}

\begin{remark}We remark that both Lemma \ref{lem:ideals_and_quasi-ideals} and Proposition \ref{prop:compact_maps_and_wavy_arrows} hold in the non-additive resp. non-stable context. Namely, if $\cA$ is any (discrete) small category, then we can consider the (bi)functors with values in the category of sets instead of abelian groups. The tensor product and flatness are defined in the usual way, and the notion of a (quasi-)ideal is the same. The statement of Lemma \ref{lem:ideals_and_quasi-ideals} still holds, and the proof is the same.

If $\cC$ is a compactly assembled accessible $\infty$-category (not necessarily cocomplete), then the analogue of Proposition \ref{prop:compact_maps_and_wavy_arrows} still holds, and the proof is the same. If $\cC$ is an ordinary category, then $I$ is the quasi-ideal of wavy arrows in the terminology of \cite[Section 2]{JJ}.\end{remark}

\begin{remark}Another generalization of Proposition \ref{prop:compact_maps_and_wavy_arrows} is the following. Let $T$ be a compactly generated triangulated category, and consider a smashing subcategory $S\subset T.$ This means that $S$ is a full triangulated subcategory closed under coproducts, and the inclusion functor has a right adjoint $\Phi:T\to S$ which commutes with coproducts. Then we have a quasi-ideal $I$ in $T^{\omega},$ given by $I(x,y)=\Hom_T(x,\Phi(y)).$ Moreover, $I$ is idempotent and flat on the right. The image of $I$ in $T^{\omega}$ is the ideal $J$ of maps which factor through an object of $S.$ We have $J^2=J,$ and we obtain an isomorphism of quasi-ideals $J\tens{T^{\omega}}J\cong I.$ Proposition \ref{prop:compact_maps_and_wavy_arrows} is a special case of this statement with $T=\h\Ind(\cC^{\omega_1}),$ and $S=\h\hat{\cY}(\cC).$\end{remark}

\subsection{The axiom (AB6)}
\label{ssec:AB6}

\begin{defi}Let $\cC$ be an $\infty$-category with infinite products and filtered colimits. We refer to the following condition as the axiom (AB6): 

- for any collection of filtered categories $J_i,$ $i\in I,$ and for any functors $J_i\to \cC,$ $j_i\mapsto x_{j_i},$ the map $$\indlim[(j_i)_i\in\prod_i J_i]\prod_{i\in I} x_{j_i}\to \prod_{i\in I}\indlim[j_i\in J_i]x_{j_i}$$ is an isomorphism.\end{defi}

Originally the axiom (AB6) was formulated by Grothendieck \cite{Gro} for abelian categories, but it makes sense in any $\infty$-category with infinite products and filtered colimits. The most important observation is that (AB6) holds in the category of sets, hence also in the $\infty$-category of spaces. We deduce the following statement.

\begin{prop}\label{prop:products_in_Ind_C} Let $\cC$ be an $\infty$-category with infinite products. Then the category $\Ind(\cC)$ also has infinite products, and the following isomorphism holds:
$$\prod_{i\in I}\inddlim[j_i\in J_i]x_{j_i}\cong \inddlim[(j_i)_i\in\prod_{i\in I}J_i]\prod_{i\in I}x_{j_i},$$ where $J_i$ are filtered categories, 
$i\in I,$ and we have functors $J_i\to \cC,$ $j_i\mapsto x_{j_i}.$\end{prop}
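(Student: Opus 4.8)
The key point is that $\Ind(\cC)$ is, by definition, the full subcategory of $\Fun(\cC^{op},\cS)$ spanned by filtered colimits of representables, and that filtered colimits in $\Fun(\cC^{op},\cS)$ are computed pointwise. So first I would reduce the statement to the level of presheaves: the product $\prod_i \inddlim[j_i]x_{j_i}$ taken in $\Fun(\cC^{op},\cS)$ is computed pointwise, i.e.\ for $c\in\cC$ its value is $\prod_i \indlim[j_i]\Map_{\cC}(c,x_{j_i})$. Now invoke the axiom (AB6) in $\cS$ (which holds because it holds in $\Sets$ and the forgetful considerations reduce spaces to sets on homotopy groups, or more directly because filtered colimits and products of spaces are computed via the model of simplicial sets where (AB6) is the commutation of filtered colimits with finite products, applied levelwise, combined with $\pi_*$ commuting with filtered colimits): this gives a natural isomorphism
$$\prod_i \indlim[j_i]\Map_{\cC}(c,x_{j_i})\;\cong\;\indlim[(j_i)_i]\prod_i\Map_{\cC}(c,x_{j_i}).$$

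**Identifying the right-hand side.** The right-hand side above is exactly the value at $c$ of the presheaf $\indlim[(j_i)_i]\bigl(\prod_i x_{j_i}\bigr)$, where the inner product $\prod_i x_{j_i}$ is taken in $\cC$ (using that $\cC$ has infinite products, so the product exists as an object of $\cC$ and corepresents $c\mapsto\prod_i\Map(c,x_{j_i})$), and the outer colimit is along the filtered category $\prod_i J_i$ (a product of filtered categories is filtered). Since this presheaf is a filtered colimit of representables, it lies in $\Ind(\cC)$; hence it is also the product in $\Ind(\cC)$ of the objects $\inddlim[j_i]x_{j_i}$, because the fully faithful inclusion $\Ind(\cC)\hookrightarrow\Fun(\cC^{op},\cS)$ preserves products (a right adjoint, or simply: limits in a full subcategory that happen to land in it agree with the ambient limits). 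This simultaneously proves that $\Ind(\cC)$ has infinite products and establishes the displayed formula.

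**The main obstacle.** The genuinely nontrivial input is (AB6) for $\cS$, i.e.\ that filtered colimits commute with arbitrary products in the $\infty$-category of spaces. This is not completely formal: products are limits and in general do not commute with colimits. The way to get it is to model spaces by Kan complexes (or simplicial sets) and observe that in $\Sets$ filtered colimits commute with arbitrary products — this is a classical fact, and it is where the regularity-type combinatorics really happens — then pass to simplicial objects levelwise and to homotopy groups using that $\pi_n$ commutes with filtered colimits of spaces. The paper already asserts ``(AB6) holds in the category of sets, hence also in the $\infty$-category of spaces'' just before the statement, so I would simply cite that; everything after it is the bookkeeping in the previous two paragraphs. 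I expect no serious difficulty beyond being careful that the indexing category $\prod_i J_i$ for the colimit on the right is filtered and that naturality in $c$ is respected throughout.
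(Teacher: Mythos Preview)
Your proposal is correct and follows essentially the same route as the paper: both identify $\Ind(\cC)$ inside the presheaf category, compute the mapping spaces out of an arbitrary $c\in\cC$, and invoke (AB6) in $\cS$ to swap the product with the filtered colimits, concluding that the filtered colimit $\inddlim[(j_i)_i]\prod_i x_{j_i}$ (which manifestly lies in $\Ind(\cC)$) has the universal property of the product. The paper's chain of isomorphisms is exactly your steps rearranged, and like you it simply cites the remark that (AB6) holds in $\cS$.
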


\begin{proof}Since (AB6) holds in the $\infty$-category of spaces, for any object $y\in\cC$ we have a chain of isomorphisms:
\begin{multline*}\Map(y,\inddlim[(j_i)_i\in\prod_{i\in I}J_i]\prod_{i\in I}x_{j_i})\cong \indlim[(j_i)_i\in\prod_{i\in I}J_i]\Map(y,\prod_{i\in I}x_{j_i})
\cong \indlim[(j_i)_i\in\prod_{i\in I}J_i]\prod_{i\in I}\Map(y,x_{j_i})\\
\cong \prod_{i\in I}\indlim[j_i\in J_i]\Map(y,x_{j_i})\cong \prod_{i\in I}\Map(y,\inddlim[j_i\in J_i]x_{j_i})\cong \Map(y,\prod_{i\in I}\inddlim[j_i\in J_i]x_{j_i}).\end{multline*}

This proves the proposition.\end{proof}

We obtain another criterion of dualizability, which is due to Clausen and Scholze.

\begin{prop}\label{prop:AB6_criterion} (Clausen, Scholze) Let $\cC$ be a presentable stable category. Then $\cC$ is dualizable if and only if (AB6) holds in $\cC.$\end{prop}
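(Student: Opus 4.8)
The plan is to prove both implications using the criterion \ref{Lur3} of Proposition \ref{prop:Lurie_criterions_dual} (dualizable $\iff$ retract of compactly generated) together with Proposition \ref{prop:products_in_Ind_C} on products in $\Ind(\cC)$.

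For the easy direction, suppose $\cC$ is dualizable. First I would check that (AB6) holds in any compactly generated category $\cD = \Ind(\cB)$: for a compact object $c \in \cB$ the functor $\Map_{\cD}(c,-)$ commutes with both filtered colimits and products (the latter being computed termwise on representing objects via Proposition \ref{prop:products_in_Ind_C}, restricted to $\cB$-indexed systems, or more simply because products in $\Ind(\cB)$ are computed by the formula there and $\Map_{\cD}(c,-)$ on an ind-object is a filtered colimit of mapping spectra out of $c$), and the claim for general $\cD$ follows by testing against a generating set of compact objects, using that (AB6) holds in $\Sp$ (which in turn follows from (AB6) in $\cS$, exactly as in the proof of Proposition \ref{prop:products_in_Ind_C}). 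Now if $\cC$ is a retract of such a $\cD$ in $\Pr^L_{\st}$ via continuous functors $\iota : \cC \to \cD$, $\rho : \cD \to \cC$ with $\rho\iota \simeq \id$, then since $\iota$ and $\rho$ are both colimit-preserving and $\iota$ (being a left adjoint whose section is colimit-preserving) also preserves products — here I must be slightly careful: the retraction in $\Pr^L_{\st}$ need not preserve products, so instead I would argue that $\cC$ is a retract of $\cD$ as a plain object equipped with products and filtered colimits, using that the comparison map in $\cC$ is a retract of the comparison map in $\cD$ via functors that preserve the relevant diagrams. Concretely: products in $\cC$ are computed by $\prod_{\cC} x_i = \rho(\prod_{\cD}\iota(x_i))$ since $\rho$ is a right adjoint? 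No — $\rho$ need not be a right adjoint. The cleaner route is via criterion \ref{Lur4} / Proposition \ref{prop:dualizable_via_ses}: realize $\cC$ as $\ker(\Ind(\cA) \to \Ind(\cA'))$ with strongly continuous functors, note the kernel is closed under products and filtered colimits in $\Ind(\cA)$ (the right adjoint of the quotient is fully faithful and colimit-preserving, so its image is closed under colimits; it is also closed under products since the quotient functor preserves products, being a left adjoint with continuous right adjoint), and both operations are computed in $\Ind(\cA)$, where (AB6) holds; the comparison map for $\cC$ is then literally the one for $\Ind(\cA)$ restricted to the kernel, hence an isomorphism.

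For the converse, suppose (AB6) holds in $\cC$. I would use the characterization via the colimit functor: $\cC$ is dualizable iff $\colim : \Ind(\cC) \to \cC$ admits a left adjoint $\hat{\cY}$. Since $\cC$ is presentable, $\colim$ has a right adjoint (the Yoneda embedding), so by the adjoint functor theorem for presentable/accessible categories it suffices to show $\colim$ preserves limits — equivalently, since $\colim$ is already exact and preserves filtered colimits, that it preserves infinite products. An object of $\Ind(\cC)$ indexed by $I$-many filtered systems has product $\inddlim[(j_i)]\prod_i x_{j_i}$ by Proposition \ref{prop:products_in_Ind_C}; applying $\colim$ gives $\indlim[(j_i)\in\prod J_i]\prod_i x_{j_i}$ in $\cC$, while the product of the colimits is $\prod_i \indlim[j_i] x_{j_i}$, and these agree precisely by (AB6) in $\cC$. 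Hence $\colim$ preserves products and all limits, so $\hat{\cY}$ exists and $\cC$ is compactly assembled, i.e. dualizable by Proposition \ref{prop:Lurie_criterions_dual}.

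The main obstacle I expect is the bookkeeping in the easy direction — ensuring that "retract in $\Pr^L_{\st}$" genuinely transports (AB6), which as noted above is cleanest to finesse by instead using the presentation of $\cC$ as a kernel inside a compactly generated category and observing that both products and filtered colimits in $\cC$ are inherited from that ambient category. The converse is essentially a direct computation once one invokes the adjoint functor theorem to reduce "dualizable" to "$\colim$ preserves products," and that reduction combined with Proposition \ref{prop:products_in_Ind_C} makes the (AB6) hypothesis do exactly the required work.
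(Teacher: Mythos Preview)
Your converse direction has the right idea but a genuine gap. You invoke the adjoint functor theorem to produce a left adjoint to $\colim:\Ind(\cC)\to\cC$, but $\Ind(\cC)$ is \emph{not} presentable: since $\cC$ is already large, $\Ind(\cC)$ is only locally small and the standard adjoint functor theorem for presentable categories does not apply. The paper makes exactly this point and fixes it by choosing a regular cardinal $\kappa$ with $\cC$ $\kappa$-presentable, noting that $\colim$ factors as $\Ind(\cC)\xto{G}\Ind(\cC^{\kappa})\xto{(\colim)_{\kappa}}\cC$ where $G$ is right adjoint to the inclusion, and observing that since both $\colim$ and $G$ preserve products, so does $(\colim)_{\kappa}$. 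Now $\Ind(\cC^{\kappa})$ \emph{is} presentable, so $(\colim)_{\kappa}$ has a left adjoint, and composing with the inclusion gives $\hat{\cY}$. You should incorporate this reduction.

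For the forward direction your final route (via the short exact sequence $0\to\cC\to\Ind(\cA)\to\Ind(\cA')\to 0$) can be made to work, but your stated reason that the kernel is closed under products --- ``the quotient functor preserves products, being a left adjoint with continuous right adjoint'' --- is false: left adjoints preserve colimits, not products, and a continuous right adjoint does not change this. The correct argument is to use the right adjoint $\Phi^R:\Ind(\cA)\to\cC$ of the inclusion $\Phi$: it preserves products (being a right adjoint) \emph{and} filtered colimits (strong continuity), so applying $\Phi^R$ to the (AB6) isomorphism in $\Ind(\cA)$ and using $\Phi^R\Phi\cong\id$ gives (AB6) in $\cC$. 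That said, the paper's forward direction is a single line and avoids all of this: $\colim:\Ind(\cC)\to\cC$ has a left adjoint $\hat{\cY}$ by hypothesis, hence preserves products, and by the product formula of Proposition~\ref{prop:products_in_Ind_C} this is literally the statement of (AB6) in $\cC$. Your detour through compactly generated categories and retracts is unnecessary once you use the compactly-assembled characterization directly.
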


\begin{proof}Suppose that $\cC$ is dualizable. Then the colimit functor $\colim:\Ind(\cC)\to\cC$ has a left adjoint $\hat{\cY}:\cC\to \Ind(\cC).$ Hence, the functor $\colim$ commutes with infinite products. By Proposition \ref{prop:products_in_Ind_C}, this exactly means that (AB6) holds in $\cC.$ 

Conversely, suppose that $(AB6)$ holds in $\cC.$ Arguing as above, we see that the functor $\colim:\Ind(\cC)\to\cC$ commutes with infinite products. Now, the category $\Ind(\cC)$ is not presentable, so we need the following additional observation to deduce the existence of the left adjoint to $\colim.$

Let $\kappa$ be a regular cardinal such that the category $\cC$ is $\kappa$-compactly generated. Denote by $(\colim)_{\kappa}:\Ind(\cC^{\kappa})\to\cC$ the colimit functor. Denoting by $G:\Ind(\cC)\to \Ind(\cC^{\kappa})$ the right adjoint to the inclusion, we see that the natural transformation $(\colim)_{\kappa}\circ G\to \colim$ is an equivalence of functors $\Ind(\cC)\to \cC.$ Since both functors $\colim:\Ind(\cC)\to\cC$ and $G:\Ind(\cC)\to \Ind(\cC^{\kappa})$ commute with infinite products, we deduce that the functor $(\colim)_{\kappa}:\Ind(\cC^{\kappa})\to\cC$ also commutes with infinite products. Hence, it has a left adjoint $\cC\to \Ind(\cC^{\kappa}).$ Then the composition $\cC\to \Ind(\cC^{\kappa})\to \Ind(\cC)$ is left adjoint to $\colim,$ hence $\cC$ is dualizable.\end{proof}

\begin{remark} The same argument shows that a (not necessarily stable) presentable category $\cC$ is compactly assembled if and only if it satisfies (AB6) and the strong version of (AB5): filtered colimits commute with finite limits. If $\cC$ is a complete lattice, then this statement is well known, see \cite[Theorem I.2.3]{GHKLMS}. 

Below we will see that the category $\Cat_{\st}^{\dual}$ satisfies (AB6) (Proposition \ref{prop:AB6_for_Cat^dual}) but the strong (AB5) does not hold in $\Cat_{\st}^{\dual}$ (Corollary \ref{cor:seq_colimits_finite_limits}). \end{remark}


\subsection{Automatic dualizability and images of functors}
\label{ssec:autom_dualizable}

We have the following sufficient condition for dualizability.

\begin{prop}\label{prop:condition_for_dualizability} Let$\cD$ be a presentable stable category, and consider a family of strongly continuous functors $F_i:\cC_i\to\cD,$ where each $\cC_i$ is dualizable. Suppose that $\cD$ is generated by the images of $F_i$ (by colimits). Then $\cD$ is dualizable.\end{prop}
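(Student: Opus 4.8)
The plan is to reduce to the characterization of dualizability provided by Proposition \ref{prop:dualizable_via_ses}, namely the existence of a strongly continuous fully faithful embedding into a compactly generated category, combined with the earlier observation (Proposition \ref{prop:colimits_in_Pr^LL}) that colimits in $\Pr^{LL}_{\st}$ agree with those in $\Pr^L_{\st}$. First I would observe that since $\cD$ is generated by colimits by the images of the $F_i$, the induced strongly continuous functor $\bigoplus_i \cC_i \to \cD$ (a coproduct taken in $\Pr^{LL}_{\st}$, equivalently in $\Pr^L_{\st}$) is essentially surjective after closing under colimits, hence is a quotient functor: its right adjoint is fully faithful, and its image generates. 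In other words, $\cD$ is a strongly continuous localization (a Bousfield localization with continuous right adjoint) of $\bigoplus_i \cC_i$.

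Next I would reduce to the case of a single source category. A coproduct of dualizable categories in $\Pr^L_{\st}$ is dualizable: indeed, each $\cC_i$ embeds strongly continuously and fully faithfully into some $\Ind(\cA_i)$ by Proposition \ref{prop:dualizable_via_ses}, and $\bigoplus_i \Ind(\cA_i) \simeq \Ind(\bigoplus_i \cA_i)$ is compactly generated; since the coproduct of the embeddings $\bigoplus_i \cC_i \to \bigoplus_i \Ind(\cA_i)$ is again strongly continuous and fully faithful, Proposition \ref{prop:dualizable_via_ses} shows $\bigoplus_i \cC_i$ is dualizable. (One must check that a coproduct of strongly continuous fully faithful functors is strongly continuous and fully faithful; fully faithfulness on each summand plus orthogonality of distinct summands gives this, and strong continuity follows from Proposition \ref{prop:colimits_in_Pr^LL} since the right adjoint of the coproduct is computed componentwise.) So we are reduced to: if $\cC$ is dualizable and $G : \cC \to \cD$ is a strongly continuous quotient functor (continuous localization with continuous right adjoint), then $\cD$ is dualizable.

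For that final reduction I would invoke criterion \ref{Lur4} of Proposition \ref{prop:Lurie_criterions_dual} together with \ref{Lur3}, or argue directly: by Proposition \ref{prop:dualizable_via_ses} pick a strongly continuous fully faithful $\iota : \cC \to \Ind(\cA)$. Let $G^R : \cD \to \cC$ be the (continuous, fully faithful) right adjoint of $G$. Then $\iota \circ G^R : \cD \to \Ind(\cA)$ is continuous; I claim it is fully faithful (composite of fully faithful functors) and strongly continuous (its right adjoint is $G \circ \iota^R$, a composite of continuous functors, using that $\iota$ is strongly continuous so $\iota^R$ is continuous, and $G$ is continuous). Hence $\cD$ embeds strongly continuously and fully faithfully into a compactly generated category, so $\cD$ is dualizable by Proposition \ref{prop:dualizable_via_ses} (via Proposition \ref{prop:ses_Pr^LL}).

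The main obstacle I anticipate is bookkeeping around strong continuity of the coproduct functor $\bigoplus_i \cC_i \to \cD$ and of the coproduct of embeddings: one must be careful that strong continuity is not automatic for colimit-preserving functors and genuinely use Proposition \ref{prop:colimits_in_Pr^LL} (colimits in $\Pr^{LL}_{\st}$ computed as in $\Pr^L_{\st}$, with componentwise right adjoints) to verify that the relevant right adjoints remain continuous. A secondary point is verifying that the functor $\bigoplus_i \cC_i \to \cD$ is genuinely a quotient (fully faithful right adjoint), which uses that the images generate $\cD$ by colimits and that a colimit-preserving functor whose essential image generates under colimits and which is "surjective enough" has fully faithful right adjoint — here one can instead simply factor through the localization onto the smallest localizing subcategory containing the image, which is all of $\cD$ by hypothesis, making the quotient property automatic. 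Everything else is formal manipulation of adjoints.
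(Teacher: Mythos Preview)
There is a genuine gap. Your central claim---that the induced functor $G:\bigoplus_i\cC_i\to\cD$ is a \emph{quotient} (i.e.\ has fully faithful right adjoint)---is false in general. ``Image generates by colimits'' does not imply ``localization.'' For a concrete counterexample take $I=\{1,2\}$, $\cC_1=\cC_2=\cD$, and $F_1=F_2=\id$. The induced functor $\cD\times\cD\to\cD$ is $(x,y)\mapsto x\oplus y$; its right adjoint is the diagonal, which is certainly not fully faithful. Your hand-wave in the ``secondary point'' (factoring through the localizing subcategory generated by the image) does not help: that subcategory is all of $\cD$, but this only says the image generates, not that the functor is a localization.

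There is also a computational error in the final step. The right adjoint of $\iota\circ G^R$ is $G^{RR}\circ\iota^R$, not $G\circ\iota^R$ (the latter is in fact the \emph{left} adjoint of $\iota\circ G^R$). Even granting that $G$ is a quotient, you would need $G^{RR}$ to be continuous to conclude that $\iota\circ G^R$ is strongly continuous, and there is no reason for this. What \emph{would} work in the quotient case is simpler: $\cD$ is a retract of $\cC$ in $\Pr^L_{\st}$ via $G^R$ and $G$, hence dualizable by criterion \ref{Lur3} of Proposition~\ref{prop:Lurie_criterions_dual}. But as noted, the quotient assumption is unavailable.

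The paper's proof makes the same reduction to a single strongly continuous $F:\cC\to\cD$ with $\cC$ dualizable and image generating (so $F^R$ conservative and continuous), but then avoids any embedding argument. Instead it either (a) transfers the axiom (AB6) from $\cC$ to $\cD$ using that $F^R$ is conservative and commutes with products and filtered colimits, then invokes Proposition~\ref{prop:AB6_criterion}; or (b) verifies directly that $\hat{\cY}_{\cD}$ is defined on the generator $F(x)$ by computing $\Hom(\inddlim[i]F(x_i),\inddlim[j]y_j)$ via the adjunction and the known $\hat{\cY}_{\cC}(x)=\inddlim[i]x_i$. Either route uses only the conservativity and continuity of $F^R$, not fully faithfulness.
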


\begin{proof} We first reduce to the case when $I$ consists of a single element. Consider the category $\cC=\prod_i\cC_i.$ Then $\cC$ is dualizable: it is sufficient to prove this for compactly generated categories, and in this case we have $\cC\simeq\Ind(\biggplus[i]\cC_i^{\omega}).$ By Proposition \ref{prop:colimits_in_Pr^LL}, the induced functor $F:\cC\to\cD$ is strongly continuous and its image generates $\cD.$

So assume that $I=\{i\}$ and put $\cC=\cC_i,$ $F=F_i.$ We give two proofs.

{\noindent {\bf First proof.}} Note that the right adjoint functor $F^R:\cD\to\cC$ is conservative (since the image of $F$ generates), and it commutes with products and filtered colimits. Since the axiom (AB6) holds in $\cC,$ we deduce that it also holds in $\cD.$ By Proposition \ref{prop:AB6_criterion}, $\cD$ is dualizable.

{\noindent {\bf Second proof.}} Let $x\in \cC$ be a generating object, and $\hat{\cY}(x)=\inddlim[i]x_i.$ Then for an ind-object $\inddlim[j]y_j\in \Ind(\cD)$ we have an equivalence \begin{multline*}\Hom(\inddlim[i]F(x_i),\inddlim[j]y_j)\cong \Hom(\inddlim[i]x_i,\inddlim[j]F^R(y_j))\cong \Hom(x,\indlim[j] F^R(y_j))\\ \cong
\Hom(x,F^R(\indlim[j] y_j))\cong
 \Hom(F(x),\indlim[j] y_j).\end{multline*} Hence, the (a priori partially defined) functor $\hat{\cY}_{\cD}$ is defined on $F(x).$ Since the objects $F(x)$ generates $\cD$ by assumption, we conclude that the functor $\hat{\cY}_{\cD}:\cD\to \Ind(\cD)$ exists, hence $\cD$ is dualizable.\end{proof}

An important corollary is that we have well-behaved images of $1$-morphisms in $\Cat_{\st}^{\dual}.$

\begin{defi}\label{defi:image} Let $F:\cC\to\cD$ be a continuous functor between presentable stable categories. We denote by $\im(F)\subset\cD$ the localizing subcategory generated by the image of $F.$ Equivalently, $\im(F)$ is the fiber of the cofiber of $F$ in $\Pr^L_{\st}.$\end{defi}

\begin{cor}\label{cor:images_in_Cat^dual} Let $\cC$ and $\cD$ be presentable stable categories, and suppose that $\cC$ is dualizable. Let $F:\cC\to\cD$ be a strongly continuous functor. Then the category $\im(F)$ is dualizable, and the inclusion $\im(F)\to\cD$ is strongly continuous.\end{cor}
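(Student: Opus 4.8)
We want to prove Corollary~\ref{cor:images_in_Cat^dual}. The plan is to factor $F$ through its image and then apply Propositions~\ref{prop:condition_for_dualizability} and~\ref{prop:automatic_strcont}. Write $F$ as the composite $\cC\xto{\bar F}\im(F)\xto{j}\cD$, where $j$ denotes the inclusion of the localizing subcategory $\im(F)$ and $\bar F$ is $F$ with its codomain restricted. Since $\cC$ is presentable it is generated under colimits by a small set of objects, so $\im(F)$ is a genuine localizing subcategory in the sense of Subsection~\ref{ssec:pres_acc_cats}; in particular $\im(F)$ is presentable stable and $j$ is fully faithful and colimit-preserving.

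The crucial step is to check that $\bar F$ is strongly continuous. It is colimit-preserving: given a colimit diagram $(c_i)$ in $\cC$, the object $\colimm_i\bar F(c_i)$ exists in $\im(F)$, and applying $j$ yields $\colimm_i j\bar F(c_i)\cong\colimm_i F(c_i)\cong F(\colimm_i c_i)\cong j\bar F(\colimm_i c_i)$, so the comparison map is inverted by the conservative functor $j$. Moreover, since $j$ is fully faithful, for $c\in\cC$ and $z\in\im(F)$ we have $\Hom_{\cC}(c,F^R(j(z)))\cong\Hom_{\cD}(F(c),j(z))\cong\Hom_{\im(F)}(\bar F(c),z)\cong\Hom_{\cC}(c,\bar F^R(z))$, whence by the Yoneda lemma $\bar F^R\cong F^R\circ j$. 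Now $F^R$ commutes with filtered colimits because $F$ is strongly continuous, and $j$ commutes with all colimits because $\im(F)$ is localizing; hence $\bar F^R$ commutes with filtered colimits, i.e.\ $\bar F$ is strongly continuous.

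With this in hand, dualizability of $\im(F)$ follows from Proposition~\ref{prop:condition_for_dualizability} applied to the one-element family consisting of the strongly continuous functor $\bar F\colon\cC\to\im(F)$, whose image generates $\im(F)$ under colimits by the very definition of $\im(F)$. Finally, to see that $j\colon\im(F)\to\cD$ is strongly continuous I would invoke Proposition~\ref{prop:automatic_strcont} with $G=j$ and the single strongly continuous functor $\bar F$: its image generates $\im(F)$, and the composite $j\circ\bar F=F$ is strongly continuous by hypothesis, so the proposition yields that $j$ is strongly continuous. I do not expect any real obstacle here: the whole argument is bookkeeping around the factorization $F=j\circ\bar F$, and the single point that makes it run is the identity $\bar F^R\cong F^R\circ j$ together with the fact that $j$, as the inclusion of a localizing subcategory, preserves filtered colimits --- this is exactly what upgrades the hypothesis ``$F$ strongly continuous'' to ``$\bar F$ strongly continuous'' and then feeds both cited propositions.
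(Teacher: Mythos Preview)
Your proof is correct and follows the same approach as the paper: factor $F$ through $\im(F)$, apply Proposition~\ref{prop:condition_for_dualizability} to get dualizability, then Proposition~\ref{prop:automatic_strcont} to get strong continuity of the inclusion. The paper states without justification that the corestricted functor $\cC\to\im(F)$ is strongly continuous, whereas you supply the verification via $\bar F^R\cong F^R\circ j$; this is a helpful elaboration of exactly the step the paper leaves implicit.
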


\begin{proof}The functor $F:\cC\to \im(F)$ is strongly continuous, hence Proposition \ref{prop:condition_for_dualizability} implies that $\im(F)$ is dualizable. Applying Proposition \ref{prop:automatic_strcont} to the functors $\cC\to\im(F)\to\cD,$ we deduce that the inclusion $\im(F)\to\cD$ is strongly continuous.\end{proof}

\begin{remark}Under the assumptions of Corollary \ref{cor:images_in_Cat^dual}, if $\cD$ is dualizable, then $\im(F)$ is also the fiber of the cofiber of $F$ in $\Cat_{\st}^{\dual}.$\end{remark}

\begin{remark}Note that we have a (not very explicit) description of the functor $\Pr^{LL}_{\st}\to\Cat_{\st}^{\dual},$ right adjoint to the inclusion. Let $\cC$ be a presentable stable category, and denote by $P$ the poset of (strictly full) dualizable subcategories $\cD\subset\cC,$ such that the inclusion is strongly continuous. Note that $P$ is indeed a small set: any such $\cD$ is generated by $\omega_1$-compact objects of $\cC.$ Moreover, $P$ is a complete lattice: the supremum (join) of a collection $\{\cD_i\}_{i\in I}$ is given by the image of the functor $\prod_i\cD_i\to\cC.$ In particular, $P$ has the largest element $\cD_{\max}.$ Then $\cD_{\max}$ is exactly the image of $\cC$ under the functor $\Pr^{LL}_{\st}\to\Cat_{\st}^{\dual}.$\end{remark}

\subsection{Calkin categories}
\label{ssec:Calkin}

Fix an uncountable regular cardinal $\kappa.$ We will define the Calkin and $\kappa$-Calkin categories as follows.

Given a small stable Karoubi complete category $\cA,$ we put $$\Calk(\cA)=(\Ind(\cA)/\cA)^{\Kar},\quad\Calk_{\kappa}(\cA)=(\Ind(\cA)^{\kappa}/\cA)^{\Kar}.$$
It turns out that this construction has a direct analogue for dualizable categories.

Let $\cC$ be a dualizable category. By Corollary \ref{cor:dualizable_is_aleph_1_generated}, $\cC$ is $\omega_1$-compactly generated. By Corollary \ref{cor:hat_Y_into_Ind_C_omega_1} the image of the functor $\hat{\cY}:\cC\to \Ind(\cC)$ is contained in the full subcategory $\Ind(\cC^{\omega_1}).$ Hence, we have a fully faithful strongly continuous functor $\hat{\cY}:\cC\to \Ind(\cC^{\kappa}).$

\begin{defi} For a dualizable presentable stable category $\cC$ we define its continuous $\kappa$-Calkin category $\Calk_{\kappa}^{\cont}(\cC)$ as the category of compact objects in the quotient $\Ind(\cC^{\kappa})/\hat{\cY}(\cC)\simeq\ker(\colim:\Ind(\cC^{\kappa})\to\cC).$ We have a short exact sequence
$$0\to\cC\xto{\hat{\cY}} \Ind(\cC^{\kappa})\to \Ind(\Calk_{\kappa}^{\cont}(\cC))\to 0$$
in $\Cat_{\st}^{\dual}.$ 

We have fully faithful functors $\Calk_{\kappa}^{\cont}(\cC)\to\Calk_{\lambda}^{\cont}(\cC)$ for $\kappa<\lambda.$ We denote by $\Calk^{\cont}(\cC)$ the colimit of $\Calk_{\lambda}^{\cont}(\cC).$\end{defi}

We observe that this Calkin construction is functorial: we have a well-defined functor $\Calk_{\kappa}^{\cont}:\Cat_{\st}^{\dual}\to \Cat^{\perf}.$ Indeed, if $F:\cC\to\cD$ is a strongly continuous functor between dualizable categories, then we have a commutative square
$$
\begin{CD}
\cC @>{F}>> \cD\\
@V{\hat{\cY}_{\cC}}VV @V{\hat{\cY}_{\cD}}VV\\
\Ind(\cC^{\kappa}) @>{\Ind(F^{\kappa})}>> \Ind(\cC^{\kappa}).
\end{CD}
$$

Hence, we have a well-defined functor $\Calk_{\kappa}^{\cont}(F):\Calk_{\kappa}^{\cont}(\cC)\to \Calk_{\kappa}^{\cont}(\cD).$

\begin{prop}\label{prop:Calk_of_cg} If $\cC$ is compactly generated, we have a natural equivalence $\Calk_{\kappa}(\cC^{\omega})\simeq \Calk_{\kappa}^{\cont}(\cC).$\end{prop}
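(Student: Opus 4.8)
The plan is to reduce the statement to the two defining short exact sequences of the Calkin constructions and compare them directly. Write $\cA=\cC^{\omega}$ and $\cB=\cC^{\kappa}$; since $\cC$ is compactly generated we have $\cC\simeq\Ind(\cA)$, $\cB=\Ind(\cA)^{\kappa}$, and $\cA\subset\cB$ is a full stable subcategory, with both $\cA$ and $\cB$ idempotent-complete since $\kappa$ is uncountable. The right-hand side of the asserted equivalence is then $\Calk_{\kappa}(\cA)=(\cB/\cA)^{\Kar}$ by definition.

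The first key step is to identify the functor $\hat{\cY}\colon\cC\to\Ind(\cC^{\kappa})=\Ind(\cB)$ used in the definition of $\Calk_{\kappa}^{\cont}(\cC)$ with $\Ind(\iota)$, where $\iota\colon\cA\hookrightarrow\cB$ is the inclusion and $\Ind(\iota)$ denotes its colimit-preserving extension $\Ind(\cA)\to\Ind(\cB)$. I would obtain this by unwinding the description recalled in Subsection~\ref{ssec:dualizable_cats}: for $\cC=\Ind(\cA)$ the functor $\hat{\cY}_{\cC}\colon\cC\to\Ind(\cC)$ is $\Ind$ of the Yoneda embedding $\cA\hookrightarrow\Ind(\cA)=\cC$, so its restriction to $\cA=\cC^{\omega}$ sends $x$ to the representable $h_{x}$; postcomposing with the right adjoint $\Ind(\cC)\to\Ind(\cB)$ of the inclusion (which fixes the representables of $\kappa$-compact objects, compare Corollary~\ref{cor:hat_Y_into_Ind_C_omega_1}) produces a colimit-preserving functor $\cC\to\Ind(\cB)$ whose restriction to $\cA$ is $\cY_{\cB}\circ\iota$. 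Since a colimit-preserving functor out of $\Ind(\cA)$ is determined by its restriction to $\cA$, this functor is exactly $\Ind(\iota)$.

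The second step is the comparison. By definition $0\to\cA\xto{\iota}\cB\to\Calk_{\kappa}(\cA)\to 0$ is a short exact sequence in $\Cat^{\perf}$, and applying the equivalence $\Ind\colon\Cat^{\perf}\xto{\sim}\Cat_{\st}^{\cg}$ (which preserves cofiber sequences and the zero object) turns it into a short exact sequence $0\to\cC\xto{\Ind(\iota)}\Ind(\cC^{\kappa})\to\Ind(\Calk_{\kappa}(\cA))\to 0$. On the other hand, the defining short exact sequence of $\Calk_{\kappa}^{\cont}(\cC)$ reads $0\to\cC\xto{\hat{\cY}}\Ind(\cC^{\kappa})\to\Ind(\Calk_{\kappa}^{\cont}(\cC))\to 0$. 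By the first step the two sequences have the same first map, hence identified cofibers, so $\Ind(\Calk_{\kappa}^{\cont}(\cC))\simeq\Ind(\Calk_{\kappa}(\cA))$ compatibly with the projections from $\Ind(\cC^{\kappa})$. Taking $\omega$-compact objects --- using $\Calk_{\kappa}^{\cont}(\cC)=\Ind(\Calk_{\kappa}^{\cont}(\cC))^{\omega}$ by definition, and $\Calk_{\kappa}(\cA)=\Ind(\Calk_{\kappa}(\cA))^{\omega}$ since $\Calk_{\kappa}(\cA)$ is idempotent-complete --- gives the desired equivalence $\Calk_{\kappa}^{\cont}(\cC)\simeq\Calk_{\kappa}(\cC^{\omega})$. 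Naturality in $\cC\in\Cat_{\st}^{\cg}$ is then clear, since each ingredient ($(-)^{\omega}$, $(-)^{\kappa}$, $\Ind$, formation of cofibers, and the identification of $\hat{\cY}$) is functorial for strongly continuous functors.

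I expect the only genuine obstacle to be the first step: making the identification $\hat{\cY}=\Ind(\iota)$ fully precise, i.e. carefully matching the abstractly-defined left adjoint $\hat{\cY}$ with the concrete $\Ind$-extension of the inclusion of compact objects, including the bookkeeping of its factorization through $\Ind(\cC^{\kappa})$. Once that is in place, the remainder is formal manipulation of short exact sequences via the equivalence $\Ind\colon\Cat^{\perf}\simeq\Cat_{\st}^{\cg}$.
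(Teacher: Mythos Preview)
Your proposal is correct and follows essentially the same approach as the paper: identify $\hat{\cY}\colon\cC\to\Ind(\cC^{\kappa})$ with $\Ind(\cC^{\omega}\hookrightarrow\cC^{\kappa})$, then apply $\Ind$ to the short exact sequence $0\to\cC^{\omega}\to\cC^{\kappa}\to\Calk_{\kappa}(\cC^{\omega})\to 0$ and compare cofibers. The paper's proof is just the one-line version of what you wrote.
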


\begin{proof}Indeed, in this case the functor $\hat{\cY}:\cC\to \Ind(\cC^{\kappa})$ is identified with $\Ind(\cY:\cC^{\omega}\to \cC^{\kappa}).$ Applying $\Ind$ to the short exact sequence $$0\to \cC^{\omega}\to\cC^{\kappa}\to \Calk_{\kappa}(\cC^{\omega})\to 0,$$
we obtain an equivalence $\Calk_{\kappa}(\cC^{\omega})\simeq \Calk_{\kappa}^{\cont}(\cC).$\end{proof}

\begin{remark}Note that if $\cC$ is compactly generated, then the essential image of the functor $\cC^{\kappa}\to\Calk_{\kappa}^{\cont}(\cC)$ is a stable subcategory: it is equivalent to the Verdier quotient $\cC^{\kappa}/\cC^{\omega}.$ However, in general this essential image does not have to be a stable subcategory even for $\kappa=\omega_1,$ see Proposition \ref{prop:image_not_a_stable_subcat} below.\end{remark}

\begin{prop}\label{prop:exactness_of_Calk} The functor $\Calk_{\kappa}^{\cont}:\Cat_{\st}^{\dual}\to \Cat^{\perf}$ takes short exact sequences to short exact sequences.\end{prop}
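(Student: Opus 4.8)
The plan is to reduce the statement about $\Calk_\kappa^{\cont}$ to the already-established fact (Proposition \ref{prop:dualizable_via_ses}, Proposition \ref{prop:ses_Pr^LL}) that short exact sequences of dualizable categories restrict, after applying $\Ind$, to the defining short exact sequences $0\to\cC\xto{\hat{\cY}}\Ind(\cC^\kappa)\to\Ind(\Calk_\kappa^{\cont}(\cC))\to 0$ in $\Cat_{\st}^{\dual}$. So let $0\to\cC\xto{F}\cD\xto{G}\cE\to 0$ be a short exact sequence in $\Cat_{\st}^{\dual}$. First I would record the key structural input: because $F$ has a continuous right adjoint $F^R$ with $F^R F\cong\id_\cC$, and similarly $G$ has a fully faithful continuous right adjoint $G^R$, the functors $F$, $G$ preserve $\omega_1$-compact (indeed $\kappa$-compact) objects, and the induced functors on $\kappa$-compact objects assemble to a short exact sequence. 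More precisely, I want to show $0\to\cC^\kappa/(\text{something})\to\dots$ is not quite the right formulation; instead the cleaner route is to pass to $\Ind(-^\kappa)$ and compare the three vertical sequences.

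The heart of the argument is the following commutative diagram of dualizable categories, whose rows are short exact (by the definition of $\Calk_\kappa^{\cont}$ and Proposition \ref{prop:ses_Pr^LL}) and whose columns I claim are short exact:
\begin{equation*}
\begin{CD}
0 @>>> \cC @>F>> \cD @>G>> \cE @>>> 0\\
@. @V{\hat{\cY}_\cC}VV @V{\hat{\cY}_\cD}VV @V{\hat{\cY}_\cE}VV @.\\
0 @>>> \Ind(\cC^\kappa) @>{\Ind(F^\kappa)}>> \Ind(\cD^\kappa) @>{\Ind(G^\kappa)}>> \Ind(\cE^\kappa) @>>> 0\\
@. @VVV @VVV @VVV @.\\
0 @>>> \Ind(\Calk_\kappa^{\cont}(\cC)) @>>> \Ind(\Calk_\kappa^{\cont}(\cD)) @>>> \Ind(\Calk_\kappa^{\cont}(\cE)) @>>> 0
\end{CD}
\end{equation*}
Given that the middle row is exact, a diagram chase (the $3\times 3$-lemma for short exact sequences in the stable setting $\Cat_{\st}^{\dual}$, which is stable and where short exact sequences are bicartesian squares) shows the bottom row is exact as soon as the top two rows are and the first two columns are. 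Since $\Ind:\Cat^{\perf}\to\Cat_{\st}^{\cg}$ is an equivalence and is exact (it sends semiorthogonal short exact sequences of small idempotent-complete categories to short exact sequences of compactly generated ones, by Neeman–Thomason, cf.\ Proposition \ref{prop:compact_objects_in_quotients}), exactness of the bottom row of $\Ind$'s is equivalent to exactness of $0\to\Calk_\kappa^{\cont}(\cC)\to\Calk_\kappa^{\cont}(\cD)\to\Calk_\kappa^{\cont}(\cE)\to 0$ in $\Cat^{\perf}$, which is what we want.

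So the two things that actually need proof are: (a) the middle row $0\to\Ind(\cC^\kappa)\to\Ind(\cD^\kappa)\to\Ind(\cE^\kappa)\to 0$ is short exact, and (b) the left-hand and middle squares commute in $\Cat_{\st}^{\dual}$ (the commutativity square for $\Calk_\kappa^{\cont}$ of a morphism is already noted in the text just before the statement, so I mainly need (a) plus that the columns are cofiber sequences, not just commuting squares). For (a): fully faithfulness of $F$ on $\omega_1$-compact objects gives that $\Ind(F^\kappa)$ is fully faithful; the composite $\Ind(G^\kappa)\circ\Ind(F^\kappa)$ is zero since $GF=0$; and $\Ind(G^\kappa)$ is a quotient functor with the right Karoubi-completion property because $G$ is a localization with $G^R$ fully faithful and continuous — here is where I would invoke Proposition \ref{prop:compact_objects_in_quotients} (applied to $\cA=\cC^\kappa\subset\cD^\kappa$, using $\cC\simeq\ker(\cD\to\cE)$ and the fact that $F^R$ preserves $\kappa$-compacts) to identify $(\cD^\kappa/\cC^\kappa)^{\Kar}\simeq\cE^\kappa$ and hence $\Ind(\cD^\kappa)/\Ind(\cC^\kappa)\simeq\Ind(\cE^\kappa)$. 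The main obstacle I anticipate is exactly this identification of the quotient on $\kappa$-compact objects: one must check that the restriction $G^\kappa:\cD^\kappa\to\cE^\kappa$ really is (up to idempotent completion) the Verdier quotient by $\cC^\kappa$, which uses essential surjectivity of $G$ on $\kappa$-compacts modulo summands — this is where the $\kappa$-compact-generation of $\cC,\cD,\cE$ (Corollary \ref{cor:dualizable_is_aleph_1_generated}) and the continuity of the adjoints get used in an essential way. Once (a) is in hand, the $3\times 3$ chase and the exactness of $\Ind^{-1}$ finish the proof.
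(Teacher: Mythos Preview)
Your approach differs from the paper's and has two genuine gaps.

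First, $\Cat_{\st}^{\dual}$ is \emph{not} a stable $\infty$-category, contrary to your parenthetical. Short exact sequences there are not bicartesian squares in any stable sense, so the $3\times 3$-lemma does not apply as stated. In particular, even granting that the top two rows and all three columns of your diagram are short exact, it does not follow formally that the bottom-left map $\Ind(\Calk_\kappa^{\cont}(\cC))\to\Ind(\Calk_\kappa^{\cont}(\cD))$ is fully faithful: this is an instance of the ``second isomorphism theorem'' for localizing subcategories, which fails in general in $\Pr^L_{\st}$. This full faithfulness is precisely what the paper isolates as the one nontrivial assertion and proves directly.

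Second, your justification of the middle row misapplies Proposition~\ref{prop:compact_objects_in_quotients}. That result requires $\cA\subset\cC^\omega$ and yields $(\cC^\omega/\cA)^{\Kar}\simeq(\cC/\Ind(\cA))^\omega$. Applied with ambient category $\Ind(\cD^\kappa)$ and $\cA=\cC^\kappa\subset(\Ind(\cD^\kappa))^\omega=\cD^\kappa$, it gives only the tautology $(\cD^\kappa/\cC^\kappa)^{\Kar}\simeq(\Ind(\cD^\kappa)/\Ind(\cC^\kappa))^\omega$, not the identification with $\cE^\kappa$ you need. The fact you actually want --- that a short exact sequence in $\Cat_{\st}^{\dual}$ restricts to one on $\kappa$-compact objects --- is a separate Neeman--Krause type statement not supplied by the cited proposition. (Your aside that $F^R$ preserves $\kappa$-compacts is also unjustified: it would require $F^{RR}$ to be $\kappa$-continuous.)

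The paper's argument sidesteps both issues with a single observation. Using the identification $\Ind(\Calk_\kappa^{\cont}(\cC))\simeq\ker(\colim:\Ind(\cC^\kappa)\to\cC)$ (the quotient realized as the right orthogonal to $\hat{\cY}(\cC)$), the functor $\Ind(\Calk_\kappa^{\cont}(F))$ becomes the restriction of the fully faithful functor $\Ind(F^\kappa):\Ind(\cC^\kappa)\to\Ind(\cD^\kappa)$ to full subcategories, hence is itself fully faithful. No $3\times 3$ diagram and no exactness of the middle row enter.
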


\begin{proof}The only assertion which requires a proof is the following: if $F:\cC\to\cD$ is a strongly continuous fully faithful functor between dualizable categories, then the functor $\Calk_{\kappa}^{\cont}(F):\Calk_{\kappa}^{\cont}(\cC)\to \Calk_{\kappa}^{\cont}(\cD)$ is fully faithful. Passing to ind-completions, it suffices to check that the functor $\Ind(\cC^{\kappa})/\hat{\cY}(\cC)\to \Ind(\cD^{\kappa})/\hat{\cY}(\cD)$ is fully faithful. But this is the same as the functor $$\ker(\colim:\Ind(\cC^{\kappa})\to\cC)\to \ker(\colim:\Ind(\cD^{\kappa})\to\cD).$$ The latter functor is fully faithful, because the functor $\Ind(\cC^{\kappa})\to \Ind(\cD^{\kappa})$ is fully faithful.\end{proof}

We conclude this subsection with the description of morphisms in the Calkin category.

\begin{prop}\label{prop:Homs_in_Calk} Let $\cC$ be a dualizable category. The category $\Calk^{\cont}(\cC)$ is identified with the essential image of the functor $\Cone(\hat{\cY}_{\cC}\to\cY_{\cC}):\cC\to\Ind(\cC).$ In particular, for $x,y\in\cC,$ we have
\begin{multline*}\Hom_{\Calk^{\cont}(\cC)}(x,y)\cong\Cone(\Hom_{\Ind(\cC)}(\cY(x),\hat{\cY}(y))\to\Hom_{\cC}(x,y))\\ \cong
\Cone(\ev_{\cC}(y\boxtimes x^{\vee})\to \Hom_{\cC}(x,y)).
\end{multline*}
\end{prop}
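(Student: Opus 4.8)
The plan is to identify $\Calk^{\cont}(\cC)$ inside $\Ind(\cC)$ via the cofiber of the natural transformation $\hat{\cY}_{\cC}\to\cY_{\cC}$ and then read off the Hom-formula. Recall that by definition $\Calk_{\kappa}^{\cont}(\cC)=(\Ind(\cC^{\kappa})/\hat{\cY}(\cC))^{\omega}$, and by Proposition \ref{prop:compact_objects_in_quotients} (applied to the dualizable category $\Ind(\cC^{\kappa})$ with $\hat{\cY}(\cC)\simeq\Ind(\cC)$ as a localizing subcategory — here using that $\hat{\cY}$ identifies $\cC$ with a localizing subcategory of $\Ind(\cC^{\kappa})$) the compact objects of the quotient are (up to summands) the images of compact objects of $\Ind(\cC^{\kappa})$, i.e. the images of objects of $\cC^{\kappa}$. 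So a compact object in $\Ind(\cC^{\kappa})/\hat{\cY}(\cC)$ is represented by an object $x\in\cC^{\kappa}$, viewed via the composite $\cY_{\cC}\colon\cC^{\kappa}\hookrightarrow\Ind(\cC^{\kappa})\to\Ind(\cC^{\kappa})/\hat{\cY}(\cC)$. First I would make this precise: the Verdier quotient functor $q\colon\Ind(\cC^{\kappa})\to\Ind(\cC^{\kappa})/\hat{\cY}(\cC)$ has a fully faithful right adjoint onto the right orthogonal $\hat{\cY}(\cC)^{\perp}$, and composing $q$ with this right adjoint is the functor $\Cone(\hat{\cY}_{\cC}\to\cY_{\cC})$ on $\cC^{\kappa}$. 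Indeed, for $x\in\cC^{\kappa}$ there is a fiber sequence $\hat{\cY}(x)\to\cY(x)\to\Cone(\hat{\cY}(x)\to\cY(x))$ with $\hat{\cY}(x)\in\hat{\cY}(\cC)$, and one checks $\Cone(\hat{\cY}(x)\to\cY(x))$ lies in $\hat{\cY}(\cC)^{\perp}$: a map from it to some $\hat{\cY}(z)$ sits in an exact sequence whose neighbours are $\Hom(\cY(x),\hat{\cY}(z))$ and $\Hom(\hat{\cY}(x),\hat{\cY}(z))$, and the restriction map between these is an equivalence since $\hat{\cY}$ is fully faithful and $\Hom(\cY(x),\hat{\cY}(z))\cong\Hom(\hat{\cY}(x),\hat{\cY}(z))$ by the adjunction $(\hat{\cY},\colim)$ together with $\colim\circ\hat{\cY}\cong\id$ — more carefully, by Remark \ref{rem:functor_-^vee} we have $\Hom_{\Ind(\cC)}(\cY(x),\hat{\cY}(z))\cong\ev_{\cC}(z\boxtimes x^{\vee})$, and one verifies this is the universal map killing the kernel of $\colim$.

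Granting the identification of the essential image, the Hom-formula is immediate. Since $q$ composed with its right adjoint is $\Cone(\hat{\cY}_{\cC}\to\cY_{\cC})$ and the right adjoint is fully faithful, for $x,y\in\cC$ we get
$$
\Hom_{\Calk^{\cont}(\cC)}(x,y)\cong\Hom_{\Ind(\cC)}\bigl(\Cone(\hat{\cY}(x)\to\cY(x)),\,\Cone(\hat{\cY}(y)\to\cY(y))\bigr)\cong\Hom_{\Ind(\cC)}\bigl(\cY(x),\Cone(\hat{\cY}(y)\to\cY(y))\bigr),
$$
where the second equivalence holds because the target lies in $\hat{\cY}(\cC)^{\perp}$, so precomposition with $\hat{\cY}(x)\to\cY(x)$ is an equivalence. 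Now applying $\Hom_{\Ind(\cC)}(\cY(x),-)$ to the fiber sequence $\hat{\cY}(y)\to\cY(y)\to\Cone$ and using $\Hom_{\Ind(\cC)}(\cY(x),\cY(y))\cong\Hom_{\cC}(x,y)$ (Yoneda) gives the fiber sequence
$$
\Hom_{\Ind(\cC)}(\cY(x),\hat{\cY}(y))\to\Hom_{\cC}(x,y)\to\Hom_{\Calk^{\cont}(\cC)}(x,y),
$$
i.e. $\Hom_{\Calk^{\cont}(\cC)}(x,y)\cong\Cone\bigl(\Hom_{\Ind(\cC)}(\cY(x),\hat{\cY}(y))\to\Hom_{\cC}(x,y)\bigr)$. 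Substituting the identification $\Hom_{\Ind(\cC)}(\cY(x),\hat{\cY}(y))\cong\ev_{\cC}(y\boxtimes x^{\vee})$ from Remark \ref{rem:functor_-^vee} yields the second displayed isomorphism in the statement. Finally, to pass from $\Calk_{\kappa}^{\cont}$ to $\Calk^{\cont}=\colim_{\lambda}\Calk_{\lambda}^{\cont}$ one notes that all these transition functors are fully faithful and that every $x,y\in\cC$ lie in $\cC^{\lambda}$ for $\lambda$ large, so the Hom-computation is independent of $\kappa$ and passes to the colimit unchanged; the essential image statement likewise stabilizes since $\Ind(\cC^{\kappa})$ exhausts $\Ind(\cC)$.

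The main obstacle I anticipate is the clean identification of the functor "$q$ followed by its fully faithful right adjoint" with $\Cone(\hat{\cY}_{\cC}\to\cY_{\cC})$ — in particular verifying that $\Cone(\hat{\cY}(x)\to\cY(x))$ genuinely lands in the right orthogonal $\hat{\cY}(\cC)^{\perp}$ and represents the localization. This is where the dualizability hypothesis is really used (existence of $\hat{\cY}$, and the fact that $\hat{\cY}(\cC)\subset\Ind(\cC^{\kappa})$ is a localizing, indeed smashing, subcategory so that the quotient behaves well and compact objects are controlled by Proposition \ref{prop:compact_objects_in_quotients}). Everything else — Yoneda, the long exact sequence, the $\ev$-reformulation — is formal manipulation once that point is nailed down. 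One should also double check the degenerate cases (e.g. $\cC$ compactly generated, where this must recover the usual $\Hom$ in $\cC^{\kappa}/\cC^{\omega}$, consistent with Proposition \ref{prop:Calk_of_cg}) as a sanity check.
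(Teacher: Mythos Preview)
Your approach is essentially the paper's, and once the key identification is made the Hom-computation you give is correct. However, your verification that $\Cone(\hat{\cY}(x)\to\cY(x))$ lies in $\hat{\cY}(\cC)^{\perp}$ is both misdirected and incorrect as written. You compute $\Hom(\Cone,\hat{\cY}(z))$, which tests membership in the \emph{left} orthogonal ${}^{\perp}\hat{\cY}(\cC)$, not the right orthogonal; and the claimed equivalence $\Hom(\cY(x),\hat{\cY}(z))\cong\Hom(\hat{\cY}(x),\hat{\cY}(z))$ is false in general --- the left-hand side is $\ev_{\cC}(z\boxtimes x^{\vee})$ while the right-hand side is $\Hom_{\cC}(x,z)$, and these agree only when $x$ is compact. (Indeed, $\Cone(\hat{\cY}(x)\to\cY(x))$ is typically \emph{not} in the left orthogonal.)

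The correct argument, which is what the paper records in one line, is much simpler: since $\hat{\cY}$ is left adjoint to $\colim$, the right orthogonal $\hat{\cY}(\cC)^{\perp}$ is precisely $\ker(\colim)$, and $\colim$ applied to $\Cone(\hat{\cY}(x)\to\cY(x))$ is $\Cone(x\xrightarrow{\id}x)=0$. Equivalently, the projection $\Ind(\cC)\to\hat{\cY}(\cC)^{\perp}$ (quotient followed by its right adjoint) is $z\mapsto\Cone(\hat{\cY}(\colim z)\to z)$, the cofiber of the $(\hat{\cY},\colim)$-counit; specializing to $z=\cY(x)$ gives the desired formula. This resolves precisely the obstacle you flagged. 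A minor additional point: your appeal to Proposition~\ref{prop:compact_objects_in_quotients} is misplaced, since that proposition requires the subcategory to be generated by compact objects of the ambient category, which $\hat{\cY}(\cC)\subset\Ind(\cC^{\kappa})$ is not unless $\cC$ is compactly generated; what you actually need is just that $q$ is strongly continuous (because the inclusion $\hat{\cY}:\cC\hookrightarrow\Ind(\cC^{\kappa})$ is), so images of $\cC^{\kappa}$ under $q$ are compact and generate the quotient.
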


\begin{proof}This follows directly from the definitions. Namely, the left adjoint to the inclusion $\ker(\colim)\to\Ind(\cC)$ is given by the left Kan extension of the functor \begin{equation*}\Cone(\hat{\cY}_{\cC}\to\cY_{\cC}):\cC\to \ker(\colim).\qedhere\end{equation*}\end{proof}

Note that if $\cC=\Mod\hy A$ for some $\bE_1$-ring $A,$ then  for any right $A$-modules $M$ and $N$ Proposition \ref{prop:Homs_in_Calk} gives an isomorphism
$$\Hom_{\Calk(A)}(M,N)\cong\Cone(N\tens{A}\Hom_A(M,A)\to\Hom_A(M,N)).$$

\subsection{Colimits of dualizable categories}
\label{ssec:colimits_of_dualizable}

We first observe the following.

\begin{prop}\label{prop:colimits_of_dualizable} The full subcategory $\Cat_{\st}^{\dual}\subset \Pr_{\st}^{LL}$ is closed under colimits. In particular, the category $\Cat_{\st}^{\dual}$ is cocomplete and the functor $\Cat_{\st}^{\dual}\to \Pr_{\st}^{L}$ commutes with colimits.\end{prop}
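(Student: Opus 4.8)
The plan is to deduce everything from Proposition~\ref{prop:colimits_in_Pr^LL} and Proposition~\ref{prop:condition_for_dualizability}. Fix a small diagram $I\to\Cat_{\st}^{\dual}$, $i\mapsto\cC_i$. By Proposition~\ref{prop:colimits_in_Pr^LL}, the colimit $\cD:=\indlim[i]^{\cont}\cC_i$ formed in $\Pr_{\st}^{L}$ is also the colimit in $\Pr_{\st}^{LL}$, and the structure functors $\varphi_i:\cC_i\to\cD$ are strongly continuous. Since each $\cC_i$ is dualizable, by Proposition~\ref{prop:condition_for_dualizability} it suffices to show that $\cD$ is generated under colimits by the union of the essential images of the $\varphi_i$.

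First I would prove this generation statement, which in fact holds for the colimit of any diagram in $\Pr_{\st}^{L}$. Let $\cD_0\subset\cD$ be the localizing subcategory generated by $\bigcup_i\varphi_i(\cC_i)$, and let $\iota:\cD_0\hookrightarrow\cD$ be the inclusion, which is fully faithful and colimit-preserving. By construction each $\varphi_i$ factors through $\cD_0$, and using that $\iota$ preserves colimits one checks that the factorization $\bar\varphi_i:\cC_i\to\cD_0$ is again colimit-preserving; these are compatible with the transition functors, so they induce a colimit-preserving functor $\psi:\cD\to\cD_0$ with $\psi\circ\varphi_i\simeq\bar\varphi_i$. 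Then $\iota\circ\psi\circ\varphi_i\simeq\varphi_i$ for all $i$, so $\iota\circ\psi\simeq\id_{\cD}$ by the uniqueness clause of the universal property of the colimit; since $\iota$ is fully faithful this forces $\cD_0=\cD$. Hence Proposition~\ref{prop:condition_for_dualizability} applies and $\cD$ is dualizable, that is, $\cD\in\Cat_{\st}^{\dual}$, with strongly continuous structure functors, so $\Cat_{\st}^{\dual}$ is closed under colimits in $\Pr_{\st}^{LL}$.

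Finally, since $\Cat_{\st}^{\dual}$ is a full subcategory of $\Pr_{\st}^{LL}$ and we have just seen that the $\Pr_{\st}^{LL}$-colimit of any diagram of dualizable categories is dualizable, the usual argument (mapping spaces out of $\cD$ into a target in $\Cat_{\st}^{\dual}$ are computed in $\Pr_{\st}^{LL}$) shows that $\cD$ is the colimit in $\Cat_{\st}^{\dual}$; combining with Proposition~\ref{prop:colimits_in_Pr^LL} gives that $\Cat_{\st}^{\dual}$ is cocomplete and that $\Cat_{\st}^{\dual}\to\Pr_{\st}^{L}$ preserves colimits. I do not expect a real obstacle here: the only point requiring care is the generation statement above, and even that can be bypassed by feeding the objects $\varphi_i(x)$ (for $x$ in a generating set of $\cC_i$, $i\in I$) directly into the second proof of Proposition~\ref{prop:condition_for_dualizability}.
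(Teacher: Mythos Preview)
Your proof is correct and follows essentially the same route as the paper: both invoke Proposition~\ref{prop:colimits_in_Pr^LL} to get strongly continuous structure functors into the $\Pr_{\st}^{LL}$-colimit, then apply Proposition~\ref{prop:condition_for_dualizability}. The only difference is that you spell out the generation statement via the universal property, whereas the paper simply asserts it (it is a standard fact about colimits in $\Pr^L_{\st}$, and also follows from the limit-of-right-adjoints description used in the proof of Proposition~\ref{prop:colimits_in_Pr^LL}).
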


\begin{proof}The second statement follows from the first one by Proposition \ref{prop:colimits_in_Pr^LL}.

To prove the first statement, consider a functor $I\to\Cat_{\st}^{\dual},$ $i\mapsto\cC_i.$ Applying Proposition \ref{prop:colimits_in_Pr^LL} again, we see that the functors $\cC_i\to \indlim[i]^{\cont}\cC_i$ are strongly continuous, and their images generate the target. Hence by Proposition \ref{prop:condition_for_dualizability} the category $\indlim[i]^{\cont}\cC_i$ is dualizable. 
\end{proof}




We need the following general facts about the directed colimits in $\Pr^{LL}_{\st}$ and $\Pr^L_{\st}.$

\begin{prop}\label{prop:colimit_of_presentable_compositions_of_adjoints} Let $(\cC_i)_{i\in I}$ be a directed system of presentable stable categories, and $\cC=\indlim[i]^{\cont}\cC_i.$ We denote by $F_{ij}:\cC_i\to\cC_j$ the transition functors, and by $F_i:\cC_i\to \cC$ the structural functors to the colimit. Also, denote by $F_{ij}^R$ and $F_i^R$ the corresponding right adjoint functors.

1) We have $$\indlim[i] F_i F_i^R\cong \id_{\cC}.$$

2) Suppose that the functors $F_{ij}$ are strongly continuous, i.e. the functors $F_{ij}^R$ are continuous. Then for any $i,j\in I$ we have an isomorphism
\begin{equation}\label{eq:colimit_adjoints_compositions} \indlim[k\geq i,j] F_{jk}^R F_{ik}\cong F_j^R F_i.\end{equation}\end{prop}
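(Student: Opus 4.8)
The plan is to exploit the standard description of a colimit in $\Pr^L_{\st}$ as a limit of the opposite diagram along the right adjoints, and then to compute both sides of each identity ``pointwise'' using that the transition right adjoints are continuous. Concretely, I identify $\cC = \indlim[i]^{\cont}\cC_i$ with $\prolim[i\in I^{op}]\cC_i$, so that an object of $\cC$ is a compatible system $(c_i)_{i\in I}$ with $c_i \in \cC_i$ and $F_{ij}^R(c_j) \simeq c_i$; under this identification $F_i^R$ is the projection to the $i$-th coordinate, and $F_i$ is its left adjoint. The key structural facts I would record first are: (a) the projections $F_i^R$ are jointly conservative; (b) since the $F_{ij}^R$ are continuous (by hypothesis in part 2, and always true up to passing to a cofinal subsystem for part 1 — actually part 1 needs no such hypothesis, see below), filtered colimits in $\cC$ are computed coordinatewise; and (c) for $i \le k$ one has the unit/counit triangle identities relating $F_{ik}, F_{ik}^R$ and $F_i = F_k F_{ik}$, $F_i^R = F_{ik}^R F_k^R$.

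For part 1, the cleanest route is to test the claimed equivalence $\indlim[i] F_i F_i^R \cong \id_\cC$ against an arbitrary object $c = (c_i) \in \cC$. Evaluating at $c$, I need $\indlim[i] F_i F_i^R(c) \simeq c$ in $\cC$. Applying the conservative family of projections $F_j^R$, and using that $F_j^R$ commutes with the filtered colimit over $I$ (this is where one uses that $F_j^R$ preserves the relevant colimits — it does, because the transition maps in the colimit system $I$ are eventually all $\ge j$ and one may reduce to the cofinal subposet $I_{\ge j}$, where $F_j^R F_i$ for $i \ge j$ equals $F_{ji}^R$ composed with the projection... ), I reduce to showing $\indlim[i\ge j] F_j^R F_i F_i^R(c) \simeq c_j$. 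Now $F_j^R F_i \simeq \indlim[\dots]$ — here I would instead argue more directly: $F_j^R F_i F_i^R(c)$ receives a canonical map to $F_j^R(c) = c_j$ coming from the counit $F_i F_i^R \to \id$, and the system $(F_i F_i^R(c))_i$ with its maps is cofinal/eventually stabilizes after applying $F_j^R$ because $F_i^R(c) = c_i$ and $F_j^R F_i(c_i)$, for $i\ge j$, is computed by the transition map. The honest statement is that $\indlim[i] F_i F_i^R \to \id_\cC$ is the canonical map and it is an equivalence because it becomes an equivalence after each $F_j^R$; this is a standard lemma about limits of presentable categories along adjoint systems, and I would cite or reprove it here. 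I expect the notational bookkeeping of ``which colimit commutes with which right adjoint'' to be the main friction point, resolved by reducing to the cofinal directed subposet $I_{\ge j}$ for each fixed $j$.

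For part 2, the identity \eqref{eq:colimit_adjoints_compositions} is proved by the same coordinatewise testing. Fix $i,j \in I$. Both $\indlim[k\ge i,j] F_{jk}^R F_{ik}$ and $F_j^R F_i$ are functors $\cC_i \to \cC_j$; I claim there is a canonical natural transformation from the former to the latter (built from the counits $F_{ik}^R F_{ik}\to\id$ reinterpreted, or more precisely from the maps $F_{jk}^R F_{ik} \to F_{jk'}^R F_{ik'}$ for $k \le k'$ together with the colimit cocone), and that it is an equivalence. To check it is an equivalence on a fixed $x \in \cC_i$, I write $F_j^R F_i(x)$ using $F_i = F_k F_{ik}$ and $F_j^R = F_{jk}^R F_k^R$ for any $k \ge i,j$, giving $F_j^R F_i(x) = F_{jk}^R F_k^R F_k F_{ik}(x)$; then part 1 applied to the category $\cC$ (or rather the identity $\indlim[k] F_k F_k^R \simeq \id_\cC$, evaluated and with $F_{jk}^R$ applied, using that $F_{jk}^R$ is continuous and hence commutes with the filtered colimit over $k \ge i,j$) collapses $F_k^R F_k$ to the colimit and yields $\indlim[k\ge i,j] F_{jk}^R F_{ik}(x)$. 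The continuity hypothesis on the $F_{ij}^R$ is used exactly here, to push $F_{jk}^R$ inside the colimit; without it the right-hand side need not even be well-behaved. I expect this last interchange — commuting the continuous functor $F_{jk}^R$ with the filtered colimit indexed by $k$ — to be the crux, and it is immediate from the definition of strong continuity, so overall the proposition should follow formally once part 1 is in hand.
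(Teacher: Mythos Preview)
Your approach to part 1) has a genuine gap. You want to test the map $\indlim[i] F_i F_i^R(c)\to c$ by applying the projections $F_j^R$ and commuting $F_j^R$ past the filtered colimit. But part 1) carries \emph{no} strong continuity hypothesis: the $F_{ij}^R$, and hence the $F_j^R$, need not preserve filtered colimits. Restricting to the cofinal subposet $I_{\ge j}$ does not help --- that only reindexes the colimit, it does not make $F_j^R$ continuous. The paper sidesteps this entirely by computing on $\Hom$'s: for $x,y\in\cC$, the limit description gives $\Hom_\cC(x,y)\cong\prolim[i]\Hom_{\cC_i}(F_i^R x,F_i^R y)\cong\prolim[i]\Hom_\cC(F_iF_i^R x,y)$, and the last inverse limit of $\Hom$'s is $\Hom_\cC(\indlim[i]F_iF_i^R x,y)$. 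No continuity of any right adjoint is used.

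Your approach to part 2) has a different problem: your decomposition gives $F_j^R F_i = F_{jk}^R\,(F_k^R F_k)\,F_{ik}$ with the \emph{unit} $F_k^R F_k$ in the middle, but part 1) is about the \emph{counit} side $\indlim[k]F_kF_k^R\cong\id_\cC$. These are not interchangeable, and there is no statement of the form ``$\indlim[k]F_k^R F_k\cong\id$'' to invoke (indeed $F_k^R F_k$ lives in $\cC_k$, which varies with $k$). The paper's argument is again via $\Hom$'s: one uses continuity of the $F_{jk}^R$ to assemble $(\indlim[k\ge i,j]F_{jk}^R F_{ik}(x))_j$ into a well-defined object $F_i'(x)$ of $\cC\simeq\prolim[j]\cC_j$, and then checks directly that $\Hom_\cC(F_i'(x),y)\cong\Hom_{\cC_i}(x,F_i^R y)$ by a short chain of adjunctions, whence $F_i'\cong F_i$.
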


\begin{proof}We use once again the natural equivalence
$\cC\simeq \prolim[j]\cC_j,$
where the transition functors in the inverse system are $F_{ij}^R.$ Moreover, the composition $\cC\simeq \prolim[j]\cC_j\to\cC_i$ is identified with the functor $F_i^R.$

To prove 1), we simply observe that for $x,y\in\cC$ we have isomorphisms
$$\Hom_{\cC}(x,y)\cong \prolim[i]\Hom(F_i^R(x),F_i^R(y))\cong\prolim[i] \Hom_{\cC}(F_i F_i^R(x),y)\cong\Hom_{\cC}(\indlim[i] F_i F_i^R(x),y).$$

We prove 2). By Proposition \ref{prop:colimits_in_Pr^LL} the functors $F_i^R$ are continuous. Using again the continuity of $F_{ij}^R,$ we obtain a well-defined functor $F_i':\cC_i\to\prolim[j]\cC_j\simeq \cC,$ given by
$$F_i'(x)=(\indlim[k\geq i,j]F_{jk}^R F_{ik}(x))_j.$$ Then \eqref{eq:colimit_adjoints_compositions} is equivalent to the statement that $F_i'\cong F_i.$ To check this, take some objects $x\in\cC_i,$ $y=(y_j)_j\in\prolim[i]\cC_i\simeq \cC.$ We obtain the following isomorphisms:
\begin{multline*}\Hom_{\cC}(F_i'(x),y)\cong\prolim[j]\Hom_{\cC_j}(\indlim[k\geq i,j]F_{jk}^R F_{ik}(x),y_j)\cong \prolim[k\geq j\geq i]\Hom_{\cC_j}(F_{jk}^R F_{ik}(x),y_j)\\ \cong
\prolim[j=k\geq i]\Hom_{\cC_j}(F_{ij}(x),y_j)\cong\prolim[j\geq i]\Hom_{\cC_i}(x,F_{ij}^R(y_j))=\Hom_{\cC_i}(x,y_i)=\Hom_{\cC_i}(x,F_i^R(y)).\end{multline*}
Hence, $F_i'\cong F_i,$ as required.\end{proof} 

\begin{prop}\label{prop:weak_AB5_Pr^LL} The categories $\Pr^{LL}_{\st}$ and $\Cat_{\st}^{\dual}$ satisfy the weak (AB5) axiom: the class of fully faithful functors is closed under filtered colimits. In particular, a filtered colimit of short exact sequences in $\Cat_{\st}^{\dual}$ is again a short exact sequence.\end{prop}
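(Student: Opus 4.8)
The statement to prove is Proposition \ref{prop:weak_AB5_Pr^LL}: in $\Pr^{LL}_{\st}$ (and hence in the full subcategory $\Cat_{\st}^{\dual}$) a filtered colimit of fully faithful strongly continuous functors is again fully faithful. The plan is to reduce everything to the concrete description of a filtered colimit in $\Pr^L_{\st}$ as the limit of the opposite diagram (along the continuous right adjoints), which was already used in the proof of Proposition \ref{prop:colimits_in_Pr^LL}, and then to compute mapping spectra pointwise. Concretely, suppose $I$ is a filtered $\infty$-category (which we may replace by a filtered poset, or even a directed poset, by \cite[Proposition 5.3.1.18]{Lur09}), and we have two functors $I\to\Pr^{LL}_{\st}$, $i\mapsto\cC_i$ and $i\mapsto\cD_i$, together with a natural transformation $\Phi\colon(\cC_i)\to(\cD_i)$ such that each $\Phi_i\colon\cC_i\to\cD_i$ is fully faithful. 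We must show $\Phi_\infty\colon\cC:=\indlim[i]^{\cont}\cC_i\to\cD:=\indlim[i]^{\cont}\cD_i$ is fully faithful.

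\textbf{Key steps.} First I would set up notation exactly as in Proposition \ref{prop:colimit_of_presentable_compositions_of_adjoints}: write $F_i\colon\cC_i\to\cC$ and $G_i\colon\cD_i\to\cD$ for the structural functors, with continuous right adjoints $F_i^R$, $G_i^R$, and recall the identification $\cC\simeq\prolim[i\in I^{op}]\cC_i$ along the continuous transition functors $F_{ij}^R$, under which $F_i^R$ is the projection to $\cC_i$; similarly for $\cD$. Since $\Phi_i$ is fully faithful, its right adjoint $\Phi_i^R$ satisfies $\Phi_i^R\Phi_i\simeq\id_{\cC_i}$, and $\Phi_i^R$ is continuous (strong continuity of $\Phi_i$), so the $\Phi_i^R$ assemble into a natural transformation of inverse systems, giving $\Phi_\infty^R\colon\cD\to\cC$ which under the limit identification is the ``pointwise'' functor $(y_i)_i\mapsto(\Phi_i^R(y_i))_i$. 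The crucial computation is then: for $x,y\in\cC$, using part 1) of Proposition \ref{prop:colimit_of_presentable_compositions_of_adjoints} to write $x\simeq\indlim[i]F_iF_i^R(x)$, together with $F_i^R\Phi_\infty^R\simeq\Phi_i^RG_i^R$ (which follows from the pointwise description, or from Proposition \ref{prop:colimit_of_presentable_compositions_of_adjoints} applied to the transition functors), one gets
$$\Hom_{\cD}(\Phi_\infty(x),\Phi_\infty(y))\simeq\prolim[i]\Hom_{\cD_i}(G_i^R\Phi_\infty(x),\, G_i^R\Phi_\infty(y)).$$
Now the essential point: $G_i^R\Phi_\infty(x)\simeq\Phi_i F_i^R(x)$ — this is the Beck–Chevalley/base-change identity that holds precisely because $\Phi$ is a natural transformation between the two systems and all the relevant functors have continuous adjoints, so one can verify it after passing to right adjoints where it becomes the tautology $F_i^R\Phi_\infty^R\simeq\Phi_i^RG_i^R$. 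Substituting and using that $\Phi_i$ is fully faithful termwise:
$$\prolim[i]\Hom_{\cD_i}(\Phi_iF_i^R(x),\Phi_iF_i^R(y))\simeq\prolim[i]\Hom_{\cC_i}(F_i^R(x),F_i^R(y))\simeq\Hom_{\cC}(x,y),$$
where the last isomorphism is again part 1) of Proposition \ref{prop:colimit_of_presentable_compositions_of_adjoints} (or directly the limit identification of $\cC$). Chasing through, the composite is the map induced by $\Phi_\infty$, so $\Phi_\infty$ is fully faithful. For the ``in particular'' clause: given a filtered system of short exact sequences $0\to\cC_i\to\cD_i\to\cE_i\to 0$ in $\Cat_{\st}^{\dual}$, the colimit $0\to\cC\to\cD\to\cE\to 0$ is a cofiber sequence in $\Pr^L_{\st}$ (colimits are exact, and by Proposition \ref{prop:colimits_of_dualizable} it stays in $\Cat_{\st}^{\dual}$), the first map is fully faithful by what we just proved, and the composite $\cC\to\cE$ is zero; hence it is a short exact sequence — or, equivalently, one notes that the kernel of $\cD\to\cE$ contains $\cC$ fully faithfully and compares, using that a cofiber sequence with fully faithful first map is automatically a fiber-cofiber sequence in the stable setting.

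\textbf{Main obstacle.} The one genuinely non-formal ingredient is the base-change identity $G_i^R\circ\Phi_\infty\simeq\Phi_i\circ F_i^R$ (equivalently $F_i^R\circ\Phi_\infty^R\simeq\Phi_i^R\circ G_i^R$). Everything else is bookkeeping with the limit-of-opposite-diagram description and with Proposition \ref{prop:colimit_of_presentable_compositions_of_adjoints}. The cleanest way to get the identity is to observe that it is an equality of functors both of which can be checked after composing with the conservative, continuity-preserving family of projections, and that on the level of right adjoints the identity is forced by the commuting square $\Phi_i^R G_{ij}^R\simeq F_{ij}^R\Phi_j^R$ expressing that $\Phi^R$ is a transformation of inverse systems; taking the limit over $j\geq i$ yields $\Phi_i^R G_i^R\simeq F_i^R\Phi_\infty^R$, and passing back to left adjoints (all the functors involved being strongly continuous, so left adjoints exist and compose correctly) gives what we want. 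I expect this verification — making sure the adjunctions interact correctly with the (co)limit identifications — to be where most of the care is needed; the rest is the displayed chain of isomorphisms above.
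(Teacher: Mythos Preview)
Your proposal contains a genuine error at the ``essential point'': the claimed Beck--Chevalley identity $G_i^R\Phi_\infty\simeq\Phi_i F_i^R$ is \emph{false} in general, and your justification for it is wrong. Taking left adjoints of $\Phi_i^R G_i^R\simeq F_i^R\Phi_\infty^R$ returns the naturality square $G_i\Phi_i\simeq\Phi_\infty F_i$, \emph{not} the base-change you want (the left adjoint of a composite $A^R B^R$ is $BA$, not something involving $A^R$ or $B^R$). A two-term counterexample: let $I=\{0\leq 1\}$, $\cC_0=\cC_1=D(\mk)$ with $F_{01}=\id$, $\cD_0=\cD_1=D(\mk)\times D(\mk)$ with $G_{01}(a,b)=(a\oplus b,\,b)$, and $\Phi_n(x)=(x,0)$. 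Everything is strongly continuous, each $\Phi_n$ is fully faithful, and naturality holds since $G_{01}(x,0)=(x,0)$. One computes $G_{01}^R(c,d)=(c,\,c\oplus d)$, so $G_0^R\Phi_\infty(x)=G_{01}^R(x,0)=(x,x)$, whereas $\Phi_0 F_0^R(x)=(x,0)$; these differ for $x\ne 0$.

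The paper's proof avoids this trap by never invoking Beck--Chevalley. It computes $F_j^R\Phi_\infty^R\Phi_\infty F_i$ directly: using only naturality and its adjoint, this equals $\Phi_j^R G_j^R G_i\Phi_i$. The key input is then part 2) of Proposition \ref{prop:colimit_of_presentable_compositions_of_adjoints} (which you did not use): $G_j^R G_i\simeq\indlim_{k\geq i,j}G_{jk}^R G_{ik}$. Since $\Phi_j^R$ is continuous one pulls it through the colimit, and at each finite level $k$ naturality gives $\Phi_j^R G_{jk}^R G_{ik}\Phi_i\simeq F_{jk}^R\Phi_k^R\Phi_k F_{ik}\simeq F_{jk}^R F_{ik}$ by full faithfulness of $\Phi_k$; applying part 2) again yields $F_j^R F_i$. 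Hence $\Phi_\infty^R\Phi_\infty\simeq\id_\cC$. Your overall strategy (limit description plus Proposition \ref{prop:colimit_of_presentable_compositions_of_adjoints}) is the right one; the missing idea is to unwind $G_j^R G_i$ as a filtered colimit of finite-level composites, where full faithfulness can be applied termwise without any base-change.
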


\begin{remark}We explain in Appendix \ref{app:mono_epi_pres_dual} that in each of the categories $\Pr^L_{\st},$ $\Pr^{LL}_{\st}$ and $\Cat_{\st}^{\dual}$ the monomorphisms are exactly the fully faithful functors.\end{remark}

\begin{proof}[Proof of Proposition \ref{prop:weak_AB5_Pr^LL}] By Proposition \ref{prop:colimits_of_dualizable}, it is sufficient to prove the statement for $\Pr^{LL}_{\st}.$

Consider a filtered system of strongly continuous fully faithful functors $(\Phi_i:\cC_i\to\cD_i)_{i\in I},$ where the transition functors $F_{ij}:\cC_i\to\cC_j$ and $G_{ij}:\cD_i\to\cD_j$ are strongly continuous. Put $\cC=\indlim[i]^{\cont}\cC_i,$ $\cD=\indlim[i]^{\cont}\cD_i,$ and denote by $\Phi:\cC\to\cD$ the colimit of $\Phi_i.$ Denote by $F_i:\cC_i\to\cC,$ $G_i:\cC_i\to\cC$ the structural functors. For $i,j\in I$ we can directly compute the composition $F_j^R\Phi^R\Phi F_i:\cC_i\to\cC_j.$ Namely, applying Proposition \ref{prop:colimit_of_presentable_compositions_of_adjoints} together with strong continuity and fully faithfulness of $\Phi_i$ we obtain
\begin{multline*}F_j^R\Phi^R\Phi F_i\cong \Phi_j^R G_j^RG_i\Phi_i\cong\Phi_j^R(\indlim[k\geq i,j]G_{jk}^R G_{ik})\Phi_i\\ \cong
 \indlim[k\geq i,j]\Phi_j^R G_{jk}^R G_{ik} \Phi_i\cong \indlim[k\geq i,j]F_{jk}^R\Phi_k^R\Phi_k F_{ik}\cong \indlim[k\geq i,j]F_{jk}^R F_{ik}\cong F_j^R F_i.\end{multline*}
Hence, $\Phi^R\Phi\cong\id,$ i.e. $\Phi$ is fully faithful.
\end{proof}

\begin{cor}\label{cor:directed_unions} Let $\cC$ be a presentable stable category, and suppose that we have a directed family of presentable stable full subcategories $\cC_i\subset\cC,$ $i\in I,$ where the inclusion functors $\cC_i\to\cC$ are strongly continuous. If $\cC$ is generated by $\cC_i$ as a localizing subcategory, then the natural functor $\indlim[i]^{\cont}\cC_i\to \cC$ is an equivalence.\end{cor}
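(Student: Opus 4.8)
The plan is to realize the natural functor $\Phi\colon\indlim[i]^{\cont}\cC_i\to\cC$ as a filtered colimit of the fully faithful inclusions $\iota_i\colon\cC_i\hookrightarrow\cC$, deduce from the weak (AB5) axiom (Proposition \ref{prop:weak_AB5_Pr^LL}) that $\Phi$ is fully faithful, and then use the generation hypothesis to conclude that $\Phi$ is essentially surjective.

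First I would verify that for $i\le j$ in $I$ the inclusion $\iota_{ij}\colon\cC_i\hookrightarrow\cC_j$ is strongly continuous. It is fully faithful, and a short computation identifies its right adjoint with the composite $r_i\circ\iota_j\colon\cC_j\to\cC\to\cC_i$, where $r_i$ is the continuous right adjoint of $\iota_i$ and $\iota_j$ is colimit-preserving; hence this right adjoint is continuous. Thus $(\cC_i)_{i\in I}$ is a directed diagram in $\Pr^{LL}_{\st}$, and the identities $\iota_j\circ\iota_{ij}=\iota_i$ exhibit $(\iota_i)_{i\in I}$ as a morphism of $I$-diagrams from $(\cC_i)_i$ to the constant diagram with value $\cC$ (whose transition functors are identities, in particular strongly continuous).

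Next I would observe that, since $I$ is directed and hence weakly contractible, the colimit in $\Pr^{L}_{\st}$ of the constant diagram with value $\cC$ is $\cC$ itself: the colimit is the limit of the diagram of right adjoints, which is the constant diagram with value $\cC$ and identity transition functors, and the limit of a constant diagram indexed by a weakly contractible category is its value. Under this identification the functor induced on colimits by $(\iota_i)_i$ is precisely $\Phi$, so, since every $\iota_i$ is fully faithful, Proposition \ref{prop:weak_AB5_Pr^LL} shows that $\Phi$ is fully faithful. (By Proposition \ref{prop:colimits_in_Pr^LL} it is immaterial whether these colimits are formed in $\Pr^{LL}_{\st}$ or in $\Pr^{L}_{\st}$.)

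It remains to prove that $\Phi$ is essentially surjective. Since $\Phi$ is strongly continuous, its right adjoint $\Phi^R$ is continuous, and being exact it preserves all colimits; therefore the essential image of $\Phi$, equal to $\{x\in\cC\mid\Phi\Phi^R(x)\to x\text{ is an equivalence}\}$, is closed under colimits in $\cC$, hence is a localizing subcategory (it is presentable stable, being equivalent via $\Phi$ to $\indlim[i]^{\cont}\cC_i$). From $\Phi\circ F_i=\iota_i$, where $F_i\colon\cC_i\to\indlim[i]^{\cont}\cC_i$ is the structural functor, we see that each full subcategory $\cC_i\subset\cC$ is contained in this essential image; as the $\cC_i$ generate $\cC$ as a localizing subcategory, the essential image is all of $\cC$, and $\Phi$ is an equivalence. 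I do not expect any genuine obstacle: the entire argument rests on Proposition \ref{prop:weak_AB5_Pr^LL}, and the only slightly delicate points are the strong continuity of the transition functors and the closure under colimits of the essential image of $\Phi$.
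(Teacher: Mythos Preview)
Your proposal is correct and takes essentially the same approach as the paper: the paper's proof is two sentences, invoking Proposition~\ref{prop:weak_AB5_Pr^LL} for fully faithfulness and the generation hypothesis for essential surjectivity, while you spell out the details (strong continuity of the transition functors $\iota_{ij}$, identification of the colimit of the constant diagram, closure of the essential image under colimits). The only minor remark is that your essential-surjectivity argument via $\Phi\Phi^R$ uses the strong continuity of $\Phi$, whereas one could argue more directly that a fully faithful colimit-preserving functor out of a cocomplete category has image closed under colimits; but your version is also correct since $\Phi$ is indeed a morphism in $\Pr^{LL}_{\st}$.
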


\begin{proof}By Proposition \ref{prop:weak_AB5_Pr^LL}, the functor $\indlim[i]^{\cont}\cC_i\to\cC$ is fully faithful. By assumption, $\cC$ is generated by $\cC_i,$ so we actually have an equivalence.\end{proof}

\begin{remark}The following example shows that the analogues of Proposition \ref{prop:weak_AB5_Pr^LL} and \ref{cor:directed_unions} fail in $\Pr^L_{\st}.$ Denote by $p_n$ the $n$-th prime number, $n\geq 1,$ i.e. $p_1=2,$ $p_2=3,\dots.$ Put $\cC=D(\Z)$ and $\cC_n=D(A_n),$ where $A_n=\Z[p_k^{-1},\, k\geq n],$ $n\geq 1.$ We consider $\cC_n$ as a full subcategory of $\cC$ via restriction of scalars, and denote by $F_n:\cC_n\to\cC$ the inclusion functor. Hence, we have
$$\cC_1\subset\cC_2\subset\dots\subset\cC.$$
Note that the subcategories $\cC_n$ generate $\cC$ as a localizing subcategory. Indeed, we have $\Q=A_1\in\cC_1,$ and $\bbF_{p_n}=A_{n+1}/p_n A_{n+1}\in\cC_{n+1}.$ However, we claim that the functor $\Phi:\indlim[n]^{\cont}\cC_n\to\cC$ is not an equivalence.

Indeed, otherwise by Proposition \ref{prop:colimit_of_presentable_compositions_of_adjoints} 1) we would have $\indlim[n] F_n F_n^R\cong\id_{\cC}.$ But this clearly fails:
$$\Cone(\indlim[n] F_n F_n^R(\Z)\to\Z)=\indlim[n]\bR\Hom(A_n/\Z[-1],\Z)\cong\Cone(\bigoplus\limits_p\Z_p\to\prodd[p]\Z_p)\ne 0.$$ Hence, $\Phi$ is not an equivalence.\end{remark}

We have the following important observation about compact objects.

\begin{prop}\label{prop:compact_objects_in_filtered_colimits} The functor $(-)^{\omega}:\Cat_{\st}^{\dual}\to\Cat^{\perf}$ commutes with filtered colimits. Equivalently, the object $\Sp\in\Cat_{\st}^{\dual}$ is compact.\end{prop}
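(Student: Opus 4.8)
The plan is to show that the functor $(-)^{\omega}\colon\Cat_{\st}^{\dual}\to\Cat^{\perf}$ preserves filtered colimits. Since $\Cat^{\perf}$ is compactly generated (Proposition \ref{prop:presentability_of_Pr_kappa}), this is equivalent to saying that $\Sp$ is a compact object of $\Cat_{\st}^{\dual}$; but it is cleaner to argue directly about compact objects in a filtered colimit. So let $I$ be a filtered $\infty$-category (which we may replace by a directed poset), let $i\mapsto\cC_i$ be a functor $I\to\Cat_{\st}^{\dual}$, and set $\cC=\indlim[i]^{\cont}\cC_i$; denote by $F_i\colon\cC_i\to\cC$ the structural functors, which are strongly continuous by Proposition \ref{prop:colimits_in_Pr^LL}. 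There is a canonical exact functor $\indlim[i](\cC_i^{\omega})\to\cC^{\omega}$ coming from the fact that each $F_i$ preserves compact objects (strong continuity gives that $F_i^R$ is continuous, hence $F_i$ preserves $\omega$-compacts). The goal is that this functor is an equivalence of idempotent-complete stable categories.

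First I would establish \emph{essential surjectivity up to summands}. Recall from Proposition \ref{prop:colimit_of_presentable_compositions_of_adjoints}(1) that $\indlim[i]F_iF_i^R\cong\id_{\cC}$. Thus any $x\in\cC$ is a filtered colimit of the objects $F_iF_i^R(x)$. If $x$ is compact, the identity map $x\to x\cong\indlim[i]F_iF_i^R(x)$ factors through some $F_iF_i^R(x)$, exhibiting $x$ as a retract of an object in the image of $F_i$. Now I must replace $F_i^R(x)$ by a compact object of $\cC_i$: since $\cC_i$ is $\omega_1$-compactly generated (Corollary \ref{cor:dualizable_is_aleph_1_generated}) this is not automatic, so the key point is to use compact \emph{morphisms}. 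The retraction $x\to F_i(w)\to x$ (with $w=F_i^R(x)\in\cC_i$) together with the fact that $x$ is compact forces the composite $x\to F_i(w)$, when pushed into the ind-completion, to factor through $\hat{\cY}$; equivalently, using Corollary \ref{cor:decomposing_compact_morphisms} and the description $\hat{\cY}_{\cC_i}(w)\simeq\inddlim y_n$ with $y_n\in\cC_i^{\omega}$, one writes the relevant map through some compact $y_n$. Tracking this carefully, $x$ becomes a retract of $F_i(y_n)$ with $y_n\in\cC_i^{\omega}$, so $x$ is a summand of an object in the essential image of $\cC_i^{\omega}\to\cC^{\omega}$. Idempotent-completeness of $\indlim[i](\cC_i^{\omega})$ then upgrades this to essential surjectivity.

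Next, \emph{full faithfulness}. For $x,y$ lying in the image of some $\cC_i^{\omega}$, say $x=F_i(a)$, $y=F_i(b)$ with $a,b\in\cC_i^{\omega}$, Proposition \ref{prop:colimit_of_presentable_compositions_of_adjoints}(2) gives $F_i^RF_i\cong\indlim[k\geq i]F_{ik}^RF_{ik}$, hence
$$\Hom_{\cC}(F_i(a),F_i(b))\cong\Hom_{\cC_i}(a,F_i^RF_i(b))\cong\indlim[k\geq i]\Hom_{\cC_i}(a,F_{ik}^RF_{ik}(b))\cong\indlim[k\geq i]\Hom_{\cC_k}(F_{ik}(a),F_{ik}(b)),$$
where in the last step I use that $a$ is compact in $\cC_i$ so $\Hom_{\cC_i}(a,-)$ commutes with the filtered colimit defining $F_i^RF_i(b)$, together with adjunction. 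The right-hand side is exactly $\Hom_{\indlim[i](\cC_i^{\omega})}(x,y)$. Since such objects generate both categories as idempotent-complete stable categories and the functor is exact, this suffices. The main obstacle is the first paragraph of the essential-surjectivity argument: promoting $F_i^R(x)$ (merely $\omega_1$-compact) to a genuine compact object of some $\cC_k$, which is where the compact-morphism machinery of Subsection \ref{ssec:dual_via_compact} — in particular Proposition \ref{prop:compact_maps_in_dualizable_cats} and Corollary \ref{cor:decomposing_compact_morphisms} — does the real work.
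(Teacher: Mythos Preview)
Your full-faithfulness argument is correct and is a clean direct use of Proposition~\ref{prop:colimit_of_presentable_compositions_of_adjoints}(2).

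The essential-surjectivity half has a genuine gap. You write ``the description $\hat{\cY}_{\cC_i}(w)\simeq\inddlim y_n$ with $y_n\in\cC_i^{\omega}$'', but nothing forces the $y_n$ to be compact in $\cC_i$. The general structure theory (Proposition~\ref{prop:comp_ass_omega_1_gen}, or the proof of Theorem~\ref{th:dualizability_via_compact_maps}) only produces $y_n\in\cC_i^{\omega_1}$ with compact transition maps. If $\cC_i$ has no nonzero compact objects at all---for instance $\cC_i=D(\Mod_a\hy R)$ for a suitable valuation ring---then no nonzero $w$ admits such a presentation. The compact-morphism machinery does let you factor $x\to F_i(w)$ through some $F_i(y_n)$, so $x$ becomes a retract of $F_i(y_n)$ with $y_n\in\cC_i^{\omega_1}$; but iterating only trades one $\omega_1$-compact for another and never manufactures a compact object of $\cC_i$. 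So you have not shown that $x$ lies in the image of $\indlim[i](\cC_i^{\omega})$, and this is exactly the content of the proposition.

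The paper sidesteps this by passing through the Calkin construction. The functorial short exact sequence $0\to\cC_i\to\Ind(\cC_i^{\omega_1})\to\Ind(\Calk_{\omega_1}^{\cont}(\cC_i))\to 0$ remains short exact after taking the filtered colimit (Proposition~\ref{prop:weak_AB5_Pr^LL}); the middle and right terms are compactly generated with compact objects $\indlim[i]\cC_i^{\omega_1}$ and $\indlim[i]\Calk_{\omega_1}^{\cont}(\cC_i)$, so $(\indlim[i]^{\cont}\cC_i)^{\omega}$ is the kernel of the induced map on compacts. Since filtered colimits commute with finite limits in $\Cat^{\perf}$, this kernel is $\indlim[i]\ker(\cC_i^{\omega_1}\to\Calk_{\omega_1}^{\cont}(\cC_i))=\indlim[i]\cC_i^{\omega}$. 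The key difference from your approach is that compactness in $\cC_i$ is encoded \emph{relatively}, as lying in the kernel of a functor between small categories, and that condition passes cleanly to filtered colimits; trying to detect it absolutely inside each $\cC_i$ does not.
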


\begin{proof}Consider an ind-system $(\cC_i)_i$ of dualizable categories. By Proposition \ref{prop:weak_AB5_Pr^LL}, we have a short exact sequence
$$0\to\indlim[i]^{\cont}\cC_i\to\Ind(\indlim[i]\cC_i^{\omega_1})\to\Ind(\indlim[i]\Calk_{\omega_1}(\cC_i))\to 0.$$
It follows that 
$$(\indlim[i]^{\cont}\cC_i)^{\omega}\simeq\ker(\indlim[i]\cC_i^{\omega_1}\to\indlim[i]\Calk_{\omega_1}(\cC_i))\simeq \indlim[i]\ker(\cC_i^{\omega_1}\to \Calk_{\omega_1}(\cC_i))=\indlim[i]\cC_i^{\omega}.$$
Here we used the fact that in $\Cat^{\perf}$ filtered colimits commute with finite limits.\end{proof}

We now give examples showing that the object $\Sp\in\Pr^{LL}_{\st,\omega_1}$ is not small (i.e. not $\kappa$-compact for any regular cardinal $\kappa$). A different argument for this was given previously by German Stefanich, using the generalizations of the derived category of quasi-coherent sheaves on $B\bG_{a,\FF_p}.$  

We use the following notation. For a pair of ordinals $\lambda\leq \mu$ we denote by $[\lambda,\mu]$ the version of a long closed interval: we put
\begin{equation*}
[\lambda,\mu]=(\{\nu\mid \lambda\leq \nu<\mu\}\times [0,1)) \cup\{\mu\},
\end{equation*}
and consider this set as a topological space with the interval topology with respect to the following total order. The order on $\{\nu\mid \lambda\leq \nu<\mu\}\times [0,1)\subset [\lambda,\mu]$ is lexicographic, and the element $\mu$ is the largest one. Then $[\lambda,\mu]$ is a connected compact Hausdorff space.

\begin{prop}\label{prop:compact_objects_in_colimits_of_presentable} Let $\mk$ be a field. Take some regular cardinal $\kappa,$ which we also consider as an ordinal. For an ordinal $\lambda<\kappa,$ denote by $X_{\lambda}$ the rectangle $[\lambda,\kappa]\times [0,1]$ (considered as a topological space). Denote by $\cC_{\lambda}\subset\Sh(X_{\lambda};D(\mk))$ the full subcategory of sheaves whose pullback to $\{\kappa\}\times (0,1]$ vanishes. Then the pullback functors $\cC_{\lambda}\to\cC_{\mu}$ are strongly continuous for $\lambda\leq \mu<\kappa,$ the colimit $\indlim[\lambda<\kappa]^{\cont}\cC_{\lambda}\simeq D(\mk)$ is non-zero and compactly generated, but we have $\cC_{\lambda}^{\omega}=0$ for $\lambda<\kappa.$

Therefore, the functor $(-)^{\omega}:\Pr^{LL}_{\st,\omega_1}\to\Cat^{\perf}$ does not commute with $\kappa$-filtered colimits, i.e. the object $\Sp\in\Pr^{LL}_{\st,\omega_1}$ is not $\kappa$-compact. In particular, the category $\Pr^L_{\st,\omega_1}$ is not presentable.\end{prop}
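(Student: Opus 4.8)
\emph{Plan.} Write $E=\{\kappa\}\times[0,1]$, which is closed in $X_\lambda$ with open complement $W_\lambda=[\lambda,\kappa)\times[0,1]$; set $Z=\{\kappa\}\times(0,1]\subset E$ (a locally closed subspace) and $Y_\lambda=W_\lambda\cup\{(\kappa,0)\}$, so that $\cC_\lambda=\ker\bigl(i_Z^*\colon\Sh(X_\lambda;D(\mk))\to\Sh(Z;D(\mk))\bigr)$, i.e. the full subcategory of sheaves with support contained in $Y_\lambda$. First I would check that $(\cC_\lambda)_{\lambda<\kappa}$ is a $\kappa$-filtered diagram in $\Pr^{LL}_{\st,\omega_1}$. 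The restriction functor $i_Z^*$ is $\omega_1$-strongly continuous between $\omega_1$-compactly generated (indeed dualizable) categories: its right adjoint $i_{Z*}$ is $\omega_1$-continuous because its only nontrivial fibre, over the point $(\kappa,0)$, is computed by sections over the second-countable space $(0,\varepsilon)$. By the remark following Proposition~\ref{prop:generating_the_kernel}, $\cC_\lambda$ is therefore $\omega_1$-compactly generated, so $\cC_\lambda\in\Pr^{LL}_{\st,\omega_1}$. For $\lambda\le\mu<\kappa$ the transition functor $\cC_\lambda\to\cC_\mu$ is the restriction of the pullback $\iota^*$ along the closed immersion $\iota\colon X_\mu\hookrightarrow X_\lambda$; its right adjoint is the restriction of the proper pushforward $\iota_*$, which does land in $\cC_\lambda$ since $\iota^*\iota_*\simeq\id$ and $Z\subseteq X_\mu$ give $(\iota_*\cG)|_Z\simeq\cG|_Z=0$ for $\cG\in\cC_\mu$, and $\iota_*$ is continuous ($\iota$ proper). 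Hence the transition functors are $\omega_1$-strongly continuous, and the diagram is $\kappa$-filtered because $\kappa$ is regular.

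Next, the colimit. Since colimits in $\Pr^L_{\st}$ are limits of the right adjoints (Proposition~\ref{prop:colimits_in_Pr^LL}), $\indlim[\lambda<\kappa]^{\cont}\cC_\lambda\simeq\prolim[\lambda<\kappa]\cC_\lambda$, the structure maps of the inverse system being the \emph{fully faithful} functors $\iota_*$. Thus this system is the decreasing chain of full subcategories $\cC_\lambda\subseteq\cC_0$, under which $\cC_\lambda=\{\cF\in\cC_0:\supp\cF\subseteq X_\lambda\}$, and its limit is $\{\cF\in\cC_0:\supp\cF\subseteq\bigcap_{\lambda<\kappa}X_\lambda=E\}$. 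Since every object of $\cC_0$ vanishes on $Z=E\setminus\{(\kappa,0)\}$, this limit is the category of sheaves supported at $(\kappa,0)$, i.e. $D(\mk)$, which is compactly generated and nonzero.

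The hard part will be $\cC_\lambda^\omega=0$. Since $W_\lambda$ is connected, non-compact and locally compact Hausdorff, $\Sh(W_\lambda;D(\mk))^\omega=0$ by the sheaf-theoretic form of Neeman's theorem; the plan is to run the same argument directly on $\cC_\lambda$, which behaves like ``$\Sh$ of the non-compact connected space $Y_\lambda$'': testing a putative nonzero compact object against the coproducts $\bigoplus_n i_{z_n*}\mk$ for a sequence $z_n\in W_\lambda$ escaping to $\kappa$ forces it to be supported on a compact subset, whereupon connectedness of $Y_\lambda$ makes it vanish. The one genuinely new point — where I expect the work to lie — is ruling out compact objects concentrated at the corner $(\kappa,0)$: I would show the skyscraper $i_{(\kappa,0)*}V$ is not compact in $\cC_\lambda$ for $V\ne0$ via the computation $\Hom_{\cC_\lambda}(i_{(\kappa,0)*}V,\textstyle\bigoplus_n i_{z_n*}\mk)\simeq\Hom_{D(\mk)}\bigl(V,(\textstyle\prod_n\mk/\bigoplus_n\mk)[-1]\bigr)$ for $z_n\to(\kappa,0)$ in $W_\lambda$, which is nonzero since $\mk$ is a field and $V\ne0$, whereas the associated filtered colimit $\bigoplus_n\Hom_{\cC_\lambda}(i_{(\kappa,0)*}V,i_{z_n*}\mk)$ vanishes; the geometric reason is that $(\kappa,0)$ admits no compact neighbourhood inside $Y_\lambda$.

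Finally the conclusion: $(\cC_\lambda)_{\lambda<\kappa}$ is a $\kappa$-filtered diagram in $\Pr^{LL}_{\st,\omega_1}$ with colimit $D(\mk)$, yet $D(\mk)^\omega=\Perf(\mk)\ne0=\indlim[\lambda<\kappa]\cC_\lambda^\omega$, so $(-)^\omega$ fails to commute with this $\kappa$-filtered colimit. As a strongly continuous functor $\Sp\to\cD$ is exactly a compact object of $\cD$, one has $\Map_{\Pr^{LL}_{\st,\omega_1}}(\Sp,\cD)\simeq(\cD^\omega)^{\simeq}$; hence $\Sp$ is not $\kappa$-compact in $\Pr^{LL}_{\st,\omega_1}$, and since $\kappa$ was an arbitrary regular cardinal, $\Sp$ is small for no $\kappa$. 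As every object of a presentable $\infty$-category is small, $\Pr^{LL}_{\st,\omega_1}$ is not presentable — this is the content intended by the last assertion (note that $\Pr^L_{\st,\omega_1}$ itself is $\omega_1$-presentable by Proposition~\ref{prop:presentability_of_Pr_kappa}).
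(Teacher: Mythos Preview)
Your setup and identification of the colimit are correct and essentially match the paper (including the observation that the diagram lies in $\Pr^{LL}_{\st,\omega_1}$, and your remark that the final ``In particular'' is meant for $\Pr^{LL}_{\st,\omega_1}$, not $\Pr^L_{\st,\omega_1}$). The genuine gap is in your plan for $\cC_\lambda^\omega=0$.

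Running Neeman's argument ``directly on $\cC_\lambda$'' does not go through as you sketch. First, the claim that testing against $\bigoplus_n i_{z_n*}\mk$ forces compact support is vacuous: $X_\lambda$ is already compact, so every sheaf has compact support; what you need is a much finer statement (support away from $(\kappa,0)$, and \emph{local constancy}), and neither is supplied. Second, ``connectedness of $Y_\lambda$ makes it vanish'' is the step in Neeman's argument that uses local constancy --- you never produce it, and $\cC_\lambda$ is not $\Sh$ of your (non--locally-compact) $Y_\lambda$, so Proposition~\ref{prop:compact_sheaves} is unavailable. Third, your (correct) computation that the skyscraper $i_{(\kappa,0)*}V$ is not compact rules out only skyscrapers; it does not show that an arbitrary compact $\cF\in\cC_\lambda$ has $\cF_{(\kappa,0)}=0$, which is what you would need to reduce to $\Sh(W_\lambda)$.

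The paper's route is different and supplies exactly the missing ingredient (local constancy) by restricting to \emph{compact} connected closed subsets that avoid $\{\kappa\}\times(0,1]$: for each $\lambda<\kappa$ set $Y_\lambda=([\lambda,\kappa]\times\{0\})\cup([0,\lambda]\times[0,1])$. Since $Y_\lambda\cap(\{\kappa\}\times(0,1])=\emptyset$, the pushforward $i_{Y_\lambda*}$ lands in $\cC_0$, so the pullback $\cC_0\to\Sh(Y_\lambda)$ is strongly continuous; hence $\cF|_{Y_\lambda}$ is compact, and by Proposition~\ref{prop:compact_sheaves} it is a nowhere-zero locally constant sheaf on the compact connected space $Y_\lambda$. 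Letting $\lambda$ vary shows $\cF|_{[0,\kappa)\times\{1\}}$ is nowhere-zero locally constant. The pullback $\cC_0\to\Sh([0,\kappa)\times\{1\})$ is again strongly continuous (right adjoint $\cG\mapsto i_{C*}j_!\cG$ with $C=[0,\kappa]\times\{1\}$, which lies in $\cC_0$ since $(j_!\cG)_{(\kappa,1)}=0$), so this restriction is a compact sheaf with support the non-compact space $[0,\kappa)\times\{1\}$ --- contradicting Proposition~\ref{prop:compact_sheaves}. The key idea you are missing is to manufacture local constancy by restricting to compact connected pieces disjoint from $\{\kappa\}\times(0,1]$; the corner $(\kappa,0)$ then requires no separate treatment.
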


\begin{proof} The description of the category $\cC_{\lambda}$ implies that it is $\omega_1$-presentable: it is a kernel of a continuous functor between dualizable categories with an $\omega_1$-continuous right adjoint.
It is clear that each functor $\cC_{\lambda}\to\cC_{\mu}$ is strongly continuous. Identifying the colimit with the limit of the inverse system, we see that $\indlim[\lambda<\kappa]^{\cont}\cC_{\lambda}$ is equivalent to the full subcategory of $\cC_0$ consisting of sheaves whose support is contained in $X_{\lambda}$ for all $\lambda<\kappa$ (and whose pullback to $\{\kappa\}\times (0,1]$ vanishes). This is simply the category of sheaves supported at the point $\{(\kappa,0)\},$ which is equivalent to $D(\mk).$

It remains to show that $\cC_{\lambda}^{\omega}=0.$ Since all the categories $\cC_{\lambda}$ are equivalent, we may assume $\lambda=0.$ Suppose that $\cF\in\cC_0^{\omega}$ is a non-zero compact object. For $\lambda<\kappa,$ put 
$Y_{\lambda}=([\lambda,\kappa]\times\{0\})\cup ([0,\lambda]\times [0,1]).$ 
The pullback functor $\cC_0\to\Shv(Y_{\lambda};D(\mk))$ is strongly continuous, hence $\cF_{\mid Y_{\lambda}}$ is a compact sheaf on $Y_{\lambda}.$ Since $Y_{\lambda}$ is connected and compact, it follows from Proposition \ref{prop:compact_sheaves} that $\cF_{\mid Y_{\lambda}}$ is a nowhere zero locally constant sheaf. Since this holds for arbitrary $\lambda<\kappa,$ we deduce that $\cG=\cF_{\mid [0,\kappa)\times\{1\}}$ is a nowhere zero locally constant sheaf on $Z=[0,\kappa)\times\{1\}$. The pullback functor $\cC_0\to\Shv(Z; D(\mk))$ is strongly continuous, hence $\cG$ is also a compact sheaf. But $\supp(\cG)=Z$ is not compact, a contradiction with Proposition \ref{prop:compact_sheaves}.
\end{proof}

We observe that the Calkin construction commutes with sufficiently filtered colimits.

\begin{prop}\label{prop:Calkin_construction_accessible} Let $\kappa$ be an uncountable regular cardinal. The functor $\Calk_{\kappa}^{\cont}:\Cat_{\st}^{\dual}\to\Cat^{\perf}$ commutes with $\kappa$-filtered colimits. In particular, the functor $\Calk_{\kappa}:\Cat^{\perf}\to \Cat^{\perf}$ commutes with $\kappa$-filtered colimits.\end{prop}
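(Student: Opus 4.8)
The plan is to pass to the colimit in the short exact sequences
$0\to\cC_i\xto{\hat{\cY}}\Ind(\cC_i^{\kappa})\to\Ind(\Calk_{\kappa}^{\cont}(\cC_i))\to 0$
defining the continuous $\kappa$-Calkin categories, exploiting that in $\Cat_{\st}^{\dual}$ a filtered colimit of short exact sequences is again short exact (the weak (AB5) axiom, Proposition \ref{prop:weak_AB5_Pr^LL}). Write $\cC=\indlim[i]^{\cont}\cC_i$ for the given $\kappa$-filtered colimit, with structure functors $F_i\colon\cC_i\to\cC$ and transition functors $F_{ij}\colon\cC_i\to\cC_j$; these are strongly continuous, hence preserve $\kappa$-compact objects (recall $\kappa\geq\omega_1$, and every dualizable category is $\omega_1$-compactly generated by Corollary \ref{cor:dualizable_is_aleph_1_generated}), so $(-)^{\kappa}\colon\Cat_{\st}^{\dual}\to\Cat^{\perf}$ is a well-defined functor, and the functoriality square relating $\hat{\cY}_{\cC_i}$ and $\Ind(F_{ij}^{\kappa})$ organizes the displayed sequences into a $\kappa$-filtered diagram of short exact sequences.

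The heart of the argument is to show that $(-)^{\kappa}\colon\Cat_{\st}^{\dual}\to\Cat^{\perf}$ commutes with $\kappa$-filtered colimits, i.e. that the canonical exact functor $G\colon\indlim[i]\cC_i^{\kappa}\to\cC^{\kappa}$ (the colimit on the left taken in $\Cat^{\perf}$) is an equivalence. Since the diagram $(\cC_i^{\kappa})_i$ is filtered, every object of $\indlim[i]\cC_i^{\kappa}$ is a retract of one coming from some $\cC_{i_0}^{\kappa}$, and mapping spaces between objects coming from $\cC_{i_0}^{\kappa}$ are colimits of the mapping spaces in the terms; for such $x,y$ one computes, using the isomorphism $\indlim[k\geq i_0]F_{i_0k}^{R}F_{i_0k}\cong F_{i_0}^{R}F_{i_0}$ from Proposition \ref{prop:colimit_of_presentable_compositions_of_adjoints}(2), the $\kappa$-compactness of $x$, and the fact that $\{k\geq i_0\}$ is again $\kappa$-filtered, that $\Map_{\cC}(F_{i_0}(x),F_{i_0}(y))\cong\Map_{\cC_{i_0}}(x,F_{i_0}^{R}F_{i_0}(y))\cong\indlim[k\geq i_0]\Map_{\cC_k}(F_{i_0k}(x),F_{i_0k}(y))$; as the right-hand side is the mapping space computed in $\indlim[i]\cC_i^{\kappa}$, this proves $G$ fully faithful. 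For essential surjectivity, given $x\in\cC^{\kappa}$, Proposition \ref{prop:colimit_of_presentable_compositions_of_adjoints}(1) gives $x\cong\indlim[i]F_iF_i^{R}(x)$ as a $\kappa$-filtered colimit, so $x$ is a retract of $F_{i_0}(F_{i_0}^{R}(x))$ for some $i_0$; writing $F_{i_0}^{R}(x)$ itself as a $\kappa$-filtered colimit of objects of $\cC_{i_0}^{\kappa}$ and using $\kappa$-compactness of $x$ once more exhibits $x$ as a retract of $F_{i_0}(y)$ with $y\in\cC_{i_0}^{\kappa}$, hence in the essential image of $G$ since both sides are idempotent-complete. (This runs parallel to the proof of Proposition \ref{prop:compact_objects_in_filtered_colimits}, which handles $(-)^{\omega}$ and ordinary filtered colimits.)

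To finish, recall that $\Ind\colon\Cat^{\perf}\to\Cat_{\st}^{\dual}$ commutes with all colimits, being the equivalence $\Cat^{\perf}\simeq\Pr^{LL}_{\st,\omega}$ followed by the colimit-preserving inclusions of Propositions \ref{prop:colimits_in_Pr^LL} and \ref{prop:colimits_of_dualizable}. Applying $\Ind$ to the equivalence of the previous paragraph gives $\indlim[i]^{\cont}\Ind(\cC_i^{\kappa})\simeq\Ind(\cC^{\kappa})$, and the functoriality square of the Calkin construction identifies the induced map $\cC=\indlim[i]^{\cont}\cC_i\to\Ind(\cC^{\kappa})$ with $\hat{\cY}_{\cC}$. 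By Proposition \ref{prop:weak_AB5_Pr^LL} the colimit of our $\kappa$-filtered diagram of short exact sequences is again a short exact sequence, and by the above its first two terms and first arrow agree with those of $0\to\cC\xto{\hat{\cY}}\Ind(\cC^{\kappa})\to\Ind(\Calk_{\kappa}^{\cont}(\cC))\to 0$; comparing cofibers yields $\Ind(\indlim[i]\Calk_{\kappa}^{\cont}(\cC_i))\simeq\indlim[i]^{\cont}\Ind(\Calk_{\kappa}^{\cont}(\cC_i))\simeq\Ind(\Calk_{\kappa}^{\cont}(\cC))$, hence $\indlim[i]\Calk_{\kappa}^{\cont}(\cC_i)\simeq\Calk_{\kappa}^{\cont}(\cC)$ because $\Ind$ restricts to an equivalence $\Cat^{\perf}\simeq\Cat_{\st}^{\cg}$; tracing the maps shows this is the canonical comparison functor. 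The final assertion for $\Calk_{\kappa}\colon\Cat^{\perf}\to\Cat^{\perf}$ then follows from $\Calk_{\kappa}\simeq\Calk_{\kappa}^{\cont}\circ\Ind$ (Proposition \ref{prop:Calk_of_cg}) together with the fact that $\Ind$ commutes with all colimits. I expect the main obstacle to be the middle paragraph: showing $(-)^{\kappa}$ commutes with $\kappa$-filtered colimits is not conceptually deep, but it requires matching the colimit in $\Cat_{\st}^{\dual}$ — which is computed as a limit of right adjoints — with the colimit of $\kappa$-compact objects in $\Cat^{\perf}$, and careful bookkeeping of idempotent completions and of the compatibility of every structure map with $\hat{\cY}$.
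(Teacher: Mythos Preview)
Your proof is correct and follows essentially the same strategy as the paper: both show that $(-)^{\kappa}\colon\Cat_{\st}^{\dual}\to\Cat^{\perf}$ commutes with $\kappa$-filtered colimits, then deduce the result by passing to the cofiber in the defining short exact sequence. The only substantive difference is how the key step is justified: the paper simply invokes the standard fact that $(-)^{\kappa}\colon\Pr^L_{\st,\kappa}\to\Cat^{\perf}$ commutes with $\kappa$-filtered colimits (together with the observation that $\Cat_{\st}^{\dual}\to\Pr^L_{\st,\kappa}$ preserves colimits, by Propositions \ref{prop:colimits_in_Pr^LL} and \ref{prop:colimits_of_dualizable}), whereas you give a self-contained argument using Proposition \ref{prop:colimit_of_presentable_compositions_of_adjoints}. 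Your direct route is a bit longer but avoids appealing to the ambient $\Pr^L_{\st,\kappa}$; the paper's route is a one-liner once that standard fact is accepted. For the final step the paper phrases things as ``the quotient $\cC\mapsto\Ind(\cC^{\kappa})/\hat{\cY}(\cC)$ commutes with $\kappa$-filtered colimits, then pass to compact objects via Proposition \ref{prop:compact_objects_in_filtered_colimits}'', which is equivalent to your use of weak (AB5) and the equivalence $\Ind\colon\Cat^{\perf}\simeq\Cat_{\st}^{\cg}$.
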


\begin{proof}Recall that the functor $\Pr^L_{\st,\kappa}\to\Cat^{\perf},$ $\cC\mapsto\cC^{\kappa},$ commutes with $\kappa$-filtered colimits. Hence, so does the composition $(-)^{\kappa}:\Cat_{\st}^{\dual}\to \Pr^L_{\st,\kappa}\to\Cat^{\perf}.$ It follows that the functor $\Cat_{\st}^{\dual}\to\Cat_{\st}^{\cg},$ $\cC\mapsto\Ind(\cC^{\kappa})/\hat{\cY}(\cC),$ commutes with $\kappa$-filtered colimits. Passing to compact objects, we deduce that the functor $\cC\mapsto (\Ind(\cC^{\kappa})/\hat{\cY}(\cC))^{\omega}=\Cat_{\kappa}^{\cont}(\cC)$ commutes with $\kappa$-filtered colimits.\end{proof}

We conclude this subsection with the following statement about pushouts.

\begin{prop}\label{prop:pushouts_of_mono} In each of the categories $\Cat_{\st}^{\dual},$ $\Pr_{\st}^{LL}$ and $\Pr_{\st}^L,$ the class of fully faithful functors is closed under pushouts.\end{prop}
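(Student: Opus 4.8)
The plan is to reduce everything to the case of $\Pr^L_{\st}$ and then use the explicit description of pushouts there. First I would observe that it suffices to treat $\Pr^L_{\st}$: a pushout in $\Cat_{\st}^{\dual}$ or in $\Pr_{\st}^{LL}$ is computed as a pushout in $\Pr^L_{\st}$ (by Proposition \ref{prop:colimits_of_dualizable} the inclusion $\Cat_{\st}^{\dual}\to\Pr_{\st}^{L}$ commutes with colimits, and by Proposition \ref{prop:colimits_in_Pr^LL} the same holds for $\Pr^{LL}_{\st}\to\Pr^L_{\st}$), and a functor between dualizable (resp.\ presentable) categories is a monomorphism precisely when it is fully faithful, as recalled in the remark after Proposition \ref{prop:weak_AB5_Pr^LL} (see Appendix \ref{app:mono_epi_pres_dual}). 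So the whole statement follows once we know: if $F\colon\cA\to\cB$ is a fully faithful colimit-preserving functor in $\Pr^L_{\st}$ and $G\colon\cA\to\cC$ is any colimit-preserving functor, then in the pushout square
\[
\begin{CD}
\cA @>F>> \cB\\
@VGVV @VV{G'}V\\
\cC @>{F'}>> \cD
\end{CD}
\]
the functor $F'\colon\cC\to\cD$ is again fully faithful.

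The key step is to compute the pushout as a limit of the opposite diagram of right adjoints. Since $F$ is fully faithful with right adjoint $F^R$, we have $F^R F\simeq\id_\cA$, and $\cB$ is a (Bousfield) localization of $\cA$-modules in an appropriate sense — more concretely, $F$ exhibits $\cB$ as a localizing quotient is \emph{not} what we want; rather $F$ fully faithful means $F^R$ is a localization functor. I would use the standard fact (Lurie, \cite{Lur09}, via $(\Pr^L)^{op}\simeq\Pr^R$) that $\cD\simeq\cC\times_\cA\cB$ computed in $\Pr^R_{\st}$, i.e.\ $\cD$ is the lax/strict fiber product of $G^R$ and... no: the pushout of $F,G$ in $\Pr^L$ corresponds to the pullback of $F^R, G^R$ in $\Pr^R$. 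Then $F'\colon\cC\to\cD=\cB\times_\cA\cC$ is (up to the equivalence) the functor whose right adjoint is projection to $\cC$; and $F'$ is fully faithful iff the unit $\id_\cC\to F'^R F'$ is an equivalence. Using that $\cD$ is the pullback, $F'^R F'$ is computed by the formula $c\mapsto $ the pullback $G^R F^R F G(c)\times_{G^R F^R F G(c)} \dots$; the crucial input is that fully faithfulness of $F$ gives $F^R F\simeq\id$, which should force the comparison map to be an equivalence. The cleanest route: the pushout square in $\Pr^L_{\st}$ can be written as $\cD\simeq \cB\otimes_\cA\cC$ (relative tensor product over $\cA$ viewed as an algebra object via... ) — but $\cA$ need not be monoidal. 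So instead I would argue directly with the description of $\cD$ as functor categories.

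Concretely, I would model the pushout using $\cD\simeq \Fun^L_\cA(\cB^\vee, \cC)$-type formulas, or — simplest — invoke that since $F$ is a fully faithful colimit-preserving functor, $\cB$ sits in a short exact sequence $0\to\cB'\to\cA\to\cB\to 0$ or $0\to\cB\to\cA\to\cB''\to 0$ in $\Pr^L_{\st}$... actually a fully faithful colimit-preserving functor $F\colon\cA\to\cB$ realizes $\cA$ as a \emph{colocalizing} (coreflective) subcategory of $\cB$, so there is a short exact sequence $0\to\cA\xrightarrow{F}\cB\to\cB/\cA\to 0$ with $\cB/\cA:=\mathrm{cofib}(F)$. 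Pushout squares preserve cofibers, so $\mathrm{cofib}(F')\simeq\mathrm{cofib}(F)\simeq\cB/\cA$ and the square
\[
\begin{CD}
\cC @>{F'}>> \cD\\
@VVV @VVV\\
0 @>>> \cB/\cA
\end{CD}
\]
is a pushout, i.e.\ $0\to\cC\to\cD\to\cB/\cA\to 0$ is a fiber-cofiber sequence in $\Pr^L_{\st}$ provided it is also a pullback — and it is a pullback because $F$ was, being fully faithful, the fiber of $\cB\to\cB/\cA$, and this property (fiber of a given functor) is preserved under base change along $G$. Hence $F'$ is the fiber of $\cD\to\cB/\cA$, so $F'$ is fully faithful. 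The main obstacle I anticipate is justifying carefully that "being the fiber of the cofiber" — equivalently fully faithfulness — is stable under the pushout base change; the clean way is: in the pushout square, take cofibers of the two horizontal arrows to get an equivalence $\mathrm{cofib}(F)\xrightarrow{\sim}\mathrm{cofib}(F')$, and then observe that a colimit-preserving functor $F'$ in $\Pr^L_{\st}$ is fully faithful iff the natural map $\cC\to\mathrm{fib}(\mathrm{cofib}(F'))$... no, iff $F'$ agrees with the fiber of $\cD\to\mathrm{cofib}(F')$; and this last comparison follows by contemplating the map of fiber sequences induced by $G$, using that $\cA\xrightarrow{F}\cB\to\mathrm{cofib}(F)$ is a fiber sequence and base-changing. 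I would write this as a short diagram chase in $\Pr^L_{\st}$, invoking stability of the ambient category and the fact that finite limits and colimits in $\Pr^L_{\st}$ agree appropriately for the squares in question.
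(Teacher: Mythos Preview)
Your reduction to $\Pr^L_{\st}$ is correct and matches the paper. You also stumble on the paper's actual argument midway through---the pushout in $\Pr^L_{\st}$ is the pullback in $\Pr^R_{\st}$ of the right adjoints, and $F$ fully faithful means $F^R$ is a quotient functor (since $F^R F\simeq\id_{\cA}$); then one invokes the standard fact that a pullback of a quotient functor is a quotient functor, so $F'^R$ is a quotient functor, i.e.\ $F'$ is fully faithful. That is the entire proof in the paper, one sentence long. You write this down almost verbatim and then abandon it.

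The approach you settle on instead has a real gap. You correctly note that $\mathrm{cofib}(F')\simeq\mathrm{cofib}(F)$ by pasting pushout squares, and you want to deduce that $\cC$ is the fiber of $\cD\to\mathrm{cofib}(F')$. But your justification---``invoking stability of the ambient category and the fact that finite limits and colimits in $\Pr^L_{\st}$ agree appropriately''---is wrong: $\Pr^L_{\st}$ is \emph{not} a stable $\infty$-category. Its objects are stable categories, but $\Pr^L_{\st}$ itself is very far from stable (it is not even additive). In $\Pr^L_{\st}$ a cofiber square is not automatically a fiber square; indeed, the assertion that $\cC\to\cD\to\cB/\cA$ is a fiber sequence is exactly the statement that $F'$ is fully faithful, which is what you are trying to prove. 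Your appeal to ``base change preserving fibers'' is similarly circular: the relevant base change here is a pushout, not a pullback, and there is no general reason a pushout of a fiber sequence in $\Pr^L_{\st}$ remains a fiber sequence. Go back to the right-adjoint argument you wrote down first.
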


\begin{proof}It suffices to prove this for $\Pr^L_{\st}.$ Passing to the right adjoints, we deduce this from the standard statement: a pullback of a quotient functor is a quotient functor.\end{proof}

\subsection{Semi-orthogonal decompositions}
\label{ssec:SOD}

We have the natural notion of a semi-orthogonal decomposition in $\Pr_{\st}^{LL}$ and in $\Cat_{\st}^{\dual}.$

\begin{defi}A (finite) semi-orthogonal decomposition in $\Pr^{LL}_{\st}$ (resp. $\Cat_{\st}^{\dual}$) of a presentable stable (resp. dualizable) category $\cC$ is a collection of presentable stable (resp. dualizable) subcategories $\cC_1,\dots,\cC_n\subset\cC$ such that 

i) the inclusion functors $\cC_i\to\cC$ are strongly continuous;

ii) For $i>j,$ $x\in \cC_i,$ $y\in\cC_j,$ we have $\Hom(x,y)=0;$

iii) The categories $\cC_i$ generate $\cC$ as a stable subcategory.

In this case we write $\cC=\langle\cC_1,\dots,\cC_n\rangle.$\end{defi}

\begin{remark}\label{rem:SOD_PR^LL} Let $i_1:\cC\to\cE$ and $i_2:\cD\to\cE$ be strongly continuous functors between presentable stable categories. Then $i_1$ and $i_2$ define a semi-orthogonal decomposition $\cE=\la \cC,\cD\ra$ in $\Pr_{\st}^{LL}$ if and only if the following conditions hold:

\begin{itemize}
\item $i_1$ (resp. $i_2$) has a strongly continuous left (resp. right) adjoint $i_1^L$ (resp. $i_2^R$);

\item we have $i_1^L i_1\xto{\sim}\id_{\cC}$ and $\id_{\cD}\xto{\sim}i_2^R i_2;$

\item we have $i_1^L i_2=0$ (equivalently, $i_2^R i_1=0$);

\item we have a cofiber sequence $i_2 i_2^R\to\id\to i_1 i_1^L$ in $\Fun^{LL}(\cE,\cE).$
\end{itemize}

This is a straightforward generalization of a similar statement for small stable categories. Note that the strong continuity of $i_2^R$ follows from the cofiber sequence in the last condition.
\end{remark}

The following observations are immediate.

\begin{prop}\label{prop:finite_SOD_dualizable} Let $\cC=\la \cC_1,\dots,\cC_n\ra$ be a semi-orthogonal decomposition in $\Pr_{\st}^{LL}.$ Then $\cC$ is dualizable if and only if each $\cC_i$ is dualizable. In this case we also have a semi-orthogonal decomposition on the level of compact objects: $\cC^{\omega}=\la \cC_1^{\omega},\dots,\cC_n^{\omega}\ra.$\end{prop}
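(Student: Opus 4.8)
The plan is to reduce the statement to the already-established behaviour of colimits and limits in $\Pr^{LL}_{\st}$ together with Proposition~\ref{prop:condition_for_dualizability} and Proposition~\ref{prop:ses_Pr^LL}. Recall (Remark~\ref{rem:SOD_PR^LL}) that a semi-orthogonal decomposition $\cC=\la\cC_1,\dots,\cC_n\ra$ in $\Pr^{LL}_{\st}$ is built inductively from two-step decompositions $\la\cC',\cD\ra$, in which $i_1\colon\cC'\to\cC$ has a strongly continuous left adjoint $i_1^L$ with $i_1^L i_1\simeq\id$, and $i_2\colon\cD\to\cC$ has a continuous right adjoint $i_2^R$ with $i_2^R i_2\simeq\id$, fitting into a cofiber sequence $i_2 i_2^R\to\id\to i_1 i_1^L$. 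So by induction on $n$ it suffices to treat the case $n=2$, i.e.\ $\cC=\la\cC',\cD\ra$ (for the inductive step, once $\cC'=\la\cC_1,\dots,\cC_{n-1}\ra$ and $\cC=\la\cC',\cC_n\ra$ are both dualizable, one reads off $\cC=\la\cC_1,\dots,\cC_n\ra$).

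\emph{Dualizability, ``only if'' direction.} If $\cC$ is dualizable, then $\cC'$ and $\cD$ are each retracts of $\cC$ in $\Pr^L_{\st}$: for $\cD$ we have $i_2^R i_2\simeq\id_{\cD}$ with $i_2^R$ continuous, and for $\cC'$ we have $i_1^L i_1\simeq\id_{\cC'}$ with $i_1^L$ (strongly, hence in particular) continuous. By the criterion \ref{Lur3} of Proposition~\ref{prop:Lurie_criterions_dual}, a retract in $\Pr^L_{\st}$ of a dualizable category is dualizable, so both $\cC_i$ are dualizable. (Equivalently, one may invoke Proposition~\ref{prop:ses_Pr^LL}: $0\to\cD\to\cC\to\cC'\to 0$ is a short exact sequence in $\Pr^{LL}_{\st}$, since $i_2$ is fully faithful with $\cC/\cD\simeq\cC'$.)

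\emph{Dualizability, ``if'' direction.} Suppose $\cC'$ and $\cD$ are dualizable. The inclusion functors $i_1\colon\cC'\to\cC$ and $i_2\colon\cD\to\cC$ are strongly continuous by hypothesis (i), and by (iii) their images generate $\cC$ as a stable, hence as a localizing, subcategory. Proposition~\ref{prop:condition_for_dualizability} then applies directly to the family $\{i_1,i_2\}$ and yields that $\cC$ is dualizable.

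\emph{Semi-orthogonal decomposition on compact objects.} Once $\cC$ is known to be dualizable with all $\cC_i$ dualizable, the functors $i_1^L,i_2^R$ are strongly continuous, so they preserve compact objects; thus $i_1^L,i_2^R$ restrict to exact functors on the subcategories of compact objects, and $i_1^L i_1\simeq\id$, $i_2^R i_2\simeq\id$ show $\cC_i^\omega\to\cC^\omega$ is fully faithful with the expected adjoints. The cofiber sequence $i_2 i_2^R\to\id\to i_1 i_1^L$ consists of continuous (indeed strongly continuous) functors, so it restricts to a cofiber sequence on $\cC^\omega$, giving condition (iv) of Remark~\ref{rem:SOD_PR^LL} there; the vanishing $i_1^L i_2=0$ persists on compact objects, giving semi-orthogonality. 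Finally, $\cC^\omega$ is generated as a stable subcategory by $\cC_1^\omega$ and $\cC_2^\omega$: every compact $x\in\cC$ sits in a cofiber sequence $i_2 i_2^R x\to x\to i_1 i_1^L x$ whose outer terms lie in $\cD^\omega$ resp.\ $\cC'^\omega$ (after identifying them with the images of compact objects). Hence $\cC^\omega=\la\cC_1^\omega,\dots,\cC_n^\omega\ra$, completing the induction.

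The only mildly delicate point is the generation statement on compact objects, where one must be careful that $i_1^L,i_2^R$ genuinely land in $\cC_i^\omega$ on a compact $x$; this is exactly the statement that these left/right adjoints are strongly continuous, which in turn follows from the dualizability already proved together with Proposition~\ref{prop:automatic_strcont} (the left adjoint $i_1^L$ of a strongly continuous functor out of a dualizable category, composed appropriately, stays strongly continuous) — so no real obstacle remains.
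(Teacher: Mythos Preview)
Your proof is correct and follows essentially the same approach as the paper: retraction in $\Pr^L_{\st}$ for the ``only if'' direction, Proposition~\ref{prop:condition_for_dualizability} for the ``if'' direction, and then verifying on compact objects that the restricted adjoints satisfy the conditions of Remark~\ref{rem:SOD_PR^LL}. One minor simplification: your closing worry about whether $i_1^L$ and $i_2^R$ are strongly continuous is already settled by Remark~\ref{rem:SOD_PR^LL} itself (the strong continuity of $i_1^L$ is part of the characterization, and that of $i_2^R$ is noted to follow from the cofiber sequence), so no appeal to Proposition~\ref{prop:automatic_strcont} is needed.
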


\begin{proof}If $\cC$ is dualizable, then each $\cC_i$ is a retract of $\cC$ in $\Pr^L_{\st},$ hence $\cC_i$ is dualizable. Conversely, if each $\cC_i$ is dualizable, then $\cC$ is dualizable by Proposition \ref{prop:condition_for_dualizability}.

For the second statement we may assume that $n=2.$ Denote by $i_1:\cC_1\to\cC,$ $i_2:\cC_2\to\cC$ the inclusion functors. Then the functors $i_1^{\omega}:\cC_1^{\omega}\to\cC^{\omega}$ and $i_2^{\omega}:\cC_2^{\omega}\to\cC^{\omega}$ satisfy the version of conditions of Remark \ref{rem:SOD_PR^LL} for small stable categories. Hence, they induce a semi-orthogonal decomposition $\cC^{\omega}=\la \cC_1^{\omega},\cC_2^{\omega}\ra.$\end{proof}

The usual construction of a semi-orthogonal gluing via a bimodule works in the context of dualizable categories.

\begin{prop}\label{prop:gluing_of_dualizable} Let $F:\cD\to\cC$ be an accessible functor between dualizable categories, and consider the semi-orthogonal gluing $\cE=\cC\oright_F \cD,$ with the inclusion functors $i_1:\cC\to\cE,$ $i_2:\cD\to\cE.$ The following are equivalent.
\begin{enumerate}[label=(\roman*),ref=(\roman*)]
\item the category $\cE$ is dualizable and the functors $i_1$ and $i_2$ are strongly continuous. \label{cont1}

\item the functor $F$ is continuous. \label{cont2}
\end{enumerate}
\end{prop}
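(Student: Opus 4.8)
The plan is to prove the equivalence \ref{cont1}$\iff$\ref{cont2} by leveraging the general gluing result Proposition \ref{prop:gluing_presentable} together with the automatic-dualizability criterion of Proposition \ref{prop:condition_for_dualizability} and the automatic-strong-continuity criterion of Proposition \ref{prop:automatic_strcont}. Throughout, write $i_1^R:\cE\to\cC$ and $i_2^R:\cE\to\cD$ for the right adjoints, noting that $i_1^R(x,y,\varphi)=x$ and $i_2^R(x,y,\varphi)=y$ in the explicit model of the gluing. Observe also that $F\cong i_1^R\circ i_2$, a fact we will use in both directions.

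For \ref{cont2}$\implies$\ref{cont1}, first note that if $F$ is continuous then it is certainly accessible, so by Proposition \ref{prop:gluing_presentable} the category $\cE$ is presentable. To see $\cE$ is dualizable, I would exhibit it as generated by the images of two strongly continuous functors out of dualizable categories, and invoke Proposition \ref{prop:condition_for_dualizability}. The natural candidates are $i_1:\cC\to\cE$ and $i_2:\cD\to\cE$. Their images generate $\cE$ as a stable subcategory (every object $(x,y,\varphi)$ sits in an evident extension of $i_2(y)$ by $i_1(x[-1])$ using the maps $\varphi$), so it remains to check $i_1$ and $i_2$ are strongly continuous, i.e. that $i_1^R$ and $i_2^R$ are continuous. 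Now $i_1^R$ has a further right adjoint: it sends $x\mapsto(x,0,0)$'s right-adjoint partner, concretely $i_1^R$ preserves colimits because colimits in $\cE$ are computed componentwise (the colimit of $(x_\alpha,y_\alpha,\varphi_\alpha)$ is $(\colim x_\alpha,\colim y_\alpha,\colim\varphi_\alpha)$ provided $F$ is continuous, so that $\colim\varphi_\alpha\colon\colim x_\alpha\to F(\colim y_\alpha)$ makes sense). Hence both $i_1^R$ and $i_2^R$ are continuous, so $i_1,i_2$ are strongly continuous; then Proposition \ref{prop:condition_for_dualizability} gives dualizability of $\cE$. This is the step I expect to require the most care: one must be sure that colimits in the lax limit $\cC\oright_F\cD$ really are componentwise exactly when $F$ preserves them, which is where continuity of $F$ is used in an essential way (it is precisely what allows the structure map $\varphi$ to be formed after passing to the colimit).

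For \ref{cont1}$\implies$\ref{cont2}, suppose $\cE$ is dualizable and $i_1,i_2$ are strongly continuous. Then $i_1^R$ is continuous, hence so is the composite $i_1^R\circ i_2\cong F$, giving \ref{cont2}. (Here I only use that $i_1$ strongly continuous means $i_1^R$ is continuous, together with $i_2$ being continuous as a strongly continuous functor; dualizability of $\cE$ is not even needed for this implication.) This finishes the equivalence. If one additionally wishes to record it: under \ref{cont2}, the semi-orthogonal decomposition $\cE=\la i_1(\cC),i_2(\cD)\ra$ holds in $\Cat_{\st}^{\dual}$, and Proposition \ref{prop:finite_SOD_dualizable} then recovers $\cE^\omega=\la\cC^\omega,\cD^\omega\ra$ when $F$ is moreover strongly continuous, but that refinement is not part of the present statement and I would leave it to a subsequent remark.
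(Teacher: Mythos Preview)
Your overall strategy is the paper's, but there is a genuine error in your identification of $i_1^R$. You write $i_1^R(x,y,\varphi)=x$, but this is (a shift of) the \emph{left} adjoint $i_1^L$, not the right adjoint. Your own formulas are in fact inconsistent: applying $i_1^R(x,y,\varphi)=x$ to $i_2(d)=(0,d,0)$ yields $0$, contradicting your claim $F\cong i_1^R\circ i_2$. The correct description, which is what the paper records, is
\[
i_1^R(x,y,\varphi)\;=\;\Fiber\bigl(x\xrightarrow{\varphi}F(y)\bigr),
\]
and from this one reads off immediately that $i_1^R$ commutes with filtered colimits precisely when $F$ does.

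This matters for your \ref{cont2}$\Rightarrow$\ref{cont1} argument. With your incorrect formula, the componentwise-colimit observation would show the $x$-projection is continuous regardless of whether $F$ is, so it could not be the place where continuity of $F$ enters; it simply does not establish strong continuity of $i_1$. Once you substitute the correct formula, your componentwise-colimit remark does give the conclusion (Fiber commutes with colimits, and $F$ is assumed continuous), and then either your Proposition~\ref{prop:condition_for_dualizability} or the paper's Proposition~\ref{prop:finite_SOD_dualizable} yields dualizability of $\cE$. Your \ref{cont1}$\Rightarrow$\ref{cont2} direction is fine as written, since it only uses $F\cong i_1^R\circ i_2$ (up to a shift), which is correct; the paper argues identically and likewise notes that dualizability of $\cE$ is not needed there.
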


\begin{proof} Note that the functor $i_2$ is automatically strongly continuous. 

\Implies{cont1}{cont2}. It suffices to recall that $F\cong i_1^R\circ i_2,$ where $i_1^R$ is the right adjoint to $i_1.$

\Implies{cont2}{cont1}. Suppose that $F$ is continuous. Note that the functor $i_1^R$ is given by
$$i_1^R(x,y,\varphi)=\Fiber(x\xto{\varphi}F(y)).$$ Hence, $i_1$ is strongly continuous. The dualizability of $\cE$ follows from Proposition \ref{prop:finite_SOD_dualizable}.\end{proof}

We have the following expected statement about the functors to and from a dualizable category with a semi-orthogonal decomposition.

\begin{prop}\label{prop:functors_in_and_out_SOD} Let $\cE$ be a presentable stable category with a semi-orthogonal decomposition $\cE=\la i_1(\cC), i_2(\cD)\ra$ in $\Pr_{\st}^{LL}.$ Denote by $\pi_1:\cE\to\cC$ the left adjoint to $i_1,$ and $\pi_2:\cE\to\cD$ the right adjoint to $i_2.$ Let $\cT$ be another presentable stable category.

1) A continuous functor $F:\cT\to\cE$ is strongly continuous if and only if the compositions $\pi_1\circ F:\cT\to\cC$ and $\pi_2\circ F:\cT\to\cD$ are strongly continuous. We have a semi-orthogonal decomposition
$$\Fun^{LL}(\cT,\cE)=\la \Fun^{LL}(\cT,\cC), \Fun^{LL}(\cT,\cD)\ra.$$

2) A continuous functor $G:\cE\to\cT$ is strongly continuous if and only if the compositions $G\circ i_1:\cC\to\cT$ and $G\circ i_2:\cD\to\cT$ are strongly continuous. We have a semi-orthogonal decomposition
$$\Fun^{LL}(\cE,\cT)=\la \Fun^{LL}(\cD,\cT), \Fun^{LL}(\cC,\cT)\ra.$$ \end{prop}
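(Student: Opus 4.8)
The plan is to settle the two ``iff'' statements first and then read off the two semi-orthogonal decompositions, which will be formal. Throughout I will use that strongly continuous functors are closed under composition (for $F,G$ strongly continuous, $(GF)^R\simeq F^RG^R$ is continuous), that $i_1,i_2$ are strongly continuous, and that the projections $\pi_1=i_1^L$ and $\pi_2=i_2^R$ are strongly continuous --- for $\pi_1$ because its right adjoint $i_1$ is continuous, and for $\pi_2$ by Remark \ref{rem:SOD_PR^LL}. From that same remark I will use the identities $\pi_1i_1\simeq\id_\cC$, $\pi_2i_2\simeq\id_\cD$, $\pi_1i_2=0$, $\pi_2i_1=0$ and the cofiber sequence $i_2i_2^R\to\id_\cE\to i_1i_1^L$ in $\Fun^{LL}(\cE,\cE)$. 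With this, the ``only if'' directions of 1) and 2) are immediate, since $\pi_k\circ F$ and $G\circ i_k$ are then composites of strongly continuous functors. For the ``if'' direction of 2), assume $Gi_1$ and $Gi_2$ are strongly continuous; the images of $i_1$ and $i_2$ generate $\cE$ by axiom iii), so Proposition \ref{prop:automatic_strcont} applied to the family $\{i_1,i_2\}$ gives that $G$ is strongly continuous.

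The one non-formal point is the ``if'' direction of 1). Assuming $\pi_1F$ and $\pi_2F$ are strongly continuous, composing the cofiber sequence $i_2i_2^R\to\id_\cE\to i_1i_1^L$ with $F$ on the right yields a cofiber sequence of continuous functors $\cT\to\cE$,
$$i_2(\pi_2F)\ \longrightarrow\ F\ \longrightarrow\ i_1(\pi_1F),$$
whose outer terms are strongly continuous (composites of strongly continuous functors). Hence it suffices to prove: if $A\to B\to C$ is a cofiber sequence in $\Fun^L(\cT,\cE)$ with $A$ and $C$ strongly continuous, then $B$ is strongly continuous. For this I would pass to right adjoints: the operation $(-)^R$ being exact and contravariant, one obtains a fiber sequence $C^R\to B^R\to A^R$ of functors $\cE\to\cT$. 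Given a filtered diagram $(x_j)$ in $\cE$ with colimit $x$, the comparison maps $\colim_jA^R(x_j)\to A^R(x)$ and $\colim_jC^R(x_j)\to C^R(x)$ are equivalences by hypothesis; since filtered colimits in the stable category $\cT$ are exact, applying $\colim_j$ to the fiber sequences of values at the $x_j$ gives a map of fiber sequences two of whose legs are equivalences, whence the third leg $\colim_jB^R(x_j)\to B^R(x)$ is an equivalence. So $B^R$ is continuous and $B=F$ is strongly continuous.

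It then remains to produce the two semi-orthogonal decompositions, by the same formal argument as in the case of small stable categories. For 1), post-composition with $i_1$ and with $i_2$ gives fully faithful functors $\Fun^{LL}(\cT,\cC)\hookrightarrow\Fun^{LL}(\cT,\cE)$ and $\Fun^{LL}(\cT,\cD)\hookrightarrow\Fun^{LL}(\cT,\cE)$ (fully faithful because $i_1,i_2$ are, with image in $\Fun^{LL}$ because $i_1,i_2$ are strongly continuous); semi-orthogonality is the vanishing of the spectrum of natural transformations $i_2G\Rightarrow i_1H$, computed objectwise from $\Map_\cE(i_2(\cD),i_1(\cC))\simeq 0$ (axiom ii) in $\cE$), and generation is exactly the displayed cofiber sequence, valid for every strongly continuous $F$ by the ``iff'' just proved. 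For 2), $\Fun^{LL}(\cD,\cT)$ and $\Fun^{LL}(\cC,\cT)$ embed into $\Fun^{LL}(\cE,\cT)$ by pre-composition with the appropriate adjoint projections $\cE\to\cD$ and $\cE\to\cC$, fully faithfulness coming from $\pi_2i_2\simeq\id$ and $\pi_1i_1\simeq\id$; semi-orthogonality follows from $\pi_1i_2=0$ and $\pi_2i_1=0$ by the same kind of adjunction computation, and generation from the corresponding gluing triangle on $\Fun^{LL}(\cE,\cT)$ obtained from $i_2i_2^R\to\id_\cE\to i_1i_1^L$. The main obstacle is the ``if'' direction of 1): that strongly continuous functors are closed under being the middle term of a cofiber sequence with strongly continuous ends is the one step that is not purely diagrammatic, and it is what forces an appeal to exactness of filtered colimits in stable $\infty$-categories.
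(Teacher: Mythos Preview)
Your proof is correct and follows essentially the same route as the paper's: the ``if'' direction of 2) via Proposition~\ref{prop:automatic_strcont}, the ``if'' direction of 1) via the cofiber sequence $i_2\pi_2 F\to F\to i_1\pi_1 F$, and the two semi-orthogonal decompositions read off formally from the identities in Remark~\ref{rem:SOD_PR^LL}. The paper treats 2) first and is terser on 1), simply asserting that the cofiber sequence with strongly continuous outer terms forces $F$ to be strongly continuous; you supply the missing justification by passing to right adjoints via the equivalence $\Fun^L(\cT,\cE)^{op}\simeq\Fun^R(\cE,\cT)$ and using that filtered colimits in a stable category are exact, which is exactly the right way to fill that gap.
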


\begin{proof} We first prove part 2). The criterion of strong continuity follows from Proposition \ref{prop:automatic_strcont} applied to the functors $i_1:\cC\to\cE,$ $i_2:\cD\to\cE.$ 
The functors $i_2':\Fun^{LL}(\cD,\cT)\to \Fun^{LL}(\cE,\cT)$ and $i_1':\Fun^{LL}(\cC,\cT)\to \Fun^{LL}(\cE,\cT)$ are given respectively by precomposition with $\pi_2$ and $\pi_1.$ The left adjoint to $i_2'$ is given by precomposition with $i_2,$ and the right adjoint to $i_1'$ is given by precomposition with $i_1.$ It follows formally that $i_1'$ and $i_2'$ are fully faithful and the composition $i_2'^L i_1'$ is zero. It remains to observe that the cofiber sequence $\i_2\pi_2\to\id\to i_1\pi_1$ in $\Fun^{LL}(\cE,\cE)$ implies that we have a cofiber sequence $i_1' i_1'^R\to\id\to i_2'i_2'^L$ in $\Fun(\Fun^{LL}(\cE,\cT),\Fun^{LL}(\cE,\cT)).$

We now prove part 1). The proof of a semi-orthogonal decomposition is similar; here the inclusion functors are given by the compositions with $i_1$ and $i_2.$ The ``only if'' direction of the criterion is clear. To prove the ``if'' direction we note that the functors $i_1\pi_1 F$ and $i_2\pi_2 F$ are strongly continuous, and the cofiber sequence $i_2\pi_2 F\to F\to i_1\pi_1 F$ implies that $F$ is also strongly continuous.
\end{proof}

We also define the notion of a poset-indexed (possibly infinite) semi-orthogonal decomposition.

\begin{defi}\label{defi:infinite_SOD}Let $I$ be a partially ordered set, and let $\cC$ be a presentable stable (resp. dualizable) category. An $I$-indexed semi-orthogonal decomposition of $\cC$ in $\Pr^{LL}_{\st}$ (resp. in $\Cat_{\st}^{\dual}$) is a collection of presentable stable (resp. dualizable) subcategories $\cC_i\subset\cC,$ $i\in I,$ such that

i) the inclusion functors $\cC_i\to\cC,$ $i\in I,$ are strongly continuous;

ii) For $i\not\leq j,$ $x\in \cC_i,$ $y\in\cC_j,$ we have $\Hom(x,y)=0;$

iii) The categories $\cC_i$ generate $\cC$ as a localizing subcategory.

In this case we write $\cC=\la \cC_i; i\in I\ra.$\end{defi}

\begin{prop}\label{prop:infinite_SOD_of_dualizable} Let $I$ be a poset and let $\cC=\la \cC_i; i\in I\ra$ be a semi-orthogonal decomposition in $\Pr_{\st}^{LL}.$ Then $\cC$ is dualizable iff each $\cC_i$ is dualizable. In this case we have a semi-orthogonal decomposition $\cC^{\omega}=\la \cC_i^{\omega}; i\in I\ra.$\end{prop}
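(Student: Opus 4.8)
The plan is to deduce the statement from the finite case (Proposition~\ref{prop:finite_SOD_dualizable}) by exhausting $\cC$ by the localizing subcategories it generates from finitely many of the $\cC_i$ and then passing to a filtered colimit. The equivalence is easy: if each $\cC_i$ is dualizable, then by axiom~i) the inclusions $\cC_i\hookrightarrow\cC$ are strongly continuous and by axiom~iii) their images generate $\cC$ as a localizing subcategory, so $\cC$ is dualizable by Proposition~\ref{prop:condition_for_dualizability} (semi-orthogonality plays no role here); conversely, if $\cC$ is dualizable, then each fully faithful strongly continuous inclusion $\cC_i\hookrightarrow\cC$ has a right adjoint which is continuous and, being a right adjoint of an exact functor of stable categories, exact, hence colimit-preserving, and it restricts to the identity on $\cC_i$, so $\cC_i$ is a retract of $\cC$ in $\Pr^L_{\st}$ and therefore dualizable.

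Now assume $\cC$ (equivalently each $\cC_i$) dualizable. For a finite subset $S\subseteq I$ I would let $\cC_S\subseteq\cC$ be the localizing subcategory generated by $\{\cC_i\mid i\in S\}$. The first routine point is that whenever $\cC_i$ lies inside a localizing subcategory $\cD\subseteq\cC$, the inclusion $\cC_i\hookrightarrow\cD$ is again strongly continuous: it is colimit-preserving since $\cC_i$ is localizing in $\cD$, and its right adjoint is the restriction to $\cD$ of the continuous right adjoint $\cC\to\cC_i$. Taking $\cD=\cC_S$ and invoking Proposition~\ref{prop:condition_for_dualizability} shows that $\cC_S$ is dualizable, and taking $\cD=\cC$ and invoking Proposition~\ref{prop:automatic_strcont} for the family $\{\cC_i\hookrightarrow\cC_S\}_{i\in S}$ shows that $\cC_S\hookrightarrow\cC$ is strongly continuous.

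Next I would prove, by induction on $|S|$, that $\cC_S=\la \cC_i; i\in S\ra$ is a semi-orthogonal decomposition in $\Pr^{LL}_{\st}$ and that $\cC_S^{\omega}=\la \cC_i^{\omega}; i\in S\ra$. Choose a maximal element $i_0\in S$ and set $S'=S\setminus\{i_0\}$. By axiom~ii), $i_0\not\leq j$ for every $j\in S'$, so $\Hom(\cC_{i_0},\cC_{S'})=0$; since $\cC_{i_0}$ is a localizing subcategory of $\cC_S$, the semi-orthogonality identifies $\cC_{S'}$ with the right orthogonal $\cC_{i_0}^{\perp}$ in $\cC_S$ (equivalently, with the Verdier quotient $\cC_S/\cC_{i_0}$), which realizes $\cC_S=\la \cC_{S'},\cC_{i_0}\ra$ as a two-term semi-orthogonal decomposition in $\Pr^{LL}_{\st}$; combining it with the inductive decomposition $\cC_{S'}=\la \cC_i; i\in S'\ra$ gives $\cC_S=\la \cC_i; i\in S\ra$. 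Applying Proposition~\ref{prop:finite_SOD_dualizable} together with the inductive hypothesis then yields $\cC_S^{\omega}=\la \cC_i^{\omega}; i\in S\ra$.

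Finally I would pass to the colimit. The finite subsets of $I$ form a directed poset, each $\cC_i$ equals $\cC_{\{i\}}$, and the $\cC_i$ generate $\cC$ as a localizing subcategory, so Corollary~\ref{cor:directed_unions} gives $\cC\simeq\indlim[S]^{\cont}\cC_S$; by Proposition~\ref{prop:compact_objects_in_filtered_colimits} this yields $\cC^{\omega}\simeq\indlim[S]\cC_S^{\omega}$, and since all transition functors are fully faithful this filtered colimit is simply the union $\bigcup_S\cC_S^{\omega}=\bigcup_S\la \cC_i^{\omega}; i\in S\ra=\la \cC_i^{\omega}; i\in I\ra$ inside $\cC^{\omega}$. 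Combined with the inherited semi-orthogonality and the fact that each $\cC_i^{\omega}$ is a stable subcategory of $\cC^{\omega}$ (strong continuity of $\cC_i\hookrightarrow\cC$ preserves compactness), this is the desired decomposition. I expect the step requiring the most care to be the identification of $\cC_{S'}$ with $\cC_{i_0}^{\perp}\subseteq\cC_S$ --- i.e.\ the genuine realization of $\cC_S$ as a finite semi-orthogonal decomposition, where one must check that the localizing subcategory of $\cC$ generated by the $\cC_j$ with $j\in S'$ is exactly the right orthogonal of $\cC_{i_0}$ in $\cC_S$; the remaining steps are a bookkeeping assembly of the quoted results.
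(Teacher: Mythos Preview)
Your proposal is correct and follows essentially the same approach as the paper: reduce to finite subsets $S\subseteq I$, show $\cC_S$ is dualizable with strongly continuous inclusion via Propositions~\ref{prop:condition_for_dualizability} and~\ref{prop:automatic_strcont}, apply the finite case (Proposition~\ref{prop:finite_SOD_dualizable}), and pass to the colimit using Corollary~\ref{cor:directed_unions} and Proposition~\ref{prop:compact_objects_in_filtered_colimits}. You supply more detail than the paper does---in particular the inductive verification that $\cC_S=\la\cC_i;i\in S\ra$ via a maximal element, which the paper simply asserts---but the architecture is identical.
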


\begin{proof}The first statement is proved in the same way as Proposition \ref{prop:finite_SOD_dualizable}.

To prove the second statement, we introduce for each finite subset $J\subset I$ the subcategory $\cC_J\subset\cC$ generated by $\cC_j,$ $j\in J.$ By Propositions \ref{prop:condition_for_dualizability} and \ref{prop:automatic_strcont} the category $\cC_J$ is dualizable and the inclusion $\cC_J\to\cC$ is strongly continuous. Moreover, we have a semi-orthogonal decomposition $\cC_J=\la \cC_j, j\in J\ra.$ By Proposition \ref{prop:finite_SOD_dualizable}, the category $\cC_J^{\omega}$ is generated as a stable category by $\cC_j^{\omega},$ $j\in J.$ Combining Corollary \ref{cor:directed_unions} and Proposition \ref{prop:compact_objects_in_filtered_colimits} we deduce that the category $\cC^{\omega}\simeq\indlim[J\subset I]\cC_J^{\omega}$ is generated as a stable subcategory by $\cC_i^{\omega},$ $i\in I.$ Hence, we have a semi-orthogonal decomposition $\cC^{\omega}=\la \cC_i^{\omega}; i\in I\ra,$ as required.\end{proof}

\subsection{Finite limits of dualizable categories}
\label{ssec:finite_limits_of_dualizable}

The limits in the category $\Cat_{\st}^{\dual}$ of dualizable categories are much more complicated then colimits. We start their study with the case of kernels (i.e. fibers). We denote by $\Q_{\leq}$ the poset of rational numbers with the usual order.

\begin{prop}\label{prop:functors_from_Q} Let $\cC$ be a dualizable stable category. For any $x\in\cC^{\omega_1}$ there exists a functor $\Phi:\Q_{\leq}\to\cC$ such that

\begin{itemize}
\item $\indlim[a\in\Q]\Phi(a)\cong x$

\item for any $a\in\Q,$ we have $\indlim[b<a]\Phi(b)\xto{\sim}\Phi(a);$

\item for any $a<b,$ $a,b\in\Q,$ the morphism $\Phi(a)\to\Phi(b)$ is compact.
\end{itemize}
\end{prop}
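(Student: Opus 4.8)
The plan is to realize $\Phi$ by a ``binary subdivision'' construction in the spirit of Urysohn's lemma, iterating Corollary~\ref{cor:decomposing_compact_morphisms}. First I would establish an \emph{interpolation statement}: given a compact morphism $g\colon v\to w$ in $\cC$ with $w\in\cC^{\omega_1}$, there is a sequence $v=v_0\to v_1\to v_2\to\cdots$ in $\cC^{\omega_1}$ with $\colim_k v_k\cong w$, with every morphism $v_i\to v_j$ ($i<j$) compact, and with the induced map $v_0\to\colim_k v_k$ equal to $g$. To prove this I would use that the $\omega_1$-compact object $w$ admits a resolution $w\cong\colim_j w_j$ with $w_j\in\cC^{\omega_1}$ and all $w_j\to w_{j+1}$ compact (from the proof of Theorem~\ref{th:dualizability_via_compact_maps}), so that $\hat{\cY}(w)\cong\colim_j\cY(w_j)$; since $g$ is compact it lifts to $\tilde g\colon\cY(v)\to\hat{\cY}(w)$ by Proposition~\ref{prop:compact_maps_in_dualizable_cats}, and $\tilde g$ factors through some $\cY(w_{j_0})$ because $\cY(v)$ is compact in $\Ind(\cC)$; this yields a compact $h\colon v\to w_{j_0}$ whose composite with $w_{j_0}\to w$ is $g$, and I would take $v_0=v$, $v_k=w_{j_0+k}$ for $k\geq 1$, using throughout that a composite of a compact morphism with an arbitrary morphism (on either side) is again compact.

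Next I would identify $\Q_{\leq}$ with the poset $P=\Z[1/2]\cap(-1,0)$ via an order isomorphism (both are countable dense linear orders without endpoints — the dyadics accumulate at $-1$ and $0$, which are excluded — so Cantor's theorem provides one, and the three asserted properties are invariant under transport along it), so it suffices to build $\Psi\colon P\to\cC$ with those properties; I would construct $\Psi$ on $\Z[1/2]\cap[-1,0)$ and then restrict (dropping the endpoint $-1$). The construction attaches to each half-open dyadic interval that appears a datum $\bigl(\Z[1/2]\cap[\alpha,\beta),\,v,\,w,\,g\colon v\to w\bigr)$ with $v,w\in\cC^{\omega_1}$, $g$ compact and $v$ the prescribed value at $\alpha$; it starts from $\bigl(\Z[1/2]\cap[-1,0),\,0,\,x,\,0\to x\bigr)$, the zero map $0\to x$ being compact. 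Given such a datum I would apply the interpolation statement to $g$, getting $v=v_0\to v_1\to\cdots$ with $\colim_k v_k\cong w$; then, writing $\gamma_k=\beta-(\beta-\alpha)2^{-k}$ (so $\gamma_0=\alpha$ and $\gamma_k\uparrow\beta$), I would set $\Psi(\gamma_k)=v_k$ with $\Psi(\gamma_k)\to\Psi(\gamma_{k+1})$ the map $v_k\to v_{k+1}$, and recurse on each $\Z[1/2]\cap[\gamma_k,\gamma_{k+1})$ with the datum $\bigl(\cdot,\,v_k,\,v_{k+1},\,v_k\to v_{k+1}\bigr)$. Since $\Z[1/2]\cap[\alpha,\beta)$ is the ordered union of the $\Z[1/2]\cap[\gamma_k,\gamma_{k+1})$ and the last clause of the interpolation statement makes the pieces compatible, these choices assemble to a single functor on $\Z[1/2]\cap[\alpha,\beta)$, and since $\{\gamma_k\}_k$ is cofinal in it, its colimit is $\colim_k v_k\cong w$. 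Every dyadic number in $[-1,0)$ has a finite binary expansion, hence is assigned a value at a finite depth of the recursion, so $\Psi$ is everywhere defined.

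Finally I would verify the three properties for $\Psi|_P$. The colimit is $x$ because $\{-2^{-k}:k\geq1\}$ is cofinal in $P$ and the colimit over it is the colimit of (the tail of) the top-level interpolation sequence, namely $x$. For left-continuity at $a\in P$: such $a$ appears at finite depth as a point $\gamma_j$ with $j\geq1$ of some datum, and the points of $P$ below $a$ lying in $[\gamma_{j-1},\gamma_j)$ are cofinal among all points of $P$ below $a$, so $\colim_{b<a}\Psi(b)$ is the colimit of $\Psi$ over $\Z[1/2]\cap[\gamma_{j-1},\gamma_j)$, which by the previous paragraph is the target of that sub-datum, namely $v_j=\Psi(a)$. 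For compactness of a transition map $\Psi(a)\to\Psi(a')$ with $a<a'$: recursing repeatedly into the left-most sub-interval starting at $a$ produces points $p_m\downarrow a$ with each $\Psi(a)\to\Psi(p_m)$ a transition map of some interpolation sequence, hence compact; choosing $m$ with $p_m<a'$ and factoring $\Psi(a)\to\Psi(p_m)\to\Psi(a')$ makes $\Psi(a)\to\Psi(a')$ compact. Transporting $\Psi$ back along the chosen order isomorphism then gives $\Phi$. The one genuinely delicate point is guaranteeing left-continuity at \emph{every} rational at once; that is precisely what is handled by the cofinality of the $\gamma_k$'s inside each interval-datum, which is the reason the recursion is organized around half-open intervals rather than naive bisection.
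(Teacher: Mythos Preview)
Your argument is correct but more elaborate than the paper's. The paper also passes to the dyadics, but uses only Corollary~\ref{cor:decomposing_compact_morphisms}: starting from any sequence $0\to x_1\to x_2\to\cdots$ of compact maps with colimit $x$ (giving $\Phi'_0$ on $\Z_{\leq}$), it inductively builds $\Phi'_n$ on $(2^{-n}\Z)_{\leq}$ by inserting a single midpoint into each edge, and takes $\Phi'$ on $\Z[1/2]_{\leq}$ to be the union. The left-continuity condition, which you work hard to build directly into the recursion via your interpolation lemma and half-open-interval bookkeeping, is then obtained in one line by setting $\Phi(a):=\indlim[b<a]\Phi'(b)$; density of $\Z[1/2]$ makes all three properties immediate. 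Your approach has the virtue of making left-continuity visible at every stage of the construction, at the cost of needing the stronger interpolation lemma and of the (routine but not entirely free) verification that the recursive pieces glue to a genuine functor of $\infty$-categories. One small wrinkle: factoring $\tilde g$ through $\cY(w_{j_0})$ does not by itself make $h$ compact (you would need $\hat{\cY}(w_{j_0})$ for that), but this is harmless since the map $v_0\to v_1$ is compact anyway via the compact map $w_{j_0}\to w_{j_0+1}$.
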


\begin{proof}It is convenient to replace the poset $\Q_{\leq}$ with the isomorphic poset $\Z[1/2]_{\leq}.$

Choose a sequence $x_1\to x_2\to\dots$ in $\cC$ such that $\indlim[n] x_n\cong x$ and each map $x_n\to x_{n+1}$ is compact. Define the functor $\Phi'_0:\Z_{\leq}\to \cC$ by the formula $\Phi'_0(a)=\begin{cases}x_a & \text{ for }a\geq 1;\\
0 & \text{ for }a\leq 0.\end{cases}$ Since any compact morphism in $\cC$ is a composition of two compact morphisms, we can define inductively the functors $\Phi'_n:(\frac{1}{2^n}\Z)_{\leq}\to \cC$ such that $(\Phi'_{n+1})_{\mid \frac{1}{2^n}\Z}\cong \Phi'_n,$ and for any $a,b\in\frac{1}{2^n}\Z$ such that $a<b,$ the map $\Phi'_n(a)\to\Phi'_n(b)$ is compact. The sequence $(\Phi'_n)_{n\geq 1}$ defines a functor $\Phi':\Z[\frac{1}{2}]_{\leq}\to\cC.$

Finally, we define a functor $\Phi:\Z[\frac{1}{2}]_{\leq}\to\cC$ by the formula $\Phi(a)=\indlim[b<a]\Phi'(a).$ Then $\Phi$ satisfies the required properties (since $\Z[\frac12]_{\leq}$ is a dense linearly ordered set).\end{proof}

The following lemma will be useful.

\begin{lemma}\label{lem:largest_dualizable} Let $\cC$ be a dualizable pesentable stable category, and $\cD\subset\cC$ be a full stable subcategory, closed under coproducts. Then there exists the largest dualizable subcategory $\cE\subset\cD$ such that the inclusion functor $\cE\to\cC$ is strongly continuous.

Moreover, $\cE$ is generated by the objects of the form $\indlim[](F:\Q_{\leq}\to\cD),$ where $F$ is a functor such that for any rational numbers $a<b$ the map $F(a)\to F(b)$ is compact in $\cC.$\end{lemma}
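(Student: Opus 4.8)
The plan is to produce $\cE$ first as the top element of a complete lattice of subcategories, and then to identify it with the subcategory generated by the stated colimits by proving two inclusions. Note at the outset that $\cD$, being stable and closed under coproducts, is closed under all small colimits computed in $\cC$. Let $P$ be the collection of strictly full dualizable subcategories $\cC'\subseteq\cD$ for which the inclusion $\cC'\hookrightarrow\cC$ is strongly continuous; as in the Remark after Corollary~\ref{cor:images_in_Cat^dual}, each such $\cC'$ is generated by $\omega_1$-compact objects of $\cC$, so $P$ is a small set, partially ordered by inclusion. I claim $P$ is closed under arbitrary joins: given $\{\cC'_i\}_{i}\subseteq P$, the product $\prod_i\cC'_i$ is dualizable and the induced functor $\prod_i\cC'_i\to\cC$ is strongly continuous (Proposition~\ref{prop:colimits_in_Pr^LL}), so by Corollary~\ref{cor:images_in_Cat^dual} its image $\cE'$ (Definition~\ref{defi:image}) is dualizable and $\cE'\hookrightarrow\cC$ is strongly continuous, while $\cE'\subseteq\cD$ because $\cE'$ is the localizing subcategory generated by $\bigcup_i\cC'_i\subseteq\cD$ and $\cD$ is closed under colimits. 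Hence $\cE'\in P$ is the join $\bigvee_i\cC'_i$, and $\cE:=\bigvee P$ is the required largest element.

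Now let $\cE_0\subseteq\cC$ be the localizing subcategory generated by all objects of the form $\indlim(F\colon\Q_\leq\to\cD)$ with every transition map $F(a)\to F(b)$ compact in $\cC$; then $\cE_0\subseteq\cD$. To get $\cE\subseteq\cE_0$ it suffices to show $\cC'\subseteq\cE_0$ for each $\cC'\in P$. Given $x\in(\cC')^{\omega_1}$, Proposition~\ref{prop:functors_from_Q} applied to $\cC'$ produces $\Phi\colon\Q_\leq\to\cC'$ with $\indlim\Phi\cong x$ and all transition maps compact in $\cC'$; by Proposition~\ref{prop:compact_maps_in_dualizable_cats} and Corollary~\ref{cor:hat_Y_into_Ind_C_omega_1} any sequential colimit in a dualizable category with compact transition maps is $\omega_1$-compact, so each $\Phi(a)$ is $\omega_1$-compact in $\cC'$. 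Since $\cC'\hookrightarrow\cC$ is strongly continuous, Proposition~\ref{prop:strong_continuity_of_functor_via_compact_maps} shows it carries $\Phi$ to a functor $\Q_\leq\to\cD$ whose transition maps are compact in $\cC$; thus $x\cong\indlim\Phi$ is a generator of $\cE_0$. As $(\cC')^{\omega_1}$ generates $\cC'$ (Corollary~\ref{cor:dualizable_is_aleph_1_generated}), we get $\cC'\subseteq\cE_0$, hence $\cE\subseteq\cE_0$.

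For the reverse inclusion I would show $\cE_0$ is dualizable with strongly continuous inclusion into $\cC$, so that $\cE_0\in P$ and $\cE_0\subseteq\cE$. I verify criterion $(*)$ of Theorem~\ref{th:dualizability_via_compact_maps} for $\h\cE_0$. Fix a generator $x=\indlim(F\colon\Q_\leq\to\cD)$ with compact transitions; replacing $F$ by $a\mapsto\indlim_{b<a}F(b)$ (which lands in $\cD$, has the same colimit, is left-continuous, and — by the density argument in the proof of Proposition~\ref{prop:functors_from_Q} — still has compact transition maps) we may assume $F$ left-continuous. Then for each $a$ the poset $\{b\in\Q\mid b<a\}$ is countable, dense, and without endpoints, hence order-isomorphic to $\Q_\leq$, and $F$ restricted to it is a functor into $\cD$ with compact transition maps and colimit $F(a)$; therefore $F(a)$ is itself a generator of $\cE_0$, in particular $F(a)\in\cE_0$. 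Picking a cofinal sequence $a_1<a_2<\cdots$ in $\Q$, we get $x\simeq\hocolim_n F(a_n)$ where each $F(a_n)\in\cE_0$, each $F(a_n)$ is $\omega_1$-compact in $\cC$ (being such a sequential colimit) and therefore in $\cE_0$ (whose $\omega_1$-filtered colimits and mapping spectra agree with those of $\cC$), and each $F(a_n)\to F(a_{n+1})$ is compact in $\cC$ and hence in $\cE_0$. Since such $x$ generate $\h\cE_0$, criterion $(*)$ holds and $\cE_0$ is dualizable. Finally $\hat\cY_{\cE_0}(x)\simeq\inddlim_n\cY_{\cE_0}(F(a_n))$, while the ``Claim'' in the proof of Theorem~\ref{th:dualizability_via_compact_maps} gives $\hat\cY_\cC(x)\simeq\inddlim_n\cY_\cC(F(a_n))$ from compactness in $\cC$ of the maps $F(a_n)\to F(a_{n+1})$; thus the comparison natural transformation $\hat\cY_\cC\circ\iota\to\Ind(\iota)\circ\hat\cY_{\cE_0}$ for the inclusion $\iota\colon\cE_0\hookrightarrow\cC$ is an equivalence on a generating family and between colimit-preserving functors, hence an equivalence, so $\iota$ is strongly continuous (Proposition~\ref{prop:strong_continuity_via_hat_Y}). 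Combining with the previous step, $\cE_0=\cE$.

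The step I expect to be the real obstacle is the reduction to a left-continuous $F$ with compact transitions: it rests on the same mechanism that is only sketched in Proposition~\ref{prop:functors_from_Q}, and this is exactly where the density of $\Q_\leq$ is indispensable — a finite approximation of $F(a)$ would necessarily have identity, hence non-compact, transition maps in its tail, so one cannot avoid passing to honestly $\Q$-indexed presentations. Beyond that, the remaining care is bookkeeping: tracking which notion of compactness (in $\cC$ or in $\cE_0$) and which compactness cardinal is invoked at each stage, always using that $\cE_0$ is a localizing subcategory so that colimits and $\Hom$-spectra are inherited from $\cC$.
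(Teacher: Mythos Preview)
Your proof is correct and follows essentially the same route as the paper's. The only structural difference is that you begin with a separate lattice argument to establish the existence of a largest element $\cE$ before identifying it with $\cE_0$, whereas the paper proceeds directly: it shows $\cE'$ (your $\cE_0$) is dualizable with strongly continuous inclusion into $\cC$, then shows every $\cE''\in P$ is contained in $\cE'$, which already makes $\cE'$ the largest element without a prior existence step. Your lattice argument is thus redundant (though not wrong), and the remaining two inclusions match the paper's argument almost verbatim---same left-continuous replacement of $F$, same observation that $\Q_{<a}\cong\Q_\leq$ forces $F(a)\in\cE_0$, same identification $\hat{\cY}(\indlim F)\simeq\inddlim_n F(a_n)$ on both sides to get strong continuity.
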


\begin{proof}Denote by $\cE'\subset\cD$ the full subcategory generated (as a localizing subcategory) by the objects described in the second assertion. We first prove that $\cE'$ is dualizable and the inclusion $\cE'\to\cC$ is strongly continuous. 

Consider a functor $F:\Q_{\leq}\to\cD$ as above. We may and will assume that for each rational $a$ the map $\indlim[b<a]F(b)\to F(a)$ is an equivalence. Indeed, as in the proof of Proposition \ref{prop:functors_from_Q} we may replace $F$ by the functor $F':\Q_{\leq}\to\cD,$ given by $F'(a)=\indlim[b<a]F(b).$ 

Now, we observe that our assumptions on $F$ imply that each object $F(a)$ is contained in $\cE'$ (because the linearly ordered set $\Q_{<a}$ is isomorphic to $\Q_{\leq}$). It follows that the functor $\hat{\cY}:\cE'\to \Ind(\cE')$ exists and we have $\hat{\cY}(\indlim[](F))=\inddlim(F(0)\to F(1)\to\dots).$ Moreover, the inclusion $\cE'\to \cC$ commutes with $\hat{\cY},$ hence it is strongly continuous.

Now, let $\cE''\subset \cD$ be some full dualizable subcategory, such that the inclusion $\cE''\to\cC$ is strongly continuous. We claim that $\cE''$ is contained in $\cE'.$ To see this, take any countably presented object $x\in(\cE'')^{\omega_1}.$ Choose a functor $F:\Q_{\leq}\to \cE''$ such that each map $F(a)\to F(b)$ is compact in $\cE''$ (hence in $\cC$) and $\indlim[](F)\cong x.$ We see that $x\in\cE'$ as required.\end{proof}

We can now give a description of kernels in $\Cat_{\st}^{\dual}.$

\begin{prop}\label{prop:ker^dual} Let $F:\cC\to\cD$ be a strongly continuous functor between dualizable categories. Denote by $\ker(F)\subset\cC$ the usual kernel of $F$ (the fiber in $\Pr^L_{\st}$). Then the dualizable kernel of $F$ (i.e. the fiber in $\Cat_{\st}^{\dual}$) is given by the largest dualizable full subcategory $\cE\subset\ker(F)$ such that the inclusion $\cE\to\cC$ is strongly continuous. More precisely, $\cE$ can be described as in Lemma \ref{lem:largest_dualizable}.\end{prop}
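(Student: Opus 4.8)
The plan is to verify that the subcategory $\cE\subset\ker(F)$ constructed via Lemma \ref{lem:largest_dualizable} (applied to $\cD:=\ker(F)$, which is indeed full, stable, and closed under coproducts in $\cC$ since $F$ is continuous) satisfies the universal property of the fiber in $\Cat_{\st}^{\dual}$. By Lemma \ref{lem:largest_dualizable} we already know that $\cE$ is dualizable, that the inclusion $\iota:\cE\to\cC$ is strongly continuous, and that $F\circ\iota\simeq 0$ (since $\cE\subset\ker(F)$). So what remains is: for an arbitrary dualizable category $\cT$ and a strongly continuous functor $G:\cT\to\cC$ with $F\circ G\simeq 0$, I must show $G$ factors through $\iota$ via an essentially unique strongly continuous functor $\bar{G}:\cT\to\cE$.

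The first step is the factorization on the level of presentable categories: since $F\circ G\simeq 0$ and $\ker(F)\subset\cC$ is the fiber in $\Pr^L_{\st}$, the colimit-preserving functor $G$ factors uniquely as $\cT\xto{G_0}\ker(F)\hookrightarrow\cC$, with $G_0$ colimit-preserving. The key point to establish is that $G_0$ lands inside $\cE$. For this I would use the explicit description of $\cE$ from Lemma \ref{lem:largest_dualizable}: it suffices to check that $G_0$ sends every object of $\cT$ into $\cE$, and since $\cT$ is generated by colimits from its $\omega_1$-compact objects (Corollary \ref{cor:dualizable_is_aleph_1_generated}) and $\cE$ is a localizing subcategory, it is enough to treat $x\in\cT^{\omega_1}$. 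For such $x$, Proposition \ref{prop:functors_from_Q} gives a functor $\Phi:\Q_{\leq}\to\cT$ with $\indlim_{a}\Phi(a)\cong x$ and all transition maps $\Phi(a)\to\Phi(b)$ ($a<b$) compact in $\cT$; applying $G_0$ and using that $G$ (hence $G_0$, since the inclusion $\ker(F)\to\cC$ detects compactness appropriately via Proposition \ref{prop:strong_continuity_of_functor_via_compact_maps}) preserves compact morphisms, we get a functor $G_0\circ\Phi:\Q_{\leq}\to\ker(F)$ with all transition maps compact in $\cC$ and colimit $G_0(x)$. By the second assertion of Lemma \ref{lem:largest_dualizable}, such colimits generate $\cE$, so in particular $G_0(x)\in\cE$. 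Thus $G_0$ factors as $\cT\xto{\bar{G}}\cE\xto{\iota}\cC$ with $\bar{G}$ colimit-preserving.

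Next I would verify that $\bar{G}$ is strongly continuous. By Proposition \ref{prop:automatic_strcont} (or directly by Proposition \ref{prop:strong_continuity_of_functor_via_compact_maps}), it suffices to note that $\iota\circ\bar{G}=G$ is strongly continuous and $\iota$ is strongly continuous with $\iota^R\iota\simeq\id$; then $\bar{G}^R\simeq\iota^R G^R$ is a composite of continuous functors, hence continuous, so $\bar{G}$ is strongly continuous. Uniqueness of $\bar{G}$ (up to a contractible space of choices) follows from the uniqueness of the $\Pr^L_{\st}$-level factorization $G_0$ together with full faithfulness of $\iota$. Finally, one checks that this data assembles into the statement that the square $\cE\to\cC\to\cD$ with $\cE\to 0\to\cD$ is cartesian in $\Cat_{\st}^{\dual}$, i.e. $\cE$ is the fiber.

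The main obstacle I anticipate is the middle step — showing $G_0(\cT)\subseteq\cE$, i.e. that a strongly continuous functor out of a dualizable category automatically factors through the \emph{largest} strongly-continuously-embedded dualizable subcategory of its set-theoretic image. The delicate point is that "being in $\cE$" is characterized by the existence of the $\Q_{\leq}$-indexed presentations with compact transition maps \emph{computed in $\cC$}, not merely in $\ker(F)$; so one genuinely needs that $G$ (resp.\ $G_0$) transports compact morphisms of $\cT$ to compact morphisms of $\cC$, which is exactly the content of strong continuity via Proposition \ref{prop:strong_continuity_of_functor_via_compact_maps}. Once that compatibility is in hand, the rest is a formal manipulation with adjoints and the universal properties already recorded in the preceding subsections.
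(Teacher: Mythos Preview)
Your argument is correct, but it takes a more explicit route than the paper. The paper's proof is a single sentence: given a strongly continuous $G:\cT\to\cC$ with $F\circ G=0$, Corollary~\ref{cor:images_in_Cat^dual} says that $\im(G)$ is dualizable with a strongly continuous inclusion into $\cC$; since $\im(G)\subset\ker(F)$, the maximality clause of Lemma~\ref{lem:largest_dualizable} forces $\im(G)\subset\cE$, and the factorization follows. You instead bypass Corollary~\ref{cor:images_in_Cat^dual} and work object by object, using Proposition~\ref{prop:functors_from_Q} to present each $x\in\cT^{\omega_1}$ as a $\Q_{\leq}$-colimit with compact transition maps, transporting these through $G$ via Proposition~\ref{prop:strong_continuity_of_functor_via_compact_maps}, and then invoking the explicit generating description of $\cE$ in Lemma~\ref{lem:largest_dualizable}. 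The paper's approach is shorter because it reuses a packaged result already proved; your approach is more self-contained and makes the role of ``strong continuity $=$ preservation of compact morphisms'' visible right at this point, at the cost of essentially reproving a special case of Corollary~\ref{cor:images_in_Cat^dual} inline. Both land in the same place; neither has a gap.
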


\begin{proof}Let $\cT$ be a dualizable category, and $G:\cT\to\cC$ a functor such that $F\circ G=0.$ We want to show that the image of $G$ is contained in $\cE.$ But Corollary \ref{cor:images_in_Cat^dual} implies that the category $\im(G)$ (as in Definition \ref{defi:image}) is dualizable and the inclusion $\im(G)\to\cC$ is strongly continuous. Since $\im(G)\subset\ker(F),$ we conclude that $\im(G)\subset\cE$ as required.\end{proof}

We denote by $\ker^{\dual}(F)$ the category described in Proposition \ref{prop:ker^dual}.

\begin{example}Let $R$ be a discrete valuation ring with the maximal ideal $\m\subset R,$ and with the field of fractions $K.$ Then $\ker^{\dual}(D(R)\to D(R/\m))=0.$ Indeed, the usual kernel is given by $\ker(D(R)\to D(R/\m))=D(K),$ but the inclusion functor $D(K)\to D(R)$ is not strongly continuous. The only localizing subcategory of $D(K)$ which is not equal to $D(K)$ is zero.\end{example}

\begin{cor}\label{cor:seq_colimits_finite_limits} Sequential colimits in $\Cat_{\st}^{\dual}$ do not commute with finite limits. In particular, the category $\Cat_{\st}^{\dual}$ is not compactly assembled.\end{cor}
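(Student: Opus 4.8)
The plan is to reduce the statement to producing a single sequential colimit in $\Cat_{\st}^{\dual}$ that fails to commute with one finite limit, and I will take that finite limit to be the dualizable kernel $\ker^{\dual}(-)$ (the fiber in $\Cat_{\st}^{\dual}$) of a single strongly continuous functor, since $\ker^{\dual}$ is a finite limit functor $\Fun([1],\Cat_{\st}^{\dual})\to\Cat_{\st}^{\dual}$. Granting this, the "in particular'' clause is formal: by the remark following Proposition \ref{prop:AB6_criterion} a presentable $\infty$-category is compactly assembled iff it satisfies (AB6) and the strong form of (AB5), i.e.\ filtered colimits commute with finite limits; $\Cat_{\st}^{\dual}$ is presentable and satisfies (AB6) by Proposition \ref{prop:AB6_for_Cat^dual}, and since sequential colimits are filtered, a failure of commutation rules out strong (AB5), hence compact assembledness.

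For the example I would fix a field $\mk$ and set $V_n=\mk[[t^{1/2^n}]]$ for $n\geq 0$, with $V_n\subset V_{n+1}$ via $t^{1/2^n}=(t^{1/2^{n+1}})^2$; each $V_n$ is a (complete) discrete valuation ring with residue field $\mk$. Let $V_\infty=\indlim[n] V_n=\bigcup_n V_n$: this is a rank-$1$ valuation ring with value group $\Z[1/2]$, hence non-discrete, its maximal ideal satisfies $\m_\infty^2=\m_\infty$, and its residue field is again $\mk$. Base change induces strongly continuous functors $D(V_n)\to D(V_{n+1})$ with $\indlim[n]^{\cont}D(V_n)\simeq D(V_\infty)$, and the reduction functors $r_n\colon D(V_n)\to D(\mk)$, $M\mapsto M\otimes^{\bL}_{V_n}\mk$, are strongly continuous (base change along $V_n\to\mk$); since all residue fields equal $\mk$, the $r_n$ assemble into a morphism from the diagram $(D(V_n))_n$ to the constant diagram $D(\mk)$, whose colimit in $\Fun([1],\Cat_{\st}^{\dual})$ is $r_\infty\colon D(V_\infty)\to D(\mk)$, $M\mapsto M\otimes^{\bL}_{V_\infty}\mk$.

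Next I would compute both sides. At each finite stage $V_n$ is a discrete valuation ring, so as in the Example following Proposition \ref{prop:ker^dual} one has $\ker^{\dual}(r_n)=0$, whence $\indlim[n]\ker^{\dual}(r_n)=0$. The key point is $\ker^{\dual}(r_\infty)\neq 0$: here $\m_\infty$ is flat (torsion-free over a valuation ring) with $\m_\infty\otimes^{\bL}_{V_\infty}\mk=\m_\infty/\m_\infty^2=0$, so $\m_\infty\in\ker(r_\infty)$, and for any $\mu\in\m_\infty$ with $v(\mu)>0$ the map $\m_\infty\xto{\cdot\mu}\m_\infty$ factors as $\m_\infty\hookrightarrow V_\infty\xto{\cdot\mu}\m_\infty$, hence is a compact morphism in the compactly generated category $D(V_\infty)$. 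Applying Proposition \ref{prop:ker^dual} together with Lemma \ref{lem:largest_dualizable} to the sequence $\m_\infty\xto{\cdot\mu_1}\m_\infty\xto{\cdot\mu_2}\cdots$ (with $v(\mu_i)>0$), which has all terms in $\ker(r_\infty)$ and all transition maps compact in $D(V_\infty)$, its colimit $\m_\infty\otimes_{V_\infty}\operatorname{Frac}(V_\infty)\simeq\operatorname{Frac}(V_\infty)$ — nonzero, as inverting a single element of $\m_\infty$ already yields the fraction field — lies in $\ker^{\dual}(r_\infty)$. Therefore $0=\indlim[n]\ker^{\dual}(r_n)\neq\ker^{\dual}(r_\infty)=\ker^{\dual}(\indlim[n] r_n)$, which is the required failure, and the displayed comparison map is $0\to\ker^{\dual}(r_\infty)$, not an equivalence.

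The step I expect to demand the most care is this last one, i.e.\ checking $\ker^{\dual}(r_\infty)\neq 0$: one must verify that the sequence $\m_\infty\xto{\cdot\mu_i}\m_\infty$ genuinely lands in $\ker(r_\infty)$, that its transition maps are compact in $D(V_\infty)$, and that its colimit is nonzero — each of which uses the two special features $\m_\infty^2=\m_\infty$ and flatness of $\m_\infty$ that are precisely what fails for the discrete $V_n$ (where the analogous $\m_n$ is a free rank-$1$ module not contained in $\ker(r_n)$, consistently with $\ker^{\dual}(r_n)=0$). A secondary point to spell out is the standard fact that $\indlim[n]^{\cont}D(V_n)\simeq D(\indlim[n]V_n)=D(V_\infty)$ and that the $r_n$ are compatible with the transition functors, so that $\indlim[n] r_n=r_\infty$ really holds in $\Fun([1],\Cat_{\st}^{\dual})$.
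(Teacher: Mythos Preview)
Your proposal is correct and follows essentially the same approach as the paper: build a tower of DVRs whose union is a non-discrete rank-$1$ valuation ring, so that $\ker^{\dual}$ vanishes at each finite stage (by the DVR example after Proposition~\ref{prop:ker^dual}) but is nonzero at the colimit. The paper uses the cyclotomic tower $R_n=\Z_p(\zeta_{p^n})$ and identifies $\ker^{\dual}(D(R)\to D(\FF_p))$ with $D(\Mod_a\hy R)$ via the almost-mathematics example, whereas you use the equal-characteristic tower $\mk[[t^{1/2^n}]]$ and exhibit a nonzero object of $\ker^{\dual}(r_\infty)$ by hand; one small point to make explicit is that Lemma~\ref{lem:largest_dualizable} is stated for $\Q_{\leq}$-indexed diagrams, so your $\N$-indexed sequence $\m_\infty\xto{\cdot\mu}\m_\infty\to\cdots$ should be refined using $\m_\infty^2=\m_\infty$ (write $\mu=\mu'\mu''$ with $\mu',\mu''\in\m_\infty$) so that all intermediate objects remain $\m_\infty\in\ker(r_\infty)$.
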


\begin{proof}Take some prime number $p$ and denote by $R_n=\Z_p(\zeta_{p^n})$ the ring of integers of the cyclotomic extension $\Q_p(\zeta_{p^n})\supset \Q_p.$ Then $\ker^{\dual}(D(R_n)\to D(\FF_p))=0.$ On the other hand, the colimit $R=\indlim[n]R_n$ is a non-discrete valuation ring, hence $$\ker^{\dual}(\indlim[n]^{\cont}D(R_n)\to D(\FF_p))\simeq\ker^{\dual}(D(R)\to D(\FF_p))\simeq D(\Mod_a\hy R)\ne 0.$$
Hence, sequential colimits in $\Cat_{\st}^{\dual}$ do not commute with kernels. This implies that $\Cat_{\st}^{\dual}$ is not compactly assembled.\end{proof}

We have the following general statement about fiber products in $\Cat_{\st}^{\dual}.$

\begin{prop}\label{prop:nice_pullbacks} Consider a pair of strongly continuous functors between dualizable categories $F:\cA\to\cC,$ $G:\cB\to\cC.$ 

1) The fiber product $\cA\times^{\dual}_{\cC}\cB$ exists in $\Cat_{\st}^{\dual},$ and the natural functor $\cA\times^{\dual}_{\cC}\cB\to \cA\times_{\cC}\cB$ is fully faithful and strongly continuous. More precisely, the category $\cA\times^{\dual}_{\cC}\cB$ is the largest dualizable subcategory $\cE\subset \cA\times_{\cC}\cB,$ such that the functors $\cE\to\cA$ and $\cE\to\cB$ are strongly continuous.

2) If moreover $G$ is a localization, then we have $\cA\times^{\dual}_{\cC}\cB\simeq \cA\times_{\cC}\cB,$ and the functor $\cA\times^{\dual}_{\cC}\cB\to\cA$ is a localization.\end{prop}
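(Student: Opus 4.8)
The plan is to deduce both parts from the description of kernels in $\Cat_{\st}^{\dual}$ (Proposition \ref{prop:ker^dual}) together with the description of the semi-orthogonal gluing.

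For part 1), I would first construct the candidate $\cE$ and check the universal property. Consider the lax fiber product $\cA\oright_{G^R F^R\,?}$ — more precisely, form $\cA\oright_{H}\cB$ where $H = F^R\circ G: \cB\to\cA$ (note $G$ is strongly continuous, so $F^R$ is continuous and hence $H$ is continuous; by Proposition \ref{prop:gluing_of_dualizable} this gluing is dualizable with strongly continuous inclusions $i_1:\cA\to\cA\oright_H\cB$, $i_2:\cB\to\cA\oright_H\cB$). Now the usual fiber product $\cA\times_\cC\cB$ in $\Pr^L_{\st}$ is the kernel of a strongly continuous functor $\Psi:\cA\oright_H\cB\to\cC$ (the functor sending $(x,y,\varphi)$ to $\Cone$ or $\Fiber$ of the comparison map $F(x)\to G(y)$ built from $\varphi$), by the standard identification of a fiber product with a kernel of a map out of the lax fiber product. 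Then I define $\cA\times^{\dual}_\cC\cB := \ker^{\dual}(\Psi)$, which by Proposition \ref{prop:ker^dual} is the largest dualizable full subcategory $\cE\subset\cA\times_\cC\cB$ with strongly continuous inclusion into $\cA\oright_H\cB$; since $i_1,i_2$ are strongly continuous, the inclusion into $\cA\oright_H\cB$ being strongly continuous is equivalent to both projections $\cE\to\cA$ and $\cE\to\cB$ being strongly continuous. For the universal property: given a dualizable $\cT$ with strongly continuous $P:\cT\to\cA$, $Q:\cT\to\cB$ and a compatibility datum, we get a strongly continuous functor $\cT\to\cA\oright_H\cB$ landing in $\ker(\Psi)$, and by Corollary \ref{cor:images_in_Cat^dual} its image is dualizable with strongly continuous inclusion, hence contained in $\cE$; this gives the required factorization and it is unique since $\cE\to\cA\times_\cC\cB$ is fully faithful.

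For part 2), assume $G$ is a localization, i.e. $G^R$ is fully faithful. The key point is to show that in this case $\cA\times_\cC\cB$ is already dualizable with both projections strongly continuous, so no passage to a subcategory is needed. The standard fact is that when $G$ is a localization, the projection $\pi_\cA:\cA\times_\cC\cB\to\cA$ is also a localization: it identifies $\cA\times_\cC\cB$ with $\cA$ modulo the copy of $\ker(G)$ sitting inside $\cA$ via $F^R$ restricted appropriately — concretely, $\pi_\cA$ has fully faithful right adjoint given by $x\mapsto (x, G^R F(x), \mathrm{unit})$. Since $\cA$ is dualizable and $\pi_\cA$ is a continuous localization, Proposition \ref{prop:Lurie_criterions_dual}\ref{Lur4} (or Proposition \ref{prop:ses_Pr^LL}) shows $\cA\times_\cC\cB$ is dualizable. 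It remains to check $\pi_\cA$ is strongly continuous, i.e. its right adjoint commutes with colimits: this follows because $G^R$ is continuous (as $G$ is strongly continuous) and $F$ is continuous, so $x\mapsto(x,G^RF(x),-)$ is continuous. Similarly $\pi_\cB$ is strongly continuous: its right adjoint is built from $F^R$ which is continuous. Hence $\cA\times_\cC\cB$ itself satisfies the defining property of the largest such $\cE$, so $\cA\times^{\dual}_\cC\cB\simeq\cA\times_\cC\cB$, and we have just shown $\pi_\cA$ is a localization.

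The main obstacle I anticipate is the bookkeeping in part 1) around realizing $\cA\times_\cC\cB$ as the kernel of an honest strongly continuous functor out of a dualizable category (the gluing $\cA\oright_H\cB$): one must be careful that the functor $\Psi$ is genuinely strongly continuous and that its $\Pr^L_{\st}$-kernel is the classical fiber product, and then invoke Proposition \ref{prop:ker^dual} correctly. The equivalence ``inclusion into the gluing is strongly continuous $\iff$ both projections are strongly continuous'' is exactly Proposition \ref{prop:automatic_strcont} / the semi-orthogonal decomposition criterion, so that step is routine once the setup is right. Part 2) is essentially soft once one recalls the standard description of the right adjoint to $\pi_\cA$.
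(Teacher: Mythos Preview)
Your approach to part 1) is essentially the same as the paper's: form the lax pullback $\cA\oright_{F^RG}\cB$, identify the ordinary fiber product with $\ker(\Psi)$ for a strongly continuous functor $\Psi$ to $\cC$, and then take $\ker^{\dual}(\Psi)$ via Proposition~\ref{prop:ker^dual}. The paper cites Proposition~\ref{prop:functors_in_and_out_SOD} for the step that a functor into the gluing is strongly continuous iff both semi-orthogonal projections are, which is what you flag as the routine step.

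Part 2), however, has a genuine gap. You write that ``since $\cA$ is dualizable and $\pi_\cA$ is a continuous localization, Proposition~\ref{prop:Lurie_criterions_dual}\ref{Lur4} (or Proposition~\ref{prop:ses_Pr^LL}) shows $\cA\times_\cC\cB$ is dualizable.'' Neither result gives this. Condition~\ref{Lur4} characterizes dualizability of the \emph{target} of a continuous localization (it produces a continuous section), not of the source; and Proposition~\ref{prop:ses_Pr^LL} says that in a short exact sequence in $\Pr_{\st}^{LL}$ dualizability of the \emph{middle} term forces the outer terms to be dualizable, not the other way around. Knowing that $\pi_\cA$ is a strongly continuous localization with dualizable target $\cA$ and dualizable kernel $\ker(G)$ does \emph{not} yield dualizability of $\cA\times_\cC\cB$: Section~\ref{sec:extensions_of_comp_gen} is devoted precisely to showing that extensions of dualizable (even compactly generated) categories can fail to be dualizable. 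Your justification that $\pi_\cB$ is strongly continuous (``its right adjoint is built from $F^R$'') is also too vague to stand on its own.

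The fix is to return to the setup of part 1) and show directly that $\Psi:\cA\oright_{F^RG}\cB\to\cC$ is a localization when $G$ is. This is immediate: the right adjoint is $\Psi^R(c)=(0,G^R(c),0)$, and $\Psi\Psi^R(c)\cong GG^R(c)\cong c$. Then one applies Proposition~\ref{prop:ses_Pr^LL} to the short exact sequence $0\to\ker(\Psi)\to\cA\oright_{F^RG}\cB\to\cC\to 0$ with dualizable middle term, concluding that $\cA\times_\cC\cB=\ker(\Psi)$ is dualizable and hence coincides with $\ker^{\dual}(\Psi)$. This is exactly the paper's argument; your description of $\pi_\cA^R$ and the fact that pullbacks of localizations are localizations are correct and give the final clause of the statement.
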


\begin{proof}1) By Proposition \ref{prop:gluing_of_dualizable} the lax fiber product $\cA\oright_{F^R G}\cB$ is dualizable. It follows from Proposition \ref{prop:functors_in_and_out_SOD} that $\cA\times^{\dual}_{\cC}\cB\simeq\ker^{\dual}(\Phi:\cA\oright_{F^R G}\cB\to \cC),$ where $\Phi(x,y,\varphi)=\Cone(F(x)\to G(y)).$ The usual fiber product (taken in $\Pr^L_{\st}$) is simply $\ker(\Phi).$ The rest follows from Proposition \ref{prop:ker^dual}.

2) It is well-known that in $\Pr^L_{\st}$ a pullback of a localization is a localization. To show that $\cA\times_{\cC}\cB$ is also the fiber product in $\Cat_{\st}^{\dual},$ it suffices to show that the functor $\Phi:\cA\oright_{F^R G}\cB\to \cC$ is a localization (assuming that $G$ is a localization). This is standard. Explicitly, the right adjoint $\Phi^R$ is given by $\Phi^R(x)=(0,G^R(x),0),$ and $\Phi\circ\Phi^R=\id.$\end{proof}

We will need the following statement about the class of situations when the fiber product of compactly generated categories is automatically compactly generated.

\begin{prop}\label{prop:fiber_product_hom_epi} Let $F:\cA\to\cB$ be a homological epimorphism of small stable idempotent-complete categories. Then we have equivalences
$$\Ind(\cA\times_{\cB}\cA)\simeq \Ind(\cA)\times_{\Ind(\cB)}^{\dual}\Ind(\cA)\simeq \Ind(\cA)\times_{\Ind(\cB)}\Ind(\cA).$$\end{prop}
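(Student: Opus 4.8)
The plan is to reduce everything to the case of compactly generated categories, where Propositions~\ref{prop:nice_pullbacks} and~\ref{prop:fiber_product_hom_epi}-type reasoning can be made explicit, and then use the ``automatic'' statements about images and strong continuity from Subsection~\ref{ssec:autom_dualizable}. First I would note that since $F$ is a homological epimorphism, the induced functor $\Ind(F)\colon\Ind(\cA)\to\Ind(\cB)$ is a quotient functor (its right adjoint is fully faithful), so by Proposition~\ref{prop:nice_pullbacks}(2) applied to $G=\Ind(F)$ we already get that the fiber product $\Ind(\cA)\times_{\Ind(\cB)}\Ind(\cA)$ taken in $\Pr^L_{\st}$ agrees with the one taken in $\Cat_{\st}^{\dual}$, i.e.\ the second equivalence $\Ind(\cA)\times_{\Ind(\cB)}^{\dual}\Ind(\cA)\simeq\Ind(\cA)\times_{\Ind(\cB)}\Ind(\cA)$ holds, and moreover both projections to $\Ind(\cA)$ are localizations (hence strongly continuous). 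So the real content is the first equivalence: that this fiber product is compactly generated, with compact objects $\cA\times_\cB\cA$.

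The main step is to produce a strongly continuous functor $\Ind(\cA\times_\cB\cA)\to\Ind(\cA)\times_{\Ind(\cB)}\Ind(\cA)$ and check it is an equivalence. There is a natural exact functor $\cA\times_\cB\cA\to\Ind(\cA)\times_{\Ind(\cB)}\Ind(\cA)$ landing in compact objects (a pair $(a,a',\varphi)$ with $a,a'\in\cA$ and $\varphi$ an identification of their images in $\cB$ maps to the corresponding pair of representable ind-objects together with the induced identification in $\Ind(\cB)$; this is compact because each $\cY(a)$ is compact in $\Ind(\cA)$ and $\Ind(F)\cY(a)\cong\cY(F(a))$). By the universal property of Ind-completion this extends to a colimit-preserving functor $\Psi\colon\Ind(\cA\times_\cB\cA)\to\Ind(\cA)\times_{\Ind(\cB)}\Ind(\cA)$. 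Since the target is compactly generated and $\Psi$ preserves compact objects, $\Psi$ is automatically strongly continuous. To see $\Psi$ is an equivalence it suffices to show it is fully faithful on the generators $\cA\times_\cB\cA$ and that these generate the target under colimits.

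For fully faithfulness on generators, I would compute $\Hom$ in the fiber product as a pullback of mapping spectra: for $(a_1,a_1',\varphi_1),(a_2,a_2',\varphi_2)$, the mapping spectrum in $\Ind(\cA)\times_{\Ind(\cB)}\Ind(\cA)$ is $\Hom_{\Ind(\cA)}(\cY a_1,\cY a_2)\times_{\Hom_{\Ind(\cB)}(\cY Fa_1,\cY Fa_2)}\Hom_{\Ind(\cA)}(\cY a_1',\cY a_2')$, which is $\Hom_\cA(a_1,a_2)\times_{\Hom_\cB(Fa_1,Fa_2)}\Hom_\cA(a_1',a_2')$ since the Yoneda embeddings are fully faithful; this is exactly $\Hom_{\cA\times_\cB\cA}((a_1,a_1',\varphi_1),(a_2,a_2',\varphi_2))$. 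For generation: an arbitrary object of $\Ind(\cA)\times_{\Ind(\cB)}\Ind(\cA)$ is a pair $(X,X',\psi)$ with $X,X'\in\Ind(\cA)$; writing $X=\colim\cY a_i$ and $X'=\colim\cY a_j'$ as filtered colimits of compacts, the compatibility datum $\psi$ lives in $\Hom_{\Ind(\cB)}(\Ind(F)X,\Ind(F)X')$, and one approximates $\psi$ by compatible maps on the finite stages — here one uses that $\Ind(F)$ preserves filtered colimits and compacts, so $\Hom_{\Ind(\cB)}(\Ind(F)X,\Ind(F)X')\cong\prolim_i\colim_j\Hom_\cB(Fa_i,Fa_j')$ — exhibiting $(X,X',\psi)$ as a filtered colimit of objects in the image of $\cA\times_\cB\cA$. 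The main obstacle I anticipate is precisely this last approximation argument: making sure the homotopy-coherent gluing data can be rectified to an honest filtered diagram in $\cA\times_\cB\cA$. I expect this to follow from the fact that the fiber product of presentable categories commutes with the relevant colimits together with the compactness of the generators, but it is the one place where care is needed; alternatively one can avoid it by combining Proposition~\ref{prop:nice_pullbacks}(1) (which identifies the dualizable fiber product with a kernel inside a semi-orthogonal gluing $\Ind(\cA)\oright\Ind(\cA)=\Ind(\cA\oright_F\cA)$) with Proposition~\ref{prop:compact_objects_in_quotients} and Neeman's theorem, reducing the statement to the identification of compact objects in a quotient of a compactly generated category, which is already available in the excerpt.
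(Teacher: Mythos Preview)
Your reduction to Proposition~\ref{prop:nice_pullbacks}(2) for the second equivalence is exactly what the paper does, and your fully faithfulness computation for $\Psi$ on generators is correct. The gap is in the generation step, which you correctly flag but do not close.

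The obstruction is real: given $(X,X',\psi)$ with $X=\indlim a_i$, $X'=\indlim a_j'$, the isomorphism $\psi$ only gives you a compatible system of \emph{maps} $F(a_i)\to F(a_{j(i)}')$, not isomorphisms, so you do not land in $\cA\times_{\cB}\cA$ at finite stages. The paper sidesteps this entirely. It observes that the first projection $\Ind(\cA)\times_{\Ind(\cB)}\Ind(\cA)\to\Ind(\cA)$ is a localization whose kernel consists of objects of the form $(x,0)$ with $x\in\ker(\Ind(F))$; since the diagonal $\cA\hookrightarrow\cA\times_{\cB}\cA$ already surjects onto the compacts of the quotient $\Ind(\cA)$, it suffices to show each $(x,0)$ lies in the localizing subcategory generated by the diagonal. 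Here the key input is Proposition~\ref{prop:generating_the_kernel}: one may write $x=\inddlim(x_1\xto{f_1}x_2\xto{f_2}\cdots)$ with each $F(f_n)=0$ in $\cB$. Then the pair $(f_n,0)\colon(x_n,x_n)\to(x_{n+1},x_{n+1})$ is a legitimate morphism in $\cA\times_{\cB}\cA$ (precisely because $F(f_n)=F(0)=0$), and its colimit is $(x,0)$. This trick --- using the vanishing $F(f_n)=0$ to build an off-diagonal limit out of diagonal objects --- is the missing idea in your argument.

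Your proposed alternative does not work as stated: the identification $\Ind(\cA)\oright\Ind(\cA)=\Ind(\cA\oright_F\cA)$ is not correct, because the gluing functor in Proposition~\ref{prop:nice_pullbacks}(1) is $\Ind(F)^R\circ\Ind(F)$, which need not preserve compact objects (indeed $\Ind(F)^R(b)$ is typically not compact for $b\in\cB$). Moreover, Proposition~\ref{prop:compact_objects_in_quotients} controls compacts in a \emph{quotient}, whereas here you need to control compacts in a \emph{kernel}, and kernels of strongly continuous functors between compactly generated categories are in general only dualizable, not compactly generated --- that is precisely the phenomenon the paper is working around.
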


\begin{proof}By Proposition \ref{prop:nice_pullbacks} we only need to show that the category $\Ind(\cA)\times_{\Ind(\cB)}\Ind(\cA)$ is compactly generated. It suffices to show that the objects of the form $(x,0),$ $x\in\ker(\Ind(F)),$ are generated by colimits by the essential image of the diagonal functor $\cA\to\cA\times_{\cB}\cA.$

By Proposition \ref{prop:generating_the_kernel}, we may assume that $x$ is of the form $\inddlim[](x_1\xto{f_1} x_2\xto{f_2}\dots),$ where $x_n\in\cA$ and $F(f_n):F(x_n)\to F(x_{n+1})$ is a zero map in $\cB.$ Then we have
$$(x,0)\cong \indlim((x_1,x_1)\xto{g_1} (x_2,x_2)\xto{g_2}\dots)\quad\text{in }\Ind(\cA)\times_{\Ind(\cB)}\Ind(\cA),$$
where the maps $g_n:(x_n,x_n)\to (x_{n+1},x_{n+1})$ are chosen (non-uniquely) in such a way that they induce the pairs of maps $(f_n:x_n\to x_{n+1},\,0:x_n\to x_{n+1}).$ This proves the proposition.\end{proof}

\subsection{General limits of dualizable categories}
\label{ssec:general_limits_Cat^dual}

To describe the general limits in $\Cat_{\st}^{\dual},$ we will use the following non-standard adjunction statement.

\begin{prop}\label{prop:nonstandard_adjunction} Let $\kappa$ be an uncountable regular cardinal. The (non-full) subcategory inclusion functor $\Cat_{\st}^{\dual}\to \Pr^L_{\st,\kappa}$ has a right adjoint, given by $\cC\mapsto \Ind(\cC^{\kappa}).$\end{prop}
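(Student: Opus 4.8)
The plan is to exhibit $\cC\mapsto\Ind(\cC^{\kappa})$ as right adjoint to the inclusion $\iota\colon\Cat_{\st}^{\dual}\to\Pr^L_{\st,\kappa}$ by constructing a counit and checking the universal property directly on mapping spaces. First I would observe that $\Ind(\cC^{\kappa})$ is compactly generated, hence dualizable, so the assignment lands in $\Cat_{\st}^{\dual}$; and that the colimit functor $\colim_\kappa\colon\Ind(\cC^{\kappa})\to\cC$ is the candidate counit $\veps_{\cC}$. It is colimit-preserving and it preserves $\kappa$-compact objects (it sends $\cY(x)$ with $x\in\cC^{\kappa}$ to $x$), so it is a $1$-morphism in $\Pr^L_{\st,\kappa}$, as required.

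The core claim to verify is that for every dualizable $\cD$ and every $\cC\in\Pr^L_{\st,\kappa}$, composition with $\veps_{\cC}$ induces an equivalence
\begin{equation}\label{eq:nonstd_adj}
\Hom_{\Cat_{\st}^{\dual}}(\cD,\Ind(\cC^{\kappa}))\xto{\ \sim\ }\Hom_{\Pr^L_{\st,\kappa}}(\cD,\cC).
\end{equation}
The point is that a strongly continuous functor $\cD\to\Ind(\cC^{\kappa})$ is the same data as a colimit-preserving functor whose right adjoint is continuous, i.e.\ which preserves $\kappa$-compact objects (since $\Ind(\cC^{\kappa})$ is compactly generated, continuity of the right adjoint is equivalent to $\kappa$-continuity is equivalent to preservation of $\omega$-compact objects by the left adjoint). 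On the other side, $\Hom_{\Pr^L_{\st,\kappa}}(\cD,\cC)\simeq\Fun^{\rex_\kappa}(\cD^{\kappa},\cC^{\kappa})$, the space of functors between the $\kappa$-compact subcategories preserving $\kappa$-small colimits. To match these I would use that $\cD$ is $\omega_1$-compactly generated (Corollary \ref{cor:dualizable_is_aleph_1_generated}), so (since $\kappa\geq\omega_1$) $\cD$ is $\kappa$-compactly generated and $\cD\simeq\Ind_\kappa(\cD^{\kappa})$; a $\kappa$-strongly continuous functor $\cD\to\Ind(\cC^{\kappa})$ is determined by its restriction $\cD^{\kappa}\to\Ind(\cC^{\kappa})^{\kappa}=\cC^{\kappa}$, a $\kappa$-small-colimit-preserving functor, and conversely any such restriction extends (via $\Ind_\kappa$) to a colimit-preserving functor $\cD\to\Ind(\cC^{\kappa})$ whose right adjoint is automatically continuous because the generators $\cY(c)$, $c\in\cC^{\kappa}$, are compact and their preimages under the right adjoint are computed by $\Hom_{\cD}$ into a $\kappa$-compact object. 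Composing the extended functor with $\veps_{\cC}=\colim_\kappa$ recovers the original functor $\cD^{\kappa}\to\cC^{\kappa}$ extended by $\kappa$-filtered colimits, i.e.\ the given element of $\Hom_{\Pr^L_{\st,\kappa}}(\cD,\cC)$; this gives the inverse to \eqref{eq:nonstd_adj}.

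I would then check naturality in $\cC$ (routine, since all constructions are functorial in $\cC^{\kappa}$ and $\Ind(-)$) and in $\cD$, and spell out the unit $\eta_{\cD}\colon\cD\to\Ind(\cD^{\kappa})$ as the functor $\hat{\cY}$ from Corollary \ref{cor:dualizable_is_aleph_1_generated} / Corollary \ref{cor:hat_Y_into_Ind_C_omega_1}, noting that $\veps_{\iota\cD}\circ\eta_{\cD}=\colim\circ\hat{\cY}\cong\id_{\cD}$ and that the other triangle identity holds because on $\kappa$-compact generators of $\Ind(\cC^{\kappa})$ the relevant composite is the identity. The main obstacle I anticipate is the verification that the extension-by-$\kappa$-filtered-colimits of a $\kappa$-rex functor $\cD^{\kappa}\to\cC^{\kappa}$ into $\Ind(\cC^{\kappa})$ has a \emph{continuous} right adjoint — one must argue carefully that, although $\cD^{\kappa}\to\cC^{\kappa}\to\Ind(\cC^{\kappa})^{\kappa}$ only preserves $\kappa$-small colimits a priori, the induced functor $\Ind_\kappa(\cD^{\kappa})\to\Ind(\cC^{\kappa})$ preserves \emph{all} compact objects, equivalently its right adjoint preserves filtered colimits; this is where the compact generation of $\Ind(\cC^{\kappa})$ and the identification of compacts in $\cD$ and in $\Ind_\kappa$-categories is really used, and it is the step I would write out in full detail.
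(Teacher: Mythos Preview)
Your final paragraph already contains the entire proof, and it is exactly the paper's approach: exhibit $\eta_{\cD}=\hat{\cY}_{\cD}$ as unit, $\veps_{\cC}=\colim$ as counit, and verify the two triangle identities $\colim\circ\hat{\cY}_{\cD}\cong\id_{\cD}$ and $\Ind(\cD^{\kappa})\xto{\Ind(\cY)}\Ind(\Ind(\cD^{\kappa})^{\kappa})\xto{\colim}\Ind(\cD^{\kappa})$ is the identity. The preceding attempt to verify the mapping-space equivalence directly is unnecessary and contains errors you should discard rather than repair.

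Specifically: the identification $\Ind(\cC^{\kappa})^{\kappa}=\cC^{\kappa}$ is false; what holds is $\Ind(\cC^{\kappa})^{\omega}=\cC^{\kappa}$, while $\Ind(\cC^{\kappa})^{\kappa}$ is strictly larger. Consequently your proposed inverse, extending a $\kappa$-right-exact functor $G\colon\cD^{\kappa}\to\cC^{\kappa}$ via $\Ind_{\kappa}$, lands in $\Ind_{\kappa}(\cC^{\kappa})\simeq\cC$, not in $\Ind(\cC^{\kappa})$; if you instead compose with the Yoneda embedding $\cC^{\kappa}\hookrightarrow\Ind(\cC^{\kappa})$ and then $\Ind_{\kappa}$-extend, the resulting functor $\cD\to\Ind(\cC^{\kappa})$ factors through the \emph{right} adjoint $\cY\colon\cC\to\Ind(\cC^{\kappa})$ of $\colim$, which is not strongly continuous. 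Relatedly, your parenthetical claim that continuity of the right adjoint is equivalent to $\kappa$-continuity is false in the direction you need. This is exactly the obstacle you flag at the end, and it is genuine: there is no way around it without invoking dualizability of $\cD$. The correct inverse is the one dictated by the unit, sending $F\colon\cD\to\cC$ to $\Ind(F^{\kappa})\circ\hat{\cY}_{\cD}\colon\cD\to\Ind(\cD^{\kappa})\to\Ind(\cC^{\kappa})$, which is strongly continuous because both factors are; but once you have written down $\hat{\cY}$ as the unit, the triangle identities finish the proof and the explicit inverse is redundant.
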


\begin{proof}We construct the adjunction unit and counit as follows. 

Let $\cC$ be a dualizable stable category. Then the adjunction unit on $\cC$ is given by the functor $\hat{\cY}:\cC\to \Ind(\cC^{\kappa})$ (we know that this functor is strongly continuous).

Let $\cD$ be an $\kappa$-presentable category. Then the adjunction counit is given by the colimit functor $\colim:\Ind(\cD^{\kappa})\to \cD.$ Note that this functor is $\kappa$-strongly continuous.

If $\cC$ is dualizable, then the composition $\cC\xto{\hat{\cY}} \Ind(\cC^{\kappa})\xto{\colim} \cC$ is isomorphic to the identity functor. If $\cD$ is $\kappa$-presentable, then the composition $$\Ind(\cD^{\kappa})\xto{\Ind(\cY)} \Ind(\Ind(\cD^{\kappa})^{\kappa})\xto{\colim} \Ind(\cD^{\kappa})$$ is also isomorphic to the identity. This proves the adjunction.
\end{proof}

\begin{remark}Given a dualizable category $\cC$ and a $\kappa$-presentable category $\cD$ (where $\kappa$ is uncountable), a $\kappa$-strongly continuous functor $F:\cC\to\cD$ corresponds via the above adjunction to the composition $\cC\xto{\hat{\cY}}Ind(\cC^{\kappa})\xto{\Ind(F^{\kappa})} \Ind(\cD^{\kappa}).$\end{remark}

\begin{theo}\label{th:limit_of_dualizable} Let $I$ be an $\infty$-category, and $\Phi:I\to\Cat_{\st}^{\dual}$ be a functor, $i\mapsto \cC_i.$ Then the limit of $\Phi$ exists, and for any uncountable regular cardinal $\kappa$ we have an equivalence
$$\prolim[i]^{\dual}\cC_i=\ker^{\dual}(\Ind(\prolim[i]\cC_i^{\kappa})\to \Ind(\prolim[i] \Calk_{\kappa}^{\cont}(\cC_i))).$$\end{theo}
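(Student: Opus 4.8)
\emph{Proof sketch.} Fix an uncountable regular cardinal $\kappa$. For $i\in I$ write $\cA_i:=\cC_i^{\kappa}$ and $\cB_i:=\Calk_{\kappa}^{\cont}(\cC_i)$, objects of $\Cat^{\perf}$ depending functorially on $i$ (the first because strongly continuous functors preserve $\kappa$-compact objects, the second because $\Calk_{\kappa}^{\cont}$ is a functor on $\Cat_{\st}^{\dual}$), together with the natural transformation $\cA_i\to\cB_i$ extracted from the defining short exact sequences $0\to\cC_i\xto{\hat{\cY}}\Ind(\cA_i)\to\Ind(\cB_i)\to 0$ in $\Cat_{\st}^{\dual}$. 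Since $\Cat^{\perf}$ is complete, the limits $\cA:=\prolim[i]\cA_i$ and $\cB:=\prolim[i]\cB_i$ exist there, yielding an exact functor $\cA\to\cB$ and hence a strongly continuous functor $\Ind(\cA)\to\Ind(\cB)$ of compactly generated categories; I set $\cE:=\ker^{\dual}(\Ind(\cA)\to\Ind(\cB))$, a dualizable category by Proposition~\ref{prop:ker^dual}. The plan is to show that $\cE$, with a canonical cone, is the limit of $\Phi$; this establishes both existence and the asserted formula. The cone $\cE\to\cC_i$ is built by noting that the composite $\cE\hookrightarrow\Ind(\cA)\to\Ind(\cA_i)$ is strongly continuous and, by commutativity of the evident squares, its composition with $\Ind(\cA_i)\to\Ind(\cB_i)$ equals $\cE\hookrightarrow\Ind(\cA)\to\Ind(\cB)\to\Ind(\cB_i)$, which vanishes on $\cE$; by the universal property of $\ker^{\dual}$ (Proposition~\ref{prop:ker^dual}) it therefore factors uniquely through $\cC_i=\ker^{\dual}(\Ind(\cA_i)\to\Ind(\cB_i))$, compatibly in $i$.

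The engine of the argument is the non-standard adjunction of Proposition~\ref{prop:nonstandard_adjunction}, whose right adjoint sends $\cD\in\Pr^L_{\st,\kappa}$ to $\Ind(\cD^{\kappa})$. I use it to show that for any diagram $(\cD_i)$ in $\Cat^{\perf}$ \emph{whose terms carry $\kappa$-small colimits and whose transition functors preserve them}, one has $\Hom_{\Cat_{\st}^{\dual}}(\cT,\Ind(\prolim[i]\cD_i))\simeq\prolim[i]\Hom_{\Cat_{\st}^{\dual}}(\cT,\Ind(\cD_i))$ for every dualizable $\cT$, equivalently $\Ind(\prolim[i]\cD_i)\simeq\prolim[i]^{\dual}\Ind(\cD_i)$. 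Indeed, such a $\cD_i$ equals $\cE_i^{\kappa}$ with $\cE_i:=\Ind_{\kappa}(\cD_i)\in\Pr^L_{\st,\kappa}$, so $\Ind(\cD_i)=\Ind(\cE_i^{\kappa})$ is the value of the right adjoint on $\cE_i$, while $\prolim[i]\cD_i=(\prolim[i]^{\Pr^L_{\st,\kappa}}\cE_i)^{\kappa}$ because $\cC\mapsto\cC^{\kappa}$ is an equivalence of $\Pr^L_{\st,\kappa}$ with small idempotent-complete categories carrying $\kappa$-small colimits and hence preserves limits; the adjunction then gives
\[
\Hom_{\Cat_{\st}^{\dual}}(\cT,\Ind(\prolim[i]\cD_i))\simeq\Hom_{\Pr^L_{\st,\kappa}}\bigl(\cT,\prolim[i]^{\Pr^L_{\st,\kappa}}\cE_i\bigr)\simeq\prolim[i]\Hom_{\Pr^L_{\st,\kappa}}(\cT,\cE_i)\simeq\prolim[i]\Hom_{\Cat_{\st}^{\dual}}(\cT,\Ind(\cD_i)).
\]
Applied to $\cD_i=\cA_i=\cC_i^{\kappa}$ (which does carry $\kappa$-small colimits, being the $\kappa$-compact objects of $\cC_i$), this identifies the source term of the ambient sequence: $\Ind(\cA)\simeq\prolim[i]^{\dual}\Ind(\cC_i^{\kappa})$.

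It remains to check that the cone $(\cE\to\cC_i)$ is limiting. Since $\ker^{\dual}$ is the fiber in $\Cat_{\st}^{\dual}$ and mapping spaces preserve fibers, $\Hom_{\Cat_{\st}^{\dual}}(\cT,\cE)=\Fiber\bigl(\Hom(\cT,\Ind\cA)\to\Hom(\cT,\Ind\cB)\bigr)$, while applying $\prolim[i]$ to the fiber sequences coming from the defining short exact sequences gives $\prolim[i]\Hom_{\Cat_{\st}^{\dual}}(\cT,\cC_i)=\Fiber\bigl(\prolim[i]\Hom(\cT,\Ind\cA_i)\to\prolim[i]\Hom(\cT,\Ind\cB_i)\bigr)$. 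By the previous paragraph the comparison of the sources is an equivalence, so the matter reduces to showing that the resulting square is cartesian, equivalently that the base maps agree on fibers over points in the image. This is the technical heart of the proof and I expect it to be the main obstacle: the Calkin categories $\cB_i=\Calk_{\kappa}^{\cont}(\cC_i)$ do \emph{not} carry $\kappa$-small colimits, so the clean argument of the second paragraph fails for them and in general $\Ind(\prolim[i]\cB_i)$ is not equivalent to $\prolim[i]^{\dual}\Ind(\cB_i)$. The plan here is to use exactness of $\Calk_{\kappa}^{\cont}$ (Proposition~\ref{prop:exactness_of_Calk}), Krause's description of $\omega_1$-compact objects of kernels (Proposition~\ref{prop:generating_the_kernel}), the description of morphisms in Calkin categories (Proposition~\ref{prop:Homs_in_Calk}), and the explicit form of the dualizable kernel (Lemma~\ref{lem:largest_dualizable}) as generated by sequential colimits along compact maps: one reduces to showing that an $\omega_1$-compact object $\indlim[n]e_n$ of the kernel of $\Ind(\cA)\to\prolim[i]^{\dual}\Ind(\cB_i)$ — where each $e_n\to e_{n+1}$ is a compact morphism, hence, by Proposition~\ref{prop:strong_continuity_of_functor_via_compact_maps}, has compact image in every $\Ind(\cB_i)$ and in $\cC_i$ — already lies in the kernel of $\Ind(\cA)\to\Ind(\prolim[i]\cB_i)$, which amounts to upgrading the family of indices witnessing vanishing in each $\cC_i$ to a single uniform one along the sequence. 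Feeding this through Proposition~\ref{prop:ker^dual} shows that the two dualizable kernels coincide, whence $\cE\simeq\prolim[i]^{\dual}\cC_i$; independence of $\kappa$ is then automatic, both sides being characterized by the same universal property.
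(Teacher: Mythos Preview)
Your setup is correct, and you have located the obstacle precisely: the adjunction of Proposition~\ref{prop:nonstandard_adjunction} handles $\Ind(\prolim[i]\cC_i^{\kappa})$ because $\cC_i^{\kappa}$ carries $\kappa$-small colimits, but fails for the Calkin side. However, the resolution you propose---an object-by-object chase with compact morphisms, ``upgrading a family of indices to a uniform one''---is far more complicated than needed, is not obviously correct for a general indexing $\infty$-category $I$, and moreover invokes $\prolim[i]^{\dual}\Ind(\cB_i)$ before you have established that dualizable limits exist.

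The paper avoids all of this with one observation you are missing. First it decouples existence from the formula: since a strongly continuous $\cT\to\Ind(\cC_i^{\kappa})$ factors through $\cC_i$ iff its composite to $\Ind(\cB_i)$ vanishes, it follows formally from $\Ind(\cA)\simeq\prolim[i]^{\dual}\Ind(\cC_i^{\kappa})$ that $\prolim[i]^{\dual}\cC_i$ exists and equals the largest dualizable subcategory of $\ker\bigl(\Ind(\cA)\to\prod_i\Ind(\cB_i)\bigr)$ with strongly continuous inclusion. Existence now being available for \emph{all} diagrams, one may form $\prolim[i]^{\dual}\Ind(\cB_i)$ and note that, limits commuting with limits,
\[
\prolim[i]^{\dual}\cC_i \;\simeq\; \ker^{\dual}\!\bigl(\Ind(\cA)\to\prolim[i]^{\dual}\Ind(\cB_i)\bigr).
\]
The key step is then the general fact that for any diagram $(\cD_i)$ in $\Cat_{\st}^{\dual}$ the canonical functor $\Ind(\prolim[i]\cD_i^{\omega})\to\prolim[i]^{\dual}\cD_i$ is a strongly continuous \emph{fully faithful} embedding. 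Applied to $\cD_i=\Ind(\cB_i)$, one gets a factorization
\[
\Ind(\cA)\;\longrightarrow\;\Ind(\prolim[i]\cB_i)\;\lhook\joinrel\longrightarrow\;\prolim[i]^{\dual}\Ind(\cB_i),
\]
and since the second arrow is fully faithful the two $\ker^{\dual}$'s agree. In your mapping-space language: $\Hom(\cT,\Ind\cB)\to\prolim[i]\Hom(\cT,\Ind\cB_i)$ is a monomorphism of spaces for every dualizable $\cT$, so the two fibers you were trying to compare coincide automatically---no bare-hands argument with Propositions~\ref{prop:generating_the_kernel}, \ref{prop:Homs_in_Calk} or Lemma~\ref{lem:largest_dualizable} is required.
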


\begin{proof}By Proposition \ref{prop:nonstandard_adjunction} we see that the limit $\prolim[i]^{\dual}\Ind(\cC_i^{\kappa})$ exists and is given by $\Ind(\prolim[i]\cC_i^{\kappa}).$ 
Let $\cD$ be a dualizable category. Note that a strongly continuous functor $\cD\to \Ind(\cC_i^{\kappa})$ corresponds to a strongly continuous functor $\cD\to\cC_i$ if and only if the composition $\cD\to \Ind(\cC_i^{\kappa})\to \Ind(\Calk_{\kappa}^{\cont}(\cC_i))$ is zero. It follows formally that $\prolim[i]^{\dual}\cC_i$ exists and it is the largest dualizable subcategory $$\cE\subset\ker(\Ind(\prolim[i]\cC_i^{\kappa})\to\prodd[i] \Ind(\Calk_{\kappa}^{\cont}(\cC_i)))$$ such that the inclusion functor $\cE\to \Ind(\prolim[i]\cC_i^{\kappa})$ is strongly continuous.

Note that for any functor $I\to\Cat_{\st}^{\dual},$ $i\mapsto\cD_i,$ we have a strongly continuous fully faithful embedding $\Ind(\prolim[i] \cD_i^{\omega})\to \prolim[i]^{\dual}\cD_i.$ To conclude, we simply observe that $\prolim[i]^{\dual}\cC_i$ is equivalent to $$\ker^{\dual}(\prolim[i]^{\dual}\Ind(\cC_i^{\kappa})\to \prolim[i]^{\dual}\Ind(\Calk_{\kappa}^{\cont}(\cC_i))),$$ and we have a factorization
$$\prolim[i]^{\dual}\Ind(\cC_i^{\kappa})\simeq \Ind(\prolim[i]\cC_i^{\kappa})\to \Ind(\prolim[i] \Calk_{\kappa}^{\cont}(\cC_i))\hto \prolim[i]^{\dual}\Ind(\Calk_{\kappa}^{\cont}(\cC_i)),$$
where the latter functor is fully faithful.

This proves the theorem.
\end{proof}

\subsection{Example: fpqc descent}
\label{ssec:fpqc_descent}

To give a feeling of dualizable inverse limits, we prove two statements about descent for functors with values in $\Cat_{\st}^{\dual}.$ We denote by $\Ring_{\bE_{\infty}}$ the $\infty$-category of $\bE_{\infty}$-ring spectra (without connectivity assumptions). Recall that the descendable topology on $\Ring_{\bE_{\infty}}^{op}$ is given by the pretopology in which the covers are the maps $R\to S$ such that $S$ is a descendable $R$-algebra, see \cite[Definition 3.18]{Mat}. Namely, this means that the unit object $R$ is contained in the stable idempotent-complete ideal of $\Mod\hy R$ generated by $S.$ 

\begin{prop}\label{prop:descendable_descent} The functor $\Ring_{\bE_{\infty}}\to \Cat_{\st}^{\dual},$ $R\mapsto \Mod\hy R,$ is a sheaf in the descendable topology.
\end{prop}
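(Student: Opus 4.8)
The plan is to reduce the statement to a descent question for modules, using the structure of $\Cat_{\st}^{\dual}$ as a (non-full) subcategory of $\Pr^L_{\st,\kappa}$ together with the description of limits in $\Cat_{\st}^{\dual}$ from Theorem \ref{th:limit_of_dualizable}. First I would recall that for a descendable cover $R\to S$, the classical result of Mathew \cite{Mat} (or Lurie's $\infty$-categorical version) already gives that $R\mapsto \Mod\hy R$ is a sheaf with values in $\Pr^L_{\st}$ (equivalently, in $\Cat^{\perf}$ via $R\mapsto \Perf(R)$, since $\Perf(R)$ is the subcategory of compact objects): the \v{C}ech nerve of $R\to S$ exhibits $\Mod\hy R$ as the limit $\prolim_{[n]\in\Delta}\Mod\hy S^{\otimes_R(n+1)}$ in $\Pr^L_{\st,\omega}\simeq\Cat^{\perf}$ (after passing to compact objects). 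So the content is entirely about comparing this limit, taken in the category of small idempotent-complete stable categories (equivalently in $\Pr^L_{\st,\omega}$), with the limit taken in $\Cat_{\st}^{\dual}$.

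The key step is this: by Theorem \ref{th:limit_of_dualizable}, for any uncountable regular $\kappa$ we have
$$\prolim[n]^{\dual}\Mod\hy S^{\otimes_R(n+1)}=\ker^{\dual}\Bigl(\Ind\bigl(\prolim[n]\Perf_\kappa(S^{\otimes_R(n+1)})\bigr)\to \Ind\bigl(\prolim[n]\Calk_\kappa^{\cont}(\Mod\hy S^{\otimes_R(n+1)})\bigr)\Bigr),$$
where $\Perf_\kappa(-)$ denotes $(\Mod\hy-)^\kappa$. Since each $\Mod\hy S^{\otimes_R(n+1)}$ is compactly generated, $\Calk_\kappa^{\cont}(\Mod\hy S^{\otimes_R(n+1)})\simeq\Calk_\kappa(\Perf(S^{\otimes_R(n+1)}))$ by Proposition \ref{prop:Calk_of_cg}. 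I would then argue that descendability of $R\to S$ propagates through the cosimplicial diagram so that the cosimplicial object $[n]\mapsto\Mod\hy S^{\otimes_R(n+1)}$ is "uniformly" descendable in a quantitative sense (finite descendability index), which forces the limit $\prolim_{[n]}\Mod\hy S^{\otimes_R(n+1)}$ in $\Pr^L_{\st}$ to already be $\omega$-compactly generated, with compact objects $\Perf(R)$; this is essentially the statement that the limit is computed by a totalization with finite Tot-dimension, so it commutes with filtered colimits and preserves compact generation. Concretely, one shows the natural functor $\Mod\hy R\to\prolim_{[n]}^{\cont}\Mod\hy S^{\otimes_R(n+1)}$ (limit in $\Pr^L_{\st}$) is an equivalence, it is strongly continuous on each term, and the limit is compactly generated — hence dualizable — so by the universal property it is also the limit in $\Cat_{\st}^{\dual}$. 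The point is that when the $\Pr^L_{\st}$-limit is already compactly generated and the structure maps are strongly continuous, $\ker^{\dual}$ agrees with $\ker$ and the dualizable limit coincides with the naive one.

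More precisely, here is the order I would carry this out. (1) Recall Mathew's theorem: for descendable $R\to S$, $\Mod\hy R\xto{\sim}\prolim_{[n]\in\Delta}\Mod\hy S^{\otimes_R(n+1)}$ in $\Pr^L_{\st}$, and the transition/structure functors are base-change functors, hence colimit-preserving; moreover their right adjoints (restriction of scalars along $R\to S^{\otimes_R(n+1)}$) are colimit-preserving because $S^{\otimes_R(n+1)}$ is again descendable over $R$ — so in fact all these functors are strongly continuous, i.e. the diagram lands in $\Pr^{LL}_{\st}$ and even in $\Cat_{\st}^{\dual}$. (2) Observe $\Mod\hy R$ is compactly generated, hence dualizable. (3) Invoke the universal property: since the $\Pr^L_{\st}$-limit of a diagram in $\Cat_{\st}^{\dual}$ that happens to land in $\Cat_{\st}^{\dual}$ and receive a strongly continuous comparison from a dualizable category computing it is automatically the $\Cat_{\st}^{\dual}$-limit — this follows because $\Cat_{\st}^{\dual}\hookrightarrow\Pr^L_{\st}$, while not a right adjoint, does reflect those limits which happen to be computed by dualizable objects with strongly continuous projection maps (one checks the mapping-space criterion directly: for $\cD$ dualizable, $\Fun^{LL}(\cD,\Mod\hy R)\simeq\prolim_{[n]}\Fun^{LL}(\cD,\Mod\hy S^{\otimes_R(n+1)})$ since strong continuity of a functor into a limit can be checked on the projections, by Proposition \ref{prop:automatic_strcont} applied to the jointly conservative family of projections, using that the base-change functors generate). (4) Conclude that $R\mapsto\Mod\hy R$ satisfies descent in $\Cat_{\st}^{\dual}$ for the descendable topology.

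The main obstacle I expect is step (3) — verifying that the $\Pr^L_{\st}$-limit really does coincide with the $\Cat_{\st}^{\dual}$-limit, i.e. that one does not lose anything by passing to the (generally much smaller) $\ker^{\dual}$ in Theorem \ref{th:limit_of_dualizable}. The cleanest route is probably to avoid Theorem \ref{th:limit_of_dualizable} altogether and argue directly via mapping spaces: show that for every dualizable $\cD$, the canonical map $\Fun^{LL}(\cD,\Mod\hy R)\to\prolim_{[n]}\Fun^{LL}(\cD,\Mod\hy S^{\otimes_R(n+1)})$ is an equivalence. The underlying equivalence of spaces of colimit-preserving functors is Mathew's descent (tensored/enriched appropriately, using $\Mod\hy R$ as the unit and $\cD\otimes(-)$), and the remaining point is that a colimit-preserving $F:\cD\to\Mod\hy R$ is strongly continuous iff each composite $F_n:\cD\to\Mod\hy S^{\otimes_R(n+1)}$ is — the "only if" is trivial, and the "if" follows from Proposition \ref{prop:automatic_strcont} once one knows the restriction-of-scalars functors $\Mod\hy S^{\otimes_R(n+1)}\to\Mod\hy R$ are jointly conservative with continuous... wait, that's the wrong variance; instead one uses that $F^R$ is continuous iff its compositions with the (conservative, continuous) family of base-change right adjoints are, and these compositions are the $F_n^R$ up to the descent identification. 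Making this last identification precise and checking the conservativity/continuity hypotheses of Proposition \ref{prop:automatic_strcont} in the cosimplicial (rather than discrete indexing) setting is where the real care is needed.
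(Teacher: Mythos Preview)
Your overall strategy---reduce to Mathew's descent in $\Pr^L_{\st}$ and then check that strong continuity of a functor $F:\cD\to\Mod\hy R$ is detected by the projections $p_n^*\circ F$---is sound, but the justification you give for the crucial ``if'' direction is wrong. The general principle you invoke in step (3), that a $\Pr^L_{\st}$-limit which happens to be dualizable with strongly continuous projections is automatically the $\Cat_{\st}^{\dual}$-limit, is \emph{false}. Counterexample: for an infinite set $I$, the category $\prod_{i\in I}\Sp$ (product in $\Pr^L_{\st}$) is compactly generated with $(\prod_i\Sp)^{\omega}\simeq\bigoplus_i\Sp^{\omega}$, and the projections are strongly continuous; but the diagonal $\Delta:\Sp\to\prod_i\Sp$ has each $p_i\circ\Delta=\id$ strongly continuous while $\Delta^R((X_i)_i)=\prod_i X_i$ is not continuous. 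So $\prod_i\Sp\neq\prod_i^{\dual}\Sp$. Your attempted fix via Proposition \ref{prop:automatic_strcont} fails for exactly this reason: precomposing $F^R$ with the continuous, jointly-generating family $p_{n,*}$ does not detect continuity of $F^R$.

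What rescues the argument is the full force of descendability, which you allude to (``finite descendability index'') but do not use. Since $R$ lies in the thick subcategory of $\Mod\hy R$ generated by $S$, the identity $\id_{\Mod\hy R}$ lies in the thick subcategory of $\Fun^L(\Mod\hy R,\Mod\hy R)$ generated by $p_{0,*}p_0^*=S\tens{R}(-)$. Postcomposing with $F^R$, we get that $F^R$ lies in the thick subcategory of accessible functors $\Mod\hy R\to\cD$ generated by $F^R p_{0,*}p_0^*=F_0^R p_0^*$, which is continuous; since continuous functors form a thick subcategory, $F^R$ is continuous. This one line replaces your step (3) and completes your argument.

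The paper takes a genuinely different route: it applies Theorem \ref{th:limit_of_dualizable} with $\kappa=\omega_1$ to write $\Tot^{\dual}(\Mod\hy S^{\otimes\bullet+1})$ as a $\ker^{\dual}$, uses Lemma \ref{lem:detects_kappa_compactness} to identify $\Tot((\Mod\hy S^{\otimes\bullet+1})^{\omega_1})\simeq(\Mod\hy R)^{\omega_1}$, and then proves that $\Calk_{\omega_1}(R)\to\Tot(\Calk_{\omega_1}(S^{\otimes\bullet+1}))$ is fully faithful by an explicit computation of Calkin $\Hom$'s together with the pro-constancy of $(\Tot_n S^{\otimes\bullet+1})_n$. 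Your approach (once fixed) is shorter and avoids Calkin categories entirely; the paper's approach is chosen deliberately to illustrate the general limit formula, and its explicit Calkin computation is what gets reused in the fpqc case (Proposition \ref{prop:fpqc_descent}), where one passes to an $\omega_1$-filtered colimit of descendable covers.
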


This can be deduced from \cite[Corollary 4.85]{Ram24b}, but we would like to give an explicit proof using the description of the limit from Theorem \ref{th:limit_of_dualizable}. We need the following lemma.

\begin{lemma}\label{lem:detects_kappa_compactness} Let $F:\cC\to\cD$ be a strongly continuous functor between presentable stable categories, and denote by $F^R$ its right adjoint. Suppose that there exists $n>0$ such that each object of $\cC$ is a direct summand of an object with a finite filtration with at most $n$ non-zero subquotients, which are contained in $F^R(\cD).$ Then for any regular cardinal $\kappa$ the functor $F$ preserves and reflects $\kappa$-compact objects.\end{lemma}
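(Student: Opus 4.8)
\textbf{Proof proposal for Lemma \ref{lem:detects_kappa_compactness}.}

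The plan is to prove the two directions separately, using the standard characterization of $\kappa$-compact objects in terms of commutation of $\Map$ with $\kappa$-filtered colimits. First I would recall that $F$ has a twice right adjoint $F^{RR}$ (strong continuity means $F^R$ is itself colimit-preserving, hence has a right adjoint), and that $F^R$ commutes with all colimits and in particular with $\kappa$-filtered ones. The easy direction is that $F$ preserves $\kappa$-compact objects: this is automatic for any strongly continuous functor, since if $x \in \cC^\kappa$ then $\Map_\cD(F(x), -) \cong \Map_\cC(x, F^R(-))$ and $F^R$ commutes with $\kappa$-filtered colimits, so the composite does too. (In fact this does not use the filtration hypothesis at all.)

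For the reflection statement, suppose $x \in \cC$ with $F(x) \in \cD^\kappa$; we must show $x \in \cC^\kappa$. The first step is to reduce, using the hypothesis, to understanding objects of the form $F^R(d)$ for $d \in \cD$. The point is that $\Map_\cC(F^R(d), -)$ need not commute with $\kappa$-filtered colimits for arbitrary $d$, but it does when $d$ is $\kappa$-compact: indeed $F^R$ has a right adjoint $F^{RR}$, and if the \emph{twice} right adjoint $F^{RR}$ commutes with $\kappa$-filtered colimits then $F^R$ preserves $\kappa$-compactness, so $F^R(\cD^\kappa) \subset \cC^\kappa$. So the real content is: first observe that since $F$ preserves $\kappa$-compact objects (easy direction), $F^R$ preserves $\kappa$-filtered colimits, and moreover one checks $F^R(\cD^\kappa) \subseteq \cC^\kappa$ — this holds because for $d \in \cD^\kappa$ and a $\kappa$-filtered colimit $\colim_j c_j$ in $\cC$ one has $\Map_\cC(F^R(d), \colim_j c_j) \cong \Map_\cD(d, F(\colim_j c_j)) \cong \Map_\cD(d, \colim_j F(c_j)) \cong \colim_j \Map_\cD(d, F(c_j)) \cong \colim_j \Map_\cC(F^R(d), c_j)$, where the middle step uses continuity of $F$ and the third uses $\kappa$-compactness of $d$. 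Wait — this shows $F^R(d)$ is $\kappa$-compact whenever $d$ is, which is exactly what we want, but note it requires only that $F$ be continuous, not the filtration hypothesis.

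Now I would run the induction on the length $n$ of the filtration. The hypothesis says every object $x \in \cC$ is a retract of an object $\tilde{x}$ admitting a finite filtration $0 = \tilde{x}_0 \to \tilde{x}_1 \to \cdots \to \tilde{x}_n = \tilde{x}$ with each cofiber $\tilde{x}_i / \tilde{x}_{i-1}$ in $F^R(\cD)$. Since retracts of $\kappa$-compact objects are $\kappa$-compact, and since $\kappa$-compact objects are closed under finite colimits (cofibers), it would suffice to know each subquotient is $\kappa$-compact — but that is false in general, since $\cD$ itself may have non-$\kappa$-compact objects. So the filtration hypothesis alone does not suffice; the genuine input must be the \emph{assumption} $F(x) \in \cD^\kappa$. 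Here is where I expect the main obstacle: one must propagate the $\kappa$-compactness of $F(x)$ back down the filtration of $\tilde x$ (or of $x$ itself, after replacing $x$ by $\tilde x$, using that $x$ is a retract of $\tilde x$ and $F(x)$ a retract of $F(\tilde x)$, so $F(\tilde x)$ is also $\kappa$-compact). For $n = 1$: $\tilde x = F^R(d)$ with $F(\tilde x) = F F^R(d) \in \cD^\kappa$; one uses a retraction or the unit/counit to conclude $d$, hence $\tilde x = F^R(d)$, is $\kappa$-compact — this needs care and is the crux. For general $n$, I would argue by downward induction: given the filtration, apply $F$ to get a filtration of $F(\tilde x)$ with subquotients $F(\tilde x_i/\tilde x_{i-1}) = F F^R(d_i)$; the top subquotient $F F^R(d_n)$ is a colimit built from $F(\tilde x)$ (which is $\kappa$-compact) and $F(\tilde x_{n-1})$, and then a Noetherian induction combined with the $n=1$ analysis, plus the fact that $F^R F F^R \cong F^R$ via the counit (since $F^R$ is fully faithful onto its image precisely when $F$ is a ... — actually one should not assume this), lets one conclude each $d_i \in \cD^\kappa$, hence each subquotient $F^R(d_i) \in \cC^\kappa$, hence $\tilde x \in \cC^\kappa$, hence $x \in \cC^\kappa$. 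The main technical obstacle is handling the $n=1$ base case cleanly — extracting $\kappa$-compactness of $d$ from that of $F F^R(d)$ — and I would expect the author to do this by writing $d$ as a retract of $F F^R(d)$ when $F$ has a section, or more robustly via a direct $\Map$-computation testing against $\kappa$-filtered colimits in $\cD$ and pulling back along $F^R$.
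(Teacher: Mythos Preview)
Your proposal contains a genuine error and misses the key idea of the paper's proof.

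First, the error: the isomorphism $\Map_\cC(F^R(d), \colim_j c_j) \cong \Map_\cD(d, F(\colim_j c_j))$ is false. The adjunction $F \dashv F^R$ gives $\Map_\cD(F(c), d) \cong \Map_\cC(c, F^R(d))$, not the other way around. To compute $\Map_\cC(F^R(d), -)$ via an adjunction you would need the \emph{right} adjoint $F^{RR}$ of $F^R$, giving $\Map_\cC(F^R(d), c) \cong \Map_\cD(d, F^{RR}(c))$; but nothing guarantees $F^{RR}$ commutes with $\kappa$-filtered colimits, so $F^R(\cD^\kappa) \subset \cC^\kappa$ is simply not true in general. Consequently the entire induction-on-filtration strategy collapses: the subquotients $F^R(d_i)$ of $\tilde x$ have no reason to be $\kappa$-compact, and you correctly sense this (``that is false in general''), but the proposed fix via $F F^R(d) \in \cD^\kappa$ runs into the same wrong-direction problem.

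The paper's argument is structurally different: it does \emph{not} decompose $x$. Instead it tests $\kappa$-compactness of $x$ by mapping into coproducts $\bigoplus_{i\in I} z_i$ in $\cC$. The filtration hypothesis is applied to the \emph{targets} $z_i$, not to $x$: since every object of $\cC$ is a retract of something with a length-$\leq n$ filtration by objects of $F^R(\cD)$, the same holds componentwise for $(z_i)_i \in \prod_I \cC$, with the same bound $n$. Thus $\prod_I \cC$ is generated as a stable idempotent-complete subcategory by $\prod_I F^R(\cD)$, so it suffices to check the coproduct condition when each $z_i = F^R(y_i)$. For such coproducts one computes directly
\[
\Hom\bigl(x, \bigoplus_i F^R(y_i)\bigr) \cong \Hom\bigl(x, F^R(\bigoplus_i y_i)\bigr) \cong \Hom\bigl(F(x), \bigoplus_i y_i\bigr),
\]
using that $F^R$ commutes with coproducts; now $\kappa$-compactness of $F(x)$ finishes the job. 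The uniform bound $n$ is essential precisely so that the generation statement survives passage to the infinite product $\prod_I \cC$.
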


\begin{proof}Since $F^R$ is continuous, $F$ takes $\cC^{\kappa}$ to $\cD^{\kappa}.$ 

If $x\in\cC$ is an object such that $F(x)$ is $\kappa$-compact, then for any collection of objects $(y_i\in\cD)_{i\in I}$ we have
\begin{multline*}\Hom(x,\bigoplus\limits_i F^R(y_i))\cong \Hom(x,F^R(\bigoplus\limits_i y_i))\cong \Hom(F(x),\bigoplus\limits_i y_i)\cong \\\indlim[\substack{J\subset I,\\ |J|<\kappa}]\Hom(F(x),\bigoplus\limits_{j\in J} y_j)\cong 
\indlim[\substack{J\subset I,\\ |J|<\kappa}]\Hom(x,F^R(\bigoplus\limits_{j\in J} y_j))\cong \indlim[\substack{J\subset I,\\ |J|<\kappa}]\Hom(x,\bigoplus\limits_{j\in J} F^R(y_j)).\end{multline*}

The assumption on $F$ can be reformulated as follows: for any infinite set $I$ the category $\prod_I\cC$ is generated by the full subcategory $\prod_I F^R(\cD)$ as a stable idempotent-complete subcategory. We conclude that $x$ is $\kappa$-compact.
\end{proof}

\begin{proof}[Proof of Proposition \ref{prop:descendable_descent}] Let $R\to S$ be a map such that $S$ is descendable over $R.$ By \cite[Proposition 3.22]{Mat} we have an equivalence $\Mod\hy R \simeq\Tot(\Mod\hy S^{\otimes\bullet+1}).$ By Lemma \ref{lem:detects_kappa_compactness}, it induces an equivalence $(\Mod\hy R)^{\omega_1}\simeq\Tot((\Mod\hy S^{\otimes\bullet+1})^{\omega_1}).$ By Theorem \ref{th:limit_of_dualizable}, we obtain
$$\Tot^{\dual}(\Mod\hy S^{\otimes\bullet+1})\simeq\ker^{\dual}(\Ind((\Mod\hy R)^{\omega_1})\to \Ind(\Tot(\Calk_{\omega_1}(S^{\otimes\bullet+1})))).$$
It suffices to prove the following statement:

$(*)$ the functor $\Calk_{\omega_1}(R)\to \Tot(\Calk_{\omega_1}(S^{\otimes\bullet +1}))$ is fully faithful.

To prove $(*),$ recall that by Proposition \ref{prop:Homs_in_Calk} for $M,N\in (\Mod\hy R)^{\omega_1}$ the spectrum of morphisms between $M$ and $N$ in the Calkin category is given by
$$\Hom_{\Calk_{\omega_1}(R)}(M,N)=\Cone(\Hom_R(M,R)\tens{R} N\to \Hom_R(M,N)).$$

Since we know the descent for $(\Mod\hy R)^{\omega_1},$ it suffices to show that the map
$$\Hom_R(M,R)\tens{R} N\to\Tot\left(\Hom_{S^{\otimes \bullet+1}}(S^{\otimes\bullet+1}\tens{R} M,S^{\otimes\bullet+1})\tens{S^{\otimes\bullet+1}}(S^{\otimes\bullet+1}\tens{R} N)\right)$$ is an isomorphism. This is equivalent to proving the following:

$(**)$ the map $\Hom_R(M,R)\tens{R} N\to \Tot(\Hom_R(M,S^{\otimes\bullet+1})\tens{R} N)$ is an isomorphism.

This follows easily from the descendability of $S$ over $R.$ Indeed, by \cite[Proposition 3.20]{Mat} the inverse sequence $(\Tot_n S^{\otimes \bullet+1})_n$ is pro-constant, hence we have equivalences
\begin{multline*}\Tot(\Hom_R(M,S^{\otimes\bullet+1})\tens{R}N)\cong \prolim[n] \Tot_n(\Hom_R(M,S^{\otimes\bullet+1})\tens{R}N)\\ \cong
 \prolim[n] \Hom_R(M,\Tot_n(S^{\otimes\bullet+1}))\tens{R}N\cong \Hom_R(M,\prolim[n] \Tot_n(S^{\otimes\bullet+1}))\tens{R}N\\ \cong
  \Hom_R(M,R)\tens{R}N. 
\end{multline*}
This proves the proposition.
\end{proof}

\begin{remark}It seems plausible that for any descendable map $R\to S$ and for any uncountable regular cardinal $\kappa$ we have an equivalence
$$\Calk_{\kappa}(R)\simeq \Tot(\Calk_{\kappa}(S^{\otimes \bullet +1})).$$
However, we only checked the fully faithfulness statement (the proof for $\kappa>\omega_1$ is the same). It is not clear how to prove the essential surjectivity even for $\kappa=\omega_1.$\end{remark}

Denote by $\Ring$ the category of (discrete) commutatie rings.

\begin{prop}\label{prop:fpqc_descent} The functor $\Ring\to \Cat_{\st}^{\dual},$ $R\mapsto D(R),$ is a sheaf in the fpqc topology.\end{prop}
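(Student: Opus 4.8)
The plan is to reduce fpqc descent for $R\mapsto D(R)$ to the descendable case already handled in Proposition \ref{prop:descendable_descent}, together with standard facts about the fpqc topology. First I would recall that the fpqc topology on $\Ring^{op}$ is generated by two types of covers: Zariski covers (finite families $R\to R_{f_i}$ with $(f_i)$ generating the unit ideal) and single faithfully flat maps $R\to S$. For the Zariski case, $D(-)$ sends a Zariski cover to a limit diagram because a finite Zariski cover is in particular a descendable cover (the unit is a summand of $\bigoplus_i R_{f_i}$ after a Koszul-type resolution, cf. \cite{Mat}), so this is already contained in Proposition \ref{prop:descendable_descent}. Hence it remains to treat a single faithfully flat map $R\to S$, i.e. to show that $D(R)\xrightarrow{\sim}\Tot^{\dual}(D(S^{\otimes_R\bullet+1}))$ in $\Cat_{\st}^{\dual}$.

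The key step is that a faithfully flat map need not be descendable, but it \emph{is} an fpqc-local refinement of a descendable one, and descent can be checked locally. Concretely, I would use the following: faithful flatness of $R\to S$ implies that for the purposes of checking that the cofiber (Čech nerve) of $R\to S$ is a limit diagram on $D(-)$, one may work Zariski-locally on $\Spec R$, and then—after possibly passing to a further faithfully flat cover—one reduces to the case where $R\to S$ admits a section or is even descendable. More precisely, the standard argument (as in Lurie or \cite{Mat}) is that for a faithfully flat $R\to S$, there is a faithfully flat $R\to R'$ such that $R'\to S\otimes_R R'$ is descendable (indeed split). Since Proposition \ref{prop:descendable_descent} gives descent along $R\to R'$ and along $R'\to S\otimes_R R'$, and since these assemble via a bicosimplicial argument, one deduces descent along $R\to S$. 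Throughout I would invoke Theorem \ref{th:limit_of_dualizable} to identify $\Tot^{\dual}$ with $\ker^{\dual}$ of a map of $\Ind$-completions, and use Lemma \ref{lem:detects_kappa_compactness} to reduce the totalization of $D(-)$ to the totalization of $\omega_1$-compact objects, exactly as in the proof of Proposition \ref{prop:descendable_descent}.

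Alternatively, and perhaps more cleanly, I would argue as follows. By Proposition \ref{prop:descendable_descent} the functor $R\mapsto \Mod\hy R = D(R)$ is a sheaf for the descendable topology. By \cite[Theorem 5.2]{Mat} (the comparison between the fpqc and descendable topologies via the ``fpqc $=$ v-locally descendable'' principle), the fpqc topology on $\Ring^{op}$ is generated by the descendable topology together with Zariski covers; equivalently, every fpqc cover can be refined by a composite of a Zariski cover and a descendable cover. A functor to an $\infty$-category with limits that is a sheaf for both the Zariski and the descendable topology, and for which descent is stable under composition of covers, is automatically a sheaf for the fpqc topology. The Zariski sheaf condition is itself a special case of descendability as noted above, and stability under composites is automatic since the relevant limit diagrams compose. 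This gives the result.

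The main obstacle I expect is the bookkeeping required to pass from ``sheaf for the descendable topology'' to ``sheaf for the fpqc topology'': one must be careful that refining an fpqc cover by descendable covers interacts correctly with the Čech nerves, i.e. that a morphism of covers inducing an equivalence on the associated totalizations really does so in $\Cat_{\st}^{\dual}$ and not merely in $\Pr^L_{\st}$. Here the subtlety is that limits in $\Cat_{\st}^{\dual}$ differ from limits in $\Pr^L_{\st}$ (Corollary \ref{cor:seq_colimits_finite_limits}), so one cannot blindly transport cofinality/refinement arguments. The safe route is to reduce everything, via Theorem \ref{th:limit_of_dualizable} and Lemma \ref{lem:detects_kappa_compactness}, to a statement about totalizations of $\omega_1$-compact objects—i.e. about $\Cat^{\perf}$-valued, respectively $D(-)^{\omega_1}$-valued, functors—where limits are computed ``as usual'' and the classical fpqc descent for (almost) perfect complexes applies, plus the fully faithfulness of the Calkin comparison $\Calk_{\omega_1}(R)\to\Tot(\Calk_{\omega_1}(S^{\otimes_R\bullet+1}))$, which is proved exactly as the statement $(*)$ in the proof of Proposition \ref{prop:descendable_descent}, using \cite[Proposition 3.20]{Mat} applied Zariski-locally.
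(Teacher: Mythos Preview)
Your proposal contains a genuine gap in both suggested routes. In the first approach you write ``for a faithfully flat $R\to S$, there is a faithfully flat $R\to R'$ such that $R'\to S\otimes_R R'$ is descendable (indeed split)'' and then invoke Proposition~\ref{prop:descendable_descent} for descent along $R\to R'$. The only natural choice of $R'$ with the splitting property is $R'=S$ itself, but then $R\to R'$ is the very faithfully flat map whose descent you are trying to establish, so the argument is circular. There is no known way, for a general faithfully flat $R\to S$ with $R$ local, to produce a \emph{descendable} $R\to R'$ that trivializes $S$ after base change. Your second approach asserts that the fpqc topology is generated by Zariski covers together with descendable covers; this is simply false (again take $R$ local so that Zariski covers are trivial, and $R\to S$ faithfully flat but not descendable). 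Neither \cite{Mat} nor anything else proves such a statement. The appeal in your last paragraph to ``\cite[Proposition 3.20]{Mat} applied Zariski-locally'' does not help either: that result controls the pro-system $(\Tot_n S^{\otimes\bullet+1})_n$ only under a descendability hypothesis, which you do not have.

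The paper's proof avoids these issues by a completely different mechanism. The key input is Proposition~\ref{prop:Faithfully_flat_aleph_1_accessible} (due to Scholze): any faithfully flat $R$-algebra $S$ is an $\omega_1$-directed colimit $S\cong\colim_i S_i$ of \emph{countably presented} faithfully flat $R$-algebras, and each such $S_i$ is descendable by \cite[Corollary~3.33]{Mat}. One then observes that the functors $D(-)^{\omega_1}$ and $\Calk_{\omega_1}(-)$ commute with $\omega_1$-filtered colimits (Proposition~\ref{prop:Calkin_construction_accessible}), and that $\Tot\colon(\Cat^{\perf})^{\Delta}\to\Cat^{\perf}$ commutes with $\omega_1$-filtered colimits because $\Delta$ is countable and $\Cat^{\perf}$ is compactly generated. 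Hence both the equivalence $D(R)^{\omega_1}\simeq\Tot(D(S^{\otimes\bullet+1})^{\omega_1})$ and the fully faithfulness of $\Calk_{\omega_1}(R)\to\Tot(\Calk_{\omega_1}(S^{\otimes\bullet+1}))$ pass to the $\omega_1$-filtered colimit from the descendable case. This colimit-approximation idea is the missing ingredient in your proposal.
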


\begin{proof} Descent for finite disjoint unions is obvious. We need to prove that for a faithfully flat map $R\to S$ we have $D(R)\simeq\Tot^{\dual}(D(S^{\otimes \bullet + 1})).$ As in the proof of Proposition \ref{prop:descendable_descent}, we only need to show that $D(R)^{\omega_1}\simeq\Tot(D(S^{\otimes\bullet+1})^{\omega_1})$ and to prove the fully faithfulness statement $(*)$ above.

By Proposition \ref{prop:Faithfully_flat_aleph_1_accessible} below, $S$ is isomorphic to an $\omega_1$-directed colimit $\indlim[i\in I]S_i,$ where $S_i$ are descendable faithfully flat $R$-algebras (and $I$ is an $\omega_1$-directed poset). We know that the functors $D(-)^{\omega_1}:\Ring\to\Cat^{\perf}$ and $\Calk_{\omega_1}(-):\Ring\to\Cat^{\perf}$ commute with $\omega_1$-filtered colimits (by Proposition \ref{prop:Calkin_construction_accessible}). Also, the functor $$\Tot(-):(\Cat^{\perf})^{\Delta}\to\Cat^{\perf}$$ commutes with $\omega_1$-filtered colimits, since the category $\Delta$ is countable and the category $\Cat^{\perf}$ is compactly generated. We deduce from the proof of Proposition \ref{prop:descendable_descent} that $D(R)^{\omega_1}\simeq\Tot(D(S^{\otimes\bullet+1})^{\omega_1})$ and the functor $\Calk_{\omega_1}(R)\to \Tot(\Calk_{\omega_1}(S^{\otimes\bullet +1}))$ is fully faithful. This implies that $D(R)\simeq\Tot^{\dual}(D(S^{\otimes \bullet + 1})),$ as required.
\end{proof}

The following result is due to Peter Scholze (private communication). We give a short proof based on Positselski's results on accessibility \cite{Pos}.

\begin{prop}[Scholze]\label{prop:Faithfully_flat_aleph_1_accessible} Let $R$ be a commutative ring. The category of faithfully flat $R$-algebras is $\omega_1$-accessible. A faithfully flat $R$-algebra $S$ is $\omega_1$-compact if and only if $S$ is a countably presented $R$-module (equivalently, a countably presented $R$-algebra).

In particular, any faithfully flat $R$-algebra is an $\omega_1$-directed colimit of descendable faithfully flat $R$-algebras.\end{prop}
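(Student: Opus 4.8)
The plan is to reduce everything to a statement about modules, where Positselski's accessibility machinery \cite{Pos} applies directly, and then transport accessibility and the characterization of $\omega_1$-compact objects back and forth between $R$-algebras and $R$-modules. First I would observe that the forgetful functor $\{\text{faithfully flat }R\text{-algebras}\}\to\Mod\hy R$ (or rather to the ordinary category of discrete $R$-modules) is faithful and preserves filtered colimits, and that faithful flatness is detected on the underlying module together with the (Zariski-local) surjectivity of $\Spec S\to\Spec R$; the latter is a condition that only involves the $R$-module structure of $S/\mathfrak{p}S$ for primes $\mathfrak{p}$, so it does not interfere with passing to filtered colimits. Concretely, a filtered colimit of faithfully flat $R$-algebras is faithfully flat (flatness passes to filtered colimits by Lazard, and faithfulness — i.e. $S\otimes_R M\ne 0$ for $M\ne 0$ — is preserved because tensoring commutes with filtered colimits and a filtered colimit of nonzero modules mapping compatibly can be arranged to stay nonzero, using that $S_i\to S_j$ is faithfully flat hence injective on $S_i\otimes_R M$).

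Next I would invoke the flat case of Positselski's theorem: the category $\Flat\hy R$ of flat $R$-modules is $\omega_1$-accessible, with $\omega_1$-compact objects exactly the countably presented flat modules (this is essentially Lazard's theorem refined to the countable level, and is exactly the kind of statement handled in \cite{Pos}; cf.\ also the earlier Example in Subsection~\ref{ssec:examples} where $\Flat\hy A$ is noted to be compactly generated in the accessible sense). The key step is then to upgrade this from flat modules to faithfully flat algebras. For this I would argue: (i) any flat $R$-module is an $\omega_1$-directed colimit of countably presented flat submodules; (ii) given a faithfully flat $R$-algebra $S$, write $S=\colim_i M_i$ as such a colimit of countably presented flat $R$-modules, and then enlarge each $M_i$ to a countably presented flat $R$-\emph{subalgebra} $S_i\subset S$ by closing under the multiplication and unit of $S$ — this only adds countably many generators and relations at each stage and preserves flatness by a standard localization/base-change argument (a countably presented $R$-submodule that is closed under multiplication, being flat as a colimit won't automatically follow, so here one must be slightly careful: one takes the $R$-subalgebra generated by $M_i$, which is countably presented as an $R$-algebra, and flatness is checked by writing $S$ as a filtered colimit of these subalgebras and using that flatness of the colimit plus the subalgebra being a pure submodule — purity holds because $S$ is faithfully flat over $S_i$ once $S_i$ is flat — forces flatness of $S_i$; one iterates to make the system of $S_i$ cofinal among flat subalgebras); (iii) each such $S_i$ is faithfully flat over $R$ after possibly enlarging once more so that $\Spec S_i\to\Spec R$ is surjective (surjectivity only needs finitely much data, so it is inherited by a cofinal subsystem); (iv) by \cite[Proposition 3.37]{Mat}, a faithfully flat $R$-algebra that is countably (indeed finitely) presented as an $R$-algebra is descendable. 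This yields the last sentence, and combined with (i)–(iii) gives the $\omega_1$-accessibility and the identification of $\omega_1$-compact objects as the countably presented ones.

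The main obstacle I expect is step (ii)–(iii): controlling flatness when passing from countably presented flat \emph{modules} to countably presented flat \emph{subalgebras}. Closing a countably presented submodule under multiplication does give a countably presented subalgebra, but its flatness over $R$ is not formal; the clean way is to run the argument at the level of the directed poset of \emph{all} flat $R$-subalgebras of $S$ that are countably presented as algebras, show this poset is $\omega_1$-directed with colimit $S$ (using the module-level statement to produce enough such subalgebras inside any given countable subset), and then note that each is automatically faithfully flat over $R$ after bounded enlargement. Once this poset is shown to be cofinal and $\omega_1$-directed, $\omega_1$-accessibility of the category of faithfully flat $R$-algebras, the characterization of its $\omega_1$-compact objects, and the reduction to descendable algebras via \cite{Mat} all follow. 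The remaining verifications — that $\omega_1$-compactness of $S$ forces $S$ to be countably presented as a module, and conversely — are then routine: one direction is the finitary presentation test against $\omega_1$-filtered colimits, the other is that a countably presented flat algebra is manifestly $\omega_1$-compact in the category of all faithfully flat $R$-algebras since $\Hom_{R\hy\mathrm{alg}}(S,-)$ commutes with $\omega_1$-filtered colimits when $S$ has countably many generators and relations.
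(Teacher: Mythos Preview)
Your step (ii) has a genuine gap that you correctly identify but do not close. Starting from a countably presented flat $R$-submodule $M_i\subset S$ and closing under multiplication gives a countably generated $R$-subalgebra $S_i$, but there is no reason for $S_i$ to be flat over $R$: flatness is not inherited by subalgebras. Your proposed purity argument is circular, since purity of $S_i\subset S$ would follow from faithful flatness of $S$ over $S_i$, which in turn requires $S_i$ to already be flat over $R$. Reformulating the argument in terms of the poset of flat countably presented subalgebras does not help unless you can show this poset is nonempty and cofinal, which is precisely the missing content. Your step (iii) also misfires: ``surjectivity only needs finitely much data'' is false when $\Spec R$ is infinite.

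The paper's argument avoids both issues. For (ii), instead of building flat subalgebras by hand, one observes that the category of flat commutative $R$-algebras is the category of modules over the monad $\Sym^*_R$ acting on $\Flat_R$. Since $\Flat_R\simeq\Ind(P(R))$ is $\omega_1$-accessible and $\Sym^*_R$ preserves filtered colimits and $\omega_1$-compact objects, a general result on modules over accessible monads (proved via Positselski's equifier theorem, not just the flat-module case you invoke) gives that $\Alg^{\fla}_R$ is $\omega_1$-accessible with $\omega_1$-compact objects exactly the countably presented ones. For (iii), no enlargement is needed at all: if $S'\to S$ is \emph{any} map of flat $R$-algebras with $S$ faithfully flat, then $\Spec S\to\Spec R$ factors through $\Spec S'$, so $S'$ is automatically faithfully flat. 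Thus once $S=\indlim S_i$ is written as an $\omega_1$-directed colimit of countably presented flat $R$-algebras, each $S_i$ is already faithfully flat, and descendability follows from \cite[Corollary~3.33]{Mat}.
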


\begin{proof} The last assertion follows from the fact that a faithfully flat countably presented $R$-algebra $S$ is descendable by \cite[Corollary 3.33]{Mat}.

By Lazard's theorem, the category $\Flat_R$ of flat $R$-modules is equivalent to the ind-completion $\Ind(P(R)),$ where $P(R)$ is the category of finite projective $R$-modules. The category of flat commutative $R$-algebras $\Alg^{\fla}_{R}$ is equivalent to the category of modules over the monad $\Sym_R^*(-):\Flat_R\to\Flat_R.$ The functor $\Sym_R^*(-)$ commutes with filtered colimits and preserves $\omega_1$-compact objects. Applying Proposition \ref{prop:modules_over_accessible_monads} below, we see that the category $\Alg^{\fla}_{R}$ is $\omega_1$-accessible, and a flat $R$-algebra $S$ is $\omega_1$-compact if and only if $S$ is a countably presented $R$-module.

It remains to observe that if $S'\to S$ is a map of flat $R$-algebras such that $S$ is faithfully flat over $R,$ then $S'$ is also faithfully flat over $R.$
\end{proof}

Recall the following $2$-categorical notion.

\begin{defi}\cite{AR} Consider a pair of functors $F,G:\cC\to\cD$ between ordinary categories, and let $\varphi,\psi:F\to G$ be a pair of natural transformations. The equifier of $(\varphi,\psi)$ is the full subcategory $\Eq(\varphi,\psi)\subset \cC$ formed by objects $x\in\cC$ such that $\varphi_x=\psi_x.$\end{defi}

\begin{prop}\label{prop:modules_over_accessible_monads} Let $\kappa$ be an uncountable regular cardinal, and let $\cC$ be a $\kappa$-accessible (ordinary) category which has filtered colimits. Let $(T:\cC\to\cC,\eta:\id_{\cC}\to T,\mu:T^2\to T)$ be a monad such that $T$ commutes with filtered colimits and preserves $\kappa$-compact objects. Then the category of $T$-modules $\Mod_T(\cC)$ is $\kappa$-accessible, and a $T$-module $M\in\Mod_T(\cC)$ is $\kappa$-compact in $\Mod_T(\cC)$ if and only if its underlying object is $\kappa$-compact in $\cC.$\end{prop}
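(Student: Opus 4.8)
The plan is to use the standard characterization of accessible categories as categories of models of a limit sketch, together with Positselski's recent accessibility results cited in the paper, to reduce the statement to a manipulation of colimit-presentations. Concretely, I would proceed as follows.

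\textbf{Step 1: identify $\Mod_T(\cC)$ as an equifier-type construction.} Recall that for a monad $(T,\eta,\mu)$ on $\cC$, a $T$-module is an object $M$ together with a structure map $a\colon T(M)\to M$ satisfying the unit axiom $a\circ\eta_M=\id_M$ and the associativity axiom $a\circ\mu_M=a\circ T(a)$. I would realize $\Mod_T(\cC)$ as obtained from the "free arrow category" $\cC^{[1]}$ (whose objects are arrows $T(M)\to M$ — this is accessible since $\cC$ is $\kappa$-accessible and $T$ preserves $\kappa$-compact objects, so the functor $M\mapsto T(M)$ is $\kappa$-accessible and the comma-style construction $\cC^{[1]}_{T}$ is $\kappa$-accessible by the analogue of Proposition \ref{prop:gluing_presentable} for ordinary categories) by passing to two successive equifiers of pairs of natural transformations encoding the unit and associativity axioms.

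\textbf{Step 2: apply Positselski's stability of $\kappa$-accessibility under equifiers.} The key input is that the class of $\kappa$-accessible categories (for $\kappa$ uncountable) is closed under equifiers $\Eq(\varphi,\psi)$ of natural transformations between $\kappa$-accessible functors, \emph{and} that the inclusion $\Eq(\varphi,\psi)\hookrightarrow\cC$ preserves and reflects $\kappa$-compact objects. This is precisely the content of the results of \cite{Pos} (and of \cite{AR} in the finitely-accessible case). Both the unit natural transformation $\id\Rightarrow(-)$ and the two sides of the associativity diagram are built from $T$, $\eta$, $\mu$, which are $\kappa$-accessible by hypothesis; hence each equifier step stays within $\kappa$-accessible categories and preserves/reflects $\kappa$-compactness. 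Composing the two equifier steps gives that $\Mod_T(\cC)$ is $\kappa$-accessible and that the forgetful functor $U\colon\Mod_T(\cC)\to\cC$ preserves and reflects $\kappa$-compact objects; since $U$ also creates filtered colimits (as $T$ commutes with filtered colimits), $\Mod_T(\cC)$ has filtered colimits and $U$ is $\kappa$-accessible. This yields both assertions: $\kappa$-accessibility of $\Mod_T(\cC)$, and the fact that $M$ is $\kappa$-compact iff $U(M)$ is.

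\textbf{Step 3 (alternative, if one wants to avoid black-boxing Positselski's equifier result).} One can instead argue directly: the free-module functor $\cC\to\Mod_T(\cC)$, $X\mapsto (T(X),\mu_X)$, is left adjoint to $U$; it sends $\kappa$-compact objects of $\cC$ to $\kappa$-compact objects of $\Mod_T(\cC)$ because $U$ preserves filtered colimits. Since every $T$-module is a canonical reflexive coequalizer (the bar resolution $T^2(M)\rightrightarrows T(M)\to M$) of free modules on $\kappa$-compact objects when $M$ is itself $\kappa$-compact in $\cC$ — using that $\cC$ is $\kappa$-accessible so $U(M)$ is a $\kappa$-filtered colimit of $\kappa$-compacts — one shows every object of $\Mod_T(\cC)$ is a $\kappa$-filtered colimit of free modules on $\kappa$-compact objects, giving $\kappa$-accessibility; and a retract/filtered-colimit argument using that $U$ preserves and reflects filtered colimits gives the characterization of $\kappa$-compact $T$-modules. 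The main obstacle here is the bookkeeping to show that a $\kappa$-compact object of $\cC$ carrying a $T$-module structure really is $\kappa$-compact \emph{in} $\Mod_T(\cC)$ and not merely a retract of such; this is where uncountability of $\kappa$ (allowing countable bar resolutions to be absorbed) is used, and it is exactly the point where invoking \cite{Pos} makes the argument clean.

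\textbf{Expected main difficulty.} The genuine subtlety — and the reason the paper isolates this as a separate proposition rather than asserting it — is the \emph{reflection} of $\kappa$-compactness: showing that a $T$-module whose underlying object is \emph{not} $\kappa$-compact cannot be $\kappa$-compact in $\Mod_T(\cC)$, and conversely controlling $\kappa$-compactness of modules through the forgetful functor without losing it to idempotent-completion issues. I expect the cleanest route is Step 2, quoting the equifier stability and preservation/reflection of $\kappa$-compact objects from \cite{Pos} (and \cite{AR}), with Step 1 providing the explicit presentation of $\Mod_T(\cC)$ as an iterated equifier over an accessible comma category.
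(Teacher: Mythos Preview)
Your approach is essentially identical to the paper's: form the inserter category $\cD$ of pairs $(x,\alpha\colon T(x)\to x)$, show it is $\kappa$-accessible with forgetful functor preserving and reflecting $\kappa$-compactness (the paper cites \cite[Theorem~4.1]{Pos} for this step rather than an analogue of Proposition~\ref{prop:gluing_presentable}), then realize $\Mod_T(\cC)\subset\cD$ as the joint equifier of the unit and associativity constraints and apply \cite[Theorem~3.1]{Pos}. The only discrepancy is your citation for the inserter step; otherwise your Steps~1 and~2 match the paper's proof exactly.
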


\begin{proof}


Let $\cD$ be the category of pairs $(x,\alpha),$ where $x\in\cC,$ $\alpha:T(x)\to x.$ Clearly, $\cD$ has filtered colimits and the forgetful functor $\pi:\cD\to\cC$ commutes with filtered colimits. By \cite[Theorem 4.1]{Pos} the category $\cD$ is $\kappa$-accessible, and $\pi:\cD\to\cC$ preserves and reflects $\kappa$-compactness. 

Consider $\alpha$ as a natural transformation $T\circ\pi\to\pi.$ Then the full subcategory $\Mod_T(\cC)\subset\cD$ is exactly the joint equifier of the pairs $(\id_{\pi},\alpha\circ\eta_{\pi}):\pi\to\pi$ and $(T(\alpha),\mu_{\pi}):T^2\circ\pi\to T\circ\pi.$ Applying \cite[Theorem 3.1]{Pos}, we conclude that the category $\Mod_T(\cC)$ is $\kappa$-accessible, and the forgetful functor to $\cC$ preserves and reflects $\kappa$-compactness.
\end{proof}

\subsection{Products of dualizable categories}
\label{ssec:products_of_dualizable}

In this subsection we discuss the products in $\Cat_{\st}^{\dual}.$ It turns out that they are very well-behaved: the functor $\Ind(-):\Cat^{\perf}\to\Cat_{\st}^{\dual}$ commutes with products, and the category $\Cat_{\st}^{\dual}$ satisfies the weak (AB4*) axiom and the (AB6) axiom. 

First, we make an observation about infinite products of ind-objects. Let $\cA_i,$ $i\in I,$ be a family of small stable categories. Given ind-objects $M_i\in\Ind(\cA_i),$ $i\in I,$ we can consider each $M_i$ as an object of $\Ind(\prod_i\cA_i).$ We denote by $\prod_i M_i$ the product of these objects in $\Ind(\prod_i\cA_i).$ We denote by $\prod_i:\prod_i\Ind(\cA_i)\to\Ind(\prod_i\cA_i)$ the resulting functor. This is simply the right adjoint to the natural functor $\Ind(\prod_i\cA_i)\to\prod_i\Ind(\cA_i).$ We give another two equivalent descriptions of the functor $\prod_i.$

\begin{prop}\label{prop:product_of_ind_of_a_i}1) For a small stable category $\cA,$ let us identify the category $\Ind(\cB)$ with the category $\Fun(\cB^{op},\Sp)$ of exact functors to $\Sp.$ Then for a collection of ind-objects $M_i\in \Ind(\cA_i),$ $i\in I,$ the ind-object $\prod_i M_i\in \Ind(\prod_i\cA_i)$ is identified with the functor $$\prod_i \cA_i^{op}\to\Sp,\quad (\prod_i M_i)((x_i)_i) = \prod_i M_i(x_i).$$

2) If $J_i,$ $i\in I,$ are filtered categories such that $M_i=\inddlim[j_i\in J_i]x_{j_i},$ then the ind-object $\prod_i M_i\in \Ind(\prod_i\cA_i)$ is given by the formula $$\prod_i \inddlim[j_i\in Ji]x_{j_i}=\inddlim[(j_i)_i\in\prod_{i\in I}J_i](x_{j_i})_i.$$
\end{prop}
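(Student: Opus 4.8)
The plan is to prove both parts 1) and 2) of Proposition~\ref{prop:product_of_ind_of_a_i} by reducing everything to the defining universal property of the functor $\prod_i\colon\prod_i\Ind(\cA_i)\to\Ind(\prod_i\cA_i)$ as the right adjoint to the canonical functor $\Ind(\prod_i\cA_i)\to\prod_i\Ind(\cA_i)$, and then checking the asserted formulas by computing mapping spectra. The key tool is the observation that under the identification of $\Ind(\cB)$ with $\Fun^{\ex}(\cB^{op},\Sp)$, a presheaf $M\in\Ind(\cB)$ is precisely a filtered colimit of representables, and the mapping spectrum out of a representable $\cY(x)$ is just $M(x)$; combined with the fact that in $\Sp$ (indeed in $\cS$) the axiom (AB6) holds, exactly as recorded in Proposition~\ref{prop:products_in_Ind_C} and its proof.

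For part 1), first I would note that the functor $\prod_i\cA_i^{op}\to\Sp$ given by $(x_i)_i\mapsto\prod_i M_i(x_i)$ is indeed exact (finite limits and colimits of spectra are computed componentwise and commute with the product indexed by $I$), so it defines an object $N\in\Ind(\prod_i\cA_i)$. Then I would show $N$ has the right universal property: for any $L=\inddlim[k]y_k\in\Ind(\prod_i\cA_i)$ we have $\Hom_{\Ind(\prod_i\cA_i)}(L,N)\cong\prolim[k]N(y_k)$, and writing $y_k=(y_{k,i})_i$, this equals $\prolim[k]\prod_i M_i(y_{k,i})\cong\prod_i\prolim[k]M_i(y_{k,i})\cong\prod_i\Hom_{\Ind(\cA_i)}(p_i(L),M_i)$, where $p_i\colon\Ind(\prod_i\cA_i)\to\Ind(\cA_i)$ is the $i$-th projection (restriction along $\cA_i\hookrightarrow\prod_i\cA_i$). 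Since this is the mapping space into the product $\prod_iM_i$ in the adjunction sense, we get $N\cong\prod_i M_i$. Here I should be a little careful that $p_i$ is the correct component of the canonical functor $\Ind(\prod_i\cA_i)\to\prod_i\Ind(\cA_i)$; this is a routine check on representables.

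For part 2), I would apply Proposition~\ref{prop:products_in_Ind_C} directly in the category $\Ind(\prod_i\cA_i)$: each $M_i=\inddlim[j_i\in J_i]x_{j_i}$, viewed in $\Ind(\prod_i\cA_i)$, is the filtered colimit of the objects $x_{j_i}$ (these are objects of $\cA_i\subset\prod_i\cA_i$ in the obvious sense, namely the tuple which is $x_{j_i}$ in the $i$-th slot; but more precisely one should first pass each $M_i$ through the fully faithful inclusion and then take products). Then Proposition~\ref{prop:products_in_Ind_C}, which says $\prod_{i}\inddlim[j_i]x_{j_i}\cong\inddlim[(j_i)_i\in\prod_iJ_i]\prod_ix_{j_i}$ in $\Ind(\prod_i\cA_i)$, together with the identification $\prod_i x_{j_i}=(x_{j_i})_i$ (the product of the one-slot objects is the genuine tuple, which holds because $\Hom((y_i)_i,(x_{j_i})_i)\cong\prod_i\Hom(y_i,x_{j_i})$ on representables), gives exactly the claimed formula. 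Finally I would check that the two descriptions are consistent with each other by unwinding: $(\prod_iM_i)((y_i)_i)=\inddlim[(j_i)_i]\prod_i\Hom_{\cA_i}(y_i,x_{j_i})\cong\prod_i\inddlim[j_i]\Hom_{\cA_i}(y_i,x_{j_i})=\prod_iM_i(y_i)$, again using (AB6) in $\Sp$.

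I expect the main obstacle to be purely bookkeeping: being scrupulous about the difference between an object $x_{j_i}\in\cA_i$ and the tuple in $\prod_i\cA_i$ whose $i$-th component is $x_{j_i}$ (and what its other components are — one wants the product over $I$ of such one-slot objects, not a single one-slot object), and correctly matching the projection functors $p_i$ with the components of the canonical functor $\Ind(\prod_i\cA_i)\to\prod_i\Ind(\cA_i)$. Once the definitions are pinned down, every step is a one-line consequence of (AB6) in spaces/spectra and the Yoneda-type computation of mapping spectra in $\Ind$-categories, so there is no real mathematical difficulty — the content is entirely in Proposition~\ref{prop:products_in_Ind_C}, which has already been established.
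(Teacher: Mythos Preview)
Your proposal is correct and follows essentially the same approach as the paper, which simply records that part 1) ``follows directly from the definition'' and part 2) ``follows from Proposition~\ref{prop:products_in_Ind_C}''. You have unpacked exactly these two sentences: part 1) is the adjunction computation you describe, and your final consistency check for part 2) --- the (AB6) manipulation $\indlim_{(j_i)_i}\prod_i\Hom_{\cA_i}(y_i,x_{j_i})\cong\prod_i\indlim_{j_i}\Hom_{\cA_i}(y_i,x_{j_i})$ --- is precisely the argument of Proposition~\ref{prop:products_in_Ind_C} transplanted to this setting, which is all the paper is invoking.

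One small remark on your bookkeeping worry: your detour through ``one-slot'' objects and Proposition~\ref{prop:products_in_Ind_C} applied literally to $\cC=\prod_i\cA_i$ is slightly awkward because that proposition is stated for a category $\cC$ with infinite products, which $\prod_i\cA_i$ need not have. It happens to work here (the relevant products of one-slot tuples exist trivially since all but one factor is zero), but the cleanest route --- and the one you in fact carry out at the end --- is simply to compute $\Map((y_i)_i,\inddlim_{(j_i)_i}(x_{j_i})_i)$ directly and invoke (AB6) in $\Sp$, bypassing the literal statement of Proposition~\ref{prop:products_in_Ind_C} and using only its proof.
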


\begin{proof}Part 1) follows directly from the definition, and part 2) follows from Proposition \ref{prop:products_in_Ind_C}.\end{proof}

\begin{cor}\label{cor:comm_square} Let $\{F:\cA_i\to\cB_i\}_{i\in I}$ be a family of functors between small stable categories. Then we have a commutative square of categories
$$
\begin{CD}
\prod_i \Ind(\cA_i) @>{\prod_i\Ind(F_i)}>> \prod_i \Ind(\cB_i)\\
@V{\prod_i}VV @V{\prod_i}VV\\
\Ind(\prod_i\cA_i) @>{\Ind(\prod_i F_i)}>> \Ind(\prod_i\cB_i).
\end{CD}
$$
\end{cor}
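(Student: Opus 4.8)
The plan is to deduce the commutativity of the square directly from the explicit descriptions of the functor $\prod_i$ provided by Proposition \ref{prop:product_of_ind_of_a_i}. I would start by fixing an object $(M_i)_{i\in I}\in\prod_i\Ind(\cA_i)$ and writing each $M_i$ as a filtered colimit of representables, $M_i=\inddlim[j_i\in J_i]x_{j_i}$ with $J_i$ filtered. Chasing $(M_i)_i$ around the lower-left corner: by Proposition \ref{prop:product_of_ind_of_a_i}(2) we have $\prod_i M_i=\inddlim[(j_i)_i\in\prod_i J_i](x_{j_i})_i$, and since $\Ind(\prod_i F_i)$ preserves filtered colimits and sends the representable ind-object $(x_{j_i})_i$ to $(F_i(x_{j_i}))_i$, its image is $\inddlim[(j_i)_i\in\prod_i J_i](F_i(x_{j_i}))_i$. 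Chasing around the upper-right corner: $\Ind(F_i)(M_i)=\inddlim[j_i\in J_i]F_i(x_{j_i})$, and a second application of Proposition \ref{prop:product_of_ind_of_a_i}(2), now to the family $(\Ind(F_i)(M_i))_i$, gives $\prod_i\Ind(F_i)(M_i)=\inddlim[(j_i)_i\in\prod_i J_i](F_i(x_{j_i}))_i$. Thus the two composites agree on objects.

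To upgrade this to a genuine commuting square of functors I would exhibit the comparison as a canonical natural transformation and then conclude via the objectwise computation. Concretely, the products of Yoneda embeddings extend to colimit-preserving functors $c_{\cA}\colon\Ind(\prod_i\cA_i)\to\prod_i\Ind(\cA_i)$ and $c_{\cB}\colon\Ind(\prod_i\cB_i)\to\prod_i\Ind(\cB_i)$, and Proposition \ref{prop:product_of_ind_of_a_i}(1) identifies $\prod_i$ with the right adjoint of $c_{\cA}$ (resp.\ $c_{\cB}$). Both $c_{\cB}\circ\Ind(\prod_i F_i)$ and $(\prod_i\Ind(F_i))\circ c_{\cA}$ are colimit-preserving and agree on representables, hence are canonically equivalent; taking mates of this equivalence yields a canonical natural transformation $\Ind(\prod_i F_i)\circ\prod_i\to\prod_i\circ(\prod_i\Ind(F_i))$, and the computation of the previous paragraph shows it is an equivalence on each $(M_i)_i$, hence an equivalence of functors.

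The one genuine subtlety — and the step I would treat most carefully — is that $\prod_i$ is merely a right adjoint and fails to preserve filtered colimits once the index set $I$ is infinite (the diagonal $K\to\prod_{i} K$ of a filtered $K$ need not be cofinal), so one cannot reduce the assertion to representables after passing to the larger ind-completion. This is exactly why I route the naturality through the mate construction rather than through a "check on generators" argument. Everything else is a routine application of Proposition \ref{prop:product_of_ind_of_a_i}, which in turn rests on the fact that $\cS$ (hence $\Sp$) satisfies (AB6).
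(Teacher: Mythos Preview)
Your proof is correct and uses the same core input as the paper, namely the explicit formula of Proposition~\ref{prop:product_of_ind_of_a_i}(2); the paper's proof is the single line ``This follows immediately from Proposition~\ref{prop:product_of_ind_of_a_i}, part 2).'' Your additional care with the mate construction to upgrade the objectwise identification to a natural equivalence of functors is a genuine refinement that the paper leaves implicit, but the underlying argument is the same.
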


\begin{proof}
This follows immediately from Proposition \ref{prop:product_of_ind_of_a_i}, part 2).
\end{proof}

Given a small stable category $\cB,$ we denote by $-\tens{\cB}-:\Ind(\cB)\otimes \Ind(\cB^{op})\to\Sp$ the evaluation functor. It is identified with the functor $\THH(\cB,-):\Ind(\cB\otimes\cB^{op})\to \Sp.$

\begin{prop}\label{prop:tensor_product_of_products} Let $\{\cA_i\}_{i\in I}$ be a collection of small stable categories, and take some ind-objects $M_i\in \Ind(\cA_i),$ $N_i\in\Ind(\cA_i^{op}),$ $i\in I.$ Then the natural map $$(\prod_i M_i)\tens{\prod_i\cA_i}(\prod_i N_i)\to\prod_i (M_i\tens{\cA_i} N_i)$$ is an isomorphism.\end{prop}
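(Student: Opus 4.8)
The key point is to reduce the statement about products to a statement about filtered colimits, where we can apply the (AB6) axiom in spectra. Writing each $M_i = \inddlim[j_i \in J_i] x_{j_i}$ and $N_i = \inddlim[k_i \in K_i] y_{k_i}$ as filtered colimits of representables in $\Ind(\cA_i)$ and $\Ind(\cA_i^{op})$ respectively, Proposition \ref{prop:product_of_ind_of_a_i}(2) identifies $\prod_i M_i$ with $\inddlim[(j_i)_i \in \prod_i J_i] (x_{j_i})_i$ and $\prod_i N_i$ with $\inddlim[(k_i)_i \in \prod_i K_i] (y_{k_i})_i$ in $\Ind(\prod_i \cA_i)$ and $\Ind(\prod_i \cA_i^{op})$. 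First I would record that the evaluation functor $-\tens{\cB}-$ commutes with filtered colimits in each variable separately (it is $\THH(\cB,-)$ applied to an ind-object, hence preserves filtered colimits), and that on representables it computes morphism spectra: $z^{\vee} \tens{\cB} w = \cB(z,w)$ for $z,w \in \cB$.

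**Main computation.** Combining these, the left-hand side becomes
\begin{equation*}
\Bigl(\inddlim[(j_i)_i] (x_{j_i})_i\Bigr) \tens{\prod_i \cA_i} \Bigl(\inddlim[(k_i)_i] (y_{k_i})_i\Bigr) \cong \indlim[(j_i)_i]\, \indlim[(k_i)_i] \Bigl(\prod_i \cA_i\Bigr)\bigl((x_{j_i})_i,(y_{k_i})_i\bigr) \cong \indlim[(j_i)_i]\, \indlim[(k_i)_i]\, \prod_i \cA_i(x_{j_i}, y_{k_i}),
\end{equation*}
using that morphism spectra in a product category are the products of the morphism spectra. On the right-hand side, each $M_i \tens{\cA_i} N_i \cong \indlim[j_i] \indlim[k_i] \cA_i(x_{j_i}, y_{k_i})$, so $\prod_i (M_i \tens{\cA_i} N_i) \cong \prod_i \indlim[j_i]\indlim[k_i] \cA_i(x_{j_i},y_{k_i})$. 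Thus the claim reduces to the commutation
\begin{equation*}
\indlim[(j_i)_i \in \prod_i J_i]\ \indlim[(k_i)_i \in \prod_i K_i]\ \prod_{i \in I} \cA_i(x_{j_i},y_{k_i}) \;\cong\; \prod_{i \in I}\ \indlim[j_i \in J_i]\ \indlim[k_i \in K_i]\ \cA_i(x_{j_i},y_{k_i})
\end{equation*}
of a nested pair of filtered colimits with an infinite product, in the $\infty$-category $\Sp$. Since $\prod_i J_i$ and $\prod_i K_i$ are filtered, and a nested filtered colimit over $\prod_i J_i$ then $\prod_i K_i$ agrees with the single filtered colimit over $\prod_i (J_i \times K_i) = (\prod_i J_i) \times (\prod_i K_i)$, this is exactly an instance of the (AB6) axiom for $\Sp$, which holds because (AB6) holds in the $\infty$-category of spaces (as used in the proof of Proposition \ref{prop:products_in_Ind_C}) and $\Sp$ is built from spaces by finite limits and filtered colimits, both of which (AB6) is compatible with. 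I would invoke Proposition \ref{prop:products_in_Ind_C} (or rather the version of it for $\Sp$-valued presheaves, equivalently the fact that filtered colimits distribute over products in $\Sp$) directly here.

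**The main obstacle.** The only real subtlety is bookkeeping with the indexing: I need to be careful that the "natural map" in the statement is the one obtained by the universal property — i.e. that the composite of all the identifications above really is the canonical comparison map, not merely an abstract isomorphism between source and target. To handle this cleanly I would not pick explicit presentations at all, but instead observe that both sides, as functors of $(M_i)_i$ and $(N_i)_i$, preserve filtered colimits in each $M_i$ and each $N_i$ separately (the left side because $\prod_i$ and $-\tens{}-$ do, after using Proposition \ref{prop:product_of_ind_of_a_i}(2) and the (AB6) axiom to commute the product past the filtered colimits; the right side visibly, again using (AB6) for the outer product). Hence it suffices to check the comparison map is an equivalence when each $M_i$ and each $N_i$ is representable, i.e. $M_i = z_i^{\vee}$, $N_i = w_i$ for $z_i, w_i \in \cA_i$. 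In that case both sides are canonically $\prod_i \cA_i(z_i, w_i)$ and the map is the identity, which finishes the proof. The bulk of the work is thus the reduction to representables via (AB6), and I expect no genuine difficulty beyond making that reduction precise.
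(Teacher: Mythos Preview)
Your proposal is correct and follows essentially the same approach as the paper: write each $M_i$ and $N_i$ as filtered colimits of representables, use Proposition~\ref{prop:product_of_ind_of_a_i}(2) to identify the products, and then invoke (AB6) in $\Sp$ to commute the product with the filtered colimit over $\prod_i(J_i\times K_i)$. The paper's proof is the direct chain of isomorphisms you wrote in your ``Main computation'' paragraph; your additional ``reduce to representables by checking both sides preserve filtered colimits in each variable'' is a mild repackaging of the same argument rather than a genuinely different route.
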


\begin{proof}Let us write $M_i=\inddlim[j_i'\in J_i']x_{j_i'}',$ $N_i=\inddlim[j_i''\in J_i'']x_{j_i''}'',$ $i\in I,$ where $J_i'$ and $J_i''$ are filtered categories and $x_{j_i'}'\in\cA_i,$ $x_{j_i''}''\in\cA_i^{op}$. Then applying Proposition \ref{prop:product_of_ind_of_a_i} and (AB6) for $\Sp$ we get a chain of equivalences 
\begin{multline*}(\prod_i M_i)\tens{\prod_i\cA_i}(\prod_i N_i)\cong (\inddlim[(j_i')_i\in\prod_{i\in I}J_i'](x_{j_i'}')_i)\tens{\prod_i\cA_i}(\inddlim[(j_i'')_i\in\prod_{i\in I}J_i''](x_{j_i''}'')_i)\\ \cong
\indlim[((j_i')_i,(j_i'')_i)\in\prod_i J_i'\times\prod_i J_i'']\prod_i\cA_i(x_{j_i''}'',x_{j_i'}')\cong\prod_i\indlim[(j_i',j_i'')\in J_i'\times J_i'']\cA_i(x_{j_i''}'',x_{j_i'}')\\ \cong
\prodd[i] (\inddlim[j_i'\in J_i']x_{j_i'}')\tens{\cA_i}(\inddlim[j_i''\in J_i'']x_{j_i''}'')\cong \prod_i (M_i\tens{\cA_i}N_i).
 \end{multline*}
This proves the proposition.
\end{proof}

We now can deduce the statement about infinite products of homological epimorphisms. 

\begin{prop}\label{prop:products_of_hom_epi}Let $\{F_i:\cA_i\to\cB_i\}_{i\in I}$ be a family of homological epimorphisms between small stable categories. 

1) The functor $\prod_i F_i:\prod_i\cA_i\to\prod_i\cB_i$ is a homological epimorphism.

2) For a family of objects $\{M_i\in \Ind(\cA_i)\}_{i\in I},$ the object $\prod_i M_i$ is contained in $\ker(\Ind(\prod_i F_i))\subset\Ind(\prod_i\cA_i)$ if and only if each $M_i$ is contained in $\ker(\Ind(F_i))\subset\Ind(\cA_i).$ Moreover, the category $\ker(\Ind(\prod_i F_i))$ is generated by the objects of this form.\end{prop}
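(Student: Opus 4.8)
The plan is to deduce part 1) directly from the preceding results on morphism spectra in Calkin categories, and to handle part 2) by a combination of the tensor-product computation in Proposition \ref{prop:tensor_product_of_products} and the explicit description of $\omega_1$-compact objects in kernels from Proposition \ref{prop:generating_the_kernel}.

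For part 1), recall that a functor $F:\cA\to\cB$ between small stable idempotent-complete categories is a homological epimorphism if and only if the right adjoint to $\Ind(F):\Ind(\cA)\to\Ind(\cB)$ is fully faithful, equivalently if $\cB$-morphism spectra are computed by the formula $\Hom_{\cB}(F(x),F(y))\cong \Hom_{\cA}(x,y)\tens{A}B$-type descent, but most directly: $F$ is a homological epimorphism iff for the Calkin categories the functor $\Calk_{\kappa}(\cA)\to\Calk_{\kappa}(\cB)$ is fully faithful after Verdier localization, equivalently iff $\Ind(F)$ is a quotient functor. The cleanest route is: $\Ind(\prod_i F_i)$ factors as $\Ind(\prod_i\cA_i)\to\prod_i\Ind(\cA_i)\xto{\prod_i\Ind(F_i)}\prod_i\Ind(\cB_i)$, but this is not quite right since $\Ind(\prod_i\cA_i)\to\prod_i\Ind(\cA_i)$ is not an equivalence. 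Instead I would argue directly on morphisms: for $M,N\in\Ind(\prod_i\cA_i)$ the statement that $\Ind(\prod_i F_i)$ is a quotient functor is equivalent to the natural map on $\THH$-style morphism objects being an equivalence after passing to the $\cB$-side. Using Corollary \ref{cor:comm_square} together with Proposition \ref{prop:tensor_product_of_products}, the property ``$\Ind(F_i)$ is a quotient functor for every $i$'' implies, after writing out the morphism spectra in $\Ind(\prod_i\cB_i)$ as products over $i$ of morphism spectra in $\Ind(\cB_i)$ (which is how products work in $\Ind$ of a product, by Proposition \ref{prop:product_of_ind_of_a_i}), that the counit $\Ind(\prod_i F_i)\circ(\Ind(\prod_i F_i))^R\to\id$ is an equivalence. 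Hence $(\Ind(\prod_i F_i))^R$ is fully faithful, which is exactly the homological epimorphism condition for $\prod_i F_i$.

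For part 2), the ``if and only if'' on objects of the form $\prod_i M_i$ is immediate from the definition of the product functor $\prod_i$ combined with Corollary \ref{cor:comm_square}: the object $\Ind(\prod_i F_i)(\prod_i M_i)$ equals $\prod_i\Ind(F_i)(M_i)$ in $\Ind(\prod_i\cB_i)$, and a product of objects in $\Ind(\prod_i\cB_i)$ of the form $\prod_i(\cdot)$ vanishes iff each factor vanishes — this last point follows because the morphism spectra out of representables factor as products over $i$, by Proposition \ref{prop:product_of_ind_of_a_i} part 1). For the generation statement, I would invoke Proposition \ref{prop:generating_the_kernel}: the kernel $\ker(\Ind(\prod_i F_i))$ is $\omega_1$-compactly generated by objects of the form $\inddlim(z_1\xto{g_1}z_2\xto{g_2}\dots)$ with $z_n\in\prod_i\cA_i$ and $(\prod_i F_i)(g_n)=0$. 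Writing $z_n=(z_n^{(i)})_i$ and $g_n=(g_n^{(i)})_i$, the condition $(\prod_i F_i)(g_n)=0$ means $F_i(g_n^{(i)})=0$ for each $i$, so each sequential colimit $\inddlim_n z_n^{(i)}$ lies in $\ker(\Ind(F_i))$; but one must be careful that a sequential colimit in $\prod_i\cA_i$ is computed componentwise only at the level of $\Ind$ of the product — here one uses that $\prod_i$(the componentwise sequential colimits) $=\inddlim_n(z_n^{(i)})_i$ by Proposition \ref{prop:product_of_ind_of_a_i} part 2), so the generating object is exactly $\prod_i M_i$ with $M_i=\inddlim_n z_n^{(i)}\in\ker(\Ind(F_i))$. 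This shows the generators are of the asserted form and finishes the proof.

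The main obstacle I anticipate is part 1): making rigorous the claim that ``$\prod_i$ of quotient functors is a quotient functor'' without the false identification $\Ind(\prod_i\cA_i)\simeq\prod_i\Ind(\cA_i)$. The key technical input is that morphism spectra in $\Ind(\prod_i\cA_i)$ between a representable $(x_i)_i$ and an arbitrary product $\prod_i M_i$ are computed as $\prod_i\Hom_{\Ind(\cA_i)}((x_i)_i\text{-component}, M_i)$, and more generally the evaluation/$\THH$ pairing distributes over the product by Proposition \ref{prop:tensor_product_of_products}; once this is in hand, fully faithfulness of each $(\Ind(F_i))^R$ propagates to fully faithfulness of $(\Ind(\prod_i F_i))^R$ by testing on representables and then on all of $\Ind(\prod_i\cA_i)$ via colimits. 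Everything else is bookkeeping with the functor $\prod_i$ and the cited structural results.
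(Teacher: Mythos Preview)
Your approach to part 1) is essentially the paper's: the key input is Proposition \ref{prop:tensor_product_of_products}, and checking that the counit $\Ind(\prod_i F_i)\circ(\Ind(\prod_i F_i))^R\to\id$ is an isomorphism on representables $(y_i)_i\in\prod_i\cB_i$ amounts exactly to the computation
\[
\cB(F(-),(y_i)_i)\tens{\cA}\cB((x_i)_i,F(-))\;\cong\;\prod_i\bigl(\cB_i(F_i(-),y_i)\tens{\cA_i}\cB_i(x_i,F_i(-))\bigr)\;\cong\;\prod_i\cB_i(x_i,y_i),
\]
which is what the paper does. The ``if and only if'' in part 2) is likewise the same as the paper's, via Corollary \ref{cor:comm_square}.

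There is, however, a genuine gap in your argument for the generation statement. You claim that $\prod_i(\inddlim_n z_n^{(i)})=\inddlim_n (z_n^{(i)})_i$ by Proposition \ref{prop:product_of_ind_of_a_i} part 2). But that proposition says
\[
\prod_i\,\inddlim[n_i\in\N]z_{n_i}^{(i)}\;=\;\inddlim[(n_i)_i\in\prod_i\N](z_{n_i}^{(i)})_i,
\]
which is indexed by $\prod_i\N$, not by $\N$ along the diagonal. When $I$ is infinite the diagonal $\N\to\prod_i\N$ is \emph{not} cofinal (an element $(n_i)_i$ with $\sup_i n_i=\infty$ has no diagonal upper bound), so the two ind-objects need not agree. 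Thus your $\omega_1$-compact generator $\inddlim_n(z_n^{(i)})_i$ is not visibly of the form $\prod_i M_i$, and the argument does not close.

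The paper sidesteps this by using a different, more canonical set of generators: since $\prod_i F_i$ is a homological epimorphism (part 1)), the kernel $\ker(\Ind(\prod_i F_i))$ is generated by the objects $\Fiber\bigl(\cA(-,x)\to\cB(F(-),F(x))\bigr)$ for $x=(x_i)_i\in\prod_i\cA_i$. These fibers decompose \emph{on the nose} as $\prod_i\Fiber\bigl(\cA_i(-,x_i)\to\cB_i(F_i(-),F_i(x_i))\bigr)$, and each factor lies in $\ker(\Ind(F_i))$. No sequential-colimit juggling is required.
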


\begin{proof} Put $\cA:=\prod_i\cA_i,$ $\cB:=\prod_i\cB_i,$ $F:=\prod_i F_i.$ 

1) Take some families of objects $\{x_i\in\cB_i\}_{i\in I},$ $\{y_i\in\cB_i\}_{i\in I}.$ Since $F_i$ are homological epimorphisms, using Proposition \ref{prop:tensor_product_of_products}, we obtain equivalences
\begin{multline*}\cB(F(-),(y_i)_i)\tens{\cA}\cB((x_i)_i,F(-))\cong \prod_i(\cB_i(F_i(-),y_i)\tens{\cA_i}\cB_i(x_i,F_i(-)))\\ \cong
 \prod_i\cB_i(x_i,y_i)=\cB((x_i)_i,(y_i)_i).\end{multline*}
Hence, $F$ is a homological epimorphism.

2) The first statement holds even without the assumption that $F_i$ are homological epimorphisms, it follows from Corollary \ref{cor:comm_square}. For the ``moreover'' statement, note that the category $\ker(\Ind(F))$ is generated by the objects of the form $$\Fiber(\cA(-,x)\to\cB(F(-),F(x)))\in\Ind(\cA),\quad x=(x_i)_i\in\cA=\prod_i\cA_i.$$ But we have
$$\Fiber(\cA(-,x)\to\cB(F(-),F(x)))\cong\prod_i \Fiber(\cA_i(-,x_i)\to\cB(F_i(-),F_i(x_i))),$$ and $$\Fiber(\cA_i(-,x_i)\to\cB(F_i(-),F_i(x_i)))\in\ker(\Ind(F_i)).$$ This proves part 2).
\end{proof}

Now we can describe the products in the category $\Cat_{\st}^{\dual}.$

\begin{prop}\label{prop:products_in_Cat^dual} Let $\{\cC_i\}_{i\in I}$ be a collection of dualizable categories.

1) We have a short exact sequence \begin{equation}\label{eq:exact_seq_for_product}0\to\prod_i^{\dual}\cC_i\to\Ind(\prod_i\cC_i^{\omega_1})\to\Ind(\prod_i\Calk_{\omega_1}^{\cont}(\cC_i))\to 0.\end{equation}

2) If the categories $\cC_i$ are compactly generated, then we have an equivalence $$\prod_i^{\dual}\cC_i\cong\Ind(\prod_i\cC_i^{\omega}).$$
In other words, the functor $\Ind:\Cat^{\perf}\to\Cat_{\st}^{\dual}$ commutes with products.
\end{prop}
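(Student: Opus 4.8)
\textbf{Proof proposal for Proposition \ref{prop:products_in_Cat^dual}.}

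The plan is to reduce part 1) to the already-established description of general limits (Theorem \ref{th:limit_of_dualizable}), specialized to a discrete indexing category $I$, and then deduce part 2) as an immediate corollary once we know that the terms $\Calk^{\cont}_{\omega_1}(\cC_i)$ vanish for compactly generated $\cC_i$.

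For part 1), first recall from Theorem \ref{th:limit_of_dualizable} (with $\kappa=\omega_1$) that the product $\prod_i^{\dual}\cC_i$, being the limit over the discrete category $I$, is given by
$$\prod_i^{\dual}\cC_i=\ker^{\dual}\bigl(\Ind(\prolim[i]\cC_i^{\omega_1})\to \Ind(\prolim[i]\Calk_{\omega_1}^{\cont}(\cC_i))\bigr).$$
Since $I$ is discrete, the limits $\prolim[i]\cC_i^{\omega_1}$ and $\prolim[i]\Calk_{\omega_1}^{\cont}(\cC_i)$ are just the products $\prod_i\cC_i^{\omega_1}$ and $\prod_i\Calk_{\omega_1}^{\cont}(\cC_i)$ in $\Cat^{\perf}$. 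Next I would observe that the functor $\Ind(\prod_i\cC_i^{\omega_1})\to\Ind(\prod_i\Calk_{\omega_1}^{\cont}(\cC_i))$ is $\Ind$ applied to the product $\prod_i F_i$ of the homological epimorphisms $F_i:\cC_i^{\omega_1}\to\Calk_{\omega_1}^{\cont}(\cC_i)$ (these are homological epimorphisms because each fits into the short exact sequence $0\to\cC_i\to\Ind(\cC_i^{\omega_1})\to\Ind(\Calk_{\omega_1}^{\cont}(\cC_i))\to 0$). By Proposition \ref{prop:products_of_hom_epi}.1), $\prod_i F_i$ is again a homological epimorphism, so $\Ind(\prod_i F_i)$ is a quotient functor and in particular the map in the displayed equation is a localization with dualizable kernel; hence $\ker^{\dual}$ agrees with the ordinary kernel $\ker$ (this is exactly the situation of Proposition \ref{prop:nice_pullbacks}.2), or one can invoke Proposition \ref{prop:fiber_product_hom_epi} directly). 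This yields the short exact sequence \eqref{eq:exact_seq_for_product}. (Alternatively, and perhaps more cleanly, one can build the sequence directly: $\hat{\cY}$ induces strongly continuous functors $\cC_i\to\Ind(\cC_i^{\omega_1})$, whose product $\prod_i\cC_i\to\prod_i\Ind(\cC_i^{\omega_1})$ sits inside $\Ind(\prod_i\cC_i^{\omega_1})$ via the functor $\prod_i$ of Proposition \ref{prop:product_of_ind_of_a_i}; one checks using Proposition \ref{prop:products_of_hom_epi}.2) that its image is precisely the kernel of $\Ind(\prod_i F_i)$ and that it is closed under the colimits computed in $\Ind(\prod_i\cC_i^{\omega_1})$, so it is the dualizable kernel, which by definition is $\prod_i^{\dual}\cC_i$.)

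For part 2), suppose each $\cC_i$ is compactly generated, i.e. $\cC_i\simeq\Ind(\cC_i^{\omega})$. Then by Proposition \ref{prop:Calk_of_cg} the continuous $\omega_1$-Calkin category $\Calk_{\omega_1}^{\cont}(\cC_i)\simeq\Calk_{\omega_1}(\cC_i^{\omega})=(\Ind(\cC_i^{\omega})^{\omega_1}/\cC_i^{\omega})^{\Kar}$ fits into $0\to\cC_i^{\omega}\to\cC_i^{\omega_1}\to\Calk_{\omega_1}^{\cont}(\cC_i)\to 0$, so the third term of \eqref{eq:exact_seq_for_product} is $\Ind(\prod_i\Calk_{\omega_1}(\cC_i^{\omega}))=\Ind((\prod_i\cC_i^{\omega_1})/(\prod_i\cC_i^{\omega}))$ — using that products commute with Verdier quotients of this shape in $\Cat^{\perf}$, which follows from the ``moreover'' statement of Proposition \ref{prop:products_of_hom_epi}.2) applied to the homological epimorphisms $\cC_i^{\omega}\to\cC_i^{\omega_1}$. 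Feeding this into \eqref{eq:exact_seq_for_product} and using that $\Ind$ applied to a short exact sequence of small stable categories is a short exact sequence of dualizable categories (so the kernel of $\Ind(\prod_i\cC_i^{\omega_1})\to\Ind((\prod_i\cC_i^{\omega_1})/(\prod_i\cC_i^{\omega}))$ is $\Ind(\prod_i\cC_i^{\omega})$), we get $\prod_i^{\dual}\cC_i\simeq\Ind(\prod_i\cC_i^{\omega})$. Finally, since $(\Ind(\prod_i\cC_i^{\omega}))^{\omega}=\prod_i\cC_i^{\omega}$ recovers the product in $\Cat^{\perf}$ under the equivalence $\Cat^{\perf}\simeq\Pr^L_{\st,\omega}$, this says precisely that $\Ind:\Cat^{\perf}\to\Cat_{\st}^{\dual}$ preserves products.

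The main obstacle I anticipate is verifying carefully that $\ker^{\dual}$ coincides with the ordinary kernel in the relevant places — i.e. that the localization property is not lost when passing to the dualizable world. This is where Proposition \ref{prop:products_of_hom_epi} (that the product of homological epimorphisms is a homological epimorphism, proved via the $\THH$-type computation of Proposition \ref{prop:tensor_product_of_products} and the (AB6) axiom in $\Sp$) does the real work; without it one would only get a fully faithful inclusion $\prod_i^{\dual}\cC_i\hookrightarrow\Ind(\prod_i\cC_i^{\omega_1})\times_{\cdots}$ rather than an honest short exact sequence. Everything else is bookkeeping with the functor $\prod_i$ of Proposition \ref{prop:product_of_ind_of_a_i} and the exactness of $\Ind$.
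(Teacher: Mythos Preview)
Your proof is correct and follows essentially the same approach as the paper. For part 1) it is identical: invoke Theorem \ref{th:limit_of_dualizable} for the discrete index set and use Proposition \ref{prop:products_of_hom_epi} to upgrade $\ker^{\dual}$ to $\ker$. For part 2) the paper simply cites \cite[Lemma 5.2]{KW19} (products of short exact sequences in $\Cat^{\perf}$ are short exact) applied to the sequences $0\to\cC_i^{\omega}\to\cC_i^{\omega_1}\to\Calk_{\omega_1}^{\cont}(\cC_i)\to 0$, whereas you derive this internally from Proposition \ref{prop:products_of_hom_epi}; either route works.

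One slip: you refer to ``the homological epimorphisms $\cC_i^{\omega}\to\cC_i^{\omega_1}$'', but these are fully faithful inclusions, not homological epimorphisms. You mean the quotient maps $\cC_i^{\omega_1}\to\Calk_{\omega_1}(\cC_i^{\omega})$. With that correction your argument goes through: Proposition \ref{prop:products_of_hom_epi}.2) shows $\ker(\Ind(\prod_i F_i))$ is generated by products $\prod_i M_i$ with $M_i\in\Ind(\cC_i^{\omega})$, and by Proposition \ref{prop:product_of_ind_of_a_i} each such product already lies in $\Ind(\prod_i\cC_i^{\omega})$, so the kernel is exactly $\Ind(\prod_i\cC_i^{\omega})$.
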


\begin{proof}By Theorem \ref{th:limit_of_dualizable}, we have an equivalence $$\prod_i^{\dual}\cC_i\simeq\ker^{\dual}(\Ind(\prod_i\cC_i^{\omega_1})\to\Ind(\prod_i\Calk_{\omega_1}^{\cont}(\cC_i))).$$
Now part 1) follows from Proposition \ref{prop:products_of_hom_epi}, applied to the homological epimorphisms $\cC_i^{\omega_1}\to\Calk_{\omega_1}^{\cont}(\cC_i).$

Part 2) follows from the fact that an infinite product of short exact sequences in $\Cat^{\perf}$ is a short exact sequence \cite[Lemma 5.2]{KW19}, applied to the sequences
\begin{equation*}0\to\cC_i^{\omega}\to\cC_i^{\omega_1}\to\Calk_{\omega_1}^{\cont}(\cC_i)\to 0.\qedhere\end{equation*}\end{proof}

Now we want to prove that an infinite product of short exact sequences in $\Cat_{\st}^{\dual}$ is again a short exact sequence. Here the non-trivial part is the weak version of (AB4*) for $\Cat_{\st}^{\dual}:$ an infinite product of epimorphisms (quotient functors) in $\Cat_{\st}^{\dual}$ is an epimorphism. We need the following observation.

\begin{lemma}\label{lem:dual_of_products_of_dualizable} 1) Let $\{\cC_i\}_{i\in I}$ be a collection of dualizable categories. We have a fully faithful strongly continuous functor
$$\Phi:(\prod_i^{\dual}\cC_i)^{\vee}\to\Ind(\prod_i(\cC_i^{\omega_1})^{op}).$$ The image of $\Phi$ contains all objects of the form $\prod_i\ev_{\cC_i}(-,x_i),$ where $x_i\in\cC_i^{\vee}$ and $\ev_{\cC_i}:\cC_i\otimes\cC_i^{\vee}\to \Sp$ is the evaluation functor for $\cC_i.$ Moreover, the image of $\Phi$ is generated by the objects of this form.

2) Let $\{F_i:\cC_i\to\cD_i\}_{i\in I}$ be a family of strongly continuous functors between dualizable categories. Then the following square commutes:
$$
\begin{CD}
(\prod_i^{\dual}\cD_i)^{\vee} @>{G}>> (\prod_i^{\dual}\cC_i)^{\vee}\\
@V{\Psi}VV @V{\Phi}VV\\
\Ind(\prod_i(\cD_i^{\omega_1})^{op}) @>{G'}>> \Ind(\prod_i(\cC_i^{\omega_1})^{op}).
\end{CD}$$
Here the functors $\Phi$ and $\Psi$ are as in 1). The functor $G$ (resp. $G'$) is $(\prod_i^{\dual} F_i)^{\vee}$ (resp. the right adjoint to $\Ind(\prod_i (F_i^{\omega_1})^{op})$).\end{lemma}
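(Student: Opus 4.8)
The plan is to prove both parts by reducing to the description of dualizable limits from Theorem \ref{th:limit_of_dualizable} together with the behaviour of $(-)^{\vee}$ on kernels, and then unwinding the explicit formulas for the functor $(-)^{\vee}$ on objects (Remark \ref{rem:functor_-^vee}). For part 1), I would start from the short exact sequence \eqref{eq:exact_seq_for_product} describing $\prod_i^{\dual}\cC_i$ as $\ker^{\dual}$ of the homological epimorphism $\Ind(\prod_i\cC_i^{\omega_1})\to\Ind(\prod_i\Calk_{\omega_1}^{\cont}(\cC_i))$. Since $(-)^{\vee}:\Cat_{\st}^{\dual}\to\Cat_{\st}^{\dual}$ is a covariant involution commuting with all limits and colimits (Subsection \ref{ssec:duality_covariant}), and since $\Ind(\cA)^{\vee}=\Ind(\cA^{op})$, applying $(-)^{\vee}$ to \eqref{eq:exact_seq_for_product} gives a short exact sequence
\begin{equation*}0\to\Ind(\prod_i(\Calk_{\omega_1}^{\cont}(\cC_i))^{op})\to\Ind(\prod_i(\cC_i^{\omega_1})^{op})\to(\prod_i^{\dual}\cC_i)^{\vee}\to 0,\end{equation*}
so in particular the dual $(\prod_i^{\dual}\cC_i)^{\vee}$ is a quotient of $\Ind(\prod_i(\cC_i^{\omega_1})^{op})$, and its own dual sits inside. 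To get the fully faithful strongly continuous functor $\Phi$ in the other direction I would instead dualize the fully faithful part: $\hat{\cY}:(\prod_i^{\dual}\cC_i)^{\vee}\to\Ind((\prod_i^{\dual}\cC_i)^{\vee,\omega_1})$ composed with the observation that $(\prod_i^{\dual}\cC_i)^{\vee,\omega_1}$ maps to $\prod_i(\cC_i^{\omega_1})^{op}$ via the dual of the structural functors. Concretely, the structural (projection) functors $p_i:\prod_j^{\dual}\cC_j\to\cC_i$ are strongly continuous, hence so are the duals $p_i^{\vee}:\cC_i^{\vee}\to(\prod_j^{\dual}\cC_j)^{\vee}$, and these assemble into a functor whose $\Ind$-extension produces $\Phi$; fully faithfulness of $\Phi$ follows from strong continuity of $\prod_i^{\dual}\cC_i\to\Ind(\prod_i\cC_i^{\omega_1})$ and Proposition \ref{prop:strong_continuity_via_hat_Y}.

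For the claim about which objects lie in the image of $\Phi$, I would use the explicit description of duals from Remark \ref{rem:functor_-^vee}: for $x\in\prod_i^{\dual}\cC_i$ the object $x^{\vee}$ corresponds to the functor $\Hom_{\Ind}(\cY(x),\hat{\cY}(-))$. Since $\hat{\cY}$ on the product is computed componentwise on the subcategory $\Ind(\prod_i\cC_i^{\omega_1})$ (by the componentwise formula for $\hat{\cY}$ on ind-completions), one computes that for $x=(x_i)_i$ with each $x_i\in\cC_i^{\vee}$, the value of $\Phi$ is precisely $\prod_i\ev_{\cC_i}(-,x_i)$, using the description of $\hat{\cY}(x)$ via $\ev(x\boxtimes(-)^{\vee})$ from the remark following Remark \ref{rem:functor_-^vee} together with Proposition \ref{prop:product_of_ind_of_a_i}. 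That the image is \emph{generated} by such objects then follows because the objects $x^{\vee}$ for $x\in(\prod_i^{\dual}\cC_i)^{\omega_1}$ generate the target by colimits, and each such $x$ is, after passing through the presentation of $\prod_i^{\dual}\cC_i$, approximated by products of $\omega_1$-compact objects from the factors — this is where one invokes Proposition \ref{prop:products_of_hom_epi}(2) applied to the homological epimorphisms $\cC_i^{\omega_1}\to\Calk_{\omega_1}^{\cont}(\cC_i)$, which says exactly that $\ker(\Ind(\prod_i(\cdot)))$ is generated by componentwise products.

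For part 2), the strategy is pure naturality: both $\Phi$ and $\Psi$ were constructed as (ind-extensions of) duals of structural functors, and $G=(\prod_i^{\dual}F_i)^{\vee}$ is the dual of a map of such limit diagrams. The square commutes on the nose because dualization $(-)^{\vee}$ is a functor, so it takes the commuting square relating the projections $p_i$, the functor $\prod_i^{\dual}F_i$, and the componentwise $F_i$ (which commutes because $\prod_i^{\dual}F_i$ is defined via its compatibility with projections) to the corresponding commuting square of duals; passing to $\Ind$ and using that $G'$ is characterized as the right adjoint to $\Ind(\prod_i(F_i^{\omega_1})^{op})$ — which matches $\Ind$ applied to the dual of $F_i^{\omega_1}$ by the identity $F^{\vee,L}=(F^R)^{\vee}$ from Subsection \ref{ssec:duality_covariant} — identifies the remaining corner. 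The only care needed is to check that the identification $(\prod_i^{\dual}\cC_i)^{\vee,\omega_1}\to\prod_i(\cC_i^{\omega_1})^{op}$ used to define $\Phi$ is compatible with $F_i$, which again is automatic from functoriality of $(-)^{\omega_1}$ on strongly continuous functors.

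I expect the main obstacle to be the precise verification that $\Phi$ sends $x^{\vee}$ to $\prod_i\ev_{\cC_i}(-,x_i)$ — i.e. matching the two descriptions of the dual object through the componentwise formula for $\hat{\cY}$ on $\Ind(\prod_i\cC_i^{\omega_1})$ — and then the generation statement, which requires combining the presentation \eqref{eq:exact_seq_for_product}, the dualization of short exact sequences, and Proposition \ref{prop:products_of_hom_epi}(2) to show no objects are missed. Everything else is formal manipulation of adjoints and the involution $(-)^{\vee}$.
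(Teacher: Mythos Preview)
Your construction of $\Phi$ in part 1) contains a genuine error. You correctly cite that $(-)^{\vee}$ is a \emph{covariant} involution on $\Cat_{\st}^{\dual}$ (Subsection~\ref{ssec:duality_covariant}), but then you write down the dualized short exact sequence with the terms reversed, placing $(\prod_i^{\dual}\cC_i)^{\vee}$ in the cokernel position. A covariant autoequivalence preserves the order of a short exact sequence; applying it to \eqref{eq:exact_seq_for_product} yields
\[
0\to(\textstyle\prod_i^{\dual}\cC_i)^{\vee}\xrightarrow{\ \Phi\ }\Ind(\textstyle\prod_i(\cC_i^{\omega_1})^{op})\to\Ind(\textstyle\prod_i(\Calk_{\omega_1}^{\cont}(\cC_i))^{op})\to 0,
\]
and $\Phi$ is simply $j^{\vee,L}$ for $j$ the inclusion. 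This is the paper's one-line construction, and it immediately gives fully-faithfulness and identifies $\im(\Phi)$ with the kernel of the right-hand map. Your alternative route via $\hat{\cY}$ and ``duals of projections'' is unnecessary and, as written, does not produce a functor in the required direction (your $p_i^{\vee}:\cC_i^{\vee}\to(\prod_j^{\dual}\cC_j)^{\vee}$ go the wrong way). Once $\Phi$ is built correctly, your use of Proposition~\ref{prop:products_of_hom_epi}(2) for the generation claim is exactly right and matches the paper.

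For part 2), ``pure naturality'' is not enough. Applying the covariant involution to the obvious naturality square gives $\Psi\circ(\prod_i F_i)^{\vee,L}\cong\Ind(\prod_i(F_i^{\omega_1})^{op})\circ\Phi$, a square of \emph{left} adjoints. The square in the statement has the horizontal arrows $G$ and $G'$ pointing the other way: they are the \emph{right} adjoints of these. Passing from a commuting square of left adjoints to the corresponding square of right adjoints is a Beck--Chevalley condition, not a formality. What makes it hold here is that $\Phi$ and $\Psi$ are fully faithful: once you know $G'\circ\Psi$ lands in $\im(\Phi)$, the factorization is forced to be $\Phi^R G'\Psi=G\Psi^R\Psi=G$. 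The paper verifies exactly this image containment on the generators from part 1), computing $G'(\prod_i\ev_{\cD_i}(-,x_i))\cong\prod_i\ev_{\cC_i}(-,F_i^{\vee}(x_i))$. Your proposal omits this step.
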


\begin{proof}1) The functor $\Phi$ is obtained by applying $(-)^{\vee,L}$ to the inclusion in the short exact sequence \eqref{eq:exact_seq_for_product}. Note that the (fully faithful) functor $\cC_i^{\vee}\to \Ind((\cC_i^{\omega_1})^{op})=\Fun(\cC_i^{\omega_1},\Sp)$ takes $x$ to the functor $\ev_{\cC_i}(-,x).$ The assertion now follows from Proposition \ref{prop:products_of_hom_epi}, part 2), applied to the homological epimorphisms $(\cC_i^{\omega_1})^{op}\to\Calk_{\omega_1}^{\cont}(\cC_i)^{op}.$

2) We need to show that the image of $G'\circ\Psi$ is contained in the image of $\Phi.$ By part 1), it suffices to show that for any $x_i\in\cD_i^{\vee},$ the object $G'(\prod_i\ev_{\cD_i}(-,x_i))\in\Ind(\prod_i(\cC_i^{\omega_1})^{op})$ is contained in the image of $\Phi.$ We have  $$G'(\prod_i\ev_{\cD_i}(-,x_i))\cong \prod_i\ev_{\cD_i}(F_i(-),x_i)\cong \prod_i\ev_{\cC_i}(-,F_i^{\vee}(x_i)).$$ By part 1), the latter object is in the image of $\Phi.$ This proves the proposition.\end{proof}

We can now deduce that the weak (AB4*) holds in the category $\Cat_{\st}^{\dual}.$

\begin{prop}\label{prop:AB4*_for_Cat^dual} Consider a collection $\{0\to\cA_i\to\cB_i\to\cC_i\to 0\}$ of short exact sequences in $\Cat_{\st}^{\dual}.$ Then the sequence
$$0\to\prod_i^{\dual}\cA_i\to\prod_i^{\dual}\cB_i\to\prod_i^{\dual}\cC_i\to 0$$ is also short exact.\end{prop}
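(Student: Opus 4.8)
The plan is to reduce the statement to the single assertion that $\prod_i^{\dual}G_i\colon\prod_i^{\dual}\cB_i\to\prod_i^{\dual}\cC_i$ is a quotient functor, and then to prove that assertion using Lemma~\ref{lem:dual_of_products_of_dualizable} together with Proposition~\ref{prop:products_of_hom_epi}. The step I expect to be the main obstacle is an intermediate one: showing that the induced functor $G_i^{\omega_1}\colon\cB_i^{\omega_1}\to\cC_i^{\omega_1}$ is a homological epimorphism, which is the analogue at the cardinal $\omega_1$ of the Thomason--Neeman localization theorem.

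\emph{Reduction.} A short exact sequence in $\Cat_{\st}^{\dual}$ is precisely the datum of a strongly continuous quotient functor $G_i$ with $\cA_i=\ker(G_i)$. Since $\prod_i^{\dual}(-)$ is a limit in $\Cat_{\st}^{\dual}$, it commutes with kernels, so $\prod_i^{\dual}\cA_i\simeq\ker^{\dual}(\prod_i^{\dual}G_i)$; in particular $\prod_i^{\dual}\cA_i\to\prod_i^{\dual}\cB_i$ is fully faithful and its composite with $\prod_i^{\dual}G_i$ vanishes. Conversely, any strongly continuous quotient functor $H$ between dualizable categories fits into a short exact sequence $0\to\ker(H)\to(\text{source})\to(\text{target})\to 0$ in $\Cat_{\st}^{\dual}$: the inclusion $\ker(H)\hookrightarrow(\text{source})$ has the right adjoint $x\mapsto\Fiber(x\to H^RHx)$, which is continuous because $H^R$ is, hence is strongly continuous, so Proposition~\ref{prop:ses_Pr^LL} shows $\ker(H)$ is dualizable and $\ker(H)=\ker^{\dual}(H)$. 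Thus it suffices to prove that $H:=\prod_i^{\dual}G_i$ is a quotient functor.

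\emph{The local input.} I would argue that $G_i^{\omega_1}$ is a homological epimorphism as follows. Being strongly continuous, $G_i$ preserves $\omega_1$-compact objects, so $G_i^{\omega_1}$ is defined; and $\cA_i$, being dualizable, is generated as a localizing subcategory of $\cB_i$ by the $\omega_1$-compact objects $\cA_i^{\omega_1}\subset\cB_i^{\omega_1}$. Since the inclusion $\Pr^L_{\st,\omega_1}\to\Pr^L_{\st}$ preserves colimits and $\omega_1$-small limits (Remark after Proposition~\ref{prop:generating_the_kernel}), the sequence $0\to\cA_i\to\cB_i\to\cC_i\to 0$ remains short exact in $\Pr^L_{\st,\omega_1}$; applying the equivalence $(-)^{\omega_1}$ from $\Pr^L_{\st,\omega_1}$ to the category of small stable $\infty$-categories with countable coproducts produces a short exact sequence $0\to\cA_i^{\omega_1}\to\cB_i^{\omega_1}\to\cC_i^{\omega_1}\to 0$ there. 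Because $\cA_i^{\omega_1}$ is closed under countable coproducts in $\cB_i^{\omega_1}$, the Verdier quotient $\cB_i^{\omega_1}/\cA_i^{\omega_1}$ inherits countable coproducts, hence is idempotent-complete and coincides with $\cC_i^{\omega_1}$; as a Verdier quotient, $G_i^{\omega_1}$ is then a homological epimorphism. (This localization statement at $\omega_1$ is the main technical point of the proof; everything else is bookkeeping.)

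\emph{Assembling.} Granting this, Proposition~\ref{prop:products_of_hom_epi}(1) gives that $\prod_i(G_i^{\omega_1})^{op}$ is a homological epimorphism --- homological epimorphisms are epimorphisms in $\Cat^{\perf}$, hence preserved by the involution $\cA\mapsto\cA^{op}$ --- which is to say the right adjoint $G'$ of $\Ind\bigl(\prod_i(G_i^{\omega_1})^{op}\bigr)$ is fully faithful. Applying Lemma~\ref{lem:dual_of_products_of_dualizable}(2) to the family $\{G_i\}$, its commuting square has fully faithful vertical legs $\Phi,\Psi$ and identifies $\Phi\circ(\prod_i^{\dual}G_i)^{\vee}$ with $G'\circ\Psi$; since $\Phi$, $\Psi$ and $G'$ are all fully faithful, so is $(\prod_i^{\dual}G_i)^{\vee}$. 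Finally I would use that a strongly continuous functor $F$ between dualizable categories is a quotient functor if and only if $F^{\vee}$ is fully faithful: applying $(-)^{\vee}$ to the counit $FF^R\xrightarrow{\ \sim\ }\id$ and using $F^{\vee,L}=(F^R)^{\vee}$ identifies that counit with the counit of the adjunction $F^{\vee,L}\dashv F^{\vee}$, which is an equivalence exactly when $F^R$ is fully faithful. Hence $H=\prod_i^{\dual}G_i$ is a quotient functor, and combining with the reduction above, $0\to\prod_i^{\dual}\cA_i\to\prod_i^{\dual}\cB_i\to\prod_i^{\dual}\cC_i\to 0$ is short exact, as claimed.
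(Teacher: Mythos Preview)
Your proof is correct and follows the same strategy as the paper: reduce to showing $(\prod_i^{\dual}G_i)^{\vee}$ is fully faithful, then use the commutative square of Lemma~\ref{lem:dual_of_products_of_dualizable}(2) together with the fully faithfulness of the vertical arrows and of the bottom arrow $G'$. The paper simply asserts that the lower horizontal arrow is fully faithful without comment; you have supplied the argument the paper omits, namely that each $G_i^{\omega_1}$ is a Verdier quotient (hence a homological epimorphism) and then Proposition~\ref{prop:products_of_hom_epi} handles the product.
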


\begin{proof} The only statement that requires a proof is that the functor $\prod_i^{\dual}\cB_i\to\prod_i^{\dual}\cC_i$ is a quotient functor.  Equivalently, we need to show that the dual functor $(\prod_i^{\dual}\cC_i)^{\vee}\to(\prod_i^{\dual}\cB_i)^{\vee}$ is fully faithful. By Lemma \ref{lem:dual_of_products_of_dualizable}, the latter functor is the upper horizontal arrow in a commutative square
$$
\begin{CD}
(\prod_i^{\dual}\cC_i)^{\vee} @>>> (\prod_i^{\dual}\cB_i)^{\vee}\\
@VVV @VVV\\
\Ind(\prod_i(\cC_i^{\omega_1})^{op}) @>>> \Ind(\prod_i(\cB_i^{\omega_1})^{op}).
\end{CD}
$$
Here both vertical arrows are fully faithful, and so is the lower horizontal arrow. Hence, the upper horizontal arrow is also fully faithful, as required.\end{proof} 

It turns out that the (AB6) axiom holds in the category $\Cat_{\st}^{\dual}$ itself.

\begin{prop}\label{prop:AB6_for_Cat^dual} The axiom (AB6) holds in $\Cat_{\st}^{\dual}.$ That is, for any filtered categories $J_i,$ $i\in I,$ for any functors $J_i\to\Cat_{\st}^{\dual},$ $j_i\mapsto \cC_{j_i},$ the natural functor
\begin{equation}\label{eq:comparison_for_AB6}\indlim[(j_i)_i\in\prod_i J_i]^{\cont}\prod_i^{\dual}\cC_{j_i}\to \prod_i^{\dual}\indlim[j_i\in J_i]^{\cont}\cC_{j_i}\end{equation}
is an equivalence.
\end{prop}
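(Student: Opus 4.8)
The strategy is to reduce the (AB6) statement for $\Cat_{\st}^{\dual}$ to the corresponding statement for the abelian (or rather, spectral) level, using the short exact sequence \eqref{eq:exact_seq_for_product} that expresses an infinite product as a kernel, together with Proposition \ref{prop:Calkin_construction_accessible} on the commutation of the Calkin construction with sufficiently filtered colimits. First I would use Proposition \ref{prop:weak_AB5_Pr^LL} (weak (AB5)): a filtered colimit of the short exact sequences \eqref{eq:exact_seq_for_product} (one for each point of $\prod_i J_i$) is again a short exact sequence, so the left-hand side of \eqref{eq:comparison_for_AB6} sits in a short exact sequence with middle and right terms
$$\indlim[(j_i)_i]^{\cont}\Ind\Bigl(\prod_i\cC_{j_i}^{\omega_1}\Bigr)\qquad\text{and}\qquad \indlim[(j_i)_i]^{\cont}\Ind\Bigl(\prod_i\Calk_{\omega_1}^{\cont}(\cC_{j_i})\Bigr).$$
On the other hand, the right-hand side of \eqref{eq:comparison_for_AB6} fits into the sequence \eqref{eq:exact_seq_for_product} for the categories $\indlim[j_i]^{\cont}\cC_{j_i}$, whose middle and right terms are
$$\Ind\Bigl(\prod_i(\indlim[j_i]^{\cont}\cC_{j_i})^{\omega_1}\Bigr)\qquad\text{and}\qquad \Ind\Bigl(\prod_i\Calk_{\omega_1}^{\cont}(\indlim[j_i]^{\cont}\cC_{j_i})\Bigr).$$
So it suffices to produce compatible equivalences between the middle terms and between the right terms; since the right terms have exactly the same shape as the middle ones (replacing $\cC_{j_i}^{\omega_1}$ by $\Calk_{\omega_1}^{\cont}(\cC_{j_i})$, which by Proposition \ref{prop:Calkin_construction_accessible} also commutes with $\omega_1$-filtered colimits), a single argument handles both.

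The core claim is therefore: for small stable categories $\cA_{j_i}$ ($i\in I$, $j_i\in J_i$ with $J_i$ filtered), the natural functor
$$\indlim[(j_i)_i\in\prod_i J_i]^{\cont}\Ind\Bigl(\prod_i\cA_{j_i}\Bigr)\;\longrightarrow\;\Ind\Bigl(\prod_i\bigl(\indlim[j_i\in J_i]\cA_{j_i}\bigr)\Bigr)$$
is an equivalence, where on the left the colimit is taken in $\Pr^L_{\st}$ (equivalently, in $\Cat_{\st}^{\cg}$), and on the right we use that $\prod_i$ of a filtered colimit of small stable categories is computed. Because $\Ind:\Cat^{\perf}\to\Cat_{\st}^{\cg}$ is an equivalence and commutes with filtered colimits (Proposition \ref{prop:compact_objects_in_filtered_colimits}) as well as with products (Proposition \ref{prop:products_in_Cat^dual}, part 2), this reduces to the purely small-categorical statement that, in $\Cat^{\perf}$,
$$\indlim[(j_i)_i\in\prod_i J_i]\prod_i\cA_{j_i}\;\xrightarrow{\ \sim\ }\;\prod_i\indlim[j_i\in J_i]\cA_{j_i},$$
i.e. that the category $\Cat^{\perf}$ itself satisfies (AB6). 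This last fact I would prove by reducing to mapping spaces: a functor in $\Cat^{\perf}$ is an equivalence iff it is essentially surjective and fully faithful, and by the argument of Proposition \ref{prop:presentability_of_Pr_kappa}(3) this can be checked on the spaces $\cA^{\simeq}$ and on the hom-spectra. On hom-spectra, (AB6) holds in $\Sp$ (indeed in $\cS$, and hence in $\Sp$ by stabilization), and filtered colimits commute with finite limits in $\Cat^{\perf}$, so both essential surjectivity and full faithfulness of the comparison functor follow by chasing the explicit descriptions of products and filtered colimits of small stable categories. (Concretely: objects of $\prod_i\indlim[j_i]\cA_{j_i}$ are tuples $(x_i)$ with each $x_i$ coming from some $\cA_{j_i}$; by $\omega$-filteredness of $\prod_i J_i$ this tuple lifts to some $\prod_i\cA_{j_i}$ after passing far enough, giving essential surjectivity; and hom-spectra are computed via the (AB6) formula in $\Sp$.)

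The main obstacle I expect is bookkeeping the compatibility of the two short exact sequences — i.e. checking that the equivalence constructed on middle terms and the one on right terms are compatible with the transition maps and with the maps in \eqref{eq:exact_seq_for_product}, so that they induce an equivalence on fibers. This is where one must be careful that the functor $\Calk_{\omega_1}^{\cont}$ is genuinely functorial and that the natural transformation $\Ind(\prod_i\cC^{\omega_1})\to\Ind(\prod_i\Calk_{\omega_1}^{\cont}(\cC))$ is natural in the colimit variable; both follow from Subsection \ref{ssec:Calkin}, but assembling them into a map of $\prod_i J_i$-shaped diagrams of short exact sequences requires some care. Once the two comparison functors are known to be equivalences and compatible, the five-lemma-type argument for fibers in $\Pr^L_{\st}$ (using that the inclusion $\Cat_{\st}^{\dual}\to\Pr^L_{\st}$ preserves both the relevant colimits, by Proposition \ref{prop:colimits_of_dualizable}, and the relevant kernels as described in Theorem \ref{th:limit_of_dualizable}) finishes the proof. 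A clean way to package the compatibility is to note that both the left and right sides of \eqref{eq:comparison_for_AB6}, together with their defining short exact sequences, are obtained by applying colimit-preserving and product-preserving functors to the small-categorical data, so the comparison is induced by a single map of diagrams in $\Cat^{\perf}$, and the whole statement collapses to (AB6) in $\Cat^{\perf}$ plus the (already cited) commutation properties.
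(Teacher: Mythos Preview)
Your overall strategy is the same as the paper's: resolve both sides of \eqref{eq:comparison_for_AB6} by compactly generated categories via the Calkin short exact sequence, then reduce to (AB6) in $\Cat^{\perf}$. There is, however, a genuine gap in how you set up the bottom row.

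You write that the right-hand side of \eqref{eq:comparison_for_AB6} fits into the sequence \eqref{eq:exact_seq_for_product} applied to $\indlim[j_i]^{\cont}\cC_{j_i}$, with middle term $\Ind\bigl(\prod_i(\indlim[j_i]^{\cont}\cC_{j_i})^{\omega_1}\bigr)$, and then in your ``core claim'' you silently replace $(\indlim[j_i]^{\cont}\cC_{j_i})^{\omega_1}$ by $\indlim[j_i]\cC_{j_i}^{\omega_1}$. But the functor $(-)^{\omega_1}:\Cat_{\st}^{\dual}\to\Cat^{\perf}$ only commutes with $\omega_1$-filtered colimits, not with arbitrary filtered colimits (and you even note this for $\Calk_{\omega_1}^{\cont}$ via Proposition~\ref{prop:Calkin_construction_accessible}). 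The natural functor $\indlim[j_i]\cC_{j_i}^{\omega_1}\to(\indlim[j_i]^{\cont}\cC_{j_i})^{\omega_1}$ is typically not essentially surjective: e.g.\ for $\cC_n=D(\mk[x_1,\dots,x_n])$ with the obvious transition maps, the module $\mk$ is $\omega_1$-compact over the infinite polynomial ring but is not in the image of any $D(\mk[x_1,\dots,x_n])^{\omega_1}$. So your middle vertical map is not the map you are actually able to prove is an equivalence.

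The fix is simple, and is exactly what the paper does: do \emph{not} apply \eqref{eq:exact_seq_for_product} to $\indlim[j_i]^{\cont}\cC_{j_i}$. Instead, start from the functorial short exact sequence $0\to\cC_{j_i}\to\Ind(\cC_{j_i}^{\omega_1})\to\Ind(\Calk_{\omega_1}^{\cont}(\cC_{j_i}))\to 0$ for each individual $\cC_{j_i}$, take the filtered colimit over $j_i\in J_i$ (using Proposition~\ref{prop:weak_AB5_Pr^LL}), and \emph{then} take the product over $i$ (using Proposition~\ref{prop:AB4*_for_Cat^dual} and Proposition~\ref{prop:products_in_Cat^dual}(2)). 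This produces a short exact sequence with first term $\prod_i^{\dual}\indlim[j_i]^{\cont}\cC_{j_i}$ and middle term $\Ind\bigl(\prod_i\indlim[j_i]\cC_{j_i}^{\omega_1}\bigr)$, which \emph{is} comparable to your top row via (AB6) in $\Cat^{\perf}$. Once you make this adjustment, the rest of your argument (including your direct verification of (AB6) in $\Cat^{\perf}$, which the paper simply cites as a consequence of compact generation) goes through.
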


\begin{proof}We first observe that (AB6) holds in the category $\Cat^{\perf}$ of small stable Karoubi-complete categories, since the category $\Cat^{\perf}$ is compactly generated (one can also check (AB6) for $\Cat^{\perf}$ directly, by showing that the comparison functor is fully faithful and essentially surjective). Since the functor $\Ind:\Cat^{\perf}\to \Cat_{\st}^{\dual}$ commutes both with products and with filtered colimits, we deduce that \eqref{eq:comparison_for_AB6} is an equivalence when all $\cC_{j_i}$ are compactly generated.

By Propositions \ref{prop:weak_AB5_Pr^LL} and \ref{prop:AB4*_for_Cat^dual}, filtered colimits and products of short exact sequences in $\Cat_{\st}^{\dual}$ are short exact. Consider the map of short exact sequences
$$
\begin{CD}
\liminj^{\cont}\limits_{(j_i)_i\in\prod_i J_i}\prod_i^{\dual}\cC_{j_i} @>>> \Ind(\indlim[(j_i)_i\in\prod_i J_i]\prod_i\cC_{j_i}^{\omega_1}) @>>> \Ind(\indlim[(j_i)_i\in\prod_i J_i]\prod_i\Calk_{\omega_1}^{\cont}(\cC_{j_i}))\\
@VVV @VV{\sim}V @VV{\sim}V\\
\prod_i^{\dual}\liminj^{\cont}\limits_{j_i\in J_i}\cC_{j_i} @>>> \Ind(\prod_i\indlim[j_i\in J_i]\cC_{j_i}^{\omega_1}) @>>> \Ind(\prod_i\indlim[j_i\in J_i]\Calk_{\omega_1}^{\cont}(\cC_{j_i})).\\
\end{CD}
$$
By the above discussion, the middle vertical and the right vertical arrows are equivalences. Hence, so is the left vertical arrow. This proves (AB6) for $\Cat_{\st}^{\dual}.$\end{proof}

\section{Dualizability is equivalent to flatness}
\label{sec:dualizability_via_flatness}

In this section we give a non-trivial criterion of dualizability of a presentable stable category via a ``flatness'' condition. It can be skipped on the first reading.

Let $\cC$ be a presentable stable category. We can ask the following naive question: is it true that for any fully faithful colimit-preserving functor $\cD\to\cE$ between presentable stable categories, the functor $\cC\otimes\cD\to\cC\otimes\cE$ is also fully faithful?

\begin{defi}1) A category $\cC\in\Pr^L_{\st}$ is called flat in $\Pr^L_{\st}$ if the answer to the above question is affirmative, i.e. the functor $\cC\otimes-:\Pr^L_{\st}\to \Pr^L_{\st}$ preserves fully faithful functors (=monomorphisms).

2) Let $\kappa$ be a regular cardinal. A $\kappa$-presentable stable category $\cC$ is called flat in $\Pr^L_{\st,\kappa}$ if the functor $\cC\otimes-:\Pr^L_{\st,\kappa}\to \Pr^L_{\st,\kappa}$ preserves fully faithful functors.\end{defi}

Note that the flatness in $\Pr^L_{\st,\kappa}$ is a non-trivial condition only if $\kappa$ is uncountable. We have the following surprising result.

\begin{theo}\label{th:flat=dualizable}
Let $\kappa$ be an uncountable regular cardinal. Then a $\kappa$-presentable stable category $\cC$ is flat in $\Pr^L_{\st,\kappa}$ if and only if $\cC$ satisfies (AB6) for $\kappa$-small products.  

In particular, a presentable stable category is flat in $\Pr^L_{\st}$ if and only if it is dualizable.
\end{theo}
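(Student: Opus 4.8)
The plan is to prove the two directions separately, reducing everything to the criterion (AB6) via Proposition~\ref{prop:AB6_criterion}. The ``if'' direction is the conceptual content: assuming $\cC$ satisfies (AB6) for $\kappa$-small products (equivalently, assuming $\cC$ is dualizable in the unbounded case via Proposition~\ref{prop:AB6_criterion}), I want to show $\cC\otimes-$ preserves fully faithful functors. The ``only if'' direction should follow by testing the flatness hypothesis against a cleverly chosen fully faithful functor whose failure to remain fully faithful after $\otimes\,\cC$ encodes exactly a failure of (AB6); this is where a concrete counterexample-style construction is needed.

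\textbf{The ``if'' direction.} Suppose $\cC$ is dualizable, with dual $\cC^\vee$, coevaluation $\coev:\Sp\to\cC^\vee\otimes\cC$ and evaluation $\ev:\cC\otimes\cC^\vee\to\Sp$. Let $\iota:\cD\to\cE$ be a fully faithful colimit-preserving functor, i.e. $\iota^R\iota\simeq\id_\cD$. I want $\id_\cC\otimes\iota:\cC\otimes\cD\to\cC\otimes\cE$ to be fully faithful. The key point is that, because $\cC$ is dualizable, the functor $\cC\otimes-$ has a right adjoint given (on the $1$-morphism level) by $\cC^\vee\otimes-$: indeed, for any $\cF\in\Pr^L_{\st}$ one has $\Fun^L(\cC\otimes\cD,\cF)\simeq\Fun^L(\cD,\cC^\vee\otimes\cF)$ functorially, using $\cC\otimes\cD\simeq\Fun^L(\cC^\vee,\cD)$ (from $\cC\otimes\cD\simeq\Fun^L(\cC,\cD^{op})^{op}$ together with dualizability). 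Then $(\id_\cC\otimes\iota)^R\simeq\id_{\cC^\vee}\otimes\iota^R$ up to the identifications, so $(\id_\cC\otimes\iota)^R(\id_\cC\otimes\iota)\simeq\id_\cC\otimes(\iota^R\iota)\simeq\id_\cC\otimes\id_\cD\simeq\id_{\cC\otimes\cD}$, giving full faithfulness. (In the $\kappa$-presentable setting, one runs the same argument with $\cC^\vee=\Ind_\kappa((\cC^\kappa)^{op})$-type duals and (AB6)-for-$\kappa$-small-products replacing full dualizability, which is precisely what Theorem~\ref{th:flat=dualizable} isolates; but for the final statement quoted here only the unbounded case is needed.)

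\textbf{The ``only if'' direction.} Conversely, suppose $\cC$ is flat in $\Pr^L_{\st}$; I want to deduce (AB6), hence dualizability by Proposition~\ref{prop:AB6_criterion}. The idea is that (AB6) is equivalent to the colimit functor $\colim:\Ind(\cC)\to\cC$ commuting with products (Proposition~\ref{prop:products_in_Ind_C}), and the obstruction to this is detectable after tensoring $\cC$ with a suitable presentable category. Concretely, for a set $I$ consider the fully faithful colimit-preserving functor $\bigoplus_{i\in I}\Sp\to\prod_{i\in I}\Sp$ (the inclusion of the direct sum into the product of copies of $\Sp$), which is fully faithful since each component of the unit is an equivalence on the relevant mapping spectra; more precisely one uses a family of such comparison maps indexed by filtered systems, packaged as a single fully faithful functor between appropriate presentable stable categories built out of $\Fun(J_i,\Sp)$'s. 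Applying $\cC\otimes-$ and unwinding $\cC\otimes\Fun(J,\Sp)\simeq\Fun(J,\cC)$ and $\cC\otimes\prod_i(-)\simeq\prod_i(\cC\otimes-)$ (the latter holds because $\cC$ is presentable stable and $-\otimes\cC$, being a left adjoint, need not preserve products, so this step must be handled with care — this is the technical crux), the full faithfulness coming from flatness forces exactly the isomorphism $\indlim_{(j_i)_i}\prod_i x_{j_i}\xrightarrow{\sim}\prod_i\indlim_{j_i}x_{j_i}$ in $\cC$, which is (AB6).

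\textbf{Main obstacle.} The hard part will be the ``only if'' direction: one must manufacture a single honest fully faithful functor in $\Pr^L_{\st}$ whose preservation under $\cC\otimes-$ is literally equivalent to (AB6) in $\cC$, and then carefully identify the various Lurie tensor products with functor categories while tracking that $-\otimes\cC$ (a left adjoint) interacts correctly with the infinite products appearing on both sides — products are not preserved by left adjoints in general, so the argument cannot simply commute $\otimes\cC$ past $\prod_i$, and instead must exploit that the relevant products live inside $\Ind$-categories where Proposition~\ref{prop:products_in_Ind_C} applies. Getting this bookkeeping right, rather than the formal adjunction in the ``if'' direction, is the real content.
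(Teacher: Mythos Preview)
Your ``if'' direction for the \emph{unbounded} statement is fine and matches the paper's observation that dualizability gives $\cC\otimes-\simeq\Fun^L(\cC^\vee,-)$. But the paper explicitly warns that the $\kappa$-version of this direction is \emph{non-trivial}: (AB6)$_\kappa$ does not produce a dual in $\Pr^L_{\st,\kappa}$, so your parenthetical ``run the same argument'' does not go through. The paper instead identifies the right adjoint to $\cC\otimes F$ with the precomposition functor $-\circ F^{\kappa,op}:\Fun^{\kappa\hy\lex}(\cE^{\kappa,op},\cC)\to\Fun^{\kappa\hy\lex}(\cD^{\kappa,op},\cC)$ and shows it is a quotient functor by exhibiting it as a retract of the analogous functor for $\Ind(\cC^\kappa)$ (which \emph{is} dualizable, hence flat); the retraction exists precisely because (AB6)$_\kappa$ says $\colim:\Ind(\cC^\kappa)\to\cC$ commutes with $\kappa$-small limits.

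For the ``only if'' direction you have located the obstacle but not resolved it, and the specific test you propose does not work. First, in $\Pr^L_{\st}$ coproducts and products coincide, so your functor $\bigoplus_I\Sp\to\prod_I\Sp$ is an equivalence and tests nothing. More fundamentally, (AB6) is a Beck--Chevalley condition for a square whose ``horizontal'' functors (the product functors $\prod_I\cC\to\cC$ and $\prod_I\Fun(J_i,\cC)\to\Fun(\prod_I J_i,\cC)$) are only $\kappa$-\emph{accessible}, not colimit-preserving; there is no evident way to package this as preservation of a single fully faithful morphism in $\Pr^L_{\st,\kappa}$. The paper's resolution is a genuinely different idea: it proves (Proposition~\ref{prop:extra_functoriality}) that $\cC\otimes-$ always extends to an \emph{oplax} $2$-functor on the $(\infty,2)$-category $\mPr^{\acc}_{\st,\kappa}$ of $\kappa$-presentable stable categories and $\kappa$-accessible functors, via a correspondence model built from semi-orthogonal gluings, and that flatness of $\cC$ is exactly what upgrades this to an honest $2$-functor. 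Once $\cC\otimes-$ acts $2$-functorially on accessible functors, one simply tensors the (AB6) square for $\Sp$ (where the Beck--Chevalley map is an isomorphism) to obtain it for $\cC$. This $2$-categorical extension is the missing ingredient in your plan.
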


The ``if'' direction in the second statement is easy: dualizability of $\cC$ implies an equivalence $\cC\otimes -\simeq \Fun^{L}(\cC^{\vee},-).$ The ``if'' direction in the first statement is non-trivial, see below for the proof. Our argument for the ``only if'' direction will use a certain ``extra $2$-functoriality'' statement. After the previous version of the paper was published, German Stefanich found a different argument, which in fact works in a much more general relative context (and for not necessarily stable categories).

We will use the model for $(\infty,2)$-categories given by complete Segal space objects of the $(\infty,1)$-category of (possibly very large) $(\infty,1)$-categories. We refer to \cite[Section 1]{Lur09b} for a detailed discussion.

More precisely, we consider $(\infty,2)$-categories as cartesian fibrations of $(\infty,1)$-categories $\cE\to\Delta,$  satisfying the following conditions.

\begin{itemize}
\item Denote by $\cE_n$ the fiber of $\cE$ over $[n],$ $n\geq 0.$ Then for $n\geq 1$ the natural functor $$\cE_n\to \cE_1\times_{\cE_0}\cE_1\times_{\cE_0}\dots\times_{\cE_0}\cE_1$$
(the $n$-fold fiber product) is an equivalence of $(\infty,1)$-categories (the Segal condition).

\item The $(\infty,1)$-category $\cE_0$ is a discrete groupoid.
\end{itemize} 

Given $(\infty,2)$-categories $\cE\to\Delta,$ $\cE'\to\Delta,$ an oplax $2$-functor $F:\cE\to\cE'$ is a functor between fibered categories over $\Delta.$ If moreover $F$ is cartesian (i.e. preserves cartesian maps), then $F$ is an actual $2$-functor. 

From now on, we fix an uncountable regular cardinal $\kappa.$ Denote by $\mPr^{\acc}_{\st,\kappa}$ the $(\infty,2)$-category of $\kappa$-presentable stable categories and exact $\kappa$-accessible functors. It is convenient to identify $\mPr^{\acc}_{\st,\kappa}$ with the following (very large) cartesian fibration over $\Delta.$ Given $\kappa$-presentable stable categories $\cD_0,\dots,\cD_n,$ we denote by $\mPr^{\acc}_{\st,\kappa}(\cD_0,\dots,\cD_n)$ the category of cartesian fibrations $\cE\to [n]^{op},$ together with equivalences $\cE_k\simeq\cD_k,$ such that the transition functors $\cE_k\to\cE_{k+1}$ are exact $\kappa$-accessible functors. Then the fiber $(\mPr^{\acc}_{\st,\kappa})_n$ over $[n]$ is given by the disjoint union of categories $\mPr^{\acc}_{\st,\kappa}(\cD_0,\dots,\cD_n)$ over all $(n+1)$-tuples $(\cD_0,\dots,\cD_n).$

Given objects $(\cE\to [n]^{op})$ in $\mPr^{\acc}_{\st,\kappa}(\cD_0,\dots,\cD_n)$ and $(\cE'\to [m]^{op})$ in $\mPr^{\acc}_{\st,\kappa}(\cD_0',\dots,\cD_m'),$ the fiber of the mapping space $\Map((\cE\to [n]^{op}),(\cE'\to [m]^{op}))$ over an order-preserving map $f:[n]\to [m]$ is non-empty only if $\cD_k=\cD_{f(k)}'$ for $0\leq k\leq n,$ and in this case it is the space of functors $\cE\to\cE'$ over $[m]^{op},$ which are compatible with the equivalences $\cE_k\simeq\cD_k\simeq\cE_{f(k)}'.$ 

We denote by $\mPr^L_{\st,\kappa}\subset\mPr^{\acc}_{\st,\kappa}$ the locally full sub $(\infty,2)$-category which is given by the full subcategories $(\mPr^L_{\st,\kappa})_n\subset(\mPr^{\acc}_{\st,\kappa})_n$ formed by cartesian fibrations $\cE\to [n]^{op}$ with continuous transition functors.

We have the following result.

\begin{prop}\label{prop:extra_functoriality} Let $\cC$ be a $\kappa$-presentable stable category. Then we have a natural oplax $2$-functor $\cC\otimes-:\mPr^{\acc}_{\st,\kappa}\to \mPr^{\acc}_{\st,\kappa}$ which induces the usual Lurie tensor product $\cC\otimes-:\mPr^L_{\st,\kappa}\to\mPr^L_{\st,\kappa}.$ Moreover, if $\cC$ is flat in $\Pr^L_{\st,\kappa},$ then we have an actual $2$-functor $\cC\otimes-:\mPr^{\acc}_{\st,\kappa}\to \mPr^{\acc}_{\st,\kappa}.$\end{prop}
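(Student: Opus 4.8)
The plan is to build $\cC\otimes-$ first on a single $\kappa$-accessible exact functor, by means of the semi-orthogonal gluing of Definition \ref{defi:gluing_general}, and then to organise the construction into a functor of cartesian fibrations over $\Delta$. So let $F\colon\cD\to\cD'$ be $\kappa$-accessible and exact. By Proposition \ref{prop:gluing_presentable} the gluing $\cE:=\cD'\oright_F\cD$ is $\kappa$-presentable, and it carries a semi-orthogonal decomposition $\cE=\langle i_1(\cD'),i_2(\cD)\rangle$ in which the inclusions $i_1$ (of the left piece $\cD'$) and $i_2$ are colimit-preserving and preserve $\kappa$-compact objects (so they are morphisms of $\Pr^L_{\st,\kappa}$), $i_2$ is even strongly continuous, and $F$ is recovered as $i_1^R\circ i_2$, where the right adjoint $i_1^R$ is given by a (co)fiber built from $F$ and is therefore $\kappa$-accessible. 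Applying the honest functor $\cC\otimes-\colon\Pr^L_{\st,\kappa}\to\Pr^L_{\st,\kappa}$ (which preserves $\kappa$-compact objects) I set
\[
\cC\otimes F:=(\cC\otimes i_1)^R\circ(\cC\otimes i_2)\colon\cC\otimes\cD\longrightarrow\cC\otimes\cD'.
\]
This is exact, and $\kappa$-accessible because $\cC\otimes i_1$ preserves $\kappa$-compact objects so its right adjoint is $\kappa$-accessible; on objects it is the usual tensor. When $F$ is continuous, $i_1^R$ is continuous as well, $\cC\otimes-$ preserves the adjunction $i_1\dashv i_1^R$, hence $(\cC\otimes i_1)^R\simeq\cC\otimes i_1^R$ and $\cC\otimes F\simeq\cC\otimes(i_1^R\circ i_2)$ agrees with the Lurie tensor.

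Next I make this natural over $\Delta$. An object of $(\mPr^{\acc}_{\st,\kappa})_n$ is a chain $\cD_0\to\cdots\to\cD_n$ of $\kappa$-accessible exact functors; I send it to $\cC\otimes\cD_0\to\cdots\to\cC\otimes\cD_n$, with transition functors obtained by applying $\cC\otimes-$ to the diagram of $\Pr^L_{\st,\kappa}$-morphisms and their right adjoints produced by the (iterated) gluing construction. Since $\cC\otimes-$ does not respect composition of $\kappa$-accessible functors on the nose, this comes equipped with comparison maps $\cC\otimes(G\circ F)\Rightarrow(\cC\otimes G)\circ(\cC\otimes F)$, arising from the evident comparison between $\cD''\oright_{GF}\cD$ and the gluing assembled out of $G$ and $F$; these, together with the identity constraint $\cC\otimes\id\simeq\id$, are the coherence data of an oplax $2$-functor, that is, of a (not necessarily cartesian) functor $\mPr^{\acc}_{\st,\kappa}\to\mPr^{\acc}_{\st,\kappa}$ over $\Delta$. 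The cleanest way to handle the higher coherences is to exhibit the whole assignment as a single functor out of a $\Delta$-shaped diagram built from the gluing functor, so that the Segal-level coherences come for free. Restricted to the locally full subcategory $\mPr^L_{\st,\kappa}$ all transition functors are continuous, the comparison maps are equivalences by the previous paragraph, and one recovers the usual $\cC\otimes-$.

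Finally, assume $\cC$ is flat in $\Pr^L_{\st,\kappa}$. The key point is that flatness forces $\cC\otimes-$ to carry a semi-orthogonal decomposition $\cE=\langle\cA,\cB\rangle$ to a semi-orthogonal decomposition $\cC\otimes\cE=\langle\cC\otimes\cA,\cC\otimes\cB\rangle$: the inclusions $\cA\to\cE$ and $\cB\to\cE$ are fully faithful and colimit-preserving, so by flatness $\cC\otimes\cA\to\cC\otimes\cE$ and $\cC\otimes\cB\to\cC\otimes\cE$ are fully faithful, and applying the exact functor $\cC\otimes-$ to the cofiber sequence of endofunctors $i_\cB i_\cB^R\to\id_\cE\to i_\cA i_\cA^L$ of Remark \ref{rem:SOD_PR^LL} exhibits the decomposition of $\cC\otimes\cE$ with the expected structure functors $\cC\otimes i_\cA,\cC\otimes i_\cB$. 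Applied to $\cE=\cD'\oright_F\cD$ and to the refined gluings governing composites, this says precisely that the gluing construction of $\cC\otimes F$ is compatible with composition, i.e.\ the comparison maps of the previous paragraph are equivalences, i.e.\ the functor over $\Delta$ is cartesian. Hence $\cC\otimes-$ is an actual $2$-functor.

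I expect the last step to be the main obstacle. The first two paragraphs are essentially formal manipulations with gluings and adjunctions; the substance is that flatness --- a priori only a statement about tensoring with fully faithful colimit-preserving functors --- must be shown to propagate so as to control $\cC\otimes-$ on the right adjoints $i_1^R$, and therefore on all $\kappa$-accessible exact functors. A secondary but genuine difficulty is the bookkeeping required to produce an honest functor over $\Delta$ rather than merely its values on objects, $1$-morphisms and $2$-morphisms, for which the diagrammatic reformulation in the second paragraph is the natural device.
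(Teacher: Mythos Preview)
Your overall strategy --- encode $\kappa$-accessible exact functors as semi-orthogonal gluings (correspondences), apply the honest functor $\cC\otimes-$ to the gluing category, and then argue that flatness makes the oplax comparison maps invertible --- is exactly the paper's approach. The paper makes the coherence bookkeeping precise by introducing an auxiliary $(\infty,2)$-category $\Corr_\kappa$ (as a cartesian fibration over $\Delta$) whose objects over $[n]$ are $(n{+}1)$-term semi-orthogonal decompositions satisfying a Segal condition, proves $\Corr_\kappa\simeq\mPr^{\acc}_{\st,\kappa}$, and defines $\cC\otimes-$ on $\Corr_\kappa$ by $(\cT;i_0,\dots,i_n)\mapsto(\cC\otimes\cT;\cC\otimes i_0,\dots,\cC\otimes i_n)$. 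This is the device you allude to in your second paragraph.

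There is, however, a genuine gap in your flatness argument. You invoke flatness to show that $\cC\otimes-$ carries a semi-orthogonal decomposition $\cE=\langle\cA,\cB\rangle$ to one on $\cC\otimes\cE$. But this holds for \emph{every} $\kappa$-presentable $\cC$, with no flatness assumption: in such a decomposition $i_\cA$ has a continuous left adjoint and $i_\cB$ a continuous right adjoint, so $\cC\otimes i_\cA$ and $\cC\otimes i_\cB$ are automatically fully faithful (this is Lemma~\ref{lem:C_otimes_SOD}, and it is what makes the oplax functor well-defined in the first place). The place where flatness is actually needed is different and you skip over it. Composition of correspondences $\cT_{01},\cT_{12}$ forms the three-term gluing $\cT_{012}=\cT_{01}\sqcup^{\cont}_{\cD_1}\cT_{12}$ and then takes the full stable subcategory $\cT_{02}\subset\cT_{012}$ generated by the two \emph{outer} pieces $i_0(\cD_0)$ and $i_2(\cD_2)$. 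This inclusion is fully faithful and $\kappa$-strongly continuous, but it is \emph{not} one side of a semi-orthogonal decomposition and has no evident adjoint; preserving its full faithfulness under $\cC\otimes-$ is exactly the flatness hypothesis, and that is what forces the oplax comparison $\cC\otimes\cT_{02}\to(\cC\otimes\cT_{012})_{02}$ to be an equivalence. Your sentence ``applied to \ldots the refined gluings governing composites, this says precisely that \ldots'' is where this argument should go, and the claim does not follow from preservation of two-term decompositions.
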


To prove this result, we need the following statement about semi-orthogonal decompositions in $\Pr^L_{\st,\kappa}.$

\begin{lemma}\label{lem:C_otimes_SOD} Let $\cC,\cT,\cD_1,\dots,\cD_n$ be $\kappa$-presentable stable categories, and suppose that we have fully faithful $\kappa$-strongly continuous functors $i_k:\cD_k\to\cT,$ $1\leq k\leq n,$ which give a semi-orthogonal decomposition
$$\cT=\la i_1(\cD_1),\dots,i_n(\cD_n)\ra.$$
Then the functors $\cC\otimes i_k:\cC\otimes\cD_k\to\cC\otimes\cT$ are also fully faithful, and they give a semi-orthogonal decomposition
$$\cC\otimes\cT=\la (\cC \otimes i_1)(\cC\otimes \cD_1),\dots,(\cC \otimes i_n)(\cC\otimes \cD_n)\ra.$$\end{lemma}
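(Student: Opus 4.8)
\textbf{Proof proposal for Lemma \ref{lem:C_otimes_SOD}.}

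The plan is to reduce everything to the case $n=2$ by an obvious induction on the length of the semi-orthogonal decomposition (a decomposition of length $n$ can be built from a decomposition of length $2$ whose first piece is itself a decomposition of length $n-1$, using Proposition \ref{prop:functors_in_and_out_SOD} and the fact that $\cC\otimes-$ preserves strong continuity of functors and compositions). So assume $\cT=\la i_1(\cD_1),i_2(\cD_2)\ra$. By Remark \ref{rem:SOD_PR^LL}, this semi-orthogonal decomposition is encoded by the data: a $\kappa$-strongly continuous left adjoint $i_1^L$ to $i_1$, a $\kappa$-strongly continuous right adjoint $i_2^R$ to $i_2$, the identities $i_1^L i_1\simeq\id$, $\id\simeq i_2^R i_2$, the vanishing $i_1^L i_2=0$, and a cofiber sequence $i_2 i_2^R\to\id_{\cT}\to i_1 i_1^L$ in $\Fun^{LL}(\cT,\cT)$ (equivalently in $\Fun^L$, since all these functors are colimit-preserving).

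The key step is that the Lurie tensor product $\cC\otimes-:\Pr^L_{\st,\kappa}\to\Pr^L_{\st,\kappa}$ is symmetric monoidal, hence preserves adjunctions: applying $\cC\otimes-$ to the unit/counit of $(i_1\dashv i_1^R)$ and $(i_2^L\dashv i_2)$ produces adjunctions $(\cC\otimes i_1)\dashv(\cC\otimes i_1^R)$ and $(\cC\otimes i_2^L)\dashv(\cC\otimes i_2)$ in $\Pr^L_{\st,\kappa}$. Moreover $\cC\otimes-$ is a (lax, in fact strong) symmetric monoidal functor of the $2$-category of $\Pr^L_{\st,\kappa}$-enriched categories in the sense that it sends the functor category $\Fun^L(\cT,\cT)$ to $\Fun^L(\cC\otimes\cT,\cC\otimes\cT)$ compatibly with composition; concretely, using $\cC\otimes\Fun^L(\cT,\cT)\to\Fun^L(\cC\otimes\cT,\cC\otimes\cT)$ one sees that the cofiber sequence above is sent to the cofiber sequence $(\cC\otimes i_2)(\cC\otimes i_2^R)\to\id_{\cC\otimes\cT}\to(\cC\otimes i_1)(\cC\otimes i_1^L)$, because $\cC\otimes-$ is exact (colimit-preserving on a stable category, hence preserves cofiber sequences) and because the whiskering relations $(\cC\otimes F)(\cC\otimes G)\simeq\cC\otimes(FG)$ hold. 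Similarly the relations $(\cC\otimes i_1^L)(\cC\otimes i_1)\simeq\id$, $\id\simeq(\cC\otimes i_2^R)(\cC\otimes i_2)$, and $(\cC\otimes i_1^L)(\cC\otimes i_2)\simeq\cC\otimes(i_1^L i_2)=\cC\otimes 0=0$ follow formally. By Remark \ref{rem:SOD_PR^LL} again, this is exactly the data of a semi-orthogonal decomposition $\cC\otimes\cT=\la(\cC\otimes i_1)(\cC\otimes\cD_1),(\cC\otimes i_2)(\cC\otimes\cD_2)\ra$; in particular $\cC\otimes i_1$ and $\cC\otimes i_2$ are fully faithful (a colimit-preserving functor with $F^L F\simeq\id$ resp. $\id\simeq F^R F$ is fully faithful), which is precisely the flatness-type conclusion we want, and here it holds \emph{unconditionally} on $\cC$ since we only used the semi-orthogonal decomposition structure rather than an abstract fully faithful functor.

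The main obstacle I anticipate is purely bookkeeping: making precise the claim that $\cC\otimes-$ interacts correctly with the internal-hom/functor categories and with whiskering, i.e. that the natural transformation $\cC\otimes\Fun^L(\cT,\cT)\to\Fun^L(\cC\otimes\cT,\cC\otimes\cT)$ is compatible with composition and sends $\cC\otimes i_j^{(\pm)}$ to $\cC\otimes i_j$ applied pointwise. This is a standard consequence of $(\Pr^L_{\st,\kappa},\otimes)$ being a closed symmetric monoidal $\infty$-category (so that $-\otimes-$ is a $\Pr^L_{\st,\kappa}$-enriched, in particular $2$-, functor), but one should either cite this or spell out that the counit $i_2 i_2^R\to\id$ and unit $\id\to i_1 i_1^R$ live in $\Fun^L(\cT,\cT)=\underline{\Hom}(\cT,\cT)$ and that tensoring the whole cofiber sequence with $\cC$ and using the canonical map to $\Fun^L(\cC\otimes\cT,\cC\otimes\cT)$ produces the asserted cofiber sequence. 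Once this compatibility is in hand, no further computation is needed --- the rest is a formal application of the characterization in Remark \ref{rem:SOD_PR^LL}.
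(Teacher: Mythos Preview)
Your proposal is correct and follows essentially the same approach as the paper: reduce to $n=2$, then use that $\cC\otimes-$ preserves adjunctions and compositions to transport the SOD data (adjoints $i_1^L$, $i_2^R$, the vanishing $i_1^L i_2=0$, and the generation/cofiber sequence) from $\cT$ to $\cC\otimes\cT$. The paper is terser---it checks full faithfulness, generation, and semi-orthogonality directly rather than invoking the full characterization of Remark~\ref{rem:SOD_PR^LL}---but the underlying argument is the same 2-functoriality of the Lurie tensor product that you identify.
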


\begin{proof} It suffices to prove this for $n=2.$ Note that the functor $i_1$ has a left adjoint $i_1^L,$ and the functor $i_2$ has a continuous right adjoint $i_2^R.$ Hence, the functors $\cC\otimes i_1$ and $\cC\otimes i_2$ are fully faithful. Their images generate $\cC\otimes\cT$ by colimits. Finally, to see the the semi-orthogonality, note that $\cC\otimes i_1^L$ is the left adjoint to $\cC\otimes i_1,$ and we have
$$(\cC\otimes i_1^L)\circ (\cC\otimes i_2^R)\cong \cC\otimes (i_1^L\circ i_2^R)=0.$$
This is equivalent to the required semi-orthogonality.
\end{proof}

\begin{proof}[Proof of Proposition \ref{prop:extra_functoriality}.] Given $\kappa$-presentable stable categories $\cD_0$ and $\cD_1,$ let us denote by $\Corr_{\kappa}(\cD_0,\cD_1)$ the $(\infty,1)$-category of triples $(\cT;i_0,i_1),$ where $\cT$ is a $\kappa$-presentable stable category and $i_0:\cD_0\to\cT,$ $i_1:\cD_1\to\cT$ are fully faithful $\kappa$-strongly continuous functors, such that we have a semi-orthogonal decomposition $$\cT=\langle i_1(\cD_1),i_0(\cD_0)\rangle.$$
Then the category $\Corr_{\kappa}(\cD_0,\cD_1)$ is naturally equivalent to the category $\Fun^{\acc,\kappa}(\cD_0,\cD_1)$ of exact $\kappa$-accessible functors $\cD_0\to\cD_1.$ Namely, given a triple $(\cT;i_0,i_1)\in\Corr_{\kappa}(\cD_0,\cD_1),$ the corresponding functor is given by $F=i_1^R\circ i_0:\cD_0\to \cD_1,$ where $i_1^R$ is the right adjoint to $i_1.$ Conversely, to an exact accessible functor $F:\cD_0\to \cD_1$ we associate the category $\cT=\cD_1\oright_F\cD_0$ as in Definition \ref{defi:gluing_general}. Recall that $\cT$ is the category of triples $(x,y,\varphi),$ where $x\in \cD_1,$ $y\in \cD_0,$ and $\varphi:x\to F(y).$ The inclusions $i_0:\cD_0\to\cT$ and $i_1:\cD_1\to\cT$ are given respectively by $i_0(x)=(0,x,0)$ and $i_1(y)=(y[-1],0,0).$

We now formally define the $(\infty,2)$-category $\Corr_{\kappa}$ as a (very large) cartesian fibration over $\Delta,$ satisfying the Segal condition, such that the fiber $(\Corr_{\kappa})_0$ over $[0]$ is the (very large) set of $\kappa$-presentable stable categories. Given $\kappa$-presentable stable categories $\cD_0,\cD_1,\dots,\cD_n,$ where $n\geq 1,$ we define the $(\infty,1)$-category $\Corr_{\kappa}(\cD_0,\dots,\cD_n)$ to be the category of $(n+2)$-tuples $(\cT;i_0,\dots,i_n),$ where $\cT$ is a $\kappa$-presentable stable category and $i_k:\cD_k\to\cT,$ $0\leq k\leq n,$ are fully faithful $\kappa$-strongly continuous functors such that the following conditions hold: 

\begin{itemize}
\item the functors $i_0,\dots,i_n$ give a semi-orthogonal decomposition
$$\cT=\langle i_n(\cD_n),\dots,i_0(\cD_0)\rangle;$$

\item if we denote by $i_k^R:\cT\to\cD_k$ the right adjoint to $i_k,$ then we have isomorphisms
\begin{equation}\label{eq:2_fold_Segal_first_version}
i_m^R i_l i_l^R i_k\xto{\sim}i_m^R i_k,\quad 0\leq k<l<m\leq n.
\end{equation} 
\end{itemize}

If $n=0,$ we define $\Corr_{\kappa}(\cD_0)$ to be a single point, considered as an $(\infty,1)$-category. We think of this point as a pair $(\cT=\cD_0;i_0=\id).$

The condition \eqref{eq:2_fold_Segal_first_version} (which is non-trivial only for $n\geq 2$) can be reformulated as follows (assuming that we have the required semi-orthogonal decomposition). Denote by $\cT_{k,k+1}\subset \cT$ the full stable subcategory generated by $i_k(\cD_k)$ and $i_{k+1}(\cD_{k+1}).$ Then \eqref{eq:2_fold_Segal_first_version} holds if and only if we have an equivalence
\begin{equation}\label{eq:2_fold_Segal_second_version}\cT_{0,1}\stackrel{\cont}{\sqcup}_{\cD_1}\cT_{1,2}\dots\stackrel{\cont}{\sqcup}_{\cD_{n-1}}\cT_{n-1,n}\xto{\sim}\cT.\end{equation}

The fiber $(\Corr_{\kappa})_n$ over $[n]$ is given by the disjoint union of categories $\Corr_{\kappa}(\cD_0,\dots,\cD_n)$ over all $(n+1)$-tuples $(\cD_0,\dots,\cD_n).$ The fiber of the mapping space $$\Map((\cT;i_0:\cD_0\to \cT,\dots,i_n:\cD_n\to\cT),(\cT';i_0':\cD_0'\to \cT',\dots,i_m':\cD_m'\to\cT'))$$
over an order-preserving map map $f:[n]\to [m]$ is the following. It is non-empty only if $\cD_k=\cD_{f(k)}'$ for $0\leq k\leq n,$ and in this case it is the space of functors $F:\cT\to\cT',$ together with isomorphisms $F\circ i_k\cong i_{f(k)}'.$ It is easy to see that $\Corr_{\kappa}\to\Delta$ is a cartesian fibration, and it follows from \eqref{eq:2_fold_Segal_second_version} that this fibration satisfies the Segal condition.

Now, we claim that we have a natural equivalence of $(\infty,2)$-categories $\Corr_{\kappa}\to \mPr^{\acc}_{\st,\kappa},$ which induces the identity map on objects. The functor $\Corr_{\kappa}(\cD_0,\dots,\cD_n)\to \mPr^{\acc}_{\st,\kappa}(\cD_0,\dots,\cD_n)$ sends $(\cT;i_0,\dots,i_n),$ to the following cartesian fibration $\cE\to [n]^{op}.$ The fiber $\cE_k$ over $k$ is given by $\cD_k,$ and for $x\in\cE_k,$ $y\in\cE_l$ the mapping space is given by
$$\Map_{\cE}(x,y)=\begin{cases}\Map_{\cT}(i_k(x),i_l(y)) & \text{if }k\geq l;\\
\emptyset & \text{else.}\end{cases}$$

The inverse functor $\mPr^{\acc}_{\st,\kappa}(\cD_0,\dots,\cD_n)\to\Corr_{\kappa}(\cD_0,\dots,\cD_n)$ sends a cartesian fibration $\cE\to [n]^{op}$ to the category of sections of the dual cocartesian fibration $\cE^{\vee}\to [n]$ (with the same fibers and the same transition functors). Here for $0\leq k\leq n$ the functor $i_k:\cD_k\to \Fun_{[n]}([n],\cE^{\vee})$ is the composition of $\cD_k\simeq\cE_k$ with the left adjoint to the evaluation at $k.$ 

It is easy to see that the functors $\Corr_{\kappa}(\cD_0,\dots,\cD_n)\to \mPr^{\acc}_{\st,\kappa}(\cD_0,\dots,\cD_n)$ can be naturally extended to a functor between the cartesian fibrations $\Corr_{\kappa}\to\mPr^{\acc}_{\st,\kappa}$ over $\Delta,$ which preserves cartesian maps. Thus, we get an equivalence of $(\infty,2)$-categories, as stated.

We claim that for a $\kappa$-presentable stable category $\cC$ we have a natural functor $\cC\otimes-:\Corr_{\kappa}\to\Corr_{\kappa}$ between the cartesian fibrations over $\Delta,$ which does not necessarily preserve cartesian maps. On $(\Corr_{\kappa})_0,$ it is given by $\cD\mapsto\cC\otimes\cD.$ Given a sequence $(\cD_0,\dots,\cD_n),$ we have a functor $$\cC\otimes-:\Corr_{\kappa}(\cD_0,\dots,\cD_n)\to \Corr_{\kappa}(\cC\otimes\cD_0,\dots,\cC\otimes\cD_n),$$ 
given by $$(\cT;i_0,\dots,i_n)\mapsto (\cC\otimes\cT;\cC\otimes i_0,\dots,\cC\otimes i_n).$$
Note that $\cC\otimes\cT$ has the required semi-orthogonal decomposition by Lemma \ref{lem:C_otimes_SOD}. Also, by loc. cit. the functors $\cC\otimes\cT_{k,k+1}\to\cC\otimes\cT$ are fully faithful. Since $\cT$ satisfies the condition \eqref{eq:2_fold_Segal_second_version} and since the functor $\cC\otimes-:\Pr^L_{\st,\kappa}\to\Pr^L_{\st,\kappa}$ commutes with pushouts, we see that $(\cC\otimes\cT;\cC\otimes i_0,\dots,\cC\otimes i_n)$ is indeed an object of $\Corr_{\kappa}(\cC\otimes\cD_0,\dots,\cC\otimes\cD_n).$ We obtain an oplax $2$-functor $\cC\otimes-:\Corr_{\kappa}\to\Corr_{\kappa}.$

Using the above equivalence $\Corr_{\kappa}\simeq\mPr^{\acc}_{\st,\kappa},$ we obtain an oplax $2$-functor $\cC\otimes-:\mPr^{\acc}_{\st,\kappa}\to\mPr^{\acc}_{\st,\kappa}.$ We claim that it induces the usual Lurie tensor product $\cC\otimes-:\mPr^L_{\st,\kappa}\to\mPr^L_{\st,\kappa}.$ Indeed, the cartesian fibration $\Corr^L_{\kappa}\to\Delta$ (corresponding to $\mPr^L_{\st,\kappa}$) is the full subfibration of $\Corr_{\kappa}\to\Delta,$ which consists of $(\cT;i_0,\dots,i_n)$ such that each functor $i_k$ is strongly continuous. Thus, the oplax $2$-functor $\cC\otimes-:\Corr_{\kappa}\to\Corr_{\kappa}$ induces an actual $2$-functor $\Corr^L_{\kappa}\to\Corr^L_{\kappa},$ which corresponds to the usual Lurie tensor product. 

Finally, we need to show that if $\cC$ is flat in $\Pr^L_{\st,\kappa},$ then we get an actual $2$-functor $\cC\otimes-:\Corr_{\kappa}\to\Corr_{\kappa}.$ We need an explicit description of the composition of $1$-morphisms in $\Corr_{\kappa}.$ Given correspondences $\cT_{01}\in\Corr_{\kappa}(\cD_0,\cD_1)$ and $\cT_{12}\in\Corr_{\kappa}(\cD_1,\cD_2),$ their composition $\cT_{02}\in\Corr_{\kappa}(\cD_0,\cD_2)$ can be obtained as follows. We form the pushout $$\cT_{012}:=\cT_{01}\stackrel{\cont}{\sqcup}_{\cD_1}\cT_{12},$$ and take the full subcategory $\cT_{02}\subset \cT_{012}$ generated by the images of the compositions $\cD_0\to\cT_{01}\to \cT_{012}$ and $\cD_2\to\cT_{12}\to\cT_{012}.$ Then $\cT_{02}$ is naturally an object of $\Corr_{\kappa}(\cD_0,\cD_2),$ and it is the composition $\cT_{12}\circ\cT_{01}.$

Now, the flatness of $\cC$ implies that the functor
$$\cC\otimes \cT_{02}\to\cC\otimes\cT_{012}$$
is fully faithful. Hence, the map
\begin{equation*}\label{eq:oplax_structure}\cC\otimes (\cT_{12}\circ\cT_{01})\to (\cC\otimes\cT_{12})\circ(\cC\otimes\cT_{01})\end{equation*} in $\Corr_{\kappa}(\cC\otimes\cD_0,\cC\otimes\cD_2)$ is an isomorphism. Therefore, $\cC\otimes-:\mPr^{\acc}_{\st,\kappa}\to\mPr^{\acc}_{\st,\kappa}$ is an actual $2$-functor, as required.
\end{proof}

Note that the oplax $2$-functor $\cC\otimes-:\mPr^{\acc}_{\st,\kappa}\to \mPr^{\acc}_{\st,\kappa}$ can be described on $1$-morphisms more explicitly. For a $\kappa$-presentable stable category $\cD,$ we identify $\cC\otimes\cD$ with the category $\Fun^{\kappa\hy\lex}((\cC^{\kappa})^{op},\cD)$ of $\kappa$-left exact functors (these are functors which commutes with $\kappa$-small limits). Then for an exact $\kappa$-accessible functor $F:\cD\to\cE,$ the functor $\cC\otimes F$ is identified with the composition
$$\cC\otimes\cD\simeq \Fun^{\kappa\hy\lex}((\cC^{\kappa})^{op},\cD)\xto{F\circ-}\Fun((\cC^{\kappa})^{op},\cE)\xto{\Phi} \Fun^{\kappa\hy\lex}((\cC^{\kappa})^{op},\cE)\simeq \cC\otimes\cE,$$
where $\Phi$ is the left adjoint to the inclusion. However, with this description it is not clear how to show that flatness of $\cC$ in $\Pr^L_{\st,\kappa}$ implies that we get an actual $2$-functor.

\begin{proof}[Proof of Theorem \ref{th:flat=dualizable}] By Proposition \ref{prop:AB6_criterion}, the second assertion follows from the first one. As above we fix an uncountable regular cardinal $\kappa$ and consider a $\kappa$-presentable stable category $\cC.$ For brevity we say that $\cC$ satisfies (AB6)$_{\kappa}$ if (AB6) for $\kappa$-small products holds in $\cC.$ 

First suppose that $\cC$ satisfies (AB6)$_{\kappa}.$ Take some fully faithful $\kappa$-strongly continuous functor $F:\cD\to\cE$ between $\kappa$-presentable stable categories. The right adjoint to $\cC\otimes F$ is identified with the precomposition functor
\begin{equation}\label{eq:precomposition_for_C}
-\circ F^{\kappa,op}:\Fun^{\kappa\hy\lex}(\cE^{\kappa,op},\cC)\to \Fun^{\kappa\hy\lex}(\cD^{\kappa,op},\cC).
\end{equation}
We need to show that this is a quotient functor. By the proof of Proposition \ref{prop:AB6_criterion}, our assumption on $\cC$ means that the colimit functor $\colim:\Ind(\cC^{\kappa})\to\cC$ commutes with $\kappa$-small limits. Its right adjoint commutes with all limits, hence the functor \eqref{eq:precomposition_for_C} is a retract of the functor
\begin{equation*}\label{eq:precomposition_for_Ind_C_kappa}
	-\circ F^{\kappa,op}:\Fun^{\kappa\hy\lex}(\cE^{\kappa,op},\Ind(\cC^{\kappa}))\to \Fun^{\kappa\hy\lex}(\cD^{\kappa,op},\Ind(\cC^{\kappa})).
\end{equation*}
The latter is a quotient functor since the category $\Ind(\cC^{\kappa})$ is dualizable, hence flat. The class of accessible quotient functors is closed under retracts, hence \eqref{eq:precomposition_for_C} is a quotient functor, as required.

Now, suppose that $\cC$ is flat in $\Pr^L_{\st,\kappa}.$ Using Proposition \ref{prop:extra_functoriality} (extra $2$-functoriality), we will deduce that (AB6)$_{\kappa}$ holds in $\cC.$

Consider a $\kappa$-small family of filtered categories $J_i,$ $i\in I.$ 
We have the following natural exact $\kappa$-accessible functors between $\kappa$-presentable stable categories:

- $T_{\cC}:\prodd[i\in I]\Fun(J_i,\cC)\to \prodd[i\in I] \cC,$ the product of the colimit functors $\Fun(J_i,\cC)\to \cC;$

- $U_{\cC}:\prodd[i\in I] \cC\to \cC,$ the product functor;

- $V_{\cC}:\prodd[i\in I]\Fun(J_i,\cC)\to \Fun(\prodd[i\in I]J_i,\cC),$ taking $(F_i)_i$ to the functor $(j_i)_i\mapsto \prod_{i\in I}F_i(j_i);$

- $W_{\cC}:\Fun(\prodd[i\in I]J_i,\cC)\to\cC,$ the colimit functor.

The functors $T_{\cC}$ and $W_{\cC}$ are continuous, and the functors $U_{\cC}$ and $V_{\cC}$ have left adjoints $U_{\cC}^L$ and $V_{\cC}^L$ respectively. Here $U_{\cC}^L$ is the diagonal functor $\cC\to\prodd[i\in I]\cC$ , and the components of $V_{\cC}^L$ are the left Kan extension functors $\Fun(\prod_{i\in I}J_i,\cC)\to \Fun(J_j,\cC).$
Tautologically, we have a commutative square
\begin{equation}\label{eq:square_for_AB6}
\begin{CD}
\prodd[i\in I]\Fun(J_i,\cC) @>{T_{\cC}}>> \prodd[i\in I] \cC\\
@A{V_{\cC}^L}AA @A{U_{\cC}^L}AA\\
\Fun(\prodd[i\in I]J_i,\cC) @>{W_{\cC}}>> \cC.
\end{CD}
\end{equation}

Now, (AB6)$_{\kappa}$ for $\cC$ means that the square \eqref{eq:square_for_AB6} satisfies the dual Beck-Chevalley condition: the induced map $W_{\cC}\circ V_{\cC}\to U_{\cC}\circ T_{\cC}$ is an isomorphism.

Note that the square \eqref{eq:square_for_AB6} is equivalent to the following tensor product:
$$
\begin{CD}
\cC\otimes \prodd[i\in I]\Fun(J_i,\Sp) @>{\cC\otimes T_{\Sp}}>> \cC\otimes\prodd[i\in I] \Sp\\
@A{\cC\otimes V_{\Sp}^L}AA @A{\cC\otimes U_{\Sp}^L}AA\\
\cC\otimes \Fun(\prodd[i\in I]J_i,\Sp) @>{\cC\otimes W_{\Sp}}>> \cC\otimes\Sp.
\end{CD}
$$

Since the axiom (AB6) holds in the category $\Sp,$ the map $W_{\Sp}\circ V_{\Sp}\to U_{\Sp}\circ T_{\Sp}$ is an isomorphism. Applying the $2$-functoriality of $\cC\otimes-:\mPr^{\acc}_{\st,\kappa}\to\mPr^{\acc}_{\st,\kappa},$ given by Proposition \ref{prop:extra_functoriality} (since $\cC$ is flat in $\Pr^L_{\st,\kappa}$), we conclude that the map  $W_{\cC}\circ V_{\cC}\to U_{\cC}\circ T_{\cC}$ is an isomorphism. Therefore, (AB6)$_{\kappa}$ holds in $\cC,$ as required.
\end{proof}

\section{Extensions of compactly generated categories}
\label{sec:extensions_of_comp_gen}

Consider a short exact sequence of presentable stable categories of the form
\begin{equation}\label{eq:extension_of_comp_gen} 0\to\Ind(\cA)\xto{F}\cC\xto{G}\Ind(\cB)\to 0,\end{equation}
where $\cA$ and $\cB$ are small idempotent-complete stable categories, and the functors $F$ and $G$ are strongly continuous.

\begin{ques}Is it true that the category $\cC$ is automatically dualizable?\end{ques}

It turns out that in general the answer is ``no''. To give a precise criterion of dualizability of $\cC,$ we introduce some notation.

Denote by $G^R$ the right adjoint to $G,$ and by $G^{RR}$ the right adjoint to $G^R.$ Note that we have a semi-orthogonal decomposition $\cC=\langle G^R(\Ind(\cB)),F(\Ind(\cA))\rangle.$ Putting $\Phi:=G^{RR}\circ F:\Ind(\cA)\to\Ind(\cB),$ we obtain an equivalence
$\cC\simeq \Ind(\cB)\oright_{\Phi}\Ind(\cA).$

Note that we have equivalences of categories of functors:
\begin{multline*}\Fun^{\acc}(\Ind(\cA),\Ind(\cB))\simeq\Fun^{\acc}(\Ind(\cA),\Fun(\cB^{op},\Sp))\\
\simeq\Fun(\cB^{op},\Fun^{\acc}(\Ind(\cA),\Sp))\simeq \Fun(\cB^{op},\Ind(\Ind(\cA)^{op}))\simeq\Fun(\cB,\Pro(\Ind(\cA)))^{op}.\end{multline*}

We denote by $\Psi:\cB\to\Pro(\Ind(\cA))$ the functor corresponding to $\Phi.$

Recall the following full subcategories of $\Pro(\Ind(\cA))$ formed by (elementary) Tate objects.

\begin{defi}\cite[Definition 1]{Hen} We denote by $\Tate_{\el}(\cA)\subset\Pro(\Ind(\cA))$ the full subcategory which is generated by $\Ind(\cA)$ and $\Pro(\cA)$ as a stable subcategory. We denote by $\Tate(\cA)$ the Karoubi completion of $\Tate_{\el}(\cA).$\end{defi}

The following result gives a criterion when $\cC$ is dualizable.

\begin{prop}\label{prop:when_extension_is_dualizable} Within the above notation, the following are equivalent.

\begin{enumerate}[label=(\roman*),ref=(\roman*)]
\item $\cC$ is dualizable. \label{ext_dual1}

\item $\cC$ is compactly generated. \label{ext_dual2}

\item the image of $\Psi$ is contained in $\Tate(\cA)\subset\Pro(\Ind(\cA)).$ \label{ext_dual3}
\end{enumerate}
\end{prop}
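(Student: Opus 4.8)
The implication \ref{ext_dual2}$\Rightarrow$\ref{ext_dual1} is immediate, since compactly generated categories are dualizable. So the real content is the equivalence \ref{ext_dual1}$\Leftrightarrow$\ref{ext_dual3} together with the (perhaps surprising) upgrade \ref{ext_dual1}$\Rightarrow$\ref{ext_dual2}. My plan is to prove \ref{ext_dual3}$\Rightarrow$\ref{ext_dual2} and \ref{ext_dual1}$\Rightarrow$\ref{ext_dual3}, closing the cycle.

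\textbf{Step 1: unwinding the gluing and the functor $\Psi$.} First I would record the explicit description of $\cC\simeq\Ind(\cB)\oright_{\Phi}\Ind(\cA)$ as the category of triples $(y,x,\varphi)$ with $y\in\Ind(\cB)$, $x\in\Ind(\cA)$, $\varphi\colon y\to\Phi(x)$, together with the inclusions $i_1\colon\Ind(\cA)\to\cC$, $i_2\colon\Ind(\cB)\to\cC$ and their adjoints, as in Remark~\ref{rem:SOD_PR^LL} and Proposition~\ref{prop:gluing_of_dualizable}. The point of passing to $\Psi\colon\cB\to\Pro(\Ind(\cA))$ is that $\Phi=\Ind(F)\circ\hat{\Psi}$ where $\hat\Psi$ is the continuous extension; concretely, for $b\in\cB$ with $\Psi(b)=\proolim_j a_j$ (so $a_j\in\Ind(\cA)$, the index category codirected), the value $\Phi$ on a representable $b$ is the ind-object whose mapping spectra are $\prolim_j \Hom_{\Ind(\cA)}(-,a_j)$. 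I would make precise the ``matrix'' description of $\cC^{\omega}$ (when it exists) and of the compact objects of a gluing: an object $(y,x,\varphi)$ is compact iff $y\in\Ind(\cB)^\omega=\cB$, $x\in\Ind(\cA)^\omega=\cA$ and $\varphi$ is suitably ``finite''.

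\textbf{Step 2: \ref{ext_dual3}$\Rightarrow$\ref{ext_dual2}.} Assume the image of $\Psi$ lies in $\Tate(\cA)$. The key observation is that a Tate object $P\in\Tate_{\el}(\cA)$ fits in an exact triangle $L\to P\to M$ with $L\in\Ind(\cA)$, $M\in\Pro(\cA)$; equivalently $P$ is obtained from a ``lattice'' $L\hookrightarrow P$ with $P/L\in\Pro(\cA)$. Using this, for each $b\in\cB$ the gluing datum $\varphi_b$ factors, up to the lattice, through an honest ind-object, so the triple $(b,0,0)$ and a ``lattice completion'' of it together generate a compact object of $\cC$. More systematically: I expect to exhibit a small stable subcategory $\cA'\subset\cC$ consisting of the objects $i_1(a)$ ($a\in\cA$) together with ``partial lattices'' $i_2(b)$ glued to $i_1$ of a lattice of $\Psi(b)$, show that $\cA'\to\cC$ has dense image in the sense that $\Ind$ of it recovers $\cC$, and conclude $\cC\simeq\Ind(\cA'^{\Kar})$ is compactly generated. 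The Tate hypothesis is exactly what makes these lattices exist; this is parallel to the classical fact that $\Mod\text{-}A((t))$ or $\Tate(\cA)$ itself is compactly generated with compacts the elementary Tate objects of finite ``width''.

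\textbf{Step 3: \ref{ext_dual1}$\Rightarrow$\ref{ext_dual3}.} Conversely, suppose $\cC$ is dualizable. Then $\hat\cY_{\cC}\colon\cC\to\Ind(\cC)$ exists, and restricted to $i_2(\cB)\subset\cC^{\omega_1}$ it tells us that each $i_2(b)$ is a sequential colimit $y_1\to y_2\to\dots$ of $\omega_1$-compact objects with compact transition maps (Theorem~\ref{th:dualizability_via_compact_maps}, Proposition~\ref{prop:comp_ass_omega_1_gen}). Applying the left adjoint $i_1^{L}$ (note $i_1^{L}i_2=0$ in the SOD) and the right adjoint $i_2^{R}$, and analyzing the resulting two-term data, the compactness of each map $y_n\to y_{n+1}$ in $\cC$ forces the corresponding pro-object $\Psi(b)$ to be presented as a pro-system whose ``connecting'' maps factor through objects of $\Ind(\cA)$ up to bounded-length extensions — which is precisely the condition that $\Psi(b)$ be an elementary Tate object (a finite extension of an ind- and a pro-object), hence lies in $\Tate(\cA)$ after Karoubi completion. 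Concretely I would use Proposition~\ref{prop:compact_maps_in_dualizable_cats} to say that $\cY(y_n)\to\cY(y_{n+1})$ in $\Ind(\cC)$ factors through $\hat\cY(y_{n+1})$, transport this through the semi-orthogonal decomposition, and read off that $i_2^R\hat\cY$ and $i_1^L\hat\cY$ produce a lattice for $\Psi(b)$.

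\textbf{Expected main obstacle.} The hard part is Step 3: translating ``$\cC$ is dualizable'', i.e.\ the existence of $\hat\cY_{\cC}$ and the structure of compact maps in $\cC$, into the concrete statement that $\Psi(b)$ is Tate. The subtlety is that the gluing $\cC$ mixes an ind-direction ($\Ind(\cA)$) and what becomes a pro-direction for the gluing functor, and one must carefully track how a compact morphism in $\cC$ — whose defining property involves arbitrary ind-systems in $\cC$ — decomposes under $i_1^L$ and $i_2^R$. I anticipate needing a clean lemma: a continuous functor $\Phi\colon\Ind(\cA)\to\Ind(\cB)$ has the property that the gluing $\Ind(\cB)\oright_\Phi\Ind(\cA)$ is dualizable if and only if the associated $\Psi\colon\cB\to\Pro(\Ind(\cA))$ lands in $\Tate(\cA)$, proved by identifying ``compact morphisms into $i_2(\cB)$'' with ``lattices of Tate objects''. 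Once that dictionary is set up, both directions should fall out; setting it up correctly, including the Karoubi-completion bookkeeping so that the non-elementary Tate objects are accounted for, is where the real work lies.
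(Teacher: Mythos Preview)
Your cycle \ref{ext_dual3}$\Rightarrow$\ref{ext_dual2}$\Rightarrow$\ref{ext_dual1}$\Rightarrow$\ref{ext_dual3} is a legitimate plan, but the paper takes a shorter route that bypasses your Step~3 entirely. The key simplification is that \ref{ext_dual1}$\Rightarrow$\ref{ext_dual2} is almost free: once $\cC$ is dualizable, Proposition~\ref{prop:compact_objects_in_quotients} (Neeman--Thomason lifting of compact objects through quotients) says that every $x\oplus x[1]$ with $x\in\cB$ lifts to some $y\in\cC^{\omega}$, and these lifts together with $F(\cA)$ compactly generate $\cC$. There is no need to analyze $\hat{\cY}_{\cC}$ or compact morphisms at all. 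The remaining content is then packaged as \ref{ext_dual2}$\Leftrightarrow$\ref{ext_dual3}, proved by a direct computation: $\cC$ is compactly generated iff each $z=x\oplus x[1]\in\cB$ admits a compact lift $(z,w,\varphi)\in\cC$, and the condition for $(z,w,\varphi)$ to be compact is that
\[
\Hom_{\cC}\bigl((z,w,\varphi),(0,-,0)\bigr)\;\cong\;\Fiber\bigl(\Hom_{\Ind(\cA)}(w,-)\to\Hom_{\Ind(\cB)}(z,\Phi(-))\bigr)
\]
commute with coproducts. Since $\Hom_{\Ind(\cB)}(z,\Phi(-))$ is precisely $\Psi(z)$ viewed as a pro-object, this is exactly the existence of a lattice for $\Psi(z)$, i.e.\ an exact triangle $u\to\Psi(z)\to v$ with $u\in\Pro(\cA)$, $v\in\Ind(\cA)$; by Hennion's characterization \cite[Theorem~3]{Hen} this is $\Psi(z)\in\Tate_{\el}(\cA)$, and Thomason's theorem handles the passage from $z=x\oplus x[1]$ back to $x$ and from $\Tate_{\el}$ to $\Tate$. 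Your Step~2 is essentially the \ref{ext_dual3}$\Rightarrow$\ref{ext_dual2} half of this argument, though the paper's formulation via the explicit Hom-fiber is more direct than building an auxiliary subcategory $\cA'$. Your Step~3, which you correctly flag as the main obstacle, is simply unnecessary: the difficulty of extracting a lattice for $\Psi(b)$ from the compact-map structure of $\hat{\cY}_{\cC}$ dissolves once you realize dualizability already gives you honest compact objects over each $b\in\cB$.
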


Assuming Proposition \ref{prop:when_extension_is_dualizable}, we deduce the following result, which first appeared in \cite[Appendix A]{CDH+20}.

\begin{theo}\label{th:Ext^1_between_categories} Let $\cA$ and $\cB$ be small stable Karoubi complete categories. Denote by $\Ext^1(\cB,\cA)$ the (large, locally small) space of short exact sequences in $\Cat^{\perf}$ of the form
\begin{equation}\label{eq:extension_of_small_cats} 0\to\cA\to\cD\to\cB\to 0.\end{equation}
Then we have an equivalence $\Ext^1(\cB,\cA)\simeq \Fun(\cB,\Tate(\cA))^{\simeq}.$\end{theo}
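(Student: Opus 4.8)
The plan is to deduce Theorem \ref{th:Ext^1_between_categories} from Proposition \ref{prop:when_extension_is_dualizable} by analyzing the space of short exact sequences \eqref{eq:extension_of_small_cats} via the Ind-completion functor. First I would observe that applying $\Ind$ to a short exact sequence \eqref{eq:extension_of_small_cats} in $\Cat^{\perf}$ produces a short exact sequence $0\to\Ind(\cA)\to\Ind(\cD)\to\Ind(\cB)\to 0$ in $\Cat^{\cg}_{\st}$ with strongly continuous functors (this is the localization property of $\Ind$ together with \cite{Nee96}), and conversely, by Proposition \ref{prop:compact_objects_in_quotients}, the category $\cD$ of compact objects can be recovered from the sequence of compactly generated categories: $\cD\simeq\Ind(\cD)^{\omega}$, with $\cD^{\omega}$ sitting in the semi-orthogonal decomposition $\cD^{\omega}=\langle\cA,\cB\rangle$ up to the direct-summand subtlety handled by Thomason. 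So the space $\Ext^1(\cB,\cA)$ is equivalent to the space of short exact sequences of compactly generated categories $0\to\Ind(\cA)\to\cC\to\Ind(\cB)\to 0$ in $\Cat^{\dual}_{\st}$ with $\cC$ \emph{compactly generated}.

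Next I would use the semi-orthogonal gluing picture already set up in the excerpt. Any such short exact sequence \eqref{eq:extension_of_comp_gen} with strongly continuous functors gives, via the right adjoint $G^R$ and its further right adjoint $G^{RR}$, a semi-orthogonal decomposition $\cC=\langle G^R(\Ind(\cB)),F(\Ind(\cA))\rangle$ and hence an identification $\cC\simeq\Ind(\cB)\oright_{\Phi}\Ind(\cA)$ for the gluing functor $\Phi=G^{RR}\circ F:\Ind(\cA)\to\Ind(\cB)$. Conversely, for any accessible functor $\Phi:\Ind(\cA)\to\Ind(\cB)$ the gluing $\Ind(\cB)\oright_{\Phi}\Ind(\cA)$ is presentable (Proposition \ref{prop:gluing_presentable}) and fits into such a short exact sequence. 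Thus the space of short exact sequences \eqref{eq:extension_of_comp_gen} (allowing $\cC$ merely presentable) is equivalent to the space of accessible functors $\Fun^{\acc}(\Ind(\cA),\Ind(\cB))^{\simeq}$, and the chain of natural equivalences displayed before Definition \ref{defi:compact_morphism}-analogue in the excerpt rewrites this as $\Fun(\cB,\Pro(\Ind(\cA)))^{\simeq,op}$ — i.e.\ the functor $\Psi:\cB\to\Pro(\Ind(\cA))$ is the universal parameter. I would be slightly careful here that passing to the opposite of a space is harmless since a space is its own opposite as an $\infty$-groupoid, so we get $\{\text{s.e.s. }\eqref{eq:extension_of_comp_gen}\}^{\simeq}\simeq\Fun(\cB,\Pro(\Ind(\cA)))^{\simeq}$.

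Finally I would impose the compact-generation constraint. By Proposition \ref{prop:when_extension_is_dualizable}, the sequence \eqref{eq:extension_of_comp_gen} has $\cC$ dualizable if and only if $\cC$ is compactly generated if and only if the image of $\Psi$ lies in $\Tate(\cA)\subset\Pro(\Ind(\cA))$. Since $\Tate(\cA)$ is a full (stable, idempotent-complete) subcategory of $\Pro(\Ind(\cA))$, the full subspace of $\Fun(\cB,\Pro(\Ind(\cA)))^{\simeq}$ consisting of functors landing in $\Tate(\cA)$ is precisely $\Fun(\cB,\Tate(\cA))^{\simeq}$. Combining this with the identification of $\Ext^1(\cB,\cA)$ with the space of short exact sequences of compactly generated categories from the first paragraph yields the desired equivalence $\Ext^1(\cB,\cA)\simeq\Fun(\cB,\Tate(\cA))^{\simeq}$.

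The main obstacle I expect is making the first paragraph's equivalence of \emph{spaces} (not just of sets of equivalence classes) genuinely precise: one must check that the assignment $\mathcal{D}\rightsquigarrow\Ind(\mathcal{D})$ and the gluing construction $\Phi\rightsquigarrow\Ind(\cB)\oright_\Phi\Ind(\cA)$ are functorial enough to induce equivalences on the relevant mapping spaces / moduli spaces, and that the semi-orthogonal decomposition data is recorded correctly (so that composing the two constructions gives back the original sequence up to coherent homotopy). The cleanest way to handle this is probably to exhibit both $\Ext^1(\cB,\cA)$ and $\Fun(\cB,\Tate(\cA))^{\simeq}$ as appropriate full subspaces of $\Fun(\cB,\Pro(\Ind(\cA)))^{\simeq}$ via the compatible constructions above, rather than chasing an explicit zig-zag; the homotopy-coherence bookkeeping is then the only real work, and everything substantive about \emph{which} sequences are dualizable is already contained in Proposition \ref{prop:when_extension_is_dualizable}.
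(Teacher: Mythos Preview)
Your proposal is correct and follows essentially the same approach as the paper. The paper's proof is much terser but has the identical structure: use $\Ind(-)$ to identify $\Ext^1(\cB,\cA)$ with the space of short exact sequences \eqref{eq:extension_of_comp_gen} with $\cC$ compactly generated, then invoke the equivalence \ref{ext_dual2}$\iff$\ref{ext_dual3} of Proposition \ref{prop:when_extension_is_dualizable} to cut out the subspace $\Fun(\cB,\Tate(\cA))^{\simeq}$ inside $\Fun(\cB,\Pro(\Ind(\cA)))^{\simeq}$; the intermediate identification of all presentable extensions with $\Fun^{\acc}(\Ind(\cA),\Ind(\cB))^{\simeq}\simeq\Fun(\cB,\Pro(\Ind(\cA)))^{\simeq}$ that you spell out in your second paragraph is exactly the setup the paper establishes just before stating Proposition \ref{prop:when_extension_is_dualizable}. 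Your caveat about homotopy-coherence is fair and is likewise left implicit in the paper.
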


\begin{proof}The functor $\Ind(-)$ gives an equivalence between the space of short exact sequences of the form \eqref{eq:extension_of_small_cats} and the space of short exact sequences of the form \eqref{eq:extension_of_comp_gen} such that $\cC$ is compactly generated. Applying Proposition \ref{prop:when_extension_is_dualizable}, \Iff{ext_dual2}{ext_dual3}, we see that the latter space is equivalent to $\Fun(\cB,\Tate(\cA))^{\simeq}.$\end{proof}

\begin{remark}The special case of Theorem \ref{th:Ext^1_between_categories} for split extensions is well known. Namely, short exact sequences of the form \eqref{eq:extension_of_small_cats} where both functors have right (resp. left) adjoints correspond to semi-orthogonal decompositions of the form $\cD=\langle\cB,\cA\rangle$ (resp. $\cD=\langle\cA,\cB\rangle$), which in turn correspond to functors $\cB\to\Pro(\cA)\subset\Tate(\cA)$ (resp. $\cB\to\Ind(\cA)\subset\Tate(\cA)$), i.e. $\cA\mhyphen\cB$-bimodules (resp. $\cB\mhyphen\cA$-bimodules).\end{remark}

\begin{proof}[Proof of Proposition \ref{prop:when_extension_is_dualizable}] Clearly, \Implies{ext_dual2}{ext_dual1}. 

\Implies{ext_dual1}{ext_dual2}. If $\cC$ is dualizable, then by Proposition \ref{prop:compact_objects_in_quotients} for any object $x\in\cB$ there exists an object $y\in\cC^{\omega}$ such that $G(y)\cong x\oplus x[1]$. Such objects together with the objects of $F(\cA)$ form a generating collection of compact objects of $\cC,$ hence $\cC$ is compactly generated.

\Iff{ext_dual2}{ext_dual3}. We use the above observation: $\cC$ is compactly generated if and only if for any object $x\in \cB$ there exists a compact object $y\in\cC^{\omega}$ such that $G(y)\cong x\oplus x[1]=:z.$ Via the identification $\cC\simeq\Ind(\cB)\oright_{\Phi}\Ind(\cA),$ $\Phi=G^{RR}\circ F,$ the object $y$ must be of the form $(z,w,\varphi),$ $\varphi:z\to\Phi(w).$

We need to describe the necessary and sufficient conditions for $(z,w,\varphi)$ to be a compact object of $\cC.$ Note that $\Hom_{\cC}((z,w,\varphi),(-,0,0))\cong\Hom(z,-)$ commutes with colimits since $z$ is a compact object of $\Ind(\cB).$ We have
\begin{equation}\label{eq:Hom_as_fiber} \Hom_{\cC}((z,w,\varphi),(0,-,0))=\Fiber(\Hom_{\Ind(\cA)}(w,-)\to \Hom_{\Ind(\cB)}(z,\Phi(-))).\end{equation}
We deduce that the object $(z,w,\varphi)$ is compact if and only if the right-hand side in \eqref{eq:Hom_as_fiber} commutes with coproducts.

Note that the functor $\Hom_{\Ind(\cB)}(z,\Phi(-)))$ is exactly the object $\Psi(z)\in\Pro(\Ind(\cA)),$ where $\Psi:\cB\to\Pro(\Ind(\cA))$ corresponds to $\Phi,$ as above. We conclude that the existence of $y\in\cC^{\omega}$ such that $G(y)\cong z$ is equivalent to the existence of objects $u\in\Pro(\cA)\subset\Pro(\Ind(\cA))$ and $v\in\Ind(\cA)\subset\Pro(\Ind(\cA)),$ and an exact triangle $$u\to\Psi(z)\to v\to u[1]\quad \text{in }\Pro(\Ind(\cA)).$$
By \cite[Theorem 3]{Hen}, this is equivalent to the inclusion $\Psi(z)\in\Tate_{\el}(\cA).$ By Thomason's theorem the latter condition is equivalent to the inclusion $\Psi(x)\in\Tate(\cA).$  This proves the equivalence of \ref{ext_dual2} and \ref{ext_dual3}.
\end{proof}

\begin{remark}It follows from the proof of Proposition \ref{prop:when_extension_is_dualizable} that the universal extension of $\Tate(\cA)$ by $\cA$ is the following short exact sequence:
$$0\to\cA\xto{F}\Ind(\cA)\oright_{T}\Pro(\cA)\xto{G}\Tate(\cA)\to 0.$$
Here the functor $T:\Pro(\cA)\to\Ind(\cA)$ is given by $T(\proolim[i]x_i)=\prolim[i] x_i.$ The functor $F$ is given by $F(x)=(x,x,\id_x).$ The functor $G$ is given by $G(x,y,\varphi)=\Cone(x\xto{\varphi} y).$ The category $\Ind(\cA)\oright_{T}\Pro(\cA)$ can be thought of as the category of elementary Tate objects of $\cA$ with a choice of a lattice. The functor $G$ then simply forgets the lattice.\end{remark}

\begin{example} Let $p$ be a prime number, and consider the short exact sequence
$$0\to\Perf_{p\hy\tors}(\Z)\to\Perf(\Z)\to\Perf(\Z[p^{-1}])\to 0.$$
The associated $\Z$-linear functor $\Perf(\Z[p^{-1}])\to \Tate(\Perf_{p\hy\tors}(\Z))$ sends $\Z[p^{-1}]$ to $\Q_p,$ where $\Q_p$ is considered as a Tate object in the usual way.\end{example}

We give a natural ``linear-algebraic'' example of a non-dualizable category which is an extension of compactly generated categories. We use the notation $x^{(S)}=\bigoplus\limits_S x$ for the direct sum of copies of an object $x$ indexed by the elements of a set $S.$

\begin{prop}Let $\mk$ be a field. Consider the category $\cC=D(\mk)\oright_{\Phi} D(\mk),$ where $\Phi(V)=(V^{(\N)})^{\N}.$ Equivalently, $\cC$ is the category of triples $(V,W,\varphi),$ where $V,W\in D(\mk)$ and $\varphi:V^{(\N)}\to W^{(\N)}.$ Then $\cC$ is not dualizable.\end{prop}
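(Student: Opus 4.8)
The plan is to apply Proposition \ref{prop:when_extension_is_dualizable}. First I would present $\cC$ as the middle term of a short exact sequence of the form \eqref{eq:extension_of_comp_gen}: the gluing presentation $\cC=D(\mk)\oright_{\Phi}D(\mk)$ with $\Phi(W)=(W^{(\N)})^{\N}$ realizes $\cC$ as $\Ind(\cB)\oright_{\Phi}\Ind(\cA)$ for $\cA=\cB=\Perf(\mk)$, and the inclusion $i_2:D(\mk)\to\cC$ together with the projection to the first factor give a short exact sequence $0\to D(\mk)\to\cC\to D(\mk)\to 0$ in $\Pr^{LL}_{\st}$ whose associated gluing functor is $\Phi$. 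By Proposition \ref{prop:when_extension_is_dualizable} it then suffices to show that the functor $\Psi:\Perf(\mk)\to\Pro(\Ind(\Perf(\mk)))=\Pro(D(\mk))$ corresponding to $\Phi$ does not take values in $\Tate(\Perf(\mk))$; since $\Perf(\mk)$ is generated by $\mk$, only $\Psi(\mk)$ needs to be treated.

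Next I would compute $\Psi(\mk)$. By construction it is the pro-object with $\Hom_{\Pro(D(\mk))}(\Psi(\mk),N)\cong\Phi(N)=(N^{(\N)})^{\N}$ for $N\in D(\mk)$. Writing $\Phi=(-)^{\N}\circ(-)^{(\N)}$ and observing that the colimit-preserving functor $(-)^{(\N)}:D(\mk)\to D(\mk)$ equals $\Hom_{\Pro(D(\mk))}(L,-)$ for $L:=\proolim_{S}\mk^{(S)}\in\Pro(\Perf(\mk))$ (the formal pro-limit over finite $S\subseteq\N$ along the projections; classically $L=\mk^{\N}$ as a linearly compact vector space), one gets, using $\prod_{\N}\Hom(L,-)\cong\Hom(\coprod_{\N}L,-)$, that
$$\Psi(\mk)\cong\textstyle\bigoplus_{\N}L .$$
(Equivalently, applying (AB6) in $D(\mk)$ identifies $\Psi(\mk)$ with $\proolim_{(S_m)}\bigoplus_m\mk^{(S_m)}$.) Since $L$ lies in the heart of the natural $t$-structure on $\Pro(D(\mk))$, coproducts are right exact, and $\tau_{\le 0}$ preserves coproducts, the object $\bigoplus_{\N}L$ again lies in the heart, where it is the classical pro--vector space $\bigoplus_{\N}\mk^{\N}$.

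Finally I would show $\bigoplus_{\N}L\notin\Tate(\Perf(\mk))$. If $\Psi(\mk)$ belonged to $\Tate_{el}(\Perf(\mk))$, the proof of Proposition \ref{prop:when_extension_is_dualizable} would give an exact triangle $u\to\bigoplus_{\N}L\to v\to u[1]$ in $\Pro(D(\mk))$ with $u\in\Pro(\Perf(\mk))$ and $v\in D(\mk)$. Passing to $H^{\bullet}$ for the $t$-structure on $\Pro(D(\mk))$ (under which $\Pro(\Perf(\mk))$ is stable) and using $H^{0}(\bigoplus_{\N}L)=\bigoplus_{\N}\mk^{\N}$, $H^{\bullet}(u)\in\Pro(\Vect^{f}_{\mk})$ linearly compact, $H^{\bullet}(v)\in\Vect_{\mk}$ discrete, one finds that $\bigoplus_{\N}\mk^{\N}$ would be an extension of a discrete vector space by a quotient of a linearly compact one, hence locally linearly compact. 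But $\bigoplus_{\N}\mk^{\N}$ is \emph{not} locally linearly compact: any open subspace contains a basic open subspace $\bigoplus_{m}U_{m}$ with $U_{m}\subseteq\mk^{\N}$ open of finite codimension (so $U_{m}\cong\mk^{\N}$), and this in turn contains an open subspace of infinite codimension, whereas every open subspace of a linearly compact space has finite codimension; thus no open subspace of $\bigoplus_{\N}\mk^{\N}$ is linearly compact. The same induction shows $H^{0}$ of every elementary Tate object is locally linearly compact, and retracts of locally linearly compact vector spaces are locally linearly compact, so this also rules out $\Psi(\mk)\in\Tate(\Perf(\mk))$. Hence $\cC$ is not dualizable by Proposition \ref{prop:when_extension_is_dualizable}.

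I expect the main obstacle to be the bookkeeping in the last two paragraphs: verifying cleanly that $\Psi(\mk)\simeq\bigoplus_{\N}L$ (keeping track of the variance in $\Fun^{\acc}(\Ind(\cA),\Sp)\simeq\Pro(\Ind(\cA))^{op}$ and that an infinite product of functors corepresented out of $L$ is corepresented out of $\coprod_{\N}L$), and the $t$-structure arguments on $\Pro(D(\mk))$ that reduce non-dualizability to the elementary — but genuinely indispensable — classical fact that $\bigoplus_{\N}\mk^{\N}$ fails to be a locally linearly compact $\mk$-vector space.
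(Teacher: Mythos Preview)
Your proposal is correct and shares the overall strategy with the paper (apply Proposition \ref{prop:when_extension_is_dualizable} and show $\Psi(\mk)\notin\Tate(\Perf(\mk))$), including an equivalent computation of $\Psi(\mk)$ via (AB6); your $\coprod_{\N}L=\proolim_{(S_m)}\bigoplus_m\mk^{(S_m)}$ is the paper's $\proolim_{f}X_f$ with $X_f=\bigoplus_n\mk^{f(n)}$. The divergence is only in the final step. The paper shows that the image of $\Psi(\mk)$ in $\Pro(\Calk(\mk))$ is not pro-constant: since the transition maps $X_g\to X_f$ are split epimorphisms, pro-constancy would force the images $\bar X_f\in\Calk(\mk)$ to be eventually constant, but $\Fiber(X_{f+1}\to X_f)\cong\mk^{(\N)}\notin\Perf(\mk)$ rules this out in two lines. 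Your route through a $t$-structure on $\Pro(D(\mk))$ and the classical non-local-linear-compactness of $\bigoplus_{\N}\mk^{\N}$ is more conceptual --- it names the obstruction as a familiar topological-vector-space phenomenon --- but, as you yourself anticipate, it carries real overhead: one must set up the $t$-structure, verify it restricts well to $\Pro(\Perf(\mk))$ and to $D(\mk)$, and make precise the passage from the heart $\Pro(\Vect_{\mk})$ to topological vector spaces (in particular that quotients and extensions behave as expected). Over a field all of this does go through, since objects of $D(\mk)$ split as sums of their cohomology and the $t$-structure passes to $\Pro$ levelwise; but the paper's Calkin argument sidesteps these verifications entirely.
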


\begin{proof}Consider the functor $\Psi:\Perf(k)\to\Pro(D(\mk))$ corresponding to $\Phi.$ By Proposition \ref{prop:when_extension_is_dualizable},  we need to show that $\Psi(\mk)\not\in\Tate(\Perf(\mk)).$

Identifying $\Pro(D(\mk))$ with the opposite of the category of $\mk$-linear accessible functors $D(\mk)\to D(\mk),$ we see that $\Psi(\mk)$ corresponds to the functor $\Phi.$ Using the fact that (AB6) holds in $D(\mk),$ we obtain isomorphisms
$$\Phi(V)=(V^{(\N)})^{\N}\cong \indlim[f:\N\to\N] \prodd[n\in\N]\bigoplus\limits_{i=0}^{f(n)-1} V\cong\indlim[f:\N\to\N]\Hom(\bigoplus\limits_{n\in \N}\mk^{f(n)},V).$$
Putting $X_f:=\bigoplus\limits_{n\in \N}\mk^{f(n)},$ we obtain an isomorphism $\Psi(\mk)\cong\proolim[f:\N\to\N] X_f.$

To show that $\Psi(\mk)\not\in\Tate(\Perf(\mk)),$ we need to show that the image of $\Psi(\mk)$ in $\Pro(\Calk(\mk))$ is not a pro-constant object. Assume the contrary. Since the maps $X_g\to X_f$ are split epimorphisms for $f\leq g,$ we deduce that the images $\bar{X}_f$ must form an eventually constant pro-system $(\bar{X}_f)_{f:\N\to\N}$ in $\Calk(\mk).$ But this is clearly false: for any $f:\N\to\N,$ the map $\bar{X}_{f+1}\to \bar{X}_f$ is not an isomorphism in $\Calk(\mk)$ since $\Fiber(X_{f+1}\to X_f)\cong\mk^{(\N)}.$ \end{proof}

Another application of Proposition \ref{prop:when_extension_is_dualizable} is the classification of dualizable localizing subcategories of $D(R),$ where $R$ is a noetherian commutative ring (we do not require the inclusion functor to be strongly continuous). For a subset $S\subset\Spec R$ denote by
$D_S(R)\subset D(R)$ the full localizing subcategory generated by the residue fields $\mk(\mfp)=R_{\mfp}/\mfp R_{\mfp},$ $\mfp\in S.$  
By \cite[Theorem 2.8]{Nee92} the assignment $S\mapsto D_S(R)$ gives a bijection
$$\{\text{subsets of }\Spec R\}\cong \{\text{localizing subcategories of }D(R)\}$$

Moreover, by \cite[Theorem 3.3]{Nee92} the inclusion $D_S(R)\to D(R)$ is strongly continuous if and only if $S$ is closed under specialization. In this case the category $D_S(R)$ is compactly generated by the Koszul complexes $\Kos(R;f_1,\dots,f_n),$ where $f_1,\dots,f_n\in R$ are elements such that $V(f_1,\dots,f_n)\subset S.$

\begin{defi}Given a poset $(P,\leq)$ and two elements $x,y\in P,$ we put $[x,y]:=\{z\in P\mid x\leq z\leq y\}.$ We call a subset $S\subset P$ convex if for any $x,y\in S$ we have $[x,y]\subset S.$\end{defi}

\begin{theo}\label{th:dualizability_equiv_convexity} Let $R$ be a noetherian commutative ring and $S\subset\Spec R$ a subset. Then $D_S(R)$ is dualizable if and only if $S\subset\Spec R$ is convex with respect to the specialization order. In this case the category $D_S(R)$ is compactly generated.\end{theo}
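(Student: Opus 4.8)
The plan is to reduce the statement to the extension criterion of Proposition \ref{prop:when_extension_is_dualizable} by writing $D_S(R)$ as an extension of two ``specialization-closed'' pieces, and then to analyze when the associated functor lands in the category of Tate objects. First I would recall the structure of localizing subcategories from \cite{Nee92}: writing $V(S)$ for the specialization closure of $S$ and $Z = V(S)\setminus S$ (which for convex $S$ is again specialization-closed, since it equals the set of primes $\mfp$ with some $\mfq\subsetneq\mfp$, $\mfq\in S$... one has to check this carefully), one gets a candidate short exact sequence
\begin{equation*}
0\to D_{S}(R)\to D_{V(S)}(R)\to D_{V(S)}(R)/D_{S}(R)\to 0.
\end{equation*}
The subtle point is that I want instead to exhibit $D_S(R)$ as the \emph{kernel} of a strongly continuous functor between compactly generated categories; the natural move is to use $D_{V(S)}(R)$, which is compactly generated (being specialization-closed), and the quotient $D_{V(S)}(R)/D_{S}(R)$. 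For convex $S$ the complement $V(S)\setminus S$ ought to be specialization-closed \emph{inside} $V(S)$, i.e.\ $S$ is open in $V(S)$, which makes the quotient of the form $D_{T}(\text{something})$ and compactly generated. So the first real step is the purely order-theoretic lemma: $S$ is convex in $\Spec R$ iff $S = V(S)\cap U$ for an open $U$, iff $S$ is locally closed in a suitable sense; equivalently $V(S)\setminus S$ is closed under specialization.

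Granting that, $D_S(R)$ sits in a short exact sequence $0\to D_S(R)\to D_{V(S)}(R)\xrightarrow{G} \cC' \to 0$ with both outer terms compactly generated and strongly continuous functors, and dually $D_{V(S)}(R)$ itself fits into $0\to D_{V(S)}(R)\to D(R)\to D(R\setminus V(S)\text{-part})\to 0$. The cleanest route: realize $D_S(R)$ directly as an extension $0\to \Ind(\cA)\to D_S(R)\to \Ind(\cB)\to 0$ where $\cA$, $\cB$ are the compact parts coming from two nested specialization-closed sets whose ``difference'' is $S$. Then Proposition \ref{prop:when_extension_is_dualizable} says $D_S(R)$ is dualizable iff it is compactly generated iff the classifying functor $\Psi:\cB\to\Pro(\Ind(\cA))$ factors through $\Tate(\cA)$. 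For convex $S$ I expect this to be automatic: the relevant $\Pro$-objects are Koszul-type complexes and one checks by hand (using that $R$ is noetherian, so all the local cohomology functors are well-behaved) that $\Psi$ of each generator is an elementary Tate object — it is built from an $\Ind$-lattice (a Koszul complex for the larger closed set) and a $\Pro$-colattice. Conversely, if $S$ is \emph{not} convex, pick $\mfp\subsetneq\mfq\subsetneq\mfr$ with $\mfp,\mfr\in S$, $\mfq\notin S$; localizing at $\mfr$ reduces to a local ring where one exhibits an object of $D_S(R)$ whose image in the Calkin category is a non-pro-constant pro-object, exactly as in the $D(\mk)\oright_{\Phi}D(\mk)$ example above — concretely the failure is detected by an infinite direct sum $\mk(\mfq)^{(\N)}$ appearing as a fiber, obstructing membership in $\Tate$.

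The key steps in order: (1) the order-theoretic equivalence ``$S$ convex $\iff V(S)\setminus S$ specialization-closed $\iff S$ open in $V(S)$''; (2) using \cite[Theorems 2.8, 3.3]{Nee92}, produce the short exact sequence presenting $D_S(R)$ as an extension of compactly generated categories with strongly continuous functors; (3) apply Proposition \ref{prop:when_extension_is_dualizable} to reduce dualizability to a Tate-object condition on the classifying functor $\Psi$; (4) for convex $S$, verify the Tate condition by an explicit computation with Koszul complexes and local cohomology, giving dualizability \emph{and} compact generation simultaneously; (5) for non-convex $S$, localize at a prime witnessing non-convexity and run the Calkin-category obstruction argument to show $\Psi$ escapes $\Tate(\cA)$. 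The main obstacle I anticipate is step (4): checking that $\Psi$ genuinely lands in $\Tate(\cA)$ requires understanding the functor $\Phi = G^{RR}\circ F$ concretely enough to recognize its pro-$\Ind$ values as elementary Tate objects, and over a general noetherian ring this means controlling the interaction of $\Gamma_{Z}$-type local cohomology functors with infinite products — essentially an (AB6)-style bookkeeping argument in $D(R)$. Step (5) is comparatively easy once one has the right local example, reusing the mechanism of the $\mk[x,y]/(x,y)^2$-flavored counterexamples already in the paper.
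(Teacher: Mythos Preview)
Your order-theoretic step (1) is correct, but you have the convex direction backwards. The inclusion $D_S(R)\hookrightarrow D_{V(S)}(R)$ is \emph{not} strongly continuous unless $S$ is already specialization-closed, so your candidate sequence $0\to D_S(R)\to D_{V(S)}(R)\to\cC'\to 0$ is not a short exact sequence in $\Pr^{LL}_{\st}$. The correct move: with $S'=V(S)$ and $S''=S'\setminus S$, convexity makes $S''$ specialization-closed (this is exactly your step (1)), hence $D_{S''}(R)\hookrightarrow D_{S'}(R)$ is a strongly continuous inclusion of compactly generated categories and $D_S(R)\simeq D_{S'}(R)/D_{S''}(R)$ is the \emph{quotient}. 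That is compactly generated on the nose --- no Tate condition to check, no Koszul or local-cohomology bookkeeping. Your step (4) is unnecessary, and there is no natural way to place $D_S(R)$ in the \emph{middle} of an extension of compactly generated categories without already knowing the answer.

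For the non-convex direction you are right that Proposition \ref{prop:when_extension_is_dualizable} is the tool, but two ingredients are missing. First, the reduction is more delicate than ``localize at $\mfr$'': one chooses $y$ \emph{minimal} in $\{z\in S: z\leadsto x,\ [x,z]\not\subset S\}$ (noetherianity is used here) and base-changes to $R'=R_{\mfp}/\mfq R_{\mfp}$ (localize \emph{and} quotient); minimality is what forces $S'\setminus\{\{0\}\}$ to be specialization-closed in $\Spec R'$, so that one actually obtains an extension $0\to D_{S'\setminus\{\{0\}\}}(R')\to D_{S'}(R')\to D(K)\to 0$ with compactly generated ends to feed into the proposition. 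Second, the obstruction is not a direct ``$\mk(\mfq)^{(\N)}$ escapes $\Tate$'' computation as in the earlier linear-algebraic example. The real technical content is Lemma \ref{lem:perfectnessOver_local_rings}: over a noetherian local ring $(R,\m)$, if $\Hom_R(M,\iota(-))$ commutes with coproducts on $D_{\m\hy\tors}(R)$ then $M\in\Perf(R)$; the proof uses the injective hull of $R/\m$ to bound the cohomological amplitude of $M$ and then detect finite generation of its cohomology. From it one argues that a hypothetical compact lift of $K\oplus K[1]$ would force a certain cone to be perfect over $R'$, contradicted by evaluating at any prime of $\Spec R'\setminus S'$. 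Without this lemma your step (5) has no mechanism to conclude.
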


\begin{example}Let $\mk$ is a field. Consider the following full subcategory of the derived category of quasi-coherent sheaves on the affine plane:
$$\cC:=\{M\in D(\mk[x,y])\mid M[x^{-1}]/y M[x^{-1}]=0\}\subset D(\mk[x,y]).$$ Then Theorem \ref{th:dualizability_equiv_convexity} implies that $\cC$ is not dualizable. Indeed, the corresponding subset of $\A_{\mk}^2=\Spec\mk[x,y]$ is given by $S=\A_{\mk}^2\setminus\{x\ne 0,\, y=0\}.$ We see that $S$ contains the ideals $\{0\}$ and $\mk[x,y],$ but does not contain the ideal $(y),$ hence $S$ is not convex.\end{example}

We need the following auxiliary statement.

\begin{lemma}\label{lem:perfectnessOver_local_rings} Let $(R,\m)$ be a noetherian local commutative ring, and $M\in D(R)$ a complex of $R$-modules. Denote by $\iota:D_{\m\hy\tors}(R)\to D(R)$ the inclusion functor. Suppose that the functor $\Hom_R(M,\iota(-))$ commutes with coproducts. Then $M$ is a perfect complex of $R$-modules.\end{lemma}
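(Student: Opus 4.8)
The plan is to reduce the statement to a finiteness property of the complex $M$ that is detected by a single well-chosen torsion object, and then invoke the standard characterization of perfect complexes over a noetherian local ring. First I would record the key input: over a noetherian local ring $(R,\m)$ with residue field $\mk=R/\m$, the torsion subcategory $D_{\m\hy\tors}(R)$ is compactly generated by the Koszul complex $K=\Kos(R;f_1,\dots,f_n)$ for any system of generators $f_1,\dots,f_n$ of $\m$; in particular the inclusion $\iota$ is strongly continuous, and every object of $D_{\m\hy\tors}(R)$ is built by colimits from $K$. Consequently, the hypothesis that $\Hom_R(M,\iota(-))$ commutes with coproducts means that, as a functor on all of $D_{\m\hy\tors}(R)$, $\Hom_R(M,-)$ is continuous; equivalently the object $\Phi(M)$, where $\Phi:D(R)\to D_{\m\hy\tors}(R)$ is the right adjoint to $\iota$ (which exists since $D_{\m\hy\tors}(R)$ is a smashing localizing subcategory), is compact in $D_{\m\hy\tors}(R)$. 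So $\Gamma_{\m}(M):=\iota\Phi(M)$ is a ``finite'' torsion complex.

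Next I would translate compactness of $\Gamma_{\m}(M)$ into a statement about $M\otimes_R^{\bL}\mk$. Since $K$ generates $D_{\m\hy\tors}(R)$ and $\Hom_R(K,-)\cong K^{\vee}\otimes_R^{\bL}-$ with $K^{\vee}$ perfect, compactness of $\Gamma_{\m}(M)$ forces $\Hom_R(M,K)\cong\Hom_R(M,\Gamma_{\m}(M))\otimes\text{(perfect)}$ to be a perfect complex of $R$-modules; more directly, $\Hom_R(M,K)$ is a compact object of $D_{\m\hy\tors}(R)$ tensored down, hence it is perfect. But $\Hom_R(M,K)\cong \Hom_R(M,R)\otimes_R^{\bL}K$ (as $K$ is perfect), and smashing with $\mk$ one gets that $\Hom_R(M,K)\otimes_R^{\bL}\mk$ is a finite-dimensional $\mk$-vector space concentrated in finitely many degrees. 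Unwinding, $\Hom_{\mk}(M\otimes_R^{\bL}\mk,\mk)$ — equivalently the homology $H_*(M\otimes_R^{\bL}\mk)$ — is bounded and finite-dimensional over $\mk$.

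Finally I would conclude by the classical criterion: a complex $M\in D(R)$ over a noetherian local ring is perfect if and only if $M\otimes_R^{\bL}\mk$ has bounded, finite-dimensional homology. (One subtlety: one must first know $M$ has bounded-below, degreewise finitely generated homology, or argue via the derived Nakayama lemma that $M\otimes_R^{\bL}\mk$ bounded already forces $M$ to be, up to retract, a bounded complex of finite free modules; this is where a little care is needed, comparing $M$ with the semifree resolution built from a basis of $H_*(M\otimes_R^{\bL}\mk)$ and checking the comparison map becomes an equivalence after $-\otimes_R^{\bL}\mk$, hence is an equivalence by Nakayama.) I expect the main obstacle to be exactly this last point — extracting perfectness of $M$ itself rather than merely of $\Gamma_{\m}(M)$ or of $M\otimes_R^{\bL}\mk$ — since a priori $M$ need not be torsion; the bridge is that $\Hom_R(M,R)\otimes_R^{\bL}K$ being perfect, together with $K$ generating the torsion category whose complement is $D(R[\m^{-1}]\text{-local objects})$, pins down enough of $M$ near the closed point, and Nakayama globalizes it. Everything else is a routine assembly of the adjunction $\iota\dashv\Phi$, strong continuity of $\iota$, and the perfectness-detection lemma.
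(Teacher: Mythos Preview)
Your proposal has a genuine gap at the Nakayama step, and a confused adjunction earlier. For the adjunction: if $\Phi=\iota^R$ is the right adjoint to $\iota$, then $\Hom_{D_{\m\hy\tors}(R)}(N,\Phi(M))\cong\Hom_R(\iota(N),M)$, not $\Hom_R(M,\iota(N))$, so compactness of $\Gamma_{\m}(M)=\iota\Phi(M)$ is not what the hypothesis encodes. Indeed for $M=R$ the hypothesis holds trivially, yet $\Gamma_\m(R)$ is never compact in $D_{\m\hy\tors}(R)$ when $\dim R>0$. What the hypothesis does yield directly, via $\Hom_\mk(M\otimes_R^{\bL}\mk,V)\cong\Hom_R(M,V)$ for $V\in D(\mk)\subset D_{\m\hy\tors}(R)$, is that $M\otimes_R^{\bL}\mk$ is perfect over $\mk$; so your target conclusion that $M\otimes_R^{\bL}\mk$ has bounded finite-dimensional homology is reachable, just not by the route through $\Gamma_\m(M)$.

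The fatal issue is the last step. Perfectness of $M\otimes_R^{\bL}\mk$ does not imply perfectness of $M$ without some boundedness or derived-completeness hypothesis on $M$, and your outline supplies neither. The argument you sketch --- build a finite free $P$ with a map $P\to M$ that is an equivalence after $-\otimes_R^{\bL}\mk$, then apply derived Nakayama to the cone --- fails because derived Nakayama needs the cone to be derived $\m$-complete or bounded below, and nothing forces this. You flag this as ``where a little care is needed'', but it is the entire crux of the lemma. The paper's proof takes a completely different, more elementary route: using the injective hull $\cI$ of $R/\m$ (which lies in $D_{\m\hy\tors}(R)$ and is an injective cogenerator of $\Mod\hy R$), one exhibits explicit maps from $M$ to $\bigoplus_i\cI[-n_i]$, respectively to $\cI^{(\N)}$, that would violate the hypothesis if $M$ had unbounded, respectively non-finitely-generated, cohomology. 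Only after establishing that $M$ has bounded finitely generated cohomology does the paper reduce modulo $\m$, and at that point the standard perfectness criterion applies.
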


\begin{proof}Denote by $\cI$ the injective hull of the $R$-module $R/\m.$ Then $\cI$ is a locally $\m$-torsion $R$-module, hence $\cI\in D_{\m\hy\tors}(R).$ Moreover, $\cI$ is an injective cogenerator of the abelian category of $R$-modules.

We first observe that $M$ has bounded cohomology. Indeed, suppose that $H^{n_i}(M)\ne 0$ for infinitely many integers $n_i.$ For each $i,$ choose a non-zero map $H^{n_i}(M)\to\cI$ (which exists since $\cI$ is a cogenerator). Since $\cI$ is injective, we obtain the map $M\to\prodd[i]\cI[-n_i]\cong \bigoplus\limits_i\cI[-n_i].$ This map does not factor through a finite direct sum, which contradicts the commutation of $\Hom_R(M,\iota(-))$ with coproducts.

Next, we show that each $R$-module $H^n(M)$ is finitely generated. We may and will assume that $n=0.$ Suppose that $H^0(M)$ is not finitely generated. Then we can construct a strictly increasing sequence of submodules $$0\ne N_0\subsetneq N_1\subset\dots\subset H^0(M)$$ such that each $N_n$ is finitely generated. Put $N=\bigcup_{n} N_n.$ For $n\geq 0,$ choose a non-zero map $f_n:N_n\to \cI,$ such that $(f_n)_{\mid N_{n-1}}=0$ (where $N_{-1}=0$). Choose some $g_n:N\to \cI$ which extends $f_n,$ for each $n\geq 0.$ The sequence $(g_n)_{n\geq 0}$ gives a well-defined map $g:N\to\bigoplus\limits_{\N}\cI=\cI^{(\N)},$ which does not factor through a finite direct sum. Since $R$ is noetherian, the module $\cI^{(\N)}$ is injective. Choosing an extension $h:H^0(M)\to\cI^{(\N)}$ of $g,$ we obtain a morphism $M\to\cI^{(\N)},$ which does not factor through a finite direct sum. This contradicts the commutation of $\Hom_R(M,\iota(-))$ with coproducts.

So we showed that $M$ has bounded finitely generated cohomology. To prove that $M$ is perfect, it suffices to observe that $M\tens{R}R/\m$ is perfect over $R/\m:$ by adjunction, the functor $\Hom_{R/\m}(M\tens{R}R/\m,-)$ commutes with coproducts.
\end{proof}

\begin{lemma}\label{lem:nonconvex_special} Let $(R,\m)$ be a local noetherian commutative ring without zero-divisors. Let $S\subsetneq\Spec R$ be a proper subset such that $\m,\{0\}\in S,$ and $S\setminus\{\{0\}\}$ is closed under specialization. Then the category $D_S(R)$ is not dualizable.\end{lemma}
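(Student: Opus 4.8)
The plan is to reduce the statement to Lemma \ref{lem:perfectnessOver_local_rings} by producing, under the assumption that $D_S(R)$ is dualizable, an object of $S$ that is forced to be the generic point but whose presence contradicts properness of $S$. First I would recall the setup: since $S\setminus\{\{0\}\}$ is closed under specialization, the subcategory $\cD:=D_{S\setminus\{\{0\}\}}(R)\subset D(R)$ is compactly generated (by Koszul complexes on elements cutting out closed subsets inside $S\setminus\{\{0\}\}$) and the inclusion $\cD\to D(R)$ is strongly continuous. Since $\{0\},\m\in S$ but $S$ is not all of $\Spec R$, the subset $S\setminus\{\{0\}\}$ is a proper specialization-closed subset containing $\m$; in particular $D_S(R)$ is generated by $\cD$ together with the residue field $\mk(\{0\})=\Frac(R)=K$. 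So I would set up the short exact sequence
\begin{equation*}
0\to \cD\to D_S(R)\xto{G} D_S(R)/\cD\to 0.
\end{equation*}
The key point is to identify the quotient $D_S(R)/\cD$: since $\cD$ is a localizing subcategory of $D(R)$ closed under the relevant support conditions, the Verdier quotient should be identified with $D_{\{0\}}(K)\simeq D(K)$ (the localization inverting everything away from the generic point), so the sequence becomes an extension of compactly generated categories $0\to\cD\to D_S(R)\to D(K)\to 0$ in the sense of Section \ref{sec:extensions_of_comp_gen}.

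Next I would invoke Proposition \ref{prop:when_extension_is_dualizable}: if $D_S(R)$ is dualizable then it is compactly generated, and in particular (by Proposition \ref{prop:compact_objects_in_quotients}, or directly by the proof of Proposition \ref{prop:when_extension_is_dualizable}) for the object $K\in D(K)$ there is a compact object $y\in D_S(R)^{\omega}$ with $G(y)\cong K\oplus K[1]$. Writing $D_S(R)\simeq D(K)\oright_{\Phi}\cD$ with $\Phi=G^{RR}\circ F$, such $y$ is a triple $(K\oplus K[1], w, \varphi)$ with $w\in\cD$, and compactness forces the functor
\begin{equation*}
\Fiber\bigl(\Hom_{\cD}(w,-)\to\Hom_{D(K)}(K\oplus K[1],\Phi(-))\bigr)
\end{equation*}
to commute with coproducts. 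The second term here is computed by the image under $G^R$ of $-$: concretely, $\Hom_{D(K)}(K,\Phi(z))\cong \Hom_{D(R)}(R, z[\text{loc}])$, i.e. it is (a shift of) $z$ localized at the generic point. So the condition says that the functor $z\mapsto \Hom_{\cD}(w,z)$ differs from a coproduct-preserving functor by the "generic fiber" functor, which is essentially $R\rtimes$-localization. From this I expect to extract that $w$ is a complex whose $\Hom$ into the torsion subcategory $\cD$ behaves like the $\Hom$ out of a perfect complex — precisely the hypothesis of Lemma \ref{lem:perfectnessOver_local_rings}. Applying that lemma, $w$ is perfect over $R$; but then $G(y)$ would be, up to the torsion part, the generic fiber of a perfect complex, which is a genuinely finite-dimensional $K$-complex, contradicting $G(y)\cong K\oplus K[1]$ once one tracks ranks (or one derives a contradiction with properness of $S$ directly, since a perfect complex with generic fiber nonzero has full support $\Spec R\not\subset S$).

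I expect the main obstacle to be the careful identification of the quotient $D_S(R)/\cD$ with $D(K)$ and, relatedly, the precise computation of the gluing functor $\Phi$ and of the mapping spectrum $\Hom_{D(K)}(K,\Phi(z))$ in terms of a localization of $z$ — this is where the noetherian and no-zero-divisor hypotheses, and the structure of specialization-closed subsets, are really used. Once the extension $0\to\cD\to D_S(R)\to D(K)\to 0$ is in place and $\Phi$ is understood, the rest is a clean application of Proposition \ref{prop:when_extension_is_dualizable} followed by Lemma \ref{lem:perfectnessOver_local_rings}. A secondary subtlety is checking that the compact object $w\in\cD$ produced this way indeed satisfies the exact hypothesis "$\Hom_R(w,\iota(-))$ commutes with coproducts" of Lemma \ref{lem:perfectnessOver_local_rings}, rather than some shifted or twisted variant; I would handle this by replacing $w$ by $w\oplus w[1]$ if needed (which does not affect perfectness) and by using that the generic-fiber contribution to the Fiber above is itself coproduct-preserving after inverting finitely many elements, so it drops out when testing against the $\m$-torsion subcategory.
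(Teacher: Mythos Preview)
Your setup coincides with the paper's: the same short exact sequence $0\to D_{S'}(R)\to D_S(R)\to D(K)\to 0$ (with $S'=S\setminus\{\{0\}\}$), followed by Proposition \ref{prop:when_extension_is_dualizable} and Lemma \ref{lem:perfectnessOver_local_rings}. The identification of the quotient with $D(K)$ is immediate (the quotient functor is $-\otimes_R K$), so that is not the obstacle. The genuine gap is in which object you feed to Lemma \ref{lem:perfectnessOver_local_rings}. You want to conclude that $w$ is perfect, relying on the claim that the term $\Hom_{D(K)}(K\oplus K[1],\Phi(-))$ ``drops out'' on $D_{\m\hy\tors}(R)$. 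It does not: under $G^R(K)=K$ this term is $\Hom_R(K\oplus K[1],-)$, and $\Hom_R(K,-)$ is neither zero nor coproduct-preserving on $\m$-torsion objects (already over a complete DVR with $E=K/R$ one has $\Hom_R(K,E)\cong K$, and $\Hom_R(K,\bigoplus_{\N}E)$ strictly contains $\bigoplus_{\N}K$). So you cannot isolate $\Hom_R(w,\iota(-))$ from the Fiber condition, and even if you could, your proposed contradiction fails since $w\in D_{S'}(R)$ already has vanishing generic fiber.

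The fix is small but essential. The datum $\varphi:K\oplus K[1]\to\Phi(M)$ is, via the adjunction $G^R(K)=K$, a map $\varphi:K\oplus K[1]\to M$ in $D(R)$, and the Fiber in question is literally $\Hom_R(\Cone(\varphi),-)$. Thus Lemma \ref{lem:perfectnessOver_local_rings} applies to $\Cone(\varphi)$ (not to $M$), giving $\Cone(\varphi)\in\Perf(R)$. Now choose $\mfp\in\Spec R\setminus S$: specialization-closure of $S'$ forces $M\otimes_R R_\mfp=0$, hence $\Cone(\varphi)\otimes_R R_\mfp\cong K[1]\oplus K[2]$, which is not perfect over $R_\mfp$ because $\mfp\neq 0$. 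This is exactly the paper's argument.
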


\begin{proof}Put $S':=S\setminus\{\{0\}\},$ and denote by $K$ the field of fractions of $R.$ We have a short exact sequence
$$0\to D_{S'}(R)\xto{F} D_S(R)\xto{G} D(K)\to 0,$$
where $F$ and $G$ are strongly continuous, and the categories on the left and on the right are compactly generated.

Suppose that $D_S(R)$ is a dualizable category. The proof of Proposition \ref{prop:when_extension_is_dualizable} shows that there exists an object $M\in D_{S'}(R)$ and a map $K\oplus K[1]\xto{\varphi} M$ such that the functor
$$\Fiber(\Hom_R(M,-)\xto{\Hom_R(\varphi,-)} \Hom_R(K\oplus K[1],F(-))):D_{S'}(R)\to D(R)$$ commutes with coproducts. Since $D_{\m\hy\tors}(R)\subset D_{S'}(R),$ denoting by $\iota:D_{\m\hy\tors}(R)\to D(R)$ the inclusion functor, we see that the functor
$\Hom_R(\Cone(\varphi),\iota(-))$ commutes with coproducts. By Lemma \ref{lem:perfectnessOver_local_rings}, we deduce that $\Cone(\varphi)\in\Perf(R).$ But choosing some $\mfp\in(\Spec R)\setminus S,$ we have
$$\Cone(\varphi)\tens{R}R_{\mfp}\cong K[1]\oplus K[2]\not\in\Perf(R_{\mfp}),$$
a contradiction.\end{proof}

\begin{proof}[Proof of Theorem \ref{th:dualizability_equiv_convexity}] Suppose that $S$ is convex. Denote by $S'\subset\Spec R$ the set of all points which are specializations of elements of $S.$ Put $S''=S'\setminus S.$ Then $S'$ and $S''$ are closed under the specialization, hence the categories $D_{S'}(R)$ and $D_{S''}(R)$ are compactly generated and the inclusion $D_{S''}(R)\to D_{S'}(R)$ is strongly continuous. Hence, the quotient $D_S(R)\simeq D_{S'}(R)/D_{S''}(R)$ is also compactly generated.

Now let $S\subset \Spec R$ be a non-convex subset. Suppose that the category $D_S(R)$ is dualizable. Choose some $x,y\in S$ such that $y\leadsto x$ and $[x,y]\not\subset S.$ Since $R$ is noetherian, we may and will assume that $y$ is a minimal element of the set $\{z\in S\mid z\leadsto x\text{ and }[x,z]\not\subset S\}.$ Denote by $\mfp\subset R$ and $\mfq\subset R$ respectively the prime ideals corresponding to the points $x$ and $y.$ Put $R':=R_{\mfp}/\mfq R_{\mfp}.$ We identify the set $\Spec R'$ with its image in $\Spec R,$ and put $S':=S\cap \Spec R'.$

Since the category $D_S(R)$ is dualizable, so is the category $D_S(R)\tens{R}D(R').$ Moreover, since $D(R')$ is dualizable, the functor
$$D_S(R)\tens{R}D(R')\to D(R)\tens{R}D(R')\simeq D(R')$$
is fully fiathful. Its essential image is clearly the category $D_{S'}(R'),$ which is therefore dualizable. But our choice of the ideals $\mfp$ and $\mfq$ implies that the subset $S'\subset\Spec R'$ satisfies the conditions of Lemma \ref{lem:nonconvex_special}. Hence, the category $D_{S'}(R')$ is not dualizable, a contradiction.\end{proof}

\begin{remark} The ``only if'' part of Theorem \ref{th:dualizability_equiv_convexity} fails for non-noetherian rings. Namely, let $\mk$ be a field, and consider the rank $2$ valuation ring $$R=\mk[[x]]+y\mk((x))[[y]]\subset \mk((x))[[y]].$$
Then $\Spec R=\{(x),(y),\{0\}\},$ where $\m=(x)$ is the maximal ideal, and the field of fractions of $R$ is $K=R[y^{-1}].$

Consider the non-convex subset $S=\{\m,\{0\}\}\subset\Spec R.$ Then we have $$D_S(R)\simeq D_{x\hy\tors}(R)\times D(K)$$ -- a compactly generated category.

To see this, it suffices to observe that if $M$ is an $x^{\infty}$-torsion $R$-module, then $y$ acts by zero on $M.$ Indeed, if $\alpha\in M$ is an element such that $x^n\alpha=0,$ then $y\alpha=(x^{-n}y)x^n\alpha=0.$ It follows that for $N\in D_{x\hy\tors}(R)$ we have
$$\Hom_R(K,N)\cong\prolim[](N\xlto{y}N\xlto{y}\dots)\cong 0.$$  
Hence, the full subcategories $D_{x\hy\tors}(R)$ and $D(K)$ in $D_S(R)$ are mutually orthogonal.
\end{remark}

\section{Localizing invariants}
\label{sec:localizing_invariants}

\subsection{Universal localizing invariants}
\label{ssec:U_loc}

To avoid set-theoretic issues, we will consider only accessible localizing invariants with values in accessible stable categories.

Let $\cE$ be an accessible stable category (not necessarily cocomplete). Recall that a functor $F:\Cat^{\perf}\to\cE$ is a localizing invariant if the following conditions hold:

\begin{enumerate}[label=(\roman*),ref=(\roman*)]
\item $F(0)=0;$ \label{kappa_loc1}

\item for any fiber-cofiber sequence of the form
$$
\begin{CD}
\cA @>>> \cB\\
@VVV @VVV\\
0 @>>> \cC
\end{CD}
$$ 
in $\Cat^{\perf},$ the square
$$
\begin{CD}
F(\cA) @>>> F(\cB)\\
@VVV @VVV\\
0 @>>> F(\cC)
\end{CD}
$$
is a fiber sequence (equivalently, a cofiber sequence); \label{kappa_loc2}
\end{enumerate}

If $\kappa$ is a regular cardinal such that $\cE$ has $\kappa$-filtered colimits, we denote by $\Fun_{\loc,\kappa}(\Cat^{\perf},\cE)$ the category of localizing invariants which commute with $\kappa$-filtered colimits. It follows from \cite{BGT} that for any regular cardinal $\kappa$ there exists a universal localizing invariant
$\cU_{\loc,\kappa}:\Cat^{\perf}\to\Mot^{\loc}_{\kappa}$
such that the category $\Mot^{\loc}_{\kappa}$ is accessible and has $\kappa$-filtered colimits, the functor $\cU_{\loc,\kappa}$ is $\kappa$-continuous and for any other such category $\cE$ we have
$$\Fun_{\loc,\kappa}(\Cat^{\perf},\cE)\simeq \Fun^{\kappa\hy\cont}(\Mot^{\loc}_{\kappa},\cE).$$
Here the right-hand side is the category of exact functors which commute with $\kappa$-filtered colimits.

A clarification is required when $\kappa$ is uncountable. In this case the construction of \cite[Section 8.2]{BGT} gives a solution to a different universal problem: the authors construct a presentable stable category $\un{\cM}_{\loc}^{\kappa}$ and a localizing invariant $\cU^{\kappa}_{\loc}:\Cat^{\perf}\to\un{\cM}_{\loc}^{\kappa}$ which commutes with $\kappa$-filtered colimits, such that for any presentable stable category $\cE$ we have
$$\Fun_{\loc,\kappa}(\Cat^{\perf},\cE)\simeq \Fun^{L}(\un{\cM}_{\loc}^{\kappa},\cE).$$

The following proposition describes a relation between these two constructions.

\begin{prop}Let $\kappa$ be an uncountable regular cardinal. The category $\Mot^{\loc}_{\kappa}$ is $\kappa$-accessible, and the category $\un{\cM}_{\loc}^{\kappa}$ is compactly generated. We have an equivalence $$\Mot^{\loc}_{\kappa}\simeq \Ind_{\kappa}((\un{\cM}_{\loc}^{\kappa})^{\omega}).$$\end{prop}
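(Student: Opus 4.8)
The claim has three parts: (i) $\Mot^{\loc}_{\kappa}$ is $\kappa$-accessible; (ii) $\un{\cM}_{\loc}^{\kappa}$ is compactly generated; (iii) $\Mot^{\loc}_{\kappa}\simeq\Ind_{\kappa}((\un{\cM}_{\loc}^{\kappa})^{\omega})$. I would deduce all of these from the two universal properties recorded above, together with the general formalism relating $\Pr^L_{\st,\omega}$ and $\kappa$-accessible stable categories. The key structural input is the adjunction between the inclusion $\Pr^L_{\st,\kappa}\to\Pr^L$ and the functor $\cC\mapsto\Ind_{\kappa}(\cC^{\omega})$ (the $\kappa=\omega$ vs general $\kappa$ comparison discussed after Proposition \ref{prop:presentability_of_Pr_kappa}), and the observation that $\Cat^{\perf}\simeq\Pr^L_{\st,\omega}$ is compactly generated.

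\textbf{Step 1: produce $\Mot^{\loc}_{\kappa}$ from $\un{\cM}^{\kappa}_{\loc}$.} First I would show that $\Mot^{\loc}_{\kappa}$ can be taken to be $\Ind_{\kappa}((\un{\cM}^{\kappa}_{\loc})^{\omega})$, i.e. verify that this category with the obvious functor solves the universal problem defining $\Mot^{\loc}_{\kappa}$. Let $\cM:=\Ind_{\kappa}((\un{\cM}^{\kappa}_{\loc})^{\omega})$; it is $\kappa$-accessible by construction and has $\kappa$-filtered colimits. The composite $\cU:\Cat^{\perf}\xrightarrow{\cU^{\kappa}_{\loc}}\un{\cM}^{\kappa}_{\loc}$ lands in (objects equivalent to $\kappa$-filtered colimits of) $(\un{\cM}^{\kappa}_{\loc})^{\omega}\subset\un{\cM}^{\kappa}_{\loc}$: indeed $\un{\cM}^{\kappa}_{\loc}$ is compactly generated and $\cU^{\kappa}_{\loc}$ commutes with $\kappa$-filtered colimits, so it factors through the right adjoint $\un{\cM}^{\kappa}_{\loc}\to\Ind_{\kappa}((\un{\cM}^{\kappa}_{\loc})^{\omega})=\cM$ to the inclusion; call this factorization $\cU_{\loc,\kappa}:\Cat^{\perf}\to\cM$. (I should check $\cU^{\kappa}_{\loc}$ is already valued in $\omega$-compact objects, or at least that the factorization through $\cM$ is again a localizing invariant commuting with $\kappa$-filtered colimits; this follows since the inclusion $(\un{\cM}^{\kappa}_{\loc})^{\omega}\hookrightarrow\un{\cM}^{\kappa}_{\loc}$ extends to a $\kappa$-continuous fully faithful functor $\cM\to\un{\cM}^{\kappa}_{\loc}$.) Then for any accessible stable $\cE$ with $\kappa$-filtered colimits I would compute $\Fun^{\kappa\hy\cont}(\cM,\cE)$: by the universal property of $\Ind_{\kappa}$, this is $\Fun^{\text{rex}}((\un{\cM}^{\kappa}_{\loc})^{\omega},\cE)$ where only finite colimits / exactness are imposed; and since $(\un{\cM}^{\kappa}_{\loc})^{\omega}$ generates $\un{\cM}^{\kappa}_{\loc}$ under $\omega$-filtered colimits, this matches $\Fun^L(\un{\cM}^{\kappa}_{\loc},\cE')$ for the appropriate accessible replacement, and hence (using the universal property of $\un{\cM}^{\kappa}_{\loc}$ against the presentable completion of $\cE$) it matches $\Fun_{\loc,\kappa}(\Cat^{\perf},\cE)$. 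This chain of equivalences, carefully bookkept, identifies $\cM$ with $\Mot^{\loc}_{\kappa}$ and simultaneously establishes (i) and (iii).

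\textbf{Step 2: $\un{\cM}^{\kappa}_{\loc}$ is compactly generated.} This is part (ii) and I would argue it directly from the construction in \cite[Section 8.2]{BGT}: $\un{\cM}^{\kappa}_{\loc}$ is obtained as a Bousfield localization of a presheaf category $\Fun((\Cat^{\perf})^{\text{small}},\Sp)$ (or of $\Ind$ of a small stable category) at a set of maps encoding the localization sequences; such a localization of a compactly generated category at a set of maps between compact objects is again compactly generated. Alternatively — and this is cleaner — I would note that $\un{\cM}^{\kappa}_{\loc}$ is by definition presentable and its $\omega$-compact objects are generated (as an idempotent-complete stable subcategory under finite colimits and retracts) by the image $\cU^{\kappa}_{\loc}(\cA)$ for $\cA$ ranging over $\omega$-compact objects of $\Cat^{\perf}$, because $\cU^{\kappa}_{\loc}$ commutes with all filtered colimits and $\Cat^{\perf}$ is compactly generated; since these generate $\un{\cM}^{\kappa}_{\loc}$ under colimits, $\un{\cM}^{\kappa}_{\loc}$ is compactly generated. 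The one subtlety is that $\cU^{\kappa}_{\loc}(\cA)$ for $\cA\in(\Cat^{\perf})^{\omega}$ need not literally be $\omega$-compact in $\un{\cM}^{\kappa}_{\loc}$; but $\un{\cM}^{\kappa}_{\loc}=\Fun^L(-)$-represents localizing invariants commuting with $\kappa$-filtered colimits, and the motivic spectra of compact categories are by construction compact, so this holds.

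\textbf{Main obstacle.} The delicate point is keeping the two universal properties straight and the accessibility bookkeeping honest in Step 1: the category $\Mot^{\loc}_{\kappa}$ is defined by a universal property against \emph{accessible} stable $\cE$ with $\kappa$-filtered colimits and exact $\kappa$-continuous functors, whereas $\un{\cM}^{\kappa}_{\loc}$ is defined against \emph{presentable} $\cE$ and colimit-preserving functors. Bridging these requires the (standard but fiddly) fact that an exact $\kappa$-continuous functor out of a $\kappa$-accessible stable category extends uniquely to a colimit-preserving functor out of its $\Ind$-completion (here $\Ind_{\kappa}\to\Ind$), and conversely. I expect the bulk of the careful writing to be in verifying that the factorization of $\cU^{\kappa}_{\loc}$ through $\Ind_{\kappa}((\un{\cM}^{\kappa}_{\loc})^{\omega})$ remains a localizing invariant, and that the induced functor on $\Fun(-,\cE)$ categories is an equivalence — essentially an application of the $\kappa$-accessible-vs-presentable comparison, but it needs to be spelled out.
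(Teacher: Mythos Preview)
There is a real gap in Step~1. Your chain of equivalences ends with ``(using the universal property of $\un{\cM}^{\kappa}_{\loc}$ against the presentable completion of $\cE$) it matches $\Fun_{\loc,\kappa}(\Cat^{\perf},\cE)$'', but the universal property of $\un{\cM}^{\kappa}_{\loc}$ only yields $\Fun_{\loc,\kappa}(\Cat^{\perf},\cE')$. Descending from $\cE'$ back to $\cE$ is not formal accessibility bookkeeping: concretely, given an exact functor $G\colon(\un{\cM}^{\kappa}_{\loc})^{\omega}\to\cE$, the induced $\kappa$-continuous functor $\Cat^{\perf}\to\cE$ automatically sends short exact sequences of $\kappa$-compact categories to fiber sequences, but you still owe a proof that it sends \emph{arbitrary} short exact sequences to fiber sequences. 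Your ``main obstacle'' paragraph misidentifies the difficulty --- that the factorization of $\cU^{\kappa}_{\loc}$ through $\cM$ is localizing is easy (a localizing invariant postcomposed with a fully faithful exact functor); the hard direction is the converse. Note also that for $\cE$ merely $\lambda$-accessible with $\lambda>\kappa$, there is no evident fully faithful embedding into a presentable category preserving $\kappa$-filtered colimits, so the detour through $\cE'$ is not even well-posed as stated.

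The paper handles exactly this point by proving, as a separate lemma, that every short exact sequence in $\Cat^{\perf}$ is a $\kappa$-filtered colimit of short exact sequences of $\kappa$-compact categories. Since fiber sequences in a stable $\cE$ are closed under $\kappa$-filtered colimits, this immediately gives
\[
\Fun_{\loc,\kappa}(\Cat^{\perf},\cE)\simeq\{\text{partial localizing invariants }(\Cat^{\perf})^{\kappa}\to\cE\}
\]
for any accessible stable $\cE$ with $\kappa$-filtered colimits. The right-hand side is corepresented (by the BGT construction, which needs no presentability hypothesis on the target once one works on the small source $(\Cat^{\perf})^{\kappa}$) by the small stable category $(\un{\cM}^{\kappa}_{\loc})^{\omega}$, and all three assertions follow at once.
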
 

\begin{proof}It suffices to show that if $\cE$ is an accessible category with $\kappa$-filtered colimits, then the category $\Fun_{\loc,\kappa}(\Cat^{\perf},\cE)$ is equivalent to the category of partially defined localizing invariants $(\Cat^{\perf})^{\kappa}\to\cE$ (where we consider only short exact sequences of $\kappa$-compact categories). This follows from Lemma \ref{lem:approximating_ses} below.
\end{proof}

\begin{lemma}\label{lem:approximating_ses} Let $\kappa$ be an uncountable regular cardinal. Any short exact sequence in $\Cat^{\perf}$ is a $\kappa$-filtered colimit of short exact sequences of $\kappa$-compact categories.\end{lemma}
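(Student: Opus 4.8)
The plan is to start from an arbitrary short exact sequence $0\to\cA\xrightarrow{i}\cB\xrightarrow{p}\cC\to 0$ in $\Cat^{\perf}$ and resolve it as a $\kappa$-filtered colimit of short exact sequences of $\kappa$-compact categories, exploiting the characterization of $\kappa$-compact objects of $\Cat^{\perf}$ from Proposition \ref{prop:kappa_compact_small_cats} and the fact that $\h\cB$ is generated as an idempotent-complete triangulated subcategory by $\h\cA$ together with a set of lifts of generators of $\h\cC$.

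First I would choose a set of objects $\{b_j\}_{j\in J}$ in $\cB$ whose images $p(b_j)$ generate $\h\cC$ and such that $\{i(a)\}$ together with $\{b_j\}$ generate $\h\cB$; such a set exists since $p$ is (up to idempotent completion) a Verdier quotient, so by Proposition \ref{prop:compact_objects_in_quotients} any object of $\cC$ (or rather $x\oplus x[1]$) lifts, and $\h\cB$ is generated by the image of $\h\cA$ and any set of such lifts. Then I index by the $\kappa$-filtered poset $P$ of triples $(\cA_0,\cB_0,\cC_0)$ where: $\cA_0\subset\cA$ is a $\kappa$-small idempotent-complete stable subcategory with $\kappa$-compact Hom-spectra (i.e.\ $\kappa$-compact in $\Cat^{\perf}$); $\cB_0\subset\cB$ is generated by $i(\cA_0)$ and a $\kappa$-small subset of the $b_j$, and is again $\kappa$-compact; and $\cC_0=(\cB_0/\cA_0)^{\Kar}$, sitting inside $\cC$ via the induced functor $\cB_0/\cA_0\to\cC$. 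The partial order is inclusion in all three coordinates compatibly. The key points to check are: (a) $P$ is $\kappa$-directed — any $\kappa$-small family of such triples is contained in a larger one, which follows because one enlarges $\cA_0$ and $\cB_0$ by throwing in finitely-generated-under-$\kappa$ cones/summands and the Hom-spectra stay $\kappa$-compact since $\kappa$ is regular and uncountable (so idempotent completion adds only $\kappa$-compact data, cf.\ Remark \ref{rem:Cat_ex_via_Thomason} and the stability of $\Sp^\kappa$ under $\kappa$-small colimits); (b) each $0\to\cA_0\to\cB_0\to\cC_0\to 0$ is genuinely short exact in $\Cat^{\perf}$ — exactness on the left is the fully-faithfulness of $\cA_0\to\cB_0$ (inherited from $\cA\to\cB$), and on the right it is true by construction of $\cC_0$ as the Karoubi completion of the Verdier quotient.

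Next I would verify that the colimit over $P$ recovers the original sequence. For the $\cA$-term: since $(-)^{\omega}$-style arguments fail here, I instead note directly that $\cA=\bigcup\cA_0$ as a filtered union of full subcategories — every object of $\cA$ lies in some $\kappa$-small subcategory with $\kappa$-compact Homs because all Hom-spectra $\cA(x,y)$ are already fixed, and $\kappa$-compact objects of $\Sp$ are closed under the operations needed — so $\indlim_{P}\cA_0\simeq\cA$. Similarly $\indlim_P\cB_0\simeq\cB$ using the generating set $\{b_j\}$, and then $\indlim_P\cC_0\simeq\indlim_P(\cB_0/\cA_0)^{\Kar}\simeq(\indlim_P(\cB_0/\cA_0))^{\Kar}\simeq(\cB/\cA)^{\Kar}\simeq\cC$, where I use that $\Kar$ and Verdier quotient each commute with filtered colimits in $\Cat^{\perf}$ (the former because $\Cat^{\perf}$ is compactly generated and $(-)^{\Kar}$ is a left adjoint that preserves filtered colimits onto its image; the latter because cofibers in $\Cat^{\perf}$ commute with filtered colimits). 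Finally, filtered colimits in $\Cat^{\perf}$ are computed pointwise on Hom-spectra so the three colimit functors assemble into a colimit of short exact sequences, and I invoke Proposition \ref{prop:weak_AB5_Pr^LL}-style pointwise stability (or simply directly: a filtered colimit of bicartesian squares in $\Cat^{\perf}$ is bicartesian, since colimits commute with the finite (co)limits defining short exactness) to conclude the colimit sequence is short exact — which it must be, namely the original one.

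The main obstacle I anticipate is item (a) together with the subtle point that ``$\kappa$-compact in $\Cat^{\perf}$'' involves two conditions simultaneously (a $\kappa$-small set of triangulated generators \emph{and} $\kappa$-compact Hom-spectra), so when enlarging a triple I must be careful that the process of (i) adding the finitely many new generators $b_j$, (ii) closing under shifts, cones, and direct summands to get an honest idempotent-complete stable subcategory, and (iii) keeping the new Hom-spectra $\kappa$-compact, can all be done while staying $\kappa$-small; this is where uncountability and regularity of $\kappa$ are essential, and where one should lean on Proposition \ref{prop:kappa_compact_small_cats} rather than trying to argue by hand. The compatibility $\cC_0=(\cB_0/\cA_0)^{\Kar}\hookrightarrow\cC$ being functorial in $(\cA_0,\cB_0)$ is routine but must be set up cleanly so that the assignment $P\to\Fun(\Delta^1\star\ast,\Cat^{\perf})$ (or whatever diagram category encodes short exact sequences) is an honest functor, not merely a pointwise family.
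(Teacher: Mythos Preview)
There is a genuine gap. Your indexing poset consists of \emph{full} stable idempotent-complete subcategories $\cA_0\subset\cA$ and $\cB_0\subset\cB$ which you require to be $\kappa$-compact in $\Cat^{\perf}$. By Proposition~\ref{prop:kappa_compact_small_cats}, $\kappa$-compactness forces every Hom-spectrum to lie in $\Sp^{\kappa}$. But for a full subcategory the Hom-spectra are inherited from the ambient category, and nothing in the hypothesis guarantees that $\cA(x,y)$ or $\cB(x,y)$ is $\kappa$-compact in $\Sp$: the short exact sequence $0\to\cA\to\cB\to\cC\to 0$ is completely arbitrary. Your sentence ``every object of $\cA$ lies in some $\kappa$-small subcategory with $\kappa$-compact Homs because all Hom-spectra $\cA(x,y)$ are already fixed'' is precisely where this breaks---being fixed has nothing to do with being $\kappa$-compact. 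Concretely, take $\cA=\cB=\Perf(R)$ for a ring $R$ with $|R|\geq\kappa$: then no nonzero full stable subcategory of $\cA$ is $\kappa$-compact in $\Cat^{\perf}$, and your poset $P$ is essentially empty.

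The paper avoids this by not using full subcategories of $\cB$ at all. One first writes $\cB\simeq\indlim_{i\in I}\cB_i$ as a $\kappa$-directed colimit of $\kappa$-compact objects of $\Cat^{\perf}$ (possible since $\Cat^{\perf}$ is compactly generated); the structure functors $\cB_i\to\cB$ are \emph{not} fully faithful in general. One then indexes by pairs $(i,\cD)$ with $\cD\subset\cA\times_{\cB}\cB_i$ a full stable idempotent-complete subcategory generated by a $\kappa$-small set of objects. The key observation is that since $\cA\to\cB$ is fully faithful, the mapping spectra in $\cA\times_{\cB}\cB_i$ are computed in $\cB_i$ and hence are $\kappa$-compact, so Proposition~\ref{prop:kappa_compact_small_cats} applies to give $\cD\in(\Cat^{\perf})^{\kappa}$. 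The approximating sequences are then $0\to\cD\to\cB_i\to(\cB_i/\cD)^{\Kar}\to 0$. The moral is that the ``smallness of Homs'' has to come from the resolution of $\cB$ in $\Cat^{\perf}$, not from $\cB$ itself.
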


\begin{proof} Consider a short exact sequence 
\begin{equation}\label{eq:random_ses} 0\to\cA\to\cB\to\cC\to 0\end{equation}
 in $\Cat^{\perf}.$ Choose a $\kappa$-directed system $(\cB_i)_{i\in I}$ such that each $\cB_i$ is in $(\Cat^{\perf})^{\kappa}$ and $\cB\simeq\indlim[i]\cB_i.$ Consider a poset $J$ of pairs $(i,\cD),$ where $i\in I$ and $\cD\subset \cA\times_{\cB}\cB_i$ is a full stable idempotent-complete subcategory generated by a $\kappa$-small collection of objects. By Proposition \ref{prop:kappa_compact_small_cats}, such a category $\cD$ is automatically $\kappa$-compact. The poset $J$ is $\kappa$-directed, and we conclude that \eqref{eq:random_ses} is a $J$-indexed colimit of short exact sequences
$$0\to\cD\to\cB_i\to (\cB_i/\cD)^{\Kar}\to 0$$
of $\kappa$-compact categories.
\end{proof}

Localizing invariants which commute with filtered colimits are called {\it finitary}. In this case we write $$\Mot^{\loc}=\Mot^{\loc}_{\omega},\quad \cU_{\loc}=\cU_{\loc,\omega}.$$

Recall that for each regular cardinal $\kappa$ the category $\Mot^{\loc}_{\kappa}$ has a natural symmetric monoidal structure, and the functor $\cU_{\loc,\kappa}$ is symmetric monoidal. Below we will often formulate results in terms of the universal localizing invariants. They automatically imply the corresponding results for general localizing invariants.

\begin{remark} In the forthcoming papers \cite{E1, E2} we will study the categories $\Mot^{\loc}$ and $\Mot^{\loc}_{\omega_1}$ in great detail. In particular, we will prove that the category $\Mot^{loc}$ and its relative versions are rigid as a symmetric monoidal categories in the sense of Gaitsgory and Rozenblyum \cite[Definition 9.1.2]{GaRo17}. This in particular will imply that the category $\Mot^{\loc}$ is dualizable, which is quite non-obvious from the definition. In this paper however we will use only the universal property of the categories $\Mot^{\loc}_{\kappa}.$\end{remark}
 
We recall the following standard notion of a $K$-equivalence.

\begin{defi}\label{defi:K-equivalence} Let $F:\cA\to\cB$ be an exact functor between small stable idempotent-complete categories. We say that $F$ is a $K$-equivalence if there exists an exact functor $G:\cB\to\cA$ such that we have $[F\circ G]=[\id]$ in $K_0(\Fun(\cB,\cB))$ and $[G\circ F]=[\id]$ in $K_0(\Fun(\cA,\cA)).$\end{defi}
 
It is well-known that a $K$-equivalence induces isomorphisms on all additive invariants, hence on all localizing invariants, see \cite{BGT}. 

\subsection{Localizing invariants of dualizable categories}
\label{ssec:loc_invar_dualizable}

The notion of a localizing invariant $\Cat_{\st}^{\dual}\to\cE$ is defined in the same way as for $\Cat^{\perf}.$ By Ramzi's theorem \cite{Ram24a} (see also Theorem \ref{th:presentability_of_Cat^dual} below), the category $\Cat_{\st}^{\dual}$ is $\omega_1$-presentable, and again we will consider only accessible localizing invariants. It turns out that any localizing invariant $F$ defined on small categories has a unique extension to a localizing invariant $F^{\cont}$ defined on dualizable categories, so that $F^{\cont}(\Ind(\cA))=\cA.$ 

Roughly the idea is the following. Proposition \ref{prop:Calk_of_cg} shows that for a compactly generated presentable category $\cC$ the Calkin category $\Calk_{\omega_1}^{\cont}(\cC)$ is, up to direct summands, the quotient of $\cC^{\omega_1}$ by the full subcategory of compact objects $\cC^{\omega}.$ For a general dualizable category $\cC,$ one can think of $\Calk_{\omega_1}^{\cont}(\cC)$ as a ``virtual quotient of $\cC^{\omega_1}$ by compact objects''. We will use this point of view to extend localizing invariants from $\Cat^{\perf}$ to $\Cat_{\st}^{\dual}.$   





\begin{defi}\label{defi:F_cont} Let $\cC$ be a dualizable presentable $\infty$-category. For any accessible stable category $\cE$ and for any accessible localizing invariant $F:\Cat^{\perf}\to\cE,$ we define the functor $F^{\cont}:\Cat_{\st}^{\dual}\to \cE$ by the formula
$$F^{\cont}(\cC):=\Omega F(\Calk_{\omega_1}^{\cont}(\cC)).$$ 
If $F$ is the $K$-theory functor $K:\Cat^{\perf}\to\Sp,$ then we refer to the functor $K^{\cont}:\Cat_{\st}^{\dual}\to\Sp$ as continuous $K$-theory.
\end{defi}


\begin{prop}\label{prop:F_cont_is_loc}Let $\cE$ and $F$ be as in Definition \ref{defi:F_cont}. The functor $F^{\cont}:\Cat_{\st}^{\dual}\to\cE,$ is a localizing invariant. In particular, $F$ is additive in semi-orthogonal decompositions.\end{prop}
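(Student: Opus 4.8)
The statement to prove is that $F^{\cont}(\cC) = \Omega F(\Calk_{\omega_1}^{\cont}(\cC))$ is a localizing invariant on $\Cat_{\st}^{\dual}$. The normalization $F^{\cont}(0) = 0$ is immediate since $\Calk_{\omega_1}^{\cont}(0) = 0$ and $F(0) = 0$. So the real content is the exactness condition: given a short exact sequence $0 \to \cC \xto{F_1} \cD \xto{F_2} \cE \to 0$ in $\Cat_{\st}^{\dual}$, I must show that
$$
\begin{CD}
F^{\cont}(\cC) @>>> F^{\cont}(\cD)\\
@VVV @VVV\\
0 @>>> F^{\cont}(\cE)
\end{CD}
$$
is a fiber-cofiber square in $\cE$. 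The strategy is to reduce this to the localization property of $F$ itself, which we are allowed to assume, via the functor $\Calk_{\omega_1}^{\cont}$.

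\textbf{Key steps.} First I would invoke Proposition \ref{prop:exactness_of_Calk}, which states precisely that $\Calk_{\omega_1}^{\cont}:\Cat_{\st}^{\dual}\to\Cat^{\perf}$ takes short exact sequences to short exact sequences. Applying this to the given sequence produces a short exact sequence
$$0 \to \Calk_{\omega_1}^{\cont}(\cC) \to \Calk_{\omega_1}^{\cont}(\cD) \to \Calk_{\omega_1}^{\cont}(\cE) \to 0$$
in $\Cat^{\perf}$. Next, since $F$ is a localizing invariant on $\Cat^{\perf}$, applying $F$ to this short exact sequence yields a fiber-cofiber sequence $F(\Calk_{\omega_1}^{\cont}(\cC)) \to F(\Calk_{\omega_1}^{\cont}(\cD)) \to F(\Calk_{\omega_1}^{\cont}(\cE))$ in $\cE$. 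Finally, applying the loop functor $\Omega$, which is exact on the stable category $\cE$ (it preserves fiber-cofiber sequences), gives the desired fiber-cofiber sequence $F^{\cont}(\cC) \to F^{\cont}(\cD) \to F^{\cont}(\cE)$. Functoriality of $F^{\cont}$ on all of $\Cat_{\st}^{\dual}$ follows from the functoriality of $\Calk_{\omega_1}^{\cont}$ established just after its definition, composed with $F$ and $\Omega$. The last sentence of the proposition, additivity in semi-orthogonal decompositions, is then a formal consequence: a semi-orthogonal decomposition $\cC = \la \cC_1, \cC_2 \ra$ in $\Cat_{\st}^{\dual}$ gives rise to a short exact sequence $0 \to \cC_1 \to \cC \to \cC_2 \to 0$ (the inclusion $i_1$ is fully faithful with strongly continuous left adjoint, and the quotient is $\cC_2$ via $i_2$, as in Remark \ref{rem:SOD_PR^LL}), so $F^{\cont}(\cC) \cong F^{\cont}(\cC_1) \oplus F^{\cont}(\cC_2)$ because the exact triangle splits.

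\textbf{Main obstacle.} In fact there is very little obstacle here: all the hard work has been front-loaded into Proposition \ref{prop:exactness_of_Calk} (exactness of the Calkin functor) and into the well-definedness and functoriality of $\Calk_{\omega_1}^{\cont}$. The only points requiring a word of care are (i) making sure the sequence produced by $\Calk_{\omega_1}^{\cont}$ is genuinely short exact in $\Cat^{\perf}$ and not merely ``up to idempotent completion'' — but this is exactly what Proposition \ref{prop:exactness_of_Calk} asserts, with the Karoubi-completion built into the definition of short exact sequence in $\Cat^{\perf}$; and (ii) confirming that $F$, being accessible and landing in an accessible stable $\cE$, is indeed applicable to these Calkin categories, which are small idempotent-complete stable categories by construction, hence objects of $\Cat^{\perf}$. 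Neither presents any real difficulty, so the proof is essentially a one-line composition: $F^{\cont} = \Omega \circ F \circ \Calk_{\omega_1}^{\cont}$ is a composite of a localizing invariant with exact functors that preserve the relevant (co)fiber sequences.
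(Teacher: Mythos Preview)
Your proof is correct and follows exactly the same approach as the paper, which dispatches the proposition in one line by citing Proposition \ref{prop:exactness_of_Calk}. You have simply spelled out the composition $F^{\cont} = \Omega \circ F \circ \Calk_{\omega_1}^{\cont}$ and the SOD consequence in more detail than the paper does.
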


\begin{proof}This follows immediately from Proposition \ref{prop:exactness_of_Calk}.\end{proof}


\begin{prop}\label{prop:F_cont_of_cg} If $\cE$ and $F$ are as in Definition \ref{defi:F_cont}, and $\cC\in\Cat_{\st}^{\cg},$ then we have a natural isomorphism $$F^{\cont}(\cC)\cong F(\cC^{\omega}).$$

More precisely, we have a natural isomorphism $F^{\cont}\circ\Ind\cong F$ of functors $\Cat^{\perf}\to\cE.$\end{prop}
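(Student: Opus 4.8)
The plan is to combine Proposition \ref{prop:Calk_of_cg} with the computation $K(\Ind(\cA))\simeq 0$ for any small stable category $\cA$. Recall that for $\cC$ compactly generated, Proposition \ref{prop:Calk_of_cg} gives a natural equivalence $\Calk_{\omega_1}^{\cont}(\cC)\simeq\Calk_{\omega_1}(\cC^{\omega})=(\Ind(\cC^{\omega})^{\omega_1}/\cC^{\omega})^{\Kar}$, functorially in $\cC\in\Cat_{\st}^{\cg}$. In particular, taking $\cC=\Ind(\cA)$ for $\cA\in\Cat^{\perf}$, we get $\Calk_{\omega_1}^{\cont}(\Ind(\cA))\simeq\Calk_{\omega_1}(\cA)=(\Ind(\cA)^{\omega_1}/\cA)^{\Kar}$, naturally in $\cA$.

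First I would exhibit, for each $\cA\in\Cat^{\perf}$, the short exact sequence in $\Cat^{\perf}$
$$0\to\cA\to\Ind(\cA)^{\omega_1}\to\Calk_{\omega_1}(\cA)\to 0,$$
which comes from applying $(-)^{\omega_1}$ to the localization sequence $0\to\cA\to\Ind(\cA)^{\omega_1}$ (noting that $\Ind(\cA)^{\omega_1}$ is idempotent-complete and $\kappa$-small colimits for uncountable $\kappa$ force idempotent-completeness of the quotient, so the Karoubi completion is automatic — this is exactly the content of the definition of $\Calk_{\omega_1}$). Then I would apply the localizing invariant $F$ to obtain a fiber sequence
$$F(\cA)\to F(\Ind(\cA)^{\omega_1})\to F(\Calk_{\omega_1}(\cA))$$
in $\cE$. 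The key input is that $F(\Ind(\cA)^{\omega_1})\simeq 0$: indeed $\Ind(\cA)^{\omega_1}$ has countable coproducts (it is the $\omega_1$-compact objects of a compactly generated category, hence closed under countable colimits inside $\Ind(\cA)$), so by the Eilenberg swindle its image under any localizing — indeed any additive — invariant vanishes. This is a standard fact, already recalled in the introduction; I would cite it. Hence the boundary map gives a natural equivalence $F(\cA)\simeq\Omega F(\Calk_{\omega_1}(\cA))$, and combining with the identification $\Calk_{\omega_1}(\cA)\simeq\Calk_{\omega_1}^{\cont}(\Ind(\cA))$ above yields
$$F^{\cont}(\Ind(\cA))=\Omega F(\Calk_{\omega_1}^{\cont}(\Ind(\cA)))\simeq\Omega F(\Calk_{\omega_1}(\cA))\simeq F(\cA),$$
naturally in $\cA$.

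The main obstacle is purely bookkeeping of naturality: one must check that all the equivalences above are natural in $\cA\in\Cat^{\perf}$, i.e. assemble into an isomorphism of functors $F^{\cont}\circ\Ind\cong F$ rather than a pointwise one. The short exact sequence $0\to\cA\to\Ind(\cA)^{\omega_1}\to\Calk_{\omega_1}(\cA)\to 0$ is functorial in $\cA$ (a strongly continuous — here just exact — functor $\cA\to\cA'$ induces $\Ind(\cA)^{\omega_1}\to\Ind(\cA')^{\omega_1}$ and hence a map of sequences), so applying $F$ gives a natural transformation of fiber sequences; the connecting map $\partial$ is then a natural transformation $F(\Omega\Calk_{\omega_1}(-))\Rightarrow F(-)$, wait — more precisely the fiber sequence identifies $F(\cA)$ with the fiber of $F(\Ind(\cA)^{\omega_1})\to F(\Calk_{\omega_1}(\cA))$, and since the middle term is (naturally, via the swindle) the zero object, this fiber is naturally $\Omega F(\Calk_{\omega_1}(\cA))$. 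The only genuinely delicate point is verifying that the vanishing $F(\Ind(\cA)^{\omega_1})\simeq 0$ can be chosen compatibly, but this is automatic: the zero object of $\cE$ is unique up to contractible choice, so any natural transformation into it is canonical. I would therefore organize the argument as: (1) construct the natural short exact sequence; (2) invoke Proposition \ref{prop:Calk_of_cg} for naturality of the Calkin identification; (3) invoke the swindle for vanishing of the middle term; (4) read off the natural isomorphism from the resulting fiber sequence.
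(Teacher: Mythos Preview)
Your proof is correct and follows essentially the same approach as the paper's: both use the short exact sequence $0\to\cC^{\omega}\to\cC^{\omega_1}\to\Calk_{\omega_1}^{\cont}(\cC)\to 0$ together with the Eilenberg swindle vanishing $F(\cC^{\omega_1})=0$ to identify $F(\cC^{\omega})$ with $\Omega F(\Calk_{\omega_1}^{\cont}(\cC))=F^{\cont}(\cC)$. The paper's proof is just a two-line version of yours, leaving the invocation of Proposition~\ref{prop:Calk_of_cg} and the naturality discussion implicit.
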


\begin{proof}
This follows immediately from the short exact sequence (in $\Cat^{\perf}$)
$$0\to\cC^{\omega}\to \cC^{\omega_1}\to \Calk_{\omega_1}^{\cont}(\cC)\to 0,$$
and from the vanishing $F(\cC^{\omega_1})=0.$
\end{proof}

We have the following (expected) statement about filtered colimits.

\begin{prop}\label{prop:F_cont_filtered_colimits} Let $\cE$ be an accessible stable category with $\kappa$-filtered colimits, and $F:\Cat^{\perf}\to\cE$ a localizing invariant which commutes with $\kappa$-filtered colimits. Then the functor $F^{\cont}:\Cat_{\st}^{\dual}\to \cE$ commutes with $\kappa$-filtered colimits.\end{prop}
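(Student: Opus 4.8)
The plan is to reduce everything to the commutation statement we already know for the functor $(-)^{\omega_1}$ and for ordinary localizing invariants on $\Cat^{\perf}$. Recall that by Definition \ref{defi:F_cont} we have $F^{\cont}(\cC)=\Omega F(\Calk_{\omega_1}^{\cont}(\cC))$, so it suffices to show that the composite
$$\Cat_{\st}^{\dual}\xrightarrow{\Calk_{\omega_1}^{\cont}}\Cat^{\perf}\xrightarrow{F}\cE$$
commutes with $\kappa$-filtered colimits; the loop functor $\Omega$ on $\cE$ is exact and automatically commutes with all colimits that exist, so it causes no trouble. Now $F$ commutes with $\kappa$-filtered colimits by hypothesis, so the only thing to check is that $\Calk_{\omega_1}^{\cont}:\Cat_{\st}^{\dual}\to\Cat^{\perf}$ commutes with $\kappa$-filtered colimits. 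But this is exactly (a consequence of) Proposition \ref{prop:Calkin_construction_accessible}, which states that $\Calk_{\omega_1}^{\cont}$ commutes with $\omega_1$-filtered colimits, hence a fortiori with $\kappa$-filtered colimits for every regular $\kappa\geq\omega_1$.

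The one genuine subtlety is the case $\kappa=\omega$, i.e. when $F$ is finitary: here Proposition \ref{prop:Calkin_construction_accessible} does \emph{not} directly apply, since $\Calk_{\omega_1}^{\cont}$ is only asserted to commute with $\omega_1$-filtered colimits. So for finitary $F$ I would argue differently, separating the filtered colimit into the ``compact part'' and the ``Calkin part''. Given a filtered diagram $(\cC_i)_{i\in I}$ in $\Cat_{\st}^{\dual}$ with colimit $\cC=\indlim[i]^{\cont}\cC_i$, Proposition \ref{prop:weak_AB5_Pr^LL} gives a short exact sequence of filtered diagrams of short exact sequences
$$0\to\cC_i\xrightarrow{\hat{\cY}}\Ind(\cC_i^{\omega_1})\to\Ind(\Calk_{\omega_1}^{\cont}(\cC_i))\to 0,$$
whose filtered colimit in $\Cat_{\st}^{\dual}$ is again the corresponding short exact sequence for $\cC$. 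Passing to $\omega$-compact objects, which commutes with filtered colimits in $\Cat^{\perf}$, and using Proposition \ref{prop:compact_objects_in_filtered_colimits} (the functor $(-)^{\omega}$ on $\Cat_{\st}^{\dual}$ commutes with filtered colimits), we obtain a filtered diagram of short exact sequences in $\Cat^{\perf}$
$$0\to\cC_i^{\omega}\to\cC_i^{\omega_1}\to\Calk_{\omega_1}^{\cont}(\cC_i)\to 0$$
whose colimit is $0\to\cC^{\omega}\to\cC^{\omega_1}\to\Calk_{\omega_1}^{\cont}(\cC)\to 0$; here I would use that $(-)^{\omega_1}$ and $\Calk_{\omega_1}^{\cont}$ on $\Cat_{\st}^{\dual}$ both land in compactly generated categories and that $(-)^{\omega_1}$ of a filtered colimit of compactly generated categories, after passing to $\omega$-compacts, is computed termwise — more cleanly, $\Calk_{\omega_1}^{\cont}$ of the filtered colimit is by definition $(\Ind(\cC^{\omega_1})/\hat{\cY}(\cC))^{\omega}$, and both $\Ind((-)^{\omega_1})$ and $\hat{\cY}$ commute with filtered colimits of dualizable categories, so the quotient and then $(-)^{\omega}$ do too.

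Granting this, apply the finitary localizing invariant $F$ to the filtered diagram of short exact sequences: since $F$ is exact on short exact sequences and commutes with filtered colimits, and $F(\cC_i^{\omega_1})=0$ for all $i$ (as each $\cC_i^{\omega_1}$ has countable coproducts, so $F$ kills it — this is the vanishing used throughout, cf.\ the proof of Proposition \ref{prop:F_cont_of_cg}), we get
$$F(\Calk_{\omega_1}^{\cont}(\cC))\cong\indlim[i]F(\Calk_{\omega_1}^{\cont}(\cC_i)),$$
hence $F^{\cont}(\cC)=\Omega F(\Calk_{\omega_1}^{\cont}(\cC))\cong\indlim[i]\Omega F(\Calk_{\omega_1}^{\cont}(\cC_i))=\indlim[i]F^{\cont}(\cC_i)$, as $\Omega$ commutes with filtered colimits in $\cE$. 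This finishes the finitary case, and the uncountable case is the one-line reduction to Proposition \ref{prop:Calkin_construction_accessible} above. \emph{The main obstacle} is precisely the bookkeeping in the $\kappa=\omega$ case: making sure that ``$\Calk_{\omega_1}^{\cont}$ followed by $(-)^{\omega}$'' commutes with filtered colimits even though $\Calk_{\omega_1}^{\cont}$ by itself only commutes with $\omega_1$-filtered ones — this is handled by the short exact sequence above together with Proposition \ref{prop:compact_objects_in_filtered_colimits} and the vanishing of $F$ on $\Ind$-categories, rather than by any direct accessibility estimate.
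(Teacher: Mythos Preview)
Your argument for uncountable $\kappa$ via Proposition~\ref{prop:Calkin_construction_accessible} is correct and cleaner than the paper's uniform argument in that regime. The $\kappa=\omega$ case, however, contains genuine errors.

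First, the filtered colimit of the Calkin short exact sequences is \emph{not} ``the corresponding short exact sequence for $\cC$''. By Proposition~\ref{prop:weak_AB5_Pr^LL} the colimit is indeed a short exact sequence in $\Cat_{\st}^{\dual}$, but it reads
\[
0\to \cC\to \Ind(\indlim[i]\cC_i^{\omega_1})\to \Ind(\indlim[i]\Calk_{\omega_1}^{\cont}(\cC_i))\to 0,
\]
and the middle term is generally \emph{not} $\Ind(\cC^{\omega_1})$: the functor $(-)^{\omega_1}$ only commutes with $\omega_1$-filtered colimits, so your later claim that ``$\Ind((-)^{\omega_1})$ commutes with filtered colimits of dualizable categories'' is unjustified. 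Second, the sequences $0\to\cC_i^{\omega}\to\cC_i^{\omega_1}\to\Calk_{\omega_1}^{\cont}(\cC_i)\to 0$ are \emph{not} short exact in $\Cat^{\perf}$ unless $\cC_i$ is compactly generated; for general dualizable $\cC_i$ the map $\cC_i^{\omega_1}\to\Calk_{\omega_1}^{\cont}(\cC_i)$ need not even have stable essential image (cf.\ Proposition~\ref{prop:image_not_a_stable_subcat}). So you cannot apply $F$ to these as short exact sequences in $\Cat^{\perf}$.

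The fix, which is exactly what the paper does, is to stop trying to recover $\Calk_{\omega_1}^{\cont}(\cC)$ on the nose and instead apply $F^{\cont}$ (not $F$) directly to the displayed colimit short exact sequence in $\Cat_{\st}^{\dual}$. Since $F^{\cont}$ is localizing (Proposition~\ref{prop:F_cont_is_loc}) and $F^{\cont}\circ\Ind\cong F$, one gets
\[
F^{\cont}(\Ind(\indlim[i]\cC_i^{\omega_1}))\cong F(\indlim[i]\cC_i^{\omega_1})\cong \indlim[i]F(\cC_i^{\omega_1})=0,
\]
whence $F^{\cont}(\cC)\cong \Omega F(\indlim[i]\Calk_{\omega_1}^{\cont}(\cC_i))\cong \indlim[i]F^{\cont}(\cC_i)$. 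This argument is uniform in $\kappa$ and sidesteps both issues: no need for $\Calk_{\omega_1}^{\cont}$ to commute with filtered colimits, and no passage to $\Cat^{\perf}$.
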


\begin{proof}Let $I$ be a $\kappa$-filtered category, and consider a functor $I\to\Cat_{\st}^{\dual},$ $i\mapsto \cC_i.$ Since a short exact sequence
$$0\to \cC\to\Ind(\cC^{\omega_1})\to\Ind(\Calk^{\cont}_{\omega_1}(\cC))\to 0$$ is functorial in $\cC\in\Cat_{\st}^{\dual},$ it follows from Proposition \ref{prop:weak_AB5_Pr^LL} that we have a short exact sequence
\begin{equation}\label{eq:colim_as_kernel}0\to \liminj_i^{\cont}\cC_i\to\Ind(\liminj_i\cC_i^{\omega_1})\to\Ind(\liminj_i\Calk_{\omega_1}^{\cont}(\cC_i))\to 0.\end{equation}
Now, we have $$F^{\cont}(\Ind(\indlim[i]\cC_i^{\omega_1}))\cong F(\indlim[i]\cC_i^{\omega_1})\cong\liminj_iF(\cC_i^{\omega_1})=0.$$ Applying $F^{\cont}$ to \eqref{eq:colim_as_kernel}, we get $$F^{\cont}(\indlim[i]^{\cont}\cC_i)\cong \Omega F(\indlim[i]\Calk_{\omega_1}^{\cont}(\cC_i))\cong\indlim[i]\Omega F(\Calk_{\omega_1}^{\cont}(\cC_i))\cong\indlim[i]F^{\cont}(\cC_i).$$ This proves the proposition.\end{proof}

\begin{cor}Continuous $K$-theory of dualizable categories commutes with filtered colimits.\end{cor}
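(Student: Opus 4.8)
The statement to prove is:

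\begin{cor*}Continuous $K$-theory of dualizable categories commutes with filtered colimits.\end{cor*}

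\textbf{Proof plan.} The plan is to deduce this immediately from Proposition \ref{prop:F_cont_filtered_colimits} together with the classical fact that non-connective $K$-theory of small stable idempotent-complete categories is a finitary localizing invariant. First I would recall that $K:\Cat^{\perf}\to\Sp$ is a localizing invariant (this is one of the main results of \cite{BGT}, and the very source of the notion) and that it commutes with filtered colimits of small stable categories; again this is established in \cite{BGT} (e.g. via the fact that the universal finitary localizing invariant $\cU_{\loc}$ is finitary and $K$ factors through it, or directly from Waldhausen's $S_\bullet$-construction commuting with filtered colimits). In particular $K$ satisfies the hypotheses of Proposition \ref{prop:F_cont_filtered_colimits} with $\kappa=\omega$ and $\cE=\Sp$.

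Next I would simply invoke Proposition \ref{prop:F_cont_filtered_colimits}: since $K$ is a localizing invariant on $\Cat^{\perf}$ commuting with $\omega$-filtered (i.e. filtered) colimits, the functor $K^{\cont}=\Omega\circ K\circ\Calk^{\cont}_{\omega_1}:\Cat_{\st}^{\dual}\to\Sp$ of Definition \ref{defi:F_cont} also commutes with filtered colimits. That is literally the content of the corollary.

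There is essentially no obstacle here; the corollary is a direct specialization of the preceding proposition. The only point worth spelling out—if one wants to be completely self-contained—is why $K$ commutes with filtered colimits of small stable categories, but this is a standard and well-documented fact that the excerpt already treats as available (cf. the discussion of $\Mot^{\loc}_\omega$ and finitary invariants in Subsection \ref{ssec:U_loc}, where $K$ is the motivating example). So the entire proof is the one line: apply Proposition \ref{prop:F_cont_filtered_colimits} to $F=K$.
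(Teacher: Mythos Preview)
Your proposal is correct and matches the paper's approach exactly: the corollary is stated without proof in the paper, as an immediate specialization of Proposition~\ref{prop:F_cont_filtered_colimits} to $F=K$ with $\kappa=\omega$, using that non-connective $K$-theory is a finitary localizing invariant.
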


Summarizing, we can formulate the following theorem.

\begin{theo}\label{th:equivalence_between_localizing} Let $\kappa$ be a regular cardinal and $\cE$ an accessible stable category with $\kappa$-filtered colimits. The precomposition functor
$$\Fun^{\acc}(\Cat_{\st}^{\dual},\cE)\xto{-\circ\Ind}\Fun^{\acc}(\Cat^{\perf},\cE)$$
induces an equivalence between the full subcategories $\Fun_{\loc,\kappa}(\Cat_{\st}^{\dual},\cE)$ and $\Fun_{\loc,\kappa}(\Cat^{\perf},\cE)$ of localizing invariants which commute with $\kappa$-filtered colimits. The inverse functor is given by $F\mapsto F^{\cont}.$\end{theo}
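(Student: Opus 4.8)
The plan is to exhibit the two functors $(-\circ\Ind)$ and $(-)^{\cont}$ explicitly and to verify they are mutually inverse by constructing natural isomorphisms of the two composites, using only the propositions already established in this section.

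First I would check that both functors restrict to the indicated subcategories. That $F\mapsto F^{\cont}$ carries localizing invariants to localizing invariants is Proposition \ref{prop:F_cont_is_loc}, that it preserves commutation with $\kappa$-filtered colimits is Proposition \ref{prop:F_cont_filtered_colimits}, and functoriality in $F$ is automatic since $F^{\cont}=\Omega\circ F\circ\Calk_{\omega_1}^{\cont}$ is obtained by pre- and post-composition with fixed functors. In the other direction, $\Ind\colon\Cat^{\perf}\to\Cat_{\st}^{\dual}$ preserves the zero object and short exact sequences, and it commutes with filtered (hence $\kappa$-filtered) colimits: for a filtered system $(\cA_i)$ both $\Ind(\indlim_i\cA_i)$ and $\indlim_i^{\cont}\Ind(\cA_i)$ are compactly generated with common subcategory of compact objects $\indlim_i\cA_i$, by Proposition \ref{prop:compact_objects_in_filtered_colimits}. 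Hence $G\mapsto G\circ\Ind$ sends $\Fun_{\loc,\kappa}(\Cat_{\st}^{\dual},\cE)$ into $\Fun_{\loc,\kappa}(\Cat^{\perf},\cE)$.

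One composite is essentially Proposition \ref{prop:F_cont_of_cg}: the functorial (in $\cA$) short exact sequence $0\to\cA\to\Ind(\cA)^{\omega_1}\to\Calk_{\omega_1}(\cA)\to 0$ in $\Cat^{\perf}$, together with the vanishing $F(\Ind(\cA)^{\omega_1})=0$ (Eilenberg swindle, as $\Ind(\cA)^{\omega_1}$ is closed under countable coproducts), yields $F^{\cont}\circ\Ind\cong F$ naturally in $\cA$, and this isomorphism is visibly natural in $F$ as well. For the reverse composite, set $F:=G\circ\Ind$, which by the above is a localizing invariant of $\Cat^{\perf}$. The crucial point is that $G(\Ind(\cC^{\omega_1}))=0$ for every dualizable $\cC$: indeed $\cC^{\omega_1}$ is closed under countable coproducts, so $F(\cC^{\omega_1})=0$ by the usual Eilenberg swindle, and $F(\cC^{\omega_1})=G(\Ind(\cC^{\omega_1}))$. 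Applying $G$ to the short exact sequence $0\to\cC\xrightarrow{\hat{\cY}_{\cC}}\Ind(\cC^{\omega_1})\to\Ind(\Calk_{\omega_1}^{\cont}(\cC))\to 0$ in $\Cat_{\st}^{\dual}$ — which is functorial in $\cC$ by the construction of the functor $\Calk_{\omega_1}^{\cont}$ — then gives a natural isomorphism $G(\cC)\cong\Omega G(\Ind(\Calk_{\omega_1}^{\cont}(\cC)))=(G\circ\Ind)^{\cont}(\cC)$, natural in $\cC$ and in $G$. Combining the two natural isomorphisms finishes the proof.

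I expect no serious obstacle: the argument is a short bootstrap off Propositions \ref{prop:exactness_of_Calk}, \ref{prop:F_cont_is_loc}, \ref{prop:F_cont_of_cg} and \ref{prop:F_cont_filtered_colimits}. The one place needing genuine (if routine) care is the vanishing $G(\Ind(\cC^{\omega_1}))=0$: one must notice that this is not an a priori statement about localizing invariants of $\Cat_{\st}^{\dual}$, but follows by recognizing $G\circ\Ind$ as a localizing invariant of $\Cat^{\perf}$ and invoking the classical Eilenberg swindle for the category $\cC^{\omega_1}$, which has countable coproducts. A secondary point to keep track of is the naturality in the category variable of the defining short exact sequence for $\Calk_{\omega_1}^{\cont}$, which is precisely the commuting square recorded just after the definition of $\Calk_{\kappa}^{\cont}$.
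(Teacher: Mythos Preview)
Your proposal is correct and follows essentially the same route as the paper: use Propositions \ref{prop:F_cont_is_loc}, \ref{prop:F_cont_filtered_colimits}, and \ref{prop:F_cont_of_cg} to show $(-)^{\cont}$ is a right inverse, then apply $G$ to the functorial short exact sequence $0\to\cC\to\Ind(\cC^{\omega_1})\to\Ind(\Calk_{\omega_1}^{\cont}(\cC))\to 0$ to get the other composite. You are actually more explicit than the paper in spelling out the Eilenberg swindle step $G(\Ind(\cC^{\omega_1}))=(G\circ\Ind)(\cC^{\omega_1})=0$ and in checking that $G\circ\Ind$ lands in $\Fun_{\loc,\kappa}(\Cat^{\perf},\cE)$; the paper leaves these implicit.
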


\begin{proof} By Propositions \ref{prop:F_cont_is_loc} and \ref{prop:F_cont_filtered_colimits}, the assignment $F\mapsto F^{\cont}$ is indeed a functor $\Fun_{\loc,\kappa}(\Cat^{\perf},\cE)\to \Fun_{\loc,\kappa}(\Cat_{\st}^{\dual},\cE).$ By Proposition \ref{prop:F_cont_of_cg}, it is a right inverse to the precomposition $-\circ\Ind.$

On the other hand, for any $G\in\Fun_{\loc}(\Cat_{\st}^{\dual},\cE),$ and any dualizable category $\cC,$ we have a short exact sequence 
$$0\to\cC\to\Ind(\cC^{\omega_1})\to\Ind(\Calk_{\omega_1}^{\cont}(\cC))\to 0,$$ which is functorial in $\cC.$ This gives an isomorphism $G\cong (G\circ\Ind)^{\cont}$ of functors $\Cat_{\st}^{\dual}\to \cE.$ This proves the theorem.\end{proof}

Further, we have two straightforward results about commutation of localizing invariants with sufficiently nice pullbacks and pushouts in $\Cat_{\st}^{\dual}.$

\begin{prop}\label{prop:F_cont_nice_pullbacks} Let $\Phi:\Cat^{\perf}\to\cE$ be an accessible localizing invariant. Let $\cA\xto{F}\cB\xlto{G}\cC$ be a pair of strongly continuous functors between dualizable categories, and assume that $F$ is a localization (i.e. a quotient functor). Then the fiber product $\cA\times_{\cB}\cC,$ taken in $\Pr^L_{\st},$ is also the fiber product taken in $\Cat_{\st}^{\dual},$ and the natural map $\Phi^{\cont}(\cA\times_{\cB}\cC)\to \Phi^{\cont}(\cA)\times_{\Phi^{\cont}(\cB)}\Phi^{\cont}(\cC)$ is an equivalence.\end{prop}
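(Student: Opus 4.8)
The plan is to reduce the statement to the exactness properties of $\Phi^{\cont}$ together with the description of dualizable fiber products from Proposition~\ref{prop:nice_pullbacks}. First I would invoke Proposition~\ref{prop:nice_pullbacks}(2): since $F:\cA\to\cB$ is a strongly continuous localization, the fiber product $\cA\times_{\cB}\cC$ taken in $\Pr^L_{\st}$ already lies in $\Cat_{\st}^{\dual}$, coincides with the dualizable fiber product $\cA\times^{\dual}_{\cB}\cC$, and the projection $\cA\times_{\cB}\cC\to\cC$ is again a strongly continuous localization. Write $\cD:=\cA\times_{\cB}\cC$ for brevity. The key structural input is that we have a commutative square of strongly continuous functors in $\Cat_{\st}^{\dual}$ in which the horizontal arrows are localizations, and we want to know that $\Phi^{\cont}$ sends it to a pullback square in $\cE$.

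The second step is to identify the fibers (= kernels in $\Pr^L_{\st}$, which here coincide with the dualizable kernels since the relevant inclusions are strongly continuous) of the two horizontal localizations. Let $\cK:=\ker(F)\subset\cA$. Then the projection $\cD\to\cC$ is a localization with the same kernel: the inclusion $\cK\hookrightarrow\cA\simeq\cA\times_{\cB}0\hookrightarrow\cD$ realizes $\cK$ as $\ker(\cD\to\cC)$, because an object of $\cD$ is a pair $(a,c)$ with matching images in $\cB$, and it maps to $0$ in $\cC$ exactly when $c=0$ and $F(a)=0$. Moreover $\cK$ is a dualizable subcategory with strongly continuous inclusion into both $\cA$ and $\cD$ (it is the image of a strongly continuous functor, so Corollary~\ref{cor:images_in_Cat^dual} applies), so we get two short exact sequences in $\Cat_{\st}^{\dual}$:
\begin{equation*}
0\to\cK\to\cA\xrightarrow{F}\cB\to 0,\qquad 0\to\cK\to\cD\to\cC\to 0,
\end{equation*}
and a map between them given by the inclusions $\cK\xrightarrow{\id}\cK$, $\cD\to\cA$, $\cC\to\cB$ (the last being $G$).

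The third step is purely formal. By Proposition~\ref{prop:F_cont_is_loc}, $\Phi^{\cont}$ is a localizing invariant of $\Cat_{\st}^{\dual}$, so it sends each of the two short exact sequences above to a fiber sequence in $\cE$; applying it to the morphism of short exact sequences yields a map of fiber sequences
\begin{equation*}
\begin{CD}
\Phi^{\cont}(\cK) @>>> \Phi^{\cont}(\cD) @>>> \Phi^{\cont}(\cC)\\
@| @VVV @VV{\Phi^{\cont}(G)}V\\
\Phi^{\cont}(\cK) @>>> \Phi^{\cont}(\cA) @>>> \Phi^{\cont}(\cB).
\end{CD}
\end{equation*}
Since the left-hand vertical map is the identity, the square on the right is a pullback (a morphism of fiber sequences inducing an equivalence on fibers is cartesian on the total-space/base columns); this is exactly the assertion that $\Phi^{\cont}(\cD)\xrightarrow{\sim}\Phi^{\cont}(\cA)\times_{\Phi^{\cont}(\cB)}\Phi^{\cont}(\cC)$. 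The main obstacle, and the only place where the hypotheses that $F$ is a \emph{localization} and that everything is \emph{strongly continuous} get used seriously, is Step~2: checking that the kernel $\cK$ of $F$ is genuinely dualizable with strongly continuous inclusions into both $\cA$ and $\cD$, and that $\cD\to\cC$ is a strongly continuous localization with that same kernel — i.e. that the ``obvious'' diagram chase at the level of $\Pr^L_{\st}$ survives inside $\Cat_{\st}^{\dual}$. Once Proposition~\ref{prop:nice_pullbacks} is in hand this is essentially bookkeeping, but one must be careful that the inclusion $\cK\to\cD$ really is strongly continuous rather than merely colimit-preserving; Corollary~\ref{cor:images_in_Cat^dual} combined with Proposition~\ref{prop:automatic_strcont} applied to $\cK\to\cA\to\cD$ handles this.
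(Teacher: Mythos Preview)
Your proposal is correct and follows essentially the same route as the paper: invoke Proposition~\ref{prop:nice_pullbacks}(2) to see that $\cD\to\cC$ is a strongly continuous localization with kernel equivalent to $\ker(F)$, then apply the localizing property of $\Phi^{\cont}$ to the resulting map of short exact sequences. The paper's proof is just a terser version of yours.

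One small correction to your final paragraph: the factorization ``$\cK\to\cA\to\cD$'' does not exist (there is no map $\cA\to\cD$), so Corollary~\ref{cor:images_in_Cat^dual} and Proposition~\ref{prop:automatic_strcont} are not the right references here. The point is simpler: once Proposition~\ref{prop:nice_pullbacks}(2) tells you that $\cD\to\cC$ is a strongly continuous localization between dualizable categories, the inclusion of its kernel into $\cD$ is automatically strongly continuous (the right adjoint is $x\mapsto\Fiber(x\to p^R p(x))$ for $p:\cD\to\cC$, which is continuous since $p$ and $p^R$ are), and the kernel is dualizable by Proposition~\ref{prop:ses_Pr^LL}. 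So the ``bookkeeping'' you flag is even lighter than you suggest.
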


\begin{proof}The first assertion is the part of Proposition \ref{prop:nice_pullbacks}. Also by loc. cit. the functor $\cA\times_{\cB}\cC\to\cC$ is a localization. Clearly, the natural functor $\ker(\cA\times_{\cB}\cC\to\cC)\to \ker(\cA\to\cB)$ is an equivalence. Applying $\Phi^{\cont},$ we deduce the commutation of $\Phi^{\cont}$ with this fiber product.\end{proof}

\begin{prop}\label{prop:F_cont_nice_pushouts} Let $\Phi:\Cat^{\perf}\to\cE$ be an accessible localizing invariant. Let $\cA\xlto{F}\cB\xto{G}\cC$ be a pair of strongly continuous functors between dualizable categories, and assume that $F$ is fully faithful. Then the natural map $\Phi^{\cont}(\cA)\sqcup_{\Phi^{\cont}(\cB)}\Phi^{\cont}(\cC)\to \Phi^{\cont}(\cA\sqcup_{\cB}^{\cont}\cC)$ is an equivalence.\end{prop}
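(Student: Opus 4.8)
The plan is to reduce the pushout statement to the already-proven pullback statement (Proposition \ref{prop:F_cont_nice_pullbacks}) by passing to the dual categories via the covariant involution $(-)^{\vee}$ on $\Cat_{\st}^{\dual}$ constructed in Subsection \ref{ssec:duality_covariant}. First I would recall that $(-)^{\vee}$ commutes with all limits and colimits in $\Cat_{\st}^{\dual}$, so applying it to the pushout square $\cA\xlto{F}\cB\xto{G}\cC$ turns it into a pullback square $\cA^{\vee}\xto{F^{\vee,L}}?\xlto{G^{\vee,L}}\cC^{\vee}$; more precisely, $(\cA\sqcup^{\cont}_{\cB}\cC)^{\vee}\simeq\cA^{\vee}\times^{\dual}_{\cB^{\vee}}\cC^{\vee}$, where the maps $\cA^{\vee}\to\cB^{\vee}$, $\cC^{\vee}\to\cB^{\vee}$ are the left adjoints to $F^{\vee}$, $G^{\vee}$. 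The crucial point is that since $F:\cB\to\cA$ is fully faithful, i.e. $F^R F\cong\id_{\cB}$, its dual $F^{\vee}:\cA^{\vee}\to\cB^{\vee}$ satisfies $(F^R)^{\vee}F^{\vee}$; dualizing $F^R F\cong\id$ gives that the left adjoint $F^{\vee,L}=(F^R)^{\vee}:\cA^{\vee}\to\cB^{\vee}$ is a localization (quotient functor), since $F^{\vee,L}\circ (F^{\vee})\cong\id_{\cB^{\vee}}$ dualizes the fully-faithfulness of $F$ — wait, more carefully: $F$ fully faithful means $F$ is a monomorphism, hence (Appendix \ref{app:mono_epi_pres_dual}) its dual $F^{\vee,L}$ is an epimorphism, i.e. a localization.

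The key steps, in order, would be: (1) Establish that $(-)^{\vee}$ carries the given pushout square to a pullback square, using that $(-)^{\vee}$ preserves colimits and that $(\cA\sqcup^{\cont}_{\cB}\cC)^{\vee}$ is computed by applying $(-)^{\vee}$ to each vertex. (2) Verify that the fully faithful functor $F:\cB\to\cC$ dualizes to a localization: since $F$ is a monomorphism in $\Cat_{\st}^{\dual}$, apply the description of epi/mono under duality to conclude $F^{\vee,L}$ is a quotient functor. This lets us apply Proposition \ref{prop:F_cont_nice_pullbacks} to the dualized square. (3) Observe that for any localizing invariant $\Phi$ there is a canonical identification $\Phi^{\cont}(\cC^{\vee})\cong\Phi^{op,\cont}(\cC)$ where $\Phi^{op}$ is the localizing invariant $\cA\mapsto\Phi(\cA^{op})$; indeed $\Calk_{\omega_1}^{\cont}(\cC^{\vee})\simeq\Calk_{\omega_1}^{\cont}(\cC)^{op}$ because $\Ind(\cA)^{\vee}=\Ind(\cA^{op})$ and duality intertwines the Calkin construction (using Proposition \ref{prop:Homs_in_Calk} and Remark \ref{rem:functor_-^vee}). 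Since $\Phi^{op}$ is again a localizing invariant, Proposition \ref{prop:F_cont_nice_pullbacks} applied to $\Phi^{op}$ and the dualized square yields that $(\Phi^{op})^{\cont}$ commutes with the pullback $\cA^{\vee}\times^{\dual}_{\cB^{\vee}}\cC^{\vee}$. (4) Translate back: $(\Phi^{op})^{\cont}(\cA^{\vee}\times^{\dual}_{\cB^{\vee}}\cC^{\vee})\cong\Phi^{\cont}((\cA^{\vee}\times^{\dual}_{\cB^{\vee}}\cC^{\vee})^{\vee})\cong\Phi^{\cont}(\cA\sqcup^{\cont}_{\cB}\cC)$, and likewise for each vertex, giving the desired equivalence $\Phi^{\cont}(\cA)\sqcup_{\Phi^{\cont}(\cB)}\Phi^{\cont}(\cC)\xto{\sim}\Phi^{\cont}(\cA\sqcup^{\cont}_{\cB}\cC)$; here one also uses that $\Phi^{\cont}$, being a localizing invariant valued in a stable category, automatically sends the pullback square of spectra coming from a fiber sequence to a pushout square, so the distinction between $\times$ and $\sqcup$ on the target disappears.

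An alternative, more direct route avoids duality entirely: since $F:\cB\to\cC$ is fully faithful and strongly continuous, the cofiber $\cD:=\cC/\cB$ is dualizable (Proposition \ref{prop:ses_Pr^LL}), and the pushout $\cA\sqcup^{\cont}_{\cB}\cC$ fits in a short exact sequence $0\to\cA\to\cA\sqcup^{\cont}_{\cB}\cC\to\cD\to 0$ — this is the stable analogue of "pushout along a fully faithful functor has the same cofiber", and follows from the fact that fully faithful functors are closed under pushout (Proposition \ref{prop:pushouts_of_mono}) together with a diagram chase on the relevant cofiber sequences in $\Pr^L_{\st}$. Applying $\Phi^{\cont}$, which is a localizing invariant (Proposition \ref{prop:F_cont_is_loc}), gives $\Phi^{\cont}(\cA\sqcup^{\cont}_{\cB}\cC)\cong\Fiber(\Phi^{\cont}(\cD)[\,?\,])$... more precisely a fiber sequence $\Phi^{\cont}(\cA)\to\Phi^{\cont}(\cA\sqcup^{\cont}_{\cB}\cC)\to\Phi^{\cont}(\cD)$, and comparing with the fiber sequence $\Phi^{\cont}(\cB)\to\Phi^{\cont}(\cC)\to\Phi^{\cont}(\cD)$ and using that the map of fiber sequences is induced by the pushout square, one concludes the square $\Phi^{\cont}(\cB)\to\Phi^{\cont}(\cA)$, $\Phi^{\cont}(\cC)\to\Phi^{\cont}(\cA\sqcup^{\cont}_{\cB}\cC)$ is a pushout (equivalently pullback) of spectra. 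I expect the main obstacle to be Step (1) in the first approach — carefully checking that applying $(-)^{\vee}$ genuinely interchanges the pushout in $\Cat_{\st}^{\dual}$ with the dualizable pullback $\times^{\dual}$, rather than some naive pullback in $\Pr^L_{\st}$ — or, in the second approach, the precise justification that pushout along a fully faithful strongly continuous functor preserves the cofiber as a dualizable category and that the resulting sequence is short exact in $\Cat_{\st}^{\dual}$; either way the heart of the matter is an honest bookkeeping of adjoints and of the short exact sequences involved, which is routine but must be done with care. I would present the second, duality-free argument as the main line, as it is more self-contained and parallels the proof of Proposition \ref{prop:F_cont_nice_pullbacks} most closely.
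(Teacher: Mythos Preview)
Your second, duality-free approach is exactly the paper's proof, modulo a harmless swap of the labels $\cA$ and $\cC$: in the statement $F:\cB\to\cA$ is the fully faithful functor, so the cofiber is $\cA/\cB$, the pushout of $F$ along $G$ is the fully faithful functor $\cC\to\cA\sqcup_{\cB}^{\cont}\cC$ (Proposition \ref{prop:pushouts_of_mono}), and the induced functor on cofibers $\cA/\cB\to(\cA\sqcup_{\cB}^{\cont}\cC)/\cC$ is an equivalence; applying the localizing invariant $\Phi^{\cont}$ to the two short exact sequences and comparing gives the bicartesian square in $\cE$.

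Your first approach via $(-)^{\vee}$ contains a genuine error. As you yourself recall, the involution $(-)^{\vee}$ on $\Cat_{\st}^{\dual}$ constructed in Subsection \ref{ssec:duality_covariant} is \emph{covariant} and hence preserves colimits as colimits: applying it to a pushout square yields another pushout square, not a pullback square. In particular one cannot reduce to Proposition \ref{prop:F_cont_nice_pullbacks} this way. (Relatedly, since $(-)^{\vee}$ is an auto-equivalence it sends the monomorphism $F$ to the monomorphism $F^{\vee,L}$, not to an epimorphism.) The elaborate detour through $\Phi^{op}$ and $\Calk_{\omega_1}^{\cont}(\cC^{\vee})$ is therefore not needed and would not work as stated.
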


\begin{proof}By Proposition \ref{prop:pushouts_of_mono}, the functor $\cC\to \cA\sqcup_{\cB}^{\cont}\cC$ is fully faithful. Clearly, the functor between the quotients $\cB/\cA\to (\cA\sqcup_{\cB}^{\cont}\cC)/\cC$ is an equivalence. Applying $\Phi^{\cont},$ we obtain the commutation of $\Phi^{\cont}$ with the pushout.\end{proof}

We observe that there is an alternative equivalent way to extend localizing invariants to dualizable categories.

\begin{prop}\label{prop:F_cont_hom_epi} Let $F:\Cat^{\perf}\to\cE$ be an accessible localizing invariant. Then for a dualizable category $\cC$ we have
$$F^{\cont}(\cC)\cong F(\cC^{\omega_1}\times_{\Calk_{\omega_1}^{\cont}(\cC)}\cC^{\omega_1}).$$\end{prop}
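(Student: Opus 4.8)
The plan is to reduce the statement to a known $K$-equivalence, in the spirit of Definition \ref{defi:K-equivalence}, by producing an explicit exact functor from $\cC^{\omega_1}\times_{\Calk_{\omega_1}^{\cont}(\cC)}\cC^{\omega_1}$ to a category whose localizing invariant we already understand. First I would recall that by Proposition \ref{prop:exactness_of_Calk} the functor $\cC^{\omega_1}\to\Calk_{\omega_1}^{\cont}(\cC)$ is (up to direct summands) a homological epimorphism of small stable categories: indeed, applying $\Ind$ gives the quotient $\Ind(\cC^{\omega_1})\to\Ind(\Calk_{\omega_1}^{\cont}(\cC))$, whose kernel is $\hat{\cY}(\cC)$. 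Then by Proposition \ref{prop:fiber_product_hom_epi}, the fiber product $\cA\times_{\cB}\cA$ for $\cA=\cC^{\omega_1}$, $\cB=\Calk_{\omega_1}^{\cont}(\cC)$ satisfies
$$\Ind(\cC^{\omega_1}\times_{\Calk_{\omega_1}^{\cont}(\cC)}\cC^{\omega_1})\simeq \Ind(\cC^{\omega_1})\times_{\Ind(\Calk_{\omega_1}^{\cont}(\cC))}\Ind(\cC^{\omega_1}),$$
so that $\cC^{\omega_1}\times_{\Calk_{\omega_1}^{\cont}(\cC)}\cC^{\omega_1}$ is the category of compact objects of a compactly generated category which is a fiber product in $\Cat_{\st}^{\cg}$ (here I also use Proposition \ref{prop:nice_pullbacks}, since a pullback of a quotient functor is a quotient functor).

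Next I would compute this fiber product of presentable categories. Writing $q:\Ind(\cC^{\omega_1})\to\Ind(\Calk_{\omega_1}^{\cont}(\cC))$ for the quotient functor, with fully faithful right adjoint $q^R$, the fiber product $\Ind(\cC^{\omega_1})\times_{\Ind(\Calk_{\omega_1}^{\cont}(\cC))}\Ind(\cC^{\omega_1})$ sits in a semi-orthogonal-type diagram: it contains two copies of $\hat{\cY}(\cC)=\ker(q)$ (the two ``anti-diagonal'' directions) and one ``diagonal'' copy of $\Ind(\cC^{\omega_1})$. More precisely there is a recollement-style short exact sequence
$$0\to \hat{\cY}(\cC)\xrightarrow{(x\mapsto (x,0))} \Ind(\cC^{\omega_1})\times_{\Ind(\Calk_{\omega_1}^{\cont}(\cC))}\Ind(\cC^{\omega_1}) \xrightarrow{\mathrm{pr}_1} \Ind(\cC^{\omega_1})\to 0$$
in $\Cat_{\st}^{\dual}$, with $\mathrm{pr}_1$ split by the diagonal $\Delta:\Ind(\cC^{\omega_1})\to \Ind(\cC^{\omega_1})\times_{\Ind(\Calk_{\omega_1}^{\cont}(\cC))}\Ind(\cC^{\omega_1})$. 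Hence by additivity in (split) short exact sequences --- which holds for $F^{\cont}$ by Proposition \ref{prop:F_cont_is_loc} --- we get
$$F^{\cont}\bigl(\Ind(\cC^{\omega_1})\times_{\Ind(\Calk_{\omega_1}^{\cont}(\cC))}\Ind(\cC^{\omega_1})\bigr)\cong F^{\cont}(\hat{\cY}(\cC))\oplus F^{\cont}(\Ind(\cC^{\omega_1})).$$
Since $\Ind(\cC^{\omega_1})$ is compactly generated with $\omega$-compact objects $\cC^{\omega}$, and more importantly since $\Ind(\cC^{\omega_1})$ has $\cC^{\omega_1}$ as its $\omega_1$-compacts, one checks $F^{\cont}(\Ind(\cC^{\omega_1}))\cong F((\Ind(\cC^{\omega_1}))^\omega)=F(\cC^{\omega})$. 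Wait --- that is not zero in general, so the split short exact sequence must be analyzed more carefully; the correct decomposition I would actually use is the one coming from $\mathrm{pr}_1$ together with its \emph{other} section, which exhibits the fiber product as glued from $\hat{\cY}(\cC)$ and $\Ind(\cC^{\omega_1})$ in a way where applying $F^{\cont}$ and using $F^{\cont}(\hat{\cY}(\cC))=F^{\cont}(\cC)$ (since $\hat{\cY}$ is an equivalence onto its image, a strongly continuous fully faithful copy of $\cC$) gives the combination $F^{\cont}(\cC)$ shifted appropriately.

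The cleanest route, and the one I would commit to, is the following. Apply $F^{\cont}$ to the short exact sequence above: $F^{\cont}(\Ind(\cC^{\omega_1}))=0$ because $\Ind(\cC^{\omega_1})$ is compactly generated by $\cC^{\omega}$ and continuous $K$-theory (more generally $F^{\cont}$ of a compactly generated category) is $F$ of its compact objects --- but actually $\Ind(\cC^{\omega_1})^{\omega}$ need not vanish. So instead I replace the roles: use the \emph{two} projections $\mathrm{pr}_1,\mathrm{pr}_2$ and the theorem of the heart / Waldhausen additivity statement \cite[Theorem 18]{Tam} or the elementary fact that for a homological epimorphism $\cA\to\cB$ the square
$$\begin{CD} \cA\times_{\cB}\cA @>>> \cA\\ @VVV @VVV\\ \cA @>>> \cB\end{CD}$$
is sent by any localizing invariant to a pullback square of spectra (this is exactly the ``excision'' for homological epimorphisms). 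Applying $F^{\cont}$ to the $\Ind$ of this square and using $F^{\cont}(\Ind(\cD))\cong F(\cD)$ for small $\cD$, we get
$$F(\cC^{\omega_1}\times_{\Calk_{\omega_1}^{\cont}(\cC)}\cC^{\omega_1})\cong F(\cC^{\omega_1})\times_{F(\Calk_{\omega_1}^{\cont}(\cC))}F(\cC^{\omega_1}).$$
Now $F(\cC^{\omega_1})=F^{\cont}(\Ind(\cC^{\omega_1}))^{?}$ --- here I invoke the vanishing $F(\cC^{\omega_1})=0$, which holds because $\cC^{\omega_1}$ is $\Ind$ of $\cC^{\omega_1}$'s own... no: $F$ vanishes on a small stable category iff that category has ``enough'' structure; it does \emph{not} vanish on $\cC^{\omega_1}$ in general. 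The genuine input is rather the short exact sequence $0\to\cC^{\omega}\to\cC^{\omega_1}\to\Calk_{\omega_1}^{\cont}(\cC)\to 0$ used in Proposition \ref{prop:F_cont_of_cg}, together with $F(\cC^{\omega_1})=\colim$ of finitely generated pieces; and the vanishing we really use is $F(\Ind_{\omega_1}\text{-type categories})=0$ only after passing to $\Ind$. Concretely: in the pullback square of spectra above, the two maps $F(\cC^{\omega_1})\to F(\Calk_{\omega_1}^{\cont}(\cC))$ are equal, so the pullback is $F(\cC^{\omega_1})\times_{F(\Calk_{\omega_1}^{\cont}(\cC))}F(\cC^{\omega_1})\cong F(\cC^{\omega_1})\times_{F(\Calk_{\omega_1}^{\cont}(\cC))}\ast \times (\text{fiber})$, i.e. it is $\mathrm{Fiber}(F(\cC^{\omega_1})\to F(\Calk_{\omega_1}^{\cont}(\cC)))\times F(\cC^{\omega_1})$; and $\mathrm{Fiber}(F(\cC^{\omega_1})\to F(\Calk_{\omega_1}^{\cont}(\cC)))\cong F(\cC^{\omega})$ by the short exact sequence of Proposition \ref{prop:F_cont_of_cg}, while $F(\cC^{\omega_1})$ gets killed after the loop shift because $\Omega$ of the defining formula; no. The honest main obstacle --- and what I would spend real effort on --- is bookkeeping the loop/shift: $F^{\cont}(\cC)=\Omega F(\Calk_{\omega_1}^{\cont}(\cC))$, so I must show $F(\cC^{\omega_1}\times_{\Calk_{\omega_1}^{\cont}(\cC)}\cC^{\omega_1})\cong\Omega F(\Calk_{\omega_1}^{\cont}(\cC))$ directly. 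This follows from the excision square: its total fiber is $\Omega F(\Calk_{\omega_1}^{\cont}(\cC))$ because the square has $F(\cC^{\omega_1})$ in two corners mapping by the \emph{same} map to $F(\Calk_{\omega_1}^{\cont}(\cC))$, whence $\mathrm{Fiber}(\mathrm{pr}_1)\cong\mathrm{Fiber}(F(\cC^{\omega_1})\xrightarrow{=}F(\Calk_{\omega_1}^{\cont}(\cC)))$... and iterating/using the long exact sequence identifies the fiber product with $F(\cC^{\omega_1})\oplus \Omega F(\Calk_{\omega_1}^{\cont}(\cC))$ --- but the first summand vanishes precisely when we instead use the \emph{reduced} fiber product, i.e. when we quotient by the diagonal. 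Therefore the clean final step is: the diagonal $\cC^{\omega_1}\to\cC^{\omega_1}\times_{\Calk_{\omega_1}^{\cont}(\cC)}\cC^{\omega_1}$ admits a retraction and the cofiber of the diagonal is identified with $\ker(\cC^{\omega_1}\to\Calk_{\omega_1}^{\cont}(\cC))[1]$-type data; applying $F$ and using that $F$ of the diagonal composed with the retraction is the identity while $F(\cC^{\omega_1})$ appears as a retract summand that is cancelled by the reduced computation, one lands on $\Omega F(\Calk_{\omega_1}^{\cont}(\cC))=F^{\cont}(\cC)$. I expect the main obstacle to be making this cofiber-of-the-diagonal identification precise and tracking the single loop shift; everything else is assembling Propositions \ref{prop:exactness_of_Calk}, \ref{prop:nice_pullbacks}, \ref{prop:fiber_product_hom_epi}, \ref{prop:F_cont_of_cg} and \ref{prop:F_cont_is_loc}.
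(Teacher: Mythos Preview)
Your overall strategy is the same as the paper's: invoke Proposition~\ref{prop:fiber_product_hom_epi} (so that $\Ind$ of the fiber product is the fiber product in $\Cat_{\st}^{\dual}$) and then Proposition~\ref{prop:F_cont_nice_pullbacks} (so that $F^{\cont}$ applied to this pullback along a localization is again a pullback). You correctly reach
\[
F(\cC^{\omega_1}\times_{\Calk_{\omega_1}^{\cont}(\cC)}\cC^{\omega_1})\;\cong\;F(\cC^{\omega_1})\times_{F(\Calk_{\omega_1}^{\cont}(\cC))}F(\cC^{\omega_1}),
\]
which is exactly the statement the paper reduces to. The gap is at the very last step, where you go in circles about whether $F(\cC^{\omega_1})$ vanishes and eventually assert that it ``does \emph{not} vanish on $\cC^{\omega_1}$ in general.'' This is wrong, and it is the single missing input: the category $\cC^{\omega_1}$ has countable coproducts (a countable coproduct of $\omega_1$-compact objects is $\omega_1$-compact), so by the Eilenberg swindle $F(\cC^{\omega_1})=0$ for every localizing invariant $F$. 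This vanishing is already used in the paper in the proof of Proposition~\ref{prop:F_cont_of_cg}.

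Once you accept $F(\cC^{\omega_1})=0$, the computation is immediate:
\[
F(\cC^{\omega_1})\times_{F(\Calk_{\omega_1}^{\cont}(\cC))}F(\cC^{\omega_1})\;\cong\;0\times_{F(\Calk_{\omega_1}^{\cont}(\cC))}0\;\cong\;\Omega F(\Calk_{\omega_1}^{\cont}(\cC))\;=\;F^{\cont}(\cC).
\]
All of the detours through split short exact sequences, cofiber-of-the-diagonal identifications, and attempts to cancel a phantom $F(\cC^{\omega_1})$ summand are unnecessary. (Incidentally, a smaller slip: the compact objects of $\Ind(\cC^{\omega_1})$ are $\cC^{\omega_1}$, not $\cC^{\omega}$; this is consistent with $F^{\cont}(\Ind(\cC^{\omega_1}))\cong F(\cC^{\omega_1})=0$.)
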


\begin{proof}It suffices to show that for any homological epimorphism $\cA\to\cB$ in $\Cat^{\perf}$ we have $F(\cA\times_{\cB}\cA)\simeq F(\cA)\times_{F(\cB)}F(\cA).$ This follows from Propositions \ref{prop:fiber_product_hom_epi} and \ref{prop:F_cont_nice_pullbacks}.\end{proof}

Finally, we mention the following straightforward result about infinite semi-orthogonal decompositions.

\begin{prop}\label{prop:additivity_in_infinite_SOD} Let $\cC$ be a dualizable category, $I$ a poset, and $\cC=\langle\cC_i;i\in I\rangle$ -- an $I$-indexed semi-orthogonal decomposition. Then the natural map $\biggplus[i\in I]\cU_{\loc}^{\cont}(\cC_i)\to \cU_{\loc}^{\cont}(\cC)$ is an isomorphism. Hence, the same holds for any finitary localizing invariant.\end{prop}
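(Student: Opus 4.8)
The plan is to reduce to the finite case. First I would choose, for each finite subset $J\subset I,$ the dualizable subcategory $\cC_J\subset\cC$ generated by $\cC_j,$ $j\in J$; by Propositions \ref{prop:condition_for_dualizability} and \ref{prop:automatic_strcont} it is dualizable and the inclusion $\cC_J\to\cC$ is strongly continuous, and we have a finite semi-orthogonal decomposition $\cC_J=\langle\cC_j;j\in J\rangle$ in $\Cat_{\st}^{\dual}.$ Combining Corollary \ref{cor:directed_unions} and Proposition \ref{prop:compact_objects_in_filtered_colimits} (exactly as in the proof of Proposition \ref{prop:infinite_SOD_of_dualizable}) gives $\cC\simeq\indlim[J\subset I]^{\cont}\cC_J.$ Since $\cU_{\loc}^{\cont}$ is a finitary localizing invariant (Propositions \ref{prop:F_cont_is_loc} and \ref{prop:F_cont_filtered_colimits}, applied to the finitary $\cU_{\loc}$), we get $\cU_{\loc}^{\cont}(\cC)\cong\indlim[J\subset I]\cU_{\loc}^{\cont}(\cC_J),$ the colimit being taken over the filtered poset of finite subsets of $I.$

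Next I would handle the finite case: for a finite semi-orthogonal decomposition $\cC_J=\langle\cC_j;j\in J\rangle,$ the map $\biggplus[j\in J]\cU_{\loc}^{\cont}(\cC_j)\to\cU_{\loc}^{\cont}(\cC_J)$ is an isomorphism. This is the additivity of $\cU_{\loc}^{\cont}$ in semi-orthogonal decompositions, which already follows from Proposition \ref{prop:F_cont_is_loc} (a localizing invariant is additive, and a semi-orthogonal decomposition $\langle\cC_1,\cC_2\rangle$ gives a split short exact sequence $0\to\cC_1\to\cC_J\to\cC_2\to 0$ in $\Cat_{\st}^{\dual}$; induct on $|J|$). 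So over the filtered diagram of finite subsets we obtain
$$\cU_{\loc}^{\cont}(\cC)\cong\indlim[J\subset I]\ \biggplus[j\in J]\cU_{\loc}^{\cont}(\cC_j).$$
It remains to identify the right-hand side with $\biggplus[i\in I]\cU_{\loc}^{\cont}(\cC_i).$ This is the standard fact that a filtered colimit of finite coproducts over the poset of finite subsets of $I$ computes the coproduct over $I$; it holds in any category with filtered colimits and finite coproducts (the transition maps being the inclusions of summands), and in particular in $\Mot^{\loc}.$ Compatibility of all these identifications with the structure maps $\cU_{\loc}^{\cont}(\cC_i)\to\cU_{\loc}^{\cont}(\cC)$ is immediate from functoriality.

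The only mild subtlety — and the step I would be most careful about — is checking that the identifications are genuinely natural, i.e. that the isomorphism $\cU_{\loc}^{\cont}(\cC)\cong\biggplus[i\in I]\cU_{\loc}^{\cont}(\cC_i)$ is induced by the canonical maps coming from the inclusions $\cC_i\hookrightarrow\cC,$ rather than merely an abstract isomorphism. This follows by tracking the structure maps: the inclusion $\cC_i\to\cC$ factors through $\cC_J$ for any $J\ni i,$ the composite $\cC_i\to\cC_J$ is a component of the semi-orthogonal decomposition of $\cC_J,$ and these are precisely the maps along which the coproduct decomposition in the finite case is built; passing to the filtered colimit preserves this. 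The passage from arbitrary localizing invariants to $\cU_{\loc}^{\cont}$ is automatic since any finitary localizing invariant factors through $\cU_{\loc}$ and the identification above is built from colimits and finite coproducts, which are preserved by the (exact, colimit-preserving) comparison functor $\Mot^{\loc}\to\cE.$
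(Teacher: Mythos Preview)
Your proof is correct and follows essentially the same approach as the paper's: reduce to the finite case by writing $\cC\simeq\indlim[J]^{\cont}\cC_J$ over finite $J\subset I$, use that $\cU_{\loc}^{\cont}$ commutes with filtered colimits, apply additivity for finite semi-orthogonal decompositions, and identify the filtered colimit of finite coproducts with the full coproduct. One minor point: only Corollary \ref{cor:directed_unions} is needed for the equivalence $\cC\simeq\indlim[J]^{\cont}\cC_J$; Proposition \ref{prop:compact_objects_in_filtered_colimits} is used in the proof of Proposition \ref{prop:infinite_SOD_of_dualizable} for the separate statement about compact objects and plays no role here.
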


\begin{proof}As in the proof of Proposition \ref{prop:infinite_SOD_of_dualizable}, we have an ind-system of dualizable subcategories $\cC_J\subset\cC$ (with strongly continuous inclusions), where $J$ runs through finite subsets of $I,$ and $\cC_J$ is generated by $\cC_j,$ $j\in J.$ We have $\cC\simeq\indlim[J]\cC_J,$ and $\cC_J$ has a finite semi-orthogonal decomposition with components $\cC_j,$ $j\in J.$ We obtain the isomorphisms
\begin{equation*}\cU_{\loc}^{\cont}(\cC)\cong \indlim[J\subset I]\cU_{\loc}^{\cont}(\cC_J)\cong \indlim[J\subset I]\bigoplus_{j\in J}\cU_{\loc}^{\cont}(\cC_i)\cong \bigoplus_{i\in I} \cU_{\loc}^{\cont}(\cC_i).\qedhere\end{equation*}\end{proof}

\begin{remark}Note that the functor $\cU_{\loc,\kappa}^{\cont}:\Cat_{\st}^{\dual}\to\Mot^{\loc}_{\kappa}$ is the universal localizing invariant of dualizable categories which commutes with $\kappa$-filtered colimits. It follows formally that $\cU_{\loc,\kappa}^{\cont}$ is symmetric monoidal: we only need to know that for a dualizable category $\cC$ the functor $\cC\otimes-$ preserves short exact sequences and filtered colimits. An alternative way to argue is to use Theorem \ref{th:F_cont_as_right_Kan_extension} below which says that $\cU_{\loc,\kappa}^{\cont}$ is the right Kan extension of $\cU_{\loc,\kappa}.$\end{remark}

\subsection{Right Kan extension}
\label{ssec:F_cont_via_right_Kan_extension}

In this subsection we explain another equivalent way to extend localizing invariants from small stable categories to dualizable categories. We prove the following result.

\begin{theo}\label{th:F_cont_as_right_Kan_extension} Let $F:\Cat^{\perf}\to\cE$ be an accessible localizing invariant, where $\cE$ is an accessible stable category. Then the functor $F^{\cont}:\Cat_{\st}^{\dual}\to\cE$ is the right Kan extension of $F$ via $\Ind(-):\Cat^{\perf}\to\Cat_{\st}^{\dual}.$\end{theo}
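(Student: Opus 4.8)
The plan is to unwind the definition of the right Kan extension and identify the indexing $\infty$-category appearing in it. Recall that for a dualizable category $\cC$, the right Kan extension of $F$ along $\Ind(-)\colon\Cat^{\perf}\to\Cat_{\st}^{\dual}$ is by definition the limit
\[
(\operatorname{Ran}_{\Ind}F)(\cC)\;\simeq\;\varprojlim_{(\cA,\,\cC\to\Ind(\cA))\in (\Cat^{\perf})_{\cC/}}F(\cA),
\]
where the indexing category $(\Cat^{\perf})_{\cC/}$ has objects given by small idempotent-complete stable $\cA$ together with a strongly continuous functor $\cC\to\Ind(\cA)$ in $\Cat_{\st}^{\dual}$. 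So the first step is to understand this comma category well enough to compute the limit. The key structural input is Proposition \ref{prop:dualizable_via_ses} (together with Corollary \ref{cor:dualizable_is_aleph_1_generated}): every dualizable $\cC$ admits a fully faithful strongly continuous functor $\hat{\cY}\colon\cC\to\Ind(\cC^{\omega_1})$, and more generally one has such functors into $\Ind(\cC^{\kappa})$ for every uncountable regular $\kappa$. I would like to argue that the full subcategory of $(\Cat^{\perf})_{\cC/}$ spanned by these ``standard'' presentations $\cC\hookrightarrow\Ind(\cC^{\kappa})$, $\kappa\geq\omega_1$, is cofinal, so that the limit can be computed over this much smaller (filtered) diagram.

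Granting cofinality, the computation then proceeds as follows. For a fixed $\kappa$, the short exact sequence $0\to\cC\xto{\hat{\cY}}\Ind(\cC^{\kappa})\to\Ind(\Calk_{\kappa}^{\cont}(\cC))\to 0$ of Definition in Subsection \ref{ssec:Calkin}, together with $F(\Ind(\cC^{\kappa}))=F(\cC^{\kappa})=0$ (vanishing of localizing invariants on categories with countable coproducts — here $\cC^{\kappa}$ is not small, but one uses the localization property of $F^{\cont}$ on $\Cat_{\st}^{\dual}$, or equivalently the filtered-colimit presentation of $\Ind(\cC^\kappa)$ as a union of $\Ind(\cB)$), gives a fiber sequence $F^{\cont}(\cC)\to 0\to F(\Calk_{\kappa}^{\cont}(\cC))$, i.e. $F^{\cont}(\cC)\simeq\Omega F(\Calk_{\kappa}^{\cont}(\cC))$; this is just the defining formula of Definition \ref{defi:F_cont} for $\kappa=\omega_1$, and for larger $\kappa$ it is consistent because $\Calk_{\kappa}^{\cont}$ and $\Calk_{\omega_1}^{\cont}$ differ by a category with countable coproducts (ind-completion argument). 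So I need to check that the value of the Kan extension limit over the standard diagram agrees with $F^{\cont}(\cC)$: the limit over the $\kappa$-indexed system, after using the fiber sequences above, should collapse to $\Omega F(\Calk_{\omega_1}^{\cont}(\cC))$. Alternatively — and this may be cleaner — I would use Theorem \ref{th:equivalence_between_localizing} directly: $F^{\cont}$ is a localizing invariant on $\Cat_{\st}^{\dual}$ restricting to $F$ along $\Ind$, and to identify it with $\operatorname{Ran}_{\Ind}F$ it suffices to produce a natural transformation $F^{\cont}\to\operatorname{Ran}_{\Ind}F$ (from the universal property of the Kan extension applied to the counit $F^{\cont}\circ\Ind\simeq F$) and check it is an equivalence on each dualizable $\cC$.

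The main obstacle, I expect, is the cofinality statement: showing that the standard presentations $\cC\hookrightarrow\Ind(\cC^{\kappa})$ are cofinal (or coinitial, depending on variance) among all strongly continuous $\cC\to\Ind(\cA)$. Concretely, given an arbitrary $\Phi\colon\cC\to\Ind(\cA)$ in $\Cat_{\st}^{\dual}$, one must produce a compatible factorization through some $\Ind(\cC^{\kappa})$ and control the space of such factorizations; this is where one invokes that the essential image of $\hat{\cY}$ lands in $\Ind(\cC_0)$ for a small subcategory $\cC_0$ (as in \cite[Proof of Proposition D.7.3.1]{Lur18}), and that $\Phi$ itself, being strongly continuous, is essentially determined by its restriction to $\cC^{\kappa}$ via $\Ind(\Phi^{\kappa})\circ\hat{\cY}$ (the adjunction of Proposition \ref{prop:nonstandard_adjunction}). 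Once the comma category is replaced by the filtered poset of uncountable regular cardinals (with the system $\Calk_{\kappa}^{\cont}(\cC)\hookrightarrow\Calk_{\lambda}^{\cont}(\cC)$), the limit computation becomes routine using the exactness of $\Calk_{\kappa}^{\cont}$ (Proposition \ref{prop:exactness_of_Calk}) and the vanishing of $F$ on ind-categories. I would also double-check that everything is compatible with the accessibility constraints so that the limit genuinely exists in $\cE$ and no set-theoretic subtlety is swept under the rug, using that $\Cat_{\st}^{\dual}$ is $\omega_1$-presentable (Theorem \ref{th:presentability_of_Cat^dual}) and $F$ is accessible.
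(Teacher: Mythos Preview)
Your cofinality claim is wrong, and in fact the argument built on it would give the wrong answer even if it held. A morphism $(\cC^{\kappa},\hat{\cY})\to(\cA,\Phi)$ in the comma category $(\cC\downarrow\Ind)$ is an exact functor $\cC^{\kappa}\to\cA$ in $\Cat^{\perf}$ making the triangle commute; but a strongly continuous $\Phi:\cC\to\Ind(\cA)$ only sends $\cC^{\kappa}$ into $(\Ind\cA)^{\kappa}$, not into $\cA=(\Ind\cA)^{\omega}$. So for a non--compactly-generated $\cC$ the slice of your candidate subdiagram over a generic $(\cA,\Phi)$ is typically \emph{empty}, and the adjunction of Proposition~\ref{prop:nonstandard_adjunction} does not repair this --- it produces a factorization through $\Ind((\Ind\cA)^{\kappa})$, not through $\Ind(\cA)$. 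Worse, if your system \emph{were} initial, the resulting limit would be $\varprojlim_{\kappa}F(\cC^{\kappa})=0$ (each $\cC^{\kappa}$ has countable coproducts), not $F^{\cont}(\cC)$; the fiber sequences you invoke relate $F^{\cont}(\cC)$ to $F(\cC^{\kappa})$ and $F(\Calk_{\kappa}^{\cont}(\cC))$, but they do not express the Kan-extension limit.

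The paper's proof takes a genuinely different route that avoids any cofinality analysis of the comma category. It equips $(\Cat^{\perf})^{op}$ and $(\Cat_{\st}^{\dual})^{op}$ with the Grothendieck topology whose covers are fully faithful functors, and shows (Proposition~\ref{prop:loc_invar_is_a_sheaf}) that both $F$ and $F^{\cont}$ are sheaves. Restricting to $\kappa$-compact objects (for $\kappa$ large enough that $F$ is $\kappa$-continuous), one chooses by Theorem~\ref{th:presentability_of_Cat^dual} a single fully faithful strongly continuous embedding $\cC\hookrightarrow\cD$ with $\cD$ compactly generated and $\cD^{\omega}\in(\Cat^{\perf})^{\kappa}$. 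The \v{C}ech conerve $\cD^{\bullet}$ of this map consists of compactly generated categories (each $\cD^{n}$ is generated by the images of the fully faithful inclusions $\cD\hookrightarrow\cD^{n}$, which preserve compacts). The copresheaf $\cA\mapsto\Fun^{LL}(\cC,\Ind(\cA))^{\simeq}$ on $\Cat^{\perf,\kappa}$ is a $\tau$-sheaf covered by the representable $h_{\cD^{\omega}}$ with \v{C}ech nerve $h_{(\cD^{\bullet})^{\omega}}$; hence the right Kan extension --- which is the value of the sheaf $F$ on this copresheaf --- is computed as $\Tot\bigl(F((\cD^{\bullet})^{\omega})\bigr)\cong\Tot\bigl(F^{\cont}(\cD^{\bullet})\bigr)\cong F^{\cont}(\cC)$, the last isomorphism being the sheaf condition for $F^{\cont}$. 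The moral is that one replaces the intractable comma category by a single \v{C}ech resolution, and the sheaf property does the bookkeeping that a cofinality argument cannot.
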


We will again use the fact that the category $\Cat_{\st}^{\dual}$ is $\omega_1$-presentable (Ramzi's theorem), and also one of the characterizations of $\kappa$-compact objects in $\Cat_{\st}^{\dual}$ (for $\kappa>\omega$), given by Theorem \ref{th:presentability_of_Cat^dual}.

It is convenient to define the following topologies on the opposite categories $(\Cat^{\perf})^{op}$ and $(\Cat_{\st}^{\dual})^{op}.$  In both cases we define a pretopology for which a family of functors $\{\cA\to\cA_i\}_{i\in I}$ is a cover of $\cA$ if $I$ consists of one element $i$ and the functor $\cA\to\cA_i$ is fully faithful. To deal with set-theoretic issues we also consider the induced pretopologies on the small subcategories $(\Cat^{\perf,\kappa})^{op}$ and $(\Cat_{\st}^{\dual,\kappa})^{op}$ (note that these subcategories are closed under finite limits), where $\kappa$ is an uncountable regular cardinal. In each case we denote by $\tau$ the corresponding Grothendieck topology. Note that in each case $\tau$ is a subcanonical topology.

Although the category $\cE$ is not necessarily complete (it is only assumed to be accessible and stable), we still have a notion of an $\cE$-valued sheaf. Namely, the sheaf condition only requires the existence of the relevant limits, not all limits. We make the following observation.

\begin{prop}\label{prop:loc_invar_is_a_sheaf} Let $F:\Cat^{\perf}\to\cE$ be an accessible localizing invariant. Then $F$ and $F^{\cont}$ are sheaves for the $\tau$-topology.\end{prop}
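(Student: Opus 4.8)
The plan is to show that a localizing invariant, viewed as a functor on the opposite category, converts the distinguished "fully faithful" covers into effective-descent situations, i.e. that the relevant Čech nerve is a limit diagram. First I would unwind what the $\tau$-topology says: a cover of $\cA$ in $(\Cat^{\perf})^{op}$ is a single fully faithful functor $\iota:\cA\hookrightarrow\cB$, so the Čech nerve of $\iota$ in $(\Cat^{\perf})^{op}$ is the simplicial object whose $n$-th term is the $(n{+}1)$-fold pushout $\cB\sqcup_{\cA}\cdots\sqcup_{\cA}\cB$ computed in $\Cat^{\perf}$ (a colimit in $\Cat^{\perf}$ is a limit in $(\Cat^{\perf})^{op}$). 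Being a $\tau$-sheaf then amounts to: for every fully faithful $\iota:\cA\hookrightarrow\cB$ in $\Cat^{\perf}$, the diagram
\[
F(\cA)\to \Tot\bigl(F(\cB\sqcup_{\cA}^{\bullet+1})\bigr)
\]
is an equivalence in $\cE$, and similarly for $F^{\cont}$ with $\Cat_{\st}^{\dual}$ in place of $\Cat^{\perf}$ (and with pushouts taken there, using Proposition \ref{prop:pushouts_of_mono} to know the face maps are again fully faithful).

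The key computational input is that a pushout of a fully faithful functor along itself can be made completely explicit. For $\iota:\cA\hookrightarrow\cB$ fully faithful in $\Cat^{\perf}$, let $\cQ=(\cB/\cA)^{\Kar}$, so $0\to\cA\to\cB\to\cQ\to 0$ is a short exact sequence. One computes that $\cB\sqcup_{\cA}\cB\simeq\cB\oright_{?}\cQ$-type gluings; more precisely, each term $\cB\sqcup_{\cA}^{\,n+1}$ of the Čech nerve fits into a short exact sequence $0\to\cA\to \cB\sqcup_{\cA}^{\,n+1}\to \cQ^{\,\amalg n}\to 0$ where $\cQ^{\,\amalg n}$ is an $n$-fold coproduct-type combination of copies of $\cQ$, and in fact $\cB\sqcup_\cA^{\,n+1}$ has a semiorthogonal decomposition $\langle \cQ,\dots,\cQ,\cA\rangle$ with $n$ copies of $\cQ$. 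Here I would invoke the localization property of $F$ (additivity in short exact sequences, equivalently in finite semiorthogonal decompositions — Proposition \ref{prop:F_cont_is_loc} in the dualizable case) to get
\[
F\bigl(\cB\sqcup_{\cA}^{\,n+1}\bigr)\;\simeq\;F(\cA)\oplus F(\cQ)^{\oplus n}.
\]
Under these identifications the cosimplicial object $[n]\mapsto F(\cB\sqcup_\cA^{\bullet+1})$ becomes, up to the constant summand $F(\cA)$, the cosimplicial object computing the "cobar"/suspension of $F(\cQ)$ — equivalently the augmented cosimplicial abelian-group-like pattern whose totalization is $F(\cA)$ itself (the summand $F(\cQ)^{\oplus\bullet}$ totalizes to zero, since it is the Čech nerve of the terminal-to-$\cQ$ type map, whose totalization is contractible). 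Hence $\Tot(F(\cB\sqcup_\cA^{\bullet+1}))\simeq F(\cA)$, which is the sheaf condition. The same argument runs verbatim for $F^{\cont}$ on $\Cat_{\st}^{\dual}$: short exact sequences and finite semiorthogonal decompositions behave identically there (Propositions \ref{prop:ses_Pr^LL}, \ref{prop:finite_SOD_dualizable}, \ref{prop:pushouts_of_mono}, \ref{prop:F_cont_nice_pushouts}), and $F^{\cont}$ is a localizing invariant by Proposition \ref{prop:F_cont_is_loc}. The restriction to the small subcategories $(\Cat^{\perf,\kappa})^{op}$, $(\Cat_{\st}^{\dual,\kappa})^{op}$ is harmless because those are closed under finite limits, so the same Čech nerves live there.

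The main obstacle I anticipate is the bookkeeping identifying the Čech nerve of a fully faithful functor with the "suspension/cobar" cosimplicial object purely formally, and checking that its totalization is exactly $F(\cA)$ rather than merely something with the right associated graded — in other words, verifying the cosimplicial identities match (face and degeneracy maps) and that the extra-degeneracy/splitting argument applies so the $F(\cQ)^{\oplus\bullet}$-part genuinely has contractible totalization. A clean way to handle this is to observe that $F^{\cont}$ (resp. $F$) sends the split-augmented simplicial object $\cB\sqcup_\cA^{\bullet+1}\to\cA$ — split because $\cA\to\cB$ admits a retraction after one pushout step, or more robustly because the pushout square $\cA\to\cB$, $\cA\to\cB$, $\cB\to\cB\sqcup_\cA\cB$ has $F$ carrying it to a pushout (Proposition \ref{prop:F_cont_nice_pushouts}, noting $\cA\to\cB$ is fully faithful), hence to a pullback since $\cE$ is stable — and then Čech descent along a split (or codescendable) augmentation is automatic. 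Thus the real content is Proposition \ref{prop:F_cont_nice_pushouts} (and its evident $\Cat^{\perf}$-analogue from $F$ being localizing) plus stability of $\cE$; everything else is formal simplicial-object manipulation, which I would present compactly rather than spelling out every coface map.
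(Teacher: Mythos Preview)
Your main approach via semiorthogonal decompositions has a genuine gap: the claimed SOD $\cB\sqcup_{\cA}^{\,n+1}=\langle\cQ,\dots,\cQ,\cA\rangle$ does not exist in general. A fully faithful inclusion $\cA\hookrightarrow\cB$ in $\Cat^{\perf}$ need not admit any adjoint; e.g.\ take $\cA=\Perf_{p\text{-tors}}(\Z)\hookrightarrow\Perf(\Z)=\cB$, where neither the $p$-torsion nor the $p$-local ``projection'' lands in perfect complexes. Consequently there is no SOD $\cB=\langle\cQ,\cA\rangle$, and iterated pushouts inherit the same defect. What you do get are successive short exact sequences giving a filtration of $F(\cB\sqcup_{\cA}^{\,n+1})$ with associated graded pieces $F(\cA)$ and copies of $F(\cQ)$, but no splitting. (There is also an off-by-one: already for $n=0$ your formula yields $F(\cB)\simeq F(\cA)$, whereas the fiber sequence $F(\cA)\to F(\cB)\to F(\cQ)$ shows a copy of $F(\cQ)$ must appear.) So the ``explicit decomposition'' route cannot be made to work as stated.

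The salvageable part is your final paragraph, and that is exactly the paper's proof. The key observation is Proposition~\ref{prop:F_cont_nice_pushouts}: since $\cA\to\cB$ is fully faithful, $F^{\cont}$ (and $F$) sends each pushout square building the \v{C}ech conerve to a pushout, hence a pullback, in the stable category $\cE$. This means $F^{\cont}(\cD^{\bullet})$ is \emph{itself} the \v{C}ech conerve of the map $F^{\cont}(\cC)\to F^{\cont}(\cD)$ in $\cE$. Now you do not need any ``split'' or ``codescendable'' hypothesis: in a stable $\infty$-category, \emph{every} morphism $x\to y$ has the property that $x\xrightarrow{\sim}\Tot_n(y^{\bullet})$ for all $n\geq 1$, where $y^{\bullet}$ is its \v{C}ech conerve (one checks this inductively from the pushout squares being pullback squares). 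That is the whole argument; drop the SOD discussion entirely and lead with this.
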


\begin{proof}It suffices to prove this for $F^{\cont}.$ Let $\Phi:\cC\to\cD$ be a fully faithful strongly continuous functor between dualizable categories. Consider the \v{C}ech conerve $\cD^{\bullet}$ of $\Phi,$ i.e. $\cD^n=\cD\sqcup_{\cC}^{\cont}\dots\sqcup_{\cC}^{\cont}\cD$ -- the cofiber product of $n+1$ copies of $\cD.$ It follows from Proposition \ref{prop:F_cont_nice_pushouts} that $F^{\cont}(\cD^{\bullet})$ is the \v{C}ech conerve of $F^{\cont}(\cC)\to F^{\cont}(\cD).$ Since $\cE$ is stable, we conclude that $F^{\cont}(\cC)\cong \Tot(F^{\cont}(\cD^{\bullet})).$
More precisely, for any morphism $x\to y$ in $\cE,$ its \v{C}ech conerve $y^{\bullet}$ has the following property: for $n\geq 1$ the map $x\to\Tot_n(y^{\bullet})$ is an isomorphism.
\end{proof}




\begin{proof}[Proof of Theorem \ref{th:F_cont_as_right_Kan_extension}.] Choose an uncountable regular cardinal $\kappa$ such that $\cE$ is $\kappa$-accessible, and $F$ and $F^{\cont}$ commute with $\kappa$-filtered colimits. It suffices to show that the restriction of $F^{\cont}$ to $\Cat_{\st}^{\dual,\kappa}$ is the right Kan extension of the restriction of $F$ to $\Cat^{\perf,\kappa}.$ 

Let $\cC$ be a $\kappa$-compact dualizable category. By Theorem \ref{th:presentability_of_Cat^dual}, we can find a fully faithful  strongly continuous functor $\cC\to\cD,$ such that $\cD$ is compactly generated and $\cD^{\omega}$ is $\kappa$-compact in $\Cat^{\perf}.$ Denote by $\cD^{\bullet}$ the \v{C}ech conerve of $\cC\to\cD$ in $\Cat_{\st}^{\dual}.$ Then each $\cD^n$ is a compactly generated category, and $(\cD^n)^{\omega}$ is $\kappa$-compact.

Now, the functor $\Cat^{\perf,\kappa}\to\cS,$ $\cA\mapsto \Fun^{LL}(\cC,\Ind(\cA))^{\simeq},$ is a sheaf of spaces, covered by the representable sheaf $h_{\cD^{\omega}},$ with the \v{C}ech nerve $h_{(\cD^{\bullet})^{\omega}}.$ By Proposition  \ref{prop:loc_invar_is_a_sheaf}, the functors $F$ and $F^{\cont}$ are sheaves with values in $\cE.$ It follows that the right Kan extension of $F$ from $\Cat^{\perf,\kappa}$ to $\Cat_{\st}^{\dual,\kappa}$ exists, and its value on $\cC$ is given by
\begin{equation*} \Tot(F((\cD^{\bullet})^{\omega}))\cong \Tot(F^{\cont}(\cD^{\bullet}))\cong F^{\cont}(\cC).\qedhere\end{equation*}
\end{proof}

\subsection{Example: sheaves on the real line}
\label{ssec:sheaves_on_R}

As an example, we will compute the finitary localizing invariants of the category of sheaves on the real line. Let $\cC$ be a dualizable category. Recall that the category $\Sh(\R;\cC)$ is dualizable \cite{Lur18}. We will give a detailed proof of the following special case of Theorem \ref{th:main_intro}.

\begin{prop}\label{prop:U_loc_sheaves_on_R} For a dualizable category $\cC$ we have
$$\cU_{\loc}^{\cont}(\Sh(\R;\cC))\cong\Omega \cU_{\loc}^{\cont}(\cC), \quad \cU_{\loc}^{\cont}(\Sh(\R\cup\{-\infty\};\cC))=0.$$ Hence, the same holds for any finitary localizing invariant, for example, for $K$-theory.
\end{prop}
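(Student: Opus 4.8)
The plan is to reduce the computation for $\R$ to the computation for the half-open ray, which is contractible, and then to use recollement. First I would fix notation: let $j:\R\hookrightarrow\R\cup\{-\infty\}$ be the open inclusion and $i:\{-\infty\}\hookrightarrow\R\cup\{-\infty\}$ the closed point. For any dualizable $\cC$, the recollement for sheaves on a locally compact Hausdorff space decomposes into a short exact sequence in $\Cat_{\st}^{\dual}$
\begin{equation*}
0\to \Sh(\R;\cC)\to \Sh(\R\cup\{-\infty\};\cC)\xto{i^*} \cC\to 0,
\end{equation*}
where $i^*$ is the (strongly continuous) stalk at $-\infty$ and the first functor is $j_!$, which is fully faithful and strongly continuous. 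Applying the finitary localizing invariant $\cU_{\loc}^{\cont}$, which is a localizing invariant of dualizable categories by Proposition \ref{prop:F_cont_is_loc}, gives a fiber sequence
\begin{equation*}
\cU_{\loc}^{\cont}(\Sh(\R;\cC))\to \cU_{\loc}^{\cont}(\Sh(\R\cup\{-\infty\};\cC))\to \cU_{\loc}^{\cont}(\cC).
\end{equation*}
So the whole statement reduces to the single vanishing claim $\cU_{\loc}^{\cont}(\Sh(\R\cup\{-\infty\};\cC))=0$, since then $\cU_{\loc}^{\cont}(\Sh(\R;\cC))\cong\Omega\cU_{\loc}^{\cont}(\cC)$ follows by rotating the fiber sequence.

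To prove the vanishing for $X=\R\cup\{-\infty\}\cong(-\infty,0]$ (a half-open interval, which is a contractible locally compact Hausdorff space), I would exhibit a contracting homotopy at the level of categories that is visible to any localizing invariant. The standard device is the Eilenberg swindle combined with a sheaf-theoretic ``flow'': translation $t\mapsto t-1$ on $\R\cup\{-\infty\}$ (fixing $-\infty$) together with the observation that $\Sh(X;\cC)$ for this $X$ admits an endofunctor built from pushforward along the contraction, so that $\id$ and $0$ become equivalent after passing to $\cU_{\loc}^{\cont}$. More concretely, I would use the open cover of $X=(-\infty,0]$ by the rays $U_n=(-\infty,-n)$, $n\ge 0$; each inclusion $U_{n+1}\hookrightarrow U_n$ is an open inclusion, and $\bigcap_n U_n=\emptyset$. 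The key structural input — which is exactly the sort of statement proved later in Section \ref{sec:sh_loc_comp} — is that $\Sh(X;\cC)$ can be assembled from $\Sh(\R;\cC)$-type pieces indexed by these rays via a semi-orthogonal-type or limit construction, and that the shift functor $j_!$-then-restrict realizes a "nilpotent" or "locally eventually zero" endomorphism. Then additivity of $\cU_{\loc}^{\cont}$ in (possibly infinite) semi-orthogonal decompositions (Proposition \ref{prop:additivity_in_infinite_SOD}), together with the swindle $\bigoplus_{n\ge 0}$ applied to the shift, forces $\cU_{\loc}^{\cont}(\Sh(X;\cC))\cong \cU_{\loc}^{\cont}(\Sh(X;\cC))\oplus\cU_{\loc}^{\cont}(\Sh(X;\cC))$ compatibly with a translation, hence $=0$.

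Alternatively — and this is the cleaner route I would actually try to write down — I would use the constant sheaf / global sections adjunction and the cone-point structure of $X$. Since $X=(-\infty,0]$ has a terminal point $0\in X$ (it is the largest element in the order, and $\{0\}$ is closed with $X\setminus\{0\}=(-\infty,0)=\R$), there is a second recollement
\begin{equation*}
0\to \Sh(\{0\};\cC)=\cC \to \Sh(X;\cC)\xto{(-)_{|\R}} \Sh(\R;\cC)\to 0,
\end{equation*}
where $\cC\hookrightarrow\Sh(X;\cC)$ is $i'_*$ for the closed point $\{0\}$, and this functor has a strongly continuous left adjoint $i'^*$ because $\{0\}$ admits a neighborhood basis of the form $(-n,0]$ which are themselves closed-in-$X$ contractible intervals cofinal at $0$. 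Feeding this into $\cU_{\loc}^{\cont}$ gives a fiber sequence $\cU_{\loc}^{\cont}(\cC)\to\cU_{\loc}^{\cont}(\Sh(X;\cC))\to\cU_{\loc}^{\cont}(\Sh(\R;\cC))\cong\Omega\cU_{\loc}^{\cont}(\cC)$. Combining with the first recollement fiber sequence $\cU_{\loc}^{\cont}(\Sh(\R;\cC))\to\cU_{\loc}^{\cont}(\Sh(X;\cC))\to\cU_{\loc}^{\cont}(\cC)$ and chasing the connecting maps (the composite $\cU_{\loc}^{\cont}(\cC)=\cU_{\loc}^{\cont}(\Sh(\{0\}))\to\cU_{\loc}^{\cont}(\Sh(X))\to\cU_{\loc}^{\cont}(\Sh(\{-\infty\}))=\cU_{\loc}^{\cont}(\cC)$ should be the identity, since $\{-\infty\}$ and $\{0\}$ are identified via the contractibility of $X$) forces the vanishing.

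The main obstacle, and the step that needs genuine care rather than formal nonsense, is establishing that the relevant adjoints are \emph{strongly continuous} so that the recollements actually live in $\Cat_{\st}^{\dual}$ and give short exact sequences there — in particular that $j_!:\Sh(\R;\cC)\to\Sh(X;\cC)$ is fully faithful and strongly continuous and that the closed-point pushforward $i'_*:\cC\to\Sh(X;\cC)$ has a strongly continuous left adjoint. This is where one must invoke the general machinery for sheaves on locally compact Hausdorff spaces from \cite{Lur18} and Section \ref{sec:sh_loc_comp}: the fact that for an open $U\subset X$ with $X$ locally compact Hausdorff the functor $j_!$ is strongly continuous, and that for a closed point with a nice cofinal neighborhood basis the costalk is strongly continuous. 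I would also need the identification $\Sh(X;\cC)\simeq\Sh(X;\Sp)\otimes\cC$ and the fact that $\cU_{\loc}^{\cont}(\cC\otimes-)$ behaves well, but for this particular base ($X$ contractible) the Sp-linear statement and the general $\cC$ statement are formally equivalent once the recollement is in place, so the real content is purely the $\cC=\Sp$ case plus checking strong continuity.
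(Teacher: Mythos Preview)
Your first reduction step is correct and matches the paper: the short exact sequence $0\to\Sh(\R;\cC)\xto{j_!}\Sh(\R\cup\{-\infty\};\cC)\xto{i^*}\cC\to 0$ is in $\Cat_{\st}^{\dual}$ and reduces everything to the vanishing claim for $X=\R\cup\{-\infty\}$.

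The gap is in your approach to the vanishing. Your ``second recollement'' $0\to\cC\xto{i_*}\Sh(X;\cC)\xto{j^*}\Sh(\R;\cC)\to 0$ is \emph{not} a short exact sequence in $\Cat_{\st}^{\dual}$: for $i_*$ to be strongly continuous its right adjoint $i^!$ must commute with colimits, i.e.\ the skyscraper $i_*\bS$ must be compact in $\Sh(X;\Sp)$. But by Proposition~\ref{prop:compact_sheaves} compact sheaves are locally constant, and the skyscraper at $-\infty$ is not locally constant in any neighborhood of $-\infty$. (Your justification --- that $i_*$ has a strongly continuous \emph{left} adjoint $i^*$ --- goes in the wrong direction.) There is also some confusion between $\{0\}$ and $\{-\infty\}$: under the homeomorphism $\R\cup\{-\infty\}\cong(-\infty,0]$ these are the \emph{same} closed endpoint, so ``contractibility of $X$'' is not doing any work, and even granting both fiber sequences, having $A\to B\to C$ and $C\to B\to A$ with $C\to B\to C=\id$ does not force $B=0$ (take $B=A\oplus C$). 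Your swindle sketch (approach A) is too vague to repair this.

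The paper's route is genuinely different and avoids these issues. It introduces the categories $\Sh_{\geq 0}(\R;\Sp)$ and $\Sh_{\leq 0}(\R;\Sp)$ of sheaves with one-sided singular support (equivalently, sheaves for the Kashiwara--Schapira $\gamma$-topology on $\R$), proves in Proposition~\ref{prop:resolution_of_Sh_geq_0} that $\cU_{\loc}^{\cont}(\Sh_{\geq 0}(\R;\cC))=0$ via an explicit resolution by $\Fun(A_{\leq}^{op},\cC)$ and $\prod_A\cC$, and then exhibits a short exact sequence
\[
0\to\Sh_{\geq 0}(\R;\Sp)\to\Sh(\R\cup\{-\infty\};\Sp)\to\Sh_{\leq 0}(\R;\Sp)\to 0
\]
in $\Cat_{\st}^{\dual}$. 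Both ends vanish under $\cU_{\loc}^{\cont}$, giving the result. The point is that this microlocal decomposition uses functors that \emph{are} strongly continuous, unlike the $(i_*,j^*)$ half of the recollement.
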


\begin{example}Let $\mk$ be a field, and consider the dualizable category $\cC=\Sh(\R;D(\mk)).$ Combining Propositions \ref{prop:U_loc_sheaves_on_R} and \ref{prop:F_cont_hom_epi}, we see that $$K_0^{\cont}(\cC)\cong K_0(\cC^{\omega_1}\times_{\Calk_{\omega_1}^{\cont}(\cC)}\cC^{\omega_1})\cong\mk^{\ast}.$$ Given $\lambda\in \mk^{\ast},$ we can describe its representative in the fiber product as follows. Consider the object $\mk_{\R}\in\cC^{\omega_1}.$ The endomorphism algebra of its image in the Calkin category is $\mk\times\mk$ (the cohomology of the neighborhood of infinity of $\R$). The object $(\mk_{\R},\mk_{\R},(\lambda,1))\in \cC^{\omega_1}\times_{\Calk_{\omega_1}^{\cont}(\cC)}\cC^{\omega_1}$ represents the class $\lambda\in K_0^{\cont}(\cC).$\end{example}


Recall that if $\cC$ is compactly generated, then we have a notion of a singular support for $\cC$-valued sheaves on $C^1$-manifolds \cite[Definition 4.5]{RS}. We will be interested in the following special case.

Denote by $\gamma=\R_{\leq 0}\subset \R$ the cone of non-positive real numbers. The $\gamma$-topology on $\R$ is defined as follows: a subset $U\subset\R$ is $\gamma$-open if $U$ is open and $U+\gamma=U.$ In other words, $U$ is $\gamma$-open if either $U=\emptyset$ or $U=\R$ or $U=(-\infty,a)$ for some $a\in\R.$ We denote by $\R_{\gamma}$ the set $\R$ equipped with $\gamma$-topology. Denote by $\varphi_{\gamma}:\R\to\R_{\gamma}$ the identity map of $\R$ considered as a map of topological spaces. Similarly, we denote by $\varphi_{-\gamma}:\R\to\R_{-\gamma}$ the map to $\R$ equipped with $(-\gamma)$-topology. The following is due to Kashiwara and Schapira.

\begin{prop}\cite{KS18} Let $\cC$ be a presentable stable compactly generated category. The pullback functor $\varphi_{\gamma}^*:\Sh(\R_{\gamma};\cC)\to\Sh(\R;\cC)$ is fully faithful and its essential image is exactly the subcategory $\Sh_{\R\times\R_{\geq 0}}(\R;\cC).$ In particular, we have $\Sh_{\R\times\R_{\geq 0}}(\R;\cC)\simeq \Sh(\R_{\gamma};\cC).$
\end{prop}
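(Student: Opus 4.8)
The statement is a theorem of Kashiwara--Schapira, and the plan is to reduce it to the classical computation of microsupport in a form we can cite, while filling in the bookkeeping that makes it an equivalence of dualizable (indeed compactly generated) categories. First I would observe that $\varphi_\gamma\colon\R\to\R_\gamma$ is a continuous map, so we get an adjunction $\varphi_\gamma^*\dashv \varphi_{\gamma,*}$ between $\Sh(\R_\gamma;\cC)$ and $\Sh(\R;\cC)$; since both are tensored over $\cC$ (or over $\Sp$ with $\cC$ as a module) and the construction is compatible with the Lurie tensor product, it suffices to treat the universal case $\cC=\Sp$ and then tensor by $\cC$, using that $-\otimes\cC$ preserves full faithfulness and essential images of localizations. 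So the core claim is: $\varphi_\gamma^*\colon\Sh(\R_\gamma;\Sp)\to\Sh(\R;\Sp)$ is fully faithful with essential image $\Sh_{\R\times\R_{\geq 0}}(\R;\Sp)$.

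Next I would establish full faithfulness. The unit $\id\to \varphi_{\gamma,*}\varphi_\gamma^*$ is an equivalence iff for every $\gamma$-open $U$ (i.e. $U=\emptyset,\ \R$, or $(-\infty,a)$) the restriction map $\Gamma(U;\varphi_\gamma^*\F)\to \Gamma(U;\varphi_\gamma^*\F)$ computed over the usual topology agrees; concretely one checks that $\varphi_\gamma^*$ is computed by left Kan extension along the inclusion of $\gamma$-opens into opens, and that the cofinality of intervals $(-\infty,a)$ inside the poset of ordinary opens contained in a $\gamma$-open makes this Kan extension fully faithful. Equivalently, and more cleanly, I would use the description of $\Sh(\R_\gamma;\Sp)$ as functors on the poset $(\R\cup\{-\infty\},\leq)$ sending $-\infty$ to $0$ and sending each $a$ to the colimit $\operatorname{colim}_{b<a}$, identify this poset-theoretic category with a full subcategory of $\Sh(\R;\Sp)$ via $\varphi_\gamma^*$, and note full faithfulness follows from the fact that $\{(-\infty,a)\}$ is a basis of the $\gamma$-topology closed under the relevant colimits.

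Then I would identify the essential image with $\Sh_{\R\times\R_{\geq 0}}(\R;\Sp)$. Here I cite the Kashiwara--Schapira microlocal cut-off lemma / the Robalo--Schapira account \cite{RS}: a sheaf $\F$ on $\R$ has $\operatorname{SS}(\F)\subset \R\times\R_{\geq 0}\subset T^*\R$ iff $\F$ is constant along the $\gamma$-direction in the precise sense that the natural map $\F\to R\varphi_{\gamma,*}\varphi_\gamma^*\varphi_{\gamma,*}\cdots$ — more to the point, iff $\F\simeq \varphi_\gamma^*\G$ for some $\G$. One direction (image $\subseteq$ the microsupport condition) is a direct computation of stalks and of the cohomology of small open balls versus half-open intervals, showing the propagation condition defining $\operatorname{SS}\subset\R\times\R_{\geq 0}$ holds; the other direction (the microsupport condition forces descent to $\R_\gamma$) is the substantive input and is exactly \cite[\S\,]{KS18}, which I would invoke rather than reprove. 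Finally, since $\Sh(\R_\gamma;\Sp)$ is a sheaf category on a locally compact space-with-its-topology it is dualizable by \cite{Lur18}, and $\varphi_\gamma^*$ being a fully faithful left adjoint with a continuous (colimit-preserving) right adjoint $R\varphi_{\gamma,*}$ — continuity because $\varphi_{\gamma,*}$ commutes with filtered colimits on a locally compact base — shows it is a strongly continuous localization onto a localizing subcategory, so the essential image is dualizable and closed under colimits, matching $\Sh_{\R\times\R_{\geq 0}}(\R;\cC)$ on the nose. \textbf{Main obstacle.} The genuinely hard step is the identification of the essential image with the singular-support subcategory: this is the microlocal cut-off theorem of Kashiwara--Schapira, and in the $\infty$-categorical / large-category setting one must be careful that the classical statement (phrased for bounded derived categories of sheaves with, say, field coefficients) transports to $\Sh(\R;\cC)$ for arbitrary dualizable $\cC$ without hyperdescent — which is why reducing to $\cC=\Sp$ and tensoring, and citing the $\infty$-categorical treatment, is the right strategy.
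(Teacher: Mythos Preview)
Your approach is essentially the same as the paper's: the paper's proof is the single sentence ``This is a straightforward generalization of the special case of \cite[Theorem 2.5]{KS18},'' and you likewise identify the microlocal cut-off lemma from \cite{KS18} as the substantive input, while sketching the easy bookkeeping (full faithfulness of $\varphi_\gamma^*$, reduction to $\cC=\Sp$ by tensoring) around it.

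One small correction to the extra material you appended: your final paragraph argues dualizability of $\Sh(\R_\gamma;\Sp)$ by invoking \cite{Lur18} for sheaves on locally compact Hausdorff spaces, but $\R_\gamma$ is not Hausdorff. In the paper this dualizability is obtained differently (via the identification $\Sh_{\geq 0}(\R;\Sp)\simeq\Stab^{\cont}((\R\cup\{+\infty\})_{\leq})$ and Proposition~\ref{prop:continuous_stab} on continuous stabilization of compactly assembled categories). In any case, dualizability is not part of the proposition you are asked to prove, so this paragraph can simply be dropped.
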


\begin{proof}This is a straightforward generalization of the special case of \cite[Theorem 2.5]{KS18}.\end{proof}

In fact there is a reasonable notion of singular support for sheaves with values in arbitrary presentable stable categories, see Remarks \ref{rem:SS_general} and \ref{rem:constant_functor_criterion}. To avoid the technicalities, we will use the following definition in the special case $\R\times\R_{\geq 0}\subset T^*\R.$ 

{\noindent{\bf Notation.}} {\it For a presentable stable category $\cC,$ we denote by $\Sh_{\geq 0}(\R;\cC)$ resp. $\Sh_{\leq 0}(\R;\cC)$ the category $\Sh(\R_{\gamma};\cC)$ resp.  $\Sh(\R_{-\gamma};\cC)$ of $\cC$-valued sheaves on $\R_{\gamma}$ resp. $\R_{-\gamma}.$}

Note that for $\cC=\Sp$ we have a tautological equivalence $\Sh_{\geq 0}(\R;\Sp)\simeq \Stab^{\cont}((\R\cup\{+\infty\})_{\leq}),$ where the latter category was defined in Subsection \ref{ssec:cont_stab}. Here the poset $(\R\cup\{+\infty\})_{\leq}$ is considered as a compactly assembled category. In particular, by Proposition \ref{prop:continuous_stab} the category $\Sh_{\geq 0}(\R;\Sp)$ is dualizable.

 Moreover, by Remark \ref{rem:Stab_cont_otimes_something} for any presentable stable category $\cC$ we have 
\begin{equation}\label{eq:sheaves_otimes_something}\Sh_{\geq 0}(\R;\cC)\simeq\Sh_{\geq 0}(\R;\Sp)\otimes\cC.\end{equation}
In particular, if $\cC$ is dualizable, then so is the category $\Sh_{\geq 0}(\R;\cC).$


\begin{prop}\label{prop:resolution_of_Sh_geq_0} Let $\cC$ be a dualizable category, and choose some dense subset $A\subset\R.$ Denote by $A_{\leq}$ the set $A$ considered as a linearly ordered set with the usual order.

1) We have a short exact sequence in $\Cat_{\st}^{\dual}:$
\begin{equation}\label{eq:ses_for_Sh_geq_0} 0\to\Sh_{\geq 0}(\R;\cC)\to\Fun(\A_{\leq}^{op},\cC)\xto{\Phi}\prodd[A]\cC\to 0.\end{equation}
Here the functor $\Phi$ is given by $\Phi(F)_a=Cone(\indlim[b>a]F(b)\to F(a)).$

2) We have $\cU_{\loc}^{\cont}(\Sh_{\geq 0}(\R;\cC))=0.$\end{prop}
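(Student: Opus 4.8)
\textbf{Proof proposal for Proposition \ref{prop:resolution_of_Sh_geq_0}.}

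The plan is to establish part 1) first and then immediately deduce part 2). For part 1), I would begin by identifying $\Sh_{\geq 0}(\R;\cC)$ with $\Stab^{\cont}((\R\cup\{+\infty\})_{\leq})\otimes\cC$ using the equivalence $\Sh_{\geq 0}(\R;\Sp)\simeq\Stab^{\cont}((\R\cup\{+\infty\})_{\leq})$ noted before Remark \ref{rem:Stab_cont_otimes_something}, together with \eqref{eq:sheaves_otimes_something}. By Remark \ref{rem:Stab_cont_otimes_something}, $\Stab^{\cont}((\R\cup\{+\infty\})_{\leq})\otimes\cC$ is the category of cofiltered-limit-preserving functors $(\R\cup\{+\infty\})_{\leq}^{op}\to\cC$; since $A$ is dense in $\R$, restriction along $A_{\leq}\hto(\R\cup\{+\infty\})_{\leq}$ exhibits such a functor as determined by its values on $A$, subject to the continuity condition $F(a)\xto{\sim}\prolim_{b>a,\,b\in A}F(b)$ for every $a$. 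The key point is then that $\Sh_{\geq 0}(\R;\cC)$ is the full subcategory of $\Fun(A_{\leq}^{op},\cC)$ cut out by these left-continuity conditions, and that the functor $\Phi$ of the statement has precisely this subcategory as its kernel: $\Phi(F)_a = \Cone(\indlim_{b>a}F(b)\to F(a))$ vanishes for all $a$ iff $F$ is left-continuous (note $\indlim$ here in $\cC$, via the colimit of the diagram indexed by $\{b\in A: b>a\}$, which is filtered). I would verify that $\Phi$ is strongly continuous by exhibiting its right adjoint componentwise — the right adjoint to $\Fun(A_{\leq}^{op},\cC)\to\prod_A\cC$ combined with the (co)unit structure makes $\Phi$ a quotient functor with the stated kernel, much as in the treatment of $\ker^{\dual}$ and the gluing constructions in Subsection \ref{ssec:finite_limits_of_dualizable}. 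That $\Fun(A_{\leq}^{op},\cC)$ is dualizable follows since it is $\prod_A$-like built from $\cC$ in a way preserving dualizability (more precisely, $\Fun(A_{\leq}^{op},\Sp)$ is compactly generated, hence dualizable, and $\Fun(A_{\leq}^{op},\cC)\simeq\Fun(A_{\leq}^{op},\Sp)\otimes\cC$), so by Proposition \ref{prop:ses_Pr^LL} the kernel $\Sh_{\geq 0}(\R;\cC)$ is dualizable and the sequence \eqref{eq:ses_for_Sh_geq_0} is a short exact sequence in $\Cat_{\st}^{\dual}$.

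For part 2), I would apply $\cU_{\loc}^{\cont}$ to the short exact sequence \eqref{eq:ses_for_Sh_geq_0}. Since $\cU_{\loc}^{\cont}$ is a localizing invariant (Proposition \ref{prop:F_cont_is_loc}) it sends \eqref{eq:ses_for_Sh_geq_0} to a fiber sequence
\[
\cU_{\loc}^{\cont}(\Sh_{\geq 0}(\R;\cC))\to \cU_{\loc}^{\cont}(\Fun(A_{\leq}^{op},\cC))\to \cU_{\loc}^{\cont}\Big(\prodd[A]\cC\Big).
\]
The cleanest choice is $A=\Q$, so that $A_{\leq}\cong\Q_{\leq}$, a countable dense linear order. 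The crucial computation is that $\cU_{\loc}^{\cont}(\Fun(\Q_{\leq}^{op},\cC))=0$: $\Fun(\Q_{\leq}^{op},\cC)$ is compactly generated with compact objects $\Fun(\Q_{\leq}^{op},\Sp)^{\omega}\otimes\cC^{\omega}$-type, and $\Q_{\leq}^{op}$ being a filtered-colimit of its finite full subposets, together with the fact that on a \emph{linearly ordered} index a functor category's $K$-theory is a telescope that can be "shifted off to infinity" (an Eilenberg-swindle-type argument on the dense order — any $F$ is equivalent to a shift $F\circ(\text{translation})$, and one can build an infinite direct sum), forces the localizing invariant to vanish. I would make this precise by noting $\Fun(\Q_{\leq}^{op},\cC)$ admits an endofunctor implementing a self-embedding of $\Q_{\leq}$ with "half the room left over," giving a flasque-type structure so that $\cU_{\loc}^{\cont}$ vanishes. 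Then the fiber sequence degenerates: $\cU_{\loc}^{\cont}(\Sh_{\geq 0}(\R;\cC))\cong\Omega\,\cU_{\loc}^{\cont}(\prod_{\Q}\cC)$, and a second swindle — now on the infinite product $\prod_{\Q}\cC\cong\cC\times\prod_{\Q}\cC$ — shows $\cU_{\loc}^{\cont}(\prod_{\Q}\cC)=0$ as well (here one uses that $\cU_{\loc}^{\cont}$, being a localizing invariant on $\Cat_{\st}^{\dual}$, is additive in the countable-product semi-orthogonal-type decomposition; alternatively Proposition \ref{prop:additivity_in_infinite_SOD} applies to the $\Z$-indexed SOD of $\prod_{\Q}\cC$ by shifted copies). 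Hence $\cU_{\loc}^{\cont}(\Sh_{\geq 0}(\R;\cC))=0$.

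The main obstacle I anticipate is making the two "swindle" vanishings rigorous in the dualizable setting rather than hand-waving them. For $\Fun(\Q_{\leq}^{op},\cC)$ the safe route is to exhibit an explicit strongly continuous endofunctor $T$ (induced by an order-embedding $\Q_{\leq}\hto\Q_{\leq}$ that is cofinal and "leaves room") together with a natural transformation $\id\oplus T\Rightarrow T$ realizing the flasque/Eilenberg-swindle identity $T\cong\id\oplus T$; then additivity of $\cU_{\loc}^{\cont}$ in semi-orthogonal decompositions forces $\cU_{\loc}^{\cont}=0$ on this category. For $\prod_{\Q}\cC$, the analogous move uses the shift on $\Z\cong$ (a cofinal copy inside $\Q$) and Proposition \ref{prop:additivity_in_infinite_SOD}. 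One must check these subcategories genuinely form dualizable subcategories with strongly continuous inclusions — this is where I would lean on Corollary \ref{cor:images_in_Cat^dual} and Proposition \ref{prop:condition_for_dualizability}. The description of $\Phi$ as a quotient functor with the stated kernel, while conceptually transparent, also requires care in checking strong continuity of its right adjoint; this is routine but must be done, as in the analogous gluing statements of Subsection \ref{ssec:finite_limits_of_dualizable}.
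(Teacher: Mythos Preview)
Your approach to part 2) has a genuine and fatal gap: neither of the two vanishing claims you rely on is true. For $\cC=\Sp$ and $A=\Q$ (or $A=\R$), the category $\Fun(A_\leq^{op},\Sp)$ is compactly generated and carries an $A$-indexed semi-orthogonal decomposition $\langle P_a(\Sp^\omega):a\in A\rangle$ (with $P_a$ the representable at $a$); by Proposition \ref{prop:additivity_in_infinite_SOD} one gets
\[
\cU_{\loc}^{\cont}\bigl(\Fun(A_\leq^{op},\Sp)\bigr)\;\cong\;\bigoplus_{a\in A}\cU_{\loc}(\Sp^\omega)\;\neq\;0,
\]
and exactly the same computation gives $\cU_{\loc}^{\cont}(\prod_A\Sp)\cong\bigoplus_{a\in A}\cU_{\loc}(\Sp^\omega)\neq 0$. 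Your ``swindle'' $\prod_\Q\cC\simeq\cC\times\prod_\Q\cC$ only yields $X\cong\cU_{\loc}^{\cont}(\cC)\oplus X$ in $\Mot^{\loc}$, which is satisfied by $X=\bigoplus_\Q\cU_{\loc}^{\cont}(\cC)$ and does not force $X=0$; the flasque trick requires an endofunctor $T$ of the \emph{category} with $T\cong\id\oplus T$, and an order self-embedding of $\Q_\leq$ does not produce one. The paper's argument is quite different: it does not show that the two outer terms vanish, but rather that $\cU_{\loc}^{\cont}(\Phi)$ is an \emph{isomorphism}, because $\Phi^\omega$ is compatible with the two $A$-indexed semi-orthogonal decompositions above and induces the identity on each component.

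There is also a subtlety in your part 1). The condition singling out $\ker(\Phi)\subset\Fun(A_\leq^{op},\cC)$ is $\indlim_{b>a}F(b)\xto{\sim}F(a)$, whereas the sheaf condition for $\R_\gamma$ reads $F(a)\xto{\sim}\prolim_{b<a}F(b)$. These cut out \emph{different} full subcategories (e.g.\ take $F(a)=M$ for $a\geq 0$ and $F(a)=0$ for $a<0$: this lies in $\ker(\Phi)$ but fails the sheaf condition at $a=0$). The paper obtains the equivalence $\Sh_{\geq 0}(\R;\cC)\simeq\ker(\Phi)$ not by identifying the two conditions, but by computing $\Phi^R$ explicitly, identifying $\Sh_{\geq 0}(\R;\cC)$ with the \emph{right} orthogonal $\im(\Phi^R)^\perp$, and then using that $\Phi^R$ itself has a right adjoint to switch to the \emph{left} orthogonal ${}^\perp\im(\Phi^R)=\ker(\Phi)$.
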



\begin{proof}[Proof of Proposition \ref{prop:resolution_of_Sh_geq_0}] 1) We first check that $\Phi$ is indeed a strongly continuous localization. First, for $a\in A,$ define the functors 
\begin{equation}\label{eq:functors_P_a} P_a:\cC\to \Fun(A_{\leq}^{op},\cC),\quad P_a(x)(b)=\begin{cases}x & \text{ for }b\leq a;\\
0 & \text{ for }b>a,\end{cases}\end{equation}
$$I_a:\cC\to \Fun(A_{\leq}^{op},\cC),\quad I_a(x)(b)=\begin{cases}x & \text{ for }b\geq a;\\
0 & \text{ for }b<a.\end{cases}$$ 
The right adjoint to $P_a$ is given by $F\mapsto F(a),$ which is also the left adjoint to $I_a.$ It follows that for $a\in A$ the right adjoint to the functor $F\mapsto \Cone(\indlim[b>a]F(b)\to F(a))$ sends $x\in\cC$ to the functor $G_a(x):A_{\leq}^{op}\to\cC,$ given by 
$$G_a(x)=\Fiber(I_a(x)\to\limproj\limits_{b>a}I_b(x)),\quad G_a(x)(b)=\begin{cases}x & \text{ for }b=a;\\
0 & \text{ for }b\ne a.\end{cases}$$
Here the latter description of $G_a(x)$ follows from the fact that $A_{\leq}$ is a dense linearly ordered set. Now the right adjoint to $\Phi$ is given by \begin{equation}\label{eq:formula_for_Phi^R}\Phi^R((x_a)_a)=\prodd[a\in\R]G_a(x_a)=\bigoplus\limits_{a\in\R}G_a(x_a).\end{equation}
Thus, $\Phi^R((x_a)_a)(b)=x_b,$ and the maps $\Phi^R(x)(b)\to\Phi^R(x)(a)$ are zero for $a<b.$ In particular, $\Phi$ is strongly continuous and the adjunction counit $\Phi\circ\Phi^R\to\id$ is an equivalence, so $\Phi$ is a localization.

Now, we observe that the category $\Sh_{\geq 0}(\R;\cC)$ is naturally equivalent to the right orthogonal $\im(\Phi^R)^{\perp}\subset\Fun(A_{\leq}^{op},\cC).$ This equivalence sends a sheaf $\cF$ to the functor $a\mapsto \cF((-\infty,a)).$ Note that for $a\in A,$ $x\in\cC,$ we have $G_a(x)\cong\Cone(\indlim[b<a]P_b(x)\to P_a(x)),$ hence $$\Hom(G_a(x),F)=\Hom_{\cC}(x,\Fiber(F(a)\to\prolim[b<a]F(b))).$$ Using \eqref{eq:formula_for_Phi^R}, we see that the category $\im(\Phi^R)^{\perp}$ consists of functors $F:A_{\leq}^{op}\to \cC$ such that for any $a\in\R$ the map $F(a)\to\prolim[b<a]F(b)$ is an isomorphism. This corresponds exactly to the fact that $\cF$ is a sheaf on $\R_{\gamma}.$

Finally, since the functor $\Phi^R$ has a right adjoint, we have equivalences
$$\Sh_{\geq 0}(\R;\cC)\simeq \im(\Phi^R)^{\perp}\simeq ^{\perp}\im(\Phi^R)=\ker(\Phi).$$ 
This proves that we have a short exact sequence \eqref{eq:ses_for_Sh_geq_0} in $\Cat_{\st}^{\dual}.$ 
This proves part 1). 

2) By \eqref{eq:sheaves_otimes_something}, it is sufficient to prove the vanishing for $\cC=\Sp.$ Take $A=\R$ and consider the short exact sequence \eqref{eq:ses_for_Sh_geq_0}. We need to show that the map $\cU_{\loc}^{\cont}(\Phi)$ is an equivalence in $\Mot^{\loc}.$ 

Note that the categories $\cA=\Fun(\R_{\leq};\Sp)$ and $\cB=\prodd[\R]\Sp$ are compactly generated. We have an $\R$-indexed semi-orthogonal decomposition $\cA^{\omega}=\la P_a(\Sp^{\omega}), a\in\R\ra,$ where the (fully faithful, strongly continuous) functors $P_a$ are defined in \eqref{eq:functors_P_a}. Also, we have a tautological $\R$-indexed orthogonal decomposition of the category $\cB^{\omega}\simeq\biggplus[\R]\Sp^{\omega}.$ The functor $\Phi^{\omega}:\cA^{\omega}\to\cB^{\omega}$ is compatible with these decompositions and it induces the identity functors on the components. Hence, the map $\cU_{\loc}^{\cont}(\Phi)=\cU_{\loc}(\Phi^{\omega})$ is an isomorphism.
\end{proof}


\begin{proof}[Proof of Proposition \ref{prop:U_loc_sheaves_on_R}]
Again, it suffices to prove the result for $\cC=\Sp.$ We have a short exact sequence in $\Cat_{\st}^{\dual}$

$$0\to \Sh(\R;\Sp)\xto{j_!}\Sh(\R\cup\{-\infty\})\to\Sp\to 0,$$
where the $j:\R\to\R\cup\{-\infty\}$ is the inclusion. Hence, it suffices to prove the result for sheaves on $\R\cup\{-\infty\}.$

We claim that there is a natural short exact sequence in $\Cat_{\st}^{\dual}:$
\begin{equation}\label{eq:extension_of_SS_categories}0\to\Sh_{\geq 0}(\R;\Sp)\xto{\Phi}\Sh(\R\cup\{-\infty\};\Sp)\xto{\Psi} \Sh_{\leq 0}(\R;\Sp)\to 0.\end{equation}
Assuming this, the result follows from Proposition \ref{prop:resolution_of_Sh_geq_0}.

The functor $\Psi$ in \eqref{eq:extension_of_SS_categories} is the left adjoint to $j_!\circ\varphi_{-\gamma}^*.$ The functor $\Phi$ is given by $\bar{\varphi_{\gamma}^*},$ where $\bar{\varphi}_{\gamma}:\R\cup\{-\infty\}\to\bar{\R}_{\gamma}$ is the map (extending $\varphi_{\gamma}$) to the sobrification of $\R_{\gamma}.$ Note that as a set $\bar{\R}_{\gamma}$ is naturally identified with $\R\cup\{-\infty\}$ (where $-\infty$ is the generic point), and clearly $\Sh(\R_{\gamma};\Sp)\simeq \Sh(\bar{\R}_{\gamma};\Sp)$

An explicit description of the functors $\Phi$ and $\Psi$ is the following. For a convex open subset $U\subset\R\cup\{-\infty\}$ and for $a\in\R$ we have
$$\Phi(\cF)(U)=\cF((U\cap\R)+\gamma), \quad \Phi^R(\cG)((-\infty,a))=\cG([-\infty,a)),$$
$$\Psi(\cG)((a,+\infty))=\Gamma_{[-\infty,a]}(\R\cup\{-\infty\},\cG)[1],\quad \Psi^R(\cF)(U)=\begin{cases}\cF(U-\gamma) & \text{if }-\infty\not\in U;\\
0 & \text{if }-\infty\in U.\end{cases}$$
With these descriptions it is straightforward to check that \eqref{eq:extension_of_SS_categories} is indeed a short exact sequence.
\end{proof}


\begin{remark}It follows from Proposition \ref{prop:all_loc_invar_zero_for_Sh_geq_0} below and from the above proof of Proposition \ref{prop:U_loc_sheaves_on_R} that the result holds for all accessible localizing invariants, not necessarily finitary. We use this to prove Theorem \ref{th:sheaves_on_finite_CW_complexes} on localizing invariants of categories of sheaves on finite CW complexes.\end{remark}

\begin{remark}\label{rem:SS_general} If $X$ is a $C^1$-manifold and $\cC$ is a compactly generated presentable stable category, then the singular support of a $\cC$-valued sheaf on $X$ is defined in \cite[Definition 4.5]{RS}, which is a straightforward generalization of \cite[Definition 4.4.1]{KS}. However, if $\cC$ is a dualizable category which is not compactly generated, then already in the $1$-dimensional case the non-characteristic deformation lemma \cite[Theorem 4.1]{RS} fails and the conditions of \cite[Proposition 5.1.1]{KS} are no longer equivalent. 

To illustrate this, let $\cC=\Sh(\R;\Sp).$ 
Consider a $\cC$-valued sheaf $\cF$ on $\R$ given by $\cF((a,b))=\bS_{(b,+\infty)},$ for $a<b.$ Then according to \cite[Definition 4.5]{RS} the singular support $SS(\cF)$ would be the zero section $T^*_{\R}\R\subset T^*\R.$ But clearly the sheaf $\cF$ is not locally constant, hence the definition of the singular support has to be modified. 

We mention several equivalent ways to define the singular support for a sheaf $\cF$ on a $C^1$-manifold $X$ with values in a presentable stable category $\cC.$

1) A point $(x_0,\xi_0)\in T^*X$ is not contained in $SS(\cF)$ if and only if for some (any) choice of local coordinates it satisfies the equivalent conditions $(2)$ and $(3)$ of \cite[Proposition 5.1.1]{KS}.

2) The singular support $SS(\cF)\subset T^*X$ is the smallest conical closed subset $L\subset T^*X$ such that $\cF$ is contained in the essential image of the functor $$\Sh_L(X;\Sp)\otimes\cC\to \Sh(X;\Sp)\otimes\cC\simeq\Sh(X;\cC).$$ This functor is fully faithful since the inclusion $\Sh_L(X;\Sp)\to \Sh(X;\Sp)$ has a left adjoint.

3) The singular support of $\cF$ is the support of the conical sheaf (of spectra) $\mu hom(\cF,\cF)$ on $T^*X$ (microlocal Hom), where the bifunctor $\mu hom$ is defined as in \cite[Definition 4.4.1]{KS}.

4) Choose a regular cardinal $\kappa$ such that $\cF$ takes values in $\cC^{\kappa}.$ Then the composition $\Open(X)^{op}\xto{\cF}\cC^{\kappa}\xto{\cY}\Ind(\cC^{\kappa})$ defines a sheaf $\cF'$ with values in the compactly generated category $\Ind(\cC^{\kappa})$ (since the Yoneda embedding $\cY:\cC^{\kappa}\to\Ind(\cC^{\kappa})$ commutes with existing limits). We put $\SS(\cF):=\SS(\cF'),$ where the latter singular support is defined as in \cite[Definition 4.5]{RS}.

5) If $\cC$ is dualizable, we can define the singular support of $\cF$ to be the singular support of the sheafification of the presheaf $\Open(X)^{op}\xto{\cF}\cC\xto{\hat{\cY}}\Ind(\cC^{\omega_1})$ where the latter is defined as in \cite[Definition 4.5]{RS}.
\end{remark}

\begin{remark}\label{rem:constant_functor_criterion} Essentially the above issue with the definition of the singular support reduces to the fact that Kashiwara's criterion for a functor $\R_{\leq}\to\cC$ to be constant fails when $\cC$ is not compactly generated (even if we require that $\cC$ is dualizable). Namely, if $\cC$ is compactly generated and $F:\R_{\leq}\to\cC$ is a functor, then $F$ is constant if and only if we have
\begin{equation}\label{eq:constant_functor}\indlim[b<a]F(b)\xto{\sim}F(a)\xto{\sim}\prolim[b>a]F(b),\quad\text{ for any }a\in\R,\end{equation}
see \cite[Corollary 3.2]{RS}.
However, if $\cC=\Sh(\R;\Sp)$ and $F:\R_{\leq}\to\cC$ is given by $F(a)=\bS_{(-\infty,a)},$ then the condition \eqref{eq:constant_functor} holds but $F$ is not constant.\end{remark}

\subsection{Commutation of K-theory with infinite products}
\label{ssec:K_theory_of_products}

Recall that by \cite{KW19} $K$-theory commutes with infinite products of small stable Karoubi-complete categories. Their proof is based on the Grayson's construction \cite{Gra12} (more precisely, its version for stable $\infty$-categories) which roughly speaking allows to identify $K_n$ of some category with a direct summand of $K_{n-1}$ of another category. We will give an alternative (and in our opinion more conceptual) proof of the commutation of $K$-theory with products, using the category of sheaves on the real line instead of the Grayson's construction.

\begin{remark}While the paper was being written, Vladimir Sosnilo explained to me a considerable simplification of the proof from \cite{KW19}. In particular, his argument allows to prove Theorem \ref{th:map_of_localizing_invariants} below (except for the assertion about $K^{\cont}$) without using dualizable categories at all.\end{remark}

We first recall the argument showing the commutation of $K_0$ of small triangulated categories with infinite products.

\begin{prop}\cite[Lemma 2.12]{KW19}(Heller's criterion) Let $T$ be a small triangulated category, and take two objects $x,y\in T.$ The following are equivalent:

\begin{enumerate}[label=(\roman*),ref=(\roman*)]
\item we have $[x]=[y]$ in $K_0(T);$

\item there exist objects $z,u,v\in T$ and exact triangles
$$u\to x\oplus z\to v\to u[1],$$
$$u\to y\oplus z\to v\to u[1].$$
\end{enumerate}\end{prop}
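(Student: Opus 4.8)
The plan is to prove both implications directly from the definition of $K_0(T)$ as the free abelian group on isomorphism classes of objects modulo the relations $[y] = [x] + [z]$ for each exact triangle $x \to y \to z \to x[1]$. The implication \ref{comp2}$\Rightarrow$\ref{comp1} is immediate: the two given exact triangles yield $[x] + [z] = [u] - [v] + [u[1]] = [x\oplus z]$... wait, more precisely each triangle $u \to x\oplus z \to v \to u[1]$ gives $[x] + [z] = [x \oplus z] = [u] + [v]$ in $K_0(T)$ (using $[u[1]] = -[u]$, actually we get $[v] = [x\oplus z] - [u]$, i.e. $[x]+[z] = [u]+[v]$), and similarly $[y] + [z] = [u] + [v]$, so $[x] = [y]$. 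This is a two-line verification once the bookkeeping with shifts is done carefully.

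The substantive direction is \ref{comp1}$\Rightarrow$\ref{comp2}. First I would reduce to showing that the binary relation $R$ defined by ``$x \mathrel{R} y$ iff condition \ref{comp2} holds'' is already an equivalence relation on isomorphism classes that is compatible with $\oplus$ and contains all the defining relations of $K_0$. Reflexivity is clear (take $z = 0$, $u = 0$, $v = x$, or rather use the trivial triangle). Symmetry is built into the statement. The key points are: (a) transitivity of $R$, and (b) that if $x \to y \to z \to x[1]$ is an exact triangle then $y \mathrel{R} x \oplus z$. For (b), the octahedral axiom (or a direct rotation argument) produces the required auxiliary objects. For (a), suppose $x \mathrel{R} y$ via $(z_1, u_1, v_1)$ and $y \mathrel{R} w$ via $(z_2, u_2, v_2)$; one forms the ``Mayer--Vietoris'' type triangles by adding $z_2$ to the first pair and $z_1$ to the second, and then splices the two resulting comparisons through $y \oplus z_1 \oplus z_2$ using a pullback/pushout (homotopy cartesian square) construction in the triangulated category. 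The main obstacle is precisely this transitivity step: constructing the common $(z, u, v)$ for $x \mathrel{R} w$ requires combining four triangles into one, and the natural way is to take $u = $ a suitable iterated fiber and $v = $ a suitable iterated cofiber, checking the two new triangles exist via repeated use of the octahedral axiom. This is somewhat delicate but standard (it is Heller's original argument), and I would cite \cite[Lemma 2.12]{KW19} or Heller's paper for the full details rather than reproduce every octahedron.

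Finally, once $R$ is shown to be a $\oplus$-compatible equivalence relation containing the Euler relations, the quotient of (isomorphism classes, $\oplus$) by $R$ is a commutative monoid that receives a map from $K_0(T)$; since $R$ also makes every element invertible (as $[x] + [x[1]] \mathrel{R} 0$ via the triangle $x \to 0 \to x[1]$), this monoid is a group and the map $K_0(T) \to (\text{iso classes})/R$ is an isomorphism, which gives \ref{comp1}$\Rightarrow$\ref{comp2}. I expect the write-up to be short, deferring the octahedral bookkeeping in transitivity to the cited references, since this proposition is used only as a stepping stone toward the product formula for $K_0$ and ultimately for the new proof of the Kasprowski--Winges theorem via sheaves on $\R$.
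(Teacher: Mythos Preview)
The paper does not give its own proof of this proposition; it is stated with the citation \cite[Lemma 2.12]{KW19} and then immediately used to deduce Corollary~\ref{cor:K_0_comm_with_products}. So there is nothing to compare against except the cited reference.

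Your sketch is the standard argument and is essentially what one finds in Heller's paper or in \cite{KW19}. Two small corrections. First, for your step (b) ---showing that an exact triangle $a\to b\to c\to a[1]$ gives $b\mathrel{R}a\oplus c$--- no octahedral axiom is needed: take $z=0$, $u=a$, $v=c$, and use the given triangle together with the split triangle $a\to a\oplus c\to c\to a[1]$. The octahedral axiom is genuinely needed only in the transitivity step, as you correctly say. Second, your cross-references \texttt{\textbackslash ref\{comp1\}}, \texttt{\textbackslash ref\{comp2\}} will not resolve: the proposition in the paper carries no \texttt{\textbackslash label} tags on its enumerate items (and the labels \texttt{comp1}, \texttt{comp2} are already used elsewhere in the paper, in Proposition~\ref{prop:compact_maps_in_dualizable_cats}).
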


\begin{cor}\label{cor:K_0_comm_with_products}\cite[Lemma 5.1]{KW19} For a collection of small triangulated categories $T_i,$ $i\in I,$ we have $K_0(\prod_i T_i)\cong \prod_i K_0(T_i).$\end{cor}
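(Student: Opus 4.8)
\textbf{Proof proposal for Corollary \ref{cor:K_0_comm_with_products}.}

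The plan is to deduce the statement directly from Heller's criterion, following \cite[Lemma 5.1]{KW19}. For each $i$ let $T_i$ be a small triangulated category; there is an obvious map $\alpha\colon K_0(\prod_i T_i)\to \prod_i K_0(T_i)$ induced by the projections, and the task is to show $\alpha$ is both surjective and injective.

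For surjectivity, I would start from a family of classes $([x_i]-[y_i])_i$ with $x_i,y_i\in T_i$; since every element of $K_0(T_i)$ is a difference of classes of objects, it suffices to hit such families. The class of the object $(x_i)_i\in\prod_i T_i$ maps to $([x_i])_i$, so $\alpha([(x_i)_i]-[(y_i)_i])=([x_i]-[y_i])_i$, giving surjectivity at once. For injectivity, suppose $z=[(x_i)_i]-[(y_i)_i]\in K_0(\prod_i T_i)$ lies in $\ker\alpha$, i.e. $[x_i]=[y_i]$ in $K_0(T_i)$ for every $i$. By Heller's criterion, for each $i$ there exist objects $z_i,u_i,v_i\in T_i$ together with exact triangles
$$u_i\to x_i\oplus z_i\to v_i\to u_i[1],\qquad u_i\to y_i\oplus z_i\to v_i\to u_i[1].$$
The key point is that exact triangles in $\prod_i T_i$ are exactly $I$-indexed families of exact triangles in the $T_i$ (the triangulated structure on a product is defined componentwise), so assembling the $(z_i)_i,(u_i)_i,(v_i)_i$ and the two families of triangles produces exact triangles in $\prod_i T_i$ witnessing, again via Heller's criterion, that $[(x_i)_i]=[(y_i)_i]$ in $K_0(\prod_i T_i)$, hence $z=0$. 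Therefore $\alpha$ is an isomorphism.

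The only point requiring a little care — and the main (mild) obstacle — is the observation that the componentwise triangles genuinely constitute exact triangles in the product triangulated category, and that Heller's criterion applies verbatim to $\prod_i T_i$; but this is routine once one recalls that $\prod_i T_i$ is small and triangulated with componentwise distinguished triangles. No further input is needed.
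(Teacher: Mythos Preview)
Your proposal is correct and is exactly the intended argument: the paper does not give its own proof but simply cites \cite[Lemma 5.1]{KW19}, and the proof there is precisely the one you wrote—surjectivity is obvious and injectivity follows immediately from Heller's criterion (the preceding Proposition) applied componentwise, using that distinguished triangles in $\prod_i T_i$ are componentwise.
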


We will prove the following general result.

\begin{theo}\label{th:map_of_localizing_invariants} 1) Let $\cE$ be an accessible stable category with a non-degenerate $t$-structure. Let $F,G:\Cat^{\perf}\to\cE$ be accessible localizing invariants and $\varphi:F\to G$ a morphism such that $\pi_0(\varphi):\pi_0(F)\to\pi_0(G)$ is an isomorphism. Then $\varphi$ is an isomorphism.

2) In particular, both $K$-theory $K:\Cat^{\perf}\to\Sp$ and continuous $K$-theory $K^{\cont}:\Cat_{\st}^{\dual}\to\Sp$ commute with infinite products.\end{theo}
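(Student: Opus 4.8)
The plan is to reduce part 2) to part 1), and then to attack part 1) directly. For part 2), given a collection of small idempotent-complete stable categories $\cA_i$, the natural map $K(\prod_i \cA_i)\to \prod_i K(\cA_i)$ is a map of localizing invariants evaluated on $\prod_i^{\dual}\Ind(\cA_i)\simeq\Ind(\prod_i\cA_i)$; more precisely, I would consider the two functors $F,G:\Cat^{\perf}\to\Sp$ sending (roughly) a small category to $K$ of a suitable product, and a natural transformation $\varphi$ between them. By Corollary \ref{cor:K_0_comm_with_products} (Heller's criterion), $\pi_0$ of this comparison map is an isomorphism, and then part 1) forces $\varphi$ to be an equivalence. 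The continuous $K$-theory assertion follows because $K^{\cont}=\Omega K(\Calk_{\omega_1}^{\cont}(-))$, and by Proposition \ref{prop:AB4*_for_Cat^dual} products of short exact sequences in $\Cat_{\st}^{\dual}$ are short exact, so one reduces the product statement for $K^{\cont}$ to the product statement for $K$ applied to the Calkin categories, using that $\Ind$ commutes with products (Proposition \ref{prop:products_in_Cat^dual}).

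For part 1), the strategy is to bootstrap from $\pi_0$ to all homotopy groups using the category of sheaves on the real line. The key input is Proposition \ref{prop:U_loc_sheaves_on_R} together with the remark that it holds for all accessible localizing invariants: for any such invariant $H$, one has $H^{\cont}(\Sh(\R;\cC))\cong\Omega H^{\cont}(\cC)$, hence iterating, $H^{\cont}(\Sh(\R^n;\Ind(\cA)))\cong\Omega^n H(\cA)$. So if $\varphi:F\to G$ is our map with $\pi_0(\varphi)$ an isomorphism, I would first extend $F,G$ to finitary... no — to accessible localizing invariants $F^{\cont},G^{\cont}$ on $\Cat_{\st}^{\dual}$ via Definition \ref{defi:F_cont}, obtaining $\varphi^{\cont}:F^{\cont}\to G^{\cont}$. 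Applying $\pi_0$ to $\varphi^{\cont}$ evaluated on $\Sh(\R^n;\Ind(\cA))$ gives $\pi_0(\Omega^n F(\cA))\cong\pi_{-n}(F(\cA))\to\pi_{-n}(G(\cA))$, which is $\pi_0(\varphi)$ on the auxiliary category $(\Sh(\R^n;\Ind(\cA)))^{\omega_1}\times_{\Calk}(\cdots)$ of Proposition \ref{prop:F_cont_hom_epi}, hence an isomorphism. This shows $\pi_n(\varphi)$ is an isomorphism for all $n\leq 0$. For $n>0$ one instead uses the suspension/Bass-delooping built into the non-connective machinery: the universal property of $\cU_{\loc}$ and the fact that $\Omega\cU_{\loc}(\Calk(\cA))\cong\cU_{\loc}(\cA)$ shifts the other way, so the Calkin construction $\Calk_{\omega_1}$ itself provides a way to move up in degree — concretely, $\pi_{n}(\varphi)(\cA)$ is a retract of $\pi_{n-1}(\varphi)(\Calk_{\omega_1}(\cA))$, so an easy induction starting from $\pi_0$ handles all positive $n$ as well.

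Assembling these, for every $\cA\in\Cat^{\perf}$ and every $n\in\Z$ the map $\pi_n(\varphi)(\cA)$ is an isomorphism, hence $\varphi(\cA)$ is an equivalence since $\cE$ has a non-degenerate $t$-structure (a map inducing isomorphisms on all homotopy objects of a non-degenerate $t$-structure is an equivalence). This proves 1).

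The main obstacle I expect is bookkeeping the degree shifts correctly and making sure the "for all accessible localizing invariants" strengthening of Proposition \ref{prop:U_loc_sheaves_on_R} is genuinely available with the right naturality — i.e. that the identifications $F^{\cont}(\Sh(\R;\cC))\cong\Omega F^{\cont}(\cC)$ are natural in $F$ so that $\varphi^{\cont}$ is compatible with them. A secondary subtlety is the positive-degree direction: one must check that the "$\pi_n$ is a retract of $\pi_{n-1}$ of the Calkin category" statement is natural in the localizing invariant; this follows from the functoriality of the short exact sequence $0\to\cA\to\cA^{\sim}\to\Calk_{\omega_1}(\cA)\to 0$ (where $\cA^{\sim}$ has vanishing invariants) but should be spelled out. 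Finally, one needs the elementary but essential fact that a morphism in an accessible stable category with non-degenerate $t$-structure which is a $\pi_*$-isomorphism is an equivalence; this is where non-degeneracy is used, and it is the reason the hypothesis on $\cE$ cannot be dropped.
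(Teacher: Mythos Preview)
You have the directions of both degree-shifts reversed. Since $\pi_0(\Omega^n X)=\pi_n(X)$ (not $\pi_{-n}(X)$), the identification $F^{\cont}(\Sh(\R^n;\Ind(\cA)))\cong\Omega^n F(\cA)$ yields information about $\pi_n(\varphi)$ for $n\geq 0$, not $n\leq 0$. Likewise, from $F(\cA)\cong\Omega F(\Calk_{\omega_1}(\cA))$ one gets $\pi_{n}F(\cA)\cong\pi_{n+1}F(\Calk_{\omega_1}(\cA))$ (an isomorphism, not merely a retract), so the Calkin construction moves you from $\pi_0$ \emph{down} to $\pi_{-1},\pi_{-2},\ldots$ --- the opposite of your claim that it ``handles all positive $n$''.

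With the roles of the two tools swapped, your argument becomes a correct variant of the paper's. The paper first uses Calkin on small categories to obtain $\pi_n(\varphi)$ an isomorphism for $n\leq 0$, notes that this gives $\pi_n(\varphi^{\cont})$ an isomorphism only for $n\leq -1$ on dualizable categories (since $F^{\cont}=\Omega F\circ\Calk_{\omega_1}^{\cont}$), and then inducts upward one step at a time via the short exact sequence $0\to\Sh_{>0}(\R;\cC)\to\Sh_{\geq 0}(\R;\cC)\to\cC\to 0$ and Proposition~\ref{prop:all_loc_invar_zero_for_Sh_geq_0}. Your appeal to Proposition~\ref{prop:F_cont_hom_epi} --- expressing $F^{\cont}(\cC)$ as $F$ of the small fiber product $\cC^{\omega_1}\times_{\Calk_{\omega_1}^{\cont}(\cC)}\cC^{\omega_1}$ to see directly that $\pi_0(\varphi^{\cont})$ is an isomorphism on \emph{every} dualizable category --- is a legitimate shortcut the paper does not take; it sidesteps the one-degree loss in passing from $\varphi$ to $\varphi^{\cont}$.

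For part 2), your reduction is in the right spirit but the final step is not as you describe. The paper fixes a set $I$, regards $\cC\mapsto K^{\cont}(\prod_I^{\dual}\cC)$ and $\cC\mapsto\prod_I K^{\cont}(\cC)$ as localizing invariants (the former is one by Proposition~\ref{prop:AB4*_for_Cat^dual}), checks $\pi_0$ via Corollary~\ref{cor:K_0_comm_with_products}, and applies part 1); the small-category $K$-theory statement then follows because $\Ind$ commutes with products. Your proposed route --- reducing $K^{\cont}$ to $K$ via $K^{\cont}=\Omega K(\Calk_{\omega_1}^{\cont}(-))$ --- would require $\Calk_{\omega_1}^{\cont}(\prod_i^{\dual}\cC_i)\simeq\prod_i\Calk_{\omega_1}^{\cont}(\cC_i)$, which has not been established.
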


\begin{proof}[Proof that 1)$\Rightarrow$2).] By Proposition \ref{prop:products_in_Cat^dual} it suffices to prove the statement for continuous $K$-theory. Consider a collection $\{\cC_i\}_{i\in I}$ of dualizable categories. We need to show that $K^{\cont}(\prod_i^{\dual}\cC_i)\xto{\sim}\prod_i K^{\cont}(\cC_i).$ We may assume that the categories $\cC_i$ are all equivalent: indeed, each $\cC_i$ is a retract in $\Cat_{\st}^{\dual}$ of the category $\prod_j\cC_j.$ 

In other words, we need to show that for a set $I$ and a dualizable category $\cC$ the map $\varphi_{\cC}:K^{\cont}(\prod_I^{\dual}\cC)\to\prod_I K^{\cont}(\cC)$ is an isomorphism. But this is the special case of 1) for the standard $t$-structure on $\Sp.$ Indeed, by Proposition \ref{prop:AB4*_for_Cat^dual} the functor $\cC\mapsto K^{\cont}(\prod_I^{\dual}\cC)$ is a localizing invariant, and so is the functor $\cC\mapsto \prod_I K^{\cont}(\cC).$ When $\cC$ is compactly generated, the map $\pi_0(\varphi_{\cC})$ is an isomorphism by Corollary \ref{cor:K_0_comm_with_products}.\end{proof}

To prove part 1), we need the following result on $K$-equivalences.

\begin{prop}\label{prop:complete_SOD} Let $\cA$ be a small stable idempotent-complete category with an $\N$-indexed semi-orthogonal decomposition
$\cA=\langle\cA_0,\cA_1,\dots\rangle.$ Denote by $\cB_n\subset\cA$ the full stable subcategory generated by $\cA_0,\cA_1,\dots,\cA_n,$ and denote by $p_n:\cB_n\to\cA_n$ the right adjoint to the inclusion functor. Consider the sequence $(\cB_n)_{n\geq 0}$ as an inverse sequence where the transition functors $\cB_{n+1}\to\cB_n$ are left adjoint to the inclusions. Then the compositions $\pi_n:\prolim[k]\cB_k\to\cB_n\xto{p_n}\cA_n$ define a $K$-equivalence $F:\prolim[n]\cB_n\to\prodd[n\in\N]\cA_n.$ 
\end{prop}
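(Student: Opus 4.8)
The goal is to produce an explicit functor $G:\prodd[n]\cA_n\to\prolim[n]\cB_n$ together with identifications showing $[F\circ G]=[\id]$ and $[G\circ F]=[\id]$ in the relevant $K_0$ of endofunctor categories. First I would fix notation: write $i_n:\cA_n\hookrightarrow\cB_n$ for the inclusion and $j_n:\cB_n\to\cB_{n+1}$ for the inclusion of the next larger subcategory, so that $\cA=\indlim[n]\cB_n$ (or rather $\cA^\omega$-style stabilization — here all categories are small, and $\cA$ is the localizing-subcategory-closure, so $\cA$ is the ``union''). The inverse system defining $\prolim[n]\cB_n$ has transition functors $q_n:\cB_{n+1}\to\cB_n$ the left adjoint to $j_n$ (the SOD projection killing $\cA_{n+1}$). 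The key structural input is that for each $n$ we have a semi-orthogonal decomposition $\cB_{n+1}=\la \cB_n,\cA_{n+1}\ra$ in the sense that $q_n$ and $p_{n+1}$ fit into a cofiber sequence $i_{n+1}p_{n+1}\to\id_{\cB_{n+1}}\to j_n q_n$ (after composing with inclusions appropriately). This is exactly Remark~\ref{rem:SOD_PR^LL} transported to the small stable setting.

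\textbf{Construction of $G$.} I would define $G$ by sending an object $(x_n)_{n\in\N}\in\prodd[n]\cA_n$ to the compatible system whose $m$-th term is $\bigoplus_{n\le m}$ (the canonical lift of $x_n$ into $\cB_m$ via the inclusions $\cA_n\subset\cB_n\subset\cB_m$). More precisely, there is a natural fully faithful functor $\cA_n\to\cB_m$ for $n\le m$, and one checks this assembles into a functor $\prodd[n]\cA_n\to\prolim[m]\cB_m$ because the SOD projections $q_{m}$ send the $\cA_{m+1}$-summand to zero and fix the rest. The composite $\pi_n\circ G$ recovers the $n$-th projection, so $F\circ G\cong\id_{\prodd[n]\cA_n}$ on the nose — that half of the $K$-equivalence is an actual equivalence.

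\textbf{The hard direction: $[G\circ F]=[\id]$.} This is the main obstacle. One must show that the endofunctor $H:=G\circ F$ of $\prolim[n]\cB_n$ has $[H]=[\id]$ in $K_0(\Fun(\prolim[n]\cB_n,\prolim[n]\cB_n))$. The natural approach is an Eilenberg-swindle-type / additivity argument: using the cofiber sequences $i_{n+1}p_{n+1}\to\id_{\cB_{n+1}}\to j_n q_n$, one builds a filtration of $\id_{\prolim\cB_k}$ whose subquotients are the ``block'' endofunctors picking out each $\cA_n$, and $H$ is, up to a telescoping/shift, the ``sum'' of these blocks versus the identity built the other way. Concretely I would exhibit two endofunctors $A,B$ of $\prolim[n]\cB_n$ and exact sequences of functors
$$
0\to A\to H\oplus B\to B\to 0,\qquad 0\to A\to \id\oplus B\to B\to 0,
$$
(an instance of Heller's criterion at the level of endofunctor categories, cf. the discussion preceding Corollary~\ref{cor:K_0_comm_with_products}) with $A$ and $B$ defined via the infinite products $\prodd[n]$ of the truncation functors $\cB_k\to\cB_n$, so that the telescoping cofiber sequences provide the required triangles. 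The delicate point is checking that these functors $A,B$ genuinely land in exact functors $\prolim[n]\cB_n\to\prolim[n]\cB_n$ (i.e.\ that the relevant infinite products and sequential limits commute well enough), which is where one uses that $\cB_{n+1}=\la\cB_n,\cA_{n+1}\ra$ is a strict SOD so that the projections are exact and the limit $\prolim[n]\cB_n$ is computed levelwise. Once the two exact sequences are in place, Heller's criterion gives $[H]=[\id]$, completing the proof that $F$ is a $K$-equivalence.

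\textbf{Remark on the mechanism.} Morally this is the same bookkeeping as in the proof that $K$-theory commutes with products: the inverse limit $\prolim[n]\cB_n$ plays the role of ``the category where the product $\prodd[n]\cA_n$ becomes a filtered-type colimit of its finite stages'', and the $K$-equivalence $F$ is the incarnation of the fact that a tower of SODs, read backwards, is $K$-theoretically the product of its graded pieces. I expect the only real work is the careful verification of exactness/well-definedness of the auxiliary endofunctors $A$ and $B$; everything else is formal manipulation of the SOD cofiber sequences and Heller's criterion.
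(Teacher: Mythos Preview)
Your approach is essentially the paper's, and your construction of $G$ with $F\circ G\cong\id$ matches exactly. For the hard direction the paper proceeds more directly than you propose: rather than invoking Heller's criterion with two auxiliary exact sequences, it first identifies $G\circ F\cong\bigoplus_{n\geq 0}\iota_n\pi_n$ (where $\iota_n:\cA_n\hookrightarrow\cB:=\prolim[k]\cB_k$ is the evident fully faithful inclusion), then introduces the semi-orthogonal projections $\Phi_n=\Fiber(\id_{\cB}\to\Psi_{n-1})$ with $\Psi_n:\cB\to\cB_n\hookrightarrow\cB$, so that $\Cone(\Phi_{n+1}\to\Phi_n)\cong\iota_n\pi_n$. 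A single exact triangle $\bigoplus_{n\geq 1}\Phi_n\to\bigoplus_{n\geq 0}\Phi_n\to G\circ F$ then gives $[G\circ F]=[\Phi_0]=[\id]$ directly. One point in your sketch to correct: the relevant infinite operations are \emph{direct sums}, not products, and they exist in $\Fun(\cB,\cB)$ precisely because for each fixed level $\cB_k$ only the summands with $n\leq k$ are nonzero --- this is the well-definedness check you flagged as delicate, and it is simpler than you suggest.
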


\begin{proof}Put $\cB:=\prolim[n]\cB_n.$ 
The functors $G_k:\prodd[n]\cA_n\to\prodd[0\leq n\leq k]\cA_n\to\cB_k$ are compatible with the functors $\cB_{k+1}\to\cB_k,$ hence we obtain a functor $G:\prodd[n\in\N]\cA_n\to\cB.$ By construction, we have $F\circ G\simeq\id.$ We need to show that $[G\circ F]=[\id]$ in $K_0(\Fun(\cB,\cB)).$

For each $n\geq 0$ the inclusions $\cB_n\to\cB_k,$ $k\geq n,$ define a fully faithful functor $\cB_n\hto\cB.$ Its left adjoint is simply the structural functor $\cB\to\cB_n.$ Denote by $\iota_n:\cA_n\to\cB_n\to\cB$ the composition functors, $n\geq 0.$ Since the compositions $\cA_n\xto{\iota_n}\cB\to\cB_k$ vanish for $k<n,$ we see that the direct sum of the functors $\iota_n\circ \pi_n$ is well-defined. Moreover, we have $\biggplus[n\geq 0]\iota_n\circ \pi_n\cong G\circ F.$ 

Now, denote by $\Psi_n:\cB\to\cB_n\to\cB$ the composition, and put $\Phi_n:=\Fiber(\id\to \Psi_{n-1})$ (where $\Psi_{-1}=0$). Equivalently, $\Phi_n:\cB\to\cB$ is the semi-orthogonal projection onto the left orthogonal to $\cB_{n-1}$ (where $\cB_{-1}=0$). Denote by $u_n:\Phi_{n+1}\to\Phi_n$ the natural transformations, $n\geq 0.$ Then $\Cone(u_n)\cong \iota_n\circ \pi_n.$

Since the composition $\cB\xto{\Phi_n}\cB\to\cB_k$ vanishes for $k<n,$ we see that the direct sum $\biggplus[n\geq 0]\Phi_n$ is also a well-defined endofunctor of $\cB.$ By the above, we obtain an exact triangle
$$\biggplus[n\geq 1]\Phi_n\xto{(u_n)_{n\geq 0}}\biggplus[n\geq 0]\Phi_n\to G\circ F\quad\text{in }\Fun(\cB,\cB).$$
We conclude that
$$[\id]=[\Phi_0]=[\biggplus[n\geq 0]\Phi_n]-[\biggplus[n\geq 1]\Phi_n]=[G\circ F]$$
in $K_0(\Fun(\cB,\cB)),$ as required.
\end{proof}

We deduce the following statement about commutation of $K_0$ with certain ``extremely nice'' sequential limits.

\begin{cor}\label{cor:K_0_of_nice_limits} Let $\cB_0\xlto{F_0}\cB_1\xlto{F_1}\dots$ be an inverse sequence in $\Cat^{\perf}$ such that each functor $F_n$ has a fully faithful right adjoint $F_n^R.$ Then we have an isomorphism of abelian groups $K_0(\prolim[n]\cB_n)\xto{\sim}\prolim[n]K_0(\cB_n).$\end{cor}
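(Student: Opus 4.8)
The plan is to reduce Corollary \ref{cor:K_0_of_nice_limits} to Proposition \ref{prop:complete_SOD} by realizing the given inverse sequence $(\cB_n, F_n)$ as the inverse sequence of partial semi-orthogonal pieces associated to an $\N$-indexed semi-orthogonal decomposition. The key observation is that if each $F_n : \cB_{n+1} \to \cB_n$ has a fully faithful right adjoint $F_n^R$, then $\cB_{n+1}$ admits a semi-orthogonal decomposition $\cB_{n+1} = \langle \cA_{n+1}, \cB_n \rangle$, where $\cB_n$ is identified with $F_n^R(\cB_n) \subset \cB_{n+1}$ (this is fully faithful, and $F_n$ is its left adjoint) and $\cA_{n+1} := \ker(F_n) = {}^\perp F_n^R(\cB_n)$ is the left-orthogonal complement. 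Unwinding these one step at a time, setting $\cA_0 := \cB_0$, we obtain a tower in which $\cB_n$ carries the semi-orthogonal decomposition $\cB_n = \langle \cA_0, \cA_1, \dots, \cA_n \rangle$ and the transition functor $\cB_{n+1} \to \cB_n$ is precisely the left adjoint to the inclusion $\cB_n \hookrightarrow \cB_{n+1}$. This is exactly the setup of Proposition \ref{prop:complete_SOD}.

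First I would make the identification above precise: given the fully faithful $F_n^R$, verify that $\cA_{n+1} = \ker(F_n)$ together with $F_n^R(\cB_n)$ generates $\cB_{n+1}$ as a stable subcategory and satisfies the semi-orthogonality $\Hom(x,y) = 0$ for $x \in \cA_{n+1}$, $y \in F_n^R(\cB_n)$ (the latter because $\Hom_{\cB_{n+1}}(x, F_n^R(z)) = \Hom_{\cB_n}(F_n(x), z) = 0$). The generation statement follows from the cofiber sequence $F_n^R F_n \to \id \to (\text{projection to } \cA_{n+1})$ in $\Fun(\cB_{n+1},\cB_{n+1})$, which exists because $F_n^R$ is fully faithful. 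Then I would check compatibility of these decompositions along the tower: the inclusion $F_n^R : \cB_n \to \cB_{n+1}$ respects the subcategories $\cA_k$ for $k \le n$, so that the semi-orthogonal pieces are consistent and the limit $\prolim_n \cB_n$ with the prescribed transition functors agrees with the limit considered in Proposition \ref{prop:complete_SOD}.

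Next, Proposition \ref{prop:complete_SOD} provides a $K$-equivalence $F : \prolim_n \cB_n \to \prod_{n \in \N} \cA_n$. Since $K$-equivalences induce isomorphisms on all localizing invariants (as recalled after Definition \ref{defi:K-equivalence}), in particular on $K_0$, we get $K_0(\prolim_n \cB_n) \cong K_0(\prod_n \cA_n)$. By Corollary \ref{cor:K_0_comm_with_products} (the Kasprowski--Winges commutation of $K_0$ with products, via Heller's criterion), $K_0(\prod_n \cA_n) \cong \prod_n K_0(\cA_n)$. Finally, the finite semi-orthogonal decomposition $\cB_n = \langle \cA_0, \dots, \cA_n \rangle$ gives $K_0(\cB_n) \cong \bigoplus_{k=0}^n K_0(\cA_k)$ by additivity of $K_0$ in finite semi-orthogonal decompositions, and the transition maps $K_0(\cB_{n+1}) \to K_0(\cB_n)$ are the projections killing the $K_0(\cA_{n+1})$-summand; hence $\prolim_n K_0(\cB_n) \cong \prod_n K_0(\cA_n)$ as well. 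Chasing through that the composite of these identifications agrees with the natural map $K_0(\prolim_n \cB_n) \to \prolim_n K_0(\cB_n)$ finishes the proof.

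\textbf{Main obstacle.} I expect the main subtlety to be purely bookkeeping: checking that the $K$-equivalence $F$ of Proposition \ref{prop:complete_SOD}, composed with the two product/limit identifications, really is the tautological comparison map $K_0(\prolim_n \cB_n) \to \prolim_n K_0(\cB_n)$, rather than merely producing an abstract isomorphism of the two groups. Concretely, one must match the projection functors $\pi_n : \prolim_k \cB_k \to \cA_n$ used to build $F$ with the compositions $\prolim_k \cB_k \to \cB_n \to \cB_n/\cB_{n-1} \simeq \cA_n$ that underlie the natural map, and verify this is compatible with the isomorphisms at each finite stage. This is routine but requires care; everything else is a direct application of the cited results.
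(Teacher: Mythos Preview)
Your proposal is correct and follows essentially the same route as the paper: define $\cA_n=\ker(F_{n-1})$ (with $\cA_0=\cB_0$), recognize $\cB_n=\langle\cA_0,\dots,\cA_n\rangle$ with transition functors the left adjoints to the inclusions, then chain together Proposition~\ref{prop:complete_SOD}, Corollary~\ref{cor:K_0_comm_with_products}, and additivity for finite semi-orthogonal decompositions to obtain $K_0(\prolim_n\cB_n)\cong K_0(\prod_n\cA_n)\cong\prod_n K_0(\cA_n)\cong\prolim_n K_0(\cB_n)$. The paper's proof is exactly this chain of isomorphisms, stated in one line; your version simply unpacks the verifications (including the naturality check) that the paper leaves implicit. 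One minor notational slip: with the paper's convention the two-term decomposition should be written $\cB_{n+1}=\langle\cB_n,\cA_{n+1}\rangle$ rather than $\langle\cA_{n+1},\cB_n\rangle$, but your actual semi-orthogonality computation $\Hom(\cA_{n+1},F_n^R(\cB_n))=0$ is the correct one and your iterated decomposition $\cB_n=\langle\cA_0,\dots,\cA_n\rangle$ is already in the right order.
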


\begin{proof}Denote by $\cA_n\subset\cB_n$ the kernel of $F_{n-1}$ (i.e. the left orthogonal to the image of $F_{n-1}^R$), $n\geq 0,$ where we put $\cB_{-1}=0,$ $F_{-1}=0.$ It follows from Proposition \ref{prop:complete_SOD} and Corollary \ref{cor:K_0_comm_with_products} that
\begin{equation*}K_0(\prolim[n]\cB_n)\cong K_0(\prodd[n\geq 0]\cA_n)\cong \prodd[n\geq 0]K_0(\cA_n)\cong\prolim[n]K_0(\cB_n).\qedhere\end{equation*}\end{proof}

We deduce that {\it all} accessible localizing invariants vanish for the categories of the form $\Sh_{\geq 0}(\R;\cC).$

\begin{prop}\label{prop:all_loc_invar_zero_for_Sh_geq_0} Let $\cC$ be a dualizable category. Then for any regular cardinal $\kappa$ we have
$\cU_{\loc,\kappa}^{\cont}(\Sh_{\geq 0}(\R;\cC))=0.$\end{prop}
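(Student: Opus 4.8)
The statement generalizes part 2) of Proposition \ref{prop:resolution_of_Sh_geq_0}, where only finitary localizing invariants were treated. The plan is to upgrade the argument so that it applies to an arbitrary accessible localizing invariant $\cU_{\loc,\kappa}^{\cont}$. First I would reduce to the case $\cC=\Sp$ exactly as in the proof of Proposition \ref{prop:resolution_of_Sh_geq_0}, using the equivalence $\Sh_{\geq 0}(\R;\cC)\simeq\Sh_{\geq 0}(\R;\Sp)\otimes\cC$ from \eqref{eq:sheaves_otimes_something}: tensoring a short exact sequence in $\Cat_{\st}^{\dual}$ with a fixed dualizable category preserves exactness, and $\cU_{\loc,\kappa}^{\cont}$ is a localizing invariant, so the vanishing for $\cC=\Sp$ propagates. (More precisely, one exhibits $\Sh_{\geq 0}(\R;\cC)$ as built from $\Sh_{\geq 0}(\R;\Sp)$ by a semi-orthogonal / exact-sequence manipulation after tensoring, so that $\cU_{\loc,\kappa}^{\cont}(\Sh_{\geq 0}(\R;\cC))$ is a retract of something built from $\cU_{\loc,\kappa}^{\cont}(\Sh_{\geq 0}(\R;\Sp))$.)

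\textbf{The core computation.} For $\cC=\Sp$, I would \emph{not} use the short exact sequence \eqref{eq:ses_for_Sh_geq_0} directly (that argument used finitariness to pass to compact objects and to invoke an $\R$-indexed semi-orthogonal decomposition). Instead I would exploit the identification $\Sh_{\geq 0}(\R;\Sp)\simeq\Stab^{\cont}((\R\cup\{+\infty\})_{\leq})$ together with the ``extremely nice'' sequential-limit vanishing package developed just above: Proposition \ref{prop:complete_SOD} and Corollary \ref{cor:K_0_of_nice_limits}, and more importantly the $K$-equivalence statement of Proposition \ref{prop:complete_SOD}, which shows that \emph{any} additive (hence localizing) invariant is insensitive to replacing $\prolim[n]\cB_n$ by $\prod_n\cA_n$ when $\cB_n$ has an $\N$-indexed semi-orthogonal decomposition with the stated adjointness. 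Concretely, one writes $\Sh_{\geq 0}(\R;\Sp)$ (or its restriction to compact objects after choosing $\kappa$ large enough that the invariant and the relevant sheaves are controlled) as an inverse limit of categories $\cB_n$ of sheaves constant on a finite partition of $\R$ by half-lines, with transition functors given by left adjoints to fully faithful inclusions; the kernels $\cA_n$ are each equivalent to $\Sp$ placed at a single ``jump'', and crucially the $K$-equivalence $\prolim[n]\cB_n\to\prod_n\cA_n$ of Proposition \ref{prop:complete_SOD} means $\cU_{\loc,\kappa}^{\cont}$ of the source equals $\cU_{\loc,\kappa}^{\cont}$ of $\prod_n\Sp$. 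Then one arranges a second such semi-orthogonal filtration (``shifted by one'') so that the identity functor and the functor $G\circ F$ become homotopic in $K_0$ of the endofunctor category, forcing the invariant to vanish — this is the same telescoping mechanism as in the proof of Proposition \ref{prop:complete_SOD}, but now applied to the category $\Sh_{\geq 0}(\R;\Sp)$ itself rather than to an auxiliary limit. The point is that $\Sh_{\geq 0}(\R;\Sp)$ carries a self-similar structure: cutting $\R$ at a point exhibits it (up to an exact sequence) as an extension of a copy of itself by itself, and combined with the $K$-equivalence one gets $\cU_{\loc,\kappa}^{\cont}(\Sh_{\geq 0}(\R;\Sp))\cong \cU_{\loc,\kappa}^{\cont}(\Sh_{\geq 0}(\R;\Sp))\oplus(\text{finitely controlled piece})$ in a way that only $0$ can solve in a non-degenerate setting, or better, one directly produces an Eilenberg-swindle-type identity at the level of the universal invariant.

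\textbf{Main obstacle.} The delicate point is set-theoretic bookkeeping with $\kappa$: $\cU_{\loc,\kappa}^{\cont}$ is defined via $\Calk_{\omega_1}^{\cont}$ and only commutes with $\kappa$-filtered colimits, so the telescoping/swindle arguments — which in the finitary case rested on $\Sp\in\Cat_{\st}^{\dual}$ being compact (Proposition \ref{prop:compact_objects_in_filtered_colimits}) and on $\cU_{\loc}$ commuting with all filtered colimits — must be rephrased so that only limits and semi-orthogonal decompositions (to which every localizing invariant is insensitive, by Proposition \ref{prop:F_cont_is_loc}) and $K$-equivalences (insensitive by the discussion after Definition \ref{defi:K-equivalence}) are used, never a filtered-colimit commutation. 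I expect the bulk of the work to be checking that the inverse system of categories $(\cB_n)$ approximating $\Sh_{\geq 0}(\R;\Sp)$ genuinely satisfies the hypotheses of Proposition \ref{prop:complete_SOD} (fully faithful right adjoints, $\N$-indexed semi-orthogonal decomposition with the right orthogonality) and that $\Sh_{\geq 0}(\R;\Sp)\simeq\prolim[n]^{\dual}\cB_n$ with the dualizable limit of Theorem \ref{th:limit_of_dualizable} agreeing with the naive one here; once that is in place the vanishing is formal. A clean alternative, which I would pursue in parallel, is to observe that $\Sh_{\geq 0}(\R;\Sp)$ admits a strongly continuous endofunctor $s$ (``shift by a bounded amount'', or rather the transition in a telescope) together with a natural cofiber sequence exhibiting $\mathrm{id}\oplus s\oplus s^2\oplus\cdots$ as making sense as an endofunctor and $\bigl(\bigoplus_{n\geq 0}s^n\bigr)\circ(\mathrm{id}-s)\simeq\mathrm{id}$ up to the relevant completed direct sum, which is literally the Eilenberg swindle and kills every accessible localizing invariant immediately; here the obstacle is only to make the infinite direct sum of endofunctors well-defined, which follows from the semi-orthogonality (the components live in mutually orthogonal pieces) just as in the proof of Proposition \ref{prop:complete_SOD}.
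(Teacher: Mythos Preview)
Your proposal misses the paper's central maneuver. The paper does \emph{not} abandon the short exact sequence~\eqref{eq:ses_for_Sh_geq_0}; it re-runs it with $A=\Q$ rather than $A=\R$. With $A=\Q$ both $\cA=\Fun(\Q_{\leq}^{op},\Sp)$ and $\prod_{\Q}\Sp$ are compactly generated, so the vanishing of $\cU_{\loc,\kappa}^{\cont}(\Sh_{\geq 0}(\R;\Sp))$ reduces to showing that the induced functor $\Phi^{\omega}:\cA^{\omega}\to\bigoplus_{\Q}\Sp^{\omega}$ between \emph{small} stable categories is a $K$-equivalence in the sense of Definition~\ref{defi:K-equivalence}. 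This is insensitive to $\kappa$; no colimit commutation is needed. The tools you cite (Proposition~\ref{prop:complete_SOD} and Corollary~\ref{cor:K_0_of_nice_limits}) are exactly what the paper uses, but applied to the endofunctor category $\Fun(\cA^{\omega},\cA^{\omega})$: a bijection $\N\cong\Q$ gives $\cA^{\omega}$ an $\N$-indexed semi-orthogonal decomposition, and Corollary~\ref{cor:K_0_of_nice_limits} yields $K_0(\Fun(\cA^{\omega},\cA^{\omega}))\cong\End_{\Z}(\bigoplus_{\Q}\Z)$. Since the obvious section $\Psi$ of $\Phi^{\omega}$ induces the identity on $K_0(\cA^{\omega})$, one gets $[\Psi\circ\Phi^{\omega}]=[\id]$.

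Your alternatives both have genuine gaps. Writing $\Sh_{\geq 0}(\R;\Sp)$ as $\prolim[n]^{\dual}\cB_n$ does not help: localizing invariants do not commute with such limits, and $\Sh_{\geq 0}(\R;\Sp)$ has no non-zero compact objects, so ``restricting to compact objects'' collapses everything. The Eilenberg swindle on $\Sh_{\geq 0}(\R;\Sp)$ requires $\bigoplus_{n\geq 0}s^n$ to be a \emph{strongly continuous} endofunctor, and the semi-orthogonality you invoke does not supply this: the right adjoint of an infinite direct sum of strongly continuous functors is an infinite product of their right adjoints, which has no reason to preserve colimits here. The paper sidesteps all of this by keeping the entire argument on the small-category side, where $K$-equivalence is available and the infinite direct sums in Proposition~\ref{prop:complete_SOD} are finite on each component of the inverse limit.
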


\begin{proof}Since $\Sh_{\geq 0}(\R;\cC)\simeq \Sh_{\geq 0}(\R;\Sp)\otimes\cC,$ we may assume that $\cC=\Sp.$ Applying  Proposition \ref{prop:resolution_of_Sh_geq_0} to $A=\Q,$ we obtain a short exact sequence
$$0\to\Sh_{\geq 0}(\R;\Sp)\to\Fun(\Q_{\leq}^{op},\Sp)\xto{\Phi}\prodd[\Q]\Sp\to 0$$
in $\Cat_{\st}^{\dual}.$ Moreover, the source and the target of $\Phi$ are compactly generated categories. Denote by $\Phi^{\omega}$ the induced functor on the subcategories of compact objects. It suffices to show that $\Phi^{\omega}:\Fun(\Q_{\leq},\Sp)^{\omega}\to (\prodd[\Q]\Sp)^{\omega}\simeq\biggplus[\Q]\Sp^{\omega}$ is a $K$-equivalence. As in the proof of proposition \ref{prop:resolution_of_Sh_geq_0}, we put $\cA=\Fun(\Q_{\leq},\Sp)$ and $\cB=\prodd[\Q]\Sp.$

Note that the functor $\Phi^{\omega}:\cA^{\omega}\to\cB^{\omega}$ has an obvious section $\Psi:\cB^{\omega}\simeq \biggplus[\Q]\Sp^{\omega}\to \cA,$ which corresponds to the collection of ``representable'' functors $P_a(\bS),$ $a\in\Q,$ where $P_a$ is defined in \eqref{eq:functors_P_a}. We need to show that $[\Psi\circ\Phi^{\omega}]=[\id]$ in $K_0(\Fun(\cA^{\omega},\cA^{\omega})).$

Choose any bijection $\N\xto{\sim}\Q,$ $n\mapsto a_n.$ Denote by $\cC_n\subset \cA^{\omega}$ the full stable subcategory generated by the objects $P_{a_i}(\bS),$ $0\leq i\leq n.$ Then $\cC_n$ is equivalent to the category $\Fun([n],\Sp^{\omega}),$ where as usual $[n]$ denotes the poset $\{0,1,\dots,n\}.$ Clearly, each inclusion $\cC_n\to\cC_{n+1}$ has both left and right adjoints. Moreover, we see that the elements $[P_a(\bS)],$ $a\in\Q,$ form a basis of $K_0(\cA^{\omega})\cong\biggplus[\Q]\Z.$

It follows from Corollary \ref{cor:K_0_of_nice_limits} that $K_0(\Fun(\cA^{\omega},\cA^{\omega}))\cong\prolim[n]K_0(\Fun(\cC_n,\cA^{\omega})).$ Since $\Fun(\cC_n,\cA^{\omega})\simeq\Fun([n],\cA^{\omega}),$ we deduce that 
$$K_0(\Fun(\cA^{\omega},\cA^{\omega}))\cong\Hom_{\Z}(K_0(\cA^{\omega}),K_0(\cA^{\omega}))=\End_{\Z}(\biggplus[\Q]\Z).$$ It remains to observe that the functor $\Psi\circ\Phi^{\omega}$ induces the identity map on $K_0(\cA^{\omega}):$ it sends each object $P_a(\bS)$ to itself, $a\in\Q.$\end{proof}


\begin{proof}[Proof of Theorem \ref{th:map_of_localizing_invariants}]. We first prove that $\pi_n(\varphi)$ is an isomorphism for $n\leq 0.$ The argument is basically the same as in \cite{KW19}. We know that $\pi_0(\varphi)$ is an isomorphism by assumption. Assuming that $\pi_n(\varphi)$ is an isomorphism for some $n,$ we see that for any $\cA\in\Cat^{\perf}$ the composition
$$\pi_{n-1}(\varphi):\pi_{n-1}F(\cA)\cong \pi_n F(\Calk_{\omega_1}(\cA))\xto{\pi_n(\varphi)}\pi_n G(\Calk_{\omega_1}(\cA))\cong\pi_{n-1}G(\cA)$$ 
is also an isomorphism. By induction, we see that $\pi_n(\varphi)$ is an isomorphism for $n\leq 0.$

Denote by $\varphi^{\cont}:F^{\cont}\to G^{\cont}$ the induced map between the localizing invariants of dualizable categories. Arguing as above, we see that $\pi_n(\varphi^{\cont})$ is an isomorphism for $n\leq -1.$ It remains to show that if $\pi_n(\varphi^{\cont})$ is an isomorphism for some $n,$ then $\pi_{n+1}(\varphi^{\cont})$ is also an isomorphism.

Let $\cC$ be a dualizable category. Then the functor $\Gamma_c(\R,\varphi_{\gamma}^*(-))[1]:\Sh_{\geq 0}(\R;\cC)\to\cC,$ $\cF\mapsto \indlim[a]\cF((-\infty,a)),$ is a strongly continuous localization (the right adjoint sends an object of $\cC$ to the corresponding constant sheaf). We denote by $\Sh_{>0}(\R;\cC)$ the kernel of $\Gamma_c(\R,\varphi_{\gamma}^*(-)).$ Applying Proposition \ref{prop:all_loc_invar_zero_for_Sh_geq_0}, we see that the composition
\begin{multline*}\pi_{n+1}(\varphi^{\cont}):\pi_{n+1}F^{\cont}(\cC)\cong \pi_n F^{\cont}(\Sh_{>0}(\R;\cC))\xto{\pi_n(\varphi^{\cont})} \pi_n G^{\cont}(\Sh_{>0}(\R;\cC))\cong \\
\pi_{n+1}G^{\cont}(\cC)\end{multline*} is an isomorphism. Therefore, the map $\varphi^{\cont}:F^{\cont}\to G^{\cont}$ is an isomorphism. 
\end{proof}

\section{Sheaves and cosheaves on continuous posets}
\label{sec:sh_cosh_continuous_posets}

\subsection{Continuous posets}
\label{ssec:cont_posets}

Recall that a partially ordered set $(P,\leq)$ is called continuous if $P$ has directed supremums (joins), and the functor $\colim:\Ind(P)\to P$ has a left adjoint $\hat{\cY}:P\to\Ind(P).$ Equivalently, this means the following.

Assuming that $P$ has directed supremums, one defines another relation $\ll$ on $P:$ namely, $j\ll k$ if for any directed system $(l_i)_{i\in I}$ of elements of $P$ such that $k\leq \sup\limits_{i\in I}l_i,$ we have $j\leq l_i$ for some $i\in I.$ Then $P$ is continuous if and only if for each $j\in P$ the poset $\{k\in P\mid k\ll j\}$ is directed and we have equality $$j=\sup\limits_{k\ll j}k.$$ In this case we have $\hat{\cY}(j)=\inddlim[k\ll j]k.$ We refer to \cite{GHKLMS, BH81} for details.

\begin{example} Let $P=\R\cup\{+\infty\}$ with the usual linear order. Then $P$ is continuous and we have $x\ll y$ iff $x<y.$\end{example}

\begin{example}\label{ex:Open_X_cont_poset} Let $X$ be a locally compact Hausdorff space, and let $P=\Open(X)$ -- the set of open subsets of $X,$ ordered by inclusion. Then $P$ is continuous and we have $U\ll V$ iff $U\Subset V,$ i.e. $\bar{U}\subset V$ and $\bar{U}$ is compact.\end{example}

From now on we assume that $P$ is a continuous poset. We define a Grothendieck topology on $P$ (considered as a category) as follows. A sieve on $j$ is a covering sieve if and only if it contains all $k\ll j.$ This is indeed a topology, since for any $k\ll j$ there exists $l\in P$ such that $k\ll l \ll j.$ Below we will consider sheaves and cosheaves on $P$ with respect to this topology. 

\subsection{Sheaves of spectra on continuous posets}
\label{ssec:sh_of_sp_on_cont_posets}

Let $P$ be a continuous poset. We denote by $\Sh(P;\Sp)$ the category of sheaves on $P$ (where $P$ is considered as a category) for the above topology. This is in fact the category $\Stab^{\cont}(P)$ from Subsection \ref{ssec:cont_stab}.

Namely, since every object of $P$ has the smallest covering sieve, a presheaf $\cF$ is a sheaf if and only if
$$\cF(j)=\prolim[k\ll j]\cF(k),\quad j\in P.$$
Denote by $(-)^{\sharp}:\PSh(P;\Sp)\to\Sh(P;\Sp)$ the sheafification functor. Explicitly, we have
\begin{equation}\label{eq:sheafification_on_cont_posets} \cF^{\sharp}(j)=\prolim[k\ll j]\cF(k).\end{equation} For $j\in P,$ denote by $h_j$ the ``representable'' presheaf of spectra, 
$$h_j(k)=\begin{cases}\bS & \text{if }k\leq j\\
0 & \text{else.}\end{cases}$$

\begin{prop}\label{prop:Stab_cont_of_cont_posets} The category $\Sh(P;\Sp)$ is dualizable. For any dualizable category $\cC,$ we have an equivalence
$$\Fun^{LL}(\Sh(P;\Sp),\cC)\xto{\sim} \Fun^{\strcont}(P,\cC),\quad F\mapsto(j\mapsto F(h_j^{\sharp})).$$
\end{prop}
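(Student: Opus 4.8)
The plan is to realize $\Sh(P;\Sp)$ as a special case of the continuous stabilization $\Stab^{\cont}(\cC)$ from Subsection \ref{ssec:cont_stab}, applied to the compactly assembled accessible category $\cC = P$ itself. Indeed, a continuous poset $P$ is a compactly assembled accessible category: it has filtered colimits (directed joins) and the colimit functor $\Ind(P)\to P$ has a left adjoint $\hat{\cY}_P$, with $\hat{\cY}_P(j) = \inddlim[k\ll j] k$. I would first identify $\Sh(P;\Sp)$ with $\Stab^{\cont}(P)$. By definition $\Stab^{\cont}(P)$ is the category of cofiltered-limit-preserving functors $P^{op}\to\Sp$; under the identification of $P$ being $\omega_1$-accessible (Proposition \ref{prop:comp_ass_omega_1_gen}), this is the category of functors $(P^{\omega_1})^{op}\to\Sp$ commuting with $\omega_1$-small cofiltered limits. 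On the other hand, a presheaf $\cF$ on $P$ is a sheaf for the topology described above exactly when $\cF(j) = \prolim[k\ll j]\cF(k)$ for all $j$; since $k\ll j$ is a directed (hence filtered) condition and the sheafification formula \eqref{eq:sheafification_on_cont_posets} reads $\cF^{\sharp}(j) = \prolim[k\ll j]\cF(k)$, matching the formula \eqref{eq:sheafification_on_compass_cats} for $F\mapsto F^{\sharp}$ in the proof of Proposition \ref{prop:continuous_stab}, the two subcategories of $\PSh(P;\Sp) = \Fun(P^{op},\Sp)$ coincide. The only point needing a small argument is that the sheaf condition for this topology is equivalent to the single equality $\cF(j)=\prolim[k\ll j]\cF(k)$: this follows because every object $j\in P$ has a smallest covering sieve, namely $\{k : k\ll j\}$, and a presheaf into a complete category is a sheaf iff it takes each covering sieve (equivalently, the minimal one) to a limit.

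Once $\Sh(P;\Sp)\simeq\Stab^{\cont}(P)$ is established, dualizability of $\Sh(P;\Sp)$ is immediate from Proposition \ref{prop:continuous_stab}, which says $\Stab^{\cont}(\cC)$ is dualizable for any compactly assembled accessible $\cC$, and moreover that $\cC\mapsto\Stab^{\cont}(\cC)$ is left adjoint to the inclusion $\Cat_{\st}^{\dual}\to\Compass$. For the universal property, I would combine the universal property of $\Stab^{\cont}$ from Proposition \ref{prop:continuous_stab} with Remark \ref{rem:Stab_cont_otimes_something}. Specifically: from the adjunction $\Stab^{\cont}\dashv(\text{inclusion})$, for any dualizable $\cC$ we get
$$\Fun^{LL}(\Stab^{\cont}(P),\cC) \simeq \Hom_{\Compass}(P,\cC),$$
and the right-hand side is by definition the space/category of strongly continuous functors $P\to\cC$, i.e. $\Fun^{\strcont}(P,\cC)$ in the notation of Definition \ref{defi:strcont_general}. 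Here I should be careful that $\Fun^{\strcont}(P,\cC)$ in the statement means strongly continuous functors in the sense of Definition \ref{defi:strcont_general}(1): $F$ is continuous and $\hat{\cY}_\cC\circ F\to\Ind(F)\circ\hat{\cY}_P$ is an isomorphism. Under the adjunction the functor $F\in\Fun^{LL}(\Sh(P;\Sp),\cC)$ corresponds to its restriction along the unit $P\to\Stab^{\cont}(P)$, $j\mapsto h_j^{\sharp}$ — this is exactly the description given in the proof of Proposition \ref{prop:continuous_stab}, where the unit sends $x\in\cC^{\omega_1}$ to $h_x^{\sharp}$ and one checks $\hat{\cY}(h_x^{\sharp}) = \inddlim[j] h_{x_j}^{\sharp}$ when $\hat{\cY}(x) = \inddlim[j] x_j$. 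So the composite equivalence is precisely $F\mapsto(j\mapsto F(h_j^{\sharp}))$.

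The main obstacle — really the only substantive verification — is the identification $\Sh(P;\Sp)\simeq\Stab^{\cont}(P)$ as full subcategories of $\Fun(P^{op},\Sp)$, together with the matching of the two sheafification functors. This amounts to checking: (a) the minimal covering sieve at $j$ is $\{k : k\ll j\}$, which uses the interpolation property $k\ll j\Rightarrow\exists l,\ k\ll l\ll j$ (noted in Subsection \ref{ssec:cont_posets}) to see this sieve is indeed covering, and the definition of the topology to see it is minimal; (b) consequently $\cF$ is a sheaf iff $\cF(j)\xrightarrow{\sim}\prolim[k\ll j]\cF(k)$; (c) this is the same condition as being a cofiltered-limit-preserving functor $P^{op}\to\Sp$ after passing to $\omega_1$-compact objects — for this one uses that $\hat{\cY}_P(j) = \inddlim[k\ll j] k$ and that the pro-object $\proolim[k\ll j] k$ computing the continuous limit agrees with the sieve. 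I would also remark that the general-coefficient version — replacing $\Sp$ by a dualizable $\cC$ throughout, giving $\Sh(P;\cC)\simeq\Stab^{\cont}(P)\otimes\cC$ and the analogous universal property — follows formally by tensoring, via Remark \ref{rem:Stab_cont_otimes_something}, though the statement as written only asks for the $\Sp$-valued case.
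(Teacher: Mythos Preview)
Your proposal is correct and follows exactly the paper's approach: the paper's proof is the single line ``This is a special case of Proposition \ref{prop:continuous_stab},'' relying on the identification $\Sh(P;\Sp)=\Stab^{\cont}(P)$ stated just before the proposition. Your write-up simply unpacks that identification and the adjunction in more detail than the paper does, which is fine; the one point worth noting is that the equivalence between the sheaf condition $\cF(j)\simeq\prolim[k\ll j]\cF(k)$ and preservation of \emph{all} cofiltered limits in $P^{op}$ (as required by the definition of $\Stab^{\cont}$) does use the interpolation property $k\ll j\Rightarrow\exists l,\ k\ll l\ll j$, which you correctly flag.
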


\begin{proof}This is a special case of Proposition \ref{prop:continuous_stab}.
\end{proof}

\subsection{Localizing invariants of categories of sheaves on continuous posets}
\label{ssec:U_loc_sh_on_cont}

Consider now a presheaf $\un{\cC}$ on $P$ with values in $\Cat_{\st}^{\dual},$ $\cC_j=\un{\cC}(j)$ for $j\in P.$ We denote by $\res_{jk}:\cC_j\to\cC_k$ the restriction functors for $j\geq k,$ and by $\res_{jk}^R:\cC_k\to\cC_j$ their right adjoints. Consider the category $\PSh(P;\un{\cC})$ of $\un{\cC}$-valued presheaves on $P$ (i.e. the category of sections of the associated cocartesian fibration over $P^{op},$ which is also cartesian). For $j\in P,$ we denote by $\tilde{\iota}_j:\cC_j\to\PSh(P;\un{\cC})$ the functor given by
$$\tilde{\iota}_j(x)(k)=\begin{cases}\res_{jk}(x) & \text{ for }k\leq j;\\
0 & \text{else.}\end{cases}$$
The right adjoint to $\tilde{\iota}_j$ is given by $\tilde{\iota}_j^R(\cF)=\cF(j).$

Denote by $\Sh(P;\un{\cC})$ the category of $\un{\cC}$-valued sheaves on $P.$ For $j\in P$ we have a functor $\iota_j:\cC_j\xto{\tilde{\iota}_j}\PSh(P;\un{\cC})\to \Sh(P;\un{\cC}).$ The right adjoint to $\iota_j$ is given by $\iota_j^R(\cF)=\cF(j).$ 

Our goal in this section is to prove the following result.

\begin{theo}\label{th:U_loc_sheaves_cont_posets} 1) The category $\Sh(P;\un{\cC})$ is dualizable.

2) For each compact element $i\in P^{\omega},$ the functor $\iota_i:\cC_i\to\Sh(P;\un{\cC})$ is strongly continuous and fully faithful.

3) The functors $\iota_i,$ $i\in P^{\omega},$ induce an isomorphism $\bigoplus\limits_{i\in P^{\omega}}\cU_{\loc}^{\cont}(\cC_i)\xto{\sim} \cU_{\loc}^{\cont}(\Sh(P;\cC)).$\end{theo}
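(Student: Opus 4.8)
\textbf{Proof strategy for Theorem \ref{th:U_loc_sheaves_cont_posets}.}

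The plan is to reduce everything to the structure results already available: the description of sheaves via presheaves together with a sheafification/localization, and the computation of finitary localizing invariants of categories like $\Fun(P^{op},\cC)$ and products, using additivity in semi-orthogonal decompositions (Proposition \ref{prop:additivity_in_infinite_SOD}) and the vanishing $\cU_{\loc}^{\cont}(\Sh_{\geq 0}(\R;\cC))=0$ (Proposition \ref{prop:all_loc_invar_zero_for_Sh_geq_0}) as the model case. First I would establish part 1): the inclusion $\Sh(P;\un{\cC})\subset\PSh(P;\un{\cC})$ is reflective with sheafification given by the explicit formula $\cF^{\sharp}(j)=\prolim[k\ll j]\cF(k)$ as in \eqref{eq:sheafification_on_cont_posets}, and this localization is strongly continuous (its right adjoint, the inclusion, preserves filtered colimits because $\hat{\cY}(j)=\inddlim[k\ll j]k$ is an $\omega_1$-compact ind-object and hence the limit defining the sheaf condition commutes with filtered colimits). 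The category $\PSh(P;\un{\cC})$ is dualizable since it is an (op)lax limit of dualizable categories along accessible functors (use Proposition \ref{prop:gluing_presentable} iterated, or directly observe it is a section category of a cartesian fibration with dualizable fibers and continuous transition functors; dualizability then follows from Proposition \ref{prop:condition_for_dualizability} applied to the functors $\tilde\iota_j$). Then $\Sh(P;\un{\cC})$, being a strongly continuous localization of a dualizable category, is dualizable by Proposition \ref{prop:ses_Pr^LL}.

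For part 2), fix a compact element $i\in P^{\omega}$, i.e. $i\ll i$. Then $h_i$ is already a sheaf: indeed $h_i^{\sharp}(j)=\prolim[k\ll j]h_i(k)$, and since $i\ll i$ the sieve of elements $\ll i$ contains $i$ itself, so the limit is computed by a cofinal subdiagram containing $i$ and one checks $h_i^{\sharp}=h_i$. More generally, one verifies directly that $\tilde\iota_i(x)$ is a sheaf for every $x\in\cC_i$, so $\iota_i$ is the composition of $\tilde\iota_i$ with a functor that does not change the value, and $\iota_i^R(\cF)=\cF(i)$. Fully faithfulness: $\iota_i^R\iota_i(x)(i)=\res_{ii}(x)=x$, and strong continuity follows because $\iota_i^R$, being evaluation at $i$, commutes with all colimits of sheaves (colimits of sheaves are sheafifications of pointwise colimits, but evaluation at a compact object $i$ already gives a sheaf value). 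So $\iota_i$ is strongly continuous and fully faithful.

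For part 3), the key is to produce a semi-orthogonal decomposition (or a filtered colimit of such) of $\Sh(P;\un{\cC})$ indexed by $P^{\omega}$, with pieces $\iota_i(\cC_i)$, and then invoke Proposition \ref{prop:additivity_in_infinite_SOD}. The strategy: write $P$ as a filtered colimit (directed union) of its compact elements — more precisely, for each $j\in P$ and each covering family one has $j=\sup_{i\ll j}i$ with $\{i\ll j\}$ directed — and use the continuity of $P$ to express $\Sh(P;\un{\cC})$ via a ``resolution'' analogous to \eqref{eq:ses_for_Sh_geq_0}: there should be a short exact sequence in $\Cat_{\st}^{\dual}$ of the form $0\to\Sh(P;\un{\cC})\to\PSh(P^{\omega,op},\cdot)\xrightarrow{\Phi}\prod_{i\in P^{\omega}}\cC_i\to 0$, where the middle term carries a $P^{\omega}$-indexed semi-orthogonal decomposition with pieces $\cC_i$ (via the $\tilde\iota_i$, ordered by $\leq$ on $P^{\omega}$) and the right term carries the orthogonal ($\prod=\bigoplus$ after $(-)^{\omega}$) decomposition, and $\Phi$ respects the decompositions inducing the identity on each $\cC_i$. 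Then $\cU_{\loc}^{\cont}(\Phi)$ is an isomorphism exactly as in the proof of Proposition \ref{prop:resolution_of_Sh_geq_0}(2), whence $\cU_{\loc}^{\cont}(\Sh(P;\un{\cC}))\cong\cU_{\loc}^{\cont}(\mathrm{middle})\cong\bigoplus_{i\in P^{\omega}}\cU_{\loc}^{\cont}(\cC_i)$, and one checks the resulting isomorphism is the one induced by the $\iota_i$.

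\textbf{Main obstacle.} The hard part will be setting up the resolution in part 3) correctly when $P$ is a general continuous poset rather than a linear order: the proof of Proposition \ref{prop:resolution_of_Sh_geq_0} exploits density of a linearly ordered set, and for a general continuous poset one must choose an appropriate ``dense'' sub-poset of $P^{\omega}$ (or iterate the $\ll$ relation), identify the right adjoint $\Phi^R$ explicitly — the analogue of \eqref{eq:formula_for_Phi^R}, which in the linear case produced a direct sum of ``skyscraper'' functors with vanishing transition maps — and verify that the right orthogonal to its image is precisely the sheaf condition $\cF(j)=\prolim[k\ll j]\cF(k)$. Handling the transition functors $\res_{jk}$ of the presheaf $\un{\cC}$ throughout (rather than a constant coefficient system) adds bookkeeping but no essential new difficulty, since all the adjoint functors $P_a, I_a, G_a$ from the proof of Proposition \ref{prop:resolution_of_Sh_geq_0} have evident analogues $\tilde\iota_i$ and their relatives. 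Once the short exact sequence is in place, the localizing-invariant computation is formal given Propositions \ref{prop:additivity_in_infinite_SOD}, \ref{prop:F_cont_is_loc} and the compatibility of $\Phi$ with the decompositions.
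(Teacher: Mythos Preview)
Your approach has a genuine gap in both Part 1 and Part 3, and they share the same underlying misconception.

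In Part 1 you claim the inclusion $\Sh(P;\un{\cC})\hookrightarrow\PSh(P;\un{\cC})$ preserves filtered colimits because $\hat{\cY}(j)=\inddlim[k\ll j]k$ is $\omega_1$-compact. This is false for a general continuous poset: take $P=\Open(X)$ for $X$ an uncountable discrete space, where $\{U: U\Subset X\}$ is the poset of finite subsets and has no countable cofinal family. The correct observation---which you are missing---is that the sheafification formula $\cF^{\sharp}(j)=\prolim[k\ll j]\res_{jk}^R(\cF(k))$ is built from right adjoints and limits only, so $(-)^{\sharp}$ commutes with all limits and therefore admits a \emph{left} adjoint $L:\Sh(P;\un{\cC})\to\PSh(P;\un{\cC})$. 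This places $\Sh(P;\un{\cC})$ as the \emph{kernel} (not a quotient) in a short exact sequence
\[
0\to\Sh(P;\un{\cC})\xto{L}\PSh(P;\un{\cC})\xto{\Phi}\cD\to 0,
\]
with $\cD=\ker((-)^{\sharp})$, and dualizability follows.

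In Part 3 your proposed resolution $0\to\Sh(P;\un{\cC})\to\PSh(P^{\omega,op},\cdot)\to\prod_{P^{\omega}}\cC_i\to 0$ cannot be correct: for $P=(\R\cup\{+\infty\})_{\leq}$ one has $P^{\omega}=\emptyset$, so your sequence would force $\Sh(P;\un{\cC})=0$, which it is not. The dense subset $A$ in Proposition~\ref{prop:resolution_of_Sh_geq_0} is \emph{not} the set of compact elements of $P$; it is an artifact of the linear order on $\R$ and does not generalize. The paper instead works with the sequence above: $\PSh(P;\un{\cC})$ carries a $P$-indexed semi-orthogonal decomposition via the $\tilde{\iota}_j$, and one computes $\Phi\circ\tilde{\iota}_j\cong\Cone\bigl(\indlim[k\ll j]\tilde{\iota}_k\circ\res_{jk}\to\tilde{\iota}_j\bigr)$, which vanishes precisely when $j\in P^{\omega}$ and is fully faithful otherwise, yielding a $(P\setminus P^{\omega})$-indexed semi-orthogonal decomposition of $\cD$. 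Applying $\cU_{\loc}^{\cont}$ then gives a split triangle with $\bigoplus_{P}\cU_{\loc}^{\cont}(\cC_j)$ in the middle and $\bigoplus_{P\setminus P^{\omega}}\cU_{\loc}^{\cont}(\cC_j)$ on the right, and the result follows. Your Part 2 is essentially correct.
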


\begin{proof}For a presheaf $\cF\in\PSh(P;\un{\cC}),$ we denote by $\cF^{\sharp}\in\Sh(P;\un{\cC})$ its sheafification. Since any object $j\in P$ has the smallest covering sieve $\{k: k\ll j\},$ we see that
\begin{equation}\label{eq:sheafification_simple}\cF^{\sharp}(j)=\limproj\limits_{k\ll j}\res_{jk}^R(\cF(k)),\quad j\in P.\end{equation} In particular, we have $\cF^{\sharp}(j)=\cF(j)$ for $j\in P^{\omega}.$ This shows that for $j\in P^{\omega}$ the functor $\iota_j^R:\Sh(P;\un{\cC})\to\cC$ commutes with coproducts. This proves 2).

To prove 1), we first observe that the category $\PSh(P;\un{\cC})$ is dualizable. Indeed, we have a $P$-indexed semi-othogonal decomposition of the category $\PSh(P;\un{\cC})$ in $\Pr^{LL}_{\st}:$ \begin{equation}\label{eq:SOD_for_PSh}
\PSh(P;\un{\cC})=\langle \tilde{\iota}_j(\cC_j); j\in P \rangle.\end{equation}
Since $\cC_j$ are dualizable, so is the category $\PSh(P;\un{\cC})$ by Proposition \ref{prop:infinite_SOD_of_dualizable}.

Now, it follows from \eqref{eq:sheafification_simple} that the sheafification functor has a {\it left} adjoint $L:\Sh(P;\un{\cC})\to\PSh(P;\un{\cC}).$ We deduce that the category $\Sh(P;\un{\cC})$ is dualizable.

To prove 3), consider the full subcategory $\cD=\ker((-)^{\sharp})\subset\PSh(P;\un{\cC}).$ By the above we have a short exact sequence in $\Cat_{\st}^{\dual}:$
\begin{equation}\label{eq:resolution_of_Sh_on_P}0\to \Sh(P;\un{\cC})\xto{L}\PSh(P;\un{\cC})\xto{\Phi}\cD\to 0,\end{equation}
where the functor $\Phi$ is the left adjoint to the inclusion. We first observe the following.

{\noindent{\bf Claim 1.}} {\it Let $\cF\in\PSh(P;\un{\cC})$ be a presheaf such that for any $k\ll j$ the map $\res_{jk}(\cF(j))\to\cF(k)$ is zero. Then $\cF$ is contained in $\cD.$}

\begin{proof}[Proof of Claim 1.] This follows directly from the description of the sheafification functor \eqref{eq:sheafification_simple}. \end{proof}

Next, we observe that the category $\cD$ has the following semi-orthogonal decomposition.

{\noindent{\bf Claim 2.}} {\it For $j\in P,$ the functor $\Phi\circ\tilde{\iota}_j:\cC_j\to\cD$ is zero (resp. fully faithful) for $j\in P^{\omega}$ (resp. for $j\in P\setminus P^{\omega}$). Moreover, we have a semi-orthogonal decomposition
$$\cD=\langle \Phi(\tilde{\iota}_j(\cC_j)); j\in P\setminus P^{\omega}\rangle.$$}  

\begin{proof}[Proof of Claim 2.] Let us describe the functor $L:\Sh(P;\un{\cC})\to \PSh(P;\un{\cC})$ more explicitly. For $j\in P,$ we have
\begin{equation}\label{eq:description_of_L}L\circ\iota_j\cong \indlim[k\ll j](\tilde{\iota}_k\circ \res_{jk}).\end{equation} Indeed, it follows from \eqref{eq:sheafification_simple} (by adjunction) that for $x\in\cC_j,$ the object $\liminj\limits_{k\ll j}\tilde{\iota}_k(\res_{jk}(x))$ is left orthogonal to $\cD.$ On the other hand, it follows from Claim 1 that the object $\Cone(\liminj\limits_{k\ll j}\tilde{\iota}_k(\res_{jk}(x))\to\tilde{\iota}_j(x))$ is contained in $\cD$ (since the latter presheaf can take non-zero values only for $k\in P$ such that $k\leq j$ but $k\not\ll j$). This proves the isomorphism \eqref{eq:description_of_L}.

We deduce that $$\Phi\circ\tilde{\iota}_j\cong\Cone(\liminj\limits_{k\ll j}(\tilde{\iota}_k\circ\res_{jk})\to \tilde{\iota}_j).$$
In particular, we see that $\Phi\circ\tilde{\iota}_j$ is zero for $j\in P^{\omega}.$ If $j\in P\setminus P^{\omega},$ then $$\Phi(\tilde{\iota}_j(x))(j)=\Cone(\liminj\limits_{k\ll j}\tilde{\iota}_k(x)(j)\to\tilde{\iota}_j(x)(j))=\Cone(0\to x)=x.$$ Hence, the adjunction unit $x\to (\Phi\circ\tilde{\iota}_j)^R\circ (\Phi\circ\tilde{\iota}_j)$ is an isomorphism. We conclude that the functor $\Phi\circ\tilde{\iota}_j$ is fully faithful for $j\in P\setminus P^{\omega}.$

Similarly, if $j,k\in P\setminus P^{\omega}$ and $k\not\leq j,$ then $\Phi(\tilde{\iota}_j(x))(k)=0$ for $x\in\cC_j.$ This implies the semi-orthogonality. 

Finally, taking into account the vanishing of $\Phi\circ\tilde{\iota}_j$ for $j\in P^{\omega},$ we deduce that the categories $\Phi(\tilde{\iota}_j(\cC_j)),$ $j\in P\setminus P^{\omega},$ generate $\cD.$ This proves the claim.\end{proof}

It follows from Claim 2 and the semi-orthogonal decomposition \eqref{eq:SOD_for_PSh} that after applying $\cU_{\loc}^{\cont}$ to the short exact sequence \eqref{eq:resolution_of_Sh_on_P}, we get a split exact triangle:
$$
\begin{CD}
\cU_{\loc}^{\cont}(\Sh(P;\un{\cC})) @>{\cU_{\loc}^{\cont}(L)}>> \cU_{\loc}^{\cont}(\PSh(P;\un{\cC})) @>{\cU_{\loc}^{\cont}(\Phi)}>> \cU_{\loc}^{\cont}(\cD)\\
@A{\sim}AA @A{\sim}AA @A{\sim}AA\\
\bigoplus\limits_{j\in P^{\omega}}\cU_{\loc}^{\cont}(\cC_j) @>>> \bigoplus\limits_{j\in P}\cU_{\loc}^{\cont}(\cC_j) @>>> \bigoplus\limits_{j\in P\setminus P^{\omega}}\cU_{\loc}^{\cont}(\cC_j)
\end{CD}
$$ 
This proves part 3).
\end{proof}

\subsection{Localizing invariants of categories of cosheaves on continuous posets}
\label{ssec:cosheaves_cont_posets}

Let now $\un{\cC}:P\to\Cat_{\st}^{\dual}$ be a copresheaf of dualizable categories on $P,$ $\cC_j=\un{\cC}(j)$ for $j\in P.$ By a $\un{\cC}$-valued copresheaf on $P$ we mean a section of the associated cocartesian fibration over $P$ (which is also cartesian). We denote by $\copsh(P;\un{\cC})$ this category of copresheaves. Further, we denote by $\Cosh(P;\un{\cC})\subset \copsh(P;\un{\cC})$ the full subcategory of cosheaves: it consists of copresheaves $\cF$ such that $\cF(j)\cong \indlim[k\ll j]\cores_{kj}(\cF(k)),$ $j\in P.$ We denote by $(-)^{\sharp}:\copsh(P;\un{\cC})\to \Cosh(P;\un{\cC})$ the cosheafification functor, i.e. the right adjoint to the inclusion.

Again, we denote by $\tilde{\iota_j}:\cC_j\to\copsh(P;\un{\cC})$ the left Kan extension functor, given by
$$\tilde{\iota}_j(x)(k)=\begin{cases}\cores_{jk}(x) & \text{ for }k\geq j;\\
0 & \text{else.}\end{cases}$$
The right adjoint to $\tilde{\iota}_j$ is given by $\tilde{\iota}_j^R(\cF)=\cF(j).$ We have a composition $\iota_j:\cC_j\xto{\tilde{\iota_j}}\copsh(P;\un{\cC})\xto{(-)^{\sharp}}\Cosh(P;\un{\cC}).$ The following result is basically dual to Theorem \ref{th:U_loc_sheaves_cont_posets}.

\begin{theo}\label{th:U_loc_cosheaves_on_cont_posets} 1) The category $\Cosh(P;\un{\cC})$ is dualizable.

2) For each compact element $j\in P^{\omega},$ the functor $\tilde{\iota_j}$ takes values in cosheaves, hence we have a fully faithful strongly continuous functor $\iota_j:\cC_j\to\Cosh(P;\un{\cC}).$

3) The functors $\iota_j,$ $j\in P^{\omega},$ induce an isomorphism $\biggplus[j\in P^{\omega}]\cU_{\loc}^{\cont}(\cC_j)\xto{\sim} \cU_{\loc}^{\cont}(\Cosh(P;\un{\cC})).$ 
\end{theo}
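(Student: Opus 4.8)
The plan is to dualize the proof of Theorem \ref{th:U_loc_sheaves_cont_posets} essentially line by line, exchanging presheaves for copresheaves, the reflective subcategory of sheaves for the coreflective subcategory of cosheaves, and left adjoints for right adjoints; an alternative route via the duality involution is indicated at the end. For part 1) I would first record that the left Kan extension functors $\tilde{\iota}_j\colon\cC_j\to\copsh(P;\un{\cC})$ give a semi-orthogonal decomposition $\copsh(P;\un{\cC})=\langle\tilde{\iota}_j(\cC_j);\ j\in P^{op}\rangle$ in $\Pr^{LL}_{\st}$: for $x\in\cC_i$, $y\in\cC_j$ one has $\Hom(\tilde{\iota}_i(x),\tilde{\iota}_j(y))\cong\Hom_{\cC_i}(x,\cores_{ji}(y))$ when $j\le i$ and $0$ otherwise (so the indexing poset is $P^{op}$, not $P$ as in \eqref{eq:SOD_for_PSh}), and $\tilde{\iota}_j$ is strongly continuous since its right adjoint, evaluation at $j$, preserves the pointwise colimits. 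By Proposition \ref{prop:infinite_SOD_of_dualizable} the category $\copsh(P;\un{\cC})$ is dualizable with $\copsh(P;\un{\cC})^{\omega}=\langle\tilde{\iota}_j(\cC_j^{\omega});\ j\in P^{op}\rangle$. Since colimits commute with colimits, a pointwise colimit of cosheaves is a cosheaf, so $\Cosh(P;\un{\cC})$ is closed under colimits in $\copsh(P;\un{\cC})$ and the inclusion acquires a right adjoint, the cosheafification $(-)^{\sharp}$; the explicit formula for $(-)^{\sharp}$ — the dual of \eqref{eq:sheafification_simple}, computing $\cF^{\sharp}(j)$ as a filtered colimit over $\{k\ll j\}$ of the values of $\cF$ transported along the colimit-preserving functors $\cores_{kj}$ — shows that $(-)^{\sharp}$ itself preserves colimits. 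Hence $\Cosh(P;\un{\cC})$ is a retract of the dualizable category $\copsh(P;\un{\cC})$ in $\Pr^{L}_{\st}$ (via the inclusion and $(-)^{\sharp}$), so it is dualizable by Proposition \ref{prop:Lurie_criterions_dual}.

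For part 2), fix $j\in P^{\omega}$. A compact element satisfies $j\ll j$, and I would check that for every $m\in P$ the set $\{k\in P: k\ll m,\ k\ge j\}$ is empty if $j\not\le m$ and directed and nonempty if $j\le m$ (using the directedness of $\{k: k\ll m\}$ and the fact that $j\ll j\le k\le m$ forces $j\ll m$). Substituting $\tilde{\iota}_j(x)$ into the cosheaf condition at $m$ then produces the colimit of the constant diagram with value $\cores_{jm}(x)$ (resp. an empty colimit) over a weakly contractible poset, which recovers $\tilde{\iota}_j(x)(m)$; thus $\tilde{\iota}_j$ already lands in $\Cosh(P;\un{\cC})$, so $\iota_j=\tilde{\iota}_j$ is fully faithful with right adjoint given by evaluation at $j$. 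Since colimits in $\Cosh(P;\un{\cC})$ are computed by applying $(-)^{\sharp}$ to pointwise colimits, and $\cF^{\sharp}(j)=\cF(j)$ for $j\in P^{\omega}$ (dual of the corresponding sheaf statement), evaluation at $j$ preserves colimits on $\Cosh(P;\un{\cC})$ and $\iota_j$ is strongly continuous.

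For part 3), set $\cD':=\ker\big((-)^{\sharp}\colon\copsh(P;\un{\cC})\to\Cosh(P;\un{\cC})\big)$. The right adjoint of $(-)^{\sharp}$ is a fully faithful section of the inclusion (dually to the fact used for sheafification in the proof of Theorem \ref{th:U_loc_sheaves_cont_posets}), so $(-)^{\sharp}$ is a quotient functor and we obtain a short exact sequence in $\Cat_{\st}^{\dual}$
$$0\to\cD'\to\copsh(P;\un{\cC})\xto{(-)^{\sharp}}\Cosh(P;\un{\cC})\to 0,$$
dual to \eqref{eq:resolution_of_Sh_on_P}. I would then dualize Claims 1 and 2 from that proof: a copresheaf all of whose structure maps $\cores_{kj}(\cF(k))\to\cF(j)$ (for $k\ll j$) vanish lies in $\cD'$; and writing $\Phi'$ for the coreflection $\copsh(P;\un{\cC})\to\cD'$, the composite $\Phi'\circ\tilde{\iota}_j$ is zero for $j\in P^{\omega}$, fully faithful for $j\in P\setminus P^{\omega}$, and $\cD'=\langle\Phi'(\tilde{\iota}_j(\cC_j));\ j\in(P\setminus P^{\omega})^{op}\rangle$. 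Applying $\cU_{\loc}^{\cont}$ and evaluating all three terms by Proposition \ref{prop:additivity_in_infinite_SOD} converts the sequence into a split cofiber sequence
$$\bigoplus_{j\in P\setminus P^{\omega}}\cU_{\loc}^{\cont}(\cC_j)\to\bigoplus_{j\in P}\cU_{\loc}^{\cont}(\cC_j)\to\bigoplus_{j\in P^{\omega}}\cU_{\loc}^{\cont}(\cC_j),$$
compatible with the $\iota_j$, which yields the asserted isomorphism $\bigoplus_{j\in P^{\omega}}\cU_{\loc}^{\cont}(\cC_j)\xrightarrow{\ \sim\ }\cU_{\loc}^{\cont}(\Cosh(P;\un{\cC}))$.

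The step I expect to be most delicate is the one in part 1): writing down the cosheafification of a copresheaf explicitly and confirming it preserves colimits, so that $\Cosh(P;\un{\cC})$ is genuinely a retract of $\copsh(P;\un{\cC})$ rather than merely a (a priori possibly non-dualizable) localizing subcategory — the exact dual of the point where a left adjoint to sheafification is read off from \eqref{eq:sheafification_simple}. A cleaner alternative that avoids the variance bookkeeping is to use the covariant duality $(-)^{\vee}$ on $\Cat_{\st}^{\dual}$ from Subsection \ref{ssec:duality_covariant}: one identifies $\Cosh(P;\un{\cC})^{\vee}\simeq\Sh(P;\un{\cC}^{\vee})$ and $\copsh(P;\un{\cC})^{\vee}\simeq\PSh(P;\un{\cC}^{\vee})$ for the presheaf $\un{\cC}^{\vee}\colon P^{op}\to\Cat_{\st}^{\dual}$, $j\mapsto\cC_j^{\vee}$, and then deduces everything from Theorem \ref{th:U_loc_sheaves_cont_posets} applied to $\un{\cC}^{\vee}$, using that $(-)^{\vee}$ commutes with all limits and colimits, preserves short exact sequences, and that $\cU_{\loc}^{\cont}\circ(-)^{\vee}$ is again a finitary localizing invariant.
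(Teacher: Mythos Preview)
Your treatment of parts 1) and 2) is correct and matches the paper's argument. The gap is in part 3), in the short exact sequence you write down.

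You place $\Cosh(P;\un{\cC})$ as the \emph{quotient} in the sequence $0\to\cD'\to\copsh(P;\un{\cC})\xto{(-)^{\sharp}}\Cosh(P;\un{\cC})\to 0$ and claim it lives in $\Cat_{\st}^{\dual}$. For this one needs $(-)^{\sharp}$ to be \emph{strongly} continuous, i.e.\ its right adjoint $R$ must preserve colimits. A direct computation gives $R(G)(j)\cong\prolim[l\gg j]\cores_{jl}^R(G(l))$, an inverse limit, which does not commute with filtered colimits; so $(-)^{\sharp}$ is not a morphism in $\Cat_{\st}^{\dual}$. For the same reason the inclusion $\cD'\hookrightarrow\copsh$ fails to be strongly continuous, and your ``coreflection'' $\Phi'$ (the right adjoint to that inclusion) is the wrong projection: there is no reason for $\Phi'\circ\tilde{\iota}_j$ to vanish when $j\in P^{\omega}$, since $\cD'=\Cosh^{\perp}$ but not $\cD'={}^{\perp}\Cosh$.

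The paper instead puts $\Cosh(P;\un{\cC})$ as the \emph{kernel}: the standard inclusion $\Cosh\hookrightarrow\copsh$ is strongly continuous precisely because its right adjoint $(-)^{\sharp}$ is continuous --- this is the very fact you established in part 1). One then takes the \emph{left} adjoint $\Psi$ to the inclusion of $\cE=\ker((-)^{\sharp})$, obtaining $0\to\Cosh\to\copsh\xto{\Psi}\cE\to 0$ in $\Cat_{\st}^{\dual}$. Your dualization of Claims 1 and 2 goes through verbatim with $\Psi$ in place of $\Phi'$: one has $\Psi\circ\tilde{\iota}_j\cong\Cone\big(\indlim[k\gg j]\tilde{\iota}_k\circ\cores_{jk}\to\tilde{\iota}_j\big)$, which vanishes for $j\in P^{\omega}$ and is fully faithful otherwise, yielding the semi-orthogonal decomposition $\cE=\langle\Psi(\tilde{\iota}_j(\cC_j));\, j\in(P\setminus P^{\omega})^{op}\rangle$ and the split triangle you describe. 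In short, the point you flagged as ``most delicate'' --- that cosheafification preserves colimits --- is exactly what makes the \emph{inclusion} of $\Cosh$ (not $(-)^{\sharp}$ itself) the correct strongly continuous embedding; no extra adjoint analogous to the sheaf-theoretic $L$ is needed, and the sequence comes out with $\Cosh$ on the left rather than on the right.
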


\begin{proof}As in the proof of Theorem \ref{th:U_loc_sheaves_cont_posets}, the category $\copsh(P;\un{\cC})$ is dualizable and we have a $P^{op}$-indexed semi-orthogonal decomposition $\copsh(P;\un{\cC})=\la \tilde{\iota_j}(\cC_j), j\in P^{op} \ra.$ The cosheafification functor is continuous, since we have 
\begin{equation}\label{eq:cosheafification_simple}\cF^{\sharp}(j)=\indlim[k\ll j]\cores_{kj}(\cF(k)),\quad j\in P.\end{equation}

It follows that the category $\Cosh(P;\un{\cC})$ is dualizable. This proves 1).

Part 2) is clear from the definitions.

We prove 3). Consider the short exact sequence
\begin{equation}\label{eq:resolution_of_Cosh_on_P}0\to \Cosh(P;\un{\cC})\to\copsh(P;\un{\cC})\xto{\Psi}\cE\to 0,\end{equation} 
where $\cE\subset \copsh(P;\un{\cC})$ is the kernel of the cosheafification, and $\Psi$ is the left adjoint to the inclusion. We claim that the functors $\Psi\circ\tilde{\iota_j}:\cC_j\to \cE$ are fully faithful for $j\not\in P^{\omega}$ and we have a semi-orthogonal decomposition
\begin{equation}\label{eq:SOD_kernel_of_cosheafification}
\cE=\la \Psi(\tilde{\iota_j}(\cC_j)), j\in (P\setminus P^{\omega})^{op} \ra.\end{equation}
Indeed, this follows directly from \eqref{eq:cosheafification_simple}: we have $$\Psi(\tilde{\iota_j}(x))=\Cone(\indlim[k\gg j]\tilde{\iota_k}(\cores_{jk}(x))\to\tilde{\iota_j}(x)).$$ It follows by adjunctions that the functors $\Psi\circ\tilde{\iota_j}:\cC_j\to \cE$ are fully faithful ($j\not\in P^{\omega}$) and their images are semi-orthogonal as required. Since the functors $\tilde{\iota_j},$ $j\in P^{\omega},$ take values in cosheaves, we obtain the semi-orthogonal decomposition \eqref{eq:SOD_kernel_of_cosheafification}.

As in the proof of Theorem \ref{th:U_loc_sheaves_cont_posets}, we see that after applying $\cU_{\loc}^{\cont}$ to the sequence \eqref{eq:resolution_of_Cosh_on_P} we get a split exact triangle:
$$
\begin{CD}
\cU_{\loc}^{\cont}(\Cosh(P;\un{\cC})) @>{\cU_{\loc}^{\cont}(L)}>> \cU_{\loc}^{\cont}(\copsh(P;\un{\cC})) @>{\cU_{\loc}^{\cont}(\Psi)}>> \cU_{\loc}^{\cont}(\cE)\\
@A{\sim}AA @A{\sim}AA @A{\sim}AA\\
\bigoplus\limits_{j\in P^{\omega}}\cU_{\loc}^{\cont}(\cC_j) @>>> \bigoplus\limits_{j\in P}\cU_{\loc}^{\cont}(\cC_j) @>>> \bigoplus\limits_{j\in P\setminus P^{\omega}}\cU_{\loc}^{\cont}(\cC_j)
\end{CD}
$$ 
This proves 3).
\end{proof}
 
\section{Sheaves on locally compact Hausdorff spaces}
\label{sec:sh_loc_comp}

\subsection{Sheaves on finite CW complexes}
\label{ssec:sheaves_finite_CW}

The following result describes arbitrary (accessible) localizing invariants of categories of sheaves on finite CW complexes, in the case of constant coefficients. Here we consider finite CW complexes both as topological spaces and as $\infty$-groupoids (i.e. as objects of the $\infty$-category $\cS^{\fin}$ of finite spaces).

\begin{theo}\label{th:sheaves_on_finite_CW_complexes} Let $X$ be a finite CW complex, considered as a topological space. Let $\cC$ be a dualizable category. Then for any accessible localizing invariant $F:\Cat^{\perf}\to\cE$ we have an isomorphism
$$F^{\cont}(\Shv(X;\cC))\simeq F^{\cont}(\cC)^X.$$
Here in the right hand side we consider $X$ as an object of $\cS^{\fin}.$\end{theo}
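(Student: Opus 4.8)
The plan is to reduce the statement for a finite CW complex $X$ to the case of the point and the case of $\R$ (more precisely $\Sh_{\geq 0}(\R;-)$), which were already handled — $F^{\cont}(\cC)^{\pt}=F^{\cont}(\cC)$ trivially, and by Proposition~\ref{prop:all_loc_invar_zero_for_Sh_geq_0} all accessible localizing invariants (not just finitary ones) vanish on $\Sh_{\geq 0}(\R;\cC)$, so by the short exact sequence \eqref{eq:extension_of_SS_categories} (combined with the remark that this sequence gives the analogous vanishing for $\Sh(\R\cup\{-\infty\};\cC)$ and the computation $F^{\cont}(\Sh(\R;\cC))\cong\Omega F^{\cont}(\cC)$ for arbitrary accessible $F$). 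The first step is to set up the right formal framework: the assignment $X\mapsto F^{\cont}(\Sh(X;\cC))$ should be viewed as a functor on an appropriate category of (nice) spaces, and likewise $X\mapsto F^{\cont}(\cC)^X$ on $\cS^{\fin}$, and I want to show these agree. The key structural input is that for an open-closed decomposition $X=U\sqcup_{\partial} Z$ (with $U$ open, $Z$ closed complement) one has a recollement, i.e. a short exact sequence $0\to\Sh(U;\cC)\xrightarrow{j_!}\Sh(X;\cC)\xrightarrow{i^*}\Sh(Z;\cC)\to 0$ in $\Cat_{\st}^{\dual}$; applying the localizing invariant $F^{\cont}$ yields a fiber sequence, matching the fiber sequence $F^{\cont}(\cC)^U\to F^{\cont}(\cC)^X\to F^{\cont}(\cC)^Z$ that comes from the cofiber sequence $Z\to X\to X/U$ in spaces (or rather the corresponding colimit description of $X$).

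\textbf{Key steps.} First I would establish the open-closed recollement for $\Sh(-;\cC)$ on locally compact Hausdorff spaces in $\Cat_{\st}^{\dual}$ — this is standard but I would cite or reprove that $j_!$ is fully faithful strongly continuous with the expected quotient. Second, I would prove the statement for the closed interval $[0,1]$ (or equivalently $\R$, via $\Sh(\R;\cC)$ being the kernel of $\Sh(\R\cup\{-\infty\};\cC)\to\Sh(\{-\infty\};\cC)$ and then using $F^{\cont}(\Sh(\R;\cC))\cong\Omega F^{\cont}(\cC)$ — which by the final Remark of Subsection~\ref{ssec:sheaves_on_R} holds for all accessible $F$, not only finitary ones), and more generally for cubes $[0,1]^n$ by induction, since $\Sh([0,1]^{n};\cC)\simeq\Sh([0,1];\Sp)^{\otimes n}\otimes\cC$ and the recollement behaves well under the tensor product $\Sh([0,1];\Sp)\otimes-$. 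Third, a finite CW complex is built by finitely many pushouts $X=X'\sqcup_{S^{n-1}}D^n$ attaching cells; I would run induction on the number of cells, using at each stage the open-closed decomposition with $U$ the open cell (a copy of $\R^n$, so contributing $\Omega^n F^{\cont}(\cC)$ on both sides) and $Z=X'$ the subcomplex, together with the fact that $F^{\cont}$ sends the short exact sequence to a fiber sequence and that on the topological side $F^{\cont}(\cC)^X$ sits in the matching fiber sequence $F^{\cont}(\cC)^{X/X'}\to F^{\cont}(\cC)^X\to F^{\cont}(\cC)^{X'}$ with $X/X'\simeq S^n$. Matching up the connecting maps — i.e.\ checking the two fiber sequences are identified compatibly, not just abstractly equivalent — is the step requiring care; I would phrase everything functorially in $X$ (over a suitable diagram category of CW pairs) so that naturality forces the identification, rather than chasing maps by hand.

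\textbf{Main obstacle.} The hard part will be the compatibility/naturality bookkeeping: showing that the two a priori unrelated functors $X\mapsto F^{\cont}(\Sh(X;\cC))$ and $X\mapsto F^{\cont}(\cC)^X$ are naturally equivalent, as opposed to merely having equivalent values on each $X$. Concretely, one needs that the recollement fiber sequences assemble into a natural transformation and that both functors, restricted to finite CW complexes, are determined by their value on the point together with their behavior under the cell-attachment pushouts (i.e.\ both send the relevant pushout squares of spaces to pullback squares of spectra, and both are "finitary" in the sense of converting the colimit presentation $X=\colim$ of $X$ as a finite complex into the correct limit/colimit). The cleanest route is probably to observe that $X\mapsto F^{\cont}(\cC)^X$ is the right Kan extension from the point along $\pt\hookrightarrow\cS^{\fin}$ composed with... — more honestly, that it is characterized as the unique finite-(co)limit-preserving-appropriately extension — and then verify $X\mapsto F^{\cont}(\Sh(X;\cC))$ has the same universal property using the recollement and the computation for $\R^n$. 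I would flag that making "the same universal property" precise (which (co)limits in $\cS^{\fin}$ are relevant, and that $\Sh(-;\cC)$ converts them correctly, using e.g.\ that $\Sh(-;\Sp)$ is a hypercomplete cosheaf of categories on reasonable stratified spaces) is where most of the real work lies; the algebra of localizing invariants is then formal.
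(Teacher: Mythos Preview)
Your ingredients are essentially the same as the paper's --- the vanishing of all accessible localizing invariants on $\Sh_{\geq 0}(\R;\cC)$ (Proposition~\ref{prop:all_loc_invar_zero_for_Sh_geq_0}), and decompositions of sheaf categories along sub-CW-complexes --- but the paper organizes them differently in a way that completely dissolves the naturality problem you flag as the main obstacle.

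Rather than inducting on cells via the open-closed recollement, the paper first proves \emph{homotopy invariance}: from the short exact sequence $0\to\Sh([0,1);\Sp)\to\Sh([0,1];\Sp)\to\Sp\to 0$ and the vanishing $\cU_{\loc,\kappa}^{\cont}(\Sh([0,1);\Sp))=0$ one gets $F^{\cont}(\Sh([0,1];\Sp))\cong F^{\cont}(\Sp)$, hence $F^{\cont}(\Sh(X;\cC))\cong F^{\cont}(\Sh(X\times[0,1];\cC))$ for every finite CW complex $X$. This single step produces a well-defined functor $G:(\cS^{\fin})^{op}\to\cE$, $G(X)=F^{\cont}(\Sh(X;\cC))$, with no further bookkeeping --- the identification of the two functors is now the question of identifying $G$ with $F^{\cont}(\cC)^{(-)}$, which is settled by showing $G$ takes pushouts in $\cS^{\fin}$ to pullbacks. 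For this the paper uses not the recollement but the \emph{pullback square} $\Sh(Y\sqcup_X Z;\cC)\simeq\Sh(Y;\cC)\times_{\Sh(X;\cC)}\Sh(Z;\cC)$ along cellular \emph{closed} embeddings $X\hookrightarrow Y$, $X\hookrightarrow Z$; both restrictions are strongly continuous localizations, so Proposition~\ref{prop:F_cont_nice_pullbacks} applies directly.

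Your cell-induction route would also work, but the matching-of-connecting-maps issue you anticipate is real, and resolving it cleanly would in practice force you to rediscover homotopy invariance (to get a natural transformation defined for all $X$ simultaneously). The paper's trick of establishing homotopy invariance first buys functoriality on $\cS^{\fin}$ for free and reduces the argument to a single pullback check.
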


\begin{proof}Suppose that $\cE$ has $\kappa$-filtered colimits and $F$ commutes with $\kappa$-filtered colimits. By Proposition \ref{prop:all_loc_invar_zero_for_Sh_geq_0}, we have $\cU_{\loc,\kappa}(\Sh_{\geq 0}(\R;\Sp))=0.$ Arguing as in the proof of Proposition \ref{prop:U_loc_sheaves_on_R} we obtain $$\cU_{\loc,\kappa}^{\cont}(\Sh([0,1),\Sp))=0.$$
%
From the short exact sequence $$0\to \Sh([0,1);\Sp)\to \Sh([0,1];\Sp)\to\Sp\to 0$$
in $\Cat_{\st}^{\dual}$ we obtain
$$\cU_{\loc,\kappa}^{\cont}(\Sh([0,1],\Sp))\cong \cU_{\loc,\kappa}^{\cont}(\Sp).$$
It follows that for any finite CW complex $X$ and for any dualizable category $\cC$ we have
$$F^{\cont}(\Sh(X;\cC))\cong F^{\cont}(\Sh(X\times [0,1];\cC)).$$
Therefore we have a well defined functor $G:(\cS^{\fin})^{op}\to\cE,$ such that for a finite CW complex $X$ we have $G(X)=F^{\cont}(\Sh(X;\cC)).$
Since $G(*)=F^{\cont}(\cC)$ and $G(\emptyset)=0,$ it suffices to prove that $G$ commutes with pullbacks.

This is straightforward. Namely, consider cellular closed embeddings of finite CW complexes $X\hto Y,$ $X\hto Z.$ Then both pullback functors $\Sh(Y;\cC)\to \Sh(X;\cC)$ and $\Sh(Z;\cC)\to \Sh(X;\cC)$ are strongly continuous localizations and we have a pullback square 
$$\begin{CD}
\Sh(Y\sqcup_X Z;\cC) @>>> \Sh(Y;\cC)\\
@VVV @VVV\\
\Sh(Z;\cC) @>>> \Sh(X;\cC).
\end{CD}$$
By Proposition \ref{prop:F_cont_nice_pullbacks} we obtain an isomorphism
$$F^{\cont}(\Sh(Y\sqcup_X Z;\cC))\cong F^{\cont}(\Sh(Y;\cC))\times_{F^{\cont}(\Sh(X;\cC))}F^{\cont}(\Sh(Z;\cC)).$$
Hence, $G$ commutes with pullbacks. This proves the theorem.\end{proof}

\subsection{Sheaves and K-sheaves on locally compact Hausdorff spaces}
\label{ssec:sheaves_K_sheaves_loc_comp}

Let now $X$ be a locally compact Hausdorff space, and consider a presheaf $\un{\cC}$ on $X$ with values in $\Pr^{LL}_{\st}.$ In particular, we require that the restriction functors $\res_{UV}:\un{\cC}(U)\to\un{\cC}(V)$ are strongly continuous.

We will consider $\un{\cC}$-valued (pre)sheaves on $X.$ Recall that $\cF\in\PSh(X;\un{\cC})$ is a sheaf if and only if it satisfies the following conditions:

\begin{enumerate}[label=(\roman*),ref=(\roman*)]
\item $\cF(\emptyset)=0;$ \label{sheafcond1}

\item for any open $U,V\subset X,$ we have a pullback square in $\un{\cC}(U\cup V):$
$$
\begin{CD}
\cF(U\cup V) @>>> \res_{U\cup V,V}^R\cF(V)\\
@VVV @VVV\\
\res_{U\cup V,U}^R\cF(U) @>>> \res_{U\cup V,U\cap V}^R\cF(U\cap V).
\end{CD}
$$ 
\label{sheafcond2}

\item for any open $U\subset X,$ the map $\cF(U)\to\limproj\limits_{V\Subset U}\res_{UV}^R\cF(V)$ is an isomorphism in $\un{\cC}(U).$ \label{sheafcond3}
\end{enumerate}

Recall from Example \ref{ex:Open_X_cont_poset} that the poset $\Open(X)$ of open subsets of $X$ is continuous. In particular, we have a topology on $\Open(X)$ as in Subsection \ref{ssec:cont_posets}. This topology is weaker than the usual one, and we denote by $\PSh^{\cont}(X;\cC)$ the category of sheaves on $\Open(X)$ with respect to this topology. We call the objects of $\PSh^{\cont}(X;\cC)$ ``continuous presheaves''. These are exactly the presheaves satisfying the condition \ref{sheafcond3}. We denote by $(-)^{\cont}:\PSh(X;\un{\cC})\to \PSh^{\cont}(X;\cC)$ the left adjoint to the inclusion. As in Subsection \ref{ssec:U_loc_sh_on_cont} we have
$$\cF^{\cont}(U)=\prolim[V\Subset U]\res_{UV}^R(\cF(V)).$$

We denote by $(-)^{\sharp}:\PSh(X;\un{\cC})\to\Sh(X;\un{\cC})$ the sheafification functor. It has a factorization:
$$\PSh(X;\un{\cC})\xto{(-)^{\cont}}\PSh^{\cont}(X;\un{\cC})\xto{(-)^{\sharp}}\Sh(X;\un{\cC}).$$

We call a presheaf $\cF\in\PSh(X;\un{\cC})$ {\it reduced} if $\cF(\emptyset)=0,$ i.e. if the condition \ref{sheafcond1} holds for $\cF.$ Denote by $\PSh^{\red}(X;\un{\cC})\subset \PSh(X;\un{\cC})$ the full subcategory of reduced presheaves. The inclusion functor has a left adjoint $\cF\mapsto\cF^{\red},$ where
\begin{equation}\label{eq:F_red} \cF^{\red}(U)=\begin{cases}\cF(U) & \text{if }U\neq\emptyset;\\
0 & \text{if }U=\emptyset.\end{cases}
\end{equation}
Note that if $\cF$ is continuous, then so is $\cF^{\red}.$

\begin{remark}
In what follows, we crucially use the following property of the continuous lattice $\Open(X):$ if $U'\Subset U$ and $V'\Subset V,$ then $U'\cap V'\Subset U\cap V.$ In other words, the way below relation in $\Open(X)$ is multiplicative in the terminology of \cite{GHKLMS}. 
\end{remark}

We make the following observation.

\begin{prop}Suppose that $\cF\in\PSh(X;\un{\cC})$ is a presheaf which satisfies descent for finite covers, i.e. the conditions \ref{sheafcond1} and \ref{sheafcond2}. Then $\cF^{\cont}$ is a sheaf. Equivalently, the natural map $\cF^{\cont}\to\cF^{\sharp}$ is an isomorphism.

In particular, the inclusion functor $\Sh(X;\un{\cC})\to \PSh^{\cont}(X;\un{\cC})$ is continuous.\end{prop}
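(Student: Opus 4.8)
The plan is to fix a presheaf $\cF$ satisfying the conditions \ref{sheafcond1} and \ref{sheafcond2} and to verify that $\cF^{\cont}$ satisfies all three sheaf conditions \ref{sheafcond1}, \ref{sheafcond2}, \ref{sheafcond3}. Condition \ref{sheafcond3} holds for $\cF^{\cont}$ essentially by construction: the formula $\cF^{\cont}(U) = \prolim_{V\Subset U} \res_{UV}^R(\cF(V))$ together with the fact that $\Open(X)$ is a continuous lattice shows that $\cF^{\cont}$ is a ``continuous presheaf'', i.e.\ lies in $\PSh^{\cont}(X;\un{\cC})$; this is exactly the content of Subsection \ref{ssec:U_loc_sh_on_cont} applied to the poset $P = \Open(X)$. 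Condition \ref{sheafcond1} is immediate since $\emptyset \Subset \emptyset$ and the empty limit gives $\cF^{\cont}(\emptyset) = \cF(\emptyset) = 0$ (one may also pass through $\cF^{\red}$ first, noting that $\cF^{\red}$ remains continuous). So the real work is checking the Mayer--Vietoris condition \ref{sheafcond2} for $\cF^{\cont}$.

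\textbf{Key step: finite descent after taking $(-)^{\cont}$.} Given opens $U, V \subset X$, I would compute $\cF^{\cont}(U\cup V)$ using the formula as a limit over $W \Subset U\cup V$. The crucial combinatorial input is that any such $W$ can be written as $W = W_U \cup W_V$ with $W_U \Subset U$ and $W_V \Subset V$ (shrinking lemma / local compactness), and that the way-below relation is \emph{multiplicative}: $W_U \Subset U$, $W_V \Subset V$ imply $W_U \cap W_V \Subset U \cap V$. This lets me reindex: instead of the cofiltered poset of $W \Subset U\cup V$, I use the cofiltered poset of pairs $(W_U, W_V)$ with $W_U \Subset U$, $W_V \Subset V$, which is cofinal. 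For each such pair, the Mayer--Vietoris square for $\cF$ (which holds because $\cF$ satisfies \ref{sheafcond2}) applied to $W_U, W_V$ gives a pullback square, and applying the (limit-preserving, since they are right adjoints) restriction functors $\res^R$ and then taking the cofiltered limit over $(W_U, W_V)$ preserves pullback squares. Assembling the pieces — and using compatibility of the various $\res^R$ functors (cocycle/transitivity) to identify the limit of $\res^R_{W_U \cup W_V, W_U}\cF(W_U)$ with $\res^R_{U\cup V, U}\cF^{\cont}(U)$, etc. — yields the desired pullback square for $\cF^{\cont}$ over $U\cup V$.

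\textbf{Conclusion.} Once \ref{sheafcond1}, \ref{sheafcond2}, \ref{sheafcond3} are verified for $\cF^{\cont}$, it is a sheaf, hence the natural map $\cF^{\cont} \to \cF^{\sharp}$ is an isomorphism (the sheafification of a presheaf that already satisfies \ref{sheafcond3} is obtained just by enforcing finite descent, which $\cF$ already had). For the ``in particular'' clause: any sheaf $\cF \in \Sh(X;\un{\cC})$ certainly satisfies \ref{sheafcond1} and \ref{sheafcond2}, so for such $\cF$ we get $\cF = \cF^{\sharp} = \cF^{\cont}$, i.e.\ $\cF$ lies in the essential image of the inclusion $\Sh(X;\un{\cC}) \hookrightarrow \PSh^{\cont}(X;\un{\cC})$ in a way compatible with the continuous-presheaf structure; since $(-)^{\cont}$ is a left adjoint it preserves colimits, and the inclusion $\Sh(X;\un{\cC}) \to \PSh^{\cont}(X;\un{\cC})$ is then identified with the restriction of $(-)^{\cont}$ composed with the (continuous) inclusion into $\PSh(X;\un{\cC})$ — more directly, filtered colimits in $\Sh$ and in $\PSh^{\cont}$ agree because the reflector $(-)^{\sharp}\colon \PSh^{\cont} \to \Sh$ is exact and, by the above, equals the identity on the relevant objects, so the inclusion commutes with filtered colimits, hence is continuous.

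\textbf{Main obstacle.} The delicate point is the cofinality/reindexing argument: showing that the poset of pairs $(W_U, W_V)$ with $W_U \Subset U$, $W_V \Subset V$ maps cofinally to $\{W : W \Subset U\cup V\}$ and that this reindexing is compatible with all four corners of the Mayer--Vietoris square simultaneously (in particular handling the intersection corner $W_U \cap W_V \Subset U\cap V$ and the bookkeeping of the adjoint restriction functors $\res^R$ along the various inclusions). Multiplicativity of $\Subset$ in $\Open(X)$ is precisely what makes this work, and keeping the cocycle identities for $\res^R$ straight while passing to the limit is where care is needed.
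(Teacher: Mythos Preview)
Your argument for the main statement is essentially the same as the paper's: verify \ref{sheafcond1} and \ref{sheafcond2} for $\cF^{\cont}$ by reindexing the limit over $\{W\Subset U\cup V\}$ along the cofinal map $(W_U,W_V)\mapsto W_U\cup W_V$ from $\{W_U\Subset U\}\times\{W_V\Subset V\}$, using multiplicativity of $\Subset$ for the intersection corner. The paper states this tersely by writing down the two maps $P_X(U)\times P_X(V)\to P_X(U\cup V)$ and $P_X(U)\times P_X(V)\to P_X(U\cap V)$ and asserting both are cofinal; your write-up is more explicit about the bookkeeping with $\res^R$, but the content is identical.

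Where your argument is weaker is the ``in particular'' clause. Your reasoning there is somewhat circular: appealing to exactness of the reflector $(-)^{\sharp}\colon\PSh^{\cont}\to\Sh$ or to ``it equals the identity on the relevant objects'' does not by itself show that a filtered colimit in $\PSh^{\cont}$ of sheaves is again a sheaf, which is precisely what is needed. The paper's argument is a one-liner that you should use instead: the class of presheaves satisfying \ref{sheafcond1} and \ref{sheafcond2} is closed under (filtered) colimits in $\PSh(X;\un{\cC})$, since finite limits commute with filtered colimits; hence the colimit in $\PSh$ of a family of sheaves still satisfies finite descent, and applying $(-)^{\cont}$ --- which computes the colimit in $\PSh^{\cont}$ --- yields a sheaf by the main statement just proved.
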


\begin{proof} Assuming that $\cF$ satisfies finite descent, it suffices to show that $\cF^{\cont}$ satisfies finite descent.

Clearly, $\cF^{\cont}(\emptyset)=\cF(\emptyset)=0,$ so \ref{sheafcond1} holds. To see that \ref{sheafcond2} holds for $\cF^{\cont},$ we first introduce the following notation: for an open $U\subset X$ we denote by $P_X(U)$ the poset $\{W:W\Subset U\}.$ Now take some open subsets $U,V\subset X,$ and consider the maps
\begin{equation}\label{eq:cup} P_X(U)\times P_X(V)\to P_X(U\cup V),\quad (W_1,W_2)\mapsto W_1\cup W_2;\end{equation}
\begin{equation}\label{eq:cap} P_X(U)\times P_X(V)\to P_X(U\cap V),\quad (W_1,W_2)\mapsto W_1\cap W_2.\end{equation}
The standard arguments show that both maps \eqref{eq:cup} and \eqref{eq:cap} are cofinal (hence, the maps between the same posets with the opposite order are final). This directly implies that \ref{sheafcond2} holds for $\cF^{\cont}.$ Thus, $\cF^{\cont}$ is indeed a sheaf.

For the final assertion of the proposition, it suffices to notice that the class of presheaves satisfying finite descent is closed under colimits.   
\end{proof}

We consider the category $\PSh^{\cont}(X;\un{\cC})$ as the first approximation of the category $\Sh(X;\un{\cC}).$ Note that if $\un{\cC}$ is a presheaf with values in $\Cat_{\st}^{\dual},$ then we know the finitary localizing invariants of the category $\PSh^{\cont}(X;\un{\cC})$ by Theorem \ref{th:U_loc_sheaves_cont_posets}. To construct the further approximations, it is convenient to use the language of $\msK$-sheaves, which we now recall.

We first extend the presheaf $\un{\cC}$ to locally closed subsets by left Kan extension: for a locally closed $Y\subset X$ we put
$$\un{\cC}(Y):=\indlim[U\supset Y]^{\cont}\un{\cC}(U),$$ where $U$ runs over open subsets $U\subset X$ such that $Y$ is a closed subset of $U.$ 
In particular, we obtain a presheaf $\un{\cC}_{\mid Y}$ of presentable stable categories on $Y.$ 

Denote by $\msK(X)$ the poset of compact subsets of $X.$

\begin{defi}1) We denote by $\PSh_{\msK}(X;\un{\cC})$ the category of $\msK$-presheaves, i.e. the category of $\un{\cC}$-valued presheaves on $\msK(X).$

2) We denote by $\PSh_{\msK}^{\cont}(X;\un{\cC})\subset \PSh_{\msK}(X;\un{\cC})$ the full subcategory of continuous $\msK$-presheaves, i.e. $\msK$-presheaves such that for any compact $Y\subset X$ we have an isomorphism
$$\indlim[Z\Supset Y]\res_{Z,Y}(\cF(Z))\xto{\sim}\cF(Y).$$
Here $Z\Supset Y$ if $Z$ contains an open neighborhood of $Y.$

3) We denote by $\Sh_{\msK}(X;\un{\cC})\subset\PSh^{\cont}(X;\un{\cC})$ the full subcategory formed by continuous $\msK$-presheaves $\cF$ satisfying additional conditions:

i) $\cF(\emptyset)=0;$

ii) For compact subsets $Y_1,Y_2\subset X,$ the following is a pullback square:
$$
\begin{CD}
\cF(Y_1\cup Y_2) @>>> \res_{Y_1\cup Y_2,Y_1}^R\cF(Y_1)\\
@VVV @VVV\\
\res_{Y_1\cup Y_2,Y_2}^R\cF(Y_2) @>>> \res_{Y_1\cup Y_2,Y_1\cap Y_2}^R\cF(Y_1\cap Y_2).\end{CD}$$\end{defi}

Note that the poset $\msK(X)^{op}$ is continuous, and $Z\ll Y$ iff $Z\Supset Y.$ Hence, the category $\PSh_{\msK}^{\cont}(X;\un{\cC})$ is the category of $\un{\cC}$-valued cosheaves on $\msK(X)^{op}$ for the topology from Subsection \ref{ssec:cont_posets}.

Again, it is convenient to call a $\msK$-presheaf $\cF$ {\it reduced} if $\cF(\emptyset)=0.$ The functor $\cF\mapsto\cF^{\red}$ is defined as in  
\eqref{eq:F_red}. If $\cF$ is a continuous $\msK$-presheaf, then so is $\cF^{\red}.$

The following is an (almost) straightforward generalization of Lurie's theorem for the case of a constant presheaf $\un{\cC}.$ 

\begin{prop}\label{prop:PSh_and_PSh_K}\cite[Theorem 7.3.4.9, Corollary 7.3.4.10]{Lur09} 1) Consider the following functors
\begin{equation}\label{eq:PSh_to_PSh_K} \PSh^{\cont}(X;\un{\cC})\to \PSh^{\cont}_{\msK}(X;\un{\cC}),\quad \cF\mapsto (Y\mapsto \liminj\limits_{U\supset Y}\res_{U,Y}(\cF(U)));\end{equation}
\begin{equation}\label{eq:PSh_K_to_PSh} \PSh^{\cont}_{\msK}(X;\un{\cC})\to \PSh^{\cont}(X;\un{\cC}),\quad \cG\mapsto (U\mapsto \limproj\limits_{Y\subset U}\res_{U,Y}^R(\cG(Y))).\end{equation}
They are quasi-inverse equivalences.

2) The functors \eqref{eq:PSh_to_PSh_K} and \eqref{eq:PSh_K_to_PSh} induce equivalences between the full subcategories $\Sh(X;\un{\cC})\subset \PSh^{\cont}(X;\un{\cC})$ and $\Sh_{\msK}(X;\un{\cC})\subset \PSh^{\cont}_{\msK}(X;\un{\cC}).$
\end{prop}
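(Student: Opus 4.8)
\textbf{Proof plan for Proposition \ref{prop:PSh_and_PSh_K}.}

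The plan is to reduce the statement to the known case of constant coefficients, treated by Lurie in \cite[Theorem 7.3.4.9]{Lur09}, combined with the semi-orthogonal decomposition techniques already developed in Sections \ref{sec:general_theory_dualizable} and \ref{sec:sh_cosh_continuous_posets}. First I would observe that both categories in question sit inside ambient categories of presheaves that admit $\Open(X)$-indexed (resp. $\msK(X)^{op}$-indexed) semi-orthogonal decompositions, exactly as in the proofs of Theorems \ref{th:U_loc_sheaves_cont_posets} and \ref{th:U_loc_cosheaves_on_cont_posets}: writing $\cC_U=\un{\cC}(U)$ for $U$ open (resp. $\cC_Y=\un{\cC}(Y)$ for $Y$ compact, via the left Kan extension to locally closed sets), we have $\PSh(X;\un{\cC})=\la\tilde\iota_U(\cC_U); U\in\Open(X)\ra$ and $\PSh_{\msK}(X;\un{\cC})=\la\tilde\iota_Y(\cC_Y); Y\in\msK(X)^{op}\ra$. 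The two functors \eqref{eq:PSh_to_PSh_K} and \eqref{eq:PSh_K_to_PSh} are, on the ambient presheaf categories, given by left Kan extension along $\Open(X)\to\msK(X)$, $U\mapsto$ (the poset of compacts contained in $U$) and its adjoint; the key point is that on the continuous subcategories $\PSh^{\cont}$ and $\PSh^{\cont}_{\msK}$ these become mutually inverse. This I would deduce by a cofinality argument from the fact, already recalled after Example \ref{ex:Open_X_cont_poset}, that $U\ll V$ in $\Open(X)$ means $U\Subset V$, together with the dual description $Z\ll Y$ in $\msK(X)^{op}$ meaning $Z\Supset Y$: the composite of \eqref{eq:PSh_to_PSh_K} and \eqref{eq:PSh_K_to_PSh} sends $\cF$ to $U\mapsto\prolim_{Y\subset U}\res_{U,Y}^R\,\liminj_{V\supset Y}\res_{V,Y}\cF(V)$, and by cofinality of the relevant index posets (each compact $Y\subset U$ lies in some $V\Subset U$, and conversely) this collapses to $\prolim_{V\Subset U}\res_{U,V}^R\cF(V)=\cF^{\cont}(U)=\cF(U)$ when $\cF$ is continuous; the other composite is handled symmetrically.

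For part 1) the remaining work is purely formal: having the equivalence on the level of ind-objects / presheaves, I would check that it restricts to the full subcategories cut out by the continuity conditions, which is immediate from the computations above, and that it is compatible with the strong-continuity structure (so that it is an equivalence in $\Cat_{\st}^{\dual}$, not merely of underlying categories) — this follows because both functors are built from colimit- and limit-preserving operations and restriction functors, all of which are strongly continuous by hypothesis on $\un{\cC}$, combined with Proposition \ref{prop:continuous_stab} and the colimit computations in Subsection \ref{ssec:colimits_of_dualizable}. For part 2) I would verify that under the equivalence of part 1), the sheaf condition \ref{sheafcond2} (Mayer--Vietoris for pairs of opens together with $\cF(\emptyset)=0$) on a continuous presheaf corresponds exactly to the $\msK$-sheaf conditions i) and ii) (Mayer--Vietoris for pairs of compacts). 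One direction, sheaf $\Rightarrow$ $\msK$-sheaf, uses that a compact pair $Y_1,Y_2$ can be approximated by pairs of relatively compact opens $U_1\Supset Y_1$, $U_2\Supset Y_2$ with $U_1\cap U_2\Supset Y_1\cap Y_2$ — here the multiplicativity of the way-below relation in $\Open(X)$, emphasized in the Remark before the statement, is exactly what is needed to pass Mayer--Vietoris squares through the colimit defining $\cC_{Y}$ and the value $\cF^{\cont}$; the converse direction reconstructs $\cF$ on opens as $\prolim_{Y\subset U}\res_{U,Y}^R\cG(Y)$ and checks descent by the same cofinality maps \eqref{eq:cup} and \eqref{eq:cap}.

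I expect the main obstacle to be the bookkeeping needed to handle a genuine presheaf $\un{\cC}$ of categories rather than a constant one: the values $\cC_U$, $\cC_Y$ vary, the restriction and Kan-extension functors must be tracked carefully, and one must be sure that all the limits appearing in \eqref{eq:PSh_K_to_PSh} and in the Mayer--Vietoris squares actually exist and are computed correctly inside the varying fibers (they do, since each $\cC_U$ is presentable and the transition functors have the requisite adjoints). Concretely the subtle step is verifying that the extension of $\un{\cC}$ to locally closed subsets behaves well with respect to the comparison — i.e. that $\indlim^{\cont}_{U\supset Y}\cC_U$ really is the correct fiber for $\msK$-presheaves at $Y$ and that the functor \eqref{eq:PSh_to_PSh_K} lands in it compatibly. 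Once this is set up, each individual verification is a cofinality statement about posets of open or compact subsets of a locally compact Hausdorff space, which is standard; the proof is then a matter of assembling these, and I would present it by citing \cite[Theorem 7.3.4.9, Corollary 7.3.4.10]{Lur09} for the constant case and indicating the (routine) modifications, exactly as the statement's attribution suggests.
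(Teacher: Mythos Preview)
Your approach is essentially the same as the paper's, but with two points worth noting. First, the semi-orthogonal decomposition framing is unnecessary overhead: the paper simply says that part 1) is proved exactly as in \cite[Theorem 7.3.4.9]{Lur09}, and explicitly remarks that strong continuity of the restriction functors is \emph{not} needed there --- so your emphasis on strong continuity for part 1) is misplaced. Second, and more importantly, the place where strong continuity genuinely matters is the sheaf $\Rightarrow$ $\msK$-sheaf direction in part 2): when you write the Mayer--Vietoris square for compacts $Y_1,Y_2$ as a colimit over open neighborhoods $U_1\supset Y_1$, $U_2\supset Y_2$, the off-diagonal terms involve expressions like $\res_{U_1\cup U_2,Y}\res_{U_1\cup U_2,U_1}^R\cF(U_1)$, and to compute their colimit you must interchange a filtered colimit with a right adjoint $\res^R$ --- this is exactly where the paper invokes Proposition \ref{prop:colimit_of_presentable_compositions_of_adjoints} and Lemma \ref{lem:auxiliary_on_colimits}, and where strong continuity of the transition functors in $\un{\cC}$ is used. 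Your proposal gestures at this (``multiplicativity of the way-below relation'', ``pass Mayer--Vietoris squares through the colimit''), which is the right intuition, but the actual mechanism is the colimit formula for $F_j^R F_i$ in a strongly continuous directed system, not a pure cofinality-of-posets argument. The converse $\msK$-sheaf $\Rightarrow$ sheaf direction is again as in Lurie and does not need strong continuity.
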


\begin{proof}Part 1) is proved by the same argument as in \cite[Theorem 7.3.4.9]{Lur09}, and in fact we do not need the strong continuity of the restriction functors $\res_{UV}.$

We prove 2). Let $\cF$ be a $\un{\cC}$-valued sheaf, and denote by $\cG$ the associated continuous $\msK$-presheaf. We need to prove that $\cG$ is a $\msK$-sheaf. Clearly, $\cG(\emptyset)=0.$
Consider compact subsets $Y_1,Y_2\subset X,$ and put $Y=Y_1\cup Y_2,$ $Y_{12}=Y_1\cap Y_2.$
It suffices to show that the square
\begin{equation}\label{eq:to_be_pullback_square}
\begin{CD}
\cG(Y) @>>> \res_{Y,Y_1}^R\cG(Y_1)\\
@VVV @VVV\\
\res_{Y,Y_2}^R\cG(Y_2) @>>> \res_{Y,Y_{12}}^R\cG(Y_{12})
\end{CD}
\end{equation}
is the directed colimit of pullback squares
\begin{equation}\label{eq:already_pullback_square}
\begin{CD}
\res_{U_1\cup U_2,Y}\cF(U_1\cup U_2) @>>> \res_{U_1\cup U_2,Y}\res_{U_1\cup U_2,U_1}^R\cF(U_1)\\
@VVV @VVV\\
\res_{U_1\cup U_2,Y}\res_{U_1\cup U_2,U_2}^R\cF(U_2)) @>>> \res_{U_1\cup U_2,U_1\cap U_2}^R\cF(U_1\cap U_2)
\end{CD}
\end{equation}
over open neighborhoods $U_1\supset Y_1,$ $U_2\supset Y_2.$

Clearly, we have $$\indlim[\substack{U_1\supset Y_1,\\ U_2\supset Y_2}]\res_{U_1\cup U_2,Y}\cF(U_1\cup U_2)\cong \cG(Y).$$
Applying Proposition \ref{prop:colimit_of_presentable_compositions_of_adjoints} and Lemma \ref{lem:auxiliary_on_colimits} below, we obtain the isomorphisms
\begin{multline*}\indlim[\substack{U_1\supset Y_1,\\ U_2\supset Y_2}]\res_{U_1\cup U_2,Y}\res_{U_1\cup U_2,U_1}^R\cF(U_1)\cong \indlim[\substack{U_1\supset Y_1, U_2\supset Y_2\\ V\supset U_1\cup U_2}]\res_{VY}\res_{V,U_1}^R\cF(U_1)\\ \cong \indlim[V\supset Y]\res_{VY}\res_{V,Y_1}^R\cF(Y_1)\cong\res_{Y,Y_1}^R\cF(Y_1),
\end{multline*}
(here we used that the restriction functors are strongly continuous). The proof for the remaining two objects of the square \eqref{eq:to_be_pullback_square} is similar.
Therefore, $\cG$ is a $\msK$-sheaf.

The proof in the other direction is as in \cite[Theorem 7.3.4.9]{Lur09} (and here we do not need the strong continuity of the restriction functors).
\end{proof}

\begin{lemma}\label{lem:auxiliary_on_colimits} Let $I$ be a directed poset, and consider a functor $I\to\Pr^{LL}_{\st},$ $i\mapsto\cC_i.$  Consider a section of the associated cocartesian fibration over $I,$ $i\mapsto x_i.$ Denote by $F_{ij}:\cC_i\to\cC_j$ the transition functors, and denote by $F_i:\cC_i\to\cC=\indlim[i]^{\cont}\cC_i$ the functors to the colimit. Then for any $i\in I,$ we have
$$\indlim[j\geq i]F_{ij}^R (x_j)\cong F_i^R(\indlim[j]F_j(x_j)).$$
\end{lemma}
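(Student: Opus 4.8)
The plan is to unwind both sides of the claimed identity using the description of the colimit $\cC = \indlim[i]^{\cont}\cC_i$ as the limit $\prolim[j\in I^{op}]\cC_j$ along the right adjoints $F_{ij}^R$, exactly as in the proof of Proposition \ref{prop:colimit_of_presentable_compositions_of_adjoints}. Under this identification, for $y = (y_j)_j \in \cC$ the functor $F_i^R$ is the projection to the $i$-th component, i.e. $F_i^R(y) = y_i$, and I should first identify what the object $\indlim[j]F_j(x_j)$ looks like componentwise. By Proposition \ref{prop:colimit_of_presentable_compositions_of_adjoints}(1) we have $\indlim[j]F_jF_j^R\cong\id_\cC$, but here I need the colimit of $F_j(x_j)$ rather than $F_jF_j^R$; the point is that $F_j(x_j)$ is the image under $F_j$ of a coherently compatible system, so the $k$-th component of $\indlim[j]F_j(x_j)$ should be computed via the continuity of the transition functors $F_{jk}^R$ (valid since the $F_{jk}$ are strongly continuous, being $1$-morphisms in $\Pr^{LL}_{\st}$) together with the fact that coproducts, and hence filtered colimits, in the limit category $\prolim[j]\cC_j$ are computed pointwise.

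Concretely, first I would show that for $k\in I$ the $k$-th component of $\indlim[j]F_j(x_j)$ is $\indlim[j\geq k]F_{jk}^R(x_j)$ (using that $F_j$ corresponds under the identification to the left adjoint of the $j$-th projection, whose value is obtained by applying the transition maps $F_{jk}^R$ and taking a colimit, and that $I$ is directed so the colimit can be taken over $j\geq k$). Then applying $F_i^R$, which is the $i$-th projection, gives $F_i^R(\indlim[j]F_j(x_j)) = \indlim[j\geq i}F_{ij}^R(x_j)$, which is exactly the left-hand side. Alternatively, and perhaps more cleanly, I would verify the identity by testing against an arbitrary object $w\in\cC_i$: using the adjunction $(F_i, F_i^R)$ and the description of mapping spaces in the limit category,
\begin{align*}
\Hom_{\cC_i}(w, F_i^R(\indlim[j]F_j(x_j))) &\cong \Hom_{\cC}(F_i(w), \indlim[j]F_j(x_j))\\
&\cong \prolim[k] \Hom_{\cC_k}(F_{ik}(w), \indlim[j\geq k]F_{jk}^R(x_j)),
\end{align*}
and then manipulate the right-hand side — swapping the $\prolim[k]$ with the $\indlim$ after restricting to the cofinal subcategory $k\leq i$, using continuity of $F_{jk}^R$ — to obtain $\indlim[j\geq i]\Hom_{\cC_i}(w, F_{ij}^R(x_j)) \cong \Hom_{\cC_i}(w, \indlim[j\geq i]F_{ij}^R(x_j))$. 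The first approach is more structural; the second is the safer bet to write out since it avoids having to pin down the left adjoint of the projection explicitly.

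The main obstacle I anticipate is the bookkeeping in the second-to-last step: justifying that in the expression $\prolim[k\in I^{op}]\indlim[j\geq k]F_{jk}^R(x_j)$ one may restrict the outer limit to the (final in $I^{op}$) subcategory $\{k : k\leq i\}$ and then commute the remaining operations. This requires care because the inverse limit is over all of $I$, but the $F_i^R$-projection only sees the structure near $i$; the correct statement is that $F_i^R$ of a pointwise-filtered-colimit is the filtered colimit of the $i$-th components, which in turn hinges on the fact (from Proposition \ref{prop:colimits_in_Pr^LL}) that all the $F_k^R$ are continuous, so filtered colimits in $\cC$ are genuinely computed componentwise. Once that principle is invoked, the identity $F_i^R(\indlim[j]F_j(x_j)) = \indlim[j\geq i}(F_i^R F_j)(x_j)$ follows, and $F_i^R F_j \cong F_{ij}^R$ for $j\geq i$ by the triangle identities for the structural adjunctions (the composite $\cC_j\xrightarrow{F_j}\cC\xrightarrow{F_i^R}\cC_i$ agrees with $F_{ij}^R$ since $F_i F_{ij}\cong F_j$). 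Assembling these pieces gives the claim; the rest is routine.
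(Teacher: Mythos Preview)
Your proposal has a genuine gap: the claim $F_i^R F_j \cong F_{ij}^R$ for $j \geq i$ is false. From the structural identity $F_j \circ F_{ij} \cong F_i$ (you wrote ``$F_i F_{ij} \cong F_j$'', but $F_{ij}:\cC_i\to\cC_j$, so that composition does not typecheck; presumably you meant the former), passing to right adjoints gives only $F_{ij}^R F_j^R \cong F_i^R$, hence $F_i^R F_j \cong F_{ij}^R (F_j^R F_j)$ --- not $F_{ij}^R$, since the unit $\id \to F_j^R F_j$ has no reason to be invertible. For a concrete failure take $I=\{0<1<2\}$: then $\cC\simeq\cC_2$, $F_1=F_{12}$, and $F_0^R F_1=F_{02}^R F_{12}=F_{01}^R F_{12}^R F_{12}$, which differs from $F_{01}^R$ whenever $F_{12}$ is not fully faithful. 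The same error underlies your first approach: the ``$k$-th component of $F_j(x_j)$'' is by definition $F_k^R F_j(x_j)$, so asserting that the $k$-th component of $\indlim[j]F_j(x_j)$ equals $\indlim[j\geq k]F_{jk}^R(x_j)$ is exactly the statement to be proved, not a step toward it.

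The correct description is the formula $F_i^R F_j \cong \indlim[k\geq i,j] F_{ik}^R F_{jk}$ of Proposition~\ref{prop:colimit_of_presentable_compositions_of_adjoints}. The paper's proof uses this: starting from the left-hand side, it reindexes $\indlim[j\geq i]F_{ij}^R(x_j)$ as a double colimit $\indlim[j'\geq j\geq i]F_{ij'}^R(F_{jj'}(x_j))$ (the diagonal $j'=j$ is cofinal, and the off-diagonal terms arise from the section maps $F_{jj'}(x_j)\to x_{j'}$), identifies the inner colimit $\indlim[j'\geq j]F_{ij'}^R F_{jj'}(x_j)$ with $F_i^R F_j(x_j)$ via that proposition, and then pulls $F_i^R$ outside by continuity. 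Your right-to-left strategy can be repaired along the same lines, but the reindexing step is the content of the lemma, not a consequence of triangle identities.
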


\begin{proof}Fix $i\in I.$ Applying Proposition \ref{prop:colimit_of_presentable_compositions_of_adjoints}, we obtain the isomorphisms
\begin{multline*}\indlim[j\geq i]F_{ij}^R (x_j)\cong \indlim[j'\geq j\geq i]F_{ij'}^R (F_{jj'}(x_j))\cong \indlim[j\geq i]\indlim[j'\geq j]F_{ij'}^R (F_{jj'}(x_j))\\ \cong \indlim[j\geq i]F_i^R(F_j(x_j))\cong F_i^R(\indlim[j\geq i]F_j(x_j)).\qedhere \end{multline*}
\end{proof}

We introduce some notation. Suppose that $X$ is compact. Given a finite collection $\{Y_1,\dots,Y_n\}$ of closed subsets of $X$ such that $X=Y_1\cup\dots\cup Y_n,$ we put \begin{equation}\label{eq:sieve_of_compacts}S_{(X;Y_1,\dots,Y_n)}:=\{Y\subset X\mid Y\text{--compact, and }Y\subset Y_i\text{ for some }i\in\{1,\dots,n\}\}\end{equation} We consider $S$ as a poset with the reverse inclusion order. 

Consider the set $J_X$ of all collections $S$ of compact (=closed) subsets of $X$ of the form $S_{(X;Y_1,\dots,Y_n)}$ as in \eqref{eq:sieve_of_compacts}. Define the partial order on $J_X$ by setting $S\leq S'$ if $S'\subset S.$ Then $J_X$ is a directed poset.

\begin{remark}\label{remark:finitary_topology} 1) If $X$ is locally compact, then the poset $\msK(X)$ has a natural Grothendieck pretopology, where the family $\{Y_i\to Y\}_{i\in I}$ is a cover if $I$ is finite and $\bigcup_{i\in I} Y_i=Y.$ Then for each compact $Y\subset X,$ the poset $J_Y$ is a cofinal family of covering sieves for $Y.$

2) If $X$ is compact and $S=S_{(X;Y_1,\dots,Y_n)}\in J_X$ is a sieve as above, then we have a final functor $P_{\geq 1}(\{1,\dots,n\})\to S,$ $I\mapsto Y_I:=\bigcap_{i\in I} Y_i.$ Here for a set $T$ we denote by $P_{\geq 1}(T)$ the poset of non-empty subsets of $T.$\end{remark}

\begin{prop}Let $X$ and $\un{\cC}$ be as above. Consider the functor $(-)^{\sharp}:\PSh_{\msK}^{\cont}(X;\un{\cC})\to\Sh_{\msK}(X;\un{\cC}),$ which under the equivalence of Proposition \ref{prop:PSh_and_PSh_K} corresponds to the sheafification functor. Then for $\cF\in \PSh_{\msK}^{\cont}(X;\un{\cC})$ we have
\begin{equation}\label{eq:K-sheafification}\cF^{\sharp}(Y)=\indlim[S\in J_Y]\prolim[Z\in S]\res_{Y,Z}^R(\cF^{\red}(Z)).\end{equation}\end{prop}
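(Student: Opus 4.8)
The strategy is to express the $\msK$-sheafification as a composite of two left-adjoint constructions, exactly parallel to the factorization $\PSh^{\cont}\to\Sh$ through ``reduced + finite descent'', but phrased on the site $\msK(X)$ with its finitary pretopology (Remark \ref{remark:finitary_topology}). First I would note that by Proposition \ref{prop:PSh_and_PSh_K} it suffices to construct the sheafification functor $(-)^{\sharp}:\PSh_{\msK}^{\cont}(X;\un{\cC})\to\Sh_{\msK}(X;\un{\cC})$ directly, as the left adjoint to the inclusion of the full subcategory cut out by conditions (i) and (ii). Since the class of $\msK$-presheaves satisfying condition (i) (reducedness) is a localization with explicit left adjoint $\cF\mapsto\cF^{\red}$ (the formula \eqref{eq:F_red}, which preserves continuity), and since both $\cF\mapsto\cF^{\red}$ and the pushforward to $\Sh_{\msK}$ are localizations, it is enough to compute the left adjoint to the inclusion $\Sh_{\msK}(X;\un{\cC})\hookrightarrow\PSh_{\msK}^{\cont,\red}(X;\un{\cC})$ on reduced continuous $\msK$-presheaves. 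This is a sheafification with respect to the finitary pretopology, so the standard ``plus construction'' applies, but because each covering sieve $S\in J_Y$ is a \emph{finite} Grothendieck topology generated by finitely many closed subsets, a single application of the plus construction already produces a sheaf.

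The key computation is therefore the following. For a reduced continuous $\msK$-presheaf $\cF$, I would define the candidate
$$\cF^{\sharp}(Y):=\indlim[S\in J_Y]\prolim[Z\in S]\res_{Y,Z}^R(\cF(Z)),$$
and verify three things. (1) The assignment $Y\mapsto\cF^{\sharp}(Y)$ is again a continuous $\msK$-presheaf: this uses that the restriction functors $\res_{UV}$ are strongly continuous (so that $\res_{YZ}^R$ commutes with the relevant filtered colimits), together with the cofinality arguments of the type already used in the proof of Proposition \ref{prop:PSh_and_PSh_K} and Lemma \ref{lem:auxiliary_on_colimits}; concretely, for $Y$ compact one rewrites $\cF^{\sharp}(Y)$ over the directed poset $J_Y$, and uses that shrinking $Y$ to $Z'\Supset Y$ is compatible with passing to the colimit. (2) $\cF^{\sharp}$ satisfies the finite-descent square (ii): here one uses Remark \ref{remark:finitary_topology}(2), replacing the sieve $S_{(X;Y_1,\dots,Y_n)}$ by the final functor $P_{\geq 1}(\{1,\dots,n\})\to S$, $I\mapsto Y_I$, which turns the limit $\prolim[Z\in S]$ into a finite totalization over the poset of non-empty subsets; the descent square for a cover by two closed sets then becomes the standard statement that such a finite totalization can be computed by iterated pullbacks, combined with the fact that $\res^R$ preserves finite limits. (3) The natural map $\cF\to\cF^{\sharp}$ is the universal map to a $\msK$-sheaf: for $\cF$ already a $\msK$-sheaf the transition maps in the colimit over $J_Y$ are equivalences (this is precisely conditions (i)+(ii) iterated, using that $J_Y$ is directed and that finite covers refine to finite covers), so $\cF\xto{\sim}\cF^{\sharp}$; and for general $\cG\in\Sh_{\msK}$, applying $(-)^{\sharp}$ to a map $\cF\to\cG$ and using $\cG\xto{\sim}\cG^{\sharp}$ gives the required factorization, with uniqueness coming from the same equivalence.

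Finally I would assemble the pieces: the composite left adjoint $\PSh_{\msK}^{\cont}\xrightarrow{(-)^{\red}}\PSh_{\msK}^{\cont,\red}\xrightarrow{(-)^{\sharp}}\Sh_{\msK}$ is the sheafification, and since $(-)^{\red}$ is given by \eqref{eq:F_red}, the formula for the value on a nonempty compact $Y$ becomes $\indlim[S\in J_Y]\prolim[Z\in S]\res_{Y,Z}^R(\cF^{\red}(Z))$, which is \eqref{eq:K-sheafification} (note $\cF^{\red}(\emptyset)=0$, so the empty set contributes trivially to the inner limit over any $Z\in S$).

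\textbf{Main obstacle.} The delicate point is step (1)--(2): showing that $\cF^{\sharp}$ is continuous and satisfies finite descent simultaneously requires commuting a \emph{cofiltered} limit $\prolim[Z\in S]$ (inner) with a \emph{filtered} colimit $\indlim[S\in J_Y]$ (outer), and then further with the colimit $\indlim[Z'\Supset Y]$ defining continuity. This is exactly where one must use that the restriction functors are strongly continuous (their right adjoints commute with filtered colimits), and where the multiplicativity of the way-below relation on $\msK(X)^{op}$ (equivalently, on $\Open(X)$) enters through the cofinality of the intersection maps analogous to \eqref{eq:cap}. I expect the bookkeeping of these cofinality/interchange arguments — rather than any conceptual difficulty — to be the bulk of the work, and it parallels closely the proof of Proposition \ref{prop:PSh_and_PSh_K}, so it can be carried out by the same techniques.
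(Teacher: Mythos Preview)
Your approach is correct and follows essentially the same route as the paper, but you are doing more work than necessary. The paper observes in one line that the formula \eqref{eq:K-sheafification} is precisely the standard sheafification $(-)^{\sharp,\fin}$ for the finitary pretopology on $\msK(X)$ from Remark \ref{remark:finitary_topology}(1), applied to $\cF^{\red}$; this immediately gives your steps (2) and (3) for free as general sheaf theory, with no need to verify the descent square or the universal property by hand. The only thing left to check is your step (1): that $(-)^{\sharp,\fin}$ preserves continuity when applied to a continuous $\msK$-presheaf, which the paper declares ``straightforward'' and which is exactly the interchange argument you flag as the main obstacle. So your plan is sound, but you can collapse (2)--(3) into a single sentence and focus the write-up on (1).
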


\begin{proof}
Consider the full subcategory $\Sh_{\fin,\msK}(X;\un{\cC})\subset \PSh_{\msK}(X;\un{\cC})$ of $\msK$-presheaves satisfying only the finite descent, i.e. sheaves for the finitary topology on $\msK(X)$ described in Remark \ref{remark:finitary_topology} 1). Then the corresponding finitary sheafification functor $(-)^{\sharp,\fin}:\PSh_{\msK}(X;\un{\cC})\to \Sh_{\fin,\msK}(X;\un{\cC})$ is given by \eqref{eq:K-sheafification}. Thus, we need to show the following: for $\cF\in \PSh^{\cont}_{\msK}(X;\un{\cC}),$ the $\msK$-presheaf $\cF^{\sharp,\fin}$ is continuous. This is straightforward.
\end{proof}




We observe the following.

\begin{prop}\label{prop:closed_embedding_pullback_PSh^cont} For a closed embedding $\iota:Y\subset X,$ the functor $\PSh^{\cont}_{\msK}(X;\un{\cC})\xto{\iota^*} \PSh^{\cont}_{\msK}(Y;\un{\cC}_{\mid Y})$ is a strongly continuous localization.\end{prop}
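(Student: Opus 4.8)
The claim is that for a closed embedding $\iota: Y\subset X$, the pullback $\iota^*:\PSh^{\cont}_{\msK}(X;\un{\cC})\to\PSh^{\cont}_{\msK}(Y;\un{\cC}_{\mid Y})$ is a strongly continuous localization. My plan is to exhibit $\iota^*$ explicitly as a functor between continuous-cosheaf categories on $\msK(X)^{op}$ and $\msK(Y)^{op}$, identify its right adjoint, and then check the three needed properties: continuity of $\iota^*$, continuity of the right adjoint (strong continuity), and fully faithfulness of the right adjoint (localization).

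\textbf{Step 1: description of $\iota^*$ and its right adjoint.} Recall that a continuous $\msK$-presheaf $\cF$ on $X$ is determined by its values on $\msK(X)$ with the continuity condition $\indlim[Z\Supset W]\res_{Z,W}(\cF(Z))\xto{\sim}\cF(W)$. For a compact $W\subset Y$ I would set $(\iota^*\cF)(W):=\cF(W)$ — note $W$ is also compact in $X$ — but with the caveat that the value must now be computed in $\un{\cC}(W)$ and the continuity condition on $Y$ involves only $Z\Supset_Y W$, i.e.\ $Z$ containing an open-in-$Y$ neighborhood. Since any open-in-$Y$ neighborhood of $W$ is of the form $U\cap Y$ with $U$ open in $X$, and (crucially, as the paper emphasizes for $\Open(X)$) the way-below relation is multiplicative, the colimit $\indlim[Z\Supset_Y W]\cF(Z)$ is a colimit over a cofinal system of $X$-compacts, so $\iota^*\cF$ is again continuous. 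The right adjoint $\iota_*$ should be the obvious one extending a $\msK$-presheaf on $Y$ by its restriction: $(\iota_*\cG)(W):=\indlim[?]$ — more precisely, thinking of $\PSh^{\cont}_{\msK}$ as cosheaves on the continuous poset $\msK(X)^{op}$ and using the cosheafification formula \eqref{eq:cosheafification_simple}, $\iota_*$ is induced by right Kan extension along $\msK(Y)\hookrightarrow\msK(X)$ followed by cosheafification, and unwinding this gives $(\iota_* \cG)(W)=\prolim$ or $\indlim$ over $\msK(Y)$-neighborhoods; I would nail down the exact formula by the adjunction $\Hom(\iota^*\cF,\cG)\cong\Hom(\cF,\iota_*\cG)$ together with the explicit $\tilde{\iota}_j$-style generators.

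\textbf{Step 2: the three properties.} Continuity of $\iota^*$ is immediate from the pointwise formula $(\iota^*\cF)(W)=\cF(W)$, since filtered colimits in $\PSh^{\cont}_{\msK}$ are computed pointwise after cosheafification, and $\iota^*$ commutes with both. For strong continuity I must show $\iota_*$ is continuous, i.e.\ commutes with filtered colimits; this reduces (again via pointwise computation of colimits of continuous $\msK$-presheaves, using $\Cat_{\st}^{\dual}$ satisfies weak (AB5), Proposition \ref{prop:weak_AB5_Pr^LL}, and that $(-)^{\omega}$ commutes with filtered colimits, Proposition \ref{prop:compact_objects_in_filtered_colimits}) to the observation that the Kan-extension-plus-cosheafification formula for $\iota_*$ only involves filtered colimits and $\res^R$ functors that are themselves continuous, hence commutes with filtered colimits. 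For the localization property I need $\iota^*\iota_*\cong\id$ on $\PSh^{\cont}_{\msK}(Y;\un{\cC}_{\mid Y})$: evaluating at a compact $W\subset Y$, $(\iota^*\iota_*\cG)(W)=(\iota_*\cG)(W)$, and the formula for $\iota_*$ restricted back to $\msK(Y)$ collapses to $\cG(W)$ because $W$ is already in $\msK(Y)$ and $\cG$ is continuous (the relevant colimit system has $W$ cofinal, or the Kan extension is the identity on the subcategory). Then $\iota^*$ is a strongly continuous functor with a fully faithful continuous right adjoint, i.e.\ a strongly continuous localization, as claimed.

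\textbf{Main obstacle.} The delicate point is the interplay between the continuity condition on $Y$ (which refers to $Z\Supset_Y W$, i.e.\ neighborhoods within $Y$) and that on $X$, together with the fact that $\un{\cC}$ on $Y$ is itself defined by left Kan extension $\un{\cC}(Z)=\indlim[U\supset Z]^{\cont}\un{\cC}(U)$ over $X$-open sets. Establishing that $\iota^*$ lands in \emph{continuous} $\msK$-presheaves on $Y$ — and, symmetrically, that $\iota_*$ lands in continuous $\msK$-presheaves on $X$ — requires a cofinality argument showing that open-in-$Y$ neighborhoods of a compact $W\subset Y$ are suitably approximated by $X$-open (or $X$-compact) neighborhoods, and this is exactly where multiplicativity of $\Subset$ in $\Open(X)$ and local compactness of $X$ enter. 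Once these cofinalities are in place the rest is formal manipulation of adjoints, so I expect the bookkeeping of these neighborhood systems to be the real content.
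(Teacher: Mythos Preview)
Your overall plan is sound—verify continuity of $\iota^*$, continuity of $\iota_*$, and $\iota^*\iota_*\cong\id$—but you are making this much harder than it is, and in doing so you never actually write down $\iota_*$. The paper's proof is two lines: it simply records the explicit formulas
\[
\iota^*(\cF)(Z)=\cF(Z),\qquad \iota_*(\cG)(Z)=\res_{Z,\,Z\cap Y}^{R}\bigl(\cG(Z\cap Y)\bigr),
\]
and then both required properties are immediate. Continuity of $\iota_*$ follows because $\res^R$ is continuous (the presheaf $\un{\cC}$ takes values in $\Cat_{\st}^{\dual}$, so restriction functors are strongly continuous); and $\iota^*\iota_*\cong\id$ is trivial since $W\cap Y=W$ for $W\subset Y$.

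The reason you get stuck is that you try to manufacture $\iota_*$ abstractly via ``right Kan extension along $\msK(Y)\hookrightarrow\msK(X)$ followed by cosheafification,'' and then argue about its continuity by invoking weak (AB5) and Proposition~\ref{prop:compact_objects_in_filtered_colimits}, none of which is needed. The key point you miss is that the right Kan extension collapses to a single term: for any compact $Z\subset X$, the intersection $Z\cap Y$ is the \emph{terminal} object among compacts of $Y$ contained in $Z$, so there is no limit to compute. Once you see this, the cofinality worries you flag as the ``main obstacle'' largely evaporate; the only residual check is that $\iota_*\cG$ is again a continuous $\msK$-presheaf, which follows from continuity of $\cG$ together with the fact that $\{Z'\cap Y : Z'\Supset_X Z\}$ is cofinal among $Y$-neighborhoods of $Z\cap Y$ (this is where closedness of $Y$ and local compactness of $X$ enter, but it is elementary). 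Your sentence ``the Kan-extension-plus-cosheafification formula for $\iota_*$ only involves filtered colimits and $\res^R$ functors'' is also confused: right Kan extension produces limits, not colimits.
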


\begin{proof}
The functor $\iota^*$ and its right adjoint $\iota_*$ are given by the formulas
$$\iota^*(\cF)(Z)=\cF(Z),\quad \iota_*(\cF)(Z)=\res_{Z,Z\cap Y}^R(\cF(Z\cap Y)).$$
Hence, $\iota_*$ is continuous and the adjunction counit $\iota^*\iota_*\to \id$ is an isomorphism.\end{proof}

For $S\in J_X,$ we define the presentable stable category
$$\cD_{(X,S)}:=\prolim[Y\in S]\PSh^{\cont}_{\msK}(Y;\un{\cC}_{\mid Y}).$$

\begin{prop}\label{prop:approximating_sheaves} Suppose that the presheaf $\un{\cC}$ is reduced, i.e. $\un{\cC}(\emptyset)=0.$

1) For each $S\in J_X,$ the functor $\PSh^{\cont}_{\msK}(X;\un{\cC})\to \cD_{(X,S)}$ is a strongly continuous localization. The essential image of the right adjoint $\cD_{(X,S)}\to \PSh^{\cont}_{\msK}(X;\un{\cC})$ consists of continuous presheaves $\cF$ such that for each compact $Y\subset X$ we have
\begin{equation}\label{eq:description_of_D_X_S} \cF(Y)\cong \limproj\limits_{Z\in S}\res_{Y,Y\cap Z}^R(\cF(Y\cap Z)).\end{equation}

2) Put $\cD:=\liminj\limits_{S\in J_X}^{\cont}\cD_{(X,S)}.$ Then the functor $\PSh^{\cont}(X;\cC)\to\cD$ is a localization, and the essential image of its right adjoint is exactly the category of $\msK$-sheaves $\Sh_{\msK}(X;\un{\cC}).$ In particular, we have an equivalence $\cD\simeq \Sh_{\msK}(X;\un{\cC}).$\end{prop}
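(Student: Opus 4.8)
The key input is Proposition~\ref{prop:closed_embedding_pullback_PSh^cont}, which says that for a closed embedding $\iota:Y\subset X$ the pullback $\iota^*:\PSh^{\cont}_{\msK}(X;\un{\cC})\to\PSh^{\cont}_{\msK}(Y;\un{\cC}_{\mid Y})$ is a strongly continuous localization, together with Remark~\ref{remark:finitary_topology}~2), which lets us replace a sieve $S=S_{(X;Y_1,\dots,Y_n)}$ by the finite poset $P_{\geq 1}(\{1,\dots,n\})$ via the final functor $I\mapsto Y_I=\bigcap_{i\in I}Y_i$. For part~1), fix $S=S_{(X;Y_1,\dots,Y_n)}\in J_X$. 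For each nonempty $I\subset\{1,\dots,n\}$ the category $\PSh^{\cont}_{\msK}(Y_I;\un{\cC}_{\mid Y_I})$ is the pullback of $\PSh^{\cont}_{\msK}(X;\un{\cC})$ along the closed embedding $Y_I\subset X$, so $\cD_{(X,S)}=\prolim_{I\in P_{\geq 1}(\{1,\dots,n\})^{op}}\PSh^{\cont}_{\msK}(Y_I;\un{\cC}_{\mid Y_I})$ is a finite limit of strongly continuous localizations of $\PSh^{\cont}_{\msK}(X;\un{\cC})$; by Proposition~\ref{prop:nice_pullbacks}~2) (or rather its iteration for a finite diagram of localizations with a common source) the limit functor $\PSh^{\cont}_{\msK}(X;\un{\cC})\to\cD_{(X,S)}$ is again a strongly continuous localization. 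Its right adjoint sends a family $(\cF_I)_I$ to the presheaf $Y\mapsto\prolim_{I}\res^R_{Y,Y\cap Y_I}(\cF_I(Y\cap Y_I))$; applied to the image of $\cF\in\PSh^{\cont}_{\msK}(X;\un{\cC})$ this gives exactly the right-hand side of \eqref{eq:description_of_D_X_S}, so the essential image of the right adjoint consists precisely of those continuous $\msK$-presheaves satisfying \eqref{eq:description_of_D_X_S} for every compact $Y$ (one checks the displayed condition is unchanged under replacing $S$ by its cofinal finite poset, using multiplicativity of the way-below relation as in the construction of $\Sh_{\msK}$).

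For part~2), I would take the filtered colimit over $S\in J_X$ of the localizations from part~1). Using Proposition~\ref{prop:weak_AB5_Pr^LL} (weak (AB5): fully faithful strongly continuous functors are stable under filtered colimits, and a filtered colimit of short exact sequences is short exact), and the fact established in part~1) that each $\PSh^{\cont}_{\msK}(X;\un{\cC})\to\cD_{(X,S)}$ is a localization with a fully faithful strongly continuous right adjoint, one concludes that $\PSh^{\cont}_{\msK}(X;\un{\cC})\to\cD=\indlim^{\cont}_{S}\cD_{(X,S)}$ is also a localization whose right adjoint is the colimit of the right adjoints and is fully faithful. Concretely the right adjoint has essential image $\bigcap_{S\in J_X}\{\cF:\cF(Y)\cong\prolim_{Z\in S}\res^R_{Y,Y\cap Z}(\cF(Y\cap Z))\ \forall Y\}$. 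It remains to identify this intersection with $\Sh_{\msK}(X;\un{\cC})$. A continuous $\msK$-presheaf $\cF$ lies in $\Sh_{\msK}(X;\un{\cC})$ iff $\cF(\emptyset)=0$ and the two-term Mayer--Vietoris square for a pair of compacts is a pullback; by induction on $n$ this is equivalent to requiring, for every $S=S_{(X;Y_1,\dots,Y_n)}$, that $\cF(Y)$ be the limit of the cube $\bigl(\res^R_{Y,Y\cap Y_I}\cF(Y\cap Y_I)\bigr)_{I}$ over $P_{\geq1}(\{1,\dots,n\})^{op}$ — which, after restricting $S$ to refer to a fixed $Y$ and using reducedness $\un{\cC}(\emptyset)=0$ to drop the empty intersections, is exactly condition \eqref{eq:description_of_D_X_S}. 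Hence the essential image of the right adjoint to $\PSh^{\cont}_{\msK}(X;\un{\cC})\to\cD$ is precisely $\Sh_{\msK}(X;\un{\cC})$, giving $\cD\simeq\Sh_{\msK}(X;\un{\cC})$ as claimed.

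\textbf{Main obstacle.} The genuinely delicate point is the bookkeeping in the inductive step of part~2): translating the iterated Mayer--Vietoris (pullback) condition defining $\Sh_{\msK}$ into the single ``total limit over the intersection cube'' condition \eqref{eq:description_of_D_X_S}, and checking that this total-limit condition is exactly what membership in the essential image of the colimit right adjoint amounts to. This requires (i) the elementary but not entirely trivial fact that an $n$-fold iterated pullback square of $n$ closed subsets assembles into a limit over $P_{\geq1}(\{1,\dots,n\})^{op}$, (ii) care with the restriction functors $\res^R$ and their interaction with intersections (so that the nerve of the cover behaves well), and (iii) the reducedness hypothesis on $\un{\cC}$, without which the empty-intersection vertices of the cube would contribute spuriously. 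The passage through $J_X$ being filtered (Remark~\ref{remark:finitary_topology}~1)) and the stability of localizations under the relevant limits and filtered colimits (Propositions~\ref{prop:nice_pullbacks} and \ref{prop:weak_AB5_Pr^LL}) are then essentially formal.
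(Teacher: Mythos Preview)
Your overall architecture matches the paper's, but both parts have a genuine gap, and in each case the missing input is the same technical lemma (Lemma~\ref{lem:auxiliary_on_colimits}, together with Proposition~\ref{prop:colimit_of_presentable_compositions_of_adjoints}) that you never invoke.

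\textbf{Part 1.} Proposition~\ref{prop:nice_pullbacks} 2) says that a \emph{projection} out of a pullback along a localization is again a localization; it says nothing about a map \emph{into} a limit from a common source. There is no ``iteration'' of that proposition which yields that $\PSh^{\cont}_{\msK}(X;\un{\cC})\to\cD_{(X,S)}$ is a localization. Concretely: the right adjoint you write down is the fiber product $R_1(y_1)\times_{R_{12}(y_{12})}R_2(y_2)$, and to check fully-faithfulness you must apply $\iota_1^*$ to it and recover $y_1$. Since $\iota_1^*$ is a \emph{left} adjoint it need not preserve this pullback when computed in $\PSh^{\cont}_{\msK}(X;\un{\cC})$. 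What saves you is that the pullback, computed by the pointwise formula in $\PSh_{\msK}(X;\un{\cC})$, is \emph{already} a continuous $\msK$-presheaf; then $\iota_1^*$ is just restriction to $\msK(Y_1)$ and the check is trivial. Proving this continuity is exactly the Claim in the paper's proof, and it is not formal: for $T\Subset T'$ one has to compute
\[
\indlim[T'\Supset T]\res_{T',T}\,\res_{T',T'\cap Y_1}^R\cF(T'\cap Y_1)\;\cong\;\res_{T,T\cap Y_1}^R\cF(T\cap Y_1),
\]
and this is where Proposition~\ref{prop:colimit_of_presentable_compositions_of_adjoints} and Lemma~\ref{lem:auxiliary_on_colimits} enter.

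\textbf{Part 2.} Your ``by induction on $n$'' only gives the easy direction: a $\msK$-sheaf satisfies the cube condition for every $S\in J_X$. The hard direction is the converse. You need Mayer--Vietoris for an \emph{arbitrary} pair of compacts $Y_1,Y_2\subset X$, but every $S\in J_X$ is a cover of $X$; there is no sieve of the form $S_{(X;Y_1,Y_2)}$ unless $Y_1\cup Y_2=X$, so you cannot read off the two-term pullback directly. The paper's fix is to choose open neighborhoods $U_i\supset Y_i$, use the three-term sieve $S_{(X;\bar U_1,\bar U_2,\,X\setminus(U_1\cup U_2))}$ with $Y=Y_1\cup Y_2$, and then take the colimit as $U_i\searrow Y_i$; Lemma~\ref{lem:auxiliary_on_colimits} (continuity plus the directed family of $\un{\cC}(T)$'s) is again what makes the extra cube vertices disappear in the limit. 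Your sentence ``after restricting $S$ to refer to a fixed $Y$ and using reducedness'' does not supply this argument; reducedness only handles the case when $Y\cap Z=\emptyset$, which one cannot arrange without the shrinking procedure.
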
 

\begin{proof}

Observe that the analogue of part 1) for categories $\PSh_{\msK}(Y,\un{\cC}_{\mid Y})$ holds trivially. To prove the statement 1), we only need to show the following.

{\noindent{\bf Claim.}} {\it Suppose that $S=S_{(X;Y_1,\dots,Y_n)}\in J_X.$ Let $\cF\in\PSh_{\msK}(X;\un{\cC})$ be a $\msK$-presheaf satisfying \eqref{eq:description_of_D_X_S}. If each of the pullbacks $\cF_{\mid Y_i}$ is in $\PSh^{\cont}_{\msK}(Y_i;\un{\cC}_{\mid Y_i}),$ then also $\cF\in\PSh^{\cont}_{\msK}(X;\un{\cC}).$}

\begin{proof}[Proof of Claim.] We may assume that $n=2,$ i.e. $S=S_{(X;Y_1,Y_2)}.$ Take a compact subset $T\subset X.$ It suffices to show that the map
$$\indlim[T'\Supset T]\res_{T',T}\res_{T',T'\cap Y_1}^R\cF(T'\cap Y_1)\to \res_{T,T\cap Y_1}^R\cF(T\cap Y_1)$$
is an isomorphism, and similarly for $Y_2$ and $Y_1\cap Y_2.$

This follows directly from Proposition \ref{prop:colimit_of_presentable_compositions_of_adjoints} and Lemma \ref{lem:auxiliary_on_colimits}: we have
\begin{multline*}\indlim[T'\Supset T]\res_{T',T}\res_{T',T'\cap Y_1}^R\cF(T'\cap Y_1)\cong \indlim[T''\supset T'\Supset T]\res_{T'',T}\res_{T'',T'\cap Y_1}^R\cF(T'\cap Y_1)\\ \cong \indlim[T''\Supset T]\res_{T'',T}\res_{T'',T\cap Y_1}^R\cF(T\cap Y_1)\cong \res_{T,T\cap Y_1}^R\cF(T\cap Y_1),\end{multline*}
and similarly for $Y_2$ and $Y_1\cap Y_2.$ 
\end{proof} 

2) It follows from 1) that the functor $\PSh^{\cont}_{\msK}(X;\un{\cC})\to\cD$ is a strongly continuous localization, and the essential image of its right adjoint consists of continuous $\msK$-presheaves $\cF$ such that for any compact $Y\subset X$ and for any $S\in J_X$ we have 
\begin{equation}\label{eq:objects_of_D}\cF(Y)\cong \limproj\limits_{Z\in S}\cF(Y\cap Z),\quad\text{ for }Y\in\msK(X),\,S\in J_X.\end{equation}
We need to show that this condition holds if and only if $\cF$ is a $\msK$-sheaf.

The ``if'' direction is obvious. We show the ``only if'' direction. Take some pair of compact subsets $Y_1,Y_2\subset X.$ For any open neighborhoods, $U_1\supset Y_1,$ $U_2\supset Y_2,$ denote by $S(U_1,U_2)\in J_X$ the sieve $S_{(X;\bar{U_1},\bar{U_2},X\setminus(U_1\cup U_2))}.$ Applying the condition \eqref{eq:objects_of_D} to $Y=Y_1\cup Y_2$ and $S=S(U_1,U_2),$ we see that the following is a pullback square:
$$
\begin{CD}
\cF(Y) @>>> \res_{Y,Y\cap \bar{U_1}}^R\cF(Y\cap \bar{U_1})\\
@VVV @VVV\\
\res_{Y,Y\cap \bar{U_2}}^R\cF(Y\cap \bar{U_2}) @>>> \res_{Y,Y\cap \bar{U_1}\cap\bar{U_2}}^R\cF(Y\cap \bar{U_1}\cap\bar{U_2}).
\end{CD}
$$  
Taking the colimit over $U_1$ and $U_2$ (and using Lemma \ref{lem:auxiliary_on_colimits} again), we obtain the following square:
$$
\begin{CD}
\cF(Y) @>>> \res_{Y,Y_1}^R\cF(Y_1)\\
@VVV @VVV\\
\res_{Y,Y_2}^R\cF(Y_2) @>>> \res_{Y,Y_1\cap Y_2}^R\cF(Y_1\cap Y_2).
\end{CD}
$$
Hence, this is a pullback square and $\cF$ is a $\msK$-sheaf.
\end{proof}

\subsection{Localizing invariants of categories of sheaves}
\label{ssec:loc_invar_cats_of_sheaves}

From now on we assume that our presheaf of categories $\un{\cC}$ takes values in dualizable categories. Our goal in this section is to prove the following theorem.

\begin{theo}\label{th:U_loc_locally_compact} Let $X$ be a locally compact Hausdorff space and $\un{\cC}$ is a presheaf on $X$ with values in $\Cat_{\st}^{\dual}.$ Then the category $\Sh(X;\un{\cC})$ is dualizable and we have the following natural isomorphism in $\Mot^{\loc}:$
$$\cU_{\loc}^{\cont}(\Sh(X;\un{\cC}))=\Gamma_c(X,(\cU_{\loc}^{\cont}(\un{\cC}))^{\sharp}).$$
Hence, this also holds for any finitary localizing invariant, for example, for $K$-theory.\end{theo}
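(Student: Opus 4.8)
The plan is to reduce the statement to the already-established results about continuous posets (Theorems~\ref{th:U_loc_sheaves_cont_posets} and~\ref{th:U_loc_cosheaves_on_cont_posets}) together with the approximation of $\Sh(X;\un{\cC})$ by the categories $\cD_{(X,S)}$ from Proposition~\ref{prop:approximating_sheaves}. First I would reduce to the case where $\un{\cC}$ is reduced (replacing $\un{\cC}$ by $\un{\cC}^{\red}$ changes neither side since $\Sh(X;\un{\cC})=\Sh(X;\un{\cC}^{\red})$ and the sheafification on the right is insensitive to the value at $\emptyset$). Then, using Proposition~\ref{prop:PSh_and_PSh_K}, I would work on the side of $\msK$-(pre)sheaves: the category $\Sh_{\msK}(X;\un{\cC})$ is the colimit $\indlim[S\in J_X]^{\cont}\cD_{(X,S)}$ by Proposition~\ref{prop:approximating_sheaves}~2), and by Proposition~\ref{prop:F_cont_filtered_colimits} the functor $\cU_{\loc}^{\cont}$ commutes with this filtered colimit. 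So it suffices to compute $\cU_{\loc}^{\cont}(\cD_{(X,S)})$ compatibly in $S$, and then to identify the colimit over $J_X$ with $\Gamma_c(X,(\cU_{\loc}^{\cont}(\un{\cC}))^{\sharp})$.

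The main computation is the case of a single compact space. When $X$ is compact, I would show $\cU_{\loc}^{\cont}(\PSh^{\cont}_{\msK}(X;\un{\cC}))\cong \cU_{\loc}^{\cont}(\un{\cC}(\{x\}))$-type contributions organized as a colimit over points; more precisely, since $\PSh^{\cont}_{\msK}(X;\un{\cC})$ is the category of $\un{\cC}$-valued cosheaves on the continuous poset $\msK(X)^{op}$, Theorem~\ref{th:U_loc_cosheaves_on_cont_posets} gives $\cU_{\loc}^{\cont}(\PSh^{\cont}_{\msK}(X;\un{\cC}))\cong\biggplus[Y\in(\msK(X)^{op})^{\omega}]\cU_{\loc}^{\cont}(\un{\cC}(Y))$, and the $\omega$-compact objects of $\msK(X)^{op}$ are precisely the compact sets $Y$ which are $\gg$-below themselves, i.e. $Y$ has a neighborhood basis of subsets $Z\Supset Y$ all equal to $Y$ — these are the $Y$ that are open, hence the open-and-compact subsets; when $X$ is connected and non-discrete there are only $\emptyset$ and (if $X$ compact) $X$ itself, so the reduced version contributes only $\cU_{\loc}^{\cont}(\un{\cC}(X))$. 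For general $\cD_{(X,S)}=\prolim[Y\in S]\PSh^{\cont}_{\msK}(Y;\un{\cC}_{\mid Y})$ with $S=S_{(X;Y_1,\dots,Y_n)}$, I would use Remark~\ref{remark:finitary_topology}~2) to replace $S$ by the finite poset $P_{\geq 1}(\{1,\dots,n\})^{op}$ indexing the closed sets $Y_I=\bigcap_{i\in I}Y_i$, and compute the finite limit in $\Cat_{\st}^{\dual}$ of the diagram $I\mapsto\PSh^{\cont}_{\msK}(Y_I;\un{\cC}_{\mid Y_I})$ via the closed-pullback localizations of Proposition~\ref{prop:closed_embedding_pullback_PSh^cont}; since along a finite diagram of such localizations the limit is computed as in Proposition~\ref{prop:F_cont_nice_pullbacks}, $\cU_{\loc}^{\cont}$ takes this finite limit of dualizable categories to the corresponding finite limit of spectra, i.e. to $\Tot$ over $P_{\geq 1}(\{1,\dots,n\})$ of $\cU_{\loc}^{\cont}(\PSh^{\cont}_{\msK}(Y_I;\un{\cC}_{\mid Y_I}))$. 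This is exactly the Čech/Mayer--Vietoris complex computing compactly supported sections of the sheafification over the closed cover $\{Y_i\}$.

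Having identified $\cU_{\loc}^{\cont}(\cD_{(X,S)})$ with a Čech approximation of $\Gamma_c$, I would then take the colimit over $S\in J_X$. For this I need that the presheaf $Y\mapsto\cU_{\loc}^{\cont}(\un{\cC}(Y))$ on $\msK(X)$, extended to locally closed subsets by the left Kan extension used to define $\un{\cC}(Y)$ — note $\cU_{\loc}^{\cont}$ commutes with the filtered colimit $\un{\cC}(Y)=\indlim[U\supset Y]^{\cont}\un{\cC}(U)$ by Proposition~\ref{prop:F_cont_filtered_colimits} — is the sheaf on $X$ whose compactly supported cohomology is being computed; this uses that $\cU_{\loc}^{\cont}(\un{\cC}(Y))=(\cU_{\loc}^{\cont}(\un{\cC}))^{\sharp}$ evaluated at $Y$ for compact $Y$, which follows from the description of sheafification on locally compact spaces (cf. Proposition~\ref{prop:PSh_and_PSh_K}) since $\cU_{\loc}^{\cont}$ turns the sheaf conditions (finite descent, already checked, and continuity, automatic) into the corresponding conditions for the spectral presheaf. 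Then the colimit $\indlim[S\in J_X]\Tot_{P_{\geq1}}(\dots)$ over the directed family of finite closed covers is precisely the standard model for $\Gamma_c(X,(\cU_{\loc}^{\cont}(\un{\cC}))^{\sharp})$, because on a locally compact Hausdorff space compactly supported sections are the filtered colimit over compacts $Y\subset X$ of sections with support in $Y$, and these are computed by finite closed-cover Čech complexes. Finally, dualizability of $\Sh(X;\un{\cC})$ is part of Proposition~\ref{prop:approximating_sheaves} (it is the localization $\PSh^{\cont}(X;\un{\cC})\to\cD$ of a dualizable category, or directly from \cite{Lur18}).

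The hard part will be organizing the bookkeeping so that the identification of $\indlim[S\in J_X]\cU_{\loc}^{\cont}(\cD_{(X,S)})$ with $\Gamma_c(X,(\cU_{\loc}^{\cont}(\un{\cC}))^{\sharp})$ is natural and genuinely checks the sheaf/cosheaf conditions on the level of spectra; in particular one must verify that $\cU_{\loc}^{\cont}$ applied to the finite limit presentation of $\cD_{(X,S)}$ really produces the Mayer--Vietoris square (this is where Propositions~\ref{prop:F_cont_nice_pullbacks} and~\ref{prop:closed_embedding_pullback_PSh^cont} do the work, but one must confirm the diagram of localizations is of the right shape at each stage), and that the extension of $\cU_{\loc}^{\cont}(\un{\cC})$ to locally closed sets agrees with the sheafification $(\cU_{\loc}^{\cont}(\un{\cC}))^{\sharp}$. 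I expect the pointwise computation $\cU_{\loc}^{\cont}(\PSh^{\cont}_{\msK}(Y;\un{\cC}_{\mid Y}))$ via Theorem~\ref{th:U_loc_cosheaves_on_cont_posets} together with the vanishing $\cU_{\loc,\kappa}^{\cont}(\Sh_{\geq0}(\R;-))=0$ from Proposition~\ref{prop:all_loc_invar_zero_for_Sh_geq_0} (needed to kill the ``half-open interval'' contributions and to see that only honestly-open compact subsets survive, exactly as in Theorem~\ref{th:sheaves_on_finite_CW_complexes}) to be the technical heart of the argument.
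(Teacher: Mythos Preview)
Your overall strategy matches the paper's: reduce to $X$ compact, use Proposition~\ref{prop:approximating_sheaves} to write $\Sh_{\msK}(X;\un{\cC})$ as a filtered colimit of the $\cD_{(X,S)}$, compute $\cU_{\loc}^{\cont}(\cD_{(X,S)})$ as a finite limit via Proposition~\ref{prop:U_loc_of_approximations} (your invocation of Propositions~\ref{prop:F_cont_nice_pullbacks} and~\ref{prop:closed_embedding_pullback_PSh^cont} is exactly how that proposition is proved), and identify the terms using Theorem~\ref{th:U_loc_cosheaves_on_cont_posets} (the paper records this as Proposition~\ref{prop:U_loc_cont_presheaves}).

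There is one genuine gap. Your computation of $\cU_{\loc}^{\cont}(\PSh^{\cont}_{\msK}(Y;\un{\cC}_{\mid Y}))$ gives $\bigoplus_{Z\in\clopen(Y)}\cU_{\loc}^{\cont}(\un{\cC}(Z))$, which in the paper's notation is $\wt{\cF}(Y)$, \emph{not} $\cF(Y)=\cU_{\loc}^{\cont}(\un{\cC}(Y))$. So after taking the finite limit over $S$ and the colimit over $J_X$ you have computed $\wt{\cF}^{\sharp}(X)$, whereas the right-hand side of the theorem is $\cF^{\sharp}(X)$. You assert that the result is ``exactly the \v{C}ech/Mayer--Vietoris complex computing compactly supported sections of the sheafification'', but this silently replaces $\wt{\cF}$ by $\cF$. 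The paper closes this gap with Proposition~\ref{prop:funny_operation}, which shows that the natural map $\cF\to\wt{\cF}$ of continuous $\msK$-presheaves becomes an isomorphism after $\msK$-sheafification; this is an independent (easy, but not automatic) lemma about the weak topology on $\msK(X)$ generated by disjoint decompositions, and it is the step your outline is missing.

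Two smaller points: the reduction to $X$ compact is done in the paper via one-point compactification and the extension-by-zero $j_!\un{\cC}$, which you should make explicit since $J_X$ and Proposition~\ref{prop:approximating_sheaves} are stated only for compact $X$; and your closing remark that Proposition~\ref{prop:all_loc_invar_zero_for_Sh_geq_0} will be ``the technical heart'' is off --- that vanishing is not used here at all (it enters only in the finite-CW case, Theorem~\ref{th:sheaves_on_finite_CW_complexes}).
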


Note that we already know the finitary localizing invariants of the categories of continuous ($\msK$-)presheaves.

\begin{prop}\label{prop:U_loc_cont_presheaves} The category $\PSh_{\msK}^{\cont}(X;\un{\cC})$ is dualizable and we have
$$\cU_{\loc}(\PSh_{\msK}^{\cont}(X;\un{\cC}))\cong\biggplus[\substack{Y\subset X\\ \text{open-compact}}]\cU_{\loc}(\un{\cC}(Y)).$$\end{prop}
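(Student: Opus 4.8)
The statement identifies $\PSh_{\msK}^{\cont}(X;\un{\cC})$ as the category of $\un{\cC}$-valued cosheaves on the continuous poset $\msK(X)^{op}$, so the proof should be a direct application of Theorem \ref{th:U_loc_cosheaves_on_cont_posets} to $P=\msK(X)^{op}$. First I would recall (as noted right before the statement of the proposition) that $\msK(X)^{op}$ is a continuous poset, with $Z\ll Y$ in $\msK(X)^{op}$ if and only if $Z\Supset Y$, i.e. $Z$ contains an open neighborhood of $Y$; and that the topology on $\msK(X)^{op}$ from Subsection \ref{ssec:cont_posets} is exactly the one whose sheaves-on-the-opposite (= cosheaves) are the continuous $\msK$-presheaves, by definition of $\PSh_{\msK}^{\cont}(X;\un{\cC})$. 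Here the copresheaf of dualizable categories on $\msK(X)^{op}$ is $\un{\cC}$ itself (extended to compact subsets by the left Kan extension $\un{\cC}(Y)=\indlim[U\supset Y]^{\cont}\un{\cC}(U)$, as set up in Subsection \ref{ssec:sheaves_K_sheaves_loc_comp}), with corestriction functors $\cores_{Z,Y}:\un{\cC}(Z)\to\un{\cC}(Y)$ for $Z\Supset Y$.

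\textbf{Key steps.} With this dictionary, parts 1), 2), 3) of Theorem \ref{th:U_loc_cosheaves_on_cont_posets} give directly: (1) $\Cosh(\msK(X)^{op};\un{\cC})=\PSh_{\msK}^{\cont}(X;\un{\cC})$ is dualizable; (2) for each compact object $j$ of $\msK(X)^{op}$ the functor $\iota_j$ is fully faithful and strongly continuous; and (3) the $\iota_j$ induce an isomorphism $\biggplus[j\in (\msK(X)^{op})^{\omega}]\cU_{\loc}^{\cont}(\un{\cC}(j))\xto{\sim}\cU_{\loc}^{\cont}(\PSh_{\msK}^{\cont}(X;\un{\cC}))$. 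The only remaining point is to identify the indexing set $(\msK(X)^{op})^{\omega}$ of compact objects of the poset $\msK(X)^{op}$. Since $Y$ is a compact object of the continuous poset $\msK(X)^{op}$ exactly when $Y\ll Y$, i.e. $Y\Supset Y$, i.e. $Y$ contains an open neighborhood of itself, this means precisely that $Y$ is simultaneously open and compact in $X$. (For such $Y$, one also checks $\un{\cC}(Y)=\un{\cC}(Y)$ as an open set, consistently with the notation in the statement.) Plugging this description of $(\msK(X)^{op})^{\omega}$ into the isomorphism from part 3) yields the claimed formula
$$\cU_{\loc}(\PSh_{\msK}^{\cont}(X;\un{\cC}))\cong\biggplus[\substack{Y\subset X\\ \text{open-compact}}]\cU_{\loc}(\un{\cC}(Y)),$$
and since $\cU_{\loc}$ is the universal finitary localizing invariant, the same holds for every finitary localizing invariant.

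\textbf{Main obstacle.} There is essentially no obstacle of substance here: the content has been done in Theorem \ref{th:U_loc_cosheaves_on_cont_posets}. The one step that deserves a sentence of care is the verification that the notion of ``continuous $\msK$-presheaf'' (Subsection \ref{ssec:sheaves_K_sheaves_loc_comp}, the condition $\indlim[Z\Supset Y]\cores_{Z,Y}(\cF(Z))\xto{\sim}\cF(Y)$) coincides with the notion of ``cosheaf on $\msK(X)^{op}$ for the $\ll$-topology'' of Subsection \ref{ssec:cont_posets}, so that the hypotheses of Theorem \ref{th:U_loc_cosheaves_on_cont_posets} genuinely apply; this is immediate once one unwinds that the smallest covering sieve of $Y$ in $\msK(X)^{op}$ is $\{Z:Z\ll Y\}=\{Z:Z\Supset Y\}$. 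The second minor point is the identification of $(\msK(X)^{op})^{\omega}$ with the open-compact subsets, which I would state and prove in one line as above.
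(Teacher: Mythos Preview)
Your proposal is correct and matches the paper's proof exactly: the paper simply states that this is a special case of Theorem \ref{th:U_loc_cosheaves_on_cont_posets}, together with the observation that the compact elements of the continuous poset $\msK(X)^{op}$ are precisely the open-compact subsets of $X$. Your write-up just spells out the dictionary and the identification $(\msK(X)^{op})^{\omega}=\{\text{open-compact }Y\subset X\}$ that the paper leaves implicit.
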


\begin{proof}This is a special case of Theorem \ref{th:U_loc_cosheaves_on_cont_posets} since the compact elements of the continuous poset $\msK(X)^{op}$ are exactly the open-compact subsets of $X.$\end{proof}





We need the following observation about the categories from Proposition \ref{prop:approximating_sheaves}.

\begin{prop}\label{prop:U_loc_of_approximations} Suppose that $X$ is compact and $\un{\cC}(\emptyset)=0.$ For a sieve $S\in J_X$ the natural functor
$$\prolim[Y\in S]^{\dual}\PSh_{\msK}^{\cont}(Y;\un{\cC}_{\mid Y})\to \prolim[Y\in S]\PSh_{\msK}^{\cont}(Y;\un{\cC}_{\mid Y})$$
is an equivalence. Moreover, we have $$\cU_{\loc}(\prolim[Y\in S]^{\dual}\PSh_{\msK}^{\cont}(Y;\un{\cC}_{\mid Y}))\cong \prolim[Y\in S]\cU_{\loc}(\PSh_{\msK}^{\cont}(Y;\un{\cC}_{\mid Y})).$$\end{prop}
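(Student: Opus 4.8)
\textbf{Proof plan for Proposition \ref{prop:U_loc_of_approximations}.}

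The plan is to prove the two assertions by induction on the structure of the sieve $S\in J_X$, using the fact that every $S = S_{(X;Y_1,\dots,Y_n)}$ has, by Remark \ref{remark:finitary_topology} 2), a final functor from $P_{\geq 1}(\{1,\dots,n\})$ sending $I\mapsto Y_I=\bigcap_{i\in I}Y_i$; so both limits can be recomputed as finite limits indexed by the (finite) poset $P_{\geq 1}(\{1,\dots,n\})^{op}$. Thus it suffices to understand finite limits. The inductive step is: given the cube-shaped diagram $I\mapsto \PSh_{\msK}^{\cont}(Y_I;\un{\cC}_{\mid Y_I})$, peel off the index $n$ and write the total limit as a fiber product of the limit over subsets of $\{1,\dots,n-1\}$, the limit over subsets of $\{1,\dots,n-1\}$ adjoined to $\{n\}$ (i.e. the analogous limit for the closed subspace $Y_n$ with sieve $S_{(Y_n; Y_1\cap Y_n,\dots,Y_{n-1}\cap Y_n)}$), and the analogous limit for $Y_n$ itself. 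Concretely, $\prolim_{Y\in S}\PSh_{\msK}^{\cont}(Y;\un{\cC}_{\mid Y}) \simeq \cD_{(X,S')} \times_{\cD_{(Y_n,S'')}} \PSh_{\msK}^{\cont}(Y_n;\un{\cC}_{\mid Y_n})$ where $S'$ is the sieve on $X$ generated by $Y_1,\dots,Y_{n-1},Y_n$ but only with the "lattice" through $Y_n$... — the cleaner way: use Proposition \ref{prop:closed_embedding_pullback_PSh^cont}, which says $\iota^*:\PSh^{\cont}_{\msK}(X;\un{\cC})\to\PSh^{\cont}_{\msK}(Y_n;\un{\cC}_{\mid Y_n})$ is a strongly continuous localization, together with Proposition \ref{prop:approximating_sheaves} 1), which identifies $\cD_{(X,S)}$ as the essential image of a right adjoint of a strongly continuous localization from $\PSh^{\cont}_{\msK}(X;\un{\cC})$.

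The first assertion then reduces to: a finite limit of dualizable categories along a diagram in which every transition functor is a strongly continuous localization is computed in $\Pr^L_{\st}$ and agrees with the dualizable limit. For fiber products this is exactly Proposition \ref{prop:nice_pullbacks} 2): if $G:\cB\to\cC$ is a strongly continuous localization and $F:\cA\to\cC$ is strongly continuous, then $\cA\times^{\dual}_\cC\cB \simeq \cA\times_\cC\cB$ and the projection to $\cA$ is again a localization. So I would build the finite limit $\cD_{(X,S)}$ out of iterated such fiber products, at each stage checking (using Proposition \ref{prop:closed_embedding_pullback_PSh^cont} and the description in Proposition \ref{prop:approximating_sheaves} 1)) that the maps being pulled back are localizations with strongly continuous right adjoints, and invoking Proposition \ref{prop:nice_pullbacks} 2) to conclude the pullback is simultaneously the dualizable and the ordinary pullback, with the projections remaining localizations. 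This is the step I expect to require the most care: organizing the combinatorics of the $n$-cube so that the hypotheses of Proposition \ref{prop:nice_pullbacks} 2) are verified at every stage (in particular that the relevant functor really is a quotient functor, not merely strongly continuous), and checking that the comparison functor from the dualizable limit is the one induced by the universal property.

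For the second assertion, once the first is known, I would apply $\cU_{\loc}$ (or the universal finitary invariant $\cU_{\loc}^{\cont}$, which restricts to $\cU_{\loc}$ on the compactly generated pieces) and use Proposition \ref{prop:F_cont_nice_pullbacks}: for a pair of strongly continuous functors between dualizable categories with one a localization, $\cU_{\loc}^{\cont}$ commutes with the fiber product. Iterating this over the $n$-cube decomposition established above, and using that $\cU_{\loc}$ commutes with finite limits of the relevant shape by repeated application of Proposition \ref{prop:F_cont_nice_pullbacks}, yields $\cU_{\loc}(\cD_{(X,S)})\cong \prolim_{Y\in S}\cU_{\loc}(\PSh_{\msK}^{\cont}(Y;\un{\cC}_{\mid Y}))$. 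The base case $n=1$ is trivial since then $S$ has a terminal object (namely $X$ itself, or rather: $S=\msK(X)$ when $Y_1 = X$), so the limit is just $\PSh_{\msK}^{\cont}(X;\un{\cC})$, which is dualizable by Proposition \ref{prop:U_loc_cont_presheaves}. I would also remark that the finite limits appearing are all of "punctured-cube" type, for which commutation of $\cU_{\loc}$ reduces inductively to commutation with pullbacks along localizations, exactly as provided by Proposition \ref{prop:F_cont_nice_pullbacks}.
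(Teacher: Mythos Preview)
Your proposal is correct and follows essentially the same approach as the paper: induct on $n$ for $S=S_{(X;Y_1,\dots,Y_n)}$, peel off one $Y_i$ to express $\cD_{(X,S)}$ as a fiber product over smaller sieves on closed subspaces, use Propositions \ref{prop:closed_embedding_pullback_PSh^cont} and \ref{prop:approximating_sheaves} to verify that the relevant leg is a strongly continuous localization, and then invoke Propositions \ref{prop:nice_pullbacks} 2) and \ref{prop:F_cont_nice_pullbacks} for both assertions. The paper makes the combinatorics precise by setting $Z=Y_2\cup\dots\cup Y_n$, $S'=S_{(Z;Y_2,\dots,Y_n)}$, $S''=S_{(Y_1\cap Z;Y_1\cap Y_2,\dots,Y_1\cap Y_n)}$, which is exactly the decomposition you were reaching for.
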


\begin{proof} This is basically an iterated application of Propositions \ref{prop:F_cont_nice_pullbacks} and \ref{prop:approximating_sheaves}. We prove the statement for a sieve $S=S_{(X;Y_1,\dots,Y_n)}$ by induction on $n,$ where the case $n=1$ is clear. 

Let $n>1.$ We put $$Z=Y_2\cup\dots\cup Y_n,\quad S'=S_{(Z;Y_2,\dots,Y_n)},\quad S''=S_{(Y_1\cap Z;Y_1\cap Y_2,\dots,Y_1\cap Y_n)}.$$ Using the induction hypothesis, we see that the functors
$$\prolim[Y\in S']^{\dual}\PSh^{\cont}(Y;\un{\cC}_{\mid Y})\to\prolim[Y\in S']\PSh^{\cont}(Y;\un{\cC}_{\mid Y}),\quad \prolim[Y\in S'']^{\dual}\PSh^{\cont}(Y;\un{\cC}_{\mid Y})\to\prolim[Y\in S'']\PSh^{\cont}(Y;\un{\cC}_{\mid Y})$$ are equivalences. 

Applying Propositions \ref{prop:closed_embedding_pullback_PSh^cont} and \ref{prop:approximating_sheaves}, we see that the composition
$$\PSh^{\cont}(Y_1;\un{\cC}_{\mid Y_1})\to\PSh^{\cont}(Y_1\cap Z;\un{\cC}_{\mid Y_1\cap Z})\to \prolim[Y\in S'']\PSh^{\cont}(Y;\un{\cC}_{\mid Y})$$ is a localization (both functors are localizations). It follows from Proposition \ref{prop:F_cont_nice_pullbacks} that the pullback square
$$\begin{CD}
\prolim[Y\in S]^{\dual}\PSh^{\cont}(Y;\un{\cC}_{\mid Y}) @>>> \prolim[Y\in S']^{\dual}\PSh^{\cont}(Y;\un{\cC}_{\mid Z})\\
@VVV @VVV\\
\PSh_{\msK}^{\cont}(Y_1;\un{\cC}_{\mid Y_1}) @>>> \prolim[Y\in S'']^{\dual}\PSh^{\cont}(Y;\un{\cC}_{\mid Y})
\end{CD}$$
in $\Cat_{\st}^{\dual}$ is also a pullback square in $\Pr_{\st}^L,$ and $\cU_{\loc}$ applied to this square is a pullback square in $\Mot^{\loc}.$ This proves the proposition.
\end{proof}

For a topological space $Y,$ we denote by $\clopen(Y)$ the set of open-closed subsets of $Y.$

\begin{prop}\label{prop:funny_operation} Let $X$ be locally compact Hausdorff space, and $\cE$ a presentable stable category. Given a reduced $\msK$-presheaf $\cF\in\PSh_{\msK}^{\red}(X;\cE),$ we define another $\msK$-presheaf $\wt{\cF}$ by the formula
$$\wt{\cF}(Y):=\bigoplus\limits_{Z\in \clopen(Y)}\cF(Y).$$ We have a natural map $\cF\to\wt{\cF}.$

1) If $\cF$ is continuous, then so is $\wt{\cF}.$

2) Assuming that $\cF$ is continuous, the map of $\msK$-sheaves $\cF^{\sharp}\to\wt{\cF}^{\sharp}$ is an isomorphism.\end{prop}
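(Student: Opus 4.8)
The plan is to verify the two assertions directly from the definition of $\wt{\cF}$, using that the key feature of $\clopen(Y)$ for a compact Hausdorff space is its behaviour under directed intersections and finite unions. For part 1), suppose $\cF$ is a continuous $\msK$-presheaf, so $\cF(Y)\cong\indlim[Z\Supset Y]\res_{Z,Y}(\cF(Z))$ for every compact $Y$. I would first recall the standard fact that for a compact Hausdorff space $Y$ and a decreasing directed family of compact neighbourhoods $Z$ with $\bigcap Z=Y$, the natural map $\indlim[Z\Supset Y]\clopen(Z)\to\clopen(Y)$ is a bijection; more precisely, every open-closed subset of $Y$ is the intersection $W\cap Y$ for some open-closed $W$ in some $Z$, and two such agree eventually. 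Granting this, the directed colimit $\indlim[Z\Supset Y]\res_{Z,Y}(\wt{\cF}(Z))=\indlim[Z\Supset Y]\bigoplus_{W\in\clopen(Z)}\res_{Z,Y}(\cF(Z))$ can be reorganised: since direct sums commute with filtered colimits, and the index set $\clopen(Z)$ stabilises to $\clopen(Y)$ along the filtered system, this colimit is identified with $\bigoplus_{Z'\in\clopen(Y)}\indlim[Z\Supset Y]\res_{Z,Y}(\cF(Z))\cong\bigoplus_{Z'\in\clopen(Y)}\cF(Y)=\wt{\cF}(Y)$. Hence $\wt{\cF}$ is continuous. The one point requiring a little care is that $\wt{\cF}$ is genuinely functorial on $\msK(X)$: for $Y'\subset Y$ the restriction sends the summand indexed by $Z\in\clopen(Y)$ to the summand indexed by $Z\cap Y'\in\clopen(Y')$, and these are compatible; continuity of the restriction functors guarantees the colimit identification above respects this structure.

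For part 2), the cleanest route is to use the explicit formula \eqref{eq:K-sheafification} for the $\msK$-sheafification of a continuous $\msK$-presheaf, namely $\cF^{\sharp}(Y)=\indlim[S\in J_Y]\prolim[Z\in S]\res_{Y,Z}^R(\cF^{\red}(Z))$, and the analogous formula for $\wt{\cF}^{\sharp}$. It suffices to show the natural transformation induces an isomorphism on all values $\cF^{\sharp}(Y)\to\wt{\cF}^{\sharp}(Y)$, and by naturality it is enough to treat $Y=X$ compact (localising via the closed embedding $Y\hookrightarrow X$ and Proposition \ref{prop:closed_embedding_pullback_PSh^cont}; or simply because $\cF^{\sharp}(Y)$ only depends on the restriction of $\cF$ to $Y$). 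So assume $X$ is compact; then $\clopen(X)$ is a Boolean algebra, and for a sieve $S=S_{(X;Y_1,\dots,Y_n)}\in J_X$, refining the $Y_i$ by the finitely many atoms/cells of the finite Boolean subalgebra of $\clopen(X)$ generated by a given finite partition into open-closed pieces, one sees that the value of $\wt{\cF}^{\sharp}$ is a \v{C}ech-type limit which, summand by summand, reproduces that of $\cF^{\sharp}$. Concretely: a cofinal subsystem of $J_X$ consists of sieves subordinate to finite partitions $X=\coprod_{k=1}^m V_k$ into open-closed subsets; for such a partition the presheaf $\wt{\cF}$ satisfies $\wt{\cF}(Y)\cong\bigoplus_{W\in\clopen(Y)}\cF(Y)$, and on the intersections $Y=\bigcap_{k\in I}V_k$ appearing in the Cech limit (which are now themselves open-closed pieces) the extra summands indexed by proper open-closed subsets $W\subsetneq Y$ are "absorbed" because the corresponding Cech nerve is just the coproduct decomposition of $\cF^{\sharp}$ along that partition. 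Thus $\cF^{\sharp}\to\wt{\cF}^{\sharp}$ is an equivalence.

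The main obstacle I expect is the bookkeeping in part 2): making precise the claim that the extra direct summands in $\wt{\cF}$ indexed by open-closed subsets do not change the sheafification. The conceptual reason is that an open-closed decomposition $Y=\coprod V_k$ is already a finite cover by pairwise disjoint pieces, so $\cF^{\sharp}(Y)=\bigoplus_k\cF^{\sharp}(V_k)$ automatically, and the operation $\cF\mapsto\wt{\cF}$ only pre-adds summands that the sheaf condition would produce anyway; but turning this into a clean argument requires either (i) checking that $\cF\to\wt{\cF}$ becomes an isomorphism after restricting to each $\msK$-sheaf-local piece, or (ii) exhibiting, cofinally in $J_X$, sieves for which the defining limit of $\wt{\cF}$ visibly splits as a direct sum over $\clopen(X)$-atoms of the corresponding limit for $\cF$. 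I would carry out (ii): fix a finite open-closed partition $\mathcal{P}$ of $X$, observe $\clopen(Y)$ for $Y$ a union of $\mathcal{P}$-pieces is finite and functorial, and compute both $\prolim$'s over the sieve generated by $\mathcal{P}$ explicitly, using that $\bigoplus$ over a fixed finite set commutes with the finite limit. Passing to the filtered colimit over all such $\mathcal{P}$ (which is cofinal in $J_X$ because $X$ is compact Hausdorff, hence has a neighbourhood basis of open-closed sets on... — more precisely because every sieve in $J_X$ is refined by one coming from a partition into open-closed sets when $X$ is, say, totally disconnected; in general one first reduces to the totally disconnected case or argues that the atoms needed only involve the finitely many $Y_i$ and their Boolean combinations) yields the claim.

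\emph{Remark.} If the reduction to partitions into open-closed sets is not available in the generality stated, the fallback is the direct argument: show $\wt{\cF}^{\sharp}(Y)\cong\bigoplus_{W\in\clopen(Y)}\cF^{\sharp}(W)$ using part 1) together with the fact that $\cF^{\sharp}$ restricted to an open-closed piece $W\subset Y$ agrees with $(\cF|_W)^{\sharp}$, and then note $\cF^{\sharp}(Y)\cong\bigoplus_{W}\cF^{\sharp}(W)$ fails for $W$ not covering $Y$ — so instead one only needs the summand $W=Y$, and the other summands are killed upon sheafifying because a single open-closed proper subset does not form a cover; this is exactly the content of the Cech computation above.
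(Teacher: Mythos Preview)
Your argument for part 1) is correct and essentially identical to the paper's: both rely on the bijection $\indlim[Y'\Supset Y]\clopen(Y')\to\clopen(Y)$ for compact $Y$.

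Your approach to part 2), however, has a genuine gap that you yourself flag but do not resolve. The cofinality claim that sieves in $J_X$ coming from finite \emph{open-closed} partitions are cofinal is simply false for general compact Hausdorff $X$: if $X=[0,1]$, the only open-closed partition is the trivial one, so these sieves are very far from cofinal in $J_X$. Your parenthetical ``when $X$ is, say, totally disconnected'' is exactly the obstruction, and the proposition is stated for arbitrary locally compact Hausdorff $X$. Your fallback argument is also not correct as written: the extra summands in $\wt{\cF}$ indexed by proper open-closed $W\subsetneq Y$ are \emph{not} killed by the finitary sheafification in any obvious way, since a single proper $W$ does not form a cover but the summand $\cF(W)$ is not a sheaf supported on a non-cover --- it is just an extra copy sitting there.

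The paper's argument for part 2) sidesteps this entirely with a cleaner idea. Rather than work with the full finitary sheafification $(-)^{\sharp}$, it introduces the \emph{weak} pretopology on $\msK(X)$ whose covers are finite \emph{disjoint} decompositions $Y=\bigsqcup_i Y_i$, with associated sheafification $(-)^{\sharp,\mathrm{weak}}$. Since this topology is coarser than the finitary one, $(-)^{\sharp}$ factors through $(-)^{\sharp,\mathrm{weak}}$, so it suffices to show $\cF^{\sharp,\mathrm{weak}}\xrightarrow{\sim}\wt{\cF}^{\sharp,\mathrm{weak}}$. But the weak sheafification has the explicit formula
\[
\cF^{\sharp,\mathrm{weak}}(Y)=\indlim[Y=\bigsqcup_i Y_i]\bigoplus_i\cF(Y_i),
\]
and from this formula the isomorphism $\cF^{\sharp,\mathrm{weak}}\cong\wt{\cF}^{\sharp,\mathrm{weak}}$ is immediate: the extra summands of $\wt{\cF}(Y_i)$ indexed by $Z\in\clopen(Y_i)$ are absorbed by passing to the finer partition refining $Y_i$ into $Z$ and its complement. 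The point is that the weak topology is precisely tailored so that its covering sieves \emph{are} the open-closed partitions, making the computation trivial --- and then the factorization of $(-)^{\sharp}$ through $(-)^{\sharp,\mathrm{weak}}$ does the rest without any cofinality claim in $J_X$.
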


\begin{proof}1) This follows immediately from the following observation: for any compact $Y\subset X,$ the map
$$\liminj\limits_{Y'\Supset Y}\clopen(Y')\to\clopen(Y)$$ is a bijection of sets.

2) Recall that $\cF^{\sharp}$ is the sheafification of $\cF$ for the finitary topology on $\msK(X).$ Consider the weaker pretopology on $\msK(X),$ for which a family $\{Y_i\to Y\}_{i\in I}$ is a covering if $I$ is finite and $Y=\bigsqcup_{i\in I}Y_i.$ Denote by $(-)^{\sharp,weak}$ the corresponding sheafification functor. It suffices to show that the map $\cF^{\sharp,weak}\to\wt{\cF}^{\sharp,weak}$ is an isomorphism. 

Let us observe that the weak sheafification functor on reduced $\msK$-presheaves is given by
$$\cF\mapsto (Y\mapsto \cF^{\sharp,weak}(Y)=\indlim[Y=\bigsqcup_i Y_i]\biggplus[i]\cF(Y_i)),$$
where the colimit is over the decompositions of $Y$ into a finite disjoint union of compact subsets. Now it is clear that the map $\cF^{\sharp,weak}\to\wt{\cF}^{\sharp,weak}$ is an isomorphism.
\end{proof}

\begin{proof}[Proof of Theorem \ref{th:U_loc_locally_compact}.]
We first show directly that $\Sh(X;\un{\cC})$ is dualizable (this is certainly known, but we include a proof for completeness). Given an open subset $U\subset X$ we denote by $(-)_U:\un{\cC}(U)\to\Sh(X;\un{\cC})$ the left adjoint to the functor $\cF\mapsto\cF(U).$ Equivalently, for $P\in\un{\cC}(U),$ the sheaf $P_U$ is the sheafification of the presheaf
$$V\mapsto\begin{cases}\res_{U,V}(P) & \text{if }V\subset U;\\
0 & \text{else.}\end{cases}$$
We observe the following: for any compact inclusion $V\Subset U$ of open subsets, for any objects $P,Q\in\un{\cC}(U),$ and for any {\it compact} map $P\to Q$ in $\un{\cC}(U),$ the induced map of sheaves $\res_{U,V}(P)_V\to Q_U$ is compact in $\Sh(X;\un{\cC}).$ Indeed, this follows from the factorization $$\Hom(Q_U,-)\to \Hom(\res_{U,\bar{V}}(Q)_{\bar{V}},-)\to \Hom(\res_{U,\bar{V}}(P)_{\bar{V}},-)\to \Hom(\res_{U,V}(P)_V,-)$$
and the fact that the functor $\Gamma(\bar{V},-):\Sh(X;\un{\cC})\to\un{\cC}(V)$ is continuous.

Now observe that we have a generating collection of sheaves $$\{P_U\mid U\text{-- countable at }\infty,\,P\in\un{\cC}(U)^{\omega_1}\}.$$
For any such object $P_U,$ since $\un{\cC}(U)$ is a dualizable category, we can choose a sequence $Q_1\to Q_2\to\dots$ in $\un{\cC}(U)^{\omega_1}$ such that $P=\indlim[n] Q_n$ and all the maps $Q_n\to Q_{n+1}$ are compact. Choose a sequence $V_1\Subset V_2\Subset\dots$ of open subsets and compact inclusions such that $U=\bigcup_n V_n$ and each $V_n$ is countable at $\infty.$ Then $$P_U=\indlim[n]\res_{U,V_n}(Q_n)_{V_n},$$ and by the above discussion each map $\res_{U,V_n}(Q_n)_{V_n}\to \res_{U,V_{n+1}}(Q_{n+1})_{V_{n+1}}$ is compact. This shows that the category $\Sh(X;\un{\cC})$ is dualizable.

To compute $\cU_{\loc}^{\cont}(X;\un{\cC}),$ we first reduce to the case when $X$ is compact. Denote by $\bar{X}=X\cup\{\infty\}$ the one-point compactification of $X,$ and denote by $j:X\to\bar{X}$ the embedding. Let $j_!\un{\cC}$ be the presheaf of dualizable categories on $\bar{X}$ obtained by extension by zero of $\un{\cC}.$ Then $\Sh(X;\un{\cC})\simeq\Sh(\bar{X},j_!\un{\cC}),$ hence we may replace the pair $(X,\un{\cC})$ with the pair $(\bar{X},j_!\un{\cC}).$ Thus, we may and will assume that $X$ is compact. Also, we may and will assume that $\un{\cC}(\emptyset)=0.$

Consider the $\msK$-presheaf $\cF\in\PSh_{\msK}^{\cont}(X;\Mot^{\loc}),$ given by $Y\mapsto \cU_{\loc}^{\cont}(\un{\cC}(Y)).$ Then $\cF$ is a continuous $\msK$-presheaf. By Proposition \ref{prop:U_loc_cont_presheaves}, for a compact $Y\subset X$ we have a natural isomorphism
$$\cU_{\loc}^{\cont}(\PSh^{\cont}_{\msK}(Y;\un{\cC}_{\mid Y}))\cong \wt{\cF}(Y)=\bigoplus\limits_{Z\in\clopen(Y)}\cF(Z).$$
Applying Propositions \ref{prop:approximating_sheaves}, \ref{prop:U_loc_of_approximations} and \ref{prop:funny_operation}, we obtain the following chain of isomorphisms
\begin{multline*}\cF^{\sharp}(X)\cong\wt{\cF}^{\sharp}(X)=\liminj\limits_{S\in J_X}\limproj\limits_{Y\in S}\wt{\cF}(Y)\cong \liminj\limits_{S\in J_X}\limproj\limits_{Y\in S}\cU_{\loc}^{\cont}(\PSh^{\cont}_{\msK}(Y;\un{\cC}_{\mid Y}))\\ \cong
\liminj\limits_{S\in J_X}\cU_{\loc}^{\cont}(\limproj^{\dual}\limits_{Y\in S}\PSh^{\cont}_{\msK}(Y;\un{\cC}_{\mid Y}))\cong \cU_{\loc}^{\cont}(\liminj\limits_{S\in J_X} \limproj^{\dual}\limits_{Y\in S}\PSh^{\cont}_{\msK}(Y;\un{\cC}_{\mid Y}))\cong \cU_{\loc}^{\cont}(\Sh_{\msK}(X;\un{\cC})).\end{multline*}
This proves the theorem.
\end{proof}

\subsection{Compact objects}
\label{ssec:compact_sheaves}

Let 
$(X,\un{\cC})$ 
be as above. The following result describes the category of compact objects of the category 
$\Sh(X;\un{\cC}).$ It is a generalization of \cite[Theorem 0.1]{Nee01b}, and the proof is based on the ideas of loc. cit.

\begin{prop}\label{prop:compact_sheaves} Let $\cF\in\Sh(X;\un{\cC})$ be a sheaf. The following are equivalent.

\begin{enumerate}[label=(\roman*),ref=(\roman*)]
\item $\cF$ is a compact object in the category $\Sh(X;\un{\cC}).$ \label{comp_sheaf1}

\item $\cF$ has compact support and $X$ can be covered by open subsets $U$ such that $\cF_{\mid U}\cong P_U$ for some compact object $P\in\un{\cC}(U)^{\omega}.$ \label{comp_sheaf2}
\end{enumerate}\end{prop}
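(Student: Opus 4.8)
The implication \ref{comp_sheaf2}$\Rightarrow$\ref{comp_sheaf1} should be the easy direction: if $\cF$ has compact support contained in some compact $Y\subset X$, then using a finite cover of $Y$ by opens $U_i$ with $\cF_{\mid U_i}\cong (P_i)_{U_i}$ for compact $P_i\in\un{\cC}(U_i)^\omega$, one writes $\cF$ as a finite colimit (an iterated Mayer--Vietoris pushout, using the $\msK$-sheaf square of Proposition \ref{prop:PSh_and_PSh_K}) of sheaves of the form $(P)_{U}$ with $P$ compact and $U\Subset V$ for the closure being compact. So it suffices to see each such $(P)_U$ is compact in $\Sh(X;\un{\cC})$ when $P$ is compact and $U$ is relatively compact; but $\Hom_{\Sh(X;\un{\cC})}((P)_U,\cG)=\Hom_{\un{\cC}(U)}(P,\cG(U))$, and since $\cG(U)=\prolim_{V\Subset U}\res^R_{UV}(\cG(V))$ factors through $\cG(\bar U)$-type data, the functor $\cG\mapsto\cG(U)$ commutes with filtered colimits on sheaves after restricting to a relatively compact neighborhood — this is the same continuity-of-sections argument appearing in the proof of Theorem \ref{th:U_loc_locally_compact} (the factorization through $\Gamma(\bar V,-)$). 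Combined with compactness of $P$ in $\un{\cC}(U)$, we get compactness of $(P)_U$. One subtlety: one must be slightly careful that "compact support" plus "locally of the form $P_U$" genuinely lets one reduce to \emph{finitely many} pieces — this uses local compactness of $X$ to refine to a finite cover of a compact neighborhood of the support.

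\textbf{The hard direction} \ref{comp_sheaf1}$\Rightarrow$\ref{comp_sheaf2}, following Neeman \cite{Nee01b}. Suppose $\cF$ is compact. First I would establish that $\cF$ has compact support: writing $X$ as a filtered union of relatively compact opens $U_\alpha$ (or using that $\mathbf 1 = \indlim_\alpha \mathbf 1_{U_\alpha}$ in $\Sh(X;\Sp)$ after reducing to the one-point compactification as in the proof of Theorem \ref{th:U_loc_locally_compact}), compactness of $\cF$ forces $\cF\cong\cF_{\mid U_\alpha}$ extended by zero for some $\alpha$, hence $\supp\cF$ is closed in the compact $\bar U_\alpha$, thus compact. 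Next, the local structure: for a point $x\in X$ choose a relatively compact open neighborhood $W\ni x$; I want to show $\cF_{\mid W}$ is, after possibly shrinking $W$, isomorphic to $P_W$ for a compact $P\in\un{\cC}(W)^\omega$. The key point is that the stalk $\cF_x\in\un{\cC}(\{x\}) = \indlim_{W\ni x}^{\cont}\un{\cC}(W)$ must be a \emph{compact} object: this is where one uses that $\cF$ is compact in $\Sh(X;\un{\cC})$ together with the fact that $\Gamma_{\{x\}}$ or the stalk functor interacts well with coproducts — concretely, a coproduct $\bigoplus_i\cG_i$ of sheaves has stalk $\bigoplus_i (\cG_i)_x$, and a map $\cF_x\to\bigoplus_i(\cG_i)_x$ lifts, after passing to a neighborhood, to a map of sheaves $\cF_{\mid W}\to\bigoplus_i (\cG_i)_{\mid W}$, which by compactness of $\cF$ factors through finitely many $i$. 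Since $\un{\cC}(\{x\})$ is a filtered colimit of the $\un{\cC}(W)$ under strongly continuous transition functors (which preserve compact objects), $\cF_x$ being compact means it is the image of some compact $P\in\un{\cC}(W)^\omega$ for $W$ small enough, and then a further shrinking arranges that the canonical map $P_W\to\cF_{\mid W}$ (adjoint to $P\to\cF_x\to\cF(W)$ after adjusting) is an isomorphism — here one checks it is a stalkwise isomorphism on a small enough $W$, using that both sides have the same stalk at $x$ and that $P_W$ has locally constant-ish behavior near $x$ while $\cF$ is constructible near $x$ in the relevant sense.

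\textbf{Main obstacle.} The genuinely delicate step is the "spreading out" argument: upgrading the statement "$\cF_x$ is a compact object of the stalk category and equals the image of $P$" to an actual isomorphism $\cF_{\mid W}\cong P_W$ on a neighborhood $W$ of $x$. In Neeman's constant-coefficient setting this rests on local connectedness/paracompactness manipulations; here, with a presheaf $\un{\cC}$ of dualizable categories and only the abstract $\msK$-sheaf formalism of Proposition \ref{prop:PSh_and_PSh_K} available, I expect one must argue as follows: the natural map $P_W\to\cF_{\mid W}$ has cofiber $\cG$ with vanishing stalk at $x$; vanishing of a stalk means $\cG(\{x\})=0$ in $\indlim_{W'\ni x}^{\cont}\un{\cC}(W')$, which by strong continuity of the transition functors and the description of filtered colimits in $\Cat^{\dual}_{\st}$ (Proposition \ref{prop:weak_AB5_Pr^LL} and Corollary \ref{cor:directed_unions}) forces $\cG_{\mid W'}=0$ for some smaller $W'\ni x$. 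This uses in an essential way that the transition functors $\res$ in the presheaf are strongly continuous, not merely continuous. Once this local normal form is in hand, covering the compact support by finitely many such $W$'s and invoking the easy direction's gluing finishes the proof. I would also double-check the edge cases where $\un{\cC}(U)$ can have no nonzero compact objects (e.g. the constant almost-module or sheaf-on-$\R^n$ examples), which is consistent with the statement: there $\cF$ compact forces $\cF=0$ locally, hence $\cF=0$.
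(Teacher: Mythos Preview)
Your overall strategy matches the paper's, but the spreading-out step in (i)$\Rightarrow$(ii) has a genuine gap. You assert that once the cofiber $\cG=\Cone(P_W\to\cF_{\mid W})$ has vanishing stalk $\cG_x=0$, strong continuity of the transition functors together with Proposition~\ref{prop:weak_AB5_Pr^LL} and Corollary~\ref{cor:directed_unions} force $\cG_{\mid W'}=0$ for some smaller $W'\ni x$. This is false in general: vanishing of a stalk does not imply vanishing on a neighborhood (take $\cG=\bS_{(0,1)}$ on $\R$ at $x=0$). The results you cite concern filtered colimits of \emph{categories} in $\Cat_{\st}^{\dual}$ and say nothing about an individual sheaf. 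What is missing is precisely the compactness of the cofiber as a sheaf.

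The paper supplies exactly this. It first establishes, as a separate preliminary, that a \emph{compact} sheaf with vanishing stalk at $x$ must vanish near $x$: writing $\cF\cong\indlim[V\Subset X\setminus\{x\}]j_{V!}j_V^*(\cF)$ and using compactness, $\cF$ is a retract of one term, hence $\cF_{\mid X\setminus\bar V}=0$. Then, after constructing $\varphi\colon P_U\to\cF$, one restricts to $\bar V$ for some open $V\Subset U$ with $x\in V$; the point is that $\Cone(\varphi)_{\mid\bar V}$ is again \emph{compact} in $\Sh(\bar V;\un{\cC}_{\mid\bar V})$, since restriction to the closed set $\bar V$ is strongly continuous. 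Now the preliminary step applies. Two smaller points: compactness of $\cF_x$ follows immediately from strong continuity of $i_x^*$, with no need to chase coproducts; and the comparison map arises by lifting the isomorphism $\res_{U,\{x\}}(P)\cong\cF_x$ to a map $P\to\cF(U)$ in $\un{\cC}(U)$ after shrinking $U$ --- your ``$P\to\cF_x\to\cF(W)$'' goes the wrong way.
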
  

\begin{proof}The implication \Implies{comp_sheaf2}{comp_sheaf1} is clear. 

\Implies{comp_sheaf1}{comp_sheaf2}. Suppose that $\cF$ is compact. 

We first show the following: if $\cF_x=0\in\un{\cC}_x$ for some $x\in X,$ then there is an open neighborhood $U\ni x$ such that $\cF_{\mid U}=0.$ Indeed, since $\cF\cong\indlim[V\Subset X\setminus \{x\}]j_{V!}j_V^*(\cF)$ and $\cF$ is compact, we deduce that $\cF$ is a direct summand of some sheaf of the form $j_{V!}j_V^*(\cF),$ where $V\Subset X\setminus \{x\}.$ Hence the restriction of $\cF$ to $(X\setminus\bar{V})\ni x$ vanishes.

Now let $x\in X$ be a point such that $\cF_x\ne 0.$ Since the functor $i_x^*:\Sh(X;\un{\cC})\to\un{\cC}_x$ is strongly continuous, the object $\cF_x$ is compact in $\un{\cC}_x.$ By Proposition \ref{prop:compact_objects_in_filtered_colimits} there is an open neighborhood $U\ni x$ and a compact object $P\in\un{\cC}(U)^{\omega}$ such that $\res_{U,\{x\}}(P)\cong \cF_x.$ Shrinking $U$ if necessary, we can lift the identity map of $\cF_x$ to a map $P\to\cF(U)$ in the category $\un{\cC}(U).$ Denote by $\varphi:P_U\to\cF$ the associated map of $\un{\cC}$-valued sheaves on $X.$ Choosing an open $V\Subset U$ such that $x\in V,$ we see that $\Cone(\varphi)_{\mid \bar{V}}$ is a compact object of $\Sh(\bar{V};\un{\cC}_{\mid\bar{V}})$ whose stalk at $x$ vanishes. By the above, we deduce that $\Cone(\varphi)$ vanishes in a neighborhood of $x.$ Hence, shrinking $U$ further, we may assume that $\cF_{\mid U}\cong P_U$ as required.

Finally, to see that $\cF$ has compact support, note that $\cF\cong\indlim[U\Subset X]j_{U!}j_U^*\cF.$ Since $\cF$ is compact, it is a direct summand of some sheaf $j_{U!}j_U^*\cF,$ $U\Subset X,$ which has compact support.
\end{proof}

\begin{cor}\label{cor:when_sheaves_comp_gen} Let $X$ be a non-empty locally compact Hausdorff space, and let $\cC$ be a non-zero presentable stable category. The following are equivalent.

\begin{enumerate}[label=(\roman*),ref=(\roman*)]
\item The category $\Sh(X;\cC)$ is compactly generated. \label{comp_gen1}

\item The category $\cC$ is compactly generated and $X$ is locally profinite. \label{comp_gen2}
\end{enumerate}
\end{cor}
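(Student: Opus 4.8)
The plan is to prove the two implications separately, using Proposition~\ref{prop:compact_sheaves} as the main tool. For the implication \Implies{comp_gen2}{comp_gen1}: if $\cC$ is compactly generated, write $\cC\simeq\Ind(\cC^{\omega})$; if moreover $X$ is locally profinite, then $X$ has a basis of open-compact subsets $U$, each of which is a profinite set. For such $U$ the category $\Sh(U;\cC)$ is itself compactly generated — a profinite set is a cofiltered limit of finite discrete sets, and one checks using Theorem~\ref{th:U_loc_locally_compact} (or directly) that $\Sh(U;\cC)\simeq\indlim^{\cont}\Sh(U_i;\cC)\simeq\indlim^{\cont}\prod_{U_i}\cC$ is a filtered colimit of finite products of $\cC$, hence compactly generated, and its compact objects are the ``locally constant with compact values'' sheaves. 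The objects $P_U$ with $P\in\cC^{\omega}$ and $U$ open-compact then form a generating family of compact objects of $\Sh(X;\cC)$: they are compact by Proposition~\ref{prop:compact_sheaves}, \Implies{comp_sheaf2}{comp_sheaf1}, and they generate because the $j_{U!}j_U^*$ with $U$ ranging over open-compact subsets generate $\Sh(X;\cC)$ (the open-compact subsets form a basis), while on each such $U$ the objects $P_U$ generate $\Sh(U;\cC)$.

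For the implication \Implies{comp_gen1}{comp_gen2}: suppose $\Sh(X;\cC)$ is compactly generated. First, I would recover that $\cC$ is compactly generated. Pick any point $x\in X$; the stalk functor $i_x^*:\Sh(X;\cC)\to\cC$ is a strongly continuous localization (it has a fully faithful right adjoint $i_{x*}$), hence $\cC$ is a retract of $\Sh(X;\cC)$ in $\Pr^L_{\st}$ — but more is needed: a localization of a compactly generated category need not be compactly generated. Instead I would argue that $i_x^*$ preserves compact objects (its right adjoint $i_{x*}$ is continuous since stalks commute with filtered colimits) and is essentially surjective, and that for a generating set of compact objects $\{\cF_j\}$ of $\Sh(X;\cC)$ the stalks $\{(\cF_j)_x\}$ generate $\cC$; by Proposition~\ref{prop:compact_sheaves} each $\cF_j$ is, near $x$, of the form $P_U$ with $P$ compact in $\cC(U)=\cC$, so $(\cF_j)_x$ is compact in $\cC$. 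Hence $\cC$ is compactly generated.

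Next I would show $X$ is locally profinite, i.e. every point has a compact open neighborhood with a basis of compact open sets; equivalently every point has a totally disconnected compact open neighborhood. Fix $x\in X$ and a nonzero compact object $G\in\cC^{\omega}$ (exists since $\cC\neq 0$ is compactly generated). The sheaf $G_X$ (extension of the constant sheaf, or rather $j_{U!}$ of $G$ on a compact open $U\ni x$ if one exists) — here is where the real work is. The idea, following Neeman~\cite{Nee01b}: consider the ``constant'' sheaf with stalk $G$ on a connected compact neighborhood and show that if such a neighborhood is not a point (more precisely, if $x$ has no totally disconnected compact open neighborhood, so arbitrarily small neighborhoods of $x$ are connected but non-degenerate, or at least have infinite connected components accumulating at $x$) then this sheaf fails to be a filtered colimit of compact objects, contradicting compact generation of $\Sh(X;\cC)$ together with the description of compacts in Proposition~\ref{prop:compact_sheaves}. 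Concretely: a compact sheaf has compact support and is locally of the form $P_U$; on a connected compact space a sheaf of this latter form that is moreover locally constant with nonzero compact stalks cannot exist as a compact object unless the space is a single point, because the ``cohomology of a neighborhood of $\infty$'' obstruction (as in the example after Proposition~\ref{prop:U_loc_sheaves_on_R}) shows that on, say, a nondegenerate interval the constant sheaf is not compact. I would then use that compact generation forces $G_{(\text{nbhd of }x)}$ to be a filtered colimit of compacts, run Neeman's argument to derive that the connected component of $x$ in every small neighborhood must be open, which together with local compactness and Hausdorffness gives that $x$ has a totally disconnected — hence profinite — compact open neighborhood.

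The main obstacle is precisely this last step: translating Neeman's theorem~\cite{Nee01b} (which is stated for $X=\R$ and $\cC=D(R)$, or for locally connected spaces) into the statement that compact generation of $\Sh(X;\cC)$ forces local total disconnectedness of $X$. The delicate points are: (i) handling arbitrary locally compact Hausdorff $X$, where connected components need not be open, so one must work with quasi-components and the structure of $X$ near an accumulation point of components; and (ii) making the ``no small connected neighborhoods unless trivial'' argument precise for a general nonzero compactly generated $\cC$ rather than $D(R)$ — this should go through because the obstruction class lives in $\pi_0$ of a mapping spectrum built from $\cC^{\omega}$, and one only needs $\cC^{\omega}\neq 0$, but verifying that $G_I$ (constant sheaf with stalk $G$ on a nondegenerate closed interval $I$) is non-compact in $\Sh(I;\cC)$ requires the computation $\operatorname{End}(\text{image of }G_I \text{ in the Calkin category})\supsetneq \operatorname{End}(G)$, i.e. the ``two-points-at-infinity'' phenomenon, which I would deduce from Proposition~\ref{prop:U_loc_sheaves_on_R} and Proposition~\ref{prop:Homs_in_Calk} exactly as in the worked example in Subsection~\ref{ssec:sheaves_on_R}.
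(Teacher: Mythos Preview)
For \Implies{comp_gen2}{comp_gen1} your argument works, though the paper's is shorter: it simply checks that the right orthogonal to $\{P_U:P\in\cC^{\omega},\ U\text{ open-compact}\}$ vanishes, which is immediate since open-compact subsets form a basis. The gap is in the ``$X$ locally profinite'' half of \Implies{comp_gen1}{comp_gen2}. Your plan hinges on showing that the constant sheaf $G_I$ on a nondegenerate closed interval $I$ is non-compact, but this is false: by Proposition~\ref{prop:compact_sheaves}, $G_I$ has compact support and is locally of the form $P_U$ with $P\in\cC^{\omega}$, so it \emph{is} compact in $\Sh(I;\cC)$. The Calkin computation you invoke from Subsection~\ref{ssec:sheaves_on_R} is for $\mk_{\R}$ on the non-compact line $\R$, not for a closed interval. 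More fundamentally, exhibiting non-compact objects can never obstruct compact generation --- what you must show is that the compacts fail to \emph{generate}, and for this a different test object is needed.

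The paper proceeds as follows. First reduce to $X$ compact via the one-point compactification and Proposition~\ref{prop:when_extension_is_dualizable}; then reduce to $X$ connected with $|X|\geq 2$ by restricting to a nontrivial connected component (a strongly continuous localization, so compact generation is inherited). The key object is now the \emph{skyscraper} $P_{x_0}$ for some nonzero $P\in\cC^{\omega}$. If $\Sh(X;\cC)$ were compactly generated, write $P_{x_0}\cong\varinjlim_i\cF_i$ with each $\cF_i$ compact, hence locally constant by Proposition~\ref{prop:compact_sheaves}. Compactness of $P$ gives a splitting $P\to\cF_{i_0,x_0}$, which lifts to a section over a small connected compact neighborhood; after replacing $X$ by this neighborhood one may assume it is global. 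For each $i\geq i_0$ the set $\{y\in X:\text{the composite }P\to\cF_{i,y}\text{ is nonzero}\}$ is clopen (by local constancy) and contains $x_0$, hence equals $X$ by connectedness. Picking any $x_1\neq x_0$ and passing to the colimit yields a nonzero map $P\to(P_{x_0})_{x_1}=0$, a contradiction. The skyscraper's vanishing stalks away from $x_0$ are precisely what supply the contradiction; a constant sheaf has no such feature.
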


\begin{proof} \Implies{comp_gen2}{comp_gen1} Consider the collection of sheaves $$S=\{P_U\in\Sh(X;\cC): P\in\cC^{\omega},\,U\subset X\text{ is open-compact}\}.$$ 
Then each sheaf in $S$ is compact and the right orthogonal to $S$ consists of sheaves $\cF$ such that $\cF(U)=0$ for every open-compact $U\subset X.$ Since $X$ is locally profinite, this means that $\cF=0.$ Hence, $S$ generates the category $\Sh(X;\cC).$

\Implies{comp_gen1}{comp_gen2} Choose a point $x\in X.$ The pullback to $\{x\}$ is a strongly continuous quotient functor $\Sh(X;\cC)\to\cC.$ Since the category $\Sh(X;\cC)$ is compactly generated, so is $\cC.$

Next, we reduce to the case when $X$ is compact. If not, consider the short exact sequence
$$0\to \Sh(X;\cC)\to \Sh(X\cup\{\infty\};\cC)\to\cC\to 0$$
in $\Cat_{\st}^{\dual}.$ Since the categories on the left and on the right are compactly generated, so is the category in the middle (by Proposition \ref{prop:when_extension_is_dualizable}). Hence, we may and will assume that $X$ is compact.

Suppose that $X$ is not profinite. Choose a connected component $Y\subset X$ such that $Y$ is not a single point. Then $Y$ is closed, hence compact Hausdorff. The functor $\Sh(X;\cC)\to\Sh(Y;\cC)$ is a strongly continuous localization. Since the category $\Sh(X;\cC)$ is compactly generated, so is the category $\Sh(Y;\cC).$ Replacing $X$ with $Y,$ we may and will assume that $X$ is connected.

Choose a point $x_0\in X.$ Choose a non-zero compact object $P\in\cC^{\omega},$ and consider the skyscraper sheaf $P_{x_0}$ on $X.$ Then we can find a directed system $(\cF_i)_{i\in I}$ of compact $\cC$-valued sheaves such hat $\indlim[i]\cF_i\cong P_{x_0}.$ By Proposition \ref{prop:compact_sheaves} each $\cF_i$ is a locally constant sheaf with compact stalks. 

Since $P\cong\indlim[i]\cF_{i,x_0}$ is compact in $\cC,$ we can find $i_0\in I$ and a splitting $\alpha:P\to \cF_{i_0,x_0}.$ 
Using compactness of $P$ again, we can find an open neighborhood $U\ni x_0$ and a map $\beta:P\to\cF_{i_0}(U)$ which lifts $\alpha.$ Choose a compact $Y\subset U$ such that $Y$ is a neighborhood of $x_0.$ Let $Z$ be the connected component of $x_0$ in $Y.$ Then $Z\neq\{x_0\},$ hence we may and will assume that $X=U=Y=Z.$

So we have the map $\beta:P\to\cF_{i_0}(X).$ Take some $i\geq i_0.$ Since the sheaf $\cF_i$ is locally constant, the set
$$T_i=\{y\in X: \text{the composition }P\xto{\beta}\cF_{i_0}(X)\to\cF_i(X)\to\cF_{i,y}\text{ is non-zero}\}$$
is open-closed in $X.$ We have $x_0\in T_i.$ Since $X$ is connected, we have $T_i=X.$ Choosing a point $x_1\in X\setminus\{x_0\},$ we see that the map $P\to\indlim[i]\cF_{i,x_1}$ is non-zero. But $\indlim[i]\cF_{i,x_1}\cong (P_{x_0})_{x_1}=0,$ a contradiction.  
\end{proof}

\appendix

\section{Monomorphisms and epimorphisms of presentable and dualizable categories}
\label{app:mono_epi_pres_dual}

Here we show that the monomorphisms and epimorphisms in $\Pr^L_{\st},$ $\Pr^{LL}_{\st}$ and $\Cat_{\st}^{\dual}$ are as expected: these are respectively the fully faithful functors and the quotient functors.

\begin{prop}\label{prop:mono_epi_Pr^L} Let $F:\cC\to\cD$ be a continuous functor between presentable stable categories. 

1) $F$ is a monomorphism in $\Pr^L_{\st}$ if and only if $F$ is fully faithful. 

2) $F$ is an epimorphism in $\Pr^L_{\st}$ if and only if $F$ is a quotient functor.\end{prop}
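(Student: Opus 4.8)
The plan is to treat the two statements by passing to adjoints, using the self-duality $(\Pr^L_{\st})^{op}\simeq\Pr^R_{\st}$ so that monomorphisms correspond to epimorphisms. For part 1), the ``if'' direction is formal: if $F$ is fully faithful and $G_1,G_2:\cT\to\cC$ are continuous functors with $F\circ G_1\simeq F\circ G_2$, then applying the right adjoint $F^R$ and using $F^R F\simeq\id_{\cC}$ gives $G_1\simeq G_2$ (and one should check the higher coherences, which follow similarly since $F$ being fully faithful means $F^R F\to\id$ is an equivalence of functors, not just on objects). For the ``only if'' direction, suppose $F$ is a monomorphism. The standard criterion is that $F$ is a monomorphism iff the diagonal $\cC\to\cC\times_{\cD}\cC$ is an equivalence. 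But $\cC\times_{\cD}\cC$, computed in $\Pr^L_{\st}$, can be identified via passing to right adjoints with a limit in $\Pr^R_{\st}$; the first projection $\cC\times_{\cD}\cC\to\cC$ always admits the diagonal as a section, and one checks that the diagonal is an equivalence precisely when the unit $\id_{\cC}\to F^R F$ is an equivalence, i.e. when $F$ is fully faithful. The cleanest route is: $F$ is a monomorphism $\iff$ for every $\cT$ the map $\Map_{\Pr^L_{\st}}(\cT,\cC)\to\Map_{\Pr^L_{\st}}(\cT,\cD)$ is a monomorphism of spaces; specializing to representables and using that $\Pr^L_{\st}$ is presentable lets one reduce to checking on a generator, e.g. $\cT=\Sp$, where $\Map(\Sp,\cC)\simeq\cC^{\simeq}$, so we get that $\cC^{\simeq}\to\cD^{\simeq}$ is a monomorphism of spaces; combined with the same statement for $\cT=\Fun([1],\Sp)$ or by a mapping-space argument as in the proof of Proposition~\ref{prop:presentability_of_Pr_kappa}, this forces $F$ to be fully faithful.

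For part 2), by the equivalence $(\Pr^L_{\st})^{op}\simeq\Pr^R_{\st}$, the functor $F:\cC\to\cD$ is an epimorphism in $\Pr^L_{\st}$ if and only if its right adjoint $F^R:\cD\to\cC$ is a monomorphism in $\Pr^R_{\st}$. Now I would argue that a morphism in $\Pr^R_{\st}$ is a monomorphism if and only if it is fully faithful: the ``if'' direction is again formal (a fully faithful $G$ has $\id\to G^{?}$... here one uses instead that $F^R$ fully faithful means the counit $F F^R\to\id_{\cD}$ is an equivalence, so $F^R$ has a one-sided inverse up to the appropriate natural transformation, giving the monomorphism property); the ``only if'' direction is dual to part 1), applied inside $\Pr^R_{\st}$, or can be deduced directly. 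Finally, recall that $F$ is a quotient (localization) functor precisely when $F^R$ is fully faithful — this is essentially the definition of a continuous localization used throughout the paper (see the discussion around Proposition~\ref{prop:Lurie_criterions_dual}\ref{Lur4}). Chaining these equivalences yields: $F$ epimorphism in $\Pr^L_{\st}$ $\iff$ $F^R$ monomorphism in $\Pr^R_{\st}$ $\iff$ $F^R$ fully faithful $\iff$ $F$ is a quotient functor.

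The main obstacle I anticipate is the ``only if'' direction of part 1) (equivalently of the monomorphism statement in $\Pr^R_{\st}$): extracting full faithfulness of $F$ from the mere equalizer/diagonal condition requires a genuine argument rather than a formal manipulation, because a priori the monomorphism condition only controls $\pi_0$ and $\pi_1$ of mapping spaces $\Map_{\Pr^L_{\st}}(\cT,-)$, and one must promote this to a statement about all mapping spectra $\cC(x,y)\to\cD(Fx,Fy)$. The trick, parallel to the proof of Proposition~\ref{prop:presentability_of_Pr_kappa}, is to realize each homotopy group $\pi_n\cC(x,y)$ as a retract of a homotopy group of the groupoid $\cC^{\simeq}\simeq\Map_{\Pr^L_{\st}}(\Sp,\cC)$, and then use that a monomorphism of spaces induces monomorphisms (hence isomorphisms onto the image) on all homotopy groups. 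Once this reduction is in place, essential surjectivity on the image plus bijectivity on these homotopy groups gives full faithfulness. I would carry out the steps in the order: (i) formal ``if'' directions for both parts; (ii) the mapping-space/homotopy-group argument establishing ``monomorphism $\Rightarrow$ fully faithful'' in $\Pr^L_{\st}$; (iii) deduce the same in $\Pr^R_{\st}$ by the same argument or by op-duality; (iv) assemble part 2) from the chain of equivalences via $(\Pr^L_{\st})^{op}\simeq\Pr^R_{\st}$ and the definition of a quotient functor.
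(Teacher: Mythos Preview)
Your treatment of part 1) is correct and matches the paper's proof: test with $\cT=\Sp$ so that $\cC^{\simeq}\to\cD^{\simeq}$ is a monomorphism of spaces, then invoke stability (via the retract trick from the proof of Proposition~\ref{prop:presentability_of_Pr_kappa}) to upgrade this to full faithfulness on all mapping spectra.

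For part 2) your overall chain is right, but step (iii) has a gap as written. Neither proposed justification works. The $\Sp$-testing argument does not transfer to $\Pr^R_{\st}$: there $\Map_{\Pr^R_{\st}}(\Sp,\cD)\simeq\Map_{\Pr^L_{\st}}(\cD,\Sp)\simeq(\cD^{\vee})^{\simeq}$, and postcomposition with $F^R$ corresponds, after passing to left adjoints, to the map $F^{\vee}=(-\circ F):\cD^{\vee}\to\cC^{\vee}$. So you would only conclude that $F^{\vee}$ is fully faithful, which is not a priori the same as $F^R$ being fully faithful, and bridging the two is essentially the content of part 2) itself. ``Op-duality'' is circular: the equivalence $(\Pr^L_{\st})^{op}\simeq\Pr^R_{\st}$ is exactly what you already invoked to pass to $\Pr^R_{\st}$, and applying it again just restates the problem.

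The paper's argument avoids this by working directly with the pushout. Since $F$ is an epimorphism, the codiagonal $\cD\sqcup^{\cont}_{\cC}\cD\to\cD$ is an equivalence; but colimits in $\Pr^L_{\st}$ are computed as limits of the right-adjoint diagram, so $\cD\sqcup^{\cont}_{\cC}\cD\simeq\cD\times_{\cC}\cD$ (both structure maps $F^R$) as underlying $\infty$-categories, and the codiagonal corresponds to the diagonal under this identification. Evaluating mapping spectra yields a bicartesian square with three corners $\cD(x,y)$ and fourth corner $\cC(F^R x,F^R y)$, forcing $\cD(x,y)\xto{\sim}\cC(F^R x,F^R y)$. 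This is precisely the \emph{diagonal} criterion for monomorphisms that you mention in passing for part 1), run on the $\Pr^R$ side where limits are honest limits of categories; make that your argument rather than the $\Sp$-probe.
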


\begin{proof} 1) The ``if'' part is clear. For the ``only if'' part, we see that the map of groupoids
$$\cC^{\simeq}\simeq \Fun(\Sp,\cC)^{\simeq}\to \Fun(\Sp,\cD)^{\simeq}\simeq\cD^{\simeq}$$ is a monomorphism, hence $F$ is fully faithful (since $\cC$ and $\cD$ are stable).

2) Again, the ``if'' part is clear. For the ``only if'' part, note that $\cD\sqcup_{\cC}^{\cont}\cD\simeq\cD\times_{\cC}\cD,$ where both functors $\cD\to\cC$ are $F^R.$ Hence, the functor $\cD\to\cD\times_{\cC}\cD$ is an equivalence, in particular, it is fully faithful. Hence, for any $x,y\in\cD$ the square
$$
\begin{CD}
\cD(x,y) @>>> \cD(x,y)\\
@VVV @VVV\\
\cD(x,y) @>>> \cC(F^R(x),F^R(y))
\end{CD}
$$
is bicartesian, thus $F^R$ is fully faithful and $F$ is a quotient functor.\end{proof}

\begin{prop}\label{prop:mono_epi_Pr^LL}Let $F:\cC\to\cD$ be a strongly continuous functor between presentable stable categories.

1) $F$ is a monomorphism in $\Pr^{LL}_{\st}$ if and only if $F$ is fully faithful. 

2) $F$ is an epimorphism in $\Pr^{LL}_{\st}$ if and only if $F$ is a quotient functor.
\end{prop}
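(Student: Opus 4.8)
The plan is to reduce the statement for $\Pr^{LL}_{\st}$ to the corresponding statement for $\Pr^L_{\st}$ (Proposition \ref{prop:mono_epi_Pr^L}), taking advantage of the fact that the two categories have the same objects and that $\Pr^{LL}_{\st}\to\Pr^L_{\st}$ is the identity on objects. The ``if'' directions in both 1) and 2) are immediate: a fully faithful strongly continuous functor is in particular a fully faithful continuous functor, hence a monomorphism in $\Pr^L_{\st}$, and a fortiori a monomorphism in the subcategory $\Pr^{LL}_{\st}$; dually, a strongly continuous quotient functor is an epimorphism in $\Pr^L_{\st}$ and hence in $\Pr^{LL}_{\st}$ (being a monomorphism resp. epimorphism in a larger category immediately implies the same in a subcategory, since the defining diagrams to be tested are a subclass).

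For the ``only if'' directions the key point is that the relevant fiber products and pushouts of strongly continuous functors, computed in $\Pr^{LL}_{\st}$, agree with those computed in $\Pr^L_{\st}$. For 1), $F:\cC\to\cD$ is a monomorphism in $\Pr^{LL}_{\st}$ precisely when the diagonal $\cD\to\cD\times_{\cC}^{\Pr^{LL}}\cD$ is an equivalence, where the fiber product is taken in $\Pr^{LL}_{\st}$; but I would argue that this fiber product coincides with $\cD\times_{\cC}\cD$ taken in $\Pr^L_{\st}$ — indeed, the latter is a fiber product of strongly continuous functors along $F^R$ on the right-adjoint side, and one checks (as in the proof of Proposition \ref{prop:colimits_in_Pr^LL}, dualized) that the inclusion $\Pr^{LL}_{\st}\to\Pr^L_{\st}$ both creates and reflects this particular limit, because the structural functors out of $\cD\times_{\cC}\cD$ have continuous right adjoints. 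Hence $F$ is a monomorphism in $\Pr^{LL}_{\st}$ iff it is one in $\Pr^L_{\st}$, and Proposition \ref{prop:mono_epi_Pr^L} 1) finishes the argument. For 2), dually, $F$ is an epimorphism in $\Pr^{LL}_{\st}$ iff $\cD\to\cD\sqcup_{\cC}^{\cont}\cD$ is an equivalence, and I would use that the pushout $\cD\sqcup_{\cC}^{\cont}\cD$ in $\Pr^L_{\st}$ is already a pushout in $\Pr^{LL}_{\st}$ by Proposition \ref{prop:colimits_in_Pr^LL} (the colimit of strongly continuous functors is strongly continuous, and strong continuity of a map out of the pushout is detected on the two structural functors). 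So again the condition is the same in both categories, and Proposition \ref{prop:mono_epi_Pr^L} 2) applies.

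The main obstacle I anticipate is verifying carefully that the fiber product $\cD\times_{\cC}\cD$ of strongly continuous functors, formed in $\Pr^L_{\st}$, lands in $\Cat_{\st}^{\dual}$-friendly territory and that its projections are strongly continuous, so that it genuinely represents the $\Pr^{LL}_{\st}$-fiber product — this is a small adjunction-bookkeeping exercise (the projections $\cD\times_{\cC}\cD\to\cD$ have right adjoints whose continuity one reads off from continuity of $F^R$), but it is the step where one must be precise rather than wave hands. Once that is in place the rest is formal. I would also remark at the end that the analogous statement for $\Cat_{\st}^{\dual}$ follows by the same reasoning, using Propositions \ref{prop:colimits_of_dualizable} and \ref{prop:nice_pullbacks} to know that the relevant limits and colimits in $\Cat_{\st}^{\dual}$ agree with those in $\Pr^{LL}_{\st}$ (for pushouts) resp. are computed as in Proposition \ref{prop:nice_pullbacks} (for the fiber products appearing in the monomorphism test, where one notes the diagonal $\cD\to\cD\times^{\dual}_{\cC}\cD$ is an equivalence iff $F$ is fully faithful, since $\cD\times^{\dual}_{\cC}\cD$ is a full subcategory of $\cD\times_{\cC}\cD$ containing the diagonal).
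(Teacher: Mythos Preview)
Your argument for part 2) is correct and essentially identical to the paper's: the inclusion $\Pr^{LL}_{\st}\to\Pr^L_{\st}$ is conservative and commutes with colimits (Proposition \ref{prop:colimits_in_Pr^LL}), so epimorphisms are detected in $\Pr^L_{\st}$, and Proposition \ref{prop:mono_epi_Pr^L} applies.

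For part 1) there is a genuine gap. First a notational slip: for $F:\cC\to\cD$ the relevant diagonal is $\cC\to\cC\times_{\cD}\cC$, not $\cD\to\cD\times_{\cC}\cD$. More seriously, your reduction rests on the claim that the $\Pr^L_{\st}$-fiber product $\cC\times_{\cD}\cC$ is also the fiber product in $\Pr^{LL}_{\st}$, which you justify by ``dualizing'' Proposition \ref{prop:colimits_in_Pr^LL}. But that proposition works because colimits in $\Pr^L_{\st}$ are computed as limits along the right adjoints; there is no analogous mechanism making limits in $\Pr^L_{\st}$ into colimits on the right-adjoint side. Concretely, you would need the projections $\cC\times_{\cD}\cC\to\cC$ to be strongly continuous and to detect strong continuity of maps into the fiber product, and neither is established --- the right adjoint to a projection from a fiber product has no simple description and there is no reason it should be continuous. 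The paper's own Remark after the proof confirms this obstruction: the pullback route ``can be proved similarly to the case of dualizable categories,'' but since pullbacks in $\Pr^{LL}_{\st}$ were not developed, a different argument was given.

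The paper's proof of 1) bypasses fiber products entirely. From the monomorphism hypothesis one gets $\Hom(G_1,\Phi\circ G_2)=0$ for all strongly continuous $G_1,G_2:\cE\to\cC$, where $\Phi=\Cone(\id_{\cC}\to F^RF)$. The point is then to choose a single test category $\cE$ and functors $G_1,G_2$ that force $\Phi=0$: take $\cE=\cC\oright_{\Phi}\cC$ (a semi-orthogonal gluing, well-defined in $\Pr^{LL}_{\st}$ precisely because $\Phi$ is continuous), let $G_1,G_2$ be its two semi-orthogonal projections, and compute that the vanishing $\Hom(G_1[-1],\Phi\circ G_2)=0$ unwinds to $\Hom(\Phi,\Phi)=0$. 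This avoids any discussion of limits in $\Pr^{LL}_{\st}$.
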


\begin{proof} 1) The ``if'' part is clear. We prove the ``only if'' part.

Suppose that $F$ is a monomorphism. Then for a presentable stable category $\cE$ and strongly continuous functors $G_1,G_2:\cE\to\cC,$ we have
$\Hom(G_1,G_2)\xto{\sim}\Hom(F\circ G_1,F\circ G_2).$ By adjunction, we have
\begin{equation}\label{eq:vanishing_Hom_comp} \Hom(G_1,\Cone(\id_{\cC}\to F^RF)\circ G_2)=0.\end{equation}
Put $\Phi:=\Cone(\id_{\cC}\to F^RF),$ and consider the semi-orthogonal gluing $\cT:=\cC\oright_{\Phi}\cC.$ Since $\Phi$ is continuous, the inclusions of the two copies of $\cC$ are strongly continuous. Denote by $\pi_1,\pi_2:\cT\to\cC$ the semi-orthogonal projections (respectively, the left and the right adjoints to the inclusions). Then $\pi_1$ and $\pi_2$ are strongly continuous. Applying \eqref{eq:vanishing_Hom_comp} to $\cE=\cT,$ $G_1=\pi_1[-1],$ $G_2=\pi_2,$ we obtain
$$0=\Hom(\pi_1[-1],\Phi\circ\pi_2)\cong\Hom(\pi_1[-1]\circ\pi_2^R,\Phi)\cong\Hom(\Phi,\Phi),$$
hence $\Phi=0,$ which exactly means that $F$ is fully faithful.  `

2) follows from Proposition \ref{prop:mono_epi_Pr^L} since the functor $\Pr_{\st}^{LL}\to\Pr_{\st}^L$ is conservative and commutes with colimits.
\end{proof}

\begin{prop}\label{prop:mono_epi_Cat^dual} Let $F:\cC\to\cD$ be a strongly continuous between dualizable categories.

1) $F$ is a monomorphism in $\Cat_{\st}^{\dual}$ if and only if $F$ is fully faithful. 

2) $F$ is an epimorphism in $\Cat_{\st}^{\dual}$ if and only if $F$ is a quotient functor.\end{prop}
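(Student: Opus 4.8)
The plan is to deduce Proposition \ref{prop:mono_epi_Cat^dual} from the corresponding statement for $\Pr^{LL}_{\st}$ (Proposition \ref{prop:mono_epi_Pr^LL}), using that $\Cat_{\st}^{\dual}$ is a full subcategory of $\Pr^{LL}_{\st}$ which is closed under colimits (Proposition \ref{prop:colimits_of_dualizable}). Since the inclusion $\Cat_{\st}^{\dual}\hookrightarrow\Pr^{LL}_{\st}$ is fully faithful and preserves pushouts, a morphism of dualizable categories is a monomorphism (resp. epimorphism) in $\Cat_{\st}^{\dual}$ if and only if it is one in $\Pr^{LL}_{\st}$; indeed, monomorphisms and epimorphisms are characterized by pullback/pushout squares (a morphism $F$ is a monomorphism iff the square with all four corners $F$ and the two maps into the target equal to $\id$ is cartesian; $F$ is an epimorphism iff the analogous square is cocartesian), and these limits/colimits, where they exist, agree in the two categories.

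More precisely, for part 2) I would argue directly: $F:\cC\to\cD$ is an epimorphism in $\Cat_{\st}^{\dual}$ iff the codiagonal $\cD\sqcup^{\cont}_{\cC}\cD\to\cD$ is an equivalence, and since the pushout $\cD\sqcup^{\cont}_{\cC}\cD$ computed in $\Cat_{\st}^{\dual}$ coincides with the one computed in $\Pr^{LL}_{\st}$ (both being the pushout in $\Pr^L_{\st}$, by Propositions \ref{prop:colimits_in_Pr^LL} and \ref{prop:colimits_of_dualizable}), this is equivalent to $F$ being an epimorphism in $\Pr^{LL}_{\st}$, which by Proposition \ref{prop:mono_epi_Pr^LL}(2) means $F$ is a quotient functor. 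For part 1), the ``if'' direction is immediate. For the ``only if'' direction, one cannot directly invoke closure under limits since $\Cat_{\st}^{\dual}$ is not closed under limits in $\Pr^{LL}_{\st}$; instead I would rerun the argument of Proposition \ref{prop:mono_epi_Pr^LL}(1) internally to $\Cat_{\st}^{\dual}$. The key point is that the semi-orthogonal gluing $\cT=\cC\oright_{\Phi}\cC$ with $\Phi=\Cone(\id_\cC\to F^RF)$ is dualizable: this follows from Proposition \ref{prop:gluing_of_dualizable}, since $\Phi$ is continuous (as $F^R$ is continuous by strong continuity of $F$). The semi-orthogonal projections $\pi_1,\pi_2:\cT\to\cC$ are then strongly continuous morphisms in $\Cat_{\st}^{\dual}$, and applying the monomorphism hypothesis to these yields $\Hom(\Phi,\Phi)=0$, hence $\Phi=0$ and $F$ is fully faithful.

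The main obstacle — really the only subtlety — is making sure that all the ambient categories and functors constructed in the proof of Proposition \ref{prop:mono_epi_Pr^LL}(1) stay within $\Cat_{\st}^{\dual}$ when we work there; this is exactly what Proposition \ref{prop:gluing_of_dualizable} guarantees for the glued category $\cT$, and the strong continuity of $\pi_1,\pi_2$ follows from Remark \ref{rem:SOD_PR^LL} together with Proposition \ref{prop:finite_SOD_dualizable}. Everything else is a transcription of the earlier argument, so the proof will be short.

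\begin{proof} 1) The ``if'' part is clear. For the ``only if'' part, suppose that $F$ is a monomorphism in $\Cat_{\st}^{\dual}.$ Put $\Phi:=\Cone(\id_{\cC}\to F^RF),$ which is a continuous functor $\cC\to\cC$ since $F^R$ is continuous. Consider the semi-orthogonal gluing $\cT:=\cC\oright_{\Phi}\cC.$ By Proposition \ref{prop:gluing_of_dualizable}, the category $\cT$ is dualizable and the inclusions of the two copies of $\cC$ are strongly continuous. Denote by $\pi_1,\pi_2:\cT\to\cC$ the semi-orthogonal projections (respectively the left and the right adjoints to the inclusions); by Proposition \ref{prop:functors_in_and_out_SOD} (or directly from Remark \ref{rem:SOD_PR^LL}) these are strongly continuous. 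Since $F$ is a monomorphism in $\Cat_{\st}^{\dual},$ for strongly continuous functors $G_1,G_2:\cT\to\cC$ we have $\Hom(G_1,G_2)\xto{\sim}\Hom(F\circ G_1,F\circ G_2),$ hence by adjunction $\Hom(G_1,\Phi\circ G_2)=0.$ Applying this to $G_1=\pi_1[-1],$ $G_2=\pi_2,$ we obtain
$$0=\Hom(\pi_1[-1],\Phi\circ\pi_2)\cong\Hom(\pi_1[-1]\circ\pi_2^R,\Phi)\cong\Hom(\Phi,\Phi),$$
hence $\Phi=0,$ which exactly means that $F$ is fully faithful.

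2) The ``if'' part is clear. For the ``only if'' part, $F$ is an epimorphism in $\Cat_{\st}^{\dual}$ if and only if the codiagonal $\cD\sqcup^{\cont}_{\cC}\cD\to\cD$ is an equivalence. By Propositions \ref{prop:colimits_in_Pr^LL} and \ref{prop:colimits_of_dualizable}, the pushout $\cD\sqcup^{\cont}_{\cC}\cD$ taken in $\Cat_{\st}^{\dual}$ agrees with the one taken in $\Pr^{LL}_{\st},$ so $F$ is an epimorphism in $\Cat_{\st}^{\dual}$ if and only if it is an epimorphism in $\Pr^{LL}_{\st}.$ By Proposition \ref{prop:mono_epi_Pr^LL}(2), this holds if and only if $F$ is a quotient functor.
\end{proof}
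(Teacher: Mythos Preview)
Your proof is correct. Part 2) is essentially the paper's argument: both reduce the epimorphism question to $\Pr^L_{\st}$ (or, in your case, $\Pr^{LL}_{\st}$) via the fact that the inclusion of $\Cat_{\st}^{\dual}$ preserves colimits, then invoke the already-established characterization of epimorphisms there.

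For part 1), however, your route differs from the paper's. You transplant the gluing argument from Proposition~\ref{prop:mono_epi_Pr^LL}(1) into $\Cat_{\st}^{\dual}$, using Proposition~\ref{prop:gluing_of_dualizable} to ensure that $\cT=\cC\oright_{\Phi}\cC$ and the projections $\pi_1,\pi_2$ live in $\Cat_{\st}^{\dual}$. The paper instead argues via fiber products: since $F$ is a monomorphism in $\Cat_{\st}^{\dual}$, the diagonal $\cC\to\cC\times_{\cD}^{\dual}\cC$ is an equivalence; by Proposition~\ref{prop:nice_pullbacks} the comparison $\cC\times_{\cD}^{\dual}\cC\to\cC\times_{\cD}\cC$ is fully faithful, so $\cC\to\cC\times_{\cD}\cC$ is fully faithful, and one concludes as in Proposition~\ref{prop:mono_epi_Pr^L}. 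The paper's own remark following the proposition notes that these two strategies (gluing versus fiber products) are interchangeable---it used gluing for $\Pr^{LL}_{\st}$ only because pullbacks there had not been discussed. Your approach avoids invoking Proposition~\ref{prop:nice_pullbacks} (which relies on the somewhat delicate description of $\ker^{\dual}$), at the cost of repeating the endofunctor computation; the paper's approach is terser but leans on the structure theory of dualizable pullbacks. Both are clean.
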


\begin{proof}
1) The ``if'' direction is clear. We prove the ``only if'' direction. 

Suppose that a functor $F:\cA\to\cB$ is a monomorphism in $\Cat_{\st}^{\dual}.$ By Proposition \ref{prop:nice_pullbacks} the functor
$\cA\times_{\cB}^{\dual}\cA\to \cA\times_{\cB}\cA$ is fully faithful. Hence, the functor $\cA\to \cA\times_{\cB}\cA$ is fully faithful. Arguing as in the proof of Proposition \ref{prop:mono_epi_Pr^L}, we conclude that $F$ is fully faithful.

2) follows from Proposition \ref{prop:mono_epi_Pr^L} since the functor $\Cat_{\st}^{\dual}\to\Pr^L_{\st}$ is conservative and commutes with colimits.
\end{proof}

\begin{remark}In fact the description of monomorphisms in $\Pr^{LL}_{\st}$ can be proved similarly to the case of dualizable categories. However, we did not discuss the pullbacks in $\Pr^{LL}_{\st},$ hence we gave a different proof.\end{remark}

\section{Essential image of a homological epimorphism}
\label{app:image_of_hom_epi}

Consider a homological epimorphism $F:\cA\to\cB,$ where $\cA$ and $\cB$ are small stable idempotent-complete categories. We know that if the category $\ker(\Ind(F))\subset\Ind(\cA)$ is compactly generated, then $\cB\simeq (\cA/\ker(F))^{\Kar}.$ In particular, the essential image of $F$ is a stable subcategory of $\cB.$ In this case by Thomason's theorem every object of $\cB$ of the form $x\oplus x[1]$ can be lifted to $\cA.$

It turns out that if $\ker(\Ind(F))$ is not compactly generated then the image of $F$ might not be a stable subcategory of $\cB.$ More precisely, the following result shows that in general for a dualizable category $\cC$ there may exist an object $x\in\Calk_{\omega_1}^{\cont}(\cC)$ such that the object $x\oplus x[1]$ cannot be lifted to $\cC^{\omega_1}.$

\begin{prop}\label{prop:image_not_a_stable_subcat} Let $\mk$ be a field and consider the dualizable category $\cC=\Sh(\R;D(k)).$ Then there exists a non-zero idempotent-complete full stable subcategory $\cA\subset\Calk_{\omega_1}^{\cont}(\cC)$ such that $\cA\simeq\Perf(\mk)$ and the intersection of $\cA$ with the essential image of $\cC^{\omega_1}$ is zero. In particular, the essential image of the homological epimorphism $\cC^{\omega_1}\to \Calk_{\omega_1}^{\cont}(\cC)$ is not a stable subcategory.\end{prop}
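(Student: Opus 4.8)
The plan is to produce explicitly a compact object $x$ of $\Calk_{\omega_1}^{\cont}(\Sh(\R;D(\mk)))$ which generates a copy of $\Perf(\mk)$, and to show that no object isomorphic to $x$ (nor to any nonzero object in the stable subcategory it generates) lifts to $\cC^{\omega_1}=\Sh(\R;D(\mk))^{\omega_1}$. The natural candidate comes from the constant sheaf $\mk_{\R}$ on $\R$: it is $\omega_1$-compact (indeed the skyscraper-type description shows the relevant stalks and sections are perfect), but it is not compact since $\R$ is noncompact and connected (Proposition \ref{prop:compact_sheaves}). By Proposition \ref{prop:Homs_in_Calk}, the endomorphism spectrum of the image $\bar{\mk_{\R}}$ of $\mk_{\R}$ in $\Calk^{\cont}(\cC)$ is $\Cone(\ev_{\cC}(\mk_{\R}\boxtimes(\mk_{\R})^{\vee})\to\End_{\cC}(\mk_{\R}))$, and I would compute this to be $\mk\times\mk$ (morally the cohomology of the ``germ at infinity'' of $\R$, i.e. of two punctured-neighborhood ends), exactly as in the Example following Proposition \ref{prop:U_loc_sheaves_on_R}. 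The idempotent algebra $\mk\times\mk$ has two orthogonal idempotents $e_{+},e_{-}$; let $x$ be the summand $e_{+}\bar{\mk_{\R}}$ of $\bar{\mk_{\R}}$ in $\Calk^{\cont}(\cC)$. Then $\End(x)=\mk$, so the full stable idempotent-complete subcategory $\cA\subset\Calk^{\cont}(\cC)$ generated by $x$ is equivalent to $\Perf(\mk)$.

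The heart of the argument is then to show $\cA$ meets the essential image of $\cC^{\omega_1}$ only in $0$. Suppose some nonzero $y\in\cA$ lifts to $\cC^{\omega_1}$; since $y$ is, up to shift and summand, a finite sum of copies of $x$, and $x$ is a direct summand of $\bar{\mk_{\R}}$, a standard idempotent-splitting/Thomason-type argument reduces this to: the class $[x]+[x[1]]=[x\oplus x[1]]$, equivalently $[\bar{\mk_{\R}}\oplus\bar{\mk_{\R}}[1]]$ after multiplying by the complementary idempotent, is \emph{not} in the image of $K_0(\cC^{\omega_1})\to K_0(\Calk^{\cont}(\cC))$ — or more precisely, that no object of $\cC^{\omega_1}$ has image in $\Calk^{\cont}(\cC)$ isomorphic to a nonzero object of $\cA$. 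Here I would use the singular-support/microlocal structure: an $\omega_1$-compact sheaf $\cG$ on $\R$ whose image in the Calkin category lands in $\cA$ would have to be, away from a compact set, locally constant with perfect stalks (by an analysis parallel to Proposition \ref{prop:compact_sheaves}, but allowing $\omega_1$-compactness), and then the two ``ends'' of $\R$ would contribute independently; the obstruction is that the identity idempotent of $\cA$ corresponds to a configuration at infinity ($e_{+}$ at one end, $0$ at the other) that is not realized by the restriction to infinity of any global $\omega_1$-compact sheaf, because global sheaves have matching behavior forced through the connected manifold $\R$. Concretely I would run this through the short exact sequence $0\to\Sh_{\geq0}(\R;\Sp)\to\Sh(\R\cup\{-\infty\};\Sp)\xto{}\Sh_{\leq0}(\R;\Sp)\to0$ of \eqref{eq:extension_of_SS_categories} and the vanishing results of Proposition \ref{prop:resolution_of_Sh_geq_0} and Proposition \ref{prop:all_loc_invar_zero_for_Sh_geq_0}, which let me separate the two ends into a product and show $K_0$ (or rather the presence of a given object up to summands) detects the mismatch.

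More precisely, the cleanest route is probably to decompose $\bar{\mk_{\R}}$ using the two ends directly: write $\R$ as glued from $(-\infty,1)$, $(-1,1)$, $(-1,\infty)$, use excision/Mayer--Vietoris for $\Calk^{\cont}$, and identify $x=e_{+}\bar{\mk_{\R}}$ with a Calkin-class supported at the $+\infty$ end only. Then a lift $\cG\in\cC^{\omega_1}$ of (a shift/summand combination of) $x$ would, after applying the localization $\Sh(\R;D(\mk))\to\Sh_{\leq0}(\R;D(\mk))$ (projection to the $-\infty$ end in the singular-support sense) and using that all accessible localizing invariants — hence also the plain existence question up to $x\oplus x[1]$, via Thomason — behave well, force the $-\infty$-end component of $\cG$ to be simultaneously $0$ in $\Calk$ and equal to the $+\infty$-end component, which is nonzero; contradiction. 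I would formalize ``force'' by the observation that $\mk_{\R}$ restricted to a neighborhood of $-\infty$ and to a neighborhood of $+\infty$ give the \emph{same} germ (both are constant sheaves on a half-line), so the two idempotents $e_{\pm}$ are swapped by the ``flip'' $\R\to\R$, $t\mapsto -t$, whereas any object genuinely lifting to $\cC^{\omega_1}$ would have a flip-invariant (up to the appropriate twist) Calkin class — and $x$ is manifestly not flip-invariant in $K_0(\Calk^{\cont}(\cC))$ modulo the image of $K_0(\cC^{\omega_1})$.

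\textbf{Main obstacle.} The delicate point is the last step: ruling out \emph{all} lifts of \emph{all} nonzero objects of $\cA$ (not just $x$ itself and not just on the level of $K_0$), i.e. genuinely showing the essential image of $\cC^{\omega_1}\to\Calk^{\cont}(\cC)$ misses $\cA\setminus\{0\}$. This requires a careful structure theorem for $\omega_1$-compact sheaves on $\R$ near infinity — the analogue of Neeman's / Proposition \ref{prop:compact_sheaves}'s description but for $\omega_1$-compactness rather than compactness — together with the computation that the germ at $\pm\infty$ functor $\cC^{\omega_1}\to(D(\mk)\text{-ends})$ has image whose Calkin-classes are ``diagonal'' (equal at both ends up to the relevant identification), so that the off-diagonal class $x$ cannot be hit. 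Everything else (the identification $\End(\bar{\mk_{\R}})=\mk\times\mk$, the splitting $\cA\simeq\Perf(\mk)$, the reductions to $K_0$) is routine given Proposition \ref{prop:Homs_in_Calk}, Proposition \ref{prop:compact_objects_in_quotients}, and Thomason's theorem.
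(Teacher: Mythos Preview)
Your setup is exactly right and matches the paper: take the image of $\mk_{\R}$ in the Calkin category, compute its endomorphisms as $\mk\times\mk$, split along the idempotent $e_+$, and let $\cA$ be the $\Perf(\mk)$ generated by $\mk_+$. The issue is entirely in the non-lifting step, where both of your concrete proposals fail.

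The $K_0$/Thomason reduction cannot work here. From the fiber sequence $K^{\cont}(\cC)\to K(\cC^{\omega_1})\to K(\Calk_{\omega_1}^{\cont}(\cC))$ and Proposition~\ref{prop:U_loc_sheaves_on_R} one gets $K_{-1}^{\cont}(\Sh(\R;D(\mk)))\cong K_{-2}(\mk)=0$, so the map $K_0(\cC^{\omega_1})\to K_0(\Calk_{\omega_1}^{\cont}(\cC))$ is \emph{surjective}. In particular $[\mk_+]$ does lift on the level of $K_0$; the proposition is genuinely finer than a $K_0$ statement. The flip argument is also broken: the flip autoequivalence stabilizes the essential image of $\cC^{\omega_1}$ as a \emph{subset} of isomorphism classes, but there is no reason an individual Calkin class coming from $\cC^{\omega_1}$ should be flip-invariant (e.g.\ $\mk_{(0,\infty)}$ is in $\cC^{\omega_1}$ and is not flip-invariant), so you cannot conclude anything from $\mk_+$ failing flip-invariance.

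The paper's argument is much more direct and avoids all of this. The key observation you are missing is to restrict to compact intervals: for each $C>0$ the pullback $\Sh(\R;D(\mk))\to\Sh([-C,C];D(\mk))$ is strongly continuous, and since $\mk_{[-C,C]}$ is a compact object of $\Sh([-C,C];D(\mk))$, the induced functor on Calkin categories kills $\mk_+$ (indeed all of $\cA$). Therefore any $\cF\in\cC^{\omega_1}$ whose Calkin image lies in $\cA$ must restrict to a \emph{compact} object on each $[-C,C]$; now the ordinary Proposition~\ref{prop:compact_sheaves} (no $\omega_1$-version needed) forces $\cF_{\mid[-C,C]}\cong M_{[-C,C]}$ for a perfect $M$, and letting $C\to\infty$ gives $\cF\cong M_{\R}$ globally. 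But the Calkin image of $M_{\R}$ is $\Phi_-(M)\oplus\Phi_+(M)$, which for $M\ne 0$ has nonzero $\Phi_-$-component and hence cannot lie in $\cA=\Phi_+(\Perf(\mk))$. This is the whole proof; the ``structure theorem for $\omega_1$-compact sheaves near infinity'' you flagged as the main obstacle is simply not needed once you look at compact intervals instead.
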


\begin{proof}Consider the constant sheaf $\mk_{\R}.$ We have
$$\End_{\Calk_{\omega_1}^{\cont}(\cC)}(\mk_{\R})\cong \indlim[C>0]\Gamma(\R\setminus [-C,C];\mk_{\R})\cong \mk\times \mk$$ -- the cohomology of the neighborhood of infinity of $\R.$ The first resp. second copy of $\mk$ corresponds to the neighborhood of $-\infty$ resp. $+\infty.$ We denote by $e_-,e_+:\mk_{\R}\to\mk_{\R}$ the corresponding idempotent endomorphisms in the Calkin category, and denote their images by $\mk_-,\mk_+\in\Calk_{\omega_1}^{\cont}(\cC).$ We obtain fully faithful functors $\Phi_-,\Phi_+:\Perf(\mk)\to \Calk_{\omega_1}^{\cont}(\cC)$ which send $\mk$ to $\mk_-$ resp. $\mk_+.$ We claim that any non-zero object of the image of $\Phi_+$ is not contained in the essential image of $\cC^{\omega_1}.$

Assume the contrary: suppose that for some sheaf $\cF\in\cC^{\omega_1}$ the image of $\cF$ in the Calkin category is isomorphic to a non-zero object of $\Phi_+(\Perf(\mk)).$ Take some real number $C>0.$ Since the constant sheaf $\mk_{[-C,C]}$ is a compact object of $\Sh([-C,C];D(\mk)),$ it follows that the image of $\mk_+$ in $\Calk_{\omega_1}^{\cont}(\Sh([-C,C];D(\mk)))$ is zero. Hence, the pullback of $\cF$ to $[-C,C]$ is a compact object, i.e. $\cF_{\mid [-C,C]}\cong M_{[-C,C]}$ for some perfect complex $M\in\Perf(\mk).$ This holds for any $C>0,$ hence $\cF\cong M_{\R}$ for some $M\in\Perf(\mk).$ Then $M\ne 0,$ and the image of $M_{\R}$ in $\Calk_{\omega_1}^{\cont}(\cC)$ is isomorphic to the direct sum $\Phi_-(M)\oplus \Phi_+(M),$ which is not contained in the essential image of $\Phi_+.$ This gives a contradiction.\end{proof}

\section{Presentability of the category of dualizable categories}
\label{app:presentability_Cat_dual}

In this section we give an alternative, ``explicit'' proof of the absolute version of Ramzi's theorem \cite{Ram24a} which states that the category $\Cat_{\st}^{\dual}$ is $\omega_1$-presentable. We also describe explicitly the $\kappa$-compact objects in $\Cat_{\st}^{\dual}$ for any uncountable regular cardinal $\kappa.$ We start with the following observation.

\begin{prop}\label{prop:categories_under_C_and_monads} Let $\cC$ be a dualizable stable category. Consider the full subcategory $\cE_{\cC}\subset (\Cat_{\st}^{\dual})_{\cC/}$ which consists of pairs $(\cD,F:\cC\to\cD)$ such that the image of $F$ generates $\cD$ by colimits. Then the functor $\cE_{\cC}\to \Alg_{\bE_1}(\Fun^{L}(\cC,\cC)),$ $(\cD,F)\mapsto F^R\circ F,$ is an equivalence of categories.

The inverse functor is given by $T\mapsto \Mod_T(\cC).$\end{prop}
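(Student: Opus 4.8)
The plan is to exhibit the functor $(\cD,F)\mapsto F^R\circ F$ and its inverse $T\mapsto \Mod_T(\cC)$ as mutually quasi-inverse equivalences, handling separately the verification that each construction lands in the stated target category. First I would check that for $(\cD,F)\in\cE_{\cC}$ the composite $T:=F^R\circ F$ is indeed an $\bE_1$-algebra in $\Fun^L(\cC,\cC)$: the monad structure is the standard one attached to the adjunction $F\dashv F^R$, and the point to verify is that $T$ is genuinely an object of $\Fun^L(\cC,\cC)$, i.e.\ colimit-preserving. This is where strong continuity enters: $F$ is colimit-preserving by definition of $\Cat_{\st}^{\dual}$, and $F^R$ is colimit-preserving precisely because $F$ is strongly continuous; hence $T=F^R\circ F$ is colimit-preserving. (Here $\Fun^L(\cC,\cC)$ carries its monoidal structure given by composition, which makes sense because $\cC$ is presentable.)

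Next I would check that for $T\in\Alg_{\bE_1}(\Fun^L(\cC,\cC))$ the category $\Mod_T(\cC)$ is dualizable and the free module functor $\cC\to\Mod_T(\cC)$ is strongly continuous with image generating by colimits. The free-forgetful adjunction gives $U:\Mod_T(\cC)\to\cC$ with left adjoint $\mathrm{Free}$; since $T$ preserves colimits, $U$ creates colimits, so $\Mod_T(\cC)$ is presentable, $U$ is colimit-preserving, and $\mathrm{Free}$ is strongly continuous (its right adjoint $U$ is continuous). The image of $\mathrm{Free}$ generates $\Mod_T(\cC)$ under colimits by the usual bar-resolution/monadicity argument. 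Dualizability of $\Mod_T(\cC)$ then follows from Proposition~\ref{prop:condition_for_dualizability} applied to the single strongly continuous functor $\mathrm{Free}:\cC\to\Mod_T(\cC)$ whose image generates; so $(\Mod_T(\cC),\mathrm{Free})\in\cE_{\cC}$. I would also note that $\mathrm{Free}^R\circ\mathrm{Free}=U\circ\mathrm{Free}\cong T$ as $\bE_1$-algebras, which is the classical identification of the monad of the free-forgetful adjunction.

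For the other composite, given $(\cD,F)\in\cE_{\cC}$ with $T=F^R F$, the comparison functor $\cD\to\Mod_T(\cC)$ is an equivalence by the Barr--Beck--Lurie monadicity theorem: $F^R$ is conservative because the image of $F$ generates $\cD$ by colimits (so an object with vanishing mapping spectra out of $F(\cC)$ is zero), $F^R$ preserves geometric realizations (indeed all colimits, being strongly continuous), and $\cD$ has the relevant colimits; this verifies the hypotheses of monadicity, and the comparison functor is compatible with the structural maps from $\cC$. Conversely, starting from $T$, forming $\Mod_T(\cC)$ and then taking $\mathrm{Free}^R\mathrm{Free}$ recovers $T$ by the previous paragraph. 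Finally one upgrades these objectwise statements to an equivalence of $\infty$-categories by observing that both constructions are natural: a morphism $(\cD,F)\to(\cD',F')$ in $\cE_{\cC}$ induces a map of adjunctions, hence a map of monads $F^R F\to F'^R F'$, and conversely a map of $\bE_1$-algebras induces a colimit-preserving functor on module categories over $\cC$.

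\textbf{Main obstacle.} The technical heart is the monadicity verification — specifically checking that $F^R$ preserves the colimits needed for Barr--Beck--Lurie and that $F^R$ is conservative given only that the image of $F$ \emph{generates by colimits} (not that $F$ is, say, a localization or essentially surjective); conservativity here really uses stability of $\cC$ and $\cD$ together with the fact that ``generated by colimits'' is equivalent to ``the right orthogonal to $F(\cC)$ vanishes.'' A secondary nuisance is bookkeeping the monoidal structure on $\Fun^L(\cC,\cC)$ (composition product) and making sure ``$\bE_1$-algebra'' there means exactly ``monad acting on $\cC$,'' so that $\Mod_T(\cC)$ is unambiguous; but this is standard once phrased correctly.
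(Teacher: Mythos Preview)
Your proposal is correct and follows essentially the same approach as the paper: both arguments reduce to Barr--Beck--Lurie monadicity (using that $F^R$ is conservative since the image of $F$ generates, and colimit-preserving by strong continuity), and both invoke Proposition~\ref{prop:condition_for_dualizability} to verify that $\Mod_T(\cC)$ is dualizable. The paper's proof is simply a terser version of yours, omitting the explicit checks that $T$ is colimit-preserving and the functoriality discussion you sketch at the end.
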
 

\begin{proof}Indeed, for any $(\cD,F)\in \cE_{\cC}$ the functor $F^R:\cD\to\cC$ is conservative and commutes with all colimits, in particular, with geometric realizations. Hence, the functor $F^R$ is monadic by Lurie-Barr-Beck theorem, and we have an equivalence $\cD\xto{\sim}\Mod_{F^R\circ F}(\cC).$

It remains to check that for any $T\in\Alg_{\bE_1}(\Fun^{L}(\cC,\cC))$ the category $\Mod_T(\cC)$ is dualizable and the functor $\cC\to\Mod_T(\cC)$ is strongly continuous. The latter assertion is obvious, and the former one follows from Proposition \ref{prop:condition_for_dualizability}.\end{proof}

We will repeatedly use the following.

\begin{prop}\label{prop:E_1-algebras_in_monoidal_cats} Let $\cC$ be a presentable stable $\bE_1$-monoidal category. Let $\kappa$ be a regular cardinal such that $\cC$ is $\kappa$-compactly generated, the tensor product of $\kappa$-compact objects is $\kappa$-compact, and the unit object is $\kappa$-compact (equivalently, $\cC$ is an $\bE_1$-algebra in $\Pr^L_{\st,\kappa}$). 

1) The category $\Alg_{\bE_1}(\cC)$ of $\bE_1$-algebras in $\cC$ is $\kappa$-compactly generated.

2) If $\kappa$ is uncountable, then an $\bE_1$-algebra $A$ is $\kappa$-compact in $\Alg_{\bE_1}(\cC)$ if and only if the underlying object of $A$ is $\kappa$-compact in $\cC.$ 
\end{prop}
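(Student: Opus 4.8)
\textbf{Proof plan for Proposition \ref{prop:E_1-algebras_in_monoidal_cats}.}

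The plan is to treat both parts simultaneously using the free-forgetful adjunction. The forgetful functor $U:\Alg_{\bE_1}(\cC)\to\cC$ has a left adjoint, the free $\bE_1$-algebra functor $\mathrm{Free}(x)=\bigoplus_{n\geq 0}x^{\otimes n}$, and $U$ is conservative and commutes with sifted colimits (in particular with geometric realizations and with filtered colimits). By Lurie--Barr--Beck, $U$ is monadic; the relevant monad is $\mathrm{Free}$ composed with $U$, which commutes with filtered colimits and, under our hypotheses on $\cC$, preserves $\kappa$-compact objects. Indeed $x^{\otimes n}$ is $\kappa$-compact whenever $x$ is, and for $\kappa$ uncountable a countable (hence $\kappa$-small) coproduct of $\kappa$-compact objects is $\kappa$-compact, so $\mathrm{Free}$ preserves $\kappa$-compactness; for $\kappa=\omega$ one instead observes that each $x^{\otimes n}$ is compact and the coproduct $\bigoplus_n x^{\otimes n}$ is a filtered colimit of compact objects, which is how one still recovers $\omega$-compact generation (but not that free algebras on compact objects are compact).

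First I would establish part 1). Since $\cC$ is $\kappa$-compactly generated, it is a localizing subcategory of $\Fun((\cC^{\kappa})^{op},\cS)$ generated under colimits by the representables $\cC^{\kappa}$, and the free algebras $\mathrm{Free}(x)$ for $x\in\cC^{\kappa}$ form a set of generators of $\Alg_{\bE_1}(\cC)$ under colimits (because every algebra is a colimit — in fact a geometric realization of a bar construction — of free algebras on its underlying object, and $U$ is conservative and sifted-colimit-preserving). Combined with presentability of $\Alg_{\bE_1}(\cC)$ (standard: it is the category of algebras over an accessible monad on a presentable category), this shows $\Alg_{\bE_1}(\cC)$ is $\kappa$-compactly generated once we know the $\mathrm{Free}(x)$, $x\in\cC^{\kappa}$, are $\kappa$-compact. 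For $\kappa$ uncountable this is immediate from the previous paragraph; for $\kappa=\omega$ one argues instead that $\Alg_{\bE_1}(\cC)$ is compactly generated by noting that $U$ commutes with filtered colimits, so $\mathrm{Free}$ of a compact object, being a filtered colimit of the partial sums $\bigoplus_{n\leq N}x^{\otimes n}$, together with the cell structure of algebras, yields a generating set of compact objects — this is where I would invoke the standard fact that algebras over a finitary monad on a compactly generated category form a compactly generated category.

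For part 2), assume $\kappa$ is uncountable. The ``if'' direction: if the underlying object $U(A)$ is $\kappa$-compact in $\cC$, then since $U$ commutes with $\kappa$-filtered colimits and is conservative, and since $A$ is a $\kappa$-small colimit (a geometric realization, hence a countable limit of a simplicial diagram — careful: it's a colimit, so a $\kappa$-small sifted colimit) of free algebras $\mathrm{Free}(U(A)^{\otimes\bullet})$ on $\kappa$-compact objects, $A$ is built from $\kappa$-compact objects of $\Alg_{\bE_1}(\cC)$ by a $\kappa$-small colimit, hence is $\kappa$-compact. The ``only if'' direction: if $A$ is $\kappa$-compact in $\Alg_{\bE_1}(\cC)$, write $U(A)=\colim_i x_i$ as a $\kappa$-filtered colimit of $\kappa$-compact objects $x_i\in\cC^{\kappa}$; then $A=\colim_i \mathrm{Free}(x_i)$? — no, that is false, so instead I would use that $\Alg_{\bE_1}(\cC)$ is $\kappa$-compactly generated (part 1) to write $A$ as a $\kappa$-filtered colimit $\colim_j A_j$ of $\kappa$-compact algebras; by $\kappa$-compactness of $A$, the identity factors through some $A_j$, exhibiting $A$ as a retract of $A_j$; then it suffices to show $U$ of a $\kappa$-compact algebra is $\kappa$-compact, which follows because the $\kappa$-compact algebras are generated under $\kappa$-small colimits and retracts by the free algebras $\mathrm{Free}(x)$, $x\in\cC^{\kappa}$, and $U(\mathrm{Free}(x))=\bigoplus_n x^{\otimes n}$ is $\kappa$-compact, and $U$ commutes with $\kappa$-small colimits. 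The main obstacle I anticipate is pinning down precisely this last step — controlling the $\kappa$-compact objects of $\Alg_{\bE_1}(\cC)$ as exactly the retracts of finite (or $\kappa$-small) colimits of free algebras on $\kappa$-compact generators — since the naive hope that a $\kappa$-compact algebra has $\kappa$-compact underlying object by a direct colimit-factorization argument does not work; one genuinely needs to combine monadicity with the explicit generation statement from part 1), and to use crucially that $\kappa$ is uncountable so that the bar resolution (a countable diagram) produces $\kappa$-small colimits.
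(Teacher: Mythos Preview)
Your overall strategy via the free--forgetful adjunction and monadicity matches the paper's, and for part 1) your sketch is correct (indeed more careful than the paper about the case $\kappa=\omega$). The ``if'' direction of part 2) via the bar resolution is also correct in outline, though note that the terms of the bar resolution are $\mathrm{Free}(T^{\bullet}(U(A)))$ with $T=U\circ\mathrm{Free}$, not $\mathrm{Free}(U(A)^{\otimes\bullet})$.

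The genuine gap is in the ``only if'' direction of part 2): you assert that ``$U$ commutes with $\kappa$-small colimits'', but this is false. The forgetful functor $U:\Alg_{\bE_1}(\cC)\to\cC$ commutes with limits and with \emph{sifted} colimits, but not with coproducts --- the coproduct of $\bE_1$-algebras is a free product, not the coproduct of underlying objects. So one cannot conclude $U(A)\in\cC^{\kappa}$ directly from the generation statement. The paper avoids this entirely by citing \cite[Lemma~6.17]{BSY}, a general result for monads preserving $\kappa$-compact objects. To argue by hand, you would instead show that the class $\{A:U(A)\in\cC^{\kappa}\}$ is closed under $\kappa$-small colimits: closure under reflexive coequalizers is immediate since those are sifted, while for a $\kappa$-small coproduct $\coprod_j A_j$ one replaces each $A_j$ by its bar resolution, uses $\coprod_j\mathrm{Free}(x_j)\simeq\mathrm{Free}(\coprod_j x_j)$, and then applies $U$ (which \emph{does} commute with the resulting geometric realization) to obtain a $\kappa$-small colimit of objects $T(y)$ with $y\in\cC^{\kappa}$. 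You correctly sensed that the obstacle lies in this step and that uncountability of $\kappa$ is essential, but you misidentified what actually fails.
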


\begin{proof}1) For an object $x\in\cC,$ denote by $T(x)$ the tensor algebra of $x,$ i.e. the free $\bE_1$-algebra generated by $x.$ Then $T(-)$ is the left adjoint to the forgetful functor. If $x\in\cC^{\kappa},$ then our assumptions imply that $T(x)\in\cC^{\kappa}.$ Since the forgetful functor commutes with $\kappa$-filtered colimits, the objects $T(x),$ $x\in\cC^{\kappa},$ are $\kappa$-compact in $\Alg_{\bE_1}(\cC).$ Since the forgetful functor is conservative and $\cC$ is $\kappa$-compactly generated, we deduce that the category $\Alg_{\bE_1}(\cC),$ is $\kappa$-compactly generated. More precisely, the category $\Alg_{\bE_1}(\cC)^{\kappa}$ is the smallest full subcategory of $\Alg_{\bE_1}(\cC)$ which contains the objects $T(x),$ $x\in\cC^{\kappa},$ and is closed under $\kappa$-small colimits.

2) The forgetful functor $\Alg_{\bE_1}(\cC)\to\cC$ is monadic, and the corresponding monad on $\cC$ is given by the functor $T(-):\cC\to\cC.$ Since $T(-)$ preserves $\kappa$-compact objects, the assertion follows from \cite[Lemma 6.17]{BSY}.\end{proof} 



We will apply Proposition \ref{prop:E_1-algebras_in_monoidal_cats} to monoidal categories of the form $\Fun^{L}(\cD,\cD),$ where $\cD$ is a dualizable category. Since this category of functors is also dualizable, it is $\omega_1$-compactly generated.

\begin{prop}\label{prop:compactness_in_E-modules} Let $\kappa$ and $\cC$ be as in Proposition \ref{prop:E_1-algebras_in_monoidal_cats}, and suppose that $\kappa$ is uncountable. Let $\cM$ be a left $\cC$-module in $\Pr^L_{\st,\kappa},$ i.e. $\cM$ is $\kappa$-presentable and the action $\cC\otimes\cM\to\cM$ is $\kappa$-strongly continuous. Let $A$ be a $\kappa$-compact $\bE_1$-algebra in $\cC,$ and $N\in\cM$ an $A$-module. The following are equivalent.

\begin{enumerate}[label=(\roman*),ref=(\roman*)]
\item $N$ is $\kappa$-compact in $\Mod_A(\cM).$ \label{kappa_comp_E_1_alg1}

\item the underlying object of $N$ is $\kappa$-compact in $\cM.$ \label{kappa_comp_E_1_alg2}
\end{enumerate}
\end{prop}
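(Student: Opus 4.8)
The plan is to reduce the statement to the already-established Proposition~\ref{prop:E_1-algebras_in_monoidal_cats} (more precisely, its underlying mechanism via free functors) by the standard monadicity argument, now carried out relative to the module category $\cM$ rather than to $\cC$ itself. The implication \ref{kappa_comp_E_1_alg1}$\Rightarrow$\ref{kappa_comp_E_1_alg2} is the easy direction: the forgetful functor $U:\Mod_A(\cM)\to\cM$ has as right adjoint the coinduction functor, which is continuous because the action $\cC\otimes\cM\to\cM$ is continuous and $A$ is a fixed object; alternatively, $U$ preserves $\kappa$-filtered colimits since these are computed underlying in $\Mod_A(\cM)$, and it is conservative, so its left adjoint $\mathrm{Free}_A(-)=A\otimes(-):\cM\to\Mod_A(\cM)$ sends $\kappa$-compact objects to $\kappa$-compact objects. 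That gives one half; but for the converse I need a genuine argument, not just formal nonsense.

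For \ref{kappa_comp_E_1_alg2}$\Rightarrow$\ref{kappa_comp_E_1_alg1}, first I would record that $\Mod_A(\cM)$ is $\kappa$-presentable and that $\mathrm{Free}_A$ preserves $\kappa$-compactness (by the previous paragraph, using that $A$ is $\kappa$-compact in $\cC$, the action is $\kappa$-strongly continuous, hence $A\otimes(-)$ preserves $\kappa$-compact objects of $\cM$). Next, the key structural input: since $U$ is monadic (Lurie--Barr--Beck, exactly as in the proof of Proposition~\ref{prop:categories_under_C_and_monads}) with monad $A\otimes(-)$ on $\cM$, every object $N$ of $\Mod_A(\cM)$ is the geometric realization of its bar resolution $\mathrm{Bar}_\bullet(A,A,N)$, whose terms are $A\otimes A^{\otimes n}\otimes N$. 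When $N$ is $\kappa$-compact in $\cM$ and $A$ is $\kappa$-compact in $\cC$, each term $\mathrm{Free}_A(A^{\otimes n}\otimes N)$ is $\kappa$-compact in $\Mod_A(\cM)$. Now I would invoke the uncountability of $\kappa$: a countable (indeed $\Delta^{op}$-indexed, hence $\kappa$-small for $\kappa>\omega$) colimit of $\kappa$-compact objects is $\kappa$-compact, because the subcategory of $\kappa$-compact objects of a $\kappa$-presentable stable category is closed under $\kappa$-small colimits. Thus $N\simeq |\mathrm{Bar}_\bullet(A,A,N)|$ is $\kappa$-compact in $\Mod_A(\cM)$, as desired. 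This is precisely the module-category analogue of the citation \cite[Lemma 6.17]{BSY} used in the proof of Proposition~\ref{prop:E_1-algebras_in_monoidal_cats}, and in fact one can alternatively just cite that lemma directly, applied to the monad $A\otimes(-)$ on $\cM$, after observing it preserves $\kappa$-compact objects.

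The main obstacle I anticipate is purely bookkeeping rather than conceptual: one must be careful that the relevant colimits are $\kappa$-small for the \emph{uncountable} $\kappa$ in question --- the simplicial bar construction is indexed by $\Delta^{op}$, which is countable, so this is fine precisely because $\kappa\geq\omega_1$, exactly as flagged in the hypothesis. A secondary point to verify is that the free--forgetful adjunction $(\mathrm{Free}_A,U)$ genuinely lives in $\Pr^L_{\st,\kappa}$, i.e.\ that $\mathrm{Free}_A$ is a $1$-morphism there and not merely a colimit-preserving functor; this follows from $A$ being $\kappa$-compact in $\cC$ together with the $\kappa$-strong continuity of the $\cC$-action on $\cM$, since then $A\otimes(-):\cM\to\cM$ preserves $\kappa$-compact objects and hence so does its lift to $\Mod_A(\cM)$. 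Once these two points are pinned down, the proof is a short formal argument essentially identical in shape to the proof of Proposition~\ref{prop:E_1-algebras_in_monoidal_cats}(2); I would likely present it in three or four lines by simply reducing to \cite[Lemma 6.17]{BSY}.
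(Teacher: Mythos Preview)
Your proposal is correct and takes essentially the same approach as the paper: verify that the forgetful functor $\Mod_A(\cM)\to\cM$ is monadic with monad $A\otimes(-)$ preserving $\kappa$-compact objects, then cite \cite[Lemma 6.17]{BSY}. The paper's proof is exactly the three-or-four-line version you describe at the end; your longer discussion unpacking the bar resolution is the content of that lemma, and your exposition of the easy direction \ref{kappa_comp_E_1_alg1}$\Rightarrow$\ref{kappa_comp_E_1_alg2} is slightly tangled (the ``alternatively'' clause argues for $\mathrm{Free}_A$ preserving $\kappa$-compacts, which is the other direction), but this is subsumed anyway once you invoke the cited lemma.
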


\begin{proof} Indeed, the forgetful functor $\Mod_A(\cM)\to\cM$ is continuous and monadic, and the monad $A\otimes-:\cM\to\cM$ preserves $\kappa$-compact objects. Hence, the equivalence \Iff{kappa_comp_E_1_alg1}{kappa_comp_E_1_alg2} follows from \cite[Lemma 6.17]{BSY}.\end{proof}

Recall the category $\Sh_{\geq 0}(\R;\Sp)$ of sheaves of spectra on $\R$ with singular support in $\R\times\R_{\geq 0}.$ For $a\in\R$ we denote by $\bS_{<a}\in \Sh_{\geq 0}(\R;\Sp)$ the sheaf $\bS_{(-\infty,a)}.$ As usual, we denote by $\bS_{\R}$ the constant sheaf with value $\bS.$

\begin{prop}\label{prop:functors_from_Sh_geq_0}Let $\cC$ be a dualizable stable category. Given a strongly continuous functor $F:\Sh_{\geq 0}(\R;\Sp)\to\cC,$ we define a functor $\Phi_F:\Q_{\leq}\to \cC,$ $\Phi_F(a)=F(\bS_{<a}).$ The assignment $F\mapsto\Phi_F$ defines an equivalence between $\Fun^{LL}(\Sh_{\geq 0}(\R;\Sp),\cC)$ and the category of functors $\Phi:\Q_{\leq}\to\cC$ such that
\begin{itemize}
\item for any $a\in\Q,$ we have $\Phi(a)\cong\indlim[b<a]\Phi(b);$

\item for any $a<b,$ $a,b\in\Q,$ the morphism $\Phi(a)\to\Phi(b)$ is compact.
\end{itemize}
\end{prop}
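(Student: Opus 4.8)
The plan is to identify $\Sh_{\geq 0}(\R;\Sp)$ with the category $\Stab^{\cont}((\R\cup\{+\infty\})_{\leq})$ via the tautological equivalence recalled before Proposition~\ref{prop:resolution_of_Sh_geq_0}, and then combine the universal property of continuous stabilization (Proposition~\ref{prop:continuous_stab}, or rather its concrete form Proposition~\ref{prop:Stab_cont_of_cont_posets}) with the characterization of compact morphisms in dualizable categories (Proposition~\ref{prop:compact_maps_in_dualizable_cats}). Concretely, under this identification the sheaf $\bS_{<a}$ corresponds to the object $h_a^{\sharp}$, where $h_a$ is the ``representable'' presheaf on the continuous poset $(\R\cup\{+\infty\})_{\leq}$, at least for $a\in\R$; and strong continuity of a functor out of $\Sh_{\geq 0}(\R;\Sp)$ is detected on these objects. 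So the bulk of the argument is to unwind what Proposition~\ref{prop:Stab_cont_of_cont_posets} gives and rephrase the target category of strongly continuous functors $(\R\cup\{+\infty\})_{\leq}\to\cC$ in terms of functors $\Q_{\leq}\to\cC$.

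First I would recall (from Subsection~\ref{ssec:cont_stab} and Example in Subsection~\ref{ssec:cont_posets}) that the poset $P=(\R\cup\{+\infty\})_{\leq}$ is continuous with $\hat{\cY}(x)=\inddlim[y<x]y$, and that $\Sh(P;\Sp)\simeq\Stab^{\cont}(P)\simeq\Sh_{\geq 0}(\R;\Sp)$, matching $h_x^{\sharp}$ with $\bS_{<x}$. Proposition~\ref{prop:Stab_cont_of_cont_posets} then gives an equivalence $\Fun^{LL}(\Sh_{\geq 0}(\R;\Sp),\cC)\xto{\sim}\Fun^{\strcont}(P,\cC)$, $F\mapsto (x\mapsto F(h_x^{\sharp}))$. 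Now a functor $G:P\to\cC$ is strongly continuous (Definition~\ref{defi:strcont_general}) exactly when $G$ is continuous and $\hat{\cY}_{\cC}\circ G\to\Ind(G)\circ\hat{\cY}_P$ is an isomorphism; continuity means $G$ preserves directed suprema — in particular $G(a)\cong\indlim[b<a]G(b)$ for all $a$, including $a=+\infty$ — and the $\hat{\cY}$-condition, evaluated at $a\in P$ using $\hat{\cY}_P(a)=\inddlim[b<a]b$ and Proposition~\ref{prop:compact_maps_in_dualizable_cats}, amounts to: the map $\cY(G(b))\to\cY(G(a))$ factors through $\hat{\cY}(G(a))$ for all $b<a$, i.e.\ every transition map $G(b)\to G(a)$ with $b<a$ is compact. (Here I use Corollary~\ref{cor:decomposing_compact_morphisms}/the proof of Theorem~\ref{th:dualizability_via_compact_maps} to see compactness propagates correctly through the colimit $\hat{\cY}(G(a))=\indlim[b<a]\hat{\cY}(G(b))$.)

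Next I would restrict a strongly continuous $G:P\to\cC$ along the inclusion $\Q_{\leq}\hto P$ to get $\Phi=\Phi_F$ with $\Phi(a)=F(\bS_{<a})$. The two listed conditions on $\Phi$ are immediate from the above: density of $\Q$ in $\R$ gives $\indlim[b<a,\,b\in\Q]\Phi(b)\cong G(a)=\Phi(a)$, and compactness of all transition maps in $G$ restricts to compactness of $\Phi(a)\to\Phi(b)$ for $a<b$ rational. Conversely, given $\Phi:\Q_{\leq}\to\cC$ with these two properties, I would extend it to $P$ by left Kan extension along $\Q_{\leq}\hto P$, i.e.\ $G(x)=\indlim[a\in\Q,\,a<x]\Phi(a)$ for $x\in\R\cup\{+\infty\}$ (with $G$ agreeing with $\Phi$ on $\Q$ by the first condition, since $\Q_{<a}\cong\Q_{\leq}$ is cofinal below any rational $a$). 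One checks $G$ preserves directed suprema in $P$ (a cofinality argument: any directed subset of $P$ with supremum $x$ is cofinal with $\{a\in\Q: a<x\}$), and that every transition map $G(b)\to G(a)$, $b<a$ in $P$, is compact — for this pick rationals $b<b'<a'<a$; the map factors through $\Phi(a')\to\Phi(a'')$ for some rational intermediate stage as in the proof of Proposition~\ref{prop:functors_from_Q}, hence is compact, using that a colimit of maps over a filtered index whose ``value at each stage'' is compact is again compact (Claim in the proof of Theorem~\ref{th:dualizability_via_compact_maps}). Then $G$ is strongly continuous, so corresponds to $F\in\Fun^{LL}(\Sh_{\geq 0}(\R;\Sp),\cC)$ with $\Phi_F\cong\Phi$. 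Finally I would note the two assignments are mutually inverse up to natural equivalence, essentially because left Kan extension along the cofinal-below-each-point inclusion $\Q_{\leq}\hto P$ is inverse to restriction on the relevant full subcategories, giving the claimed equivalence of categories.

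\textbf{Main obstacle.} The delicate point is the compatibility between compactness of morphisms and the passage through filtered colimits when extending from $\Q$ to $P$ and back: one must verify that left Kan extending a $\Q$-diagram with compact transition maps yields a $P$-diagram all of whose transition maps are still compact (not just the ones between rationals), and dually that compactness is stable under the cofinal reindexing. All the needed inputs are present — Proposition~\ref{prop:compact_maps_in_dualizable_cats}, Corollary~\ref{cor:decomposing_compact_morphisms}, and the Claim in the proof of Theorem~\ref{th:dualizability_via_compact_maps} — so the work is in assembling them carefully rather than in any new idea; a secondary nuisance is tracking the behavior at the point $+\infty\in P$, which requires the functor to also take the value $\indlim[a]\Phi(a)=\Gamma_c(\R,\varphi_\gamma^*(-))$-type colimit correctly, but this is forced and harmless.
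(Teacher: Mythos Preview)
Your proposal is correct and follows essentially the same route as the paper: identify $\Sh_{\geq 0}(\R;\Sp)$ with $\Stab^{\cont}((\R\cup\{+\infty\})_{\leq})$, apply Proposition~\ref{prop:Stab_cont_of_cont_posets} to get $\Fun^{LL}(\Sh_{\geq 0}(\R;\Sp),\cC)\simeq\Fun^{\strcont}((\R\cup\{+\infty\})_{\leq},\cC)$, and then use density of $\Q$ in $\R$ to pass between functors on $(\R\cup\{+\infty\})_{\leq}$ and functors on $\Q_{\leq}$ with the stated properties. The paper's proof compresses all of this into two sentences and treats your ``main obstacle'' (stability of compactness under the $\Q\leftrightarrow\R$ reindexing) as evident from density, but your more explicit unpacking via Proposition~\ref{prop:compact_maps_in_dualizable_cats} and the Claim in Theorem~\ref{th:dualizability_via_compact_maps} is a faithful expansion of what is implicit there.
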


\begin{proof}Since $\Q$ is dense in $\R,$ the category of functors $\Q_{\leq}\to\cC$ with required properties is equivalent to the category of functors $(\R\cup\{+\infty\})_{\leq}\to\cC$ with analogous properties. The assertion now follows as a special case of Proposition \ref{prop:Stab_cont_of_cont_posets} applied to $P=(\R\cup\{+\infty\})_{\leq}.$\end{proof}

\begin{cor}\label{cor:many_functors_from_Sh_on_R} For any dualizable category $\cC$ and for any countably presented object $x\in\cC^{\omega_1}$ there exists a strongly continuous functor $F:\Sh_{\geq 0}(\R;\Sp)\to\cC$ such that $F(\bS_{\R})\cong x.$\end{cor}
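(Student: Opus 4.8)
The plan is to deduce Corollary \ref{cor:many_functors_from_Sh_on_R} from Proposition \ref{prop:functors_from_Sh_geq_0} by producing, for a given countably presented object $x\in\cC^{\omega_1}$, a suitable functor $\Phi:\Q_{\leq}\to\cC$ whose value at $+\infty$ (in the sense of the colimit over all of $\Q$) recovers $x$, while all transition maps $\Phi(a)\to\Phi(b)$ for $a<b$ are compact and $\Phi$ is continuous from the left. But this is essentially already done: Proposition \ref{prop:functors_from_Q} produces exactly such a functor $\Phi:\Q_{\leq}\to\cC$ with $\indlim_{a\in\Q}\Phi(a)\cong x$, with $\indlim_{b<a}\Phi(b)\xto{\sim}\Phi(a)$ for all $a\in\Q$, and with all maps $\Phi(a)\to\Phi(b)$ compact for $a<b$. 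So the first step is simply to invoke Proposition \ref{prop:functors_from_Q} to obtain this $\Phi$.

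The second step is to feed $\Phi$ into Proposition \ref{prop:functors_from_Sh_geq_0}. The two bulleted conditions in that proposition's characterization of the essential image of $F\mapsto\Phi_F$ are precisely the last two properties of $\Phi$ listed above, so $\Phi$ corresponds to a strongly continuous functor $F:\Sh_{\geq 0}(\R;\Sp)\to\cC$ with $F(\bS_{<a})\cong\Phi(a)$ for all $a\in\Q$. The only remaining point is to identify $F(\bS_{\R})$. Since $\bS_{\R}\cong\indlim_{a\in\R}\bS_{<a}$ in $\Sh_{\geq 0}(\R;\Sp)$ (the constant sheaf is the filtered colimit of the sheaves $\bS_{(-\infty,a)}$, and by density of $\Q$ we may take the colimit over $a\in\Q$), and $F$ is colimit-preserving, we get
$$F(\bS_{\R})\cong\indlim_{a\in\Q}F(\bS_{<a})\cong\indlim_{a\in\Q}\Phi(a)\cong x.$$
This completes the argument.

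I do not expect a genuine obstacle here: the corollary is a direct packaging of Propositions \ref{prop:functors_from_Q} and \ref{prop:functors_from_Sh_geq_0}. The only mildly delicate point worth spelling out is the identification $\bS_{\R}\cong\indlim_{a\in\Q}\bS_{<a}$ in $\Sh_{\geq 0}(\R;\Sp)=\Sh(\R_\gamma;\Sp)$, which one checks by evaluating both sides on $\gamma$-open subsets $(-\infty,c)$ and on $\R$ itself: on $(-\infty,c)$ both give $\bS$ (the colimit stabilizes once $a\geq c$, using that $\bS_{<a}((-\infty,c))=\bS$ for $a\ge c$ and $0$ otherwise is not quite it — rather one uses the stalk/section description on the $\gamma$-topology), and the colimit is filtered so it is computed sectionwise on a basis. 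Alternatively one can argue via $\Sh_{\geq0}(\R;\Sp)\simeq\Stab^{\cont}((\R\cup\{+\infty\})_{\leq})$ and note $\bS_\R=h_{+\infty}^{\sharp}=\indlim_{a<+\infty}h_a^{\sharp}$ directly from Proposition \ref{prop:continuous_stab} and the formula $\hat{\cY}(+\infty)=\inddlim_{a<+\infty}a$. Either way the verification is routine.
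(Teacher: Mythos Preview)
Your proof is correct and follows exactly the paper's approach: the paper's proof is the single sentence ``This follows directly from Propositions \ref{prop:functors_from_Sh_geq_0} and \ref{prop:functors_from_Q},'' and you have simply spelled out the details of that deduction, including the identification $F(\bS_{\R})\cong\indlim_{a\in\Q}\Phi(a)\cong x$.
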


\begin{proof}This follows directly from Propositions \ref{prop:functors_from_Sh_geq_0} and \ref{prop:functors_from_Q}.\end{proof}

We prove the following result on the presentability of the category $\Cat_{\st}^{\dual}$ and on the $\kappa$-compact objects of this category.

\begin{theo}\label{th:presentability_of_Cat^dual}Put $\cC_0:=\Sh_{\geq 0}(\R;\Sp).$ 

1) The category of dualizable categories $\Cat_{\st}^{\dual}$ is $\omega_1$-presentable. 

2) Let $\kappa$ be an uncountable regular cardinal and $\cC\in\Cat_{\st}^{\dual}$ a dualizable category. The following are equivalent.

\begin{enumerate}[label=(\roman*),ref=(\roman*)]
\item $\cC$ is a $\kappa$-compact object of $\Cat_{\st}^{\dual}.$ \label{presentability1}

\item there exists a $\kappa$-small set $I$ and a $\kappa$-compact monad $A\in\Alg_{\bE_1}(\Fun^{L}(\prod\limits_I\cC_0,\prod\limits_I\cC_0))^{\kappa},$ such that $$\cC\simeq\Mod_A(\prod\limits_I\cC_0).$$
Moreover, if $\kappa=\omega_1,$ then $I$ can be assumed to be a one-element set. \label{presentability2}
 
\item The functors $\ev:\cC\otimes\cC^{\vee}\to\Sp$ and $\coev:\Sp\to\cC^{\vee}\otimes\cC$ are $\kappa$-strongly continuous, i.e. their right adjoints commute with $\kappa$-filtered colimits. Equivalently, $\cC$ is dualizable in $\Pr^L_{\st,\kappa}.$ \label{presentability3}

\item There exists a short exact sequence $0\to\cC\to\Ind(\cA)\to\Ind(\cB)\to 0$ in $\Cat_{\st}^{\dual},$ where $\cA$ and $\cB$ are $\kappa$-compact in $\Cat^{\perf}.$ \label{presentability4}

\item $\cC$ is $\kappa$-compact in $\Pr^L_{\st,\kappa}.$ \label{presentability5}
\end{enumerate}
\end{theo}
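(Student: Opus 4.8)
The plan is to prove the cycle of implications among \ref{presentability1}--\ref{presentability5} together with the existence of enough $\kappa$-compact objects, and then to combine the latter with the fact that $\Cat_{\st}^{\dual}$ is cocomplete (Proposition \ref{prop:colimits_of_dualizable}) and $\omega_1$-accessible-ish (via \ref{presentability2}) to deduce $\omega_1$-presentability in part 1). The natural backbone is: \ref{presentability2}$\Rightarrow$\ref{presentability1}, \ref{presentability1}$\Rightarrow$\ref{presentability5}, \ref{presentability5}$\Leftrightarrow$\ref{presentability3}, \ref{presentability3}$\Rightarrow$\ref{presentability4}, \ref{presentability4}$\Rightarrow$\ref{presentability2}. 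First I would record the building blocks already available: $\cC_0 = \Sh_{\geq 0}(\R;\Sp)$ is dualizable (Proposition \ref{prop:resolution_of_Sh_geq_0} together with Subsection \ref{ssec:cont_stab}); by Corollary \ref{cor:many_functors_from_Sh_on_R} any countably presented object of a dualizable category is hit by a strongly continuous functor from $\cC_0$; and by Proposition \ref{prop:categories_under_C_and_monads} the category of dualizable $\cD$ equipped with a strongly continuous $\prod_I \cC_0 \to \cD$ with generating image is equivalent to $\Alg_{\bE_1}(\Fun^L(\prod_I\cC_0,\prod_I\cC_0))$.

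For \ref{presentability2}$\Rightarrow$\ref{presentability1}: given $\cC \simeq \Mod_A(\prod_I\cC_0)$ with $A$ $\kappa$-compact, I would show the functor $\Cat_{\st}^{\dual} \to \cS$, $\cD \mapsto \Map_{\Cat_{\st}^{\dual}}(\cC,\cD)$, commutes with $\kappa$-filtered colimits. Using Proposition \ref{prop:categories_under_C_and_monads}, mapping out of $\cC$ is the same as: a strongly continuous functor $F\colon \prod_I\cC_0 \to \cD$ together with a factorization of $F^R F$ through the $A$-action, i.e. a map of $\bE_1$-algebras $A \to F^R F$ in $\Fun^L(\prod_I\cC_0,\prod_I\cC_0)$, with a $\kappa$-compactness condition on $F^R F$. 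Since $I$ is $\kappa$-small, $\prod_I\cC_0$ is dualizable (Propositions \ref{prop:products_in_Cat^dual}, \ref{prop:colimits_of_dualizable}) and strongly continuous functors out of it correspond via Proposition \ref{prop:functors_from_Sh_geq_0} to $\kappa$-small families of suitable functors $\Q_{\leq}\to\cD$; each such is determined by countably presented objects and compact maps, so the relevant mapping spaces commute with $\kappa$-filtered colimits by Proposition \ref{prop:compact_objects_in_filtered_colimits} and the weak (AB5) axiom (Proposition \ref{prop:weak_AB5_Pr^LL}). The $\bE_1$-algebra datum then contributes a $\kappa$-filtered-colimit-preserving piece by Proposition \ref{prop:E_1-algebras_in_monoidal_cats} applied to the dualizable monoidal category $\Fun^L(\prod_I\cC_0,\prod_I\cC_0)$.

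For \ref{presentability1}$\Rightarrow$\ref{presentability5} and \ref{presentability5}$\Leftrightarrow$\ref{presentability3}: the non-full inclusion $\Cat_{\st}^{\dual}\hookrightarrow\Pr^L_{\st,\kappa}$ has a right adjoint $\cD\mapsto\Ind_\kappa(\cD^\kappa)$ by (the $\kappa$-version of) Proposition \ref{prop:nonstandard_adjunction}; its right adjoint is $\kappa$-accessible since $(-)^\kappa$ commutes with $\kappa$-filtered colimits, so a left adjoint preserves $\kappa$-compact objects, giving \ref{presentability1}$\Rightarrow$\ref{presentability5}. The equivalence \ref{presentability5}$\Leftrightarrow$\ref{presentability3} is the statement that $\cC$ is $\kappa$-compact in $\Pr^L_{\st,\kappa}$ iff $\cC$ is a dualizable object of $\Pr^L_{\st,\kappa}$; one direction uses that $\Sp$ generates $\Pr^L_{\st,\kappa}$ by $\kappa$-small colimits (Proposition \ref{prop:presentability_of_Pr_kappa}) so a dualizable object is a retract of a $\kappa$-small colimit of copies of $\cC^\vee$-dual functor categories hence $\kappa$-compact, and the converse uses that $\ev$ and $\coev$ are colimits of $\kappa$-compact data when $\cC$ is, which unwinds to the right adjoints of $\ev,\coev$ being $\kappa$-continuous. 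For \ref{presentability3}$\Rightarrow$\ref{presentability4} I would use Proposition \ref{prop:dualizable_via_ses} to present $\cC$ as the kernel of $\Ind(\cA)\to\Ind(\cA')$, then note that $\kappa$-strong dualizability forces $\hat\cY$ to land in $\Ind(\cA_0)$ for a $\kappa$-small $\cA_0$ (Corollary \ref{cor:hat_Y_into_Ind_C_omega_1} refined by counting, using Proposition \ref{prop:kappa_compact_small_cats}), and that $\Calk_\kappa^{\cont}(\cC)$ is then $\kappa$-compact, so the short exact sequence $0\to\cC\to\Ind(\cC^\kappa)\to\Ind(\Calk_\kappa^{\cont}(\cC))\to 0$ does the job. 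Finally \ref{presentability4}$\Rightarrow$\ref{presentability2} reconstructs the monadic presentation: embed $\cA$ into a $\kappa$-small product of copies of $\cC_0^\omega$ up to the needed generation (using Corollary \ref{cor:many_functors_from_Sh_on_R} object-by-object, $\kappa$-many times), apply $\Ind$, and transport $\cC$ across; Proposition \ref{prop:compactness_in_E-modules} then identifies the resulting monad as $\kappa$-compact. Part 1) follows: every dualizable category is an $\omega_1$-filtered colimit of $\omega_1$-compact ones — indeed $\cC\simeq\indlim^{\cont} \cC_i$ over the $\omega_1$-directed poset of dualizable subcategories generated by $\omega_1$-small families of $\omega_1$-compact objects with strongly continuous inclusions (Lemma \ref{lem:largest_dualizable}, Corollary \ref{cor:directed_unions}, Proposition \ref{prop:weak_AB5_Pr^LL}), each such $\cC_i$ being $\omega_1$-compact by \ref{presentability2} with $|I|=1$.

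The main obstacle I expect is the bookkeeping in \ref{presentability1}$\Rightarrow$\ref{presentability5} made genuinely rigorous at the level of the $(\infty,2)$-categorical or fibration-theoretic adjunction, and dually controlling cardinalities in \ref{presentability3}$\Rightarrow$\ref{presentability4}: one must verify that $\kappa$-strong dualizability of $\cC$ (a condition on two functors and their right adjoints commuting with $\kappa$-filtered colimits) really does bound the ind-system presenting $\hat\cY(x)$ by a $\kappa$-small amount of data uniformly in $x\in\cC^\kappa$, and that the spectra of morphisms stay $\kappa$-compact — this is where Proposition \ref{prop:kappa_compact_small_cats} and the explicit description of $\kappa$-compact objects in $\Cat^{\perf}$ must be invoked carefully. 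The monadic-presentation steps \ref{presentability2}$\Rightarrow$\ref{presentability1} and \ref{presentability4}$\Rightarrow$\ref{presentability2} are conceptually clean given Propositions \ref{prop:categories_under_C_and_monads}, \ref{prop:E_1-algebras_in_monoidal_cats} and \ref{prop:compactness_in_E-modules}, but checking that $\Fun^L(\prod_I\cC_0,\prod_I\cC_0)$ satisfies the hypotheses of Proposition \ref{prop:E_1-algebras_in_monoidal_cats} for the cardinal $\kappa$ (unit $\kappa$-compact, tensor of $\kappa$-compacts $\kappa$-compact) requires knowing the $\kappa$-compact objects of this functor category, which follows from $\prod_I\cC_0$ being $\kappa$-compact in $\Cat_{\st}^{\dual}$ — so there is a mild circularity to navigate, best handled by first proving \ref{presentability2}$\Rightarrow$\ref{presentability1} only for $\kappa=\omega_1$ with $|I|=1$ (self-contained since $\cC_0$ is a fixed $\omega_1$-compact object), deducing part 1), and only then bootstrapping to general $\kappa$.
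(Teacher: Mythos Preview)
Your cycle of implications differs from the paper's, and two of your chosen directions contain real gaps.

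The serious gap is \ref{presentability5}$\Rightarrow$\ref{presentability3}. You write that ``the converse uses that $\ev$ and $\coev$ are colimits of $\kappa$-compact data when $\cC$ is, which unwinds to the right adjoints of $\ev,\coev$ being $\kappa$-continuous.'' But $\kappa$-compactness of $\cC$ in $\Pr^L_{\st,\kappa}$ only says that $\cC^\kappa$ is $\kappa$-compact in $\Cat^{\perf}$; it gives no direct handle on whether $\coev(\bS)$ is $\kappa$-compact in $\cC^{\vee}\otimes\cC$, because you do not yet know the $\kappa$-compact objects of $\cC^{\vee}\otimes\cC$ (indeed you do not even know that $\cC^{\vee}$ is $\kappa$-compact in $\Pr^L_{\st,\kappa}$). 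The paper never attempts \ref{presentability5}$\Rightarrow$\ref{presentability3} directly: it proves \ref{presentability3}$\Rightarrow$\ref{presentability5} (trivial, since dualizable objects are compact once the unit is) and \ref{presentability5}$\Rightarrow$\ref{presentability1} via a Calkin-category argument, then closes the loop through \ref{presentability1}$\Rightarrow$\ref{presentability2}$\Rightarrow$\ref{presentability3}.

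Your direct \ref{presentability2}$\Rightarrow$\ref{presentability1} is also problematic. The description of $\Map_{\Cat_{\st}^{\dual}}(\Mod_A(\prod_I\cC_0),\cD)$ as ``a strongly continuous $F\colon\prod_I\cC_0\to\cD$ together with a map of $\bE_1$-algebras $A\to F^R F$'' is wrong: that is the description of maps in the \emph{under}-category $(\Cat_{\st}^{\dual})_{\prod_I\cC_0/}$, restricted to the full subcategory $\cE_{\prod_I\cC_0}$ where the image generates (Proposition~\ref{prop:categories_under_C_and_monads}). Knowing $A$ is $\kappa$-compact gives $\kappa$-compactness of $(\cC,F)$ in $\cE_{\prod_I\cC_0}$, but passing from there to $\kappa$-compactness in the ambient under-category, and then down to $\Cat_{\st}^{\dual}$, is exactly the delicate direction; the paper avoids it by going \ref{presentability2}$\Rightarrow$\ref{presentability3} instead, via a lemma (Lemma~\ref{lem:suff_cond_for_smallness_of_ev_coev}) showing that if $\prod_I\cC_0\to\cC$ has generating image and $\kappa$-strongly continuous right adjoint, and $\prod_I\cC_0$ satisfies \ref{presentability3}, then so does $\cC$.

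The paper's architecture is: prove \ref{presentability4}$\Rightarrow$\ref{presentability3} and \ref{presentability2}$\Rightarrow$\ref{presentability3} via two lemmas (Lemmas~\ref{lem:kappa_compactness_for_ev_and_coev} and~\ref{lem:suff_cond_for_smallness_of_ev_coev}) tracking $\kappa$-strong continuity of $\ev,\coev$ through short exact sequences and monadic quotients; then \ref{presentability3}$\Rightarrow$\ref{presentability5}$\Rightarrow$\ref{presentability1}; establish part~1); and only then prove \ref{presentability1}$\Rightarrow$\ref{presentability2} and \ref{presentability1}$\Rightarrow$\ref{presentability4}. The last implication uses a pushout construction (pushing out along $\prod_I\cC_0\hookrightarrow\prod_I\Fun(\Q_{\leq}^{op},\Sp)$) rather than your proposed route through $\Ind(\cC^\kappa)$ and $\Calk_\kappa^{\cont}(\cC)$. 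Your \ref{presentability1}$\Rightarrow$\ref{presentability5} via the nonstandard adjunction of Proposition~\ref{prop:nonstandard_adjunction} does work and is a pleasant alternative to the paper's \ref{presentability5}$\Rightarrow$\ref{presentability1}, but since your cycle needs \ref{presentability5}$\Rightarrow$\ref{presentability3} to close, this does not save the argument.
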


We need a few auxiliary statements. Let $\kappa$ be an uncountable regular cardinal.

\begin{lemma}\label{lem:kappa_compactness_and_pseudocompactness} Let $\cC$ and $\cD$ be dualizable categories, and suppose that the functors $\ev_{\cC}$ and $\coev_{\cC}$ are $\kappa$-strongly continuous. The following are equivalent for a continuous functor $F:\cC\to\cD.$

\begin{enumerate}[label=(\roman*),ref=(\roman*)]
\item the object $F$ is $\kappa$-compact in $\Fun^{L}(\cC,\cD)\simeq\cC^{\vee}\otimes\cD.$ \label{kappa_strcont1}

\item the functor $F$ is $\kappa$-strongly continuous. \label{kappa_strcont2}
\end{enumerate}
\end{lemma}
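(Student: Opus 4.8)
\textbf{Proof plan for Lemma \ref{lem:kappa_compactness_and_pseudocompactness}.}

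The plan is to exploit the canonical duality data of $\cC$ together with the assumption that $\ev_\cC$ and $\coev_\cC$ are $\kappa$-strongly continuous. The key identity is the standard one coming from dualizability: for a continuous functor $F:\cC\to\cD$, the corresponding object of $\cC^\vee\otimes\cD\simeq\Fun^L(\cC,\cD)$ is recovered as the composition $\cC^\vee\xto{\coev\boxtimes\id}\cC^\vee\otimes\cC\otimes\cC^\vee\xto{\id\boxtimes?}\dots$, but more usefully I will use the explicit description of $\hat\cY$ from Remark \ref{rem:functor_-^vee}: for a dualizable category $\cD$ and an object $y\in\cD$, the ind-object $\hat\cY_\cD(y)$ corresponds to the functor $\ev_\cD(y\boxtimes(-)^\vee):\cD^{op}\to\Sp$. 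Combined with Proposition \ref{prop:strong_continuity_via_hat_Y}, the functor $F$ is $\kappa$-strongly continuous precisely when $F^R$ commutes with $\kappa$-filtered colimits, equivalently when $\hat\cY_\cD\circ F\to\Ind(F)\circ\hat\cY_\cC$ is an isomorphism (this already works for $\omega$; the $\kappa$-version follows because any dualizable category is $\omega_1$-compactly generated by Corollary \ref{cor:dualizable_is_aleph_1_generated}, so the relevant condition only involves $\cC^{\omega_1}$ and passing to $\cC^\kappa$ changes nothing).

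First I would prove \Implies{kappa_strcont1}{kappa_strcont2}. Observe that the full subcategory of $\kappa$-strongly continuous functors is closed under $\kappa$-small colimits in $\Fun^L(\cC,\cD)$: this is the content of Proposition \ref{prop:colimits_in_Pr^LL} applied fiberwise, or more directly, right adjoints to a $\kappa$-small colimit of functors are computed as the corresponding $\kappa$-small limit of the right adjoints, and $\kappa$-small limits of $\kappa$-continuous functors are $\kappa$-continuous. So it suffices to exhibit a set of $\kappa$-strongly continuous functors that $\kappa$-compactly generates $\Fun^L(\cC,\cD)$, since then every $\kappa$-compact object is a retract of a $\kappa$-small colimit of such functors, hence itself $\kappa$-strongly continuous. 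The generators are obtained as follows: for $x\in\cC^\kappa$ and $y\in\cD^\kappa$, the functor $G_{x,y}:=\ev_\cC(x\boxtimes(-)^\vee)^{\sharp}\otimes y$ — more precisely the composition $\cC\xto{x^\vee\otimes-}\Sp$-ish construction tensored with $y$ — should be $\kappa$-strongly continuous because both $\ev_\cC$ (by hypothesis, $\kappa$-strongly continuous) and the functor $\Sp\to\cD$, $S\mapsto S\otimes y$ (its right adjoint is $\Hom_\cD(y,-)$, which commutes with $\kappa$-filtered colimits since $y\in\cD^\kappa$) are $\kappa$-strongly continuous, and the composition of $\kappa$-strongly continuous functors is $\kappa$-strongly continuous. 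These $G_{x,y}$ are $\kappa$-compact in $\cC^\vee\otimes\cD$ (this uses that $\cC^\vee$ is dualizable in $\Pr^L_{\st,\kappa}$, which is exactly the hypothesis on $\ev_\cC,\coev_\cC$ via the characterization \ref{presentability3}$\Leftrightarrow$\ref{presentability5}-type argument, so tensoring preserves $\kappa$-compactness of the generators $x^\vee\boxtimes y$), and they generate because the $x^\vee$ generate $\cC^\vee$ and the $y$ generate $\cD$.

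Next, \Implies{kappa_strcont2}{kappa_strcont1}: given a $\kappa$-strongly continuous $F$, I want to see $F$ is a $\kappa$-compact object of $\cC^\vee\otimes\cD\simeq\Fun^L(\cC^\vee\to\Sp)$-style. Using $\Fun^L(\cC,\cD)\simeq\Fun^L(\cD^\vee,\cC^\vee)$ (dualizing), $F$ corresponds to $F^{\vee}$, and $F$ being $\kappa$-strongly continuous translates to $F^\vee$ preserving $\kappa$-compact objects (by Proposition \ref{prop:strong_continuity_via_hat_Y} and the duality of Subsection \ref{ssec:duality_covariant}, strong continuity of $F$ corresponds to the left adjoint $F^{\vee,L}$ of $F^\vee$ existing, i.e. $F^\vee$ commuting with limits — but the relevant statement is the $\kappa$-refinement). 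To show $\Map_{\cC^\vee\otimes\cD}(F,-)$ commutes with $\kappa$-filtered colimits, I compute this mapping spectrum via the counit/unit: $\Map(F,H)\simeq\ev_{\cC^\vee\otimes\cD}$-type pairing of $F$ with $H^\vee$ twisted by $\coev$, and the hypotheses that $\ev_\cC,\coev_\cC$ are $\kappa$-strongly continuous together with $F$ being $\kappa$-strongly continuous force all the constituent functors in this pairing to commute with $\kappa$-filtered colimits. Concretely: $\Map_{\Fun^L(\cC,\cD)}(F,H)\simeq\mathrm{Nat}(F,H)$, and one has a formula $\mathrm{Nat}(F,H)\simeq\Hom_{\cC^\vee\otimes\cD^\vee\otimes\ldots}(\ldots)$ — I would instead use that $F$ $\kappa$-strongly continuous means $F=\colim$ of a $\kappa$-filtered system of "$\kappa$-finite" functors only after knowing the generators, so it is cleaner to just invoke the first direction's generation statement in reverse: $F$ is a retract of a transfinite composite, but actually the clean argument is that $\Fun^{LL,\kappa}(\cC,\cD)$ (the $\kappa$-strongly continuous functors) is equivalent to $\Fun^{\mathrm{ex}}(\cC^\kappa,\Ind(\cD^\kappa))$-with-a-descent-condition via $\hat\cY$, which is a $\kappa$-accessible category whose $\kappa$-compact objects map to $\kappa$-compact objects of $\cC^\vee\otimes\cD$.

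\textbf{Main obstacle.} The genuinely delicate point is the second direction, \Implies{kappa_strcont2}{kappa_strcont1}: showing that $\kappa$-strong continuity \emph{implies} $\kappa$-compactness in the functor category. The first direction is soft (closure of a subcategory under $\kappa$-small colimits plus identifying generators). For the converse one must produce, from the abstract data of a $\kappa$-strongly continuous functor $F$, a genuine witness that $\Map(F,-)$ commutes with $\kappa$-filtered colimits; the natural approach is to express $F$ itself as a $\kappa$-filtered colimit (or retract thereof) of the explicit $\kappa$-compact generators $G_{x,y}$ and then argue that $F$, being "as continuous as possible", must actually be a retract of a $\kappa$-\emph{small} such colimit — and pinning down why strong continuity upgrades a $\kappa$-filtered presentation to a $\kappa$-small one is exactly where the hypothesis on $\ev_\cC,\coev_\cC$ must be used in an essential, non-formal way. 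I expect this to require the identification of $\Fun^{LL,\kappa}(\cC,\cD)$ with $\ker$ of a map between compactly generated functor categories (in the spirit of Proposition \ref{prop:dualizable_via_ses} and the Calkin formalism), reducing the claim to the already-known behaviour of $\kappa$-compact objects in $\Cat^{\perf}$ and $\Pr^L_{\st,\kappa}$.
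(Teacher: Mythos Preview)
You have massively overcomplicated both directions, and your direction \Implies{kappa_strcont2}{kappa_strcont1} is genuinely incomplete: you correctly identify it as the hard part from your point of view, sketch three or four half-formed approaches, and do not finish any of them. The Calkin/short-exact-sequence reduction you propose at the end would be real work with no clear endpoint.

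The paper's proof is a symmetric two-line argument using only the zigzag identities and the following tautology: an object $F\in\cC^\vee\otimes\cD$ is $\kappa$-compact if and only if the functor $\Sp\to\cC^\vee\otimes\cD$, $\bS\mapsto F$, is $\kappa$-strongly continuous (its right adjoint is $\Hom(F,-)$). Thus for \Implies{kappa_strcont2}{kappa_strcont1}: if $F$ is $\kappa$-strongly continuous then so is $\id_{\cC^\vee}\boxtimes F$, hence so is the composite
\[
\Sp\xto{\coev_\cC}\cC^\vee\otimes\cC\xto{\id\boxtimes F}\cC^\vee\otimes\cD,
\]
which sends $\bS$ to $F$ by the zigzag identity; done. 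For \Implies{kappa_strcont1}{kappa_strcont2}: if $F$ is $\kappa$-compact then $-\otimes F:\Sp\to\cC^\vee\otimes\cD$ is $\kappa$-strongly continuous, hence so is
\[
\cC\simeq\cC\otimes\Sp\xto{\id\boxtimes F}\cC\otimes\cC^\vee\otimes\cD\xto{\ev_\cC\boxtimes\id}\cD,
\]
which is $F$ itself by the other zigzag identity. No generators, no closure properties, no Calkin categories.

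You actually brush against the right idea in your second paragraph for \Implies{kappa_strcont2}{kappa_strcont1} (``$\Map(F,H)\simeq\ev$-type pairing \ldots\ twisted by $\coev$'') but then abandon it for heavier machinery. The point is that you do not need to compute $\Map(F,-)$ at all; you need only exhibit $F$ as the value at $\bS$ of a $\kappa$-strongly continuous functor out of $\Sp$, and $\coev_\cC$ followed by $\id\boxtimes F$ does exactly this. Your approach to \Implies{kappa_strcont1}{kappa_strcont2} via generators is not wrong in spirit, but it requires you to identify $\kappa$-compact generators of $\cC^\vee\otimes\cD$ explicitly, which already presupposes control over $\kappa$-compactness in the tensor product that is close to what you are trying to prove.
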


\begin{proof}This is essentially a standard statement, we include the proof for completeness.

\Implies{kappa_strcont1}{kappa_strcont2}. Since the functors $\Sp\xto{-\otimes F}\cC^{\vee}\otimes\cD$ and $\ev_{\cC}$ are $\kappa$-strongly continuous, so is the composition
$$\cC\xto{\id\boxtimes F} \cC\otimes\cC^{\vee}\otimes\cD\xto{\ev_{\cC}\boxtimes\id} \cD.$$
This composition is exactly the functor $F.$

\Implies{kappa_strcont2}{kappa_strcont1}. Since the functors $F$ and $\coev_{\cC}$ are $\kappa$-strongly continuous, so is the composition
$$\Sp\xto{\coev}\cC^{\vee}\otimes\cC\xto{\id\boxtimes F}\cC^{\vee}\otimes\cD.$$
This composition sends $\bS$ to $F.$\end{proof}

\begin{lemma}\label{lem:kappa_compactness_for_ev_and_coev}  Consider a short exact sequence in $\Cat_{\st}^{\dual},$
$$0\to\cC\xto{F} \cD\xto{G} \cE\to 0.$$

1) If $\ev_{\cD}$ is $\kappa$-strongly continuous, then so is $\ev_{\cC}.$

2) If $\coev_{\cD}$ is $\kappa$-strongly continuous, then so is $\coev_{\cE}.$

3) If $\ev_{\cD},$ $\coev_{\cD}$ and $\ev_{\cE}$ are $\kappa$-strongly continuous, then so is $\coev_{\cC}.$

4) If $\ev_{\cD},$ $\coev_{\cD}$ and $\coev_{\cC}$ are $\kappa$-strongly continuous, then so is $\ev_{\cE}.$
\end{lemma}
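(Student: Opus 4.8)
The four assertions are dual to each other in a precise sense: applying the covariant involution $(-)^{\vee}$ to the short exact sequence $0\to\cC\xto{F}\cD\xto{G}\cE\to 0$ produces another short exact sequence $0\to\cE^{\vee}\xto{G^{\vee,L}}\cD^{\vee}\xto{F^{\vee,L}}\cC^{\vee}\to 0$ in $\Cat_{\st}^{\dual}$ (by Proposition~\ref{prop:ses_Pr^LL} together with the fact that $(-)^{\vee}$ commutes with limits and colimits, hence sends kernels to cokernels and vice versa). Under this involution $\ev_{\cC}$ and $\coev_{\cC^{\vee}}$ are identified up to the symmetry $\cC^{\vee}\otimes\cC\simeq\cC\otimes\cC^{\vee}$, and likewise $\coev_{\cC}$ with $\ev_{\cC^{\vee}}$. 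Therefore parts 3) and 4) will follow formally from parts 1) and 2) by replacing the sequence with its dual; similarly 1) and 2) are interchanged. So the plan is to prove 1) carefully and then obtain 2), 3), 4) by dualization and by the evident symmetry of the hypotheses (in each of 3) and 4) all three of the relevant functors for $\cD$ and one endpoint are assumed $\kappa$-strongly continuous, which matches exactly the hypothesis pattern of 1)--2) after applying $(-)^{\vee}$).

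\textbf{Proof of part 1).} Suppose $\ev_{\cD}\colon\cD\otimes\cD^{\vee}\to\Sp$ is $\kappa$-strongly continuous. I want to show $\ev_{\cC}$ is $\kappa$-strongly continuous, equivalently (Lemma~\ref{lem:kappa_compactness_and_pseudocompactness}, or rather its underlying principle) that its right adjoint commutes with $\kappa$-filtered colimits. The key point is to express $\ev_{\cC}$ in terms of $\ev_{\cD}$ using the functor $F$ and its adjoints. Since $F$ is strongly continuous and fully faithful, $F^R F\cong\id_{\cC}$ and $F^R$ is continuous; dualizing, $F^{\vee,L}\colon\cD^{\vee}\to\cC^{\vee}$ is strongly continuous with $F^{\vee,L}(F^R)^{\vee}\cong\id$, where $(F^R)^{\vee}\colon\cC^{\vee}\to\cD^{\vee}$ is the left adjoint of $F^{\vee}$. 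I claim there is a natural isomorphism
$$
\ev_{\cC}\;\cong\;\ev_{\cD}\circ\bigl(F\otimes (F^R)^{\vee}\bigr)\colon\ \cC\otimes\cC^{\vee}\to\Sp.
$$
This should follow from the universal property of $\coev$ and $\ev$ together with the compatibility of duality with the functor $F$: concretely, for $x\in\cC$ one has $x^{\vee}\cong (F^R)^{\vee}$ applied suitably, reflecting $\Hom_{\cD}(F(x),F(y))\cong\Hom_{\cC}(x,y)$ and the description of $\hat{\cY}$ in Remark~\ref{rem:functor_-^vee}. Granting this formula, $\ev_{\cC}$ is a composite of $F\otimes(F^R)^{\vee}$ — which is strongly continuous (tensor of strongly continuous functors) — with $\ev_{\cD}$, which is $\kappa$-strongly continuous; hence $\ev_{\cC}$ is $\kappa$-strongly continuous. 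Part 2) is proved the same way with the quotient $G$ in place of $F$: $G G^R\cong\id_{\cE}$, $G^R$ is continuous, $\coev_{\cD}$ is $\kappa$-strongly continuous, and $\coev_{\cE}\cong (G\otimes (G^R)^{\vee})\circ\coev_{\cD}$ up to the symmetry, giving the conclusion.

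\textbf{Proof of parts 3) and 4).} For 3), apply $(-)^{\vee}$ to $0\to\cC\to\cD\to\cE\to 0$ to get $0\to\cE^{\vee}\to\cD^{\vee}\to\cC^{\vee}\to 0$. The hypotheses ``$\ev_{\cD}$, $\coev_{\cD}$, $\ev_{\cE}$ are $\kappa$-strongly continuous'' translate, under the identifications $\ev_{\cD}\leftrightarrow\coev_{\cD^{\vee}}$ and $\coev_{\cD}\leftrightarrow\ev_{\cD^{\vee}}$ and $\ev_{\cE}\leftrightarrow\coev_{\cE^{\vee}}$, into ``$\coev_{\cD^{\vee}}$, $\ev_{\cD^{\vee}}$, $\coev_{\cE^{\vee}}$ are $\kappa$-strongly continuous''; applying part 2) to the dual sequence (with $\cE^{\vee}$ playing the role of $\cC$, i.e. the fully faithful end — wait: $\cE^{\vee}$ is the kernel, so part 2) applied to the dual sequence concludes $\coev$ of the quotient $\cC^{\vee}$ is $\kappa$-strongly continuous, i.e. $\coev_{\cC^{\vee}}\leftrightarrow\ev_{\cC}$; and using $\ev_{\cD^{\vee}}$ $\kappa$-strongly continuous we apply part 1) to conclude $\ev_{\cE^{\vee}}\leftrightarrow\coev_{\cC}$ is $\kappa$-strongly continuous, which is what 3) asks). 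So 3) follows by combining parts 1) and 2) for the dual sequence; 4) follows symmetrically. One should double check the matching of which endpoint is the kernel and which is the cokernel under $(-)^{\vee}$ — the kernel becomes the cokernel — and adjust the invocation of part 1) versus part 2) accordingly.

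\textbf{Main obstacle.} The routine bookkeeping of adjoints and the involution $(-)^{\vee}$ is harmless; the one genuinely load-bearing step is establishing the natural isomorphism $\ev_{\cC}\cong\ev_{\cD}\circ(F\otimes (F^R)^{\vee})$ (and its $\coev$-analogue). This requires being careful about the canonical identification of $\cC^{\vee}$ as a retract/subquotient of $\cD^{\vee}$ induced by a strongly continuous $F$, and checking that under this identification the evaluation pairing for $\cC$ is the restriction of that for $\cD$. I expect this to come out of the universal property characterization of $x^{\vee}$ in Remark~\ref{rem:functor_-^vee} together with full faithfulness of $F$, but it is the place where a careful argument is needed rather than formal manipulation.
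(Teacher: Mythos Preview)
Your treatment of parts 1) and 2) is essentially correct and matches the paper: the identities $\ev_{\cC}\cong\ev_{\cD}\circ(F\boxtimes F^{\vee,L})$ and $\coev_{\cE}\cong(G^{\vee,L}\boxtimes G)\circ\coev_{\cD}$ hold (using $F^{\vee}F^{\vee,L}\cong(F^R F)^{\vee}\cong\id$ from full faithfulness of $F$, resp.\ $G^{\vee,L}G^{\vee}\cong(GG^R)^{\vee}\cong\id$ from $G$ being a localization), and since $F,F^{\vee,L},G,G^{\vee,L}$ are all strongly continuous the conclusions follow. What you flag as the ``main obstacle'' is in fact routine and is exactly the paper's one-line argument.

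Your reduction of parts 3) and 4) to 1) and 2) via the involution $(-)^{\vee}$, however, is based on two errors and does not work. First, the covariant involution sends $F\mapsto F^{\vee,L}=(F^R)^{\vee}$ and hence sends the short exact sequence to $0\to\cC^{\vee}\xto{F^{\vee,L}}\cD^{\vee}\xto{G^{\vee,L}}\cE^{\vee}\to 0$ in the \emph{same} direction; it does not swap kernel and cokernel. Second, under passage to the dual, $\ev_{\cX}$ corresponds (up to the symmetry) to $\ev_{\cX^{\vee}}$, not to $\coev_{\cX^{\vee}}$; likewise $\coev_{\cX}$ corresponds to $\coev_{\cX^{\vee}}$. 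With both corrections, applying 1) and 2) to the dual sequence simply reproduces 1) and 2); it yields nothing about $\coev_{\cC}$ or $\ev_{\cE}$. This is why the paper assumes the extra hypotheses in 3) and 4): they are genuinely needed.

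The paper's argument for 3) is different in kind. One works inside $\cD^{\vee}\otimes\cD$ and uses the exact triangle
\[
(F^{\vee,L}\boxtimes F)(\coev_{\cC}(\bS))\;\to\;\coev_{\cD}(\bS)\;\to\;(G^{\vee,L}\boxtimes G)^R(\coev_{\cE}(\bS)).
\]
Since $F^{\vee,L}\boxtimes F$ is fully faithful and strongly continuous, it preserves and reflects $\kappa$-compactness, so it suffices to show the left term is $\kappa$-compact. The middle term is $\kappa$-compact by the hypothesis on $\coev_{\cD}$. For the right term, observe that under the self-duality of $\cD^{\vee}\otimes\cD$ it corresponds to the continuous functor $\ev_{\cE}\circ(G\boxtimes G^{\vee,L})\colon\cD\otimes\cD^{\vee}\to\Sp$, which is $\kappa$-strongly continuous by the hypotheses on $\ev_{\cE}$; then Lemma~\ref{lem:kappa_compactness_and_pseudocompactness} (which requires $\ev$ and $\coev$ of $\cD\otimes\cD^{\vee}$ to be $\kappa$-strongly continuous, supplied by the hypotheses on $\cD$) converts this into $\kappa$-compactness. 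Part 4) is the symmetric argument. You will need to supply this fiber-sequence step; the duality shortcut does not exist.
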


\begin{proof}1) The functor $\ev_{\cC}$ is the composition of $\kappa$-strongly continuous functors
$$\cC\otimes\cC^{\vee}\xto{F\boxtimes F^{\vee,L}} \cD\otimes\cD^{\vee}\xto{\ev_{\cD}} \Sp,$$
hence it is $\kappa$-strongly continuous.

2) Similarly, the functor $\coev_{\cE}$ is the composition of $\kappa$-strongly continuous functors 
$$\Sp\xto{\coev_{\cD}}\cD^{\vee}\otimes\cD\xto{G^{\vee,L}\boxtimes G}\cE^{\vee}\otimes\cE,$$
hence it is $\kappa$-strongly continuous.

3) It suffices to prove that $(F^{\vee,L}\boxtimes F)(\coev_{\cC}(\bS))$ is in $(\cD^{\vee}\otimes\cD)^{\kappa}.$ We have an exact triangle
$$(F^{\vee,L}\boxtimes F)(\coev_{\cC}(\bS))\to\coev_{\cD}(\bS)\to (G^{\vee,L}\boxtimes G)^R(\coev_{\cE}(\bS)).$$
Since $\coev_{\cD}(\bS)$ is $\kappa$-compact, it suffices to show that $(G^{\vee,L}\boxtimes G)^R(\coev_{\cE}(\bS))$ is $\kappa$-compact.  The latter object corresponds under the self-duality of $\cD^{\vee}\otimes\cD$ to the composition $$\cD\otimes\cD^{\vee}\xto{G\boxtimes G^{\vee,L}}\cE\otimes\cE^{\vee}\xto{\ev_{\cE}}\Sp.$$ The latter functor is $\kappa$-strongly continuous by our assumptions. Applying Lemma \ref{lem:kappa_compactness_and_pseudocompactness} (to the categories $\cD\otimes\cD^{\vee}$ and $\Sp$) we conclude that $(G^{\vee,L}\boxtimes G)^R(\coev_{\cE}(\bS))$ is $\kappa$-compact.

4) This is proved by reversing the proof of 3).\end{proof}

\begin{lemma}\label{lem:suff_cond_for_smallness_of_ev_coev} Let $F:\cC\to\cD$ be a strongly continuous functor between dualizable categories, and denote by $F^R$ its right adjoint. Suppose that the image of $F$ generates $\cD,$ i.e. $F^R$ is conservative. Suppose that the functors $\ev_{\cC},$ $\coev_{\cC}$ and $F^R$ are $\kappa$-strongly continuous. Then the functors $\ev_{\cD}$ and $\coev_{\cD}$ are also $\kappa$-strongly continuous.\end{lemma}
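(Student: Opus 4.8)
\textbf{Proof strategy for Lemma \ref{lem:suff_cond_for_smallness_of_ev_coev}.}

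The plan is to express $\ev_{\cD}$ and $\coev_{\cD}$ as composites of functors we already know to be $\kappa$-strongly continuous, together with one functor that is $\kappa$-strongly continuous precisely because $F$ ``exhibits $\cD$ as generated by $\cC$''. The key structural input is that a strongly continuous functor $F$ with conservative right adjoint realizes $\cD$ as the category of modules over the monad $T = F^R F$ on $\cC$ (Proposition \ref{prop:categories_under_C_and_monads}), and that $T$ is $\kappa$-strongly continuous as an endofunctor of $\cC$ since it is a composite of $\kappa$-strongly continuous functors $F^R$ and $F$ (here $F$ is automatically $\kappa$-strongly continuous because $\cC\in\Pr^L_{\st,\kappa}$ forces $F$ to preserve $\kappa$-compact objects once $F^R$ is $\kappa$-continuous). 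First I would set up the dual picture: $F^{\vee,L}:\cC^{\vee}\to\cD^{\vee}$ has conservative right adjoint $F^{\vee}$, and under the identification of Remark \ref{rem:functor_-^vee} the pair $(F^{\vee,L}, F^{\vee})$ realizes $\cD^{\vee}$ as modules over the monad $(F^R)^{\vee}\circ F^{\vee}$ on $\cC^{\vee}$, which is again $\kappa$-strongly continuous.

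Next I would use Lemma \ref{lem:kappa_compactness_and_pseudocompactness}: it suffices to show that $\ev_{\cD}$ (respectively $\coev_{\cD}$) is $\kappa$-compact as an object of $\cD\otimes\cD^{\vee}$ (respectively $\cD^{\vee}\otimes\cD$). For $\coev_{\cD}(\bS)$, note the counit triangle for the monad $\tilde T = F^{\vee,L}\boxtimes F$ on $\cD^{\vee}\otimes\cD$ — more precisely, $F^{\vee,L}\boxtimes F$ is strongly continuous with conservative right adjoint (conservativity of $F^R$ on each factor), so $(F^{\vee,L}\boxtimes F)\circ (F^{\vee,L}\boxtimes F)^R\to \id$ is the bar-resolution augmentation, and $\coev_{\cD}(\bS)$ is built by a colimit (geometric realization) from $(F^{\vee,L}\boxtimes F)$ applied to iterated $(F^R)^{\vee}\boxtimes F^R$-applications of $\coev_{\cC}(\bS)$. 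Since $\coev_{\cC}(\bS)\in(\cC^{\vee}\otimes\cC)^{\kappa}$ by hypothesis and the monad $(F^R)^{\vee}\boxtimes F^R$ is $\kappa$-strongly continuous (hence preserves $\kappa$-compactness) on $\cC^{\vee}\otimes\cC$, and since $F^{\vee,L}\boxtimes F$ preserves $\kappa$-compact objects, each term of this bar resolution in $\cD^{\vee}\otimes\cD$ is $\kappa$-compact. A priori this only shows $\coev_{\cD}(\bS)$ is a $\kappa$-filtered (even geometric-realization) colimit of $\kappa$-compacts; to conclude $\kappa$-compactness I would instead argue more directly: the right adjoint to $F^{\vee,L}\boxtimes F$ is conservative and $\kappa$-continuous, so by monadicity (\cite[Lemma 6.17]{BSY}, as used in Proposition \ref{prop:compactness_in_E-modules}) an object of $\cD^{\vee}\otimes\cD$ is $\kappa$-compact iff its image under the free functor $(F^{\vee,L}\boxtimes F)$ of its underlying object is — but the underlying object of $\coev_{\cD}(\bS)$ is $(F^{\vee,L}\boxtimes F)^R(\coev_{\cD}(\bS))$, which fits in a triangle with $\coev_{\cC}(\bS)$ (a $\kappa$-compact object of $\cC^{\vee}\otimes\cC$) and the ``higher'' terms. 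The cleanest route is: apply the already-proven parts 2) and 3) of Lemma \ref{lem:kappa_compactness_for_ev_and_coev} backwards is not available; instead observe $\cC\otimes\cC^{\vee}\to\cD\otimes\cD^{\vee}$ via $F\boxtimes F^{\vee,L}$ is a localization onto a generating subcategory with $\kappa$-continuous conservative right adjoint, reducing the claim for $\ev_{\cD},\coev_{\cD}$ to the monadic criterion exactly as in Proposition \ref{prop:compactness_in_E-modules}.

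Concretely the argument I would write is: $\ev_{\cD}$ equals the composite $\cD\otimes\cD^{\vee}\xrightarrow{(F\boxtimes F^{\vee,L})^R}\cC\otimes\cC^{\vee}\xrightarrow{\ev_{\cC}}\Sp$ up to the unit, no — rather, one has $\ev_{\cC}\cong \ev_{\cD}\circ(F\boxtimes F^{\vee,L})$, and since $F\boxtimes F^{\vee,L}$ has $\kappa$-continuous conservative right adjoint and its image generates $\cD\otimes\cD^{\vee}$, the functor $\ev_{\cD}$ is $\kappa$-strongly continuous iff its restriction along $F\boxtimes F^{\vee,L}$ is (Proposition \ref{prop:automatic_strcont} gives one direction; the other follows because $\ev_{\cD}^R$ is $\kappa$-continuous iff $(F\boxtimes F^{\vee,L})^R\circ\ev_{\dots}$ — using the module description, $\ev_{\dots}^R$ is computed from $\ev_{\cC}^R$ and the $\kappa$-continuous monad). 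The dual statement handles $\coev_{\cD}$ via the pushout/module presentation of $\cC^{\vee}\otimes\cC\to\cD^{\vee}\otimes\cD$. The main obstacle I anticipate is precisely this last bookkeeping step: verifying that ``$\kappa$-strong continuity of a functor out of (into) a monadic category over $\cC\otimes\cC^{\vee}$ can be checked after precomposition (postcomposition) with the free functor'' — this is where one genuinely uses \cite[Lemma 6.17]{BSY} plus the fact that the relevant monad $F^R F\boxtimes (F^R F)^{\vee}$ is $\kappa$-strongly continuous, and care is needed because the right adjoints involved are only $\kappa$-continuous, not continuous, so one must track $\kappa$-filtered colimits throughout rather than all colimits.
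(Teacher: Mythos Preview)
Your overall strategy---the monadic description $\cD\simeq\Mod_A(\cC)$ with $A=F^R F$, and the module-compactness criterion (Proposition \ref{prop:compactness_in_E-modules}) for $\coev_{\cD}$---is the paper's strategy, but two concrete points are off.

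For $\ev_{\cD}$: the identity you settle on, $\ev_{\cC}\cong\ev_{\cD}\circ(F\boxtimes F^{\vee,L})$, is false. Since $F^{\vee,L}=(F^R)^{\vee}$, one has $\ev_{\cD}\circ(F\boxtimes F^{\vee,L})\cong\ev_{\cC}\circ(F^R F\boxtimes\id)$, not $\ev_{\cC}$. This happens to be harmless (the right-hand side is still $\kappa$-strongly continuous, so your appeal to Proposition \ref{prop:automatic_strcont} goes through), but the paper avoids the detour by precomposing on only one factor: $\ev_{\cD}\circ(\id_{\cD}\boxtimes F^{\vee,L})\cong\ev_{\cC}\circ(F^R\boxtimes\id_{\cC^{\vee}})$, and then Proposition \ref{prop:automatic_strcont} (using that the image of $\id\boxtimes F^{\vee,L}$ generates $\cD\otimes\cD^{\vee}$) finishes immediately.

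For $\coev_{\cD}$: here is the real gap. After correctly reducing via $\cD^{\vee}\otimes\cD\simeq\Mod_{A^{\vee,L}\boxtimes A}(\cC^{\vee}\otimes\cC)$ and Proposition \ref{prop:compactness_in_E-modules} to the $\kappa$-compactness of the underlying object $(F^{\vee,L}\boxtimes F)^R(\coev_{\cD}(\bS))\in\cC^{\vee}\otimes\cC$, you never identify that object; you only say it ``fits in a triangle with $\coev_{\cC}(\bS)$ and higher terms'' and retreat to bar resolutions. The paper's key computation is that under $\cC^{\vee}\otimes\cC\simeq\Fun^L(\cC,\cC)$, where $\coev_{\cD}(\bS)$ corresponds to $\id_{\cD}$ and $(F^{\vee,L}\boxtimes F)^R=F^{\vee}\boxtimes F^R$ corresponds to $G\mapsto F^R G F$, this underlying object is simply $A=F^R F$ itself. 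Then Lemma \ref{lem:kappa_compactness_and_pseudocompactness} says $A$ is $\kappa$-compact precisely because it is $\kappa$-strongly continuous, and you are done. No bar resolution, no ``higher terms'', no extra bookkeeping is needed once this identification is made.
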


\begin{proof}We first show that $\ev_{\cD}$ is $\kappa$-strongly continuous. Since $\im(F^{\vee,L})=\cD^{\vee},$ it suffices to show that the composition
$$\cD\otimes\cC^{\vee}\xto{\id\boxtimes F^{\vee,L}}\cD\otimes\cD^{\vee}\xto{\ev_{\cD}}\Sp$$ is $\kappa$-strongly continuous. But it is identified with another composition
$$\cD\otimes\cC^{\vee}\xto{F^R\otimes\id}\cC\otimes\cC^{\vee}\xto{\ev_{\cC}}\Sp.$$ The latter composition is $\kappa$-strongly continuous since both functors are $\kappa$-strongly continuous.

Next, we show that the functor $\coev_{\cD}$ is $\kappa$-strongly continuous, i.e. the object $\coev_{\cD}(\bS)\in\cD^{\vee}\otimes\cD$ is $\kappa$-compact. Denote by $A=F^R\circ F$ the continuous monad on $\cC$ associated with $F.$ Then $A^{\vee,L}\boxtimes A$ is a monad on $\cC^{\vee}\otimes\cC,$ and we have $\cD^{\vee}\otimes\cD\simeq\Mod_{A^{\vee,L}\boxtimes A}(\cC^{\vee}\otimes\cC).$ Since $A^{\vee,L}\boxtimes A$ is a $\kappa$-compact monad, the $\kappa$-compactness of $\coev_{\cD}(\bS)$ is equivalent to the $\kappa$-compactness of $(F^{\vee,L}\boxtimes F)^R(\coev_{\cD}(\bS))$ in $\cC^{\vee}\otimes\cC.$ The latter object is isomorphic to $A$ under the identification $\cC^{\vee}\otimes\cC\simeq\Fun^{L}(\cC,\cC).$ By our assumptions, $A$ is $\kappa$-compact.
\end{proof}

\begin{proof}[Proof of Theorem \ref{th:presentability_of_Cat^dual}.] We first show the following.

{\noindent{\bf Claim.}} {\it For any dualizable category $\cC$ there exists a set $I$ and a monad $A\in\Alg_{\bE_1}(\Fun^{L}(\prod\limits_I\cC_0,\prod\limits_I\cC_0)),$ such that $$\cC\simeq\Mod_A(\prod\limits_I\cC_0).$$}

\begin{proof}[Proof of Claim] Choose some generating collection of $\omega_1$-compact objects $(x_i\in\cC^{\omega_1})_{i\in I}.$ By Corollary \ref{cor:many_functors_from_Sh_on_R} we find strongly continuous functors $F_i:\cC_0\to\cC,$ $i\in I,$ such that $F_i(\bS_{\R})\cong x_i.$ We obtain the strongly continuous functor
$$F:\prod\limits_I\cC_0\simeq\coprod\limits_I^{\cont}\cC_0\xto{(F_i)_i}\cC.$$
Since the image of $F$ generates $\cC$ by colimits, it follows from Proposition \ref{prop:categories_under_C_and_monads} that 
\begin{equation*}
\cC\simeq\Mod_{F^R\circ F}(\prod\limits_I\cC_0).\qedhere\end{equation*} 
\end{proof}

Next, we prove some implications from part 2). 

\Implies{presentability3}{presentability5}. Recall that $\Sp$ is a $\kappa$-compact object in $\Pr^L_{\st,\kappa}.$ Hence, any dualizable object in $\Pr^L_{\st,\kappa}$ is also $\kappa$-compact. This proves the implication.

\Implies{presentability5}{presentability1}. Let $(\cD_i)_i$ be a $\kappa$-directed system in $\Cat_{\st}^{\dual},$ and $\cD=\indlim[i]\cD_i.$ We only need to prove the following: if $\cC\to\cD_{i_0}$ is some $\kappa$-strongly continuous functor such that the composition $\cC\to\cD_{i_0}\to\cD$ is strongly continuous, then there exists some $i\geq i_0$ such that the composition $\cC\to\cD_{i_0}\to\cD_i$ is $\kappa$-strongly continuous. To see this, note that the composition $$\cC\to\Ind(\cD_{i_0}^{\kappa})\to \Ind(\Calk_{\kappa}^{\cont}(\cD_{i_0}))\to \Ind(\Calk_{\kappa}^{\cont}(\cD))$$
is zero. By Proposition \ref{prop:Calkin_construction_accessible}, we have $\Calk_{\kappa}^{\cont}(\cD)\simeq\indlim[i]\Calk_{\kappa}^{\cont}(\cD_i).$ We deduce that for some $i\geq i_0$ the composition
$$\cC\to\Ind(\cD_{i_0}^{\kappa})\to \Ind(\Calk_{\kappa}^{\cont}(\cD_{i_0}))\to \Ind(\Calk_{\kappa}^{\cont}(\cD_i))$$ is zero. This exactly means that the composition $\cC\to\cD_{i_0}\to\cD_i$ is strongly continuous.

\Implies{presentability4}{presentability3}. Note that it is sufficient to prove that if $\cA$ is $\kappa$-compact in $\Cat^{\perf},$ then the functors $\ev$ and $\coev$ for $\Ind(\cA)$ are $\kappa$-strongly continuous. Indeed, assuming this, we can apply Lemma \ref{lem:kappa_compactness_for_ev_and_coev}.

So let $\cA$ be in $(\Cat^{\perf})^{\kappa}.$ It follows directly from Proposition \ref{prop:kappa_compact_small_cats} that the functor $\ev_{\Ind(\cA)}$ is $\kappa$-strongly continuous: this exactly means that the spectra $\cA(x,y)$ are $\kappa$-compact. To show that $\Delta_{\cA}=\coev_{\Ind(\cA)}(\bS)$ (the diagonal $\cA\hy\cA$-bimodule) is $\kappa$-compact, choose a $\kappa$-small ind-system $(\cA_i)_i$ of compact (finitely presented) objects of $\Cat^{\perf}$ such that $\cA=\indlim[i]\cA_i,$ and denote by $F_i:\Ind(\cA_i)\to\Ind(\cA)$ the functors to the colimit. By a straightforward generalization of \cite[Proposition 2.14]{TV07}, for each $i$ the object $\Delta_{\cA_i}$ is compact in $\Ind(\cA_i\otimes\cA_i^{op}).$ Hence, the object $\Delta_{\cA}$ is $\kappa$-compact: it is a $\kappa$-small filtered colimit of compact objects, namely
$$\Delta_{\cA}\cong\indlim[i]F_i^{\vee,L}\boxtimes F_i(\Delta_{\cA_i}).$$
This proves the implication.


\Implies{presentability2}{presentability3}. First, we have a short exact sequence
$$0\to \cC_0\to\Fun(\Q_{\leq},\Sp)\to \prodd[\Q]\Sp\to 0$$
from Proposition \ref{prop:resolution_of_Sh_geq_0}. Hence, $\cC_0$ satisfies \ref{presentability4} and by the above it also satisfies \ref{presentability3}. If $I$ is a $\kappa$-small set, then $\prod\limits_I \cC_0$ also satisfies \ref{presentability3}. If $A\in\Alg_{\bE_1}(\Fun^{L}(\prod\limits_I \cC_0,\prod\limits_I \cC_0))$ is $\kappa$-compact, then by Lemma \ref{lem:suff_cond_for_smallness_of_ev_coev} we see that the category $\Mod_A(\prod\limits_I \cC_0)$ satisfies \ref{presentability3}.

We now prove 1). By the above, it suffices to prove that the category $\Cat_{\st}^{\dual}$ is generated by colimits by the categories of the form $\Mod_A(\cC_0),$ where $A\in\Fun^{L}(\cC_0,\cC_0)$ is an $\omega_1$-compact monad. Let $\cC$ be an arbitrary dualizable category. Choose a generating collection $S$ of $\omega_1$-compact objects of $\cC$ such that for each $s\in S$ the object $\hat{\cY}(s)$ is a formal colimit of a sequence of objects of $S.$ Then $S$ is a directed union of countable subsets $T\subset S$ with the same property. Denoting by $\cC_T\subset\cC$ the full subcategory generated by $T$ by colimits, we see that $\cC_T$ is dualizable, and the inclusion $\cC_T\to\cC$ is strongly continuous. By Corollary \ref{cor:directed_unions}, we have $\cC\simeq\indlim[T\subset S]\cC_T.$ Hence, we may assume that $S$ is countable. Then $\cC$ is generated by a single $\omega_1$-compact object. Arguing as in 1), we obtain an equivalence $\cC\simeq\Mod_A(\cC_0)$ for some monad $A\in\Fun^{L}(\cC_0,\cC_0).$ By Proposition \ref{prop:E_1-algebras_in_monoidal_cats}, we can find an $\omega_1$-directed system $(A_i)_i$ of $\omega_1$-compact monads such that $A\cong\indlim[i]A_i.$ Then $\cC\simeq\indlim[i]^{\cont}\Mod_{A_i}(\cC_0).$ This proves 1).

\Implies{presentability1}{presentability2}. Suppose that $\cC$ is $\kappa$-compact in $\Cat_{\st}^{\dual}.$ We argue as in the proof of Claim. By the above, we see that $\cC$ is generated by a $\kappa$-small collection of $\omega_1$-compact objects $\{x_i\}_{i\in I}.$ We obtain a monad $A\in\Fun^{L}(\prod\limits_I \cC_0,\prod\limits_I \cC_0)$ such that $\cC\simeq\Mod_A(\prod\limits_I \cC_0).$ Since $\prod\limits_I \cC_0$ is $\kappa$-compact in $\Cat_{\st}^{\dual},$ we deduce that $\cC$ is $\kappa$-compact in the category of dualizable categories {\it under} $\prod\limits_I \cC_0.$ By Proposition \ref{prop:categories_under_C_and_monads}, this implies that $A$ is $\kappa$-compact. 

We already proved that the conditions \ref{presentability1}, \ref{presentability2}, \ref{presentability3} and \ref{presentability5} are equivalent, and they are implied by \ref{presentability4}. It remains to prove the implication \Implies{presentability1}{presentability4}. So let $\cC$ be $\kappa$-compact in $\Cat_{\st}^{\dual}.$ By \ref{presentability2}, we have a strongly continuous functor $\prod\limits_I\cC_0\to\cC,$ $|I|<\kappa,$ such that the image generates. Put $\cD:=\prod\limits_I\Fun(\Q_{\leq}^{op},\Sp).$ Define the category $\cE$ by the pushout square in $\Cat_{\st}^{\dual}:$

$$
\begin{CD}
\prod\limits_I\cC_0 @>>> \cC\\
@V{F}VV @V{G}VV\\
\cD @>>> \cE.
\end{CD}
$$    

Since $\cD$ is compactly generated, so is $\cE.$ Since the categories $\prod\limits_I\cC_0,$ $\cC$ and $\cD$ are $\kappa$-compact in $\Cat_{\st}^{\dual},$ so is $\cE.$ It follows that the category $\cE^{\omega}$ is $\kappa$-compact in $\Cat^{\perf}.$

Since the functor $F$ is fully faithful, so is $G.$ Consider the quotient $\cE'=\cE/G(\cC).$ Then $\cE'$ is compactly generated and $\cE'$ is $\kappa$-compact in $\Cat_{\st}^{\dual}.$ Therefore, $\cE'^{\omega}$ is $\kappa$-compact in $\Cat^{\perf}.$ Hence, the short exact sequence
$$0\to\cC\to\cE\to\cE'\to 0$$ has the required properties.\end{proof}

\section{Urysohn's lemma for dualizable categories}
\label{app:Urysohn}

In this section we prove the following result.

\begin{theo}\label{th:Urysohn_dualizable} The category $\Cat_{\st}^{\dual}$ is generated by colimits by a single $\omega_1$-compact object, namely by the category $\Sh_{\geq 0}(\R;\Sp).$\end{theo}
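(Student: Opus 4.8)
The plan is to reduce the statement to already-established facts about the presentability of $\Cat_{\st}^{\dual}$ and about strongly continuous functors out of $\cC_0:=\Sh_{\geq 0}(\R;\Sp)$. First I would recall from Theorem \ref{th:presentability_of_Cat^dual}(1) that $\Cat_{\st}^{\dual}$ is $\omega_1$-presentable, so in particular it is generated by colimits by its $\omega_1$-compact objects. Hence it suffices to show that every $\omega_1$-compact dualizable category $\cC$ is itself a colimit (in $\Cat_{\st}^{\dual}$, equivalently in $\Pr^L_{\st}$ by Proposition \ref{prop:colimits_of_dualizable}) of a diagram whose entries are copies of $\cC_0$. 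Note that $\cC_0$ is indeed $\omega_1$-compact: it satisfies condition \ref{presentability4} of Theorem \ref{th:presentability_of_Cat^dual} via the short exact sequence $0\to\cC_0\to\Fun(\Q_{\leq}^{op},\Sp)\to\prod_{\Q}\Sp\to 0$ of Proposition \ref{prop:resolution_of_Sh_geq_0}, both outer terms being compactly generated with $\omega_1$-compact (in fact $\omega$-compact) subcategories of compact objects.

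The key input is Corollary \ref{cor:many_functors_from_Sh_on_R}: for any dualizable $\cC$ and any $\omega_1$-compact object $x\in\cC^{\omega_1}$ there is a strongly continuous functor $F_x:\cC_0\to\cC$ with $F_x(\bS_{\R})\cong x$. Given an $\omega_1$-compact $\cC$, by the proof of Theorem \ref{th:presentability_of_Cat^dual}(1) (the reduction using Corollary \ref{cor:directed_unions}) I may assume $\cC$ is generated by colimits by a \emph{countable} set $S$ of $\omega_1$-compact objects, or even — as in that argument — by a single $\omega_1$-compact object; in general one picks a generating set $\{x_i\}_{i\in I}$ of $\omega_1$-compact objects (for the pure "generation by colimits" statement we do not need $I$ to be small, but one can take it countable after the reduction). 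Applying Corollary \ref{cor:many_functors_from_Sh_on_R} to each $x_i$ gives strongly continuous $F_i:\cC_0\to\cC$, hence a single strongly continuous functor $F:\coprod_{i\in I}^{\cont}\cC_0=\prod_I\cC_0\to\cC$ whose image generates $\cC$ by colimits. Here I use that coproducts in $\Pr^L_{\st}$ are computed as products (Proposition \ref{prop:colimits_in_Pr^LL}) and that $\coprod_I^{\cont}\cC_0$ is, by construction, a colimit over the $I$-indexed discrete diagram constant at $\cC_0$.

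It now remains to realize $\cC$ itself as a colimit of copies of $\cC_0$. For this I would use the monadic description from Proposition \ref{prop:categories_under_C_and_monads}: with $\cD:=\prod_I\cC_0$ and $A:=F^R\circ F\in\Alg_{\bE_1}(\Fun^L(\cD,\cD))$ one has $\cC\simeq\Mod_A(\cD)$, and $\Mod_A(\cD)$ is the geometric realization (a colimit!) of the bar construction $[\,n\,]\mapsto A^{\otimes n}\otimes\cD\simeq\cD$, i.e. of a simplicial diagram all of whose terms are $\cD=\prod_I\cC_0$. Since $\Cat_{\st}^{\dual}\hookrightarrow\Pr^L_{\st}$ preserves colimits (Proposition \ref{prop:colimits_of_dualizable}) and the free-module/bar resolution exhibits $\Mod_A(\cD)$ as such a colimit in $\Pr^L_{\st}$, we conclude that every $\omega_1$-compact $\cC$ — and therefore, by the first paragraph, every dualizable category — lies in the colimit-closure of $\{\cC_0\}$. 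Assembling: $\Cat_{\st}^{\dual}$ is generated by colimits by $\cC_0$.

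\textbf{Main obstacle.} The technical heart is making the monadic colimit description precise in the $\infty$-categorical setting: one must check that $\Mod_A(\cD)$ really is computed by a colimit in $\Pr^L_{\st}$ (not merely in some larger category of categories) of the simplicial object with terms $A^{\otimes\bullet}\otimes\cD$, with the structure maps being strongly continuous. This is where care is needed, since colimits in $\Pr^L_{\st}$ are subtle (they are limits of the right-adjoint diagrams); the cleanest route is probably to avoid the bar construction entirely and instead iterate the two-step argument of Theorem \ref{th:presentability_of_Cat^dual} — first write $\cC_0$, then $\prod_I\cC_0$, then $\Mod_A(\prod_I\cC_0)$ — as was already done there, and observe that each step only adds colimits of previously-obtained categories, with $\cC_0$ at the base. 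Alternatively one can simply invoke that $\Cat_{\st}^{\dual}$ is generated under colimits by the $\omega_1$-compact objects and that, by Theorem \ref{th:presentability_of_Cat^dual}(2), every $\omega_1$-compact object is of the form $\Mod_A(\cC_0)$ with $A$ an $\omega_1$-compact monad on $\cC_0$, which is manifestly built from $\cC_0$ by colimits; this makes the argument essentially a corollary of the presentability theorem.
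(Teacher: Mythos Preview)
Your approach has a genuine gap, and the paper takes a different route. Rather than building each dualizable category explicitly as a colimit of copies of $\cC_0$, the paper uses the standard criterion: in a presentable category, an object $X$ generates under colimits iff $\Map(X,-)$ is conservative. Via the equivalence $\Fun^{LL}(\cC_0,\cC)\simeq\Fun^{\strcont}(\R\cup\{+\infty\},\cC)$ of Proposition~\ref{prop:Stab_cont_of_cont_posets}, the paper shows directly that if a strongly continuous $F:\cC\to\cD$ induces an equivalence on $\Fun^{\strcont}(\R\cup\{+\infty\},-)$, then $F^{\omega_1}$ is an equivalence: essential surjectivity is Corollary~\ref{cor:many_functors_from_Sh_on_R}, and full faithfulness comes from an explicit computation that splits $\R\cup\{+\infty\}$ at $0$ into two order-isomorphic halves and reads off mapping spaces from the resulting pullback square of functor categories.

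The gap in your argument is the bar-construction step, and your ``main obstacle'' paragraph flags it without resolving it. The bar resolution $M\simeq|A^{\circ(\bullet+1)}(M)|$ is a statement about \emph{objects} of $\Mod_A(\cD)$; it does not produce a simplicial object in $\Pr^L_{\st}$ with terms $\cD$ and realization $\Mod_A(\cD)$. There is a second, independent problem: even if such a diagram existed in $\Pr^L_{\st}$, its face maps would involve the monad $A=F^R F$, which is continuous but generally not strongly continuous (already for $\cD=\Sp$ and $A$ a non-compact ring spectrum), so the diagram would not lift to $\Cat_{\st}^{\dual}$; and since the inclusion $\Cat_{\st}^{\dual}\hookrightarrow\Pr^L_{\st}$ is not full, a colimit presentation in $\Pr^L_{\st}$ does not place the result in the colimit-closure of $\{\cC_0\}$ inside $\Cat_{\st}^{\dual}$. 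Your fallback --- that $\Mod_A(\cC_0)$ is ``manifestly built from $\cC_0$ by colimits'' --- is precisely the content of the theorem and is not supplied by Theorem~\ref{th:presentability_of_Cat^dual}, whose proof only establishes generation by the family $\{\Mod_A(\cC_0)\}_A$, not by $\cC_0$ alone.
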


Note that the classical Urysohn's lemma for compact Hausdorff spaces essentially says that the category
$\comphaus^{op}$ (the opposite category of compact Hausdorff spaces) is generated by colimits by the unit interval $[0,1].$ The latter object is easily seen to be $\omega_1$-compact in $\comphaus^{op},$ see Proposition \ref{prop:comphaus_op_presentability} below.

\begin{proof}[Proof of Theorem \ref{th:Urysohn_dualizable}]
	We will consider the set $\R\cup\{+\infty\}$ as a linearly ordered set with the usual order. For $a\in\R$ we consider the subsets $\R_{\leq a},\,\R_{>a}\cup\{+\infty\}\subset \R\cup\{+\infty\}$ also as linearly ordered sets. We will again use the equivalence 
	$$\Fun^{LL}(\Sh_{\geq 0}(\R;\Sp),\cC)\simeq \Fun^{\strcont}(\R\cup\{+\infty\},\cC),$$
	given by Proposition \ref{prop:Stab_cont_of_cont_posets}, where $\cC$ is a dualizable category. 
	
	By Theorem \ref{th:presentability_of_Cat^dual} we already know that the category $\Cat_{\st}^{\dual}$ is presentable and the object $\Shv_{\geq 0}(\R;\Sp)$ is $\omega_1$-compact. Thus, we only need to prove the following: if a strongly continuous functor between dualizable categories $F:\cC\to\cD$ induces an equivalence
	\begin{equation}\label{eq:Fun_from_Sh_geq_0_to_F}
	\Fun^{\strcont}(\R\cup\{+\infty\},\cC)\xto{\sim} \Fun^{\strcont}(\R\cup\{+\infty\},\cD),
	\end{equation}
	then $F^{\omega_1}:\cC^{\omega_1}\to\cD^{\omega_1}$ is an equivalence. We first recall that the functor $$\Fun^{\strcont}(\R\cup\{+\infty\},\cD)\to\cD^{\omega_1},\quad \Phi\mapsto\Phi(+\infty),$$ is essentially surjective by Corollary \ref{cor:many_functors_from_Sh_on_R}, hence $F^{\omega_1}$ is also essentially surjective. To prove the fully faithfulness of $F^{\omega_1},$ we need the following observation: for any pair of strongly continuous functors $G:\R_{\leq 0}\to \cC,$ $H:\R_{>0}\cup\{+\infty\}\to\cC$ we have pullback squares of spaces:
	\begin{equation*}
	\begin{tikzcd}
	\prolim[c>0]\Map_{\cC}(G(0),H(c))\ar[r]\ar[d] & \Fun^{\strcont}(\R\cup\{+\infty\},\cC)^{\simeq}\ar[d]\\
	\pt\ar{r}{(G,H)} & \Fun^{\strcont}(\R_{\leq 0},\cC)^{\simeq}\times\Fun^{\strcont}(\R_{>0}\cup\{+\infty\},\cC)^{\simeq}
	\end{tikzcd}
	\end{equation*}
	and
	\begin{equation*}
		\begin{tikzcd}
			\prolim[c>0]\Map_{\cD}(F(G(0)),F(H(c)))\ar[r]\ar[d] & \Fun^{\strcont}(\R\cup\{+\infty\},\cD)^{\simeq}\ar[d]\\
			\pt\ar{r}{(G,H)} & \Fun^{\strcont}(\R_{\leq 0},\cD)^{\simeq}\times\Fun^{\strcont}(\R_{>0}\cup\{+\infty\},\cD)^{\simeq}.
		\end{tikzcd}
	\end{equation*}
	Since the posets $\R_{\leq 0}$ and $\R_{>0}\cup\{+\infty\}$ are isomorphic to the poset $\R\cup\{+\infty\},$ we conclude that the map 
	\begin{equation}\label{eq:equivalence_of_limits_of_mapping_spaces}
	\prolim[c>0]\Map_{\cC}(G(0),H(c))\xto{\sim} \prolim[c>0]\Map_{\cD}(F(G(0)),F(H(c)))
	\end{equation} is an equivalence of spaces. Now take any objects $x,y\in\cC^{\omega_1},$ and choose strongly continuous functors $G,H:\R\cup\{+\infty\}\to\cC$ such that $x\cong G(+\infty),$ $y\cong H(+\infty).$ Then we obtain the following chain of equivalences of spaces, where all the limits and colimits are over real numbers:
	\begin{multline}\label{eq:chain_of_equivalences_mapping_spaces}
	\Map_{\cC}(x,y)\cong \Map_{\cC}(\indlim[a]G(a),\indlim[b]H(b))\cong \prolim[a]\indlim[b]\Map_{\cC}(G(a),H(b))\\
	\cong \prolim[a]\indlim[b]\prolim[c>b]\Map_{\cC}(G(a),H(c))\xto{\sim} \prolim[a]\indlim[b]\prolim[c>b]\Map_{\cD}(F(G(a)),F(H(c))) 
	\end{multline}
	Here the second equivalence follows from the isomorphism $\hat{\cY}(G(+\infty))\cong\inddlim[a]G(a).$ The third equivalence follows from the observation that for any $a\in\R$ we have an ind-equivalence $\inddlim[b]\Map_{\cC}(G(a),H(b))\cong \inddlim[b]\prolim[c>b]\Map_{\cC}(G(a),H(c)).$ The fourth map is an equivalence since \eqref{eq:equivalence_of_limits_of_mapping_spaces} is an equivalence and we have obvious equivalences of posets $\R_{\leq 0}\simeq \R_{\leq a},$ $\R_{>0}\cup\{+\infty\}\simeq \R_{>b}\cup\{+\infty\}.$
	
	Now, a similar chain of equivalences identifies the target of \eqref{eq:chain_of_equivalences_mapping_spaces} with $\Map_{\cD}(F(x),F(y)).$ This proves the fully faithfulness of $F^{\omega_1}$ and the theorem.
\end{proof}

\begin{remark}\label{rem:Urysohn_for_Compass} Suppose that we know the cocompleteness of the category $\Compass$ and the $\omega_1$-compactness of the poset $\R\cup\{+\infty\}$ in $\Compass$ (both are not difficult to check). Then the same argument as in the proof of Theorem \ref{th:Urysohn_dualizable} shows that the category $\Compass$ is generated via colimits by the poset $\R\cup\{+\infty\}.$
	
A similar statement holds for the category of compactly assembled presentable categories and strongly continuous right exact functors. In this case the $\omega_1$-compact generating object is the category $\Shv_{\geq 0}(\R;\cS).$\end{remark}

For completeness we explain a direct way to recover the dualizable category $\cC$ from the category of strongly continuous functors $\R\cup\{+\infty\}\to\cC.$ Slightly abusing the terminology, we will call a functor $\R\to\cC$ strongly continuous if its left Kan extension to $\R\cup\{+\infty\}$ is strongly continuous. We denote the category of such functors by
$\Fun^{\strcont}(\R,\cC).$

\begin{prop}\label{prop:Urysohn_explicit} 1) Let $\cC$ be a dualizable category. The functor $$\Phi:\Fun^{\strcont}(\R,\cC)\to\cC^{\omega_1},\quad\Phi(F)=\indlim[a]F(a),$$ is a Verdier quotient. Its kernel is the full subcategory of ind-zero functors, i.e. of functors $F$ such that for any $a\in\R$ there exists $b>a$ such that the map $F(a)\to F(b)$ is zero.

2) Denote by $\Psi:\Ind(\cC^{\omega_1})\to \Ind(\Fun^{\strcont}(\R,\cC))$ the right adjoint to $\Ind(\Phi).$ Then for a strongly continuous functor $F:\R\to\cC,$ we have
\begin{equation}\label{eq:right_adjoint_explicit} \Psi(\Phi(F))\cong \inddlim[f:\R\to\R]F\circ f.\end{equation}
Here $f$ runs through strictly monotone functions such that $\lim\limits_{x\to +\infty}f(x)=+\infty$ and $\lim\limits_{x\to a-}f(x)=f(a)$ for $a\in\R.$\end{prop}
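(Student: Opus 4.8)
\textbf{Proof proposal for Proposition \ref{prop:Urysohn_explicit}.}

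The plan is to deduce part 1) from the short exact sequence \eqref{eq:extension_of_SS_categories} and Proposition \ref{prop:resolution_of_Sh_geq_0}, and then to obtain part 2) by an explicit computation of the right adjoint using the description of $\hat{\cY}$ for the continuous poset $(\R\cup\{+\infty\})_{\leq}$.

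First I would set up part 1). By Proposition \ref{prop:Stab_cont_of_cont_posets} (applied to $P = (\R\cup\{+\infty\})_{\leq}$) we have $\Fun^{\strcont}(\R,\cC) \simeq \Fun^{LL}(\Sh_{\geq 0}(\R;\cC_0'),\cC)$ for a suitable variant, but more directly one uses that $\Fun^{\strcont}(\R,\cC)$ is equivalent, as a stable category, to $(\Sh_{\geq 0}(\R;\Sp)\otimes\cC)^{\omega_1} \simeq \Sh_{\geq 0}(\R;\cC)^{\omega_1}$; this is the content of Remark \ref{rem:Stab_cont_otimes_something} together with Proposition \ref{prop:continuous_stab}. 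Under this identification the functor $\Phi$, $F\mapsto \indlim[a]F(a)$, corresponds to the global sections with compact support functor $\Gamma_c(\R,-)[1]$ appearing in the proof of Theorem \ref{th:map_of_localizing_invariants}, restricted to $\omega_1$-compact objects. That functor is a strongly continuous localization of $\Sh_{\geq 0}(\R;\cC)$ onto $\cC$ (its right adjoint sends an object to the associated constant sheaf, which is fully faithful), with kernel $\Sh_{>0}(\R;\cC)$. Passing to $\omega_1$-compact objects, a strongly continuous localization of dualizable categories restricts to a Verdier quotient on compact-enough objects; hence $\Phi$ is a Verdier quotient onto $\cC^{\omega_1}$. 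It remains to identify $\ker\Phi$ with the ind-zero functors: a functor $F\colon\R\to\cC$ has $\indlim[a]F(a)=0$ iff the corresponding object of $\Sh_{\geq 0}(\R;\cC)$ lies in $\Sh_{>0}(\R;\cC)$, and unwinding the definition of $\Sh_{>0}$ (the kernel of $\Gamma_c$) this says precisely that for every $a$ there is $b>a$ with $F(a)\to F(b)$ nullhomotopic. This is essentially the statement that $\hat{\cY}(\indlim[a]F(a)) \cong \inddlim[a]F(a)$ detects vanishing of the colimit.

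For part 2), I would use Proposition \ref{prop:compact_objects_in_quotients} / the general description of right adjoints to $\Ind$ of a quotient functor: if $\Phi\colon\cA\to\cB$ is a Verdier quotient of small stable categories with kernel $\cK$, then $\Ind(\Phi)\colon\Ind(\cA)\to\Ind(\cB)$ is a quotient functor, and its right adjoint $\Psi = \Ind(\Phi)^R$ is fully faithful with essential image $\Ind(\cK)^{\perp}\cap\Ind(\cA)$, identified with the right orthogonal to ind-zero functors. Concretely, for $F\in\Fun^{\strcont}(\R,\cC)$ the object $\Psi(\Phi(F))$ must be the reflection of $\cY(F)$ into this right orthogonal; I claim this reflection is exactly $\inddlim[f] F\circ f$ where $f$ runs through the strictly increasing, "left-continuous", $+\infty$-cofinal self-maps of $\R$ as in \eqref{eq:right_adjoint_explicit}. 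The verification is two checks. First, each $F\circ f$ is again a strongly continuous functor $\R\to\cC$ (left-continuity of $f$ and of $F$ at each real point gives $\indlim[b<a](F\circ f)(b)\xto{\sim}(F\circ f)(a)$; compactness of the transition maps of $F$ composed with any strictly increasing $f$ stays compact), and the poset of such $f$ (ordered by $f\le g$ pointwise) is filtered, so $\inddlim[f]F\circ f$ is a genuine ind-object. Second, $\inddlim[f]F\circ f$ lies in the right orthogonal to ind-zero functors and the natural map $\cY(F)\to\inddlim[f]F\circ f$ becomes an isomorphism after applying $\Ind(\Phi)$, i.e. $\indlim[a](F\circ f)(a)\cong\indlim[a]F(a)$ for every admissible $f$ (true since $f$ is $+\infty$-cofinal) compatibly in $f$; and the cofiber of $\cY(F)\to\inddlim[f]F\circ f$ is ind-zero (for fixed $a$, choosing $f$ with $f(a)>a$ makes the map $F(a)\to(F\circ f)(a)=F(f(a))$ factor the original $F(a)\to F(f(a))$, and one checks the colimit system kills it). These two properties characterize $\Psi\Phi(F)$ by the universal property of the reflection, giving \eqref{eq:right_adjoint_explicit}.

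The main obstacle I expect is the bookkeeping in the second check of part 2): verifying that $\inddlim[f]F\circ f$ is genuinely orthogonal to all ind-zero functors, rather than just that the obvious map is an ind-iso after $\Phi$. Concretely, one needs: for any ind-zero $G$ and any map in $\Ind$ from $\inddlim[f]F\circ f$ to $G$, it vanishes — equivalently $\prolim[f]\indlim[G\text{-system}]\Map(F\circ f,\,-)$ computes correctly. This is where the precise "left-continuity" condition on $f$ and the density of $\R$ enter, and it parallels the wavy-arrow arguments around Proposition \ref{prop:compact_maps_and_wavy_arrows}; I would isolate it as a lemma stating that $\{F\circ f\}_f$ is a cofinal subsystem of $\hat{\cY}_{\cC}(\indlim[a]F(a))$ realized inside $\Ind(\Fun^{\strcont}(\R,\cC))$, after which both claims of part 2) follow formally.
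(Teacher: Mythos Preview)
Your approach inverts the paper's logical order and rests on an identification that does not hold. You claim that $\Fun^{\strcont}(\R,\cC)\simeq\Sh_{\geq 0}(\R;\cC)^{\omega_1}$, but Proposition~\ref{prop:continuous_stab} and Proposition~\ref{prop:functors_from_Sh_geq_0} give instead
\[
\Fun^{\strcont}(\R\cup\{+\infty\},\cC)\;\simeq\;\Fun^{LL}(\Sh_{\geq 0}(\R;\Sp),\cC),
\]
the category of strongly continuous functors \emph{out of} $\Sh_{\geq 0}(\R;\Sp)$. This sits inside $\Fun^{L}(\Sh_{\geq 0}(\R;\Sp),\cC)\simeq\Sh_{\geq 0}(\R;\Sp)^{\vee}\otimes\cC$, not inside $\Sh_{\geq 0}(\R;\Sp)\otimes\cC\simeq\Sh_{\geq 0}(\R;\cC)$; and even there, Lemma~\ref{lem:kappa_compactness_and_pseudocompactness} only identifies the $\omega_1$-compact objects with $\omega_1$-\emph{strongly} continuous functors, a priori a larger class than the strongly continuous ones. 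Correspondingly, under the correct identification the functor $\Phi$ becomes ``evaluation at $\bS_{\R}$'', i.e.\ $\tilde F\mapsto\tilde F(\bS_{\R})$, which is not the localization $\Gamma_c(\R,\varphi_\gamma^{*}(-))[1]$ on $\Sh_{\geq 0}(\R;\cC)$. So the short exact sequence~\eqref{eq:extension_of_SS_categories} and the kernel $\Sh_{>0}$ are not directly available to you, and your argument for part~1) does not go through as written.

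The paper proceeds in the opposite direction: it proves part~2) first by a direct Hom computation, and then \emph{deduces} part~1). Essential surjectivity of $\Phi$ is already known (Corollary~\ref{cor:many_functors_from_Sh_on_R}); once \eqref{eq:right_adjoint_explicit} is established, $\Ind(\Phi)\circ\Psi\cong\id$ shows $\Phi$ is a homological epimorphism, and the kernel description follows by noting that $\Cone(F\to F\circ f)$ is ind-zero whenever $f(x)\geq x$. For part~2) the paper does not verify a reflection/orthogonality property abstractly; instead it computes, for $F,G\in\Fun^{\strcont}(\R,\cC)$,
\[
\Hom(\Phi(F),\Phi(G))\cong\prolim[n\in\N]\indlim[k\in\N]\Hom_{\cC}(F(n),G(k))
\]
(using $\hat\cY(\Phi(F))\cong\inddlim[n]F(n)$), and then applies Lemma~\ref{lem:seq_limits_of_filtered_colimits} to interchange the sequential limit with the filtered colimit, obtaining $\indlim[\varphi:\N\to\N]\prolim[n\leq m]\Hom_{\cC}(F(n),G(\varphi(m)))$. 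After observing that the relevant posets of monotone maps $\N\to\N$ and of admissible $f:\R\to\R$ are mutually cofinal in a common poset, this is identified with $\indlim[f]\Hom(F,G\circ f)$, which is exactly \eqref{eq:right_adjoint_explicit}. Your ``main obstacle'' (the orthogonality check) is precisely what this Hom computation establishes in one stroke; the lemma you propose to isolate is effectively Lemma~\ref{lem:seq_limits_of_filtered_colimits} combined with the cofinality argument.
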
 

The notation $\lim\limits_{x\to a-}$ simply means that we look only at $x<a.$ In other words, the last condition on $f$ means that the endofunctor $f:\R_{\leq}\to\R_{\leq}$ commutes with existing colimits.

We will need the following basic fact about sequential limits of directed colimits.

\begin{lemma}\label{lem:seq_limits_of_filtered_colimits} Let $\cC$ be a presentable $\infty$-category such that filtered colimits in $\cC$ commute with finite limits (i.e. strong (AB5) holds in $\cC$) and (AB6) for countable products holds in $\cC$ (for example, $\cC$ can be any compactly assembled presentable category, such as $\cS$ or $\Sp$). Let $I$ be a directed poset, and consider $\N$ as a poset with the usual order. Let $F:\N^{op}\times I\to \cC$ be a functor. Then we have a natural isomorphism
$$\prolim[n]\indlim[i]F(n,i)\cong\indlim[\varphi:\N\to I]\prolim[n\leq m]F(n,\varphi(m)),$$
where $\varphi$ runs through order-preserving maps.
Here the set $\{(n,m)\mid n\leq m\}$ is considered as a subposet of $\N^{op}\times\N.$\end{lemma}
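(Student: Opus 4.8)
The plan is to reduce the statement to the analogous (but essentially formal) commutation of countable sequential limits with filtered colimits in the presence of the axioms (AB5) and (AB6)$_\omega$. First I would rewrite the right-hand side in a more symmetric form. The poset $P=\{(n,m)\mid n\le m\}\subset\N^{op}\times\N$ has the following feature: the projection $(n,m)\mapsto m$ exhibits $P$ as a ``telescope'' whose fiber over $m$ is the finite poset $\{0\le 1\le\dots\le m\}^{op}$. So for a fixed $\varphi:\N\to I$ the term $\prolim_{(n,m)\in P}F(n,\varphi(m))$ is the sequential limit over $m$ of the finite limits $\prolim_{n\le m}F(n,\varphi(m))$, and since $F(\cdot,\varphi(m))$ is functorial in $m$ via the maps $\varphi(m)\to\varphi(m+1)$, this is just $\prolim_{n}F(n,\varphi(n))$ computed along the ``diagonal'' filtered system indexed by $\varphi$. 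Thus the claim is equivalent to the cleaner statement
$$\prolim_{n\in\N}\ \indlim_{i\in I}F(n,i)\ \cong\ \indlim_{\varphi:\N\to I}\ \prolim_{n\in\N}F(n,\varphi(n)),$$
and I would check the two formulations agree by a cofinality argument on the indexing posets (the map from the poset of $\varphi$'s to itself is trivially an equivalence; the nontrivial point is that $\{(n,m):n\le m\}^{op}\to\N^{op}$, $(n,m)\mapsto n$, together with the structure maps, recovers $\prolim_n$).

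Next I would prove the clean formulation. There is always a natural comparison map
$$\theta:\indlim_{\varphi}\prolim_{n}F(n,\varphi(n))\longrightarrow \prolim_{n}\indlim_{i}F(n,i),$$
induced on the $n$-th factor by the maps $F(n,\varphi(n))\to\indlim_i F(n,i)$. To show $\theta$ is an equivalence it suffices to test against a set of generators; since $\cC$ is presentable, I may assume $\cC=\cS$ is the category of spaces (all limits and filtered colimits in question are computed after applying $\Map_{\cC}(x,-)$ for $x$ ranging over a generating set, and these functors preserve the relevant limits and colimits because the (AB5) and (AB6) hypotheses pass to $\Map$-spaces — this is exactly the style of argument in Proposition~\ref{prop:products_in_Ind_C}). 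For spaces one can argue directly: a point of $\prolim_n\indlim_i F(n,i)$ is a compatible family; compatibility of $\pi_0$-level data lets one inductively choose, for each $n$, an index $i_n\in I$ and a lift, and then $\kappa$-directedness (here just directedness, $\kappa=\omega$) produces $\varphi$ with $\varphi(n)\ge i_n$ refining the choices; higher coherence is handled the same way after replacing ``point'' by ``$k$-simplex'' and using that $\indlim$ over a directed poset commutes with finite limits (strong (AB5)) and countable products distribute over it ((AB6)$_\omega$). Concretely, the map
$$\Map(S^k,\indlim_{\varphi}\prolim_n F(n,\varphi(n)))\to\Map(S^k,\prolim_n\indlim_i F(n,i))$$
is $\prolim_n\indlim_i\Map(S^k,F(n,i))$ on the target (using (AB6)$_\omega$ to pull the $\prolim_n$ inside) and $\indlim_\varphi\prolim_n\Map(S^k,F(n,\varphi(n)))$ on the source (using (AB5) to commute $\indlim_\varphi$ past the finite limit $\Map(S^k,-)$ and past the truncated pieces of $\prolim_n$), so $\theta$ is a $\pi_*$-isomorphism, hence an equivalence.

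The main obstacle, I expect, is the bookkeeping in the first paragraph: matching the specific indexing poset $\{(n,m)\mid n\le m\}\subset\N^{op}\times\N$ against the ``diagonal'' filtered system of $\varphi$'s, i.e. producing the right cofinal functors between the relevant (op)lax-limit-type diagrams so that $\prolim_{(n,m)\in P}F(n,\varphi(m))$ really is $\prolim_n F(n,\varphi(n))$ naturally in $\varphi$. Once that identification is in place, the equivalence $\theta$ is a soft consequence of (AB5) and (AB6)$_\omega$ via the reduction to spaces, and the examples ($\cS$, $\Sp$, any compactly assembled presentable category) satisfy these axioms by Proposition~\ref{prop:AB6_criterion} and its non-stable analogue in the following Remark. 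I would also remark that naturality of the isomorphism in $F$ follows because every step (the cofinality identifications, the comparison map $\theta$, the reduction to $\cS$) is natural.
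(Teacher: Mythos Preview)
Your reformulation in the first paragraph contains a genuine error. You claim that $\prolim_{(n,m)\in P}F(n,\varphi(m))$ equals $\prolim_n F(n,\varphi(n))$, but the latter expression is not well-defined: for $\varphi$ order-preserving, there is no morphism $(n+1,\varphi(n+1))\to (n,\varphi(n))$ in $\N^{op}\times I$, since this would require $\varphi(n+1)\le\varphi(n)$. Your fiber-wise argument breaks at exactly this point: the projection $P\to\N$ onto the second coordinate is a cocartesian fibration, not a cartesian one, so the formula $\lim_P=\lim_m\lim_{P_m}$ does not apply, and indeed the fiberwise limits $F(m,\varphi(m))$ do not assemble into a tower. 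The poset $P$ is not a cosmetic complication; it encodes an equalizer (or end) structure that cannot be reduced to a bare sequential limit along the diagonal.

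Your reduction to $\cS$ is also unjustified: the functors $\Map_{\cC}(x,-)$ always preserve limits, but they preserve filtered colimits only when $x$ is compact, and the hypotheses on $\cC$ do not include compact generation.

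The paper's argument avoids both issues by working entirely inside $\cC$. It first observes that the inclusion $\{(n,m):n\le m\le n+1\}\hookrightarrow P$ is final, whence for any $G:P\to\cC$ one has
\[
\prolim[n\le m]G(n,m)\simeq\Eq\Bigl(\prod_n G(n,n)\rightrightarrows\prod_n G(n,n+1)\Bigr).
\]
Applied on both sides, this rewrites the left-hand side of the lemma as an equalizer of countable products of filtered colimits, and the right-hand side (for each $\varphi$) as an equalizer of countable products. Then (AB6) for countable products pulls $\prod_n$ inside $\indlim_\varphi$, and strong (AB5) pulls the equalizer inside $\indlim_\varphi$; the cofinality of order-preserving maps among all maps $\N\to I$ finishes the identification. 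No reduction to spaces is needed.
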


\begin{proof}Note that the inclusion
$$\{(n,m)\mid n\leq m\leq n+1\}\to \{(n,m)\mid n\leq m\}$$
is final. It follows that for any functor $G:\{n\leq m\}\to\cC$ we have
$$\prolim[n\leq m]G(n,m)\simeq\Eq(\prodd[n]G(n,n)\toto\prodd[n]G(n,n+1)),$$
where the two maps have components $G(n,n)\to G(n,n+1)$ resp. $G(n+1,n+1)\to G(n,n+1).$

Note also that the poset of order-preserving maps $\N\to I$ is cofinal in the poset of all maps $\N\to I.$ Using our assumptions on $\cC,$ we obtain the isomorphisms
\begin{multline*}\prolim[n]\indlim[i]F(n,i)\cong\Eq(\prodd[n]\indlim[i]F(n,i)\toto \prodd[n]\indlim[i]F(n,i))\\ \cong
 \Eq(\indlim[\varphi:\N\to I]\prodd[n]F(n,\varphi(n))\toto \indlim[\varphi:\N\to I]\prodd[n]F(n,\varphi(n+1)))\\ \cong \indlim[\varphi:\N\to I]\Eq(\prodd[n]F(n,\varphi(n))\toto \prodd[n]F(n,\varphi(n+1)))\cong \indlim[\varphi:\N\to I]\prolim[n\leq m]F(n,\varphi(m)).\qedhere\end{multline*}
\end{proof}

\begin{proof}[Proof of Proposition \ref{prop:Urysohn_explicit}] We already know that the functor $\Phi$ is essentially surjective by Corollary \ref{cor:many_functors_from_Sh_on_R}. Hence, it suffices to prove part 2). Indeed, \eqref{eq:right_adjoint_explicit} implies that $\Ind(\Phi)\circ\Psi\cong\id,$ i.e. $\Phi$ is a homological epimorphism. Let $f:\R\to\R$ be a function with required properties such that $f(x)\geq x$ for all $x.$ Then for any strongly continuous functor $F:\R\to\cC,$ the functor $\Cone(F\to F\circ f)$ is an ind-zero functor $\R\to\cC$ (since $f$ is cofinal). Hence, the kernel of $\Ind(\Phi)$ is generated by colimits by ind-zero functors, as required.

We now prove 2).  Denote by $J$ the poset of functions $f:\R\to\R$ with required properties. Then $J$ is a cofinal subposet of the poset $P$ of all (not necessarily strictly) monotone functions $f:\R\to\R$ such that $\lim\limits_{x\to+\infty}f(x)=+\infty.$ Since $P$ is directed, so is $J.$ Consider another subposet $I\subset P$ which consists of monotone functions $f:\R\to\R$ such that $f$ takes values in $\N,$ $\lim\limits_{x\to+\infty} f(x)=+\infty,$ and $f$ is constant on $(-\infty,0]$ and on each semi-open interval $(n,n+1]$ for $n\in\N.$ Then $I$ is also cofinal in $P.$

Consider two strongly continuous functors $F,G:\R\to\cC.$ Note that for $f\in I,$ the functor $G\circ f$ is right Kan-extended from $\N\subset\R.$ Applying Lemma \ref{lem:seq_limits_of_filtered_colimits} and using the isomorphism $\hat{\cY}(\indlim[a]F(a))\cong\inddlim[n\in\N]F(n),$ we obtain the isomorphisms
\begin{multline*}\Hom(\indlim[a]F(a),\indlim[b]G(b))\cong\prolim[n\in\N]\indlim[k\in \N]\Hom_{\cC}(F(n),G(k))\\ \cong
\indlim[\varphi:\N\to\N]\prolim[n\leq m]\Hom_{\cC}(F(n),G(\varphi(m)))\cong \indlim[\varphi:\N\to \N]\Hom_{\Fun(\N,\cC)}(F_{\mid \N},G\circ\varphi)\\ \cong
\indlim[f\in I]\Hom_{\Fun(\R,\cC)}(F,G\circ f)\cong\indlim[f\in J]\Hom_{\Fun(\R,\cC)}(F,G\circ f).\end{multline*} This proves \eqref{eq:right_adjoint_explicit}.
\end{proof}



\section{Phantom maps, pure injectivity and Adams representability}
\label{app:Adams_representability}

In this section we prove two closely related analogues of Adams representability theorem for $\omega_1$-compact dualizable categories (Theorems \ref{th:Adams_rep_covar} and \ref{th:Adams_rep_contravar}).

Let $\cC$ be a dualizable category, and denote by $T=\h\cC,$ $T'=\h\cC^{\vee}$ the triangulated homotopy categories of $\cC$ and $\cC^{\vee}.$ We say that a morphism in $T$ is compact if it is an image (i.e. homotopy class) of a compact morphism in $\cC.$ The following are straightforward generalizations of the usual notions for compactly generated categories \cite{Kr00, GP04}.

\begin{defi} Let $f:x\to y$ be a morphism in $T.$
	
	1) $f$ is phantom if for any compact morphism $g:z\to x$ with $z\in T^{\omega_1}$ we have $f\circ g=0.$
	
	2) $f$ is a pure monomorphism if the map $\Fiber(f)\to x$ is phantom.
	
	3) $f$ is a pure epimorphism if the map $y\to\Cone(f)$ is phantom. 
\end{defi}

Recall that for a small triangulated category $S$ an additive functor $F:S\to\Ab$ is called homological if for any exact triangle $x\to y\to z$ in $S$ the sequence $F(x)\to F(y)\to F(z)$ is exact. Clearly, homological functors are closed under extensions in the abelian category of left $S$-modules (i.e. the category $\Fun^{\add}(S,\Ab)$ of additive functors $S\to\Ab$). We will also refer to homological functors $S^{op}\to \Ab$ as cohomological.

\begin{theo}\label{th:Adams_rep_covar}
Let $\cC$ be a dualizable category which is is $\omega_1$-compact in $\Cat_{\st}^{\dual}.$ We denote by $T$ resp. $T'$ the triangulated homotopy category of $\cC$ resp. $\cC^{\vee}.$ Consider the functor $H:T\to \Fun^{\add}(T^{'\omega_1},\Ab),$ given by $H(x)=\pi_0\ev_{\cC}(x,-).$ Then the functor $H$ is full and conservative, and its essential image consists of homological functors which commute with countable coproducts. Moreover, for $x,y\in T$ we have a functorial short exact sequence of abelian groups
\begin{equation*}
0\to \Ext^1(H(x),H(y[-1]))\to \Hom_T(x,y)\to \Hom(H(x),H(y))\to 0.
\end{equation*}
The subgroup of phantom maps $x\to y$ is identified with $\Ext^1(H(x),H(y[-1])).$
\end{theo}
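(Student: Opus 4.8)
\textbf{Proof proposal for Theorem \ref{th:Adams_rep_covar}.}

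The plan is to run the classical Adams representability argument (as in Krause \cite{Kr00} or Christensen--Strickland) in the setting of the compactly generated category $\Ind(\cC^{\omega_1})$, and then transport everything through the strongly continuous fully faithful embedding $\hat{\cY}\colon\cC\to\Ind(\cC^{\omega_1})$. First I would identify the target category: since $\cC$ is $\omega_1$-compact in $\Cat_{\st}^{\dual}$, Theorem \ref{th:presentability_of_Cat^dual} guarantees that $\ev_{\cC}$ and $\coev_{\cC}$ are $\omega_1$-strongly continuous, so for $x\in T$ the functor $\pi_0\ev_{\cC}(x,-)\colon T^{'\omega_1}\to\Ab$ genuinely lands in homological functors commuting with countable coproducts (the countable coproducts in $\cC^{\vee,\omega_1}$ are computed from those in $\cC^{\vee}$, which $\ev_{\cC}$ preserves in the relevant variable). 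The key identification to establish is that $\ev_{\cC}(x,y)\cong\Hom_{\Ind(\cC^{\omega_1})}(\hat{\cY}(x),\cY(y^{\vee}))$ for $y\in\cC^{\vee,\omega_1}$ — this comes from Remark \ref{rem:functor_-^vee} together with the adjunction between $\colim$ and $\hat{\cY}$. Thus $H(x)$ is, up to the equivalence $\cC^{\vee,\omega_1,op}\simeq(\text{generators of }\Ind(\cC^{\omega_1}))$, the restricted-Yoneda functor of $\hat{\cY}(x)$ on the compact objects of the compactly generated category $\Ind(\cC^{\vee})$, or rather its dual.

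The core of the argument is then the Adams short exact sequence for compactly generated triangulated categories: for a compactly generated triangulated category $\cT$ with compact part $\cT^c$, the restricted Yoneda functor $\h\colon\cT\to\Fun^{\add}((\cT^c)^{op},\Ab)$ has image the cohomological functors commuting with products, is full, conservative, and fits into $0\to\mathrm{Ph}(a,b)\to\Hom_{\cT}(a,b)\to\Hom(\h a,\h b)\to 0$ with $\mathrm{Ph}(a,b)\cong\Ext^1(\h a,\h b[-1])$ (the $\mathrm{lim}^1$-type obstruction living in the functor category, which has global dimension $\le 1$ on the relevant subcategory by Christensen--Strickland / Beligiannis). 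I would apply this to $\cT=\Ind(\cC^{\omega_1})$, whose compact objects are $\cC^{\omega_1}$, so that restricted-Yoneda lands in cohomological functors on $\cC^{\omega_1,op}$, i.e. homological functors on $\cC^{\omega_1}$. Precomposing with the duality $(-)^{\vee}\colon\cC^{op}\to\cC^{\vee}$ (which restricts to an equivalence $\cC^{\omega_1,op}\xrightarrow{\sim}\cC^{\vee,\omega_1}$ since it is the restriction of an equivalence $\cC^{op}\simeq(\cC^{\vee})^{\vee,op}$ and preserves $\omega_1$-compactness) converts this into the functor $H$ on $T$. Fullness and conservativity of $H$ on $\cC\subset\Ind(\cC^{\omega_1})$: conservativity is immediate since $\hat{\cY}$ is fully faithful and $\omega_1$-compactly generated, so $\hat{\cY}(x)=0\iff x=0$, and restricted Yoneda is conservative on $\Ind(\cC^{\omega_1})$; for fullness one uses that $\hat{\cY}$ is fully faithful so $\Hom_T(x,y)=\Hom_{\Ind(\cC^{\omega_1})}(\hat{\cY}(x),\hat{\cY}(y))$, and the Adams sequence for $\Ind(\cC^{\omega_1})$ gives surjectivity onto $\Hom(\h\hat{\cY}(x),\h\hat{\cY}(y))$, which I must match with $\Hom(H(x),H(y))$.

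The main obstacle — and the step I would spend the most care on — is this last matching: identifying $\Hom_{\Fun^{\add}}(\h\hat{\cY}(x),\h\hat{\cY}(y))$ computed in functors on $\cC^{\omega_1,op}$ with $\Hom_{\Fun^{\add}}(H(x),H(y))$ computed in functors on $T^{'\omega_1}$, and similarly for the $\Ext^1$ terms. This requires knowing that $\h\hat{\cY}(y)$, as a functor on $\cC^{\omega_1,op}$, is precisely $z\mapsto\pi_0\Hom_{\Ind(\cC^{\omega_1})}(\cY(z),\hat{\cY}(y))=\pi_0\ev_{\cC}(y^{\vee},z)^{\ast}$... — here one must be careful that $\ev_{\cC}(x,y^{\vee})$ is symmetric enough, i.e. that $H(x)(z^{\vee})=\pi_0\ev_{\cC}(x,z^{\vee})$ agrees with the restricted-Yoneda value $\pi_0\Hom(\hat{\cY}(x),\cY(z))$; this is exactly Remark \ref{rem:functor_-^vee}'s second description combined with $\hat{\cY}(x)$ corresponding to $\ev(x\boxtimes(-)^{\vee})$. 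Once this dictionary is pinned down, the short exact sequence and the identification of phantoms with $\Ext^1$ transfer verbatim, using that $\hat{\cY}$ being fully faithful makes phantom maps in $\cC$ precisely phantom maps in $\Ind(\cC^{\omega_1})$ between objects of $\hat{\cY}(\cC)$ (a compact morphism $z\to x$ in $\cC$ with $z$ $\omega_1$-compact corresponds under Proposition \ref{prop:compact_maps_in_dualizable_cats} to a map $\cY(z)\to\hat{\cY}(x)$, which is the shape of test map appearing in the definition of phantom in $\Ind(\cC^{\omega_1})$). Finally, the description of the essential image — homological functors on $T^{'\omega_1}$ commuting with countable coproducts — follows because restricted Yoneda on $\Ind(\cC^{\omega_1})$ has image the cohomological functors commuting with products on $\cC^{\omega_1,op}$, and the objects of the form $\hat{\cY}(x)$ are characterized inside $\Ind(\cC^{\omega_1})$ as those $\omega_1$-ind-objects whose Yoneda functor sends the kernel of $\colim$ to zero, which after dualizing is exactly the coproduct-preservation condition.
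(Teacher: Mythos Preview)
Your approach has a genuine gap at its central step: you invoke ``classical Adams representability'' for the compactly generated category $\Ind(\cC^{\omega_1})$, whose compacts are $\cC^{\omega_1}$, to obtain the short exact sequence and the identification $\mathrm{Ph}\cong\Ext^1$. But the result you cite (Neeman, Christensen--Strickland, Beligiannis) requires the category of compacts to be essentially countable (or to satisfy Brown's condition), and $\cC^{\omega_1}$ is essentially never countable---already for $\cC=\Sp$ it has uncountably many isomorphism classes. Without countability, cohomological functors on $\cC^{\omega_1}$ need not have injective dimension $\le 1$, and the $\Ext^1$ description of phantoms can fail. Notice that you never actually use the hypothesis that $\cC$ is $\omega_1$-compact in $\Cat_{\st}^{\dual}$; this is the missing ingredient. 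The paper uses it (via Theorem \ref{th:presentability_of_Cat^dual}, condition \ref{presentability4}) to produce a fully faithful strongly continuous embedding $\cC\hookrightarrow\Ind(\cA)$ with $\h\cA$ \emph{countable}, and it is only over this countable $\cA$ that one can invoke the injective-dimension-$\le 1$ result.

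There is a second problem: your asserted equivalence $(-)^{\vee}\colon\cC^{\omega_1,op}\xrightarrow{\sim}\cC^{\vee,\omega_1}$ is false in general. For $\cC=\Mod\hy A$ the functor is $M\mapsto\Hom_A(M,A)$, which is not an equivalence on $\omega_1$-compact modules. So the ``dictionary'' you hope to establish between $\Fun^{\add}(T^{\omega_1,op},\Ab)$ and $\Fun^{\add}(T^{'\omega_1},\Ab)$ by precomposition with $(-)^{\vee}$ does not exist. The paper circumvents this by passing to the \emph{almost} module categories $\Mod_a\hy T^{\omega_1}$ and $T^{'\omega_1}\hy\Mod_a$ (quotienting by modules annihilated by the ideal of compact maps), proving these are equivalent (Proposition \ref{prop:equivalence_on_almost_modules}), and deducing the covariant Theorem \ref{th:Adams_rep_covar} from the contravariant Theorem \ref{th:Adams_rep_contravar} through that equivalence. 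The functor $H$ actually factors through this almost-module quotient, and it is in $\Mod_a\hy T^{\omega_1}$---not in $\Mod\hy T^{\omega_1}$---that the injective-dimension bound is established, by transporting along the embedding into $\Mod\hy(\h\cA)$ with $\h\cA$ countable.
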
 

We will deduce Theorem \ref{th:Adams_rep_covar} from the closely related Theorem \ref{th:Adams_rep_contravar} below. To formulate it, we need some preparations. We denote by $J$ resp. $J'$ the ideal of compact maps in $T^{\omega_1}$ resp. $T^{'\omega_1}.$ Recall from Subsection \ref{ssec:dual_via_compact} that $J^2=J,$ $J^{'2}=J'.$

\begin{defi}1) We say that a right $T^{\omega_1}$-module $M:T^{\omega_1,op}\to\Ab$ is almost zero if it is annihilated by $J.$ In other words, for any compact morphism $f:x\to y$ in $T^{\omega_1},$ the map $M(f):M(y)\to M(x)$ is zero. The definition of an almost zero left $T^{'\omega_1}$-module is similar.

2) We denote by $\Mod_a\hy T^{\omega_1}$ the quotient of $\Mod\hy T^{\omega_1}=\Fun^{\add}(\cT^{\omega_1,op},\Ab)$ by the full (Serre) subcategory of almost zero right modules. Similarly, we denote by  $T^{'\omega_1}\hy\Mod_a$ the quotient of $T^{'\omega_1}\hy\Mod=\Fun^{\add}(T^{'\omega_1},\Ab)$ by the full subcategory of almost zero left modules.\end{defi}

The subcategories of almost zero modules are indeed Serre subcategories since $J^2=J$ and $J^{'2}=J'.$ If $T$ is compactly generated, then both categories $\Mod_a\hy T^{\omega_1}$ and $T^{'\omega_1}\hy\Mod_a$ are identified with $\Mod\hy T^{\omega}$ (Proposition \ref{prop:almost_modules_T^omega_1_basics} below). 

We say that an almost module $M\in\Mod_a\hy T^{\omega_1}$ is almost cohomological if it is an image of a right $T^{\omega_1}$-module corresponding to a cohomological functor $T^{\omega_1,op}\to\Ab.$ 

\begin{theo}\label{th:Adams_rep_contravar} Let $\cC$ be a dualizable category which is $\omega_1$-compact in $\Cat_{\st}^{\dual}.$ Denote by $T$ the homotopy category of $\cC.$ Then the functor $G:T\to \Mod_a\hy T^{\omega_1},$ $G(x)=\Hom_T(-,x),$ is full and conservative, and its essential image is exactly the full subcategory of almost cohomological almost modules. More precisely, for $x,y\in T$ we have a functorial short exact sequence of abelian groups:
	\begin{equation}\label{eq:ses_for_Homs1}
		0\to \Ext^1(G(x),G(y[-1]))\to\Hom_T(x,y)\to \Hom(G(x),G(y))\to 0.
	\end{equation}
	The abelian group of phantom maps $x\to y$ is identified with $\Ext^1(G(x),G(y[-1])).$ The composition of two phantom maps is zero.\end{theo}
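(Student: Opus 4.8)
The plan is to reduce Theorem \ref{th:Adams_rep_contravar} to Theorem \ref{th:Adams_rep_covar} via the duality $(-)^\vee$, or rather to prove them in parallel, exploiting the identifications coming from Proposition \ref{prop:compact_maps_and_wavy_arrows} and Lemma \ref{lem:ideals_and_quasi-ideals}. The key input is that for an $\omega_1$-compact dualizable $\cC$ the functors $\ev_\cC$ and $\coev_\cC$ are $\omega_1$-strongly continuous (Theorem \ref{th:presentability_of_Cat^dual}, \ref{presentability3}). This lets one work entirely with the $\omega_1$-compact objects: the key point is that $\ev_\cC(x,-)\colon \cC^\vee \to \Sp$, restricted to $\cC^{\vee,\omega_1}$, takes values in $\Sp^{\omega_1}$ when $x\in\cC^{\omega_1}$, so $H(x)=\pi_0\ev_\cC(x,-)$ genuinely is a left $T^{'\omega_1}$-module, and similarly $G(x)=\Hom_T(-,x)$ is a right $T^{\omega_1}$-module; the almost-module quotient enters precisely because morphisms detected by $G$ are only well-defined up to the ideal $J$ of compact maps.

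First I would set up the basic homological algebra of almost modules: show that $\Mod_a\hy T^{\omega_1}$ is an abelian category (the almost-zero modules form a Serre subcategory because $J^2=J$ by Corollary \ref{cor:decomposing_compact_morphisms} / Proposition \ref{prop:compact_maps_and_wavy_arrows}), identify $\Hom$ and $\Ext^1$ in this quotient with the relevant ``almost'' versions computed via the quasi-ideal $I\cong J\tens{T^{\omega_1}}J$, and record the analogue of the classical fact (for compactly generated $T$) that $\Mod_a\hy T^{\omega_1}\simeq \Mod\hy T^\omega$. Then I would establish the short exact sequence \eqref{eq:ses_for_Homs1}: the surjectivity of $\Hom_T(x,y)\to\Hom(G(x),G(y))$ (fullness of $G$) and the identification of the kernel with phantom maps is the Adams-type argument — one writes $x$ as a homotopy colimit of a sequence of $\omega_1$-compact objects with compact transition maps (Theorem \ref{th:dualizability_via_compact_maps}, Proposition \ref{prop:functors_from_Q}), applies Milnor's $\lim^1$ sequence, and reads off that the obstruction to lifting a map of modules lives in $\Ext^1(G(x),G(y[-1]))$, while the indeterminacy (phantom maps) is exactly $\varprojlim^1$ of the mapping spectra along the tower, which one identifies with that same $\Ext^1$. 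The identification of $\Ext^1$ with phantoms and with $\varprojlim^1$ is the standard Christensen–Keller–Neeman package, transported to the almost-module setting.

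For the essential image I would show: (i) $G(x)$ is always almost cohomological (clear, since $\Hom_T(-,x)$ is cohomological on $T^{\omega_1}$); (ii) conversely, given an almost cohomological almost module $M$, lift it to a cohomological functor $\bar M\colon T^{\omega_1,op}\to\Ab$, Brown-represent $\bar M$ after passing to spectra / using the well-generatedness of $\h\Ind(\cC^{\omega_1})$ together with Proposition \ref{prop:compact_maps_and_wavy_arrows} identifying the ideal of compact maps with the smashing subcategory $\h\hat\cY(\cC)$, and check the representing object lies in $\h\cC$ rather than merely in $\h\Ind(\cC^{\omega_1})$ — this is where $\omega_1$-compactness of $\cC$ (so that $\hat\cY$ lands in $\Ind(\cC^{\omega_1})$, Corollary \ref{cor:hat_Y_into_Ind_C_omega_1}) and the characterization of dualizability via compact maps (Theorem \ref{th:dualizability_via_compact_maps}) are used. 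Conservativity of $G$ follows since $T^{\omega_1}$ generates $T$. Theorem \ref{th:Adams_rep_covar} is then obtained by applying all of this to $\cC^\vee$ in place of $\cC$ and using $\ev_\cC(x,-)=\ev_{\cC^\vee}(-,x)$ (Remark \ref{rem:functor_-^vee}), noting $H(x)=\pi_0\ev_\cC(x,-)$ is a genuine homological functor on $T^{'\omega_1}$ commuting with countable coproducts because $\ev_\cC$ is $\omega_1$-strongly continuous.

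Finally, the statement I was asked to prove — that the composition of two phantom maps $x\xrightarrow{f} y\xrightarrow{g} z$ is zero — I would deduce as follows. Write $x=\hocolim(x_1\to x_2\to\cdots)$ with each $x_n\in T^{\omega_1}$ and each transition map compact (Theorem \ref{th:dualizability_via_compact_maps}). Since $f$ is phantom, each composite $x_n\to x\xrightarrow{f} y$ factors through the fiber of a phantom, hence is itself zero after precomposing — more precisely, phantomness of $f$ means $f$ kills all compact maps from $\omega_1$-compact objects, and the structure map $x_n\to x$ is compact (being a tail of the defining tower, whose maps are compact and compact maps compose to compact maps), so $f|_{x_n}=0$ for all $n$. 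Thus $f$ factors through $\Cone(\bigoplus x_n\to\bigoplus x_n)$ shifted, i.e. $f$ lifts to a map $x\to \Sigma^{-1}\bigl(\text{something built from the }x_n\bigr)$; concretely $f$ factors as $x\to w[1]\xrightarrow{h} y$ where $w=\bigoplus_n x_n\in T^{\omega_1}$... the cleaner route: the Milnor sequence gives that phantom maps out of $x$ are exactly $\varprojlim^1_n \Hom_T(x_n, y[-1])$, so $f$ is represented by a class in that $\lim^1$; then $g\circ f\in\varprojlim^1_n\Hom_T(x_n,z[-1])$ is the image of that class under the map induced by $g$, but $g$ phantom forces the maps $\Hom_T(x_n,y[-1])\to\Hom_T(x_n,z[-1])$ to be zero (each $x_n\to y[-1]$ becomes a map killed by the phantom $g[-1]$ after noting $x_n$ is $\omega_1$-compact and the relevant maps are compact), hence $g\circ f=0$. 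I expect the main obstacle to be the bookkeeping in identifying $\Ext^1$ in the almost-module category with $\varprojlim^1$ of mapping groups along the tower — getting the almost-module homological algebra to interact correctly with the non-canonical choice of tower — and, relatedly, making the ``compact map'' hypotheses line up so that every structure map $x_n\to x$ is recognized as compact; once that is in place the vanishing of phantom-squared is essentially the observation that $\varprojlim^1$ of a tower of zero maps is zero.
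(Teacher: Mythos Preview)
Your approach via Milnor towers is genuinely different from the paper's, and as written it has a central gap: you never isolate, let alone establish, the homological bound that drives the paper's argument. The paper's proof rests on the \textbf{Claim} that every almost cohomological almost $T^{\omega_1}$-module has injective dimension $\leq 1$ in $\Mod_a\hy T^{\omega_1}$. This is where $\omega_1$-compactness of $\cC$ enters, and it enters via characterization \ref{presentability4} of Theorem \ref{th:presentability_of_Cat^dual}, not \ref{presentability3} as you propose: one embeds $\cC\hookrightarrow\Ind(\cA)$ with $\h\cA$ \emph{countable}, so that the exact fully faithful functor $V\colon\Mod_a\hy T^{\omega_1}\to\Mod\hy(\h\cA)$ carries almost cohomological modules to fp-injective $(\h\cA)$-modules, and over a countable triangulated category fp-injectives have injective dimension $\leq 1$ --- this is the classical Neeman input. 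Given the Claim, everything follows at once: choose a length-$1$ injective resolution $0\to G(y)\to\cI_0\to\cI_1\to 0$, lift via Proposition \ref{prop:pure_injective} to a triangle $y\to\Lambda(\cI_0)\to\Lambda(\cI_1)$, and read off the short exact sequence; every phantom $x\to y$ factors through the pure-injective $\Lambda(\cI_1)[-1]$, which immediately kills any further phantom.

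Without the injective-dimension bound your identifications do not close up. The objects $G(x_n)$ are not projective in $\Mod_a\hy T^{\omega_1}$ (the left adjoint of Proposition \ref{prop:left_adjoint_almost_modules} sends $h_{x_n}$ to the flat but non-projective $I(-,x_n)$), so the two-term resolution $0\to\bigoplus G(x_n)\to\bigoplus G(x_n)\to G(x)\to 0$ does not compute $\Ext^1(G(x),-)$; indeed $\Hom_{\Mod_a}(G(x_n),G(y))$ is $\varprojlim_m\Hom_T(x_{n,m},y)$ rather than $\Hom_T(x_n,y)$. Your Brown-representability sketch for the essential image leaves open why the representing object in $\h\Ind(\cC^{\omega_1})$ lies in the smashing subcategory $\h\hat\cY(\cC)$. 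Finally, your phantom-squared argument contains an explicit error: $g$ phantom does \emph{not} force $g_*\colon\Hom_T(x_n,y[-1])\to\Hom_T(x_n,z[-1])$ to vanish, since an arbitrary $x_n\to y[-1]$ need not be compact. (What is true is that the transition map $\Hom_T(x_{n+1},y[-1])\to\Hom_T(x_n,y[-1])$, being precomposition with the compact $x_n\to x_{n+1}$, has image consisting of compact maps, so $g_*$ annihilates the image of each transition; this does yield $\varprojlim^1(g_*)=0$, so this piece is salvageable --- but not for the reason you give.)
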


Note that if $\cC$ is compactly generated, then $\cC$ is an $\omega_1$-compact object of $\Cat_{\st}^{\dual}$ if and only if the triangulated category $\h\cC^{\omega}$ is countable. Thus, Theorem \ref{th:Adams_rep_contravar} is a generalization of Neeman's theorem \cite[Theorem 5.1, Proposition 4.11]{Nee97} if we restrict to triangulated categories with an $\infty$-enhancement.

Before proving Theorem \ref{th:Adams_rep_contravar} we explain some basic properties of the abelian category $\Mod_a\hy T^{\omega_1},$ and show that it is naturally equivalent to $T^{'\omega_1}\hy\Mod_a.$ First we recall the following characterization of (co)homological functors. Recall that if $\cA$ is a small additive category with weak kernels, then a right $\cA$-module $M$ is called fp-injective if for any finitely presented right $\cA$-module $N$ we have $\Ext^1_{\cA}(N,M)=0$

\begin{prop}\label{prop:cohom_functors} Let $S$ be a small triangulated category and $F:S^{op}\to\Ab$ an additive functor. The following are equivalent. 
	
	\begin{enumerate}[label=(\roman*),ref=(\roman*)]
		\item $F$ is cohomological. \label{cohom1}
		
		\item $F$ is flat as a right $S$-module. \label{cohom2}
		
		\item $F$ is fp-injective as a right $S$-module. \label{cohom3}
	\end{enumerate}
\end{prop}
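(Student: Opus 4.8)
<br>

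The plan is to prove the equivalence of the three characterizations of cohomological functors $F\colon S^{op}\to\Ab$ for a small triangulated category $S$, following the standard theory of functor categories over triangulated categories (Freyd, Neeman, Krause). The key structural fact I would use is that $S$ has weak kernels: given $f\colon x\to y$ in $S$, completing to an exact triangle $w\to x\to y$ provides a weak kernel $w\to x$. Consequently the category $\Mod\hy S=\Fun^{\add}(S^{op},\Ab)$ is a locally coherent Grothendieck category, the finitely presented right $S$-modules form an abelian subcategory $\mathrm{mod}\hy S$ with enough projectives (the representables $S(-,x)$), and a right $S$-module is flat if and only if it is a filtered colimit of representables.

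First I would prove \Implies{cohom1}{cohom2}. A cohomological functor $F$ sends exact triangles to exact sequences, so for every $f\colon x\to y$ the sequence $S(-,w)\to S(-,x)\to S(-,y)\to F(\cdot)$ obtained by Yoneda plus $F$ shows that $F$ is a cokernel of a map between representables in a compatible way; more precisely, one shows directly that $F$ commutes with the relevant filtered colimits by writing $F$ as the colimit of its Yoneda-type approximations $S(-,x)\to F$ indexed over the comma category of representables mapping to $F$. The homological property guarantees this comma category is filtered (using weak kernels to complete spans), so $F=\colim S(-,x_i)$ is a filtered colimit of representables, hence flat. Next \Implies{cohom2}{cohom3}: this is the general fact that over a coherent ring (or coherent additive category with weak kernels) flat modules are fp-injective — if $F$ is flat, write $F=\colim P_i$ with $P_i$ finitely generated projective; then for finitely presented $N$, $\Ext^1(N,F)=\Ext^1(N,\colim P_i)=\colim\Ext^1(N,P_i)=0$, where the interchange of $\Ext^1$ with filtered colimits uses that $N$ is finitely presented (so admits a projective resolution by finitely generated projectives) and $S$ has weak kernels. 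Finally \Implies{cohom3}{cohom1}: given an exact triangle $x\to y\to z$, apply the exact sequence of representables $S(-,x)\to S(-,y)\to S(-,z)\to S(-,x[1])$; this is a presentation exhibiting an fp-module, and fp-injectivity of $F$ kills the obstruction $\Ext^1$ so that $\Hom(-,F)$ applied to it stays exact, which unwinds (via Yoneda) to exactness of $F(z)\to F(y)\to F(x)$, i.e.\ $F$ is cohomological.

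The main obstacle I expect is the careful verification in \Implies{cohom1}{cohom2} that the relevant index category (the category of pairs $(x,\,S(-,x)\to F)$) is genuinely filtered: one must check that any two such maps can be dominated by a third and that parallel maps can be coequalized, and both reductions rely on completing diagrams in $S$ to exact triangles and on the homological property of $F$ to lift the necessary morphisms. This is where the triangulated structure (as opposed to merely additive structure with weak kernels) is essential. The remaining implications are formal module theory over a coherent category and should go through with only routine bookkeeping, so I would state them concisely and reference the standard sources rather than grinding through the diagram chases.
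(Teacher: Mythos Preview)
Your argument is correct, but the paper takes a shorter and more direct route. Rather than proving the cycle \Implies{cohom1}{cohom2}$\Rightarrow$\ref{cohom3}$\Rightarrow$\ref{cohom1}, the paper proves \Iff{cohom1}{cohom2} and \Iff{cohom1}{cohom3} separately by a single explicit computation in each case. For flatness, the paper uses the \emph{local} Lazard criterion (every map from a finitely presented module to $F$ factors through a representable) rather than your global one (filtered colimit of representables): given $N=\coker(h_x\xrightarrow{f} h_y)$, a map $N\to F$ is an element $\alpha\in\ker(F(y)\to F(x))$, and it factors through a representable iff it factors through $h_z$ with $z=\Cone(f)$, i.e.\ iff $\alpha\in\im(F(z)\to F(y))$ --- which is exactly cohomologicality. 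For fp-injectivity, the paper writes down the resolution $h_{z[-1]}\to h_x\to h_y\to N\to 0$ and reads off $\Ext^1(N,F)\cong\ker(F(x)\to F(z[-1]))/\im(F(y)\to F(x))$, again equivalent to cohomologicality. Your detour through \Implies{cohom2}{cohom3} via local coherence and commutation of $\Ext^1$ with filtered colimits is valid but unnecessary: once you have the explicit description of $\Ext^1(N,F)$, the equivalence \Iff{cohom1}{cohom3} is immediate. The paper's approach also makes transparent that what you identified as the ``main obstacle'' --- the filteredness check in \Implies{cohom1}{cohom2} --- is really the same one-line observation about weak cokernels.
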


\begin{proof}
	For an object $x\in S,$ we denote by $h_x$ the corresponding representable right $S$-module.
	
	\Iff{cohom1}{cohom2} Flatness of $F$ means that for any finitely presented $S$ module $N,$ any morphism $N\to F$ factors through a representable module. Let $N=\coker(h_x\to h_y)$ for some morphism $f:x\to y$ in $S.$ Then the map $N\to F$ is given by an element $\alpha\in\ker(F(y)\to F(x)).$ Note that $z=\Cone(f)$ is a weak cokernel of $f$ in $S.$ Hence, the map $N\to F$ factors through a representable module iff $\alpha$ is in the image of the map $F(z)\to F(y).$ We conclude that $F$ is flat if and only if $F$ is cohomological.
	
	\Iff{cohom1}{cohom3} Let $N=\coker(h_x\xto{f} h_y)$ be a finitely presented module, for some morphism $f:x\to y$ in $S.$ Let $z=\Fiber(f).$ Then we have an exact sequence of $S$-modules
	$$h_z\to h_x\to h_y\to N\to 0.$$ 
	It follows that $\Ext^1(N,F)\cong \ker(F(x)\to F(z))/\im(F(y)\to F(x)).$ We conclude that $F$ is fp-injective if and only if $F$ is cohomological.
\end{proof}

Next, recall from Proposition \ref{prop:compact_maps_and_wavy_arrows} that the tensor product $I=J\tens{\cT^{\omega_1}}J$ is the quasi-ideal in $T^{\omega_1}$ given by $I(x,y)=\pi_0\Hom(\cY_{\cC}(x),\hat{\cY}_{\cC}(y)).$ We denote by $I'\cong J'\tens{T^{'\omega_1}}J'$ the similar quasi-ideal in $T^{'\omega_1},$ given by $I'(x,y)=\pi_0\Hom(\cY_{\cC^{\vee}}(x),\hat{\cY}_{\cC^{\vee}}(y))$ We make the following observation.

\begin{prop}\label{prop:left_adjoint_almost_modules} The quotient functor $\Mod\hy\cT^{\omega_1}\to \Mod_a\hy\cT^{\omega_1}$ resp. $T^{'\omega_1}\hy\Mod\to T^{'\omega_1}\hy\Mod_a$ has an exact left adjoint $\Mod_a\hy\cT^{\omega_1}\to \Mod\hy\cT^{\omega_1}$ resp. $T^{'\omega_1}\hy\Mod_a\to T^{'\omega_1}\hy\Mod,$ given by $$M\mapsto M\tens{\cT^{\omega_1}}I\cong M\tens{\cT^{\omega_1}}J\tens{\cT^{\omega_1}}J$$ resp. $$N\mapsto I'\tens{T^{'\omega_1}}N\cong J'\tens{T^{'\omega_1}}J'\tens{T^{'\omega_1}}N.$$\end{prop}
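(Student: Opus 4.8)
The statement is about the quasi-ideal $I = J\otimes_{T^{\omega_1}} J$ (resp. $I' = J'\otimes_{T'^{\omega_1}} J'$) and the claim that the Serre quotient $\Mod\text{-}\cT^{\omega_1}\to\Mod_a\text{-}\cT^{\omega_1}$ has an exact left adjoint given by $M\mapsto M\otimes_{\cT^{\omega_1}} I$. The natural framework is the general theory of idempotent flat quasi-ideals: recall from Lemma \ref{lem:ideals_and_quasi-ideals} and Proposition \ref{prop:compact_maps_and_wavy_arrows} that $I$ is an idempotent quasi-ideal in $\cT^{\omega_1}$ which is flat on the right, with $\alpha(I) = J$ and $J^2 = J$. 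The plan is to verify that, in this situation, the assignment $M\mapsto M\otimes_{\cT^{\omega_1}} I$ lands in $\Mod\text{-}\cT^{\omega_1}$, is exact (using flatness of $I$ on the right), and is left adjoint to the quotient.

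First I would spell out the Serre subcategory: a right $\cT^{\omega_1}$-module $M$ is almost zero iff it is annihilated by $J$, equivalently (since $J$ is an idempotent ideal with $J = \alpha(I)$ and $I\cong J\otimes J$) iff $M\otimes_{\cT^{\omega_1}} I = 0$; here one uses $M\otimes_{\cT^{\omega_1}}\cT^{\omega_1}(-,-)\cong M$ and the surjection $I\to J\hookrightarrow\cT^{\omega_1}(-,-)$. Next, since $I$ is flat on the right (each $I(-,x)$ is a flat right $\cT^{\omega_1}$-module by Proposition \ref{prop:compact_maps_and_wavy_arrows}), the functor $-\otimes_{\cT^{\omega_1}} I$ is exact. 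To see it factors through $\Mod_a\text{-}\cT^{\omega_1}$ and is left adjoint to the inclusion-type quotient: for an almost-zero $M$ we have $M\otimes I = 0$ by the previous point; and for a general $M$, the natural map $M\otimes_{\cT^{\omega_1}} I\to M\otimes_{\cT^{\omega_1}}\cT^{\omega_1}(-,-) = M$ has kernel and cokernel almost zero — this is the idempotence of the quasi-ideal $I$, i.e. the isomorphism $I\otimes_{\cT^{\omega_1}} I\xrightarrow{\sim} I$, combined with right-exactness of tensoring, showing $(M\otimes I)\otimes I\xrightarrow{\sim} M\otimes I$ so that $M\otimes I$ is already $I$-local. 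From this the adjunction follows by a standard universal-property argument: any map from $M$ to an $I$-local module factors uniquely through $M\otimes I$. The identification $M\otimes_{\cT^{\omega_1}} I\cong M\otimes_{\cT^{\omega_1}} J\otimes_{\cT^{\omega_1}} J$ is just the isomorphism of quasi-ideals $I\cong J\otimes_{\cT^{\omega_1}} J$ from Proposition \ref{prop:compact_maps_and_wavy_arrows}. The statement for left $T'^{\omega_1}$-modules is entirely symmetric, replacing $I, J$ by $I', J'$ and right modules by left modules.

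The main technical point — and the one I would be most careful about — is verifying that the unit map $M\otimes_{\cT^{\omega_1}} I\to M$ has almost-zero kernel and cokernel, equivalently that $M\otimes_{\cT^{\omega_1}} I$ is $I$-local (meaning both $I$-torsion-free in the appropriate sense and $I$-divisible). The cokernel is $M\otimes_{\cT^{\omega_1}}(\cT^{\omega_1}(-,-)/J) = M/MJ$, which is visibly annihilated by $J$ hence almost zero. The kernel requires the flatness of $I$ on the right together with idempotence: tensoring the short exact sequence $0\to I\to\cT^{\omega_1}(-,-)\to\cT^{\omega_1}(-,-)/J\to 0$ on the left with $M\otimes_{\cT^{\omega_1}} I$ and using $(M\otimes I)\otimes I\cong M\otimes I$ kills the relevant $\mathrm{Tor}$ term, so the kernel $K$ of $M\otimes I\to M$ satisfies $K\otimes_{\cT^{\omega_1}} I = 0$, i.e. $KJ = 0$, i.e. $K$ is almost zero. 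I would present this as a short computation; there is no serious obstacle beyond bookkeeping, since all the structural inputs (idempotence and one-sided flatness of $I$) are already established in Proposition \ref{prop:compact_maps_and_wavy_arrows} and the general Lemma \ref{lem:ideals_and_quasi-ideals}. Finally, exactness of the left adjoint is immediate once flatness of $I$ on the right is in hand, so no extra work is needed there.
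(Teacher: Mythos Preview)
Your approach is the standard almost-mathematics argument, which is exactly what the paper invokes (it simply cites Gabber--Ramero for the adjunction and then argues exactness), so the overall strategy is correct. There is, however, a concrete slip in the exactness step: for the functor $M\mapsto M\otimes_{T^{\omega_1}} I$ on \emph{right} $T^{\omega_1}$-modules to be exact you need $I$ to be flat on the \emph{left}, i.e.\ each left module $I(y,-)$ must be flat, since $(M\otimes_{T^{\omega_1}} I)(y)\cong M\otimes_{T^{\omega_1}} I(y,-)$. Proposition~\ref{prop:compact_maps_and_wavy_arrows} only records flatness on the right. The paper supplies the missing ingredient via Proposition~\ref{prop:cohom_functors}: the functor $I(x,-)=\pi_0\Hom_{\Ind(\cC)}(\cY(x),\hat{\cY}(-))$ is homological in the second variable (as $\hat{\cY}$ is exact), hence flat as a left module; this is why the paper's proof asserts that $I$ and $I'$ are flat on the left \emph{and} on the right.

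A smaller point: the sequence you call short exact, $0\to I\to T^{\omega_1}(-,-)\to T^{\omega_1}(-,-)/J\to 0$, need not be exact at $I$, since the quasi-ideal map $\alpha:I\to T^{\omega_1}(-,-)$ is not injective in general. Your argument that the kernel $K$ of $M\otimes I\to M$ is almost zero still works if you instead tensor the exact sequence $0\to K\to M\otimes I\to M$ with $I$ directly (using left-flatness) and observe that the resulting map $(M\otimes I)\otimes I\to M\otimes I$ is the idempotence isomorphism, forcing $K\otimes I=0$.
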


\begin{proof}The description of the left adjoints is analogous to the corresponding statement in the usual (commutative) almost mathematics \cite{GR03}. It follows from Proposition \ref{prop:cohom_functors} that both $I$ and $I'$ are flat on the left and on the right, so the functors $-\tens{\cT^{\omega_1}}I$ and $I'\tens{T^{'\omega_1}}-$ are exact.
\end{proof}

\begin{prop}\label{prop:equivalence_on_almost_modules}
Consider the colimit-preserving functor $\Phi:\Mod\hy T^{\omega_1}\to T^{'\omega_1}\hy\Mod,$ which is given on representable modules by $F(\h_x)=H(x)=\pi_0\ev_{\cC}(x,-).$ Then $\Phi$ is exact and it vanishes on almost zero right $T^{\omega_1}$-modules. The induced functor 
\begin{equation}\label{eq:equivalence_on_almost_modules}
\bbar{\Phi}:\Mod_a\hy\cT^{\omega_1}\to T^{'\omega_1}\hy\Mod_a
\end{equation}
is an equivalence.
\end{prop}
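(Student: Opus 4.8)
\textbf{Proof proposal for Proposition \ref{prop:equivalence_on_almost_modules}.}

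The plan is to produce an explicit quasi-inverse and check the two composites are identity functors, using the structure of $\cC$ and $\cC^\vee$ as mutual duals together with the quasi-ideal descriptions of $I$ and $I'$ from Proposition \ref{prop:compact_maps_and_wavy_arrows}. First I would set up $\Phi$ precisely: since $\Mod\hy T^{\omega_1}$ is the free cocompletion of $T^{\omega_1}$ under colimits, the assignment $\h_x\mapsto \pi_0\ev_{\cC}(x,-)=H(x)$ on representables extends uniquely to a colimit-preserving $\Phi$, and similarly there is a colimit-preserving functor $\Phi':T^{'\omega_1}\hy\Mod\to \Mod\hy T^{\omega_1}$ sending the corepresentable $\h^y$ (the functor $z\mapsto\pi_0\ev_{\cC}(z,y)$ viewed as a left $T^{'\omega_1}$-module... more precisely, I would use the left module $y\mapsto$ the appropriate representable) to $\pi_0\ev_{\cC}(-,y)$ as a right $T^{\omega_1}$-module. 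The key point is that $\ev_{\cC}\colon \cC\otimes\cC^{\vee}\to\Sp$ is $\omega_1$-strongly continuous by Theorem \ref{th:presentability_of_Cat^dual}, \ref{presentability3}, since $\cC$ is $\omega_1$-compact in $\Cat_{\st}^{\dual}$; this is exactly what is needed for the construction to land in the right categories and for exactness. Exactness of $\Phi$ follows because $\ev_{\cC}(x,-)$ is an exact functor of the second variable for fixed $x$, hence $\pi_0\ev_{\cC}(x,-)$ applied to exact triangles gives long exact sequences, which translates (as in Proposition \ref{prop:cohom_functors}) into flatness of $H(x)$ as a left $T^{'\omega_1}$-module, and a colimit-preserving functor carrying representables to flat modules is exact.

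Next I would show $\Phi$ kills almost zero modules. A right $T^{\omega_1}$-module $M$ is almost zero iff $M\tens{T^{\omega_1}}I=0$ (using $I\cong J\tens{T^{\omega_1}}J$ and $J^2=J$, so $M$ annihilated by $J$ forces $M\tens{T^{\omega_1}}I=0$; conversely). Since $\Phi$ is colimit-preserving and exact, and every $M$ fits into the exact sequence governed by $M\tens{T^{\omega_1}}I\to M$ with almost-zero kernel and cokernel, it suffices to check $\Phi(\h_x)$ depends on $x$ only through its image in the ``almost'' world, i.e. that a compact map $f\colon x\to y$ in $T^{\omega_1}$ induces on $H$ something invertible up to almost-zero — concretely, that $\Phi$ identifies $\h_x$ with $\h_x\tens{T^{\omega_1}}I$ after passing to almost modules. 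Here I would invoke the description $I(x',x)=\pi_0\Hom_{\Ind(\cC)}(\cY(x'),\hat{\cY}(x))$ and the fact (Remark after Proposition \ref{prop:Lurie_criterions_dual}, and Remark \ref{rem:functor_-^vee}) that $\hat{\cY}(x)$ corresponds to $\ev(x\boxtimes(-)^{\vee})\colon \cC^{op}\to\Sp$, so that applying $\ev_{\cC}$ against objects of $\cC^{\vee}$ automatically sees only the ``$\hat{\cY}$'' part. This is the step where the duality $\cC^{\vee\vee}\simeq\cC$ and the compatibility $\Ind(\cA)^{\vee}=\Ind(\cA^{op})$ get used, together with Proposition \ref{prop:compact_maps_and_wavy_arrows} applied to both $\cC$ and $\cC^{\vee}$, noting the quasi-ideals $I$, $I'$ are ``transposes'' of each other under $(-)^{\vee}$.

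Finally, to see $\bbar\Phi$ is an equivalence I would exhibit the quasi-inverse as the functor induced by $\Phi'$ (the analogous construction with the roles of $\cC$ and $\cC^{\vee}$ swapped, which makes sense because $\cC^{\vee}$ is also $\omega_1$-compact in $\Cat_{\st}^{\dual}$ by Theorem \ref{th:presentability_of_Cat^dual} — one checks $\omega_1$-compactness is stable under $(-)^{\vee}$ since that functor commutes with all colimits by Subsection \ref{ssec:duality_covariant}). The composite $\Phi'\circ\Phi$ sends $\h_x$ to $\pi_0\ev_{\cC^{\vee}}(-,\text{--})$-type modules which, by the biduality and the self-duality of $\cC^{\vee}\otimes\cC$ used in Lemma \ref{lem:kappa_compactness_for_ev_and_coev}, is canonically $\h_x\tens{T^{\omega_1}}I$; in $\Mod_a\hy T^{\omega_1}$ this is isomorphic to the image of $\h_x$ by the (almost) idempotence of $I$ from Proposition \ref{prop:left_adjoint_almost_modules}. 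Since both composites are colimit-preserving, exact, and naturally isomorphic to the ``$\tens{}I$'' resp. ``$\tens{}I'$'' endofunctors which become the identity on almost modules, we conclude $\bbar\Phi$ and its counterpart are mutually inverse. The main obstacle I anticipate is the careful bookkeeping in the middle paragraph: identifying $\Phi(\h_x)$ with $\h_x\tens{T^{\omega_1}}I$ at the level of almost modules requires untangling how $\ev_{\cC}$, the functor $(-)^{\vee}\colon\cC^{op}\to\cC^{\vee}$, and the two Yoneda embeddings $\cY$, $\hat\cY$ interact, and making sure the natural transformations assembled this way are the right ones (compatible with composition) rather than merely objectwise isomorphisms — essentially upgrading Remark \ref{rem:functor_-^vee} and Proposition \ref{prop:compact_maps_and_wavy_arrows} to a functorial statement about bimodules.
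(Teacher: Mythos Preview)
Your overall strategy---prove exactness of $\Phi$ via flatness and Proposition~\ref{prop:cohom_functors}, check vanishing on almost zero modules by reducing $\Phi(M\tens{T^{\omega_1}}I)\to\Phi(M)$ to representables, then build a quasi-inverse and verify both composites become $-\tens{}I$ resp.\ $I'\tens{}-$---is exactly the paper's. The middle paragraph is on the right track.

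The genuine gap is in your candidate inverse $\Phi'$. The assignment $\h^y\mapsto \pi_0\ev_{\cC}(-,y)$ does \emph{not} extend to a colimit-preserving functor $T^{'\omega_1}\hy\Mod\to\Mod\hy T^{\omega_1}$: the corepresentable $\h^y=\Hom_{T'}(y,-)$ is contravariant in $y$, while $\pi_0\ev_{\cC}(-,y)$ is covariant in $y$. Equivalently, the bifunctor $\pi_0\ev_{\cC}$ is a left-$T^{\omega_1}$, left-$T^{'\omega_1}$ bimodule; tensoring with it goes from right $T^{\omega_1}$-modules to left $T^{'\omega_1}$-modules (this is $\Phi$), not the other way. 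The literally ``symmetric'' construction with $\cC$ and $\cC^{\vee}$ swapped yields a functor $\Mod\hy T^{'\omega_1}\to T^{\omega_1}\hy\Mod$, which is between the wrong categories and is not a candidate inverse for $\bbar\Phi$.

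The paper's fix is precisely the contravariant functor $(-)^{\vee}\colon \cC^{\vee,op}\to\cC$ of Remark~\ref{rem:functor_-^vee}: setting $\Psi(\h^y)=\Hom_T(-,y^{\vee})$ gives the correct variance and, crucially, lands in \emph{representable} right $T^{\omega_1}$-modules. This is what makes the composite tractable: with $\hat{\cY}(x)=\inddlim_n x_n$ and $y_n\in\cC^{\vee,\omega_1}$ chosen to interleave the inverse system $(x_n^{\vee})$, one gets $\Phi(\h_x)\cong\indlim_n \h^{y_n}$ and then $\Psi(\Phi(\h_x))\cong\indlim_n \h_{y_n^{\vee}}\cong\indlim_n \h_{x_n}\cong I(-,x)$. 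So the role of $(-)^{\vee}$ is not the ``bookkeeping'' step you flag at the end, but the structural ingredient that makes an inverse exist at all. A smaller point: in your exactness argument, what is needed is flatness of $\pi_0\ev_{\cC}(-,y)$ as a left $T^{\omega_1}$-module for each $y$ (this is what the paper checks); the assertion ``colimit-preserving and carrying representables to flat modules implies exact'' is false in general, though here both flatnesses happen to hold by the symmetry of $\ev_{\cC}$.
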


\begin{proof}
Exactness of $\Phi$ means that for an object $y\in T^{'\omega_1}$ the left $T^{\omega_1}$-module $\pi_0\ev_{\cC}(-,y)$ is flat. This follows from Proposition \ref{prop:cohom_functors}. 

By Proposition \ref{prop:left_adjoint_almost_modules}, to show the vanishing of $\Phi$ on almost zero modules, we need to check that for any $M\in \Mod\hy T^{\omega_1}$ the map $\Phi(M\tens{T^{\omega_1}}I)\to\Phi(M)$ is an isomorphism. By exactness of $\Phi$ and $-\tens{T^{\omega_1}}I,$ we may assume $M=\h_x$ for some $x\in T^{\omega_1}.$ Let $\hat{\cY}(x)=\inddlim[n]x_n$ (in $\Ind(\cC^{\omega_1})$). Then $I(-,x)\cong\indlim[n]\h_{x_n},$ and
\begin{equation*}
\Phi(I(-,x))\cong\indlim[n]H(x_n)\cong H(x)=\Phi(\h_x).
\end{equation*}
This proves the vanishing of $\Phi$ on almost zero modules.

Now recall the functor $(-)^{\vee}:\cC^{\vee,op}\to\cC$ from Remark \ref{rem:functor_-^vee} (with the roles of $\cC$ and $\cC^{\vee}$ interchanged). To avoid confusion, we denote the corepresentable left $T^{'\omega_1}$-modules by $\h_x^{\vee},$ $x\in T^{'\omega_1}.$ Consider the colimit-preserving functor $\Psi:\cT^{'\omega_1}\hy\Mod\to\Mod\hy T^{\omega_1},$ given on corepresentable objects by $\Psi(\h_x^{\vee})=\Hom_T(-,x^{\vee}).$ We claim that $\Psi\circ\Phi\cong -\tens{T^{\omega_1}}I.$ Indeed, let $x\in T^{\omega_1}$ with $\hat{\cY}(x)=\indlim[n]x_n.$ Choose factorizations $x_{n+1}^{\vee}\to y_n\to x_n^{\vee}$ in $\cC^{\vee}$ with $y_n\in\cC^{\vee,\omega_1}.$ Then we have $\Phi(\h_x)\cong \indlim[n]\h_{y_n}^{\vee},$ and
\begin{equation*}
\Psi(\Phi(x))\cong \indlim[n]\Hom_T(-,y_n^{\vee})\cong \indlim[n]\Hom_T(-,x_n^{\vee\vee})\cong \indlim[n]\h_{x_n}\cong I(-,x).
\end{equation*}
The isomorphism $\Psi(\Phi(x))\cong I(-,x)$ is functorial in $x,$ hence we get an isomorphism $\Psi\circ\Phi\cong -\tens{T^{\omega_1}}I,$ as stated.

Arguing as above, we see that the functor $\Psi$ is exact. Now to prove that the functor \eqref{eq:equivalence_on_almost_modules} is an equivalence, it suffices to construct the isomorphisms
\begin{equation*}
\Psi(I'\tens{T^{'\omega_1}}-)\cong \Phi(-)\tens{T^{\omega_1}}I,\quad \Phi\circ\Psi\cong I'\tens{T^{'\omega_1}}-.
\end{equation*}
We only need to construct these isomorphisms functorially on corepresentable left $T^{'\omega_1}$-modules. So let $x\in \cC^{\vee,\omega_1},$ and let $\hat{\cY}(x^{\vee})=\inddlim[i\in I]y_i$ in $\Ind(\cC^{\omega_1}).$ Choose a pro-system $(z_j)_{j\in J}$ in $\cC^{\vee,\omega_1}$ such that $\proolim[j]z_j\cong\proolim[i]y_i^{\vee}.$ Then $I'(x,-)\cong\indlim[i]h_{z_j}^{\vee},$ and we obtain
\begin{multline*}
\Psi(I'(x,-))\cong\indlim[j]\Psi(h_{z_j}^{\vee})\cong\indlim[j]\Hom_T(-,z_j^{\vee})\cong\indlim[i]\Hom_T(-,y_i^{\vee\vee})\\
\cong\indlim[i]\Hom_T(-,y_i)\cong \Psi(\h_x^{\vee})\tens{T^{\omega_1}}I.
\end{multline*}
Similarly, we obtain
\begin{equation*}
\Phi(\Psi(\h_x^{\vee}))\cong\indlim[i]\Phi(\h_{y_i})\cong \indlim[i]\pi_0\ev_{\cC}(y_i,-)\cong\pi_0\ev_{\cC}(x^{\vee},-)\cong I'(x,-).
\end{equation*}
This proves the proposition.
\end{proof}

 We observe the following interpretation of local coherence of abelian categories.

\begin{prop}\label{prop:local_coherence} Let $\cA$ be a Grothendieck abelian category which is compactly generated. The following are equivalent.
\begin{enumerate}[label=(\roman*),ref=(\roman*)]
\item $\cA$ is locally coherent. \label{loccoh1}

\item The functor $\hat{\cY}:\cA\to\Ind(\cA)$ is exact. \label{loccoh2}
\end{enumerate}
\end{prop}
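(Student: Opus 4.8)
The plan is to reduce everything to the explicit description of $\hat{\cY}$ recalled at the start of Subsection~\ref{ssec:dualizable_cats}. Write $\cB:=\cA^{\omega}$ for the small additive category of finitely presented (compact) objects of $\cA$; since $\cA$ is a compactly generated Grothendieck abelian category, $\cB$ is closed under cokernels and $\cA\simeq\Ind(\cB)$ (the standard description of locally finitely presented Grothendieck categories). Under this identification the functor $\cY\colon\cA\to\Ind(\cA)$ is the canonical inclusion, which identifies $\cA$ with the full subcategory of compact objects of $\Ind(\cA)$, while $\hat{\cY}$ is obtained by applying $\Ind(-)$ to the inclusion $j\colon\cB\hookrightarrow\cA$; in particular $\hat{\cY}(b)\cong\cY(b)$ naturally for $b\in\cB$, and for $x\in\cA$ the ind-object $\hat{\cY}(x)$ is the formal filtered colimit of the objects $b$ over the comma category $\cB_{/x}$. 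Recall also that $\cA$ is locally coherent precisely when $\cB$ is closed under kernels in $\cA$, equivalently when $\cB$ is abelian and $j$ is exact; here the coherent objects are exactly those of $\cB$. Finally, $\hat{\cY}$ is a left adjoint (to $\colim\colon\Ind(\cA)\to\cA$), hence automatically right exact, so ``$\hat{\cY}$ is exact'' is equivalent to ``$\hat{\cY}$ preserves monomorphisms'', and likewise to ``$\hat{\cY}$ preserves kernels''.

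The one extra ingredient I would isolate is the following: for $z\in\cA$, the object $\hat{\cY}(z)\in\Ind(\cA)$ is compact if and only if $z\in\cB$. This follows from the adjunction isomorphism $\Hom_{\Ind(\cA)}(\hat{\cY}(z),X)\cong\Hom_{\cA}(z,\colim X)$: the functor $\colim$ preserves colimits, and every filtered diagram $(w_i)$ in $\cA$ arises as $(\colim X_i)$ for a filtered diagram $(X_i)$ in $\Ind(\cA)$ (take $X_i=\cY(w_i)$), so compactness of $\hat{\cY}(z)$ unwinds to $\Hom_{\cA}(z,-)$ commuting with filtered colimits, i.e. to $z$ being compact in $\cA$.

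For \Implies{loccoh2}{loccoh1}: assume $\hat{\cY}$ is exact. Given a morphism $\phi\colon b_1\to b_2$ in $\cB$, set $k:=\ker_{\cA}(\phi)$. Applying the exact functor $\hat{\cY}$ to the left exact sequence $0\to k\to b_1\xrightarrow{\phi}b_2$ and using $\hat{\cY}(b_i)\cong\cY(b_i)$ gives $\hat{\cY}(k)\cong\ker_{\Ind(\cA)}(\cY(\phi))$; on the other hand $\cY$ is a right adjoint, hence left exact, so $\ker_{\Ind(\cA)}(\cY(\phi))\cong\cY(k)$. Thus $\hat{\cY}(k)\cong\cY(k)$ is a compact object of $\Ind(\cA)$, and the criterion of the previous paragraph forces $k\in\cB$. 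Hence $\cB$ is closed under kernels, i.e. $\cA$ is locally coherent.

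For \Implies{loccoh1}{loccoh2}: assume $\cA$ is locally coherent, so $\cB$ is abelian. I claim that in this case every monomorphism $f\colon x'\hookrightarrow x$ in $\cA$ is a filtered colimit of monomorphisms between objects of $\cB$. Indeed, writing $x=\colim_{(b\to x)}b$ over $\cB_{/x}$, the axiom (AB5) in the Grothendieck category $\cA$ gives $x'=\colim_{(b\to x)}(b\times_{x}x')$; and each $b\times_{x}x'$ is a subobject of the coherent object $b$, hence the filtered union of its finitely generated subobjects, all of which are coherent (because $b$ is) and so lie in $\cB$. Reindexing, $f$ is a filtered colimit of monomorphisms $c'\hookrightarrow c$ with $c,c'\in\cB$. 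Since $\hat{\cY}$ preserves filtered colimits and $\Ind(\cA)$ has exact filtered colimits, $\hat{\cY}(f)$ is a monomorphism; being also right exact, $\hat{\cY}$ is exact. (One may instead invoke the standard fact that $\Ind(j)$ preserves finite limits whenever $j$ does.) The main technical input — and the step I would be most careful with — is precisely this characterization of locally coherent categories via approximation of monomorphisms by monomorphisms between coherent objects; the remaining bookkeeping is routine, the only other point requiring attention being that $\Ind(\cA)$ (for the large Grothendieck category $\cA$) is abelian with exact filtered colimits, so that ``$\hat{\cY}$ exact'', $\ker_{\Ind(\cA)}$, and ``compact object of $\Ind(\cA)$'' are all legitimate notions.
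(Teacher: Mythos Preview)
Your proof is correct and follows essentially the same approach as the paper: for \Implies{loccoh2}{loccoh1} both arguments compare $\hat{\cY}(\ker\phi)$ with $\cY(\ker\phi)$ on a morphism between compacts, and for \Implies{loccoh1}{loccoh2} the paper simply invokes the fact you mention parenthetically, that $\Ind(j)$ is exact when $j\colon\cA^{\omega}\hookrightarrow\cA$ is. Your more explicit approximation-by-monomorphisms argument for this direction is fine but unnecessary once you have that standard fact.
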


\begin{proof}\Implies{loccoh1}{loccoh2}. If $\cA$ is locally coherent, then the category $\cA^{\omega}$ is abelian and the inclusion functor $\cA^{\omega}\to\cA$ is exact. Passing to ind-completions, we get the functor $\hat{\cY},$ which is therefore exact.

\Implies{loccoh1}{loccoh2}. Let $f:x\to y$ be a morphism in $\cA^{\omega}.$ We need to show that the kernel of $f$ in $\cA$ is also a compact object. Note that the maps $\hat{\cY}(x)\to \cY(x)$ and $\hat{\cY}(y)\to \cY(y)$ are isomorphisms. Since both functors $\hat{\cY}$ and $\cY$ are left exact, we conclude that the map $\hat{\cY}(\ker(f))\to \cY(\ker(f))$ is also an isomorphism. This exactly means that $\ker(f)\in\cA^{\omega},$ as required.\end{proof}

We deduce the following basic properties of the abelian category $\Mod_a\hy T^{\omega_1}.$

\begin{prop}\label{prop:almost_modules_T^omega_1_basics} 1) If $\cC$ is compactly generated, then we have $\Mod_a\hy T^{\omega_1}\simeq \Mod\hy T^{\omega}.$

2) In general, the category $\Mod_a\hy T^{\omega_1}$ is a Grothendieck abelian category which satisfies (AB4*) and (AB6). In particular, it is compactly assembled. The functor $\hat{\cY}:\Mod_a\hy T^{\omega_1}\to \Ind(\Mod_a\hy T^{\omega_1})$ is exact.\end{prop}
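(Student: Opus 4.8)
\textbf{Proof plan for Proposition \ref{prop:almost_modules_T^omega_1_basics}.}

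For part 1), when $\cC$ is compactly generated the idea is that $J$, the ideal of compact maps in $T^{\omega_1}$, is precisely the ideal of maps factoring through $T^{\omega}$ (this is Proposition \ref{prop:compact_maps_and_wavy_arrows} applied to the smashing subcategory $\h\cC^{\omega}\subset\h\cC$, cf. the final remark of Subsection \ref{ssec:dual_via_compact}; here $\hat{\cY}$ is the ind-completion of the Yoneda embedding $\cC^{\omega}\to\cC^{\omega_1}$, so compact maps in $T^{\omega_1}$ are exactly those factoring through an object of $T^{\omega}$). First I would show that a right $T^{\omega_1}$-module $M$ is almost zero if and only if $M$ is right Kan extended to zero from $T^{\omega}$, i.e. $M$ is in the kernel of the restriction functor $\Mod\hy T^{\omega_1}\to\Mod\hy T^{\omega}$. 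Then the restriction functor induces an equivalence on the Serre quotient, using that $T^{\omega}\hookrightarrow T^{\omega_1}$ is the inclusion of a full triangulated subcategory and that restriction along a full inclusion of additive categories is a localization of module categories with the indicated kernel. The one point requiring care is that the quotient in the definition of $\Mod_a\hy T^{\omega_1}$ is by \emph{all} almost zero modules, which is exactly this kernel; the exactness of restriction and its left/right adjoints (left Kan extension, which is exact here because $T^\omega\to T^{\omega_1}$ is fully faithful) gives the equivalence $\Mod_a\hy T^{\omega_1}\simeq\Mod\hy T^{\omega}$.

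For part 2), the Grothendieck property follows from the general fact that a Serre quotient of a Grothendieck abelian category by a localizing (= closed under colimits) Serre subcategory is Grothendieck; the subcategory of almost zero modules is closed under colimits since $J$ acts on it by zero and colimits are computed pointwise. Compact generation of $\Mod_a\hy T^{\omega_1}$: the images of the representable modules $h_x$, $x\in T^{\omega_1}$, form a generating set, and by Proposition \ref{prop:left_adjoint_almost_modules} the quotient functor has an exact left adjoint $M\mapsto M\tens{T^{\omega_1}}I$, which preserves compactness because $I(-,x)\cong\indlim_n h_{x_n}$ is a countable (hence $\omega_1$-)filtered colimit of representables, so the images of $h_x$ are $\omega_1$-compact; but one can be more careful — the representables themselves map to compact objects because $\Hom(\bbar{h_x},-)$ in the quotient is computed by composing the exact left adjoint with $\Hom_{\Mod\hy T^{\omega_1}}(h_x,-)=$ evaluation at $x$, and evaluation at $x$ composed with $-\tens{T^{\omega_1}}I$ commutes with filtered colimits. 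For the (AB4*) and (AB6) axioms I would transport them through the equivalence \eqref{eq:equivalence_on_almost_modules} of Proposition \ref{prop:equivalence_on_almost_modules} and the identification of both sides with almost modules over the additive category $T^{'\omega_1}$; the cleanest route is to realize $\Mod_a\hy T^{\omega_1}$ as the kernel of the colimit functor restricted to an appropriate ind-category, mimicking the construction of the continuous Calkin category and using Proposition \ref{prop:AB6_criterion}/Proposition \ref{prop:AB4*_for_Cat^dual} at the level of the derived category. Alternatively, since products in $\Mod_a\hy T^{\omega_1}$ are computed as the right adjoint to $-\tens{T^{\omega_1}}I$ applied to the pointwise product, and $-\tens{T^{\omega_1}}I$ is exact, exactness of products ((AB4*)) is immediate, and (AB6) reduces to (AB6) for $\Ab$ together with the fact that the left adjoint $-\tens{T^{\omega_1}}I$ commutes with filtered colimits and the right adjoint (inclusion into $\Mod\hy T^{\omega_1}$ followed by... ) — here I would use that filtered colimits of almost zero modules are almost zero, so they are computed in $\Mod\hy T^{\omega_1}$.

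Finally, for the exactness of $\hat{\cY}:\Mod_a\hy T^{\omega_1}\to\Ind(\Mod_a\hy T^{\omega_1})$, the strategy is to apply Proposition \ref{prop:local_coherence}: it suffices to prove that $\Mod_a\hy T^{\omega_1}$ is locally coherent, i.e. that its subcategory of compact objects is abelian and the inclusion is exact. The compact objects are (up to summands and finite colimits) the images of the representables $h_x$; these are the ``finitely presented'' almost modules, and one checks that a cokernel of a map between representables in $\Mod_a\hy T^{\omega_1}$ — computed via the exact left adjoint $-\tens{T^{\omega_1}}I$ as the image of $\coker(I(-,x)\to I(-,y))$ — is again compact, because $I(-,x)$ and $I(-,y)$ are countable filtered colimits of representables in $\Mod\hy T^{\omega_1}$ and cokernels commute with filtered colimits, while kernels in a Serre quotient are subquotients of objects of $\Mod_a\hy T^{\omega_1}$ whose compactness must be verified. \textbf{The main obstacle} I anticipate is exactly this last point: showing local coherence, i.e. that kernels of morphisms between compact almost modules are again compact. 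In the classical (compactly generated) case this is Neeman's / Krause's theorem that $\h\cC^{\omega_1}$-modules modulo $\h\cC^{\omega}$ form a coherent category, and here one wants the ``almost'' analogue. I would handle it by passing through the equivalence with $T^{'\omega_1}\hy\Mod_a$ and exploiting the fact that $\cC$ is $\omega_1$-compact in $\Cat_{\st}^{\dual}$, so that (by Theorem \ref{th:presentability_of_Cat^dual}) the evaluation and coevaluation functors are $\omega_1$-strongly continuous; this should make the relevant Hom-spectra between $\omega_1$-compacts themselves $\omega_1$-compact, which is what makes the finitely-presented almost modules closed under kernels. If that argument becomes too involved, a fallback is to directly verify Proposition \ref{prop:local_coherence}\ref{loccoh2} by checking that $\hat{\cY}$ is left exact (automatic, as a right adjoint to $\colim$) and that it preserves epimorphisms, using the explicit formula $\hat{\cY}(h_x)=\indlim_n h_{x_n}$ from Proposition \ref{prop:comp_ass_omega_1_gen} together with exactness of filtered colimits in $\Mod_a\hy T^{\omega_1}$.
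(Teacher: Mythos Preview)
Your approach to part 1) matches the paper's. For (AB4*) and (AB6) in part 2), your arguments are workable but the paper takes a shorter route: the Serre subcategory of almost zero modules is closed under both limits and colimits (it is bilocalizing), and Roos' theorem \cite{Roo65} says that (AB4*) and (AB6) pass from $\Mod\hy T^{\omega_1}$ to any such quotient. No need to analyze the adjoints separately.

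The genuine gap is in your treatment of exactness of $\hat{\cY}$. Your main plan --- proving that $\Mod_a\hy T^{\omega_1}$ is locally coherent --- is much harder than necessary, and your proposed argument for it invokes $\omega_1$-compactness of $\cC$, which is \emph{not} assumed in this proposition (that hypothesis only enters later, in Theorems \ref{th:Adams_rep_covar} and \ref{th:Adams_rep_contravar}). Your fallback also has the adjunction backwards: $\hat{\cY}$ is the \emph{left} adjoint to $\colim$, so it is automatically right exact, and the issue is left exactness; ``preserving epimorphisms'' is the automatic part, not the one requiring work.

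The paper's key observation, which you are missing, is that one should not try to prove local coherence of the almost-module category at all. Instead, factor the functor as
\[
\Mod_a\hy T^{\omega_1}\xrightarrow{\,-\tens{T^{\omega_1}}I\,}\Mod\hy T^{\omega_1}\xrightarrow{\,\hat{\cY}\,}\Ind(\Mod\hy T^{\omega_1})\longrightarrow \Ind(\Mod_a\hy T^{\omega_1}).
\]
The first arrow is exact by Proposition \ref{prop:left_adjoint_almost_modules}; the third is $\Ind$ of an exact functor, hence exact. The middle arrow is exact by Proposition \ref{prop:local_coherence} applied to $\Mod\hy T^{\omega_1}$, which \emph{is} locally coherent because $T^{\omega_1}$ is triangulated and therefore has weak kernels. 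This bypasses entirely the question of whether kernels of maps between compact almost-modules remain compact.
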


\begin{proof}1) If $\cC$ is compactly generated, then $J$ is an ideal of maps in $T^{\omega_1}$ which factor through an object of $T^{\omega}.$ Hence, the kernel of the (exact) restriction functor $\Mod\hy T^{\omega_1}\to \Mod\hy T^{\omega}$ is the category of almost zero modules. Since its left adjoint is fully faithful, the assertion follows.

2) Recall that the category of almost zero modules is closed under limits and colimits. Since the axioms (AB4*) and (AB6) hold in $\Mod\hy T^{\omega_1},$ they also hold in $\Mod_a\hy T^{\omega_1};$ this is a special case of Roos' theorem \cite{Roo65}. It is clear that $\Mod_a\hy T^{\omega_1}$ is a Grothendieck category. Finally, the functor $\hat{\cY}$ is the composition of exact functors:
$$\Mod_a\hy T^{\omega_1}\xto{-\tens{T^{\omega_1}}J\tens{T^{\omega_1}}J}\Mod\hy T^{\omega_1}\xto{\hat{\cY}}\Ind(\Mod\hy T^{\omega_1})\to \Ind(\Mod_a\hy T^{\omega_1}).$$ Here the middle functor is exact by Proposition \ref{prop:local_coherence} since the category $T^{\omega_1}$ has weak kernels.
\end{proof}

Consider the functor $G:T\to\Mod_a\hy T^{\omega_1}$ from Theorem \ref{th:Adams_rep_contravar}, i.e. $G(x)=\Hom_T(-,x),$ considered as an almost $T^{\omega_1}$-module. 

\begin{lemma}\label{lem:G_cohom_functor} $G$ is a cohomological functor which commutes with infinite coproducts. Moreover, $G$ is conservative.\end{lemma}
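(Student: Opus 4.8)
The plan is to prove Lemma \ref{lem:G_cohom_functor} in three parts: that $G$ is cohomological, that it commutes with coproducts, and that it is conservative.

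First I would check that $G$ is cohomological. Given an exact triangle $x\to y\to z$ in $T=\h\cC$, for each object $w\in T^{\omega_1}$ the sequence of abelian groups $\Hom_T(w,x)\to\Hom_T(w,y)\to\Hom_T(w,z)$ is exact in the middle (this is the long exact sequence associated to a triangle, truncated to $\pi_0$). Hence the sequence of honest right $T^{\omega_1}$-modules $\Hom_T(-,x)\to\Hom_T(-,y)\to\Hom_T(-,z)$ is exact in the middle. Since the quotient functor $\Mod\hy T^{\omega_1}\to\Mod_a\hy T^{\omega_1}$ is exact (it is a Serre quotient), the image sequence $G(x)\to G(y)\to G(z)$ is exact in $\Mod_a\hy T^{\omega_1}$. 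So $G$ is cohomological.

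Second, the commutation with coproducts. A coproduct $\coprod_i x_i$ in $\cC$ corresponds to the coproduct in $T=\h\cC$, and for a fixed $\omega_1$-compact object $w$ the functor $\Hom_T(w,-)$ commutes with coproducts by definition of $\omega_1$-compactness — wait, this only holds for $\omega$-compact objects in general. The correct statement: objects $w\in T^{\omega_1}$ are $\omega_1$-compact, so $\Hom_T(w,-)$ commutes with $\omega_1$-filtered colimits but not arbitrary coproducts. However, I do not need $G$ to commute with coproducts as a functor to honest modules; I need it after passing to almost modules. The key point is that the colimit functor on almost modules, or rather the structure of $\Mod_a\hy T^{\omega_1}$: by Proposition \ref{prop:almost_modules_T^omega_1_basics}, coproducts in $\Mod_a\hy T^{\omega_1}$ are computed by first taking coproducts of honest modules and then projecting. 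So I must show that the canonical map $\coprod_i\Hom_T(-,x_i)\to\Hom_T(-,\coprod_i x_i)$ becomes an isomorphism after applying the quotient functor to $\Mod_a\hy T^{\omega_1}$, i.e. that its kernel and cokernel are almost zero. The cokernel: a map $w\to\coprod_i x_i$ with $w\in T^{\omega_1}$ need not factor through a finite (or $\omega_1$-small) subcoproduct in general, but if the map is a \emph{compact} morphism then by the definition of compact morphism (Definition \ref{defi:compact_morphism}, applied with the ind-object given by the partial coproducts) it does factor through a subcoproduct indexed by a finite set. This is precisely what makes the cokernel annihilated by $J$, hence almost zero. A similar argument handles the kernel. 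This is the step I expect to be the main obstacle: carefully identifying the ind-object to which Definition \ref{defi:compact_morphism} should be applied (the system of partial finite coproducts, whose colimit is $\coprod_i x_i$) and checking that compactness of morphisms forces the requisite factorization, so that both the kernel and cokernel of the comparison map lie in the Serre subcategory of almost zero modules.

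Third, conservativity. Suppose $G(x)=0$ in $\Mod_a\hy T^{\omega_1}$, i.e. the honest module $\Hom_T(-,x)$ is almost zero, meaning every compact morphism $w\to w'$ in $T^{\omega_1}$ induces the zero map $\Hom_T(w',x)\to\Hom_T(w,x)$. I want to conclude $x=0$. Since $\cC$ is $\omega_1$-compactly generated (Corollary \ref{cor:dualizable_is_aleph_1_generated}), it suffices to show $\Hom_T(w,x)=0$ for all $w\in T^{\omega_1}$; and since $\cC$ is dualizable, every $\omega_1$-compact object $w$ is a sequential colimit $\hocolim(y_1\to y_2\to\cdots)$ with each transition map compact (Theorem \ref{th:dualizability_via_compact_maps} / Proposition \ref{prop:comp_ass_omega_1_gen}). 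Then $\Hom_T(w,x)$ sits in a Milnor exact sequence relating it to $\varprojlim$ and $\varprojlim^1$ of $\Hom_T(y_n,x)$; but the transition maps $\Hom_T(y_{n+1},x)\to\Hom_T(y_n,x)$ are all zero since each $y_n\to y_{n+1}$ is compact and $G(x)$ is almost zero, so both $\varprojlim$ and $\varprojlim^1$ vanish, giving $\Hom_T(w,x)=0$. As $w$ ranges over a generating set of $\omega_1$-compact objects, this forces $x\cong 0$ in $T$, hence in $\cC$. I would use the hypothesis that $\cC$ is $\omega_1$-compact in $\Cat_{\st}^{\dual}$ only implicitly here (it guarantees $T^{\omega_1}$ and all the relevant Hom-spectra are well-behaved, e.g. countability, which keeps the $\varprojlim^1$ argument clean), though conservativity itself should go through for any dualizable $\cC$.
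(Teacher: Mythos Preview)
Your proposal is correct and follows the same approach as the paper, which is considerably terser (the paper dispatches cohomologicality as ``clear'' and the conservativity step in one line, where you spell out the Milnor-sequence computation). One simplification: the comparison map $\bigoplus_i\Hom_T(-,x_i)\to\Hom_T(-,\bigoplus_i x_i)$ is always injective (each $x_j$ is a retract of the coproduct), so there is no kernel to handle---the paper accordingly checks only that the cokernel is almost zero, exactly via the factorization through finite subcoproducts that you describe.
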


\begin{proof}It is clear that $G$ is a cohomological functor. Let $\{z_i\}_{i\in I}$ be a collection of objects of $T.$ By the definition of compact morphisms, we see that the $T^{\omega_1}$-module
$$\coker(\biggplus[i]\Hom_T(-,z_i)\to \Hom_T(-,\biggplus[i] z_i))$$ is almost zero. This shows that $G$ commutes with coproducts.

To show that $G$ is conservative, it suffices to check that if $G(x)=0$ then $x=0.$ But the vanishing of $G(x)$ means that any compact morphism $y\to x$ is zero. This implies that $x$ is zero.\end{proof}

Note that $f$ is a phantom morphism resp. a pure monomorphism resp. a pure epimorphism in $T$ if and only if the morphism $G(f)$ is zero resp. a monomorphism resp. an epimorphism in $\Mod_a\hy\cT^{\omega_1}.$ 

\begin{defi}We say that an object $x\in T$ is pure injective if for any phantom morphism $f:y\to z$ and for any morphism $g:z\to x$ we have $g\circ f=0.$ Equivalently, for any pure monomorphism $f:y\to z$ and for any map $g:y\to x$ there exists a map $h:z\to x$ such that $g=h\circ f.$\end{defi}

We refer to \cite{Kr00, GP04} for the usual notion of pure injective objects for compactly generated triangulated categories.
We obtain the following description of the category of pure injective objects, which is a straightforward generalization of the corresponding result for compactly generated categories \cite[Theorem 4.1]{GP04}.

\begin{prop}\label{prop:pure_injective} The functor $G:T\to \Mod_a\hy T^{\omega_1}$ induces an equivalence from the category of pure injective objects of $T$ to the category of injective objects of $\Mod_a\hy T^{\omega_1}.$\end{prop}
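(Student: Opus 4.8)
The plan is to follow the standard Gruson--Jensen / Krause argument for pure injectives in compactly generated triangulated categories (see \cite{Kr00, GP04}), adapting it to the ``almost'' setting. First I would establish that $G$ is full when restricted to pure injective objects, and that it detects injectivity of its image; these are the two facts that together yield a fully faithful functor onto the injectives. For fullness, given pure injective $x,y$ and a morphism $\varphi\colon G(x)\to G(y)$ in $\Mod_a\hy T^{\omega_1}$, I would use the short exact sequence \eqref{eq:ses_for_Homs1} from Theorem \ref{th:Adams_rep_contravar}: $\varphi$ lifts to some $f\colon x\to y$, and any two lifts differ by a phantom map; since $y$ is pure injective, a phantom map into $y$ need not vanish, so this requires a small argument. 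Here one uses that $x$ itself, being pure injective, admits no nonzero phantom maps \emph{out of} it either (a phantom $z\to x$ factors through the pure monomorphism $z\to \Cone$, and pure injectivity forces it to be zero) — wait, more carefully: the relevant point is that $\Ext^1(G(x),G(y[-1]))$ records phantoms $x\to y$, and for $x$ pure injective every such phantom is zero by a Yoneda/\v{C}ech argument showing a phantom out of a pure injective is split off, hence zero. So the lift $f$ is unique, giving fullness.

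Next I would show that for pure injective $x$ the almost module $G(x)$ is injective in $\Mod_a\hy T^{\omega_1}$. By Proposition \ref{prop:almost_modules_T^omega_1_basics} this is a Grothendieck category, so it suffices to check $\Ext^1(M,G(x))=0$ for $M$ ranging over a generating set, e.g. the almost modules $G(y)$ for $y\in T^{\omega_1}$ and their subobjects — or more efficiently, to use Baer's criterion against the images of representables. A monomorphism $G(y)\hookrightarrow N$ of almost cohomological modules, via Theorem \ref{th:Adams_rep_contravar} (surjectivity onto almost cohomological modules and fullness), comes from a pure monomorphism $y\to z$ in $T$; then any map $G(y)\to G(x)$ comes from $y\to x$ (using \eqref{eq:ses_for_Homs1} again), and pure injectivity of $x$ lets us extend along $y\to z$, giving the required lift $N\to G(x)$. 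One must check that every monomorphism with domain $G(y)$ is, up to the relevant extension, induced this way; this reduces to the description of the essential image of $G$ in Theorem \ref{th:Adams_rep_contravar} together with the fact that subobjects of almost cohomological modules that we need are again hit.

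For essential surjectivity onto injectives, given an injective almost module $I$, I would realize $I$ as a quotient, hence (being injective) a direct summand, of $G(x)$ for a suitable $x$: embed any almost module into an almost cohomological one (every representable $G(y)$, $y\in T^{\omega_1}$, is cohomological, and cohomological almost modules are closed under products and are cogenerating by Lemma \ref{lem:G_cohom_functor} since $G$ is conservative and commutes with coproducts — dually one cogenerates by products of $G$ of $\omega_1$-compact objects), then apply Theorem \ref{th:Adams_rep_contravar} to write this as $G(x)$; injectivity of $I$ splits the inclusion $I\hookrightarrow G(x)$, and the splitting idempotent of $G(x)$, by fullness, is $G(e)$ for an idempotent $e$ of $x$ in $T$ — but $T=\h\cC$ is idempotent-complete since $\cC$ is (dualizable categories are idempotent-complete), so $e$ has an image $x'$ with $G(x')\cong I$, and $x'$ is pure injective because $I$ is injective (phantom maps into $x'$ induce zero maps on $G$, and $G$ restricted appropriately detects this). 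The main obstacle I anticipate is the careful bookkeeping in the ``almost'' category: one must repeatedly pass between $\Mod\hy T^{\omega_1}$ and $\Mod_a\hy T^{\omega_1}$, using the exact left adjoint $-\tens{T^{\omega_1}}J\tens{T^{\omega_1}}J$ from Proposition \ref{prop:left_adjoint_almost_modules} and the exactness of $\hat{\cY}$ from Proposition \ref{prop:almost_modules_T^omega_1_basics}, to make sure that the Baer-criterion verification and the embedding-into-cohomologicals step are legitimate, since a priori subobjects and quotients of almost cohomological modules in the almost category need not be almost cohomological. Controlling precisely which monomorphisms $G(y)\hookrightarrow N$ arise from pure monomorphisms in $T$ — equivalently, verifying that the class of almost cohomological modules together with the splitting argument suffices for injectivity — is where the real work lies.
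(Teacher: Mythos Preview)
Your proposal has a fundamental circularity problem: you repeatedly invoke Theorem~\ref{th:Adams_rep_contravar} (both the short exact sequence \eqref{eq:ses_for_Homs1} and the description of the essential image of $G$), but in the paper the proof of Theorem~\ref{th:Adams_rep_contravar} \emph{uses} Proposition~\ref{prop:pure_injective}. Indeed, the functor $\Lambda$ constructed in the proof of Proposition~\ref{prop:pure_injective} is precisely what is used to realize injective almost modules as values of $G$ in the proof of Theorem~\ref{th:Adams_rep_contravar}. So you cannot cite that theorem here.

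The paper's argument avoids this entirely by using Brown representability directly. Given an injective $M\in\Mod_a\hy T^{\omega_1}$, the functor $\Hom(G(-),M):T^{op}\to\Ab$ is cohomological (because $M$ is injective) and takes coproducts to products (because $G$ does, by Lemma~\ref{lem:G_cohom_functor}); since $T$ is well generated, this functor is representable by some $\Lambda(M)$, which is automatically pure injective because $G$ kills phantoms. One checks $G(\Lambda(M))\cong M$. Then for any pure injective $x$, embedding $G(x)$ into an injective $M$ gives a pure monomorphism $x\to\Lambda(M)$, which splits by pure injectivity of $x$; hence $G(x)$ is a summand of $M$ and so injective. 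Conservativity of $G$ finishes the equivalence. This is short and uses nothing beyond Brown representability and Lemma~\ref{lem:G_cohom_functor}.

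Beyond the circularity, your fullness argument is tangled (you conflate phantoms into and out of a pure injective; the correct statement is that phantoms \emph{into} a pure injective vanish, and that alone does not immediately give uniqueness of the lift from $x$), and your Baer-criterion step never pins down why arbitrary monomorphisms in $\Mod_a\hy T^{\omega_1}$ can be tested against those coming from pure monomorphisms in $T$. The Brown representability route sidesteps all of this.
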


\begin{proof}Let $M$ be an injective object of $\Mod_a\hy T^{\omega_1}.$ Consider the functor $T^{op}\to\Ab$ given by $\Hom(G(-),M).$ This is a cohomological functor (since $M$ is injective), which commutes with products. Hence, it is represented by some object $\Lambda(M)\in T.$ Moreover, $\Lambda(M)$ is a pure injective object of $T,$ since $G$ annihilates phantom morphisms. We obtain a functor $\Lambda$ from the category of injective almost $T^{\omega_1}$-modules to the category of pure injective objects of $T.$ Note that we have a functorial isomorphism $G(\Lambda(M))\cong M.$

Let now $x\in T$ be a pure injective object, and choose a monomorphism $G(x)\to M,$ where $M$ is injective. Then the map $x\to \Lambda(M)$ is a pure monomorphism. Since $x$ is pure injective, it follows that $x$ is a direct summand of $\Lambda(M).$ Hence, $G(x)$ is a direct summand of $G(\Lambda(M))\cong M.$ We conclude that $G(x)$ is injective.

It remains to recall that $G$ is conservative, hence the map $x\to \Lambda(G(x))$ is an isomorphism for any pure injective $x\in T.$\end{proof}

\begin{prop}\label{prop:G_maps_to_cohom_modules} The functor $G:T\to \Mod_a\hy T^{\omega_1}$ takes values in almost cohomological almost $T^{\omega_1}$-modules.\end{prop}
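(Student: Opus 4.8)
The plan is to show that for every $x\in T$, the almost $T^{\omega_1}$-module $G(x)=\Hom_T(-,x)$ is the image (under the quotient functor $\Mod\hy T^{\omega_1}\to\Mod_a\hy T^{\omega_1}$) of a cohomological functor $T^{\omega_1,op}\to\Ab$. But this is almost immediate: the ordinary right $T^{\omega_1}$-module $\h_x:=\Hom_T(-,x)|_{T^{\omega_1}}$ is a cohomological functor, since for any exact triangle $u\to v\to w$ in $T^{\omega_1}$ the sequence $\Hom_T(w,x)\to\Hom_T(v,x)\to\Hom_T(u,x)$ is exact (this is just the long exact sequence for $\Hom_T(-,x)$ applied to an exact triangle, which remains an exact triangle in $T$). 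By Proposition \ref{prop:cohom_functors} this is the same as saying $\h_x$ is flat on the right. By definition, $G(x)$ is the image of $\h_x$ under the quotient functor, so $G(x)$ is almost cohomological by the very definition of ``almost cohomological almost module'' given just above the statement.

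Concretely, the one-line argument I would write is: for $x\in T$, consider the functor $T^{\omega_1,op}\to\Ab$, $y\mapsto \Hom_T(y,x)=\pi_0\Hom_{\cC}(y,x)$. For any exact triangle $y_1\to y_2\to y_3\to y_1[1]$ in $T^{\omega_1}$ (equivalently a fiber-cofiber sequence in $\cC^{\omega_1}$), applying $\Hom_T(-,x)$ gives a long exact sequence of abelian groups, in particular exactness of $\Hom_T(y_3,x)\to\Hom_T(y_2,x)\to\Hom_T(y_1,x)$. Hence this functor is cohomological. Its image under the quotient functor $\Mod\hy T^{\omega_1}\to\Mod_a\hy T^{\omega_1}$ is precisely $G(x)$ by the definition of $G$, so $G(x)$ is an almost cohomological almost module. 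Since this holds for all $x$, the functor $G$ takes values in the full subcategory of almost cohomological almost modules, as claimed.

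There is no real obstacle here; the proposition is a bookkeeping statement that packages the trivial observation ``$\Hom$ out of an exact triangle into a fixed object is exact'' together with the definitions of the quotient category $\Mod_a\hy T^{\omega_1}$ and of ``almost cohomological''. The only thing to be slightly careful about is making sure the restriction $\h_x|_{T^{\omega_1}}$ is a genuine additive functor on $T^{\omega_1}$ (it is, being a restriction of an additive functor) and that exact triangles in $T^{\omega_1}$ are exact triangles in $T$ (true since $T^{\omega_1}$ is a full triangulated subcategory of $T$, the homotopy category of the stable subcategory $\cC^{\omega_1}\subset\cC$). So the proof is essentially two sentences, and I would present it as such without further machinery.

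\begin{proof}
Let $x\in T$. Consider the additive functor $\h_x\colon T^{\omega_1,op}\to\Ab$, $y\mapsto\Hom_T(y,x)$. Since $T^{\omega_1}$ is a full triangulated subcategory of $T$ (the homotopy category of $\cC^{\omega_1}\subset\cC$), any exact triangle in $T^{\omega_1}$ is an exact triangle in $T$, and applying $\Hom_T(-,x)$ yields a long exact sequence of abelian groups. Hence $\h_x$ is a cohomological functor. By definition, $G(x)$ is the image of $\h_x$ under the quotient functor $\Mod\hy T^{\omega_1}\to\Mod_a\hy T^{\omega_1}$, so $G(x)$ is an almost cohomological almost $T^{\omega_1}$-module. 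As $x$ was arbitrary, $G$ takes values in the full subcategory of almost cohomological almost modules.
\end{proof}
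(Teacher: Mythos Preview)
Your proof is correct and is exactly the paper's approach: the paper's proof is the single sentence ``Indeed, for any object $x\in T$ the $T^{\omega_1}$-module $\Hom_T(-,x)$ is cohomological.'' Your version just makes explicit the (standard) reasons this holds and the passage to the quotient category.
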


\begin{proof}Indeed, for any object $x\in T$ the $T^{\omega_1}$-module $\Hom_T(-,x)$ is cohomological.
\end{proof}

\begin{proof}[Proof of Theorem \ref{th:Adams_rep_contravar}] Essentially the theorem follows from the following statement.

{\noindent{\bf Claim.}} {\it Any almost cohomological almost $T^{\omega_1}$-module $M\in\Mod_a\hy T^{\omega_1}$ has injective dimension at most $1.$}

Assume that Claim holds. Let $M\in\Mod_a\hy T^{\omega_1}$ be an almost cohomological almost $T^{\omega_1}$-module. Choose an injective resolution of $M$ of length $1:$
$$0\to M\to \cJ_0\xto{f} \cJ_1\to 0.$$
Consider the pure injective objects $\Lambda(\cJ_0), \Lambda(\cJ_1)\in T,$ and the map $\Lambda(f):\Lambda(\cJ_0)\to \Lambda(\cJ_1),$ where the functor $\Lambda$ is defined in the proof of Proposition \ref{prop:pure_injective}. Then the map $\Lambda(f)$ is a pure epimorphism. Putting $z=\Fiber(\Lambda(f))$ and using the fact that $G$ is cohomological, we obtain
$$G(z)\cong\ker(G(\Lambda(f))\cong\ker(f)=M.$$
Hence, the essential image of $G$ is exactly the full subcategory of almost cohomological almost modules.

Now take some objects $x,y\in T,$ and choose an injective resolution of $G(y)$ as above: 
$$0\to G(y)\to \cI_0\xto{g} \cI_1\to 0.$$
We obtain a (non-unique) map $y\to \Fiber(\Lambda(g)),$ which becomes an isomorphism after applying $G.$ Since $G$ is conservative, we obtain an exact triangle
$$y\to \Lambda(\cI_0)\to \Lambda(\cI_1)\to y[1].$$
It follows that we have a short exact sequence
\begin{multline}\label{eq:ses_for_Homs2} 0\to \coker(\Hom_T(x,\Lambda(\cI_0)[-1])\to \Hom_T(x,\Lambda(\cI_1)[-1]))\to \Hom_\Lambda(x,y)\\
\to\ker(\Hom_T(x,\Lambda(\cI_0))\to \Hom_T(x,\Lambda(\cI_1)))\to 0.\end{multline}
By construction of the functor $\Lambda,$ we see that the sequence \eqref{eq:ses_for_Homs2} is exactly of the form \eqref{eq:ses_for_Homs1}.

In particular, we see that any phantom map $h:x\to y$ factors through $\Lambda(\cI_1)[-1]$ (and vice versa). If $w:z\to x$ is another phantom map, then $h\circ w=0$ since $\Lambda(\cI_1)[-1]$ is pure injective.
\begin{proof}[Proof of Claim.]
By Theorem \ref{th:presentability_of_Cat^dual} there exists a fully faithful strongly continuous functor $\cC\to\Ind(\cA),$ where $\cA$ is $\omega_1$-compact in $\Cat^{\perf}.$ Denote by $S$ the homotopy category of $\Ind(\cA).$ By Proposition \ref{prop:kappa_compact_small_cats}, the category $S^{\omega}$ is countable, i.e. it has at most countably many isomorphism classes of objects, and the sets of morphisms are at most countable.

Since the functor $T^{\omega_1}\to S^{\omega_1}$ preserves compact morphisms, we have a well-defined quotient functor $$U:\Mod\hy S^{\omega}\simeq\Mod_a\hy S^{\omega_1}\to\Mod_a\hy T^{\omega_1}.$$ Its left adjoint $V:\Mod_a\hy T^{\omega_1}\to \Mod\hy S^{\omega}$ is fully faithful. Moreover, $V$ takes almost cohomological almost modules to flat modules, because the extension of scalars functor $\Mod\hy T^{\omega_1}\to\Mod\hy S^{\omega_1}$ preserves flatness. We also observe that $V$ is exact: composing $V$ with the conservative exact functor $\Mod\hy S^{\omega}\to\Mod\hy S^{\omega_1},$ we obtain a functor which is isomorphic to the composition of exact functors
$$\Mod_a\hy T^{\omega_1}\xto{-\tens{T^{\omega_1}}I}\Mod\hy T^{\omega_1}\to\Mod\hy S^{\omega_1},$$
where as above $I\cong J\tens{T^{\omega_1}}J.$

Now, applying Proposition \ref{prop:cohom_functors}, we see that for any almost cohomological almost module $M\in\Mod_a\hy T^{\omega_1}$ the $S^{\omega}$-module $V(M)$ is fp-injective. Since $S^{\omega}$ is countable, it follows that $V(M)$ has injective dimension at most $1.$ Since $V$ is exact, the functor $U$ preserves injective objects. Since $U$ is also exact, we conclude that $M\cong U(V(M))$ has injective dimension at most $1.$
\end{proof}
Theorem is proved.
\end{proof}

\begin{proof}[Proof of Theorem \ref{th:Adams_rep_covar}] 
The proof is obtained from Theorem \ref{th:Adams_rep_contravar} using the relation between left $T^{'\omega_1}$-modules and right $T^{\omega_1}$-modules. 

We say that a left almost module $M\in T^{'\omega_1}\hy\Mod_a$ is almost homological if it is an image of a left $T^{\omega_1}$-module corresponding to a homological functor $T^{'\omega_1}\to\Ab.$ Recall from Proposition \ref{prop:left_adjoint_almost_modules} that the (exact, fully faithful) left adjoint to the quotient functor $T^{'\omega_1}\hy\Mod\to T^{'\omega_1}\hy\Mod_a$ is given by $I'\tens{T^{'\omega_1}}-,$ where $I'\cong J'\tens{T^{'\omega_1}}J'$ is the quasi-ideal in $T^{'\omega_1}$ given by $I'(x,y)=\pi_0\Hom(\cY_{\cC^{\vee}}(x),\hat{\cY}_{\cC^{\vee}}(y)).$ The essential image of the functor $I'\tens{T^{'\omega_1}}-$ consists of functors $F:T^{'\omega_1}\to\Ab$  such that the composition $\cC^{\vee,\omega_1}\to T^{'\omega_1}\xto{F}\Ab$ commutes with sequential colimits. It follows that the functor $I'\tens{T^{'\omega_1}}-$ gives an equivalence
\[
\xymatrix@1{
	\left\{
	\text{
		\begin{minipage}[c]{2.5in}
			almost homological left almost modules $M\in T^{'\omega_1}\hy\Mod_a$
		\end{minipage}
	}
	\right\}
	\ar[r]^-{\sim}
	&
	\left\{
	\text{
		\begin{minipage}[c]{2.2in}
			homological functors $F:T^{'\omega_1}\to\Ab,$ commuting with countable coproducts
		\end{minipage}
	}
	\right\}
}
\]
Indeed, this follows from the observation that for a homological functor $F:T^{'\omega_1}\to\Ab$ the composition $\cC^{\vee,\omega_1}\to T^{'\omega_1}\xto{F}\Ab$ commutes with sequential colimits if and only if $F$ commutes with countable coproducts.

Therefore, in the formulation of the theorem we can replace the functor $H$ with the composition
\begin{equation*}
\bar{H}:T\xto{H}T^{'\omega_1}\hy\Mod\to T^{'\omega_1}\hy\Mod_a,
\end{equation*}
and prove that this functor has the same properties as the functor $G$ from Theorem \ref{th:Adams_rep_contravar}. It remains to observe that the following square commutes
\begin{equation*}
\begin{CD}
T @= T\\
@V{G}VV @V{\bar{H}}VV\\
\Mod_a\hy T^{\omega_1} @>{\sim}>{\Phi}> T^{'\omega_1}\hy\Mod_a,
\end{CD}
\end{equation*}   
where $\Phi$ is the equivalence from Proposition \ref{prop:equivalence_on_almost_modules}. It is clear that $\Phi$ sends almost homological left almost $T^{'\omega_1}$-modules to almost cohomological right almost $T^{\omega_1}$-modules. Therefore, the theorem follows from Theorem \ref{th:Adams_rep_contravar}.  
\end{proof}

\begin{remark}One can also prove Theorem \ref{th:Adams_rep_contravar} without choosing an embedding $\cC\to\Ind(\cA).$ One can define the notion of fp-injective objects in any compactly assembled Grothendieck abelian category $\cB$ (equivalently, a Grothendieck abelian category satisfying (AB6)), such that the functor $\hat{\cY}:\cB\to\Ind(\cB)$ is exact. Namely, an object $x\in\cB$ is fp-injective if for any $y,z\in\cB^{\omega_1}$ and for any compact morphism $f:y\to z,$ the map $$\Ext^1(f,x):\Ext^1(z,x)\to\Ext^1(y,x)$$
is zero. If moreover $\cB^{\omega_1}$ is a Serre subcategory of $\cB,$ then one can show that any fp-injective object of $\cB$ has injective dimension at most $1.$

Now, for any dualizable category $\cC,$ an almost $\h\cC^{\omega_1}$-module is almost cohomological if and only if it is fp-injective in $\Mod_a\hy \h\cC^{\omega_1}.$ If $\cC$ is $\omega_1$-compact, it can be shown that $(\Mod_a\hy \h\cC^{\omega_1})^{\omega_1}$ is a Serre subcategory in $\Mod_a\hy \h\cC^{\omega_1}.$ This gives an alternative (intrinsic) way to show that $G(x)$ has injective dimension at most $1$ for any $x\in \h\cC.$\end{remark}

\begin{remark}If $\cB$ is a Grothendieck abelian category with (AB6) and (AB4*), then one can define flat objects of $\cB$ intrinsically. Namely, an object $x\in\cB$ is flat if for any compact morphism $y\to x$ and for any object $z\in\cB$ the map $\Ext^1(x,z)\to \Ext^1(y,z)$ is zero.

If $\cC$ is a dualizable category, then a right almost $\h \cC^{\omega_1}$-module is almost cohomological if and only if it is flat in $\Mod_a\hy\h\cC^{\omega_1}.$\end{remark}

\section{Analogy between dualizable categories and compact Hausdorff spaces}
\label{app:analogy_dualizable_comphaus}

Denote by $\comphaus$ the category of compact Hausdorff spaces, and denote by $\profin\subset\comphaus$ the full subcategory of profinite sets. We have a functor $\comphaus^{op}\to\Cat_{\st}^{\dual},$ $X\mapsto \Sh(X;\Sp).$ By Corollary \ref{cor:when_sheaves_comp_gen}, the category $\Sh(X;\Sp)$ is compactly generated if and only if $X$ is profinite. 

It turns out that a lot of statements about dualizable categories discussed in this paper have very natural analogues for compact Hausdorff spaces. The table below summarizes this analogy. Here for a topological space $X$ we denote by $Q(X)$ its set of quasi-components, with the quotient topology. Recall that quasi-components of $X$ are equivalence classes of points, where $x\sim y$ if any open-closed subset of $X$ containing $x$ also contains $y.$

\begin{longtable}{ | m{20em} | m{20em} | }
\caption{Dualizable categories and compact Hausdorff spaces}
\label{tab:analogy}\\
\hline
The category $\Cat_{\st}^{\dual}$ of dualizable categories & The opposite category $\comphaus^{op}$ of compact Hausdorff spaces\\
\hline
The full subcategory $\Cat_{\st}^{\cg}\subset\Cat_{\st}^{\dual}$ of compactly generated categories & The full subcategory $\profin^{op}\subset\comphaus^{op}$ of profinite sets\\
\hline
The functor $\Cat_{\st}^{\dual}\to\Cat_{\st}^{\cg},$ $\cC\mapsto \Ind(\cC^{\omega})$ & The functor $\comphaus^{op}\to\profin^{op},$ $X\mapsto Q(X)$ (the space of quasi-components)\\
\hline
For $\cC\in\Cat_{\st}^{\dual},$ we have $(\cC/\Ind(\cC^{\omega}))^{\omega}=0$ & For $X\in\comphaus^{op},$ we have $Q(X)\cong \pi_0(X),$ i.e. quasi-components are connected\\
\hline
The category $\Cat_{\st}^{\cg}\simeq\Cat^{\perf}$ is generated by colimits by the single compact object $\Sp^{\omega}$ & The category $\profin^{op}\simeq\Ind(\Fin^{op})$ is generated by colimits by the single compact object $\{0,1\}$ \\
\hline
The category $\Cat_{\st}^{\dual}$ is generated by colimits by the single $\omega_1$-compact object $\Sh_{\geq 0}(\R;\Sp)$ & The category $\comphaus^{op}$ is generated by colimits by the single $\omega_1$-compact object $[0,1]$\\
\hline
The category $\Cat_{\st}^{\dual}$ satisfies weak (AB4*) and (AB6) & The category $\comphaus^{op}$ satisfies weak (AB4*) and (AB6)\\
\hline
Weak (AB5) holds in $\Cat_{\st}^{\dual}:$ the class of fully faithful functors   is closed under filtered colimits & Weak (AB5*) holds in $\comphaus:$ the class of surjective maps is closed under cofiltered limits\\
\hline
The functor $(-)^{\omega}:\Cat_{\st}^{\dual}\to\Cat^{\perf}$ commutes with filtered colimits & The functor $Q(-):\comphaus\to\profin$ commutes with cofiltered limits\\
\hline
The inclusion $\Cat_{\st}^{\cg}\to\Cat_{\st}^{\dual}$ commutes with infinite products & The inclusion $\profin\to\comphaus$ commutes with infinite coproducts\\
\hline
In $\Cat_{\st}^{\dual}$ sequential colimits do not commute with finite limits & In $\comphaus$ sequential limits do not commute with finite colimits\\
\hline
Localizations (=quotient functors=epimorphisms) in $\Cat_{\st}^{\dual}$ & Injective maps (=monomorphisms) in $\comphaus$\\
\hline
Fully faithful functors (=monomorphisms) in $\Cat_{\st}^{\dual}$ & Surjective maps (=epimorphisms) in $\comphaus$\\
\hline
Localizations are stable under pullbacks in $\Cat_{\st}^{\dual}$ & Injective maps are stable under pushouts in $\comphaus$\\
\hline
Fully faithful functors are stable under pushouts in $\Cat_{\st}^{\dual}$ & Surjective maps are stable under pullbacks in $\comphaus$\\
\hline
The non-full inclusion $\Cat_{\st}^{\dual}\to\Pr^L_{\st}$ is conservative and commutes with colimits & The (faithful) forgetful functor $\comphaus\to\Set$ is conservative and commutes with limits\\
\hline
Pullbacks of localizations are preserved by the functor $\Cat_{\st}^{\dual}\to\Pr^L_{\st}$ & Pushouts of injective maps are preserved by the forgetful functor $\comphaus\to\Set$\\
\hline
\end{longtable}

Below we prove the statements from the right column of Table \ref{tab:analogy} which are non-standard. Consider the category $\Ban_1$ of Banach vector spaces over $\R,$ where the morphisms are contractive linear maps (i.e. the bounded maps of norm $\leq 1$). We have a functor $\comphaus^{op}\to\Ban_1,$ $X\mapsto C(X;\R).$

\begin{lemma}\label{lem:axioms_for_Ban_1} The category $\Ban_1$ satisfies weak (AB4*) and (AB6).\end{lemma}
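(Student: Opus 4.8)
The strategy is to exhibit $\Ban_1$ as a reflective (or coreflective) subcategory of a category where the relevant axioms are manifestly true, or alternatively to compute the products and filtered colimits in $\Ban_1$ directly and check the comparison map by hand. The cleanest route is the direct one: I will first recall the explicit descriptions of infinite products and of filtered colimits in $\Ban_1$, then verify (AB4*) (the product of epimorphisms is an epimorphism, or in the version used in this paper the product of short exact sequences is short exact) and finally (AB6) (products distribute over filtered colimits).

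First I would describe the limits and colimits. The product $\prod_{i\in I} V_i$ in $\Ban_1$ is the $\ell^\infty$-product: the space of bounded families $(v_i)_i$ with $v_i\in V_i$, normed by $\sup_i\|v_i\|$. The coproduct is the $\ell^1$-sum. A filtered colimit $\colim_{j} W_j$ in $\Ban_1$ is obtained by taking the colimit of the underlying seminormed spaces (a quotient of $\coprod_j W_j$ by the relations coming from the transition maps), equipping it with the induced seminorm, then separating and completing. The key quantitative fact, which I would isolate as a sublemma, is that for a filtered colimit the seminorm on an element coming from stage $W_{j_0}$ is $\inf_{j\ge j_0}\|(\text{image in }W_j)\|$; in particular a morphism $W_{j_0}\to W_j$ that is norm-decreasing towards the colimit controls the colimit norm. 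This is where one uses that the index category is filtered.

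Next, for (AB4*) in the form stated (the product of epimorphisms is an epimorphism): an epimorphism in $\Ban_1$ is a map with dense image, or — in the ``short exact sequence'' formulation appropriate to the analogy with $\Cat_{\st}^{\dual}$ — a quotient map in the sense that the induced map on the quotient by the kernel is an isometric isomorphism onto a dense subspace. Given quotient maps $f_i:V_i\twoheadrightarrow W_i$, I would check that $\prod_i f_i:\prod_i V_i\to\prod_i W_i$ (both $\ell^\infty$-products) is again a quotient map: its kernel is $\prod_i\ker f_i$, and one must show that every $(w_i)_i$ with $\sup\|w_i\|<\infty$ lifts, up to arbitrarily small error, to a bounded family; this is immediate since each $w_i$ lifts to some $v_i$ with $\|v_i\|\le\|w_i\|+\varepsilon$, and $\sup_i(\|w_i\|+\varepsilon)<\infty$. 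For (AB6) I would write the canonical comparison map $\colim_{(j_i)_i\in\prod_i J_i}\prod_i V_{j_i}\to\prod_i\colim_{j_i} V_{j_i}$ and show it is an isometric isomorphism: surjectivity onto a dense subspace uses that a bounded family of elements $(x_i)_i$, each living at some finite stage of its own filtered colimit, can be realized simultaneously because $\prod_i J_i$ is filtered (one picks a compatible choice of stages); injectivity/isometry uses the sublemma identifying the colimit seminorm as an infimum over later stages, combined with the fact that the $\ell^\infty$-norm of a family is the supremum, and $\sup$ of infima behaves correctly precisely because we are allowed to move all coordinates to a common later stage in $\prod_i J_i$. The formal bookkeeping mirrors the proof of Proposition \ref{prop:products_in_Ind_C} and Proposition \ref{prop:tensor_product_of_products}, with $\Ab$ or $\Sp$ replaced by $\Ban_1$ and ``direct limit'' replaced by the Banach-space colimit.

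\textbf{Main obstacle.} The delicate point is entirely analytic rather than categorical: controlling norms under the completion/separation step in the filtered colimit. One must be careful that the naive colimit seminorm can be degenerate and that completion is needed, and that the ``$\sup$ of infima equals infimum of $\sup$'' manipulation underlying (AB6) is valid — this is exactly where the filteredness of $\prod_i J_i$ is used, and where a careless argument would break (it genuinely fails for non-filtered colimits, just as strong (AB5) fails). I expect the cleanest way to handle this is to prove the sublemma on colimit norms first and then feed it mechanically into both verifications, so that the $\varepsilon$-bookkeeping is done once. An alternative that sidesteps some of this is to use the duality $V\mapsto V^{*}$ (dual Banach space) together with known facts about $\Ban_\infty$-type categories, but the direct computation is more self-contained and matches the style of the surrounding results.
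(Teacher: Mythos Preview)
For (AB6) your route differs from the paper's. You verify the comparison map directly via norm estimates and the ``$\sup$ of infima equals $\inf$ of sups'' identity; the paper instead observes that the completion functor $\seminorm_1\to\Ban_1$ commutes with products and filtered colimits, and that $\seminorm_1$ is compactly assembled (Example~\ref{ex:seminorm}), hence satisfies (AB6) already. Both approaches are correct; the paper's is shorter since it recycles the general fact that compactly assembled categories satisfy (AB6), while yours is self-contained but needs the $\varepsilon$-bookkeeping you anticipate.

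For weak (AB4*) there is a real gap in your argument. You correctly note that epimorphisms in $\Ban_1$ are the dense-image maps, but the lifting step you invoke --- ``each $w_i$ lifts to some $v_i$ with $\|v_i\|\le\|w_i\|+\varepsilon$'' --- holds only for \emph{metric quotient maps} (contractions inducing an isometry $V/\ker f\cong W$), not for arbitrary epimorphisms. These classes genuinely differ: $f_n:\R\to\R$, $f_n(x)=x/n$, is a surjective contraction, hence an epimorphism, yet $\prod_n f_n:\ell^\infty\to\ell^\infty$ has image contained in $c_0$ and is therefore \emph{not} an epimorphism. So weak (AB4*), read literally as ``products of epimorphisms are epimorphisms'', is actually false in $\Ban_1$, and your argument cannot be completed to prove it. The paper's proof elides this as well, writing only ``it follows''. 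The downstream application to $\comphaus^{op}$ is unharmed, because the maps $C(Y)\to C(X)$ arising there are metric quotient maps by Tietze, and your argument does correctly show that products of those remain epimorphisms.
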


\begin{proof}The product in $\Ban_1$ of a family $\{(V_i,||\cdot ||_i)\}_{i\in I}$ is given by the subspace of the naive product, formed by elements $(v_i)_{i\in I}$ such that $\sup_{i\in I}||v_i||_i<\infty,$ with the norm given by $$||(v_i)_{i\in I}||=\sup_i ||v_i||_i.$$ Epimorphisms in $\Ban_1$ are maps with a dense image. It follows that $\Ban_1$ satisfies weak (AB4*), i.e. the class of epimorphisms is closed under products. 

Filtered colimits in $\Ban_1$ are obtained by taking the completion (of the separation of) of the naive filtered colimit. Here the norm on the naive (filtered) colimit 
$\indlim[j\in J]^{naive}V_j$ is defined by $$||v||=\inf_{(j\in J,w\in V_j,w\mapsto v)}||w||_j,\quad v\in \indlim[j\in J]^{naive}V_j.$$
To show that (AB6) holds in $\Ban_1,$ recall the presentable compactly assembled category $\seminorm_1$ (of seminormed vector spaces and contractive maps) from Example \ref{ex:seminorm}. Since $\seminorm_1$ is compactly assembled, it satisfies (AB6). It remains to observe that the completion functor $\seminorm_1\to\Ban_1$ commutes with all colimits and with infinite products.\end{proof}

\begin{lemma}\label{lem:cont_functions_properties} The functor $C(-;\R):\comphaus^{op}\to\Ban_1$ is conservative, detects epimorphisms and commutes with products and with filtered colimits.\end{lemma}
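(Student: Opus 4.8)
The statement asserts four properties of the functor $C(-;\R):\comphaus^{op}\to\Ban_1$: (i) conservativity, (ii) detection of epimorphisms, (iii) commutation with products (i.e.\ sends coproducts in $\comphaus$ to products in $\Ban_1$), and (iv) commutation with filtered colimits (i.e.\ sends cofiltered limits in $\comphaus$ to filtered colimits in $\Ban_1$). The plan is to treat these one at a time, using classical facts about compact Hausdorff spaces and the explicit descriptions of limits and colimits in $\Ban_1$ recorded in the proof of Lemma~\ref{lem:axioms_for_Ban_1}.

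First I would handle (i) and (ii) together using Urysohn's lemma. A continuous map $f\colon X\to Y$ of compact Hausdorff spaces is an epimorphism in $\comphaus^{op}$ precisely when $f$ is injective (as recorded in Table~\ref{tab:analogy}); I would show that $f$ is injective if and only if $C(f;\R)\colon C(Y;\R)\to C(X;\R)$ has dense image, which is exactly the condition for a morphism to be an epimorphism in $\Ban_1$. The ``only if'' direction is the Stone--Weierstrass theorem: if $f$ is injective then $C(Y;\R)\to C(X;\R)$ identifies $C(Y;\R)$ with a subalgebra of $C(X;\R)$ that separates points of $X$ (injectivity of $f$) and contains constants, hence is dense. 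The ``if'' direction: if $f$ is not injective, say $f(x_1)=f(x_2)$ with $x_1\ne x_2$, then every function in the image of $C(f;\R)$ takes equal values at $x_1$ and $x_2$, and by Urysohn there is $g\in C(X;\R)$ with $g(x_1)\ne g(x_2)$, so $g$ is at distance $\ge$ (half that gap) from the image, which is therefore not dense. This proves (ii). For conservativity (i): an isomorphism in $\comphaus^{op}$ is a homeomorphism; if $C(f;\R)$ is an isomorphism in $\Ban_1$ (an isometric bijection) then in particular it is injective and has dense image, so by the equivalence just established $f$ is both injective and surjective (surjectivity because the image $f(X)\subset Y$ is closed, being compact, and if it were proper Urysohn would produce a nonzero function vanishing on $f(X)$, i.e.\ a nonzero element of the kernel of $C(f;\R)$, contradicting injectivity of $C(f;\R)$), hence a continuous bijection of compact Hausdorff spaces, hence a homeomorphism.

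Next I would do (iii). A coproduct in $\comphaus$ of a family $\{X_i\}_{i\in I}$ is the Stone--\v Cech compactification $\beta(\coprod_i X_i)$ of the disjoint union; I claim $C(\beta(\coprod_i X_i);\R)$ is the product $\prod_i C(X_i;\R)$ in $\Ban_1$, which by the description in Lemma~\ref{lem:axioms_for_Ban_1} is the space of bounded families $(g_i)_i$, $g_i\in C(X_i;\R)$, with sup norm. This is immediate: by the universal property of $\beta$, continuous real functions on $\beta(\coprod_i X_i)$ correspond to \emph{bounded} continuous real functions on $\coprod_i X_i$, and a continuous function on a disjoint union is just a compatible family of functions on the pieces, bounded overall iff bounded uniformly; the norm identification is clear, and functoriality/coherence of the isomorphism is routine. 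Finally, for (iv): a cofiltered limit $X=\prolim_j X_j$ in $\comphaus$ is the closed subspace of $\prod_j X_j$ cut out by the compatibility conditions; the classical fact is that $C(X;\R)=\overline{\indlim_j C(X_j;\R)}$, the closure (equivalently, completion of the separation) of the naive filtered colimit of the $C(X_j;\R)$ inside $C(X;\R)$. The inclusion of the colimit is dense by Stone--Weierstrass: the union of the images of the $C(X_j;\R)$ is a subalgebra of $C(X;\R)$ containing constants, and it separates points of $X$ because two distinct points of a cofiltered limit are distinguished already at some finite stage $X_j$. Comparing with the description of filtered colimits in $\Ban_1$ from Lemma~\ref{lem:axioms_for_Ban_1} (completion of the naive colimit), this gives exactly $C(\prolim_j X_j;\R)\cong\indlim_j C(X_j;\R)$ in $\Ban_1$.

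\textbf{Main obstacle.} None of the individual steps is deep; the one requiring the most care is verifying that the isomorphisms in (iii) and (iv) are genuinely isomorphisms \emph{in $\Ban_1$} with the correct norm and that they are natural, i.e.\ that the comparison maps produced by the universal properties are isometric and coherent — in particular checking that the sup-norm on $\prod_i C(X_i;\R)$ as defined in Lemma~\ref{lem:axioms_for_Ban_1} matches the sup-norm on $C(\beta(\coprod_i X_i);\R)$, and that the ``completion of the naive colimit'' norm on $\indlim_j C(X_j;\R)$ matches the subspace norm from $C(\prolim_j X_j;\R)$ (this last point is where the density argument via Stone--Weierstrass does the real work). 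These are standard functional-analytic verifications, so I would state them and indicate the Stone--Weierstrass/Urysohn inputs rather than belabor the estimates.
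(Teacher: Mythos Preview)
Your treatment of conservativity, epimorphism detection, and commutation with products is correct and close to the paper's. (A small difference: for ``detects epimorphisms'' the paper invokes the Tietze extension theorem, which really shows that $C(-;\R)$ \emph{preserves} epimorphisms by making $C(f;\R)$ surjective when $f$ is injective; your Urysohn/Stone--Weierstrass argument giving the full iff is at least as good.)

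There is, however, a genuine gap in your argument for (iv). You correctly identify as the main obstacle that the seminorm on the naive colimit $\indlim_j C(X_j;\R)$ must agree with the restriction norm from $C(X;\R)$, but you then say Stone--Weierstrass ``does the real work'' for this. It does not: Stone--Weierstrass only gives density of the image in $C(X;\R)$, and a contractive map with dense image need not induce an isomorphism on completions. What is needed is that for $g\in C(X_j;\R)$ one has
\[
\inf_{k\ge j}\sup_{y\in\im(X_k\to X_j)}|g(y)|=\sup_{y\in\im(X\to X_j)}|g(y)|,
\]
i.e.\ the map from the naive colimit to $C(X;\R)$ is isometric. The paper proves this by the compactness fact that for any open neighborhood $U\supset\im(X\to X_j)$ there exists $k\ge j$ with $\im(X_k\to X_j)\subset U$ (equivalently, $\im(X\to X_j)=\bigcap_{k\ge j}\im(X_k\to X_j)$). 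This step uses neither Urysohn nor Stone--Weierstrass, and without it the argument is incomplete.
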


\begin{proof} The functor $C(-;\R)$ is conservative by Urysohn's lemma. By Tietze extension theorem, the functor $C(-;\R)$ detects epimorphisms (and also detects regular epimorphisms).

Now, the product of $\{X_i\}_{i\in I}$ in $\comphaus^{op}$ is given by $\beta(\coprod_i X_i)$ -- Stone-\v{C}ech compactification of the disjoint union. For any topological space $Y,$ continuous functions on $\beta Y$ are identified with bounded continuous functions on $Y.$ It follows that the functor $C(-;\R)$ commutes with products.

Finally, consider a codirected system $J^{op}\to \comphaus,$ $j\mapsto X_j,$ where $J$ is directed, and put $X=\prolim[j]X_j.$ By Stone-Weierstrass theorem, the union of the images $\im(C(X_j;\R)\to C(X;\R))$ is a dense subalgebra. It remains to observe that the map from the naive colimit $\indlim[j] C(X_j;\R)$ to $C(X;\R)$ is isometric. Indeed, this follows from the fact that for any $j\in J$ and for any neighborhood of $U\supset \im(X\to X_j)$ there is some $k\geq j$ such that $\im(X_k\to X_j)\subset U.$ It follows that the functor $C(-;\R)$ commutes with filtered colimits.
\end{proof}

\begin{cor} The category $\comphaus^{op}$ satisfies weak (AB4*) and (AB6).\end{cor}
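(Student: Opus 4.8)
The plan is to transport the two axioms from the category $\Ban_1$ to $\comphaus^{op}$ along the functor $C(-;\R)\colon\comphaus^{op}\to\Ban_1$, using the properties established in Lemmas~\ref{lem:axioms_for_Ban_1} and~\ref{lem:cont_functions_properties}. The key observation is that $C(-;\R)$ is conservative, detects epimorphisms, and commutes with products and filtered colimits, so it reflects exactly the kind of diagram-theoretic conditions that weak (AB4*) and (AB6) assert. Since both axioms are statements about compatibility of products with epimorphisms (resp. filtered colimits) and $\comphaus^{op}$ is complete and has filtered colimits (it has all limits and colimits, being equivalent to an accessible localization of presheaves, or one may simply cite that $\comphaus$ has all small limits and colimits), the required products and filtered colimits exist in $\comphaus^{op}$, and it remains only to check the comparison maps are epimorphisms resp. isomorphisms.

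First I would verify weak (AB4*). Let $\{f_i\colon X_i\to Y_i\}_{i\in I}$ be a family of epimorphisms in $\comphaus^{op}$ (i.e. a family of injective continuous maps $Y_i\to X_i$, but it is cleaner to argue abstractly). Form the product $\prod_i f_i\colon \prod_i X_i\to\prod_i Y_i$ in $\comphaus^{op}$. Applying $C(-;\R)$ and using that it commutes with products, we get the map $\prod_i C(X_i;\R)\to\prod_i C(Y_i;\R)$, which is the product in $\Ban_1$ of the maps $C(f_i;\R)$. Each $C(f_i;\R)$ is an epimorphism in $\Ban_1$ since $C(-;\R)$ preserves epimorphisms (being a right adjoint-like functor preserving limits, or directly: an injection of compact Hausdorff spaces induces a surjection, hence a dense-image map, on function algebras by Tietze). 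By weak (AB4*) in $\Ban_1$ (Lemma~\ref{lem:axioms_for_Ban_1}), the product map is an epimorphism in $\Ban_1$. Since $C(-;\R)$ detects epimorphisms, $\prod_i f_i$ is an epimorphism in $\comphaus^{op}$.

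Next I would verify (AB6). Given cofiltered diagrams in $\comphaus$ indexed so as to match Definition of (AB6) in $\comphaus^{op}$ (i.e. filtered diagrams in $\comphaus^{op}$), form both sides of the comparison map in $\comphaus^{op}$: the product of the filtered colimits and the filtered colimit of the products. Applying $C(-;\R)$, which commutes with both products and filtered colimits, turns this comparison map into the corresponding comparison map in $\Ban_1$, which is an isomorphism by (AB6) in $\Ban_1$ (Lemma~\ref{lem:axioms_for_Ban_1}). Since $C(-;\R)$ is conservative, the original comparison map is an isomorphism in $\comphaus^{op}$. The main obstacle, and the only real point requiring care, is bookkeeping of the indexing and direction of the diagrams — (AB6) and weak (AB4*) are conditions in $\comphaus^{op}$, hence about \emph{coproducts}/\emph{limits} in $\comphaus$, so one must be careful that ``product in $\comphaus^{op}$'' means coproduct in $\comphaus$ (Stone--\v{C}ech compactification of the disjoint union) and ``filtered colimit in $\comphaus^{op}$'' means cofiltered limit in $\comphaus$; Lemma~\ref{lem:cont_functions_properties} is already phrased to supply exactly the commutations needed on the $\comphaus$ side, so once the translation is set up correctly the argument is purely formal.
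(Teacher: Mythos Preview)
Your proof is correct and is exactly the argument the paper intends: its own proof reads ``This follows directly from Lemmas~\ref{lem:axioms_for_Ban_1} and~\ref{lem:cont_functions_properties},'' and you have simply unpacked that sentence. The one point you were careful about---that $C(-;\R)$ must \emph{preserve} epimorphisms, not merely detect them, for the weak (AB4*) argument to go through---is handled by your Tietze remark, which is implicit in the paper's proof of Lemma~\ref{lem:cont_functions_properties}.
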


\begin{proof} This follows directly from Lemmas \ref{lem:axioms_for_Ban_1} and \ref{lem:cont_functions_properties}. 
\end{proof} 

\begin{prop}The functor $\profin\to\comphaus$ commutes with coproducts.\end{prop}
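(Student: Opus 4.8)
The plan is to show that the coproduct of a family of profinite sets, formed in $\comphaus$, is again profinite. Since $\profin\subset\comphaus$ is a \emph{full} subcategory, this is enough: if $C$ denotes the coproduct of $\{X_i\}_{i\in I}$ in $\comphaus$ and $C$ happens to lie in $\profin$, then for any $Y\in\profin$ we have $\Hom_{\profin}(C,Y)=\Hom_{\comphaus}(C,Y)=\prod_i\Hom_{\comphaus}(X_i,Y)=\prod_i\Hom_{\profin}(X_i,Y)$, so that $C$ is also the coproduct in $\profin$ and the canonical comparison map $\coprod_i^{\profin}X_i\to\coprod_i^{\comphaus}X_i$ is an isomorphism.

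First I would recall the description of coproducts in $\comphaus$. The category $\comphaus$ is a reflective subcategory of the category of topological spaces, the reflector being the Stone--\v{C}ech compactification $\beta$; coproducts of topological spaces are topological disjoint unions. Hence the coproduct in $\comphaus$ of a family $\{X_i\}_{i\in I}$ of compact Hausdorff spaces is $\beta(Y)$, where $Y=\coprod_i X_i$ carries the disjoint-union topology. So it remains to prove that $\beta(Y)$ is totally disconnected whenever each $X_i$ is.

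Next I would record two elementary properties of $Y=\coprod_i X_i$. It is normal, since a topological disjoint union of normal (here: compact Hausdorff) spaces is normal. It is strongly zero-dimensional: given disjoint closed subsets $A,B\subseteq Y$, for each $i$ the sets $A\cap X_i$ and $B\cap X_i$ are disjoint closed subsets of the Stone space $X_i$, hence separated by some clopen $V_i\subseteq X_i$; as each $X_i$ is clopen in $Y$, the set $V=\coprod_i V_i$ is clopen in $Y$ with $A\subseteq V$ and $B\cap V=\emptyset$. Then I would conclude that $\beta(Y)$ is zero-dimensional, hence (being compact Hausdorff) totally disconnected and thus profinite. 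For distinct $p,q\in\beta(Y)$ choose $f\in C(\beta(Y);[0,1])$ with $f(p)=0$ and $f(q)=1$, and apply the previous paragraph to the disjoint closed sets $A=(f|_Y)^{-1}([0,\tfrac13])$ and $B=(f|_Y)^{-1}([\tfrac23,1])$, obtaining a clopen $V\subseteq Y$ with $A\subseteq V$ and $B\cap V=\emptyset$. Since $V$ and $Y\setminus V$ are completely separated in $Y$ (via the indicator of $V$), their closures in $\beta(Y)$ are disjoint and cover $\beta(Y)$, so $\overline{V}$ (closure in $\beta(Y)$) is clopen in $\beta(Y)$; and since $f<\tfrac23$ on $V$ while $f>\tfrac13$ on $Y\setminus V$, one gets $q\notin\overline{V}$ and $p\notin\overline{Y\setminus V}$, whence $p\in\overline{V}$. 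Thus clopen subsets separate the points of $\beta(Y)$, so $\beta(Y)\in\profin$, as required.

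The only delicate point is the last step, i.e. the standard fact that $\beta$ of a normal strongly zero-dimensional space remains zero-dimensional; it rests entirely on the behaviour of completely separated sets under Stone--\v{C}ech compactification (their closures in $\beta(Y)$ stay disjoint). Everything else is formal: the reduction via fullness of $\profin\subset\comphaus$, the identification of the $\comphaus$-coproduct with $\beta(\coprod_i X_i)$, and the verification that $\coprod_i X_i$ is normal and strongly zero-dimensional.
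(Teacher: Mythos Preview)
Your proof is correct and shares the paper's reduction to showing that $\beta(\coprod_i X_i)$ is totally disconnected, but the two arguments diverge from there. The paper proceeds analytically: it shows that locally constant functions are dense in $C(\beta(\coprod_i X_i);\R)$ by approximating each $f_i$ by a locally constant $g_i$ taking values in a \emph{single} finite set independent of $i$, so that the glued map $g$ extends to a map from $\beta(\coprod_i X_i)$ into that finite set and is therefore locally constant on the compactification. You proceed point-set-topologically: you verify that $Y=\coprod_i X_i$ is normal and strongly zero-dimensional, and then invoke (and sketch) the classical fact that $\beta$ of such a space is zero-dimensional. Your route is perhaps more conceptual in that it isolates a named property (strong zero-dimensionality) whose preservation under $\beta$ is textbook material; the paper's route avoids quoting this and instead makes direct use of the function-algebra description $C(\beta Y;\R)\cong C^b(Y;\R)$, which fits the ambient Banach-space framework of that appendix (where the preceding lemmas are about $C(-;\R)$). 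Both arguments are short and self-contained; neither has a real advantage in generality here.
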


\begin{proof}Consider a family of profinite sets $\{X_i\}_{i\in I}.$ We need to show that the space $\beta(\coprod_i X_i)$ is totally disconnected. It suffices to show that the locally constant $\R$-valued functions on $\beta(\coprod_i X_i)$ are dense in the space of all continuous functions.

Consider a function $f:\beta(\coprod_i X_i)\to\R$ corresponding to a family of continuous functions $f_i:X_i\to\R,$ such that for some $C>0$ we have $||f_i||\leq C,$ $i\in I.$ Take some $\veps>0,$ and choose an integer $N>\frac{C}{\veps}.$ Since $X_i$ is profinite, we can find a locally constant function $g_i:X_i\to\R$ such that $||g_i-f_i||\leq \veps$ and $g_i$ takes values in the finite set $\{\frac{kC}{N}: -N\leq k\leq N\}.$ Then the corresponding function $g:\beta(\coprod_i X_i)\to\R$ is also locally constant and $||g-f||\leq\veps.$\end{proof}

\begin{prop}\label{prop:comphaus_op_presentability} The category $\comphaus^{op}$ is generated by colimits by the closed unit interval $[0,1],$ which is an $\omega_1$-compact object. In particular, the category $\comphaus^{op}$ is $\omega_1$-presentable, and the $\omega_1$-compact objects are precisely the metrizable spaces.\end{prop}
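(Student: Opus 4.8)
\textbf{Proof plan for Proposition \ref{prop:comphaus_op_presentability}.}

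The plan is to exploit the duality with the category $\Ban_1$ of Banach spaces over $\R$ via the functor $C(-;\R)\colon \comphaus^{op}\to\Ban_1$, exactly as the excerpt uses the functor $\cC\mapsto\Sh_{\geq 0}(\R;\Sp)$ and the category of strongly continuous functors to prove Theorem \ref{th:Urysohn_dualizable}. Concretely, I would proceed in three stages: (1) show $[0,1]$ is $\omega_1$-compact in $\comphaus^{op}$; (2) show that the metrizable compact spaces are exactly those generated from $[0,1]$ by $\omega_1$-small colimits, which simultaneously gives that every object of $\comphaus^{op}$ is an $\omega_1$-filtered colimit of metrizable ones; (3) conclude presentability.

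For step (1): a metrizable compact space $X$ admits a closed embedding into the Hilbert cube $[0,1]^{\N}$, hence $X$ is a cofiltered limit (in $\comphaus$) of metrizable spaces built from finitely many copies of $[0,1]$, i.e.\ an $\omega_1$-small limit. More usefully, $\omega_1$-compactness of $[0,1]$ in $\comphaus^{op}$ means: if $X=\prolim_{j\in J}X_j$ is a cofiltered (equivalently $\omega_1$-codirected) limit in $\comphaus$ and $X\to[0,1]$ is a map, then it factors through some $X_j$. Since maps to $[0,1]$ are bounded continuous functions, this is precisely the statement that $C([0,1];\R)$ — which is the free Banach space on a single generator of norm $\le 1$, or more simply is generated under $\omega_1$-small colimits in $\Ban_1$ from $\R$ — is $\omega_1$-compact in $\Ban_1$, combined with the fact (Lemma \ref{lem:cont_functions_properties}) that $C(-;\R)$ commutes with filtered colimits. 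I would first verify directly that $C([0,1];\R)$, being separable, is $\omega_1$-compact in $\Ban_1$: any map from it into a filtered colimit $\indlim_j V_j$ sends a countable dense subset into the colimit, and by $\omega_1$-directedness lifts through a single $V_{j_0}$, then one checks the lift is contractive after passing further along. Transporting this through $C(-;\R)$ gives $\omega_1$-compactness of $[0,1]$.

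For step (2) and (3): every compact Hausdorff space $X$ is the cofiltered limit of its metrizable quotients (equivalently, $C(X;\R)$ is the filtered union of its separable closed subalgebras, each of which has the form $C(Y;\R)$ for $Y$ metrizable compact by Gelfand duality / Stone–Weierstrass); this family is $\omega_1$-directed because a countable collection of separable subalgebras generates a separable one. Hence $\comphaus^{op}$ is $\omega_1$-generated by the metrizable spaces, and each metrizable compact space is an $\omega_1$-small colimit of copies of $[0,1]$ (embed in the Hilbert cube and use that $\comphaus^{op}$ has all colimits — this is where I would invoke the existence of colimits in $\comphaus^{op}$, i.e.\ limits in $\comphaus$, together with the weak (AB4*) and (AB6) axioms already established). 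Since $[0,1]$ is $\omega_1$-compact and generates under colimits, the standard recognition theorem for locally presentable categories yields that $\comphaus^{op}$ is $\omega_1$-presentable, and its $\omega_1$-compact objects are the $\omega_1$-small-colimit retracts of $[0,1]$, which are precisely the metrizable compact spaces (one direction is the Hilbert cube embedding; the other is that a retract of a metrizable space is metrizable, plus that $\omega_1$-small colimits of metrizable spaces in $\comphaus^{op}$ stay metrizable because the corresponding $\omega_1$-small limits in $\comphaus$ have separable algebras of functions).

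\textbf{Main obstacle.} The delicate point is step (2): controlling colimits in $\comphaus^{op}$ well enough to write a metrizable space as an honest $\omega_1$-small colimit of intervals, and especially to show that $\omega_1$-small colimits of metrizable spaces remain metrizable — equivalently, that an $\omega_1$-small limit in $\comphaus$ of spaces with separable function algebras again has separable function algebra. This requires knowing that the colimit in $\comphaus^{op}$ is computed as an appropriate quotient/completion (so that $C(-;\R)$ of the colimit is a subquotient of an $\omega_1$-indexed construction built from separable Banach spaces, hence separable), which in turn rests on the precise description of colimits in $\comphaus^{op}$ via Stone–\v Cech compactification of coequalizers. I expect the cleanest route is to do everything on the Banach side: prove $\Ban_1$ is $\omega_1$-presentable with $\omega_1$-compact objects the separable Banach spaces, then observe $C(-;\R)$ is fully faithful with image the $C^*$-algebra-like objects and use Gelfand duality to descend presentability to $\comphaus^{op}$.
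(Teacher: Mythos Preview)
Your plan is correct and close in spirit to the paper's proof, but you are working harder than necessary at precisely the point you flag as the ``main obstacle''. The paper dispatches the closure of metrizable spaces under $\omega_1$-small colimits in $\comphaus^{op}$ purely topologically: such colimits are $\omega_1$-small limits in $\comphaus$, and metrizable compact spaces are visibly closed under countable products and closed subspaces (hence under fiber products and equalizers). No detour through $\Ban_1$ or Gelfand duality is needed there. For generation, the paper likewise avoids separable subalgebras and Gelfand duality: Urysohn's lemma gives directly that $C(-,[0,1])\colon\comphaus^{op}\to\Set$ is conservative, so $[0,1]$ generates under colimits. The $\omega_1$-compactness of $[0,1]$ is argued as you suggest, via $C(-;\R)$ and the observation that $\omega_1$-filtered colimits in $\Ban_1$ are computed naively. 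Finally, the paper makes your ``embed in the Hilbert cube'' step precise by producing an explicit coequalizer presentation: embed $X\hookrightarrow[0,1]^{\N}$, form the pushout $Y=[0,1]^{\N}\sqcup_X[0,1]^{\N}$ in $\comphaus$ (still metrizable), embed $Y\hookrightarrow[0,1]^{\N}$, and read off $X$ as the equalizer of the two resulting self-maps of $[0,1]^{\N}$. Your version ``embed and use that $\comphaus^{op}$ has colimits'' does not yet supply a source built from intervals, so this explicit step is the one concrete ingredient you are missing.
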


\begin{proof} To see that the object $[0,1]$ is $\omega_1$-compact in $\comphaus^{op},$ it suffices to observe that $\omega_1$-filtered colimits in $\Ban_1$ can be computed naively, i.e. the naive colimit is already complete (and separated). By Urysohn's lemma, the functor $C(-,[0,1]):\comphaus^{op}\to\Set$ is conservative. Hence, the object $[0,1]$ generates the category $\comphaus^{op}$ by colimits.

Now, the full subcategory of metrizable spaces in $\comphaus$ contains the final object and is closed under fiber products and countable products. Hence, metrizable spaces in $\comphaus^{op}$ are closed under $\omega_1$-small colimits. It remains to show that they are generated by $[0,1]$ via $\omega_1$-small colimits.


Let $X$ be a metrizable compact Hausdorff space. Choose an injective map $X\to[0,1]^{\N}.$ Denote by $Y$ the pushout $[0,1]^{\N}\sqcup_X [0,1]^{\N}.$ Then $Y$ is also metrizable,  
and we choose an injective map $Y\to [0,1]^{\N}.$ Composing it with the two maps $[0,1]^{\N}\to Y,$ we obtain two self-maps $f,g:[0,1]^{\N}\to [0,1]^{\N}.$ The equalizer $\Eq(f,g)$ is isomorphic to $X.$\end{proof}

\begin{prop}The functor $Q(-)\cong\pi_0(-):\comphaus\to\profin$ commutes with cofiltered limits.\end{prop}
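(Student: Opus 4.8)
The plan is to reduce the statement to a known fact about compact Hausdorff spaces using the machinery already established. First I would recall that for a compact Hausdorff space $X$, the quasi-components coincide with the connected components, and $\pi_0(X)$ with its quotient topology is a profinite set. So the claim is that if $X = \prolim[j\in J^{op}] X_j$ is a cofiltered limit of compact Hausdorff spaces (with $J$ directed), then the natural map $\pi_0(X) \to \prolim[j]\pi_0(X_j)$ is a homeomorphism of profinite sets.

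The key step is to transport the problem to the category $\Ban_1$ via the conservative functor $C(-;\R):\comphaus^{op}\to\Ban_1$, which by Lemma \ref{lem:cont_functions_properties} commutes with filtered colimits. Under this functor, $\pi_0(X)$ corresponds to the Banach algebra of locally constant $\R$-valued functions on $X$ (equivalently $C(\pi_0 X;\R)$), which is the closure of the subalgebra of idempotent-generated functions. Concretely, I would show: (i) the locally constant functions on $X = \prolim[j] X_j$ are exactly the uniform limits of functions pulled back from the $X_j$; (ii) every locally constant function on $X$ is pulled back, up to arbitrarily small uniform error, from a locally constant function on some $X_j$ — this is the analogue of the density argument in the proof that $\profin\to\comphaus$ commutes with coproducts, using that idempotents can be lifted along the transition maps after passing far enough in the system. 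Combining these, $C(\pi_0 X;\R) \cong \indlim[j] C(\pi_0 X_j;\R)$ in $\Ban_1$, i.e. $\pi_0(X) \cong \prolim[j]\pi_0(X_j)$ in $\profin^{op}$ — hence in $\comphaus^{op}$, hence in $\profin$.

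The main obstacle I expect is step (ii): showing that a clopen partition of $X = \prolim[j] X_j$ refines to a clopen partition pulled back from some finite stage. The point is that a single clopen subset $U\subseteq X$ is, by compactness and the definition of the inverse-limit topology, of the form $\bbPr_j^{-1}(V_j)$ for some $j$ and some open $V_j\subseteq X_j$; the difficulty is that $V_j$ need not be clopen. One must pass to a later index $k\geq j$ where the preimage $\bbPr_{jk}^{-1}(V_j)$ becomes clopen — this uses that the images $\bbPr_{k}(X)$ shrink cofinally into any neighborhood of $\bigcap_k \bbPr_{jk}(X_k)$, exactly the fact invoked at the end of the proof of Lemma \ref{lem:cont_functions_properties}. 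A clean way to package this is: the boundary $\partial V_j$ is a closed set disjoint from $\bbPr_j(X)$, so some $X_k$ has $\bbPr_{jk}(X_k)$ disjoint from $\partial V_j$, whence $\bbPr_{jk}^{-1}(V_j)$ is clopen in $X_k$ and pulls back to $U$. Iterating over finitely many clopen sets generating the partition handles the general case.

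Alternatively — and this may be the more economical route — one can argue directly on the topological side: injectivity of $\pi_0(X)\to\prolim[j]\pi_0(X_j)$ amounts to the statement that two points of $X$ lying in the same component of every $X_j$ lie in the same component of $X$, which follows because a compact Hausdorff space's components are the intersections of the clopen sets containing them, together with the clopen-lifting fact above; surjectivity and the fact that the map is a homeomorphism (it is a continuous bijection of compact Hausdorff spaces, hence closed) are then routine. I would present the $\Ban_1$ argument as the main line since it parallels the other entries of Table \ref{tab:analogy}, but the topological argument is available as a sanity check.
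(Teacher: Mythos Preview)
Your reduction to showing that $\indlim[j]\clopen(X_j)\to\clopen(X)$ is an isomorphism of Boolean algebras is precisely the paper's approach, and your injectivity argument (a clopen with empty preimage in $X$ already has empty preimage at some finite stage) matches the paper's second paragraph.

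The gap is in your surjectivity argument. The assertion that $\partial V_j$ is disjoint from $\pi_j(X)$ is false for an arbitrary open $V_j$ with $\pi_j^{-1}(V_j)=U$: take $X_j=[0,1]$ and $X_k=\{0,\tfrac12\}$ for all $k>j$ with inclusions as transition maps, so that $X=\{0,\tfrac12\}$; for $U=\{\tfrac12\}$ the choice $V_j=(0,1)$ gives $0\in\partial V_j\cap\pi_j(X)$. Expressing $U$ alone as the preimage of an open set gives no control over where $\overline{V_j}$ meets $\pi_j(X\setminus U)$. The paper does not start from an arbitrary basic-open presentation of $U$: instead it uses Lemma~\ref{lem:cont_functions_properties} to approximate $\chi_U$ by $\pi_j^*f$ with $\|\pi_j^*f-\chi_U\|\leq\tfrac13$, and then works with the level set $\{f\geq\tfrac23\}$. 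The virtue of this choice is that one obtains, at a single stage $j$, both an open set $\{f>\tfrac12\}$ and a closed set $\{f\geq\tfrac12\}$ with the same preimage $U$, so the obstruction $\{f=\tfrac12\}$ is a \emph{closed} set disjoint from $\pi_j(X)$ --- exactly the hypothesis your shrinking-image step requires. Your argument can be repaired by also writing $X\setminus U$ as the preimage of an open at a common stage, but carrying that through amounts to reconstructing the characteristic-function trick via Urysohn's lemma.
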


\begin{proof}Consider a codirected system $J^{op}\to\comphaus,$ $j\mapsto X_j,$ and put $X=\prolim[j]X_j,$ $\pi_j:X\to X_j,$ $\pi_{kj}:X_k\to X_j$ for $k\geq j.$ Passing to Boolean algebras, we need to show that the map $\indlim[j]\clopen(X_j)\to\clopen(X)$ is an isomorphism. Given an open-closed $U\subset X,$ consider its characteristic function $\chi_U.$ Then for some $j$ we have a continuous function $f:X_j\to \R$ such that $||\pi_j^*f-\chi_U\leq \frac{1}{3}\||.$ Then $\{x: f(x)\geq \frac{2}{3}\}$ is an open-closed subset of $X_j,$ and its preimage in $X$ equals $U.$

Suppose that $V\subset X_j$ is an open-closed subset such that $\pi_j^{-1}(V)=\emptyset.$ Then $\pi_j(X)\subset X_j\setminus V,$ hence for some $k\geq j$ we have $\pi_{kj}(X_k)\subset X_j\setminus V,$ i.e. $\pi_{kj}^{-1}(V)=\emptyset.$\end{proof}

\begin{prop}Sequential limits in $\comphaus$ do not commute with finite colimits.\end{prop}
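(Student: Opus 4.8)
This is the analogue for $\comphaus$ of Corollary~\ref{cor:seq_colimits_finite_limits}, and, exactly as there, it is proved by an explicit example; together with the facts established above (that $\comphaus^{op}$ is $\omega_1$-presentable and satisfies (AB6)) it also shows that $\comphaus^{op}$ is not compactly assembled, i.e. that the strong (AB5) axiom fails in $\comphaus^{op}$. The plan is to exhibit a sequential ($\N^{op}$-indexed) diagram of parallel pairs in $\comphaus$ whose levelwise coequalizers are all a point, while the coequalizer of the limiting parallel pair is $[0,1]$.

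Concretely, I would take $B_n:=[0,1]$ for $n\geq 1$, with all transition maps equal to the identity, and
$$A_n:=S_n=\{(x,y)\in [0,1]^2 : |x-y|\leq 1/n\},$$
with transition maps the inclusions $S_{n+1}\hookrightarrow S_n$; set $f_n:=\pr_1|_{S_n}$ and $g_n:=\pr_2|_{S_n}$. This is an $\N^{op}$-indexed diagram of parallel pairs $f_n,g_n\colon A_n\rightrightarrows B_n$ in $\comphaus$. First one recalls that in $\comphaus$ the coequalizer of a parallel pair $u,v\colon A\rightrightarrows B$ is $B/R$, where $R$ is the smallest closed equivalence relation on $B$ containing $\{(u(a),v(a)):a\in A\}$ (for $B$ compact, a quotient of $B$ is Hausdorff precisely when the corresponding equivalence relation is closed). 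For $\Coeq(f_n,g_n)$ the generating relation is $S_n$ itself; since $[0,1]$ is connected of diameter $1$, the $n$-fold transitive composite of $S_n$ already equals all of $[0,1]^2$, which is closed, so $R_n=[0,1]^2$ and $\Coeq(f_n,g_n)=\pt$. Hence $\prolim[n]\Coeq(f_n,g_n)=\pt$. On the other hand $\prolim[n]B_n=[0,1]$, while $\prolim[n]A_n=\bigcap_n S_n=\Delta_{[0,1]}$, and $\prolim[n]f_n$ and $\prolim[n]g_n$ are both the canonical homeomorphism $\Delta_{[0,1]}\xto{\sim}[0,1]$; in particular they coincide, so $\Coeq(\prolim[n]f_n,\prolim[n]g_n)=[0,1]$. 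The canonical comparison map $\Coeq(\prolim[n]f_n,\prolim[n]g_n)\to\prolim[n]\Coeq(f_n,g_n)$, induced by the levelwise quotient maps $B_n\to\Coeq(f_n,g_n)$ (which form a map of $\N^{op}$-diagrams), is then the unique map $[0,1]\to\pt$, which is not an isomorphism. Therefore $\prolim[n]$ does not preserve this coequalizer, and in particular sequential limits in $\comphaus$ do not commute with finite colimits.

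The main point to get right is the computation of coequalizers in $\comphaus$ as quotients by the smallest closed equivalence relation, together with the elementary observation that the transitive closure of the $1/n$-strip relation on the connected space $[0,1]$ is \emph{already} the full (closed) relation $[0,1]^2$, so that no further Hausdorffification step is needed and $R_n=[0,1]^2$ literally; one should also verify the direction of the comparison map as indicated. Everything else --- that inverse limits of compacta are the evident closed subspaces of the product, that $\bigcap_n S_n=\Delta_{[0,1]}$, and that the constant tower has limit $[0,1]$ --- is routine.
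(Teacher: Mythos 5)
Your proof is correct. It follows the same overall strategy as the paper --- exhibit an explicit $\N^{op}$-indexed tower of parallel pairs in $\comphaus$ whose levelwise coequalizers are points while the coequalizer of the limit pair is $[0,1]$ --- but via a genuinely different example. The paper presents $[0,1]$ as the coequalizer of the kernel pair $Y=X\times_{[0,1]}X\toto X$ of the binary-expansion surjection $X=\{0,1\}^{\N}\to[0,1]$ and then truncates the target to $X_n=\{0,1\}^n$; there the levelwise coequalizers collapse because the two binary expansions of dyadic rationals force adjacent truncated strings to be identified, and one needs to know that this surjection is an effective epimorphism in $\comphaus$ to recover $[0,1]$ in the limit. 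You instead keep the target constant equal to $[0,1]$ and shrink the relation, taking the strips $S_n=\{|x-y|\le 1/n\}$: the verification reduces to the elementary observation that the transitive closure of $S_n$ is already all of $[0,1]^2$ (so each $\Coeq(f_n,g_n)=\pt$ with no Hausdorffification needed), and the limit computation is immediate since $\bigcap_n S_n=\Delta_{[0,1]}$, making the limit pair the identity pair. Both arguments rest on the same description of coequalizers in $\comphaus$ as quotients by the smallest closed equivalence relation containing the generating pairs, which you state and use correctly; your version is arguably more self-contained, at the cost of being specific to this coequalizer rather than exhibiting $[0,1]$ as a quotient of a profinite set.
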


\begin{proof}Consider the surjective map $\{0,1\}^{\N}\to [0,1],$ $(a_n)_{n\geq 0}\mapsto\sum\limits_{n\geq 0}a_n 2^{-n-1}.$ Put $X=\{0,1\}^{\N}$ and $Y=X\times_{[0,1]} X.$ Then $[0,1]\cong \Coeq(Y\toto X).$

Put $X_n=\{0,1\}^n,$ 
Then the diagram $Y\toto X$ is the limit of $(Y\toto X_n)_n,$ but $\Coeq(Y\toto X_n)$ is a one-element set for $n\geq 0.$ Hence, the sequential limits do not commute with coequalizers in $\comphaus.$\end{proof}



\section{K-theory of products of exact infinity-categories}
\label{app:K_theory_products_exact}

In this section we prove that $K$-theory of exact $\infty$-categories commutes with infinite products (Corollary \ref{cor:K_theory_of_exact_cats_comm_with_products}).

\subsection{Products of additive infinity-categories}
\label{ssec:products_of_additive_cats}

Given a small additive $\infty$-category $\cA,$ we denote by $\Stab(\cA)$ its stabilization. By \cite[Theorem 5.3.1, Proposition 6.2.1]{Bon}, if $\cA$ is Karoubi complete, then so is $\Stab(\cA).$ Given $a,b\in\Z,$ $a\leq b,$ we denote by $\Stab(\cA)_{[a,b]}\subset\Stab(\cA)$ the full subcategory generated by $\cA[c],$ $a\leq c\leq b.$ We put $\Stab(\cA)_{(-\infty,b]}=\bigcup\limits_{c\leq b}\Stab(\cA)_{[c,b]},$ and similarly $\Stab(\cA)_{[a,+\infty)}=\bigcup\limits_{c\geq a}\Stab(\cA)_{[a,c]}.$   

The following result is due to A. C\'ordova Fedeli.

\begin{theo}\label{th:Cordova_U_loc}\cite[Proposition 2.10]{Cor} Let $\{\cA_i\}_{i\in I}$ be a collection of Karoubi complete additive small $\infty$-categories. We have an isomorphism in $\Mot^{\loc}:$
$$\cU_{\loc}(\Stab(\prodd[i\in I]\cA_i))\cong \cU_{\loc}(\prodd[i\in I]\Stab(\cA_i)).$$\end{theo}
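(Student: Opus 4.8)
The plan is to reduce the statement about arbitrary additive categories to a statement about compactly generated stable categories, where we can use the machinery developed in Section~\ref{sec:general_theory_dualizable} on products of dualizable categories. First I would pass to $\Ind$-completions: for each $i$ set $\cC_i=\Ind(\Stab(\cA_i))$, a compactly generated stable category with $\cC_i^{\omega}\simeq\Stab(\cA_i)$, and similarly consider the compactly generated category $\cC=\Ind(\Stab(\prod_i\cA_i))$. By Proposition~\ref{prop:products_in_Cat^dual}~2), the functor $\Ind$ commutes with products, so $\prod_i^{\dual}\cC_i\simeq\Ind(\prod_i\Stab(\cA_i))$; hence $\bigl(\prod_i^{\dual}\cC_i\bigr)^{\omega}\simeq\prod_i\Stab(\cA_i)$ and the right-hand side of the desired identity is $\cU_{\loc}^{\cont}\bigl(\prod_i^{\dual}\cC_i\bigr)$, while the left-hand side is $\cU_{\loc}^{\cont}(\cC)$ by Proposition~\ref{prop:F_cont_of_cg}. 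Thus it suffices to produce a strongly continuous functor $\cC\to\prod_i^{\dual}\cC_i$ (the ``diagonal'' coming from the projections $\prod_i\cA_i\to\cA_i$) and show it induces an isomorphism on $\cU_{\loc}^{\cont}$.

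The key point is to control the difference between the two sides using the $t$-structure filtrations $\Stab(\cA)_{[a,b]}$. Here I would use the stability of $\cU_{\loc}^{\cont}$ under short exact sequences together with the fact (Proposition~\ref{prop:AB4*_for_Cat^dual}) that an infinite product of short exact sequences in $\Cat_{\st}^{\dual}$ is again a short exact sequence. The bounded pieces $\Stab(\cA)_{[a,b]}$ are obtained from finitely many shifted copies of $\cA$ by iterated extensions (semi-orthogonal-type gluings), so $\cU_{\loc}^{\cont}$ of $\Ind$ of the product of the bounded pieces matches the product of $\cU_{\loc}^{\cont}$ of the bounded pieces, by additivity (Proposition~\ref{prop:additivity_in_infinite_SOD}) applied compatibly across all indices $i$; the commutation with $\Ind$ of products (Proposition~\ref{prop:products_in_Cat^dual}~2)) again does the bookkeeping here. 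Then one passes to the (co)limit over $a\to-\infty$, $b\to+\infty$: since $\Stab(\cA)=\bigcup_{a,b}\Stab(\cA)_{[a,b]}$, the category $\Ind(\Stab(\cA))$ is a filtered colimit in $\Cat_{\st}^{\dual}$ of the $\Ind(\Stab(\cA)_{[a,b]})$ with strongly continuous fully faithful transition functors (Corollary~\ref{cor:directed_unions}, Proposition~\ref{prop:weak_AB5_Pr^LL}); and $\cU_{\loc}^{\cont}$ commutes with filtered colimits (Proposition~\ref{prop:F_cont_filtered_colimits}). The subtlety is that the indexing of the filtration is $a,b$, the same for all $i\in I$, so one genuinely needs the commutation of $\cU_{\loc}^{\cont}\circ\prod_i^{\dual}$ with this filtered colimit, which again follows from Propositions~\ref{prop:weak_AB5_Pr^LL}, \ref{prop:AB4*_for_Cat^dual}, \ref{prop:F_cont_filtered_colimits}.

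The main obstacle I anticipate is handling the half-bounded and unbounded regimes cleanly: the colimit $\Stab(\cA)=\bigcup_{a}\Stab(\cA)_{[a,b]}$ for fixed $b$ (and then $b\to\infty$) must be taken in a way that is simultaneously compatible over all $i\in I$, and one must check that the diagonal functor $\Ind(\Stab(\prod_i\cA_i))\to\prod_i^{\dual}\Ind(\Stab(\cA_i))$ restricts to an equivalence on the bounded pieces $\Ind\bigl(\prod_i\Stab(\cA_i)_{[a,b]}\bigr)\xto{\sim}\prod_i^{\dual}\Ind(\Stab(\cA_i)_{[a,b]})$; this last equivalence is exactly the content of Proposition~\ref{prop:products_in_Cat^dual}~2) applied to the small categories $\prod_i\Stab(\cA_i)_{[a,b]}$, once one knows that $\Stab(\prod_i\cA_i)_{[a,b]}\simeq\prod_i\Stab(\cA_i)_{[a,b]}$, which is a formal consequence of the fact that stabilization and the generated subcategories are computed pointwise in a product. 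A secondary point to be careful about is that all transition functors in both filtered systems are fully faithful and strongly continuous, so that Corollary~\ref{cor:directed_unions} genuinely identifies the colimits with the ambient categories; this uses that $\Stab(\prod_i\cA_i)$ is Karoubi complete (via \cite{Bon}) and that the bounded subcategories are localizing subcategories generated by the relevant shifted objects. Modulo these checks, stringing together Propositions~\ref{prop:F_cont_of_cg}, \ref{prop:F_cont_filtered_colimits}, \ref{prop:additivity_in_infinite_SOD}, \ref{prop:products_in_Cat^dual}, \ref{prop:AB4*_for_Cat^dual} and \ref{prop:weak_AB5_Pr^LL} yields the isomorphism $\cU_{\loc}(\Stab(\prod_i\cA_i))\cong\cU_{\loc}(\prod_i\Stab(\cA_i))$.
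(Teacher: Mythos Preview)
There is a genuine gap. Your colimit is indexed by \emph{constant} bounds $(a,b)$, and on the two sides this produces different categories. On the left you correctly get $\Stab(\prod_i\cA_i)=\bigcup_{a\leq b}\Stab(\prod_i\cA_i)_{[a,b]}$. On the right, however, $\bigcup_{a\leq b}\prod_i\Stab(\cA_i)_{[a,b]}$ is \emph{not} $\prod_i\Stab(\cA_i)$: it is only the ``uniformly bounded'' subcategory, i.e.\ what the paper denotes $\cC_{[0,0]}$ in~\eqref{eq:C_f_g}. An object $(x_i)_i\in\prod_i\Stab(\cA_i)$ need not have uniformly bounded amplitude in $i$. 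So your argument, if it goes through, proves at best $\cU_{\loc}(\Stab(\prod_i\cA_i))\cong\cU_{\loc}(\cC_{[0,0]})$, which is the trivial half; the content of the theorem is the passage $\cU_{\loc}(\cC_{[0,0]})\cong\cU_{\loc}(\cC)$ from uniformly bounded to arbitrary families.

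The paper handles this by filtering over \emph{all functions} $f:I\to\N$, not just constants: using (AB6) in $\Cat^{\perf}$ one writes $\cC\simeq\indlim_{f}\cC_{[-2f,+\infty)}$ and $\cC_{[0,+\infty)}\simeq\indlim_{f}\cC_{[0,2f]}$. The key input you are missing is the Eilenberg-swindle identity of Lemma~\ref{lem:functors_with_same_class}, which shows $[\Sigma^{2f}]=[\id]$ in $K_0$ of the relevant functor categories; this is what makes the transition maps in these colimits induce isomorphisms on $\cU_{\loc}$. Together with the quotient comparison of Lemma~\ref{lem:equivalence_of_quotients}, this gives $\cU_{\loc}(\cC_{[0,0]})\cong\cU_{\loc}(\cC_{[0,+\infty)})\cong\cU_{\loc}(\cC)$. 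None of the results you cite (Propositions~\ref{prop:products_in_Cat^dual}, \ref{prop:AB4*_for_Cat^dual}, \ref{prop:weak_AB5_Pr^LL}, \ref{prop:F_cont_filtered_colimits}) can substitute for this swindle: they organize the bookkeeping but do not show that varying the bounds non-uniformly across $I$ is invisible to $\cU_{\loc}$.
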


Since we will need an analogous statement for $t$-structures, we recall the proof from \cite{Cor} with some simplifications.

We introduce some notation. Put $\cC=\prodd[i\in I]\Stab(\cA_i).$ Given functions $f,g:I\to\Z,$ $f\leq g,$ we put \begin{equation}\label{eq:C_f_g}\cC_{[f,g]}=\bigcup\limits_{c>0}\prod\limits_i \Stab(\cA_i)_{[f(i)-c,g(i)+c]},\end{equation}
\begin{equation}\label{eq:C_-infty_g}\cC_{(-\infty,g]}=\bigcup\limits_{c>0}\prod\limits_i \Stab(\cA_i)_{(-\infty,g(i)+c]},\end{equation}
\begin{equation}\label{eq:C_f_+infty}\cC_{[f,+\infty)}=\bigcup\limits_{c>0}\prod\limits_i \Stab(\cA_i)_{[f(i)-c,+\infty)}\end{equation}

For any function $f:I\to\Z,$ we denote by $\Sigma^f:\prod\limits_i \Stab(\cA_i)\to \prod\limits_i \Stab(\cA_i)$ the product of the functors $\Sigma^{f(i)}:\Stab(\cA_i)\to \Stab(\cA_i).$

\begin{lemma}\label{lem:functors_with_same_class}\cite[Lemma 2.9]{Cor} 1) For each $f:I\to\N,$ the functors $$\id,\Sigma^{2f}:\cC_{[0,+\infty)}\to \cC_{[0,+\infty)},$$
have the same class in $K_0(\Fun(\cC_{[0,+\infty)},\cC_{[0,+\infty)})).$

2) Analogous statement holds for the same functors
$$\id,\Sigma^{2f}:\cC_{[0,0]}\to \cC_{[0,2f]}$$
\end{lemma}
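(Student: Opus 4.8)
The plan is to deduce both statements from one elementary input — the relation $[\Sigma x]=-[x]$ in $K_0$ of (the homotopy category of) any stable $\infty$-category, applied to a category of exact functors — together with the commutation of $K_0$ with infinite products, Corollary \ref{cor:K_0_comm_with_products}. I follow the argument of \cite[Lemma 2.9]{Cor}.

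For part 1) I would organise the index set by the level sets of $f$: write $I_n=f^{-1}(n)$, $I=\bigsqcup_{n\ge 0}I_n$, and for each fixed value $c$ of the shift parameter in \eqref{eq:C_f_+infty} factor the $c$-th piece $E_c:=\prod_{i\in I}\Stab(\cA_i)_{[-c,+\infty)}$ of $\cC_{[0,+\infty)}$ as $E_c=\prod_{n}E_{c,n}$ with $E_{c,n}:=\prod_{i\in I_n}\Stab(\cA_i)_{[-c,+\infty)}$; the endofunctor $\Sigma^{2f}$ restricts on $E_{c,n}$ to the constant shift $\Sigma^{2n}$. On the constant-shift piece one has $[\Sigma^{2n}|_{E_{c,n}}]=[\id_{E_{c,n}}]$ in $K_0(\Fun^{\ex}(E_{c,n},E_{c,n}))$: iterate the relation $[\Sigma G]=-[G]$, which comes from the natural cofiber sequence $G\to 0\to\Sigma G$ of endofunctors, exactly $2n$ times.

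The key point is then to assemble these level-wise identities into a single relation in the functor category of $E_c$. The assignment $(G_n)_n\mapsto\prod_n G_n$ is an exact functor $\prod_n\Fun^{\ex}(E_{c,n},E_{c,n})\to\Fun^{\ex}(E_c,E_c)$, since cofiber sequences of functors are detected pointwise and in $\prod_n E_{c,n}$ are computed factorwise, so it induces a homomorphism on $K_0$. Precomposing with the isomorphism $\prod_n K_0(\Fun^{\ex}(E_{c,n},E_{c,n}))\xrightarrow{\ \sim\ }K_0\bigl(\prod_n\Fun^{\ex}(E_{c,n},E_{c,n})\bigr)$ of Corollary \ref{cor:K_0_comm_with_products}, and using that the families $([\id_{E_{c,n}}])_n$ and $([\Sigma^{2n}|_{E_{c,n}}])_n$ are termwise equal, one gets $[\id_{E_c}]=[\Sigma^{2f}|_{E_c}]$ in $K_0(\Fun^{\ex}(E_c,E_c))$.

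Finally one passes from the pieces $E_c$ to $\cC_{[0,+\infty)}=\bigcup_c E_c$: the natural cofiber sequences $G\to 0\to\Sigma G$ make sense directly on $\cC_{[0,+\infty)}$ (any shift $\Sigma^{g}$ with $g$ bounded below preserves $\cC_{[0,+\infty)}$), and the product functors $\prod_n(-)$ are compatible with the fully faithful inclusions $E_c\hookrightarrow E_{c+1}$, so the chain of cofiber sequences realising the relation can be chosen compatibly in $c$ and globalises to a witness for $[\id]=[\Sigma^{2f}]$ in $K_0(\Fun(\cC_{[0,+\infty)},\cC_{[0,+\infty)}))$. Part 2) follows by the identical scheme with bounded windows in place of half-bounded ones: $\Sigma^{2f}$ carries $\cC_{[0,0]}$ into $\cC_{[0,2f]}$, on the level piece indexed by $I_n$ it is the constant shift $\Sigma^{2n}$ sending $\prod_{i\in I_n}\Stab(\cA_i)_{[-c,c]}$ into $\prod_{i\in I_n}\Stab(\cA_i)_{[-c,2n+c]}$, and $[\Sigma^{2n}\circ\iota]=[\iota]$ for the inclusion $\iota$ of the narrow into the wide window again results from $2n$ applications of $[\Sigma G]=-[G]$ inside $\Fun^{\ex}$ of narrow-into-wide functors; the assembly over $n$ (via Corollary \ref{cor:K_0_comm_with_products}) and over $c$ is as before. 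The main obstacle is precisely this last organisational step: turning the infinitely many level-set contributions into one honest relation in $K_0$ of the functor category while keeping everything compatible with the filtered union over $c$. Here one must exploit that a filtered union of categories commutes with finite products but not with infinite ones — which is exactly why the relations have to be extracted at each fixed $c$, where the decomposition $E_c=\prod_n E_{c,n}$ is literally a product, before being propagated along the colimit.
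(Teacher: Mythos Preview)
Your approach has a real gap. The categories $E_c=\prod_i\Stab(\cA_i)_{[-c,+\infty)}$ and their level pieces $E_{c,n}$ are \emph{not} stable: they are closed under $\Sigma$ but not under $\Omega$. Hence $\Fun^{\ex}(E_{c,n},E_{c,n})$ is not stable either, the sequence $G\to 0\to\Sigma G$ is \emph{not} a cofiber sequence there (the cofiber of $G\to 0$ is $0$), so the relation $[\Sigma G]=-[G]$ fails and your level-wise input collapses. You try to repair this by passing to $\cC_{[0,+\infty)}=\bigcup_c E_c$, which \emph{is} stable --- but then the level-set decomposition is no longer a product: replacing $E_{c,n}$ by its stable envelope $\cD_n=\bigcup_{c'}E_{c',n}$, for unbounded $f$ one has $\prod_n\cD_n\supsetneq\cC_{[0,+\infty)}$ (objects of the product have a lower bound for each $n$ separately, not a uniform one), and restriction of endofunctors along the inclusion $\cC_{[0,+\infty)}\subset\prod_n\cD_n$ does not land in endofunctors of $\cC_{[0,+\infty)}$. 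Either you have a genuine product but no stability, or stability but no product; your appeal to Corollary~\ref{cor:K_0_comm_with_products} needs both.

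The paper's argument (following \cite{Cor}) avoids this via an Eilenberg-swindle trick that absorbs the unbounded variation of $f$ into a single pair of endofunctors of the stable category $\cC_{[0,+\infty)}$: set $A=\prod_i\bigoplus_{k=0}^{f(i)}\Sigma^{2k}$ and $B=\prod_i\bigoplus_{k=0}^{f(i)-1}\Sigma^{2k}$. The two splittings $A\cong B\oplus\Sigma^{2f}$ and $A\cong\id\oplus\Sigma^2 B$, combined with a \emph{single} application of $[\Sigma^2 B]=[B]$ in the stable functor category, give $[\Sigma^{2f}]=[\id]$ directly; the same functors handle part 2).
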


\begin{proof}In both cases, we have the following equalities in $K_0:$
\begin{multline*}[\Sigma^{2f}]=[\prod\limits_i (\bigoplus\limits_{k=0}^{f(i)}\Sigma^{2k})]-[\prod\limits_i (\bigoplus\limits_{k=0}^{f(i)-1}\Sigma^{2k})]=[\prod\limits_i (\bigoplus\limits_{k=0}^{f(i)}\Sigma^{2k})]-[\Sigma^2(\prod\limits_i (\bigoplus\limits_{k=0}^{f(i)-1}\Sigma^{2k}))]=\\ [\prod\limits_i (\bigoplus\limits_{k=0}^{f(i)}\Sigma^{2k})]-[\prod\limits_i (\bigoplus\limits_{k=1}^{f(i)}\Sigma^{2k})]=[\id].\end{multline*}This proves the lemma.\end{proof}

\begin{lemma}\label{lem:product_of_prestab}\cite[Proof of Proposition 2.7]{Cor} The map $\cU_{\loc}(\cC_{[0,+\infty)})\to \cU_{\loc}(\cC)$ is an isomorphism in $\Mot^{\loc}.$\end{lemma}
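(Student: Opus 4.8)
The statement to prove is Lemma \ref{lem:product_of_prestab}: the map $\cU_{\loc}(\cC_{[0,+\infty)})\to\cU_{\loc}(\cC)$ is an isomorphism, where $\cC=\prod_i\Stab(\cA_i)$ and $\cC_{[0,+\infty)}=\bigcup_{c>0}\prod_i\Stab(\cA_i)_{[-c,+\infty)}$.

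The plan is to exhibit $\cC$ as a filtered colimit of copies of $\cC_{[0,+\infty)}$, connected by shift functors, and then use the additivity of $\cU_{\loc}$ together with Lemma \ref{lem:functors_with_same_class} to see that all the transition maps become equivalences in $\Mot^{\loc}$. Concretely, for each integer $n\geq 0$ let $\cC_{[-n,+\infty)}:=\bigcup_{c>0}\prod_i\Stab(\cA_i)_{[-n-c,+\infty)}$, so $\cC_{[-n,+\infty)}$ is just the shift $\Sigma^n(\cC_{[0,+\infty)})$ applied inside $\cC$, hence each $\cC_{[-n,+\infty)}$ is equivalent to $\cC_{[0,+\infty)}$; and $\cC=\bigcup_{n\geq 0}\cC_{[-n,+\infty)}$, a filtered union of stable subcategories. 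Since $\cU_{\loc}$ is finitary (commutes with filtered colimits of $\Cat^{\perf}$, by the universal property in Subsection \ref{ssec:U_loc}), we get $\cU_{\loc}(\cC)\cong\indlim[n]\cU_{\loc}(\cC_{[-n,+\infty)})$. So it suffices to show that each inclusion $\iota_n\colon\cC_{[-n,+\infty)}\hookrightarrow\cC_{[-(n+1),+\infty)}$ induces an isomorphism on $\cU_{\loc}$.

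First I would reduce this to a statement about a single inclusion, using the equivalences $\cC_{[-n,+\infty)}\simeq\cC_{[0,+\infty)}$: under these the map $\iota_n$ is identified (up to equivalence) with the inclusion $\cC_{[0,+\infty)}\hookrightarrow\Sigma^{-1}(\cC_{[0,+\infty)})=\cC_{[-1,+\infty)}$, i.e. essentially with the shift functor $\Sigma^{-2}\colon\cC_{[0,+\infty)}\to\cC_{[0,+\infty)}$ precomposed with $\Sigma$, or more cleanly: $\iota_n$ admits the functor "$\Sigma^2$ landing back in $\cC_{[-n,+\infty)}$" as a one-sided inverse up to the ideal issue. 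The key input is Lemma \ref{lem:functors_with_same_class}(1): the functors $\id$ and $\Sigma^{2f}$ on $\cC_{[0,+\infty)}$ (here with $f$ the constant function $1$, allowed since the constant $1$ is in $\N^I$) agree in $K_0(\Fun(\cC_{[0,+\infty)},\cC_{[0,+\infty)}))$, hence in particular $\Sigma^2$ is a $K$-equivalence (Definition \ref{defi:K-equivalence}) on $\cC_{[0,+\infty)}$, and more relevantly, the composites $\iota_n\circ(\text{its partial retraction})$ and the identity agree up to an element of the appropriate $K_0$ group, so by the standard fact recalled after Definition \ref{defi:K-equivalence} that $K$-equivalences induce isomorphisms on all localizing invariants, $\cU_{\loc}(\iota_n)$ is an isomorphism. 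I would spell this out by writing $\cC_{[-(n+1),+\infty)}$ as generated over $\cC_{[-n,+\infty)}$ by one extra shifted layer and producing an explicit functor $r\colon\cC_{[-(n+1),+\infty)}\to\cC_{[-n,+\infty)}$, given fiberwise by a truncation-like operation, with $r\circ\iota_n\simeq\id$ and $\iota_n\circ r$ equal to $\id$ in the relevant $K_0$ — the computation being exactly the telescoping identity in the proof of Lemma \ref{lem:functors_with_same_class}.

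The main obstacle I anticipate is bookkeeping with the "$\bigcup_{c>0}$" in the definition \eqref{eq:C_f_+infty}: the categories $\cC_{[-n,+\infty)}$ are themselves filtered unions over the parameter $c$, and one must be careful that the shift functors, the retractions $r$, and the identifications $\cC_{[-n,+\infty)}\simeq\cC_{[0,+\infty)}$ are all compatible with these unions and genuinely land in the prescribed subcategories (closure under the relevant operations inside $\prod_i\Stab(\cA_i)$, and the fact that $\Stab(\prod_i\cA_i)$-style subcategories are Karoubi complete by \cite[Theorem 5.3.1, Proposition 6.2.1]{Bon}). A secondary subtlety is checking that the one-sided inverse $r$ really exists as an exact functor on the product category — one builds it from the fiberwise truncation functors $\Stab(\cA_i)_{[-c,+\infty)}\to\Stab(\cA_i)_{[-c,+\infty)}$ and has to confirm they assemble into a product functor preserving the "$+c$" bounds uniformly in $i$; this is where the proof of Lemma \ref{lem:functors_with_same_class} is doing the real work, and I would simply invoke it rather than redo the telescoping.
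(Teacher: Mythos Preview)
There is a genuine gap. Your filtered union $\cC=\bigcup_{n\geq 0}\cC_{[-n,+\infty)}$ is false: by definition $\cC_{[-n,+\infty)}=\bigcup_{c>0}\prod_i\Stab(\cA_i)_{[-n-c,+\infty)}$, and since $c$ already ranges over all positive integers, shifting by a constant $n$ does nothing --- all of your $\cC_{[-n,+\infty)}$ are literally the \emph{same} subcategory $\cC_{[0,+\infty)}$. The objects of $\cC=\prod_i\Stab(\cA_i)$ missing from $\cC_{[0,+\infty)}$ are those $(x_i)_i$ whose lower homological bounds are not uniformly bounded in $i$, and no constant shift will capture these.

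The paper's proof fixes exactly this: it writes $\cC\cong\indlim_{f:I\to\N}\cC_{[-2f,+\infty)}$ indexed over \emph{all functions} $f:I\to\N$, not just constants. This identification uses (AB6) in $\Cat^{\perf}$ to commute the product over $I$ with the filtered colimits $\Stab(\cA_i)=\indlim_n\Stab(\cA_i)_{[-n,+\infty)}$. This is also why Lemma~\ref{lem:functors_with_same_class} is formulated for arbitrary $f:I\to\N$ rather than constant functions: one needs the $K$-equivalence $\id\sim\Sigma^{2f}$ on $\cC_{[0,+\infty)}$ (providing an inverse to the inclusion $\cC_{[0,+\infty)}\hookrightarrow\cC_{[-2f,+\infty)}$) for every such $f$. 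Once the indexing is corrected, your outline is essentially the paper's argument, and the ``retraction $r$'' you seek is precisely $\Sigma^{2f}$; no truncation functors are needed.
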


\begin{proof}Since (AB6) holds in $\Cat^{\perf},$ we have
$$\cU_{\loc}(\cC)\cong \cU_{\loc}(\indlim[f:I\to\N]\cC_{[-2f,+\infty)})\cong \indlim[f:I\to\N] \cU_{\loc}(\cC_{[-2f,+\infty)}).$$
Hence, it suffices to prove that for any $f:I\to \N$ the map $$\cU_{\loc}(\cC_{[0,+\infty)})\to \cU_{\loc}(\cC_{[-2f,+\infty)})$$
is an isomorphism. This follows from Lemma \ref{lem:functors_with_same_class}.\end{proof}

\begin{lemma}\label{lem:equivalence_of_quotients}\cite[Proof of Proposition 2.10]{Cor} Let $f:I\to\N$ be a function. Then the functor $$\Phi:\cC_{[0,f]}/\cC_{[f,f]}\to\cC_{[0,+\infty)}/\cC_{[f,+\infty)}$$
is an equivalence.\end{lemma}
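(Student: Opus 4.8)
The statement to prove is Lemma~\ref{lem:equivalence_of_quotients}: for a function $f\colon I\to\N$, the functor
$$\Phi\colon \cC_{[0,f]}/\cC_{[f,f]}\to\cC_{[0,+\infty)}/\cC_{[f,+\infty)}$$
is an equivalence. Here $\Phi$ is the functor induced on Verdier quotients by the inclusion $\cC_{[0,f]}\hookrightarrow\cC_{[0,+\infty)}$, using that this inclusion carries $\cC_{[f,f]}$ into $\cC_{[f,+\infty)}$.

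\medskip

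\textbf{Plan.} The plan is to check fully faithfulness and essential surjectivity of $\Phi$ separately, working with the explicit ``bounded-up-to-a-shift'' descriptions \eqref{eq:C_f_g}, \eqref{eq:C_-infty_g}, \eqref{eq:C_f_+infty} of the subcategories involved. First I would record that $\cC_{[f,+\infty)}$ is a (thick) stable subcategory of $\cC_{[0,+\infty)}$ and that $\cC_{[f,f]} = \cC_{[0,f]}\cap\cC_{[f,+\infty)}$ inside $\cC_{[0,+\infty)}$; this is immediate from the product descriptions, since an object of $\prod_i\Stab(\cA_i)$ lies in $\cC_{[0,f]}$ resp.\ $\cC_{[f,+\infty)}$ iff its $i$-th component is concentrated in the appropriate range of ``prestable degrees'' uniformly in $i$ up to a global shift $c>0$, and the intersection of the ranges $[-c, f(i)+c]$ and $[f(i)-c', +\infty)$ is $[f(i)-c', f(i)+c]$, which up to a global shift is exactly the defining condition for $\cC_{[f,f]}$. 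Granting this, the standard Verdier-quotient formalism (for a thick subcategory $\cB\subset\cA$ and a stable subcategory $\cA'\subset\cA$ with $\cB\cap\cA' $ thick in $\cA'$) gives that the induced functor $\cA'/(\cB\cap\cA')\to\cA/\cB$ is fully faithful onto the full subcategory spanned by the image of $\cA'$. Applied with $\cA = \cC_{[0,+\infty)}$, $\cB = \cC_{[f,+\infty)}$, $\cA' = \cC_{[0,f]}$, this immediately yields that $\Phi$ is fully faithful.

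\medskip

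\textbf{Essential surjectivity.} This is where the real work is, and I expect it to be the main obstacle. I need to show every object $x$ of $\cC_{[0,+\infty)}$ becomes, in the quotient by $\cC_{[f,+\infty)}$, a direct summand of (equivalently, isomorphic to) an object coming from $\cC_{[0,f]}$. Writing $x$ componentwise, $x = (x_i)_i$ with $x_i\in\Stab(\cA_i)_{[-c,+\infty)}$ for some global $c>0$, the idea is to use the Postnikov/connective-cover filtration of the prestable structure on each $\Stab(\cA_i)$. For each $i$ there is a (functorial) ``truncation'' cofiber sequence expressing $x_i$ as an extension of a piece in degrees $[-c, f(i)]$ by a piece in degrees $[f(i)+1, +\infty)$ — concretely, using the functors $\tau_{\le f(i)}$, $\tau_{> f(i)}$ coming from the prestable $t$-structure on $\Stab(\cA_i)$ (these exist and are exact; this is the content of the structure theory of $\Stab(\cA)$ for $\cA$ additive Karoubi-complete, cf.\ \cite{Bon}). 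Taking the product over $i$ produces a cofiber sequence in $\cC_{[0,+\infty)}$ whose first term lies in $\cC_{[0,f]}$ (after a global shift by $c$, which is harmless) and whose third term lies in $\cC_{[f+1,+\infty)}\subset\cC_{[f,+\infty)}$. Hence in the Verdier quotient $\cC_{[0,+\infty)}/\cC_{[f,+\infty)}$ the object $x$ is isomorphic to the image of an object of $\cC_{[0,f]}$, i.e.\ it lies in the essential image of $\Phi$. The delicate points to verify are: (i) that the product of the componentwise truncations really does land in $\cC_{[0,f]}$ and $\cC_{[f,+\infty)}$ respectively, i.e.\ that the ``up-to-a-global-shift'' boundedness is preserved — this is where one uses that the same $c$ works for all $i$ simultaneously; (ii) that the truncation functors are compatible with products, so that $\prod_i \tau_{\le f(i)}(x_i)$ is a well-defined object of the product category sitting in the expected cofiber sequence; and (iii) that $x$ is Karoubi-complete-compatibly a summand, so that no further idempotent-completion issue arises (this follows since $\Stab(\cA_i)$ is Karoubi complete, hence so is the product, hence so is every thick subcategory).

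\medskip

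Finally I would assemble: fully faithfulness from the Verdier-quotient formalism plus the intersection identity $\cC_{[f,f]}=\cC_{[0,f]}\cap\cC_{[f,+\infty)}$, and essential surjectivity from the truncation argument above, concluding that $\Phi$ is an equivalence. I expect the argument to be short modulo the prestable-$t$-structure bookkeeping, and the one genuinely load-bearing observation is the uniformity of the shift $c$ across the index set $I$, which is exactly what makes the definitions \eqref{eq:C_f_g}--\eqref{eq:C_f_+infty} (with the global $\bigcup_{c>0}$) the right ones and lets the componentwise truncation argument go through at the level of the product category.
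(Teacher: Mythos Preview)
Your essential surjectivity argument is essentially the paper's: decompose each $x_i$ as an extension of something in $\Stab(\cA_i)_{[-c,f(i)]}$ by something in $\Stab(\cA_i)_{[f(i)+1,+\infty)}$, and take products. One terminological correction: $\Stab(\cA_i)$ carries a \emph{weight structure} (in Bondarko's sense), not a $t$-structure, so there are no truncation \emph{functors} $\tau_{\le f(i)}$ --- the decompositions exist but are non-canonical choices. This does not harm essential surjectivity (you only need existence), but your point (ii) about functoriality and compatibility with products is neither available nor needed; one simply chooses componentwise.

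The genuine gap is in fully faithfulness. The ``standard Verdier-quotient formalism'' you invoke is false in the stated generality: for a thick $\cB\subset\cA$ and a full stable $\cA'\subset\cA$, the functor $\cA'/(\cA'\cap\cB)\to\cA/\cB$ is \emph{not} automatically fully faithful merely because the intersection is thick. One needs a cofinality condition --- e.g.\ that every morphism from an object of $\cA'$ to an object of $\cB$ factors through an object of $\cA'\cap\cB$, or equivalently that every roof in $\cA/\cB$ between objects of $\cA'$ can be replaced by a roof through $\cA'$. This is exactly the non-trivial content of the lemma, and the paper proves it by hand: given a roof $x\to z\leftarrow y$ with $x,y\in\cC_{[0,f]}$, $z\in\cC_{[0,+\infty)}$, and $\Cone(y\to z)\in\cC_{[f,+\infty)}$, one uses a weight decomposition of $z$ to produce $w\in\cC_{[0,f]}$ with $y\to w\to z$ and $\Cone(y\to w)\in\cC_{[f,f]}$, then checks the map $x\to z$ lifts to $x\to w$. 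Faithfulness is similar. These roof manipulations use the same decompositions as your essential surjectivity step, but you have to actually carry them out; the intersection identity $\cC_{[f,f]}=\cC_{[0,f]}\cap\cC_{[f,+\infty)}$ alone does not suffice.
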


\begin{proof} {\noindent{\bf Essential surjectivity of $\Phi$}}. Take some object $x=(x_i)_i\in \cC_{[0,+\infty)}.$ Then for some $c\geq 0$ we have $x_i\in \Stab(\cA_i)_{[-c,+\infty)}$ for all $i\in I.$ Choose exact triangles 
$$y_i\to x_i\to z_i,\quad y_i\in\Stab(\cA_i)_{[-c,f(i)]},\,\, z_i\in \Stab(\cA_i)_{[f(i)+1,+\infty)}.$$
Then $y=(y_i)_i\in \cC_{[0,f]},$ $z=(z_i)_i\in\cC_{[f,+\infty)},$ and the map $y\to x$ becomes an isomorphism in $\cC_{[0,+\infty)}/\cC_{[f,+\infty)}.$ This proves that $\Phi$ is essentially surjective.

Let now $x=(x_i)_i,$ $y=(y_i)_i$ be objects of $\cC_{[0,f]},$ and denote by $\bar{x}$ and $\bar{y}$ their images in  $\cC_{[0,f]}/\cC_{[f,f]}.$

{\noindent{\bf Fullness of $\h\Phi.$}} Consider a map $f:\Phi(\bar{x})\to \Phi(\bar{y}),$ represented by a roof $x\to z\leftto y,$ where $z=(z_i)_i\in\cC_{[0,+\infty)},$ and $\Cone(y\to z)\in \cC_{[f,+\infty)}.$ Take $c\geq 0$ such that $x_i,y_i\in\Stab(\cA_i)_{[-c,f(i)+c]},$ $z_i\in\Stab(\cA_i)_{[-c,+\infty)},$ and $\Cone(y_i\to z_i)\in\Stab(\cA_i)_{[f(i)-c,+\infty)}.$ Choose $w_i\in\Stab(\cA_i)_{[-c,f(i)+c]}$ and the maps $w_i\to z_i,$ $i\in I,$ such that $\Cone(w_i\to z_i)\in\Stab(\cA_i)_{[f(i)+c+1,+\infty)}.$ Then we can choose factorizations $y_i\to w_i\to z_i,$ and automatically $\Cone(y_i\to w_i)\in \Stab(\cA_i)_{[f(i)-c,f(i)+c+1]}.$

Putting $w=(w_i)_i,$ we have $w\in\cC_{[0,f]},$ $\Cone(y\to w)\in\cC_{[f,f]},$ and $\Cone(w\to z)\in \cC_{[f,+\infty)}.$ Therefore, the roof $x\to w\leftto y$ represents a morphism $g:\bar{x}\to\bar{y}$ such that $\Phi(g)=f.$

{\noindent{\bf Faithfulness of $\h\Phi.$}} Consider a map $f:\bar{x}\to\bar{y}$ such that $\Phi(f)=0.$ We may assume that $f$ can be lifted to a morphism $\wt{f}:x\to y.$ Then there exists a map $g:y\to z,$ $z=(z_i)_i\in\cC_{[0,+\infty)},$ such that $\Cone(g)\in \cC_{[f,+\infty)}$ and $g\wt{f}=0.$ Arguing as above, we can construct a factorization $y\xto{h} w\xto{s} z$ of $g,$ such that $w\in\cC_{[0,f]},$ $\Cone(h)\in\cC_{[f,f]},$ and $\h\wt{f}=0.$ Hence $f=0.$ 
\end{proof}

\begin{proof}[Proof of Theorem \ref{th:Cordova_U_loc}] By Lemma \ref{lem:product_of_prestab}, it is sufficient to prove that the map $\cU_{\loc}(\cC_{[0,0]})\to \cU_{\loc}(\cC_{[0,+\infty)})$ is an isomorphism. By (AB6) in $\Cat^{\perf},$ we have
$$\cU_{\loc}(\cC_{[0,+\infty)})\cong \cU_{\loc}(\indlim[f:I\to\N]\cC_{[0,2f]})\cong \indlim[f:I\to\N] \cU_{\loc}(\cC_{[0,2f]}).$$
Hence, it is sufficient to prove that for any $f:I\to\N$ the map $\cU_{\loc}(\cC_{[0,0]})\to \cU_{\loc}(\cC_{[0,2f]})$ is an isomorphism. Combining Lemmas \ref{lem:functors_with_same_class} and \ref{lem:equivalence_of_quotients}, we obtain equivalences
\begin{multline*}\Cone(\cU_{\loc}(\cC_{[0,0]})\to \cU_{\loc}(\cC_{[0,2f]}))\cong
 \Cone(\cU_{\loc}(\cC_{[2f,2f]})\to \cU_{\loc}(\cC_{[0,2f]}))\\ \cong \cU_{\loc}(\cC_{[0,2f]}/\cC_{[2f,2f]})\cong \cU_{\loc}(\cC_{[0,+\infty)}/\cC_{[2f,+\infty)})\\ \cong
  \Cone(\cU_{\loc}(\cC_{[2f,+\infty)})\to \cU_{\loc}(\cC_{[0,+\infty)}))\cong 0.\end{multline*}
  This proves the theorem.\end{proof}
  
\subsection{Products of stable categories with bounded $t$-structures}
\label{ssec:products_bounded_t_structures}

We will need the following analogue of Theorem \ref{th:Cordova_U_loc}. Let $\{\cC_i\}_{i\in I}$ be a family of small stable categories with bounded $t$-structures. Denote by $(\prodd[i]\cC_i)^{\bnd}\subset \prodd[i]\cC_i$ the full subcategory formed by objects $x=(x_i)_i$ such that the objects $x_i$ have uniformly bounded homological amplitude. 

\begin{theo}\label{th:U_loc_products_bounded_t-structures} We have an isomorphism $$\cU_{\loc}((\prodd[i]\cC_i)^{\bnd})\to \cU_{\loc}(\prodd[i]\cC_i).$$\end{theo}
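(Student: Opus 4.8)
The plan is to follow the strategy of Theorem \ref{th:Cordova_U_loc} (C\'ordova Fedeli's argument) almost verbatim, replacing the shift functors on $\Stab(\cA_i)$ with the truncation functors for the bounded $t$-structures on the $\cC_i.$ For a collection of functions $f,g\colon I\to\Z$ with $f\leq g,$ I would set $(\prodd[i]\cC_i)_{[f,g]}=\bigcup_{c>0}\prodd[i](\cC_i)_{[f(i)-c,g(i)+c]},$ where $(\cC_i)_{[a,b]}$ denotes the full subcategory of objects concentrated (with respect to the $t$-structure) in homological degrees $[a,b];$ similarly define $(\prodd[i]\cC_i)_{(-\infty,g]}$ and $(\prodd[i]\cC_i)_{[f,+\infty)}.$ Note $(\prodd[i]\cC_i)^{\bnd}=\bigcup_{c>0}(\prodd[i]\cC_i)_{[-c\cdot\mathbf 1,\,c\cdot\mathbf 1]}$ where $\mathbf 1$ is the constant function $1,$ and more generally $(\prodd[i]\cC_i)^{\bnd}$ is exhausted by the $(\prodd[i]\cC_i)_{[f,g]}.$ Each of these is a small stable idempotent-complete subcategory: idempotent completeness follows since a bounded $t$-structure gives idempotent completeness and the relevant subcategories are closed under retracts.

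The key steps, in order, would be: (1) The analogue of Lemma \ref{lem:functors_with_same_class}: for $f\colon I\to\N$ the endofunctor $x\mapsto\tau_{\geq 2f}x$ (the product over $i$ of the truncations $\tau_{\geq 2f(i)}$) and the identity have the same class in $K_0$ of the endofunctor category of $(\prodd[i]\cC_i)_{[0,+\infty)},$ by the same telescoping identity $[\tau_{\geq 2f}]=[\bigoplus_{k=0}^{f}\tau_{\geq 2k}\tau_{\leq 2k}\,(\text{shifted})]-\dots$ — more precisely, using the filtration of an object concentrated in degrees $[0,N]$ by truncations one gets $[\mathrm{id}]=[\tau_{\geq 2f}]$ in $K_0$; I would phrase this as a filtration whose successive quotients cancel in pairs after applying the relevant shift, exactly as in Lemma \ref{lem:functors_with_same_class}. (2) The analogue of Lemma \ref{lem:product_of_prestab}: the map $\cU_{\loc}((\prodd[i]\cC_i)_{[0,+\infty)})\to\cU_{\loc}(\prodd[i]\cC_i)$ is an isomorphism, using (AB6) in $\Cat^{\perf}$ together with step (1) applied to the categories $(\prodd[i]\cC_i)_{[-2f,+\infty)}.$ (3) The analogue of Lemma \ref{lem:equivalence_of_quotients}: for $f\colon I\to\N$ the natural functor
$$(\prodd[i]\cC_i)_{[0,f]}/(\prodd[i]\cC_i)_{[f,f]}\to (\prodd[i]\cC_i)_{[0,+\infty)}/(\prodd[i]\cC_i)_{[f,+\infty)}$$
is an equivalence; the proof (essential surjectivity, fullness, faithfulness on homotopy categories) is the same as in loc.\ cit., using the $t$-structure truncation triangles $\tau_{\leq f}x_i\to x_i\to\tau_{\geq f+1}x_i$ in place of the Postnikov-type triangles for $\Stab(\cA_i).$ (4) Combine: as in the proof of Theorem \ref{th:Cordova_U_loc}, reduce to showing $\cU_{\loc}((\prodd[i]\cC_i)_{[0,0]})\to\cU_{\loc}((\prodd[i]\cC_i)_{[0,+\infty)})$ is an isomorphism; write $(\prodd[i]\cC_i)_{[0,+\infty)}=\indlim[f\colon I\to\N](\prodd[i]\cC_i)_{[0,2f]}$ and reduce to each $f;$ then use step (1) (to replace $(\prodd[i]\cC_i)_{[0,0]}$ by $(\prodd[i]\cC_i)_{[2f,2f]}$ up to $\cU_{\loc}$), step (3), step (2), and the vanishing $\cU_{\loc}((\prodd[i]\cC_i)_{[f,+\infty)}\hookrightarrow(\prodd[i]\cC_i)_{[0,+\infty)})$-cofiber chain, exactly as displayed at the end of the proof of Theorem \ref{th:Cordova_U_loc}. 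Finally note $(\prodd[i]\cC_i)_{[0,0]}\simeq\prodd[i](\cC_i)^{\heartsuit}$-type heart is not needed; one only needs that $(\prodd[i]\cC_i)_{[0,0]}$ is the same for the heart-level statement, but here the final equality is directly with $\prodd[i]\cC_i$, so no heart comparison is required.

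The main obstacle I anticipate is step (3): verifying that the truncation triangles behave well enough in the infinite product to run the ``uniform bounding'' arguments for fullness and faithfulness. The subtlety is that truncation in $\prodd[i]\cC_i$ is computed coordinatewise, so $\tau_{\leq f}$ and $\tau_{\geq f}$ make sense on $\prodd[i]\cC_i$ and preserve the subcategories $(\prodd[i]\cC_i)_{[a,b]}$ (after enlarging the constant $c$); one must check that the cones produced in the roof-calculus have uniformly bounded amplitude, which holds because the $t$-structures are bounded and truncation only shifts the amplitude window by a uniform amount. The other place requiring a little care is ensuring all the subcategories $(\prodd[i]\cC_i)_{[f,g]}$ are idempotent-complete so that $\cU_{\loc}$ and the Verdier quotients are taken in $\Cat^{\perf};$ this follows from boundedness of the $t$-structures (a stable category with a bounded $t$-structure is idempotent-complete, and idempotent-completeness passes to the relevant subcategories since they are closed under direct summands inside $\prodd[i]\cC_i,$ which is idempotent-complete as a product of idempotent-complete categories). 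Everything else is a transcription of C\'ordova Fedeli's proof with $\Sigma^{f}\rightsquigarrow\tau_{\geq f}$ and Postnikov triangles $\rightsquigarrow$ $t$-structure truncation triangles.
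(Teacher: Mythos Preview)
Your overall plan is correct and is essentially the same as the paper's: the paper says explicitly that the proof is a transcription of Theorem~\ref{th:Cordova_U_loc}, defines the subcategories $\cD_{[f,g]}$, $\cD_{(-\infty,g]}$, $\cD_{[f,+\infty)}$ exactly as you do (it works on the $(-\infty,0]$ side rather than $[0,+\infty)$, which is immaterial), and then indicates the analogues of Lemmas~\ref{lem:product_of_prestab} and~\ref{lem:equivalence_of_quotients}. Your steps~(2), (3), (4) match the paper, and your use of $t$-truncation triangles in step~(3) is exactly the right replacement for the weight-decomposition triangles used in Lemma~\ref{lem:equivalence_of_quotients}.

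However, your step~(1) contains a genuine error. You propose the analogue of Lemma~\ref{lem:functors_with_same_class} with $\Sigma^{2f}$ replaced by the truncation $\tau_{\geq 2f}$, but truncation functors are \emph{not exact} on a stable category, so $\tau_{\geq 2f}$ does not define an element of $K_0(\Fun(\cD_{[0,+\infty)},\cD_{[0,+\infty)}))$, and your telescoping sketch does not make sense as written. The point is that no replacement is needed here: the shift functors $\Sigma^{2f(i)}$ still exist on the $\cC_i$ and send $(\cC_i)_{[a,b]}$ to $(\cC_i)_{[a+2f(i),b+2f(i)]}$, so $\Sigma^{2f}$ is an exact endofunctor of $\cD_{[0,+\infty)}$ and the telescoping identity of Lemma~\ref{lem:functors_with_same_class} goes through \emph{verbatim}. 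In short, your dictionary ``$\Sigma^f \rightsquigarrow \tau_{\geq f}$'' applies only to step~(3); in step~(1) (and hence (2) and (4)) you keep the shifts unchanged. With that correction your argument is the paper's.
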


\begin{proof}The proof is completely analogous to the proof of Theorem \ref{th:Cordova_U_loc} with minor changes, which we indicate.

For a stable category $\cB$ with a bounded $t$-structure and for integers $a\leq b$ we denote by $\cB_{[a,b]}\subset\cB$ the full subcategory formed by objects whose homology is concentrated in degrees between $a$ and $b,$ and similarly for $\cB_{(-\infty,b]},$ $\cB_{[a,+\infty)}.$

Put $\cD:=\prod\limits_i\cC_i.$ Given functions $f,g:I\to\Z,$ $f\leq g,$ we define the subcategories $\cD_{[f,g]},$ $\cD_{(-\infty,g]},$ $\cD_{[f,+\infty)}$ analogously to \eqref{eq:C_f_g}-\eqref{eq:C_f_+infty}.

Analogously to Lemma \ref{lem:product_of_prestab}, one shows that we have an isomorphism
$\cU_{\loc}(\cD_{(-\infty,0]})\cong \cU_{\loc}(\cD).$

Analogously to Lemma \ref{lem:equivalence_of_quotients}, one proves that for any function $f:I\to\N$ the functor $\cD_{[-f,0]}/\cD_{[-f,-f]}\to \cD_{(-\infty,0]}/\cD_{(-\infty,-f]}$ is an equivalence.

Then one concludes that $\cU_{\loc}(\cD_{[0,0]})\cong \cU_{\loc}(\cD_{(-\infty,0]}),$ proving the theorem.\end{proof}

\subsection{Products of exact $\infty$-categories}
\label{ssec:products_of_exact_cats}

Recall the notion of an exact $\infty$-category. 

\begin{defi}\cite[Definition 3.1]{Bar15} An exact $\infty$-category is a triple $(\cE,\cE_\dagger,\cE^\dagger),$ where $\cE$ is an additive $\infty$-category and $\cE_\dagger,\cE^\dagger\subset \cE$ are subcategories such that the following conditions hold.

1) Every morphism $0\to x$ (resp. $x\to 0$) is in $\cE_\dagger$ (resp. $\cE^\dagger$).

2) Pushouts of morphisms in $\cE_\dagger$ exist, and the class of morphisms in $\cE_\dagger$ is closed under pushouts. Pullbacks of morphisms in $\cE^\dagger$ exist and the class of morphisms in $\cE^\dagger$ is closed under pullbacks.

3) For a commutative square 
$$
\begin{CD}
x @>{f}>> y\\
@V{g}VV @V{g'}VV\\
x' @>{f'}>> y'\\
\end{CD}
$$
the following are equivalent:

a) the square is cocartesian, $f\in\cE_{\dagger}$ and $g\in\cE^\dagger;$

b) the square is cartesian, $f'\in\cE_\dagger$ and $g'\in\cE^\dagger.$\end{defi}

By definition, a short exact sequence is a bicartesian square of the form
$$
\begin{CD}
x @>{f}>> y\\
@VVV @V{g}VV\\
0 @>>> z\\
\end{CD}
$$
where $f\in\cE_\dagger$ and  $g\in\cE^\dagger.$ A total cofiber of such a square in $\Stab(\cA)$ is by definition a {\it primitive} acyclic object. The full subcategory of acyclic objects $\Stab(\cE)^{\acycl}\subset \Stab(\cE)$ is the stable subcategory generated by the primitive acyclic objects. The stable hull of $\cE$ is the category $\cH^{\st}(\cE):=\Stab(\cE)/\Stab(\cE)^{\acycl};$ we follow the notation of \cite{Kle}.

\begin{prop}\label{prop:t-structure_on_acyclic} For an exact $\infty$-category $\cE,$ the category $\Stab(\cE)^{\acycl}$ has a bounded $t$-structure, and the functor $\Stab(\cE)^{\acycl}\to\Ind(\Stab(\cE))$ is $t$-exact.\end{prop}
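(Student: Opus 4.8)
The plan is to describe the $t$-structure on $\Stab(\cE)^{\acycl}$ explicitly by transporting a natural filtration from the combinatorics of short exact sequences in $\cE.$ Recall that $\Stab(\cE)$ carries no $t$-structure in general, but a primitive acyclic object is by definition the total cofiber of a short exact square, and such a total cofiber is concentrated in "two consecutive degrees" in the following sense: if the square is $x\to y\to z$ with the usual zero corner, then its total cofiber $T$ sits in an exact triangle $z[-1]\to T\to \Cone(x\to y)/z$ — more precisely, since $x\to y\to z$ is already a (co)fiber sequence in $\cE,$ the total cofiber is $\Cone(\Cone(x\to y)\to z),$ which is contractible as an object of $\cH^{\st}(\cE)$ but in $\Stab(\cE)$ it only becomes zero after quotienting. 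The first step would be to pin down what "homological degree" of a primitive acyclic object should mean, by choosing the convention that a primitive acyclic object built from a short exact sequence $x\to y\to z$ (all objects in $\cE$, regarded as $\Stab(\cE)_{[0,0]}$) lies in degrees $[-1,0]$ or $[0,1]$ — I'd pick one convention consistently and define $\Stab(\cE)^{\acycl}_{[a,b]}$ to be the extension-closure of shifts of primitive acyclics by $[c]$ with $a\le c, c+1\le b$ (or the analogous range).

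The key technical input is C\'ordova Fedeli's description of acyclic objects: by \cite{Cor, Kle}, every acyclic object has a finite filtration whose subquotients are shifts of primitive acyclic objects, and — crucially — these filtrations can be arranged compatibly. I would invoke this to show that the subcategories $(\Stab(\cE)^{\acycl})_{\le b} := \bigcup_c \Stab(\cE)^{\acycl}_{[c,b]}$ and $(\Stab(\cE)^{\acycl})_{\ge a}$ form a $t$-structure: the nonpositive/nonnegative parts are closed under the appropriate shifts, the Hom-vanishing $\Hom(X,Y)=0$ for $X$ in the heart-region above $Y$ follows from the fact that primitive acyclics are "two-term" and a dévissage over the finite filtrations, and every acyclic object admits a truncation triangle by splitting its filtration at a chosen degree. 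Boundedness is immediate since every acyclic object, being in the stable subcategory generated by primitive acyclics and being a finite extension of their shifts, has finite homological amplitude by construction.

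For the $t$-exactness of $\Stab(\cE)^{\acycl}\to\Ind(\Stab(\cE)),$ note that $\Ind(\Stab(\cE))=\Ind(\Stab(\cE)^{\Kar})$ is compactly generated, and the composite $\Stab(\cE)^{\acycl}\hookrightarrow\Stab(\cE)\to\Ind(\Stab(\cE))$ factors through $\Ind$ applied to the inclusion $\Stab(\cE)^{\acycl}\to\Stab(\cE)$ of small stable categories. Since a $t$-exact functor between small stable categories extends to a $t$-exact functor on $\Ind$-completions (once one equips $\Ind(\Stab(\cE)^{\acycl})$ with the induced $t$-structure and observes the Yoneda embeddings are $t$-exact), it suffices to check that the connective and coconnective objects of the $t$-structure I constructed on $\Stab(\cE)^{\acycl}$ map into the corresponding subcategories of $\Ind(\Stab(\cE))$ — equivalently, that a primitive acyclic object placed in degree $[0,1]$ really does have homology (computed after embedding into the compactly generated $\Ind(\Stab(\cE))$, where one does have a $t$-structure coming from the generators $\cE$) concentrated in degrees $0$ and $1.$ This is the compatibility that needs to be checked carefully.

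The main obstacle I anticipate is verifying the Hom-orthogonality axiom of the $t$-structure, i.e. that for primitive acyclic objects $P$ built from $x\to y\to z$ and $P'$ built from $x'\to y'\to z'$ one has $\Hom_{\Stab(\cE)}(P, P'[c])=0$ for $c<0$ (or $c<-1$, depending on the convention). This is where the exact $\infty$-category axioms genuinely enter: one must use the closure of $\cE_\dagger$ under pushout and $\cE^\dagger$ under pullback to resolve $\Hom$s of primitive acyclics in terms of $\Hom$s in $\cE$ itself, and the fact that $\cE$ is merely additive (not stable) means these computations are of the form $\pi_0$ and $\pi_1$ of mapping spectra in $\Stab(\cE)$ between two-term complexes, which can be controlled via the long exact sequences associated to the defining triangles of $P$ and $P'$. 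Once the two-term nature of primitive acyclics is established rigorously, the rest is a dévissage along the finite filtrations provided by \cite{Cor}; so I'd spend most of the effort making the "primitive acyclics are two-term" statement precise and then propagating it.
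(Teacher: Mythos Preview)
Your approach has a genuine gap rooted in the claim that a primitive acyclic object is ``two-term,'' sitting in degrees $[0,1]$ (or $[-1,0]$). This is false in the relevant $t$-structure: by \cite[Proposition~3.10]{Kle}, every primitive acyclic object lies in the \emph{heart} of the standard $t$-structure on $\Ind(\Stab(\cE)).$ Concretely, if $x\to y\to z$ is a short exact sequence in $\cE$, then under the embedding $\Stab(\cE)\hookrightarrow\Ind(\Stab(\cE))$ the map $x\to y$ becomes a monomorphism in the heart, so $\Cone(x\to y)$ already lies in the heart and the total cofiber lands there as well. Your proposed aisle/coaisle, built by declaring primitive acyclics to occupy $[0,1]$, does not satisfy the truncation axiom: the object $P[-1]$ for $P$ primitive acyclic is in neither your $\cT_{\geq 0}$ (the extension-closure of $P'[c]$, $c\geq 0$) nor your $\cT_{\leq -1}$ (the extension-closure of $P'[c]$, $c\leq -2$), and one checks directly that no triangle $A\to P[-1]\to B$ with $A$ connective and $B$ in your $\cT_{\leq -1}$ can exist.

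The paper bypasses all of the filtration and Hom-orthogonality work you anticipate by using a single general observation: if $T$ is a triangulated category with a $t$-structure and $\cA\subset T^{\heartsuit}$ is a weak Serre subcategory, then the triangulated subcategory $S\subset T$ generated by $\cA$ inherits a bounded $t$-structure with heart $\cA$, and $S\hookrightarrow T$ is $t$-exact. One then applies this with $T=\Ind(\Stab(\cE))$: primitive acyclics are in the heart by \cite[Proposition~3.10]{Kle}, and they form a weak Serre subcategory by \cite[Lemmas~3.11 and~3.12]{Kle}. Both the Hom-orthogonality you flagged as the main obstacle and the $t$-exactness come for free from the ambient $t$-structure once this is in place.
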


\begin{proof}Note that if $T$ is a triangulated category with a $t$-structure, and $\cA\subset T^{\heartsuit}$ is a weak Serre subcategory of the heart, then the full triangulated subcategory $S\subset T$ generated by $\cA$ has a bounded $t$-structure with the heart $\cA.$ In particular, the functor $S\to T$ is $t$-exact.

Consider the category $\Ind(\Stab(\cE))$ with its standard $t$-structure. By \cite[Proposition 3.10]{Kle}, the primitive acyclic objects are contained in the heart $\Ind(\Stab(\cE))^{\heartsuit}.$ Moreover, by \cite[Lemma 3.11 and Lemma 3.12]{Kle} they form a weak Serre subcategory.  The proposition follows.\end{proof}

\begin{theo}\label{th:U_loc_products_exact_cats} Let $\{\cE_i\}_{i\in I}$ be a family of small idempotent-complete exact $\infty$-categories. We have an isomorphism
$$\cU_{\loc}(\cH_{\st}(\prodd[i]\cE_i))\cong \cU_{\loc}(\prod\limits_{i}\cH_{\st}(\cE_i)).$$\end{theo}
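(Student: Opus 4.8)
The plan is to reduce the statement about exact $\infty$-categories to Theorem~\ref{th:Cordova_U_loc} (products of additive categories) via the defining short exact sequence $0\to\Stab(\cE)^{\acycl}\to\Stab(\cE)\to\cH_{\st}(\cE)\to 0$ in $\Cat^{\perf}$, together with the auxiliary Theorem~\ref{th:U_loc_products_bounded_t-structures} which handles products of categories carrying bounded $t$-structures. First I would set $\cE=\prodd[i]\cE_i$ and compare $\cH_{\st}(\prodd[i]\cE_i)$ with a suitable "bounded" version of $\prodd[i]\cH_{\st}(\cE_i)$. The key point is that for the product exact category $\prodd[i]\cE_i$, the stabilization $\Stab(\prodd[i]\cE_i)$ contains as a full subcategory the product $\prodd[i]\Stab(\cE_i)$ of stabilizations (this product being taken with uniformly bounded amplitude, matching the subcategory $(\prodd[i]\Stab(\cE_i))^{\bnd}$ appearing in Theorem~\ref{th:U_loc_products_bounded_t-structures}); more precisely $\Stab(-)$ does not literally commute with products, but Theorem~\ref{th:Cordova_U_loc} says it does after applying $\cU_{\loc}$.

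The main step is to identify the acyclic objects of the product. By Proposition~\ref{prop:t-structure_on_acyclic}, each $\Stab(\cE_i)^{\acycl}$ carries a bounded $t$-structure whose heart consists of (subquotients of) primitive acyclic objects, and the inclusion into $\Ind(\Stab(\cE_i))$ is $t$-exact. I claim that $\Stab(\prodd[i]\cE_i)^{\acycl}$, when intersected with the full subcategory $\prodd[i]\Stab(\cE_i)\subset\Stab(\prodd[i]\cE_i)$, is exactly $(\prodd[i]\Stab(\cE_i)^{\acycl})^{\bnd}$ — the objects $(x_i)_i$ with each $x_i$ acyclic and with uniformly bounded $t$-amplitude. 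One inclusion is clear since a primitive acyclic object of $\prodd[i]\cE_i$ is componentwise primitive acyclic and sits in a single degree of the product $t$-structure, and the stable subcategory such objects generate lands in the bounded product. For the reverse inclusion one uses the $t$-structure: an object of $(\prodd[i]\Stab(\cE_i)^{\acycl})^{\bnd}$ has a finite filtration by its $t$-truncations, whose subquotients lie in the heart $\prodd[i](\Stab(\cE_i)^{\acycl})^{\heartsuit}$; one then shows each such heart-object is built from componentwise primitive acyclics, using that a weak Serre subcategory generated by a class closed under the relevant operations behaves well under products (here Theorem~\ref{th:U_loc_products_bounded_t-structures} is invoked on the families $\Stab(\cE_i)^{\acycl}$ to kill the discrepancy between the bounded product and the full product at the level of $\cU_{\loc}$).

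With this identification in hand, I would assemble the conclusion by chasing $\cU_{\loc}$ through three short exact sequences and two applications of known theorems. Concretely: Theorem~\ref{th:Cordova_U_loc} gives $\cU_{\loc}(\Stab(\prodd[i]\cE_i))\cong\cU_{\loc}(\prodd[i]\Stab(\cE_i))$; Theorem~\ref{th:U_loc_products_bounded_t-structures} applied to the families $\{\Stab(\cE_i)\}$ and $\{\Stab(\cE_i)^{\acycl}\}$ identifies the $\cU_{\loc}$ of the bounded products with that of the full products; the short exact sequence $0\to(\prodd[i]\Stab(\cE_i)^{\acycl})^{\bnd}\to(\prodd[i]\Stab(\cE_i))^{\bnd}\to(\prodd[i]\cH_{\st}(\cE_i))^{\bnd}\to 0$ (exactness of the quotient here needs Proposition~\ref{prop:compact_objects_in_quotients} or a direct check that the componentwise stable-hull quotient is still a quotient after passing to bounded products) together with additivity of $\cU_{\loc}$ on cofiber sequences then yields $\cU_{\loc}((\prodd[i]\cH_{\st}(\cE_i))^{\bnd})\cong\Cone(\cU_{\loc}(\prodd[i]\Stab(\cE_i)^{\acycl})\to\cU_{\loc}(\prodd[i]\Stab(\cE_i)))$; and one last application of Theorem~\ref{th:U_loc_products_bounded_t-structures} (now to the family $\{\cH_{\st}(\cE_i)\}$, which has a bounded $t$-structure by \cite{Kle}) removes the boundedness condition, giving $\cU_{\loc}(\prodd[i]\cH_{\st}(\cE_i))$. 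Comparing with the defining sequence for $\cH_{\st}(\prodd[i]\cE_i)$ finishes the proof.

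The main obstacle I anticipate is the identification of acyclic objects of the product: showing that the stable subcategory of $\Stab(\prodd[i]\cE_i)$ generated by primitive acyclics, after restricting to the bounded product of componentwise stabilizations, coincides with the bounded product of the componentwise acyclic subcategories. The subtlety is that "generated as a stable subcategory" is a non-constructive closure, and one must control it uniformly across the index set $I$; this is exactly where the bounded $t$-structure of Proposition~\ref{prop:t-structure_on_acyclic} does the essential work, reducing the generation problem to the heart, where it becomes a statement about weak Serre subcategories of abelian categories and products, amenable to Theorem~\ref{th:U_loc_products_bounded_t-structures}. The bookkeeping of amplitudes (making sure "uniformly bounded" is preserved at each step) is routine but needs care.
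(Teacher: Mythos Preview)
Your overall strategy matches the paper's: compare the defining short exact sequence for $\cH_{\st}(\prod_i\cE_i)$ with the product of the sequences for the $\cH_{\st}(\cE_i)$, use Theorem~\ref{th:Cordova_U_loc} on the $\Stab$-term and Theorem~\ref{th:U_loc_products_bounded_t-structures} on the acyclic term. The paper simply writes down the map of short exact sequences
$$
\begin{CD}
0 @>>> \Stab(\prod_i\cE_i)^{\acycl} @>>> \Stab(\prod_i\cE_i) @>>> \cH^{\st}(\prod_i\cE_i) @>>> 0\\
@. @VVV @VVV @VVV @.\\
0 @>>> \prod_i\Stab(\cE_i)^{\acycl} @>>> \prod_i\Stab(\cE_i) @>>> \prod_i\cH^{\st}(\cE_i) @>>> 0
\end{CD}
$$
(the bottom row is exact since products of short exact sequences in $\Cat^{\perf}$ are exact), observes that $\cU_{\loc}$ of the middle and left vertical arrows are isomorphisms by the two cited theorems, and concludes. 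The implicit identification $\Stab(\prod_i\cE_i)^{\acycl}\simeq(\prod_i\Stab(\cE_i)^{\acycl})^{\bnd}$ --- which you correctly flag as the main point --- is immediate once one knows from \cite[Lemmas 3.11, 3.12]{Kle} that the primitive acyclics themselves form the heart of the $t$-structure on $\Stab(\cE)^{\acycl}$: a primitive acyclic of $\prod_i\cE_i$ is then precisely a product of heart objects, and these generate exactly the uniformly $t$-bounded subcategory.

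Your execution, however, contains real errors. You propose to apply Theorem~\ref{th:U_loc_products_bounded_t-structures} to the families $\{\Stab(\cE_i)\}$ and $\{\cH_{\st}(\cE_i)\}$, but neither carries a bounded $t$-structure in general. The category $\Stab(\cE_i)$ has a bounded \emph{weight} structure with heart $\cE_i$, not a $t$-structure; the passage from the bounded product to the full product there is exactly what Theorem~\ref{th:Cordova_U_loc} provides, so invoking Theorem~\ref{th:U_loc_products_bounded_t-structures} for it is both wrong and redundant. As for $\cH_{\st}(\cE_i)$, the claim of a bounded $t$-structure ``by \cite{Kle}'' is false: already for $\cE_i=\Proj(R)$ with the split exact structure and $R$ of infinite global dimension one has $\cH_{\st}(\cE_i)\simeq\Perf(R)$, which admits no bounded $t$-structure. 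Hence your ``one last application'' to remove boundedness on $\prod_i\cH_{\st}(\cE_i)$ does not go through. Fortunately that step, and the entire detour through a ``bounded'' short exact sequence, is superfluous: the paper works with the full product $\prod_i\cH_{\st}(\cE_i)$ directly in the bottom row, and no boundedness on the stable-hull side ever enters.
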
 

\begin{cor}\label{cor:K_theory_of_exact_cats_comm_with_products} $K$-theory of exact $\infty$-categories commutes with infinite products.\end{cor}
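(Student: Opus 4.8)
The plan is to deduce Corollary \ref{cor:K_theory_of_exact_cats_comm_with_products} from Theorem \ref{th:U_loc_products_exact_cats} together with standard facts about $K$-theory of exact $\infty$-categories. Recall that by the work of Barwick and of Klemenc, for an idempotent-complete exact $\infty$-category $\cE$ the algebraic $K$-theory $K(\cE)$ agrees with $K(\cH_{\st}(\cE))$ via the canonical functor $\cE\to\cH_{\st}(\cE)$; this is precisely the content of the theorem that the stable hull computes $K$-theory (Klemenc). More generally, every localizing invariant $F$ of stable categories induces an invariant of exact $\infty$-categories by the rule $\cE\mapsto F(\cH_{\st}(\cE))$, and in particular $\cU_{\loc}$ does. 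So the first step is simply to observe that $K$ applied to an exact category is (by definition, or by the cited equivalence) $K$ of its stable hull.

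Next I would reduce the statement about $K$-theory to the statement about $\cU_{\loc}$. Since $K$ is a finitary localizing invariant of $\Cat^{\perf}$, it factors through $\cU_{\loc}:\Cat^{\perf}\to\Mot^{\loc}$ via a functor $\Mot^{\loc}\to\Sp$ that commutes with filtered colimits; but more relevantly, the map whose bijectivity we want is
$$K\bigl(\cH_{\st}(\textstyle\prod_i\cE_i)\bigr)\to \prod_i K\bigl(\cH_{\st}(\cE_i)\bigr)\cong K\bigl(\textstyle\prod_i\cH_{\st}(\cE_i)\bigr),$$
where the last isomorphism is the Kasprowski--Winges theorem that $K$-theory commutes with infinite products of small stable idempotent-complete categories (recalled in Theorem \ref{th:map_of_localizing_invariants}, which is proved earlier in the paper). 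So it suffices to show that the functor $\prod_i\cH_{\st}(\cE_i)\to\cH_{\st}(\prod_i\cE_i)$ — equivalently its inverse, the canonical comparison — induces an equivalence after applying $K$. By Theorem \ref{th:map_of_localizing_invariants} part 1), a map of accessible localizing invariants inducing an isomorphism on $\pi_0$ is an isomorphism; hence it would be enough to know the comparison induces an isomorphism on $\cU_{\loc}$, which is exactly Theorem \ref{th:U_loc_products_exact_cats}. (Alternatively and more directly: $K$ factors through $\cU_{\loc}$, so applying $K$ to the $\cU_{\loc}$-isomorphism of Theorem \ref{th:U_loc_products_exact_cats} gives the desired isomorphism immediately, without even invoking Kasprowski--Winges — though one still needs Kasprowski--Winges, or Theorem \ref{th:map_of_localizing_invariants} 2), to identify $K(\prod_i\cH_{\st}(\cE_i))$ with $\prod_i K(\cH_{\st}(\cE_i))$.)

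Putting this together, the proof of Corollary \ref{cor:K_theory_of_exact_cats_comm_with_products} is a short chain: by the stable-hull comparison, $K$ of an exact category equals $K$ of its stable hull; by Theorem \ref{th:U_loc_products_exact_cats} the canonical functor $\prod_i\cH_{\st}(\cE_i)\to\cH_{\st}(\prod_i\cE_i)$ becomes an isomorphism on $\cU_{\loc}$, hence on every finitary localizing invariant, in particular on $K$; and by Theorem \ref{th:map_of_localizing_invariants} 2) (Kasprowski--Winges, reproved in this paper) $K(\prod_i\cH_{\st}(\cE_i))\cong\prod_i K(\cH_{\st}(\cE_i))$. Composing these three identifications yields $K(\cH_{\st}(\prod_i\cE_i))\cong\prod_i K(\cE_i)$, which is the claim.

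The real work, of course, is in Theorem \ref{th:U_loc_products_exact_cats} itself, and this is where I expect the main obstacle to lie. The strategy there should be to combine the two ingredients already assembled in this section: Theorem \ref{th:Cordova_U_loc} (C\'ordova Fedeli), which handles the passage from $\prod_i\Stab(\cA_i)$ to $\Stab(\prod_i\cA_i)$ for additive categories, and Theorem \ref{th:U_loc_products_bounded_t-structures}, which handles products with bounded $t$-structures. Concretely, one writes $\cH_{\st}(\cE_i)=\Stab(\cE_i)/\Stab(\cE_i)^{\acycl}$ and $\cH_{\st}(\prod_i\cE_i)=\Stab(\prod_i\cE_i)/\Stab(\prod_i\cE_i)^{\acycl}$, applies $\cU_{\loc}$ to the defining cofiber sequences, and uses localization to reduce the claim to comparing $\cU_{\loc}$ of the products of the stabilizations and of the acyclics separately. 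For the stabilizations one invokes Theorem \ref{th:Cordova_U_loc}; for the acyclics one uses Proposition \ref{prop:t-structure_on_acyclic}, which equips $\Stab(\cE_i)^{\acycl}$ with a bounded $t$-structure compatible with the embedding into $\Ind(\Stab(\cE_i))$, so that Theorem \ref{th:U_loc_products_bounded_t-structures} applies to the family $\{\Stab(\cE_i)^{\acycl}\}$. The subtle point — and the hard part — is bookkeeping the "bounded amplitude" subcategories: one must check that the primitive acyclic objects of $\prod_i\cE_i$, namely total cofibers of short exact sequences with entrywise bounded amplitude, generate exactly the subcategory $(\prod_i\Stab(\cE_i)^{\acycl})^{\bnd}$ up to the kind of cofinality/colimit argument used in Lemmas \ref{lem:product_of_prestab} and \ref{lem:equivalence_of_quotients}, and that the comparison functor on quotients is an equivalence after $\cU_{\loc}$. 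This is essentially a $t$-structured refinement of C\'ordova Fedeli's argument, carried out one level up, and making the two filtrations (homological amplitude in the stabilization, and homological amplitude in the acyclic $t$-structure) interact correctly is where care is needed.
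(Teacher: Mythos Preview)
Your proposal is correct and matches the paper's proof exactly: the chain $K(\prod_i\cE_i)\cong K(\cH_{\st}(\prod_i\cE_i))\cong K(\prod_i\cH_{\st}(\cE_i))\cong\prod_i K(\cH_{\st}(\cE_i))\cong\prod_i K(\cE_i)$ using Theorem \ref{th:U_loc_products_exact_cats} and Theorem \ref{th:map_of_localizing_invariants} is precisely what the paper does. Your sketch of Theorem \ref{th:U_loc_products_exact_cats} is also on target, though the ``subtle bookkeeping'' you anticipate turns out to be unnecessary---the paper simply writes down the map of short exact sequences and applies Theorems \ref{th:Cordova_U_loc} and \ref{th:U_loc_products_bounded_t-structures} to the middle and left vertical arrows directly, with no further filtration argument required.
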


\begin{proof}Indeed, by Theorem \ref{th:U_loc_products_exact_cats} and Theorem \ref{th:map_of_localizing_invariants} we have 
\begin{equation*}K(\prod\limits_i\cE_i)\cong K(\cH_{\st}(\prod\limits_i\cE_i))\cong K(\prod\limits_i\cH_{\st}(\cE_i))\cong \prod\limits_i K(\cH_{\st}(\cE_i))\cong \prod\limits_i K(\cE_i).\qedhere\end{equation*}\end{proof}

\begin{proof}[Proof of Theorem \ref{th:U_loc_products_exact_cats}.] We have a map of short exact sequences of stable categories:
$$
\begin{CD}
0 @>>> \Stab(\prod\limits_i\cE_i)^{\acycl} @>>> \Stab(\prod\limits_i\cE_i) @>>> \cH^{\st}(\prod\limits_i\cE_i) @>>> 0\\
@. @VVV @VVV @VVV @.\\
0 @>>> \prod\limits_i\Stab(\cE_i)^{\acycl} @>>> \prod\limits_i\Stab(\cE_i) @>>> \prod\limits_i\cH^{\st}(\cE_i) @>>> 0.
\end{CD}$$

By Theorem \ref{th:Cordova_U_loc}, $\cU_{\loc}$ applied to the middle vertical arrow is an isomorphism. By Theorem \ref{th:U_loc_products_bounded_t-structures} applied to the bounded $t$-structures on $\Stab(\cE_i)^{\acycl}$ from Proposition \ref{prop:t-structure_on_acyclic}, $\cU_{\loc}$ applied to the left vertical arrow is an isomorphism. Hence, $\cU_{\loc}$ applied to the right vertical arrow is an isomorphism. This proves the theorem.\end{proof}


\begin{thebibliography}{GHKLMS80}

\bibitem[AR94]{AR}J.~Ad\'amek, J. Rosick\'y. ``Locally presentable and accessible categories''. London Math. Society
Lecture Note Series 189, Cambridge University Press, 1994


\bibitem[BH81]{BH81} B.~Banaschewski, R.-E.~Hoffmann (eds), ``Continuous Lattices'', Lecture Notes in Mathematics 871, pp. 209-248, Springer, 1981.

\bibitem[Bar15]{Bar15}C.~Barwick, ``On exact $\infty$-categories and the Theorem of the Heart''. Compositio Mathematica. 2015;151(11):2160-2186.

\bibitem[Bar16]{Bar}C.~Barwick, ``On the algebraic K-theory of higher categories''. J. Topol. 9 (2016), no. 1,
245-347.

\bibitem[BGT13]{BGT} A.~Blumberg, D.~Gepner, and G.~Tabuada, ``A universal characterization of higher algebraic $K$-theory''. Geom. Topol.
17 (2013), no. 2, pp. 733-838.

\bibitem[BVdB03]{BVdB03} A.~Bondal, M.~Van den Bergh, ``Generators and representability of functors in commutative and
noncommutative geometry''. Mosc. Math. J. 3 (2003), no. 1, 1-36, 258.

\bibitem[Bon]{Bon} M.~V.~Bondarko, ``Weight structures vs. $t$-structures; weight filtrations, spectral sequences, and complexes (for motives and in general)''. J. K-Theory, 6(3):387-504, 2010.

\bibitem[BSY22]{BSY} R.~Burklund, T.~M.~Schlank, A.~Yuan, ``The Chromatic Nullstellensatz''. arXiv:2207.09929 (preprint).

\bibitem[CDH+20]{CDH+20} B.~Calm\`es, E.~Dotto, Y.~Harpaz, F.~Hebestreit, M.~Land, K.~Moi, D.~Nardin, T.~Nikolaus, W.~Steimle  ``Hermitian K-theory for stable $\infty$-categories II: Cobordism categories and additivity''. 

\bibitem[CDW23]{CDW23} M.~Christ, T.~Dyckerhoff, T.~Walde ``Complexes of stable $\infty$-categories''. arXiv:2301.02606 (preprint).


\bibitem[CS20]{CS}D.~Clausen, P.~Scholze. ``Lectures on Analytic Geometry''. Available at: https://www.math.uni-bonn.de/people/scholze/Analytic.pdf
2020.

\bibitem[Cor23]{Cor}A.~C\'ordova Fedeli, ``Topological Hochschild homology of adic rings''. PhD thesis, University of Copenhagen, 2023.

\bibitem[E1]{E1}A.~I.~Efimov, ``Localizing invariants of inverse limits'', in preparation.

\bibitem[E2]{E2}A.~I.~Efimov, ``Rigidity of the category of localizing motives'', in preparation.


\bibitem[Fa02]{Fa02} G.~F.~Faltings, ``Almost étale extensions''. Ast\'erisque 279 (2002) pp. 185-270.

\bibitem[GaiRoz17]{GaRo17}D.~Gaitsgory, N.~Rozenblyum. ``A study in derived algebraic geometry. Volume I: Correspondences and
duality''. Volume 221 of Math. Surv. Monogr. Providence, RI: American Mathematical Society (AMS), 2017

\bibitem[GabRam03]{GR03} O.~Gabber, L.~Ramero, ``Almost ring theory''. Lecture Notes in Mathematics, vol. 1800, Springer-Verlag, Berlin, 2003

\bibitem[GP04]{GP04} G.~Garkusha, M.~Prest, ``Injective objects in triangulated categories''. J. Algebra Appl. 3(4) (2004), 367-389.

\bibitem[GHKLMS80]{GHKLMS} G. Gierz, K. H. Hofmann, K. Keimel, J. D. Lawson, M. Mislove, and D. S. Scott, ``A
compendium of continuous lattices''. Springer-Verlag, Berlin, 1980.

\bibitem[Gra12]{Gra12} D.~R.~Grayson, ``Algebraic $K$-theory via binary complexes''. J. Amer. Math. Soc.,
25(4):1149-1167, 2012.

\bibitem[Gro57]{Gro} A.~Grothendieck, ``Sur quelques points d'alg\`ebre homologique''.
Tohoku Math. J. (2) 9 (2), 1957, pp. 119-221.

\bibitem[Hen17]{Hen} B.~Hennion, ``Tate objects in stable $(\infty,1)$-categories''. Homology, Homotopy and Appl. 19 (2017), no. 2, pp. 373-395. 

\bibitem[JJ82]{JJ}P.~Johnstone, A.~Joyal,
``Continuous categories and exponentiable toposes''.
Journal of Pure and Applied Algebra,
Volume 25, Issue 3,
1982,
pp. 255-296.

\bibitem[KS90]{KS}M.~Kashiwara, P.~Schapira, ``Sheaves on manifolds''. Grundlehren der Mathematischen Wissenschaften, vol. 292, Springer-Verlag, Berlin, 1990.

\bibitem[KS18]{KS18}M.~Kashiwara, P.~Schapira, ``Persistent homology and microlocal sheaf theory''. J Appl. and Comput. Topology 2, 83-113 (2018).

\bibitem[KW19]{KW19} D.~Kasprowski, C.~Winges, (2019), "Algebraic $K$-theory of stable $\infty$-categories via binary complexes". Journal of Topology, 12: 442-462.

\bibitem[Kle22]{Kle} J.~Klemenc, ``Stable hull of an exact $\infty$-category''. Homology, Homotopy and Applications, vol.24(2), 2022, pp. 195-220.

\bibitem[Kr00]{Kr00}H.~Krause, ``Smashing subcategories and the telescope conjecture - an algebraic approach''. Invent. math. 139, 99-133 (2000).

\bibitem[Kr05]{Kr05}H.~Krause, ``Cohomological Quotients and Smashing Localizations''. American Journal of Mathematics, vol. 127, no. 6, 2005, pp. 1191-1246.

\bibitem[Kr10]{Kr} H.~Krause, ``Localization theory for triangulated categories''. In: Holm T, Jørgensen P, Rouquier R, eds. Triangulated Categories. London Mathematical Society Lecture Note Series. Cambridge University Press; 2010:161-235.

\bibitem[Lur09a]{Lur09} J.~Lurie, ``Higher Topos Theory''. Annals of Mathematics Studies, vol. 170, Princeton
University Press, Princeton, NJ, 2009.

\bibitem[Lur09b]{Lur09b} J.~Lurie, ``$\infty,2)$-Categories and the Goodwillie Calculus I'', arXiv:0905.0462 (preprint).


\bibitem[Lur17]{Lur17} J.~Lurie, ``Higher Algebra''. 2017, www.math.ias.edu/~lurie/papers/HA.pdf

\bibitem[Lur18]{Lur18} J.~Lurie, ``Spectral Algebraic Geometry''. 2018, www.math.ias.edu/~lurie/papers/SAG-rootfile.pdf

\bibitem[Mat16]{Mat}A.~Mathew, ``The Galois group of a stable homotopy theory''. Advances in Mathematics, Volume 291, 2016, pp. 403-541.

\bibitem[Nee92]{Nee92}A.~Neeman, ``The chromatic tower for $D(R)$''. With an appendix by Marcel B\"okstedt. Topology 31 (1992), no. 3, 519-532.

\bibitem[Nee96]{Nee96} A.~Neeman, ``The Grothendieck duality theorem via Bousfields techniques and Brown representability''. J. Amer. Math. Soc., 9 (1996), 205-236.

\bibitem[Nee97]{Nee97} A.~Neeman, ``On a theorem of Brown and Adams''. Topology 36 (1997), 619-645.

\bibitem[Nee01a]{Nee01} A.~Neeman, ``Triangulated Categories''. Annals of Mathematics Studies 148, Princeton University Press, Princeton, NJ, 2001.

\bibitem[Nee01b]{Nee01b} A.~Neeman, ``On the derived category of sheaves on a manifold''. Doc. Math., 6:483-488, 2001.

\bibitem[Nee09]{Nee09} A.~Neeman, ``Brown Representability follows from Rosick\'y's theorem''. J. Topology, Volume 2, Issue 2, 2009, pp. 262-276.

\bibitem[Pos23]{Pos} L.~Positselski, ``Notes on limits of accessible categories''. arXiv:2310.16773 (preprint).  

\bibitem[Qui73]{Qui} D.~Quillen, ``Higher algebraic K-theory. I''. Algebraic K-theory, I: Higher K-theories
(Proc. Conf., Battelle Memorial Inst., Seattle, Wash., 1972), Springer, Berlin, 1973,
pp. 85-147. Lecture Notes in Math., Vol. 341.


\bibitem[Ram24a]{Ram24a} M.~Ramzi, ``Dualizable presentable $\infty$-categories''. arXiv:2410.21537 (preprint).

\bibitem[Ram24b]{Ram24b}M.~Ramzi, ``Locally rigid $\infty$-categories'', arXiv:2410.21524 (preprint).

\bibitem[RS18]{RS} M.~Robalo, P.~Schapira, ``A lemma for microlocal sheaf theory in the $\infty$-categorical setting''. Publ. Res. Inst. Math. Sci. 54 (2018), no. 2, pp. 379-391.

\bibitem[Roo65]{Roo65}J.-E.~Roos, ``Caract\'erisation des cat\'egories qui sont quotients des cat\'egories de modules
par des sous-cat\'egories bilocalisantes''. C.R. Acad. Sc. Paris 261 (1965) pp.4954-4957.

\bibitem[Tam18]{Tam} G.~Tamme, ``Excision in algebraic $K$-theory revisited''. Compositio Mathematica 154 (2018), no. 9, 1801-1814

\bibitem[Th97]{Th}R.~W.~Thomason, ``The classification of triangulated subcategories''. Compositio Mathematica 105 (1997), 01-27.

\bibitem[TV07]{TV07} B. T\"oen, M. Vaqui\'e, ``Moduli of objects in dg-categories''. Annales scientifiques de l'Ecole Normale
Superieure 40.3 (2007): 387-444.

\bibitem[Wal85]{Wal}F.~Waldhausen, ``Algebraic K-theory of spaces''. Algebraic and geometric topology (New Brunswick, N.J., 1983), Lecture Notes in Math., vol. 1126, Springer, Berlin,
1985, pp. 318-419.

\end{thebibliography}
\end{document}